\documentclass[10pt, reqno, twoside
, final
]{amsart}

\pdfoutput=1
\usepackage{graphicx}
\usepackage[utf8]{inputenc}
\usepackage{array,epsfig}
\usepackage{amsmath}
\usepackage{amsfonts}
\usepackage{amssymb}
\usepackage{dsfont}
\usepackage{amsxtra}
\usepackage{amsthm}
\usepackage{mathtools}
\usepackage{adjustbox}
\usepackage{leftindex}
\usepackage{leftidx}
\usepackage{mathrsfs}
\usepackage{color}
\let\originallhook\lhook
\usepackage{MnSymbol}
\let\lhook\originallhook
\usepackage{tikz-cd}
\usepackage{enumerate}
\usepackage{todonotes}
\usepackage{comment}
\usepackage{hyperref}    
\usepackage{quiver}
\usepackage{cleveref}
\usepackage{standalone}
\usetikzlibrary{arrows.meta}
\usetikzlibrary{3d}
\usepackage{xcolor}

\usepackage[
  a4paper
, centering
, margin=3cm
]{geometry}
\usepackage{fancyhdr}
\fancypagestyle{titlefooter}{%
  \fancyhf{} 

}

\numberwithin{equation}{subsection}

\makeatletter
\newtheorem*{rep@theorem}{\rep@title}
\newcommand{\newreptheorem}[2]{%
\newenvironment{rep#1}[1]{%
 \def\rep@title{#2 \ref{##1}}%
 \begin{rep@theorem}}%
 {\end{rep@theorem}}}
\makeatother

\newtheorem{thmx}{Theorem}
\newtheorem{corx}{Corollary}

\theoremstyle{definition}
\newtheorem{defn}{Definition}[subsection]
\crefname{defn}{Definition}{Definitions}
\theoremstyle{definition}

\crefname{defnprop}{Definition-Proposition}{Definition-Propositions}
\theoremstyle{definition}
\newtheorem{nota}[defn]{Notation}
\crefname{nota}{Notation}{Notations}
\theoremstyle{definition}
\newtheorem{cons}[defn]{Construction}
\crefname{cons}{Construction}{Constructions} 
\theoremstyle{definition}

\crefname{incdef}{Incorrect Definition}{Incorrect Definitions}
\theoremstyle{definition}

\crefname{fact}{Fact}{Facts}
\theoremstyle{definition}
\newtheorem{var}[defn]{Variant}
\crefname{var}{Variant}{Variants}
\theoremstyle{definition}
\newtheorem{warn}[defn]{Warning}
\crefname{warn}{Warning}{Warnings}
\theoremstyle{definition}

\crefname{prob}{Problem}{Problems}
\theoremstyle{plain}
\newtheorem{thm}[defn]{Theorem}
\crefname{thm}{Theorem}{Theorems}
\theoremstyle{definition}
\newtheorem{obs}[defn]{Observation}
\crefname{obs}{Observation}{Observations}
\newtheorem*{thm*}{Theorem}
\theoremstyle{plain}

\theoremstyle{definition}

\crefname{defthm}{Definition/Theorem}{Definition/Theorem}
\theoremstyle{plain}
\newtheorem{cor}[defn]{Corollary}
\crefname{cor}{Corollary}{Corollaries}
\theoremstyle{definition}
\newtheorem{rmk}[defn]{Remark}
\crefname{rmk}{Remark}{Remarks}
\theoremstyle{definition}

\crefname{vari}{Variant}{Variants}
\theoremstyle{plain}
\newtheorem{lem}[defn]{Lemma}
\crefname{lem}{Lemma}{Lemmas}
\theoremstyle{plain}

\crefname{claim}{Claim}{Claim}
\theoremstyle{definition}
\newtheorem{ex}[defn]{Example}
\crefname{ex}{Example}{Examples}
\theoremstyle{plain}

\crefname{soln}{Solution}{Solutions}
\theoremstyle{plain}
\newtheorem{prop}[defn]{Proposition}
\crefname{prop}{Proposition}{Propositions}
\theoremstyle{plain}

\crefname{conj}{Conjecture}{Conjectures}

\newcommand{\bb}[1]{\mathbb{#1}}
\newcommand{\Z}{\bb{Z}}

\newcommand{\R}{\bb{R}}

\newcommand{\E}{\bb{E}}

\newcommand{\F}{\mathcal{F}}

\newcommand{\Of}{\mathcal{O}}
\newcommand{\M}{\mathbf{M}}

\newcommand{\del}{\partial}

\newcommand{\hooklongrightarrow}{\lhook\joinrel\longrightarrow}

\newcommand{\simp}{\boldsymbol{\Delta}}

\newcommand{\set}{\mathsf{Set}}
\newcommand{\spa}{\mathcal{S}}

\newcommand{\Hom}{\mathrm{Hom}}

\newcommand{\adj}{\dashv}
\newcommand{\colim}{\mathrm{colim}\,}

\newcommand{\icat}{\mathcal{C}}
\newcommand{\icatd}{\mathcal{D}}
\newcommand{\icate}{\mathcal{E}}

\newcommand{\iop}{\mathcal{O}^{\otimes}}

\newcommand{\fun}{\mathrm{Fun}}

\newcommand{\simpop}{\boldsymbol{\Delta}^{op}}
\newcommand{\simpopn}{\boldsymbol{\Delta}^{n,op}}
\newcommand{\simpopnin}{\boldsymbol{\Delta}^{n,op}_{\mathrm{inj}}}
\newcommand{\ner}{\mathbf{N}}

\newcommand{\fin}{\mathsf{Fin}_*}

\newcommand{\pshv}{\mathsf{PShv}}
\newcommand{\shv}{\mathsf{Shv}}

\newcommand{\Fact}{\mathsf{Fact}}

\newcommand{\prl}{\mathsf{Pr}^{\mathrm{L}}}

\newcommand{\infcat}{$\infty$-category }
\newcommand{\infop}{$\infty$-operad }
\newcommand{\infcats}{$\infty$-categories }
\newcommand{\infops}{$\infty$-operads }
\newcommand{\infcatt}{$\infty$-category}
\newcommand{\infopt}{$\infty$-operad}
\newcommand{\infcatst}{$\infty$-categories}
\newcommand{\infopst}{$\infty$-operads}

\newcommand{\cat}{\mathsf{Cat}}
\newcommand{\catinf}{\mathsf{Cat}_{\infty}}
\newcommand{\catinfh}{\widehat{\mathsf{Cat}}_{\infty}}

\newcommand{\opinf}{\mathsf{Op}_{\infty}}
\newcommand{\topo}{\mathsf{Top}}

\newcommand{\alg}{\mathsf{Alg}}
\newcommand{\mor}{\mathsf{Mor}}
\newcommand{\umor}{\mathfrak{u}\mathsf{Mor}}

\newcommand{\calg}{\mathsf{CAlg}}

\newcommand{\ev}{\mathrm{ev}}
\newcommand{\coev}{\mathrm{coev}}

\newcommand{\disk}{\mathsf{Disk}}
\newcommand{\disks}{\mathsf{Disks}}
\newcommand{\infdisk}{\mathcal{D}\mathsf{isk}}
\newcommand{\infdisks}{\mathcal{D}\mathsf{isks}}
\newcommand{\mfd}{\mathsf{Mfd}}
\newcommand{\infmfd}{\mathcal{M}\mathsf{fd}}
\newcommand{\bsc}{\mathsf{Bsc}}
\newcommand{\infbsc}{\mathcal{B}\mathsf{sc}}
\newcommand{\cube}{\mathsf{diskCube}}

\newcommand{\open}{\mathsf{Open}}
\newcommand{\pfact}{\mathsf{PFact}}
\newcommand{\rev}{\text{rev}}

\newcommand{\pt}{\text{pt}}
\newcommand{\Bord}{\mathsf{Bord}}
\newcommand{\fr}{\text{fr}}

\newcommand{\Id}{\text{Id}}
\newcommand{\inv}{\mathrm{inv}}
\usepackage{bbold}
\newcommand{\unit}{\mathbb{1}}

\newcommand{\Ss}{\mathcal{S}}
\newcommand{\Tt}{\mathcal{T}}

\newcommand{\claudia}[1]{}
\newcommand{\will}[1]{}
\newcommand{\pelle}[1]{}

\newcommand{\mytodos}[1]{}

\newcommand{\collresc}{\varrho}

\usepackage{xstring}
\newcommand\cdsquareOpt[9][]{%
  \def\obja{#2}%
  \def\objb{#3}%
  \def\objc{#4}%
  \def\objd{#5}%
  \def\mora{\pgfkeysalso{#6}}
  \def\morb{\pgfkeysalso{#7}}
  \def\morc{\pgfkeysalso{#8}}
  \def\mord{\pgfkeysalso{#9}}
  \begin{tikzcd}[ampersand replacement=\&]
    \obja\ar[r,/utils/exec=\mora]\ar[d,/utils/exec=\morb]%
    \IfSubStr{#1} {pb}{\isCartesian}{}
    \IfSubStr{#1} {po}{\iscoCartesian}{}
    \IfSubStr{#1} {bicart}{\isbiCartesian}{}
    \&%
    \objb\ar[d,/utils/exec=\morc]%
    \\%
    \objc\ar[r,/utils/exec=\mord]%
    \&%
    \objd%
  \end{tikzcd}%
}

\newcommand\isCartesian[1][dr]{\ar[{#1}, phantom, description, very near start,"\lrcorner"]}
\newcommand\iscoCartesian[1][dr]{\ar[{#1}, phantom, description, very near end,"\ulcorner"]}
\newcommand\isbiCartesian[1][dr]{\ar[{#1},phantom, description, "\square"]}

\usepackage{xifthen} 		
\newcommand\maybeparens[1]{\ifthenelse{\isempty{#1}}{}{(#1)}}

\newcommand\cdcubeNA[9][normal]{%
  \def\obja{#2}%
  \def\objb{#3}%
  \def\objc{#4}%
  \def\objd{#5}%
  \def\obje{#6}
  \def\objf{#7}
  \def\objg{#8}
  \def\objh{#9}
  \begin{tikzcd}[ampersand replacement=\&, column sep=#1, row sep = #1]
    \obja\ar[rr]\ar[dr]\ar[dd]\&
    \&\objb\ar[dr]\ar[dd]\&\\
    \&\objc\ar[rr,crossing over]\&
    \&\objd\ar[dd]\\
    \obje\ar[rr]\ar[dr]\&
    \&\objf\ar[dr]\&\\
    \&\objg\ar[rr]\ar[from=uu,crossing over]\&
    \&\objh\\
  \end{tikzcd}%
}

\title{On dualizability and invertibility in the higher Morita category}
\author[Scheimbauer, Steffens, and Stewart]{Claudia I.~Scheimbauer, Pelle Steffens, and William Stewart}
\date{}

\begin{document}
\setcounter{tocdepth}{2}
\setcounter{page}{1}

\begin{abstract}
 We prove a conjecture of Lurie characterizing $(n+1)$-dualizability in higher Morita categories of $\bb{E}_n$-algebras in terms of dualizability over certain factorization homologies.
 A key ingredient is a higher Morita category based on the recently developed framework of pointless factorization algebras of Karlsson and the first author.
We also verify an invertibility conjecture of Brochier--Jordan--Safranov--Snyder as an immediate corollary of our main result.

Moreover, we prove a relative version of the dualizability conjecture, yielding a new criterion for relative/twisted field theories.
We give some examples, including Dirichlet and Neumann relative theories.
\end{abstract}

\maketitle
\thispagestyle{titlefooter}

\tableofcontents

\section{Introduction}

This paper establishes dualizability results for higher Morita categories valued in any suitable higher category.
Namely, we prove a conjecture by Lurie \cite[Remark 4.1.27]{LurieCobordism} characterizing $(n+1)$-dualizability in the Morita
$(\infty, n+1)$-category\footnote{The $\mathfrak{u}$ in $\umor_n$ stands for \emph{univalent}. For most of this article, it will be more convenient for us to work with the \emph{valent} $(\infty,n+1)$-category $\mor_n$.} $\umor_n$ informally described as follows: objects are $\E_n$-algebras, 1-morphisms are bimodules in $\E_{n-1}$-algebras, 2-morphisms are bimodules of bimodules in $\E_{n-2}$-algebras, etc.; the pattern changes for $(n+1)$-morphisms, namely, those are homomorphisms.

Finding highly dualizable objects is desirable in light of the Baez--Dolan-Lurie {\em Cobordism Hypothesis} \cite{BaezDolan, LurieCobordism}, which roughly says the following:  $k$-dualizable objects in a higher category lead to framed fully extended $k$-dimensional topological field theories (TFTs) with that higher category as a target (and vice versa).
Here {\em fully extended} means that they assign values to manifolds of all dimensions $\leq k$.
One interpretation of the Cobordism Hypothesis is that fully extended topological field theories are {\em local}, i.e.~fully determined by their value at a point (or rather an arbitrarily  small $n$-dimensional neighborhood thereof).

When studying algebras, in light of representation theory and their categories of modules, it is most natural to study them with \emph{Morita equivalences} rather than just with isomorphisms.
This can be reformulated in terms of equivalences in a certain Morita 2-category: objects are algebras, 1-morphisms are bimodules with composition the relative tensor product, and 2-morphisms are homomorphisms of bimodules. This higher category is a natural target for TFTs: it is a straightforward exercise to see that every algebra is 1-dualizable and 2-dualizable objects are precisely the finite dimensional separable ones.

Generalizing this situation, higher Morita categories of $\bb{E}_n$-algebras are commonly used as targets of (extended) topological field theories.
An $\bb{E}_n$-algebra object in a category $\mathcal{C}$ can be thought of as an object with $n$ compatible associative multiplications, or alternatively, with associative multiplications parametrized by configurations of two points in $\mathbb{R}^n$.
Examples include coherently associative algebra objects such as $A_\infty$-algebras or loop spaces ($n=1$, $\mathcal{C}=\mathrm{Chain}\,\mathrm{complexes}$ or $\mathcal{C}=\mathrm{Spaces}$), possibly braided tensor categories ($n=1, 2$, $\mathcal{C}=\mathrm{Linear}\,\mathrm{categories}$), and observables of a topological quantum field theory.

Higher Morita categories are relevant for approaching many of the well-studied TFTs via dualizability: Turaev--Viro theories \cite{DSPS,BJS}, Reshetikhin--Turaev and Crane--Yetter theories \cite{BJS,BJSS,Haioun,Kinnear,FST}; they are the natural home for (braided) fusion 2-categories and  their higher cousins \cite{DouglasReutter, Decoppet_dualizability, classificationfusion2cat, Decoppet_relative}.

\subsection{Main results and applications to topological field theories}

In this article we systematically approach dualizability questions in higher Morita categories, building upon previous work in \cite{GS}.
Our first main result is the proof of a characterization of $(n+1)$-dualizability in $\umor_n$ as conjectured by Lurie \cite[Remark 4.1.27]{LurieCobordism} in terms of dualizability over the factorization homologies over spheres.
\begin{thmx}\label{thmx:mainthm}
    (\Cref{thm:mainthm})
An $\bb{E}_n$-algebra
in the Morita $(\infty,n+1)$-category $\umor_n(\icat)$ of a nice symmetric monoidal $\infty$-category $\icat^{\otimes}$ is $(n+1)$-dualizable if and only if for all $0\leq k\leq n$, the object $A$ is left dualizable as a left module over the factorization homology 
\[  \int_{S^{k-1}\times \R^{n-k+1}}A. \]
Assuming the Cobordism Hypothesis, we obtain a fully extended framed $(n+1)$-dimensional topological field theory 
valued in $\umor_n(\icat)$, which assigns $A$ to a framed point.
\end{thmx}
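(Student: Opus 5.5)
The strategy is to reduce $(n+1)$-dualizability to a sequence of adjunction conditions, one for each categorical level, and then to identify each of those conditions with left dualizability of $A$ over $\int_{S^{k-1}\times\R^{n-k+1}}A$.

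First, we use the result of \cite{GS}: every object of $\mor_n(\icat)$ is $n$-dualizable. Combined with the standard inductive criterion of \cite{LurieCobordism}, this shows that $A$ is $(n+1)$-dualizable exactly when the evaluation and coevaluation $n$-morphisms, and their iterated adjoints, admit both left and right adjoints in the top $(\infty,1)$-category of homomorphisms. Since the dualizing data produced in \cite{GS} come with explicit adjoints up to level $n$, only finitely many further conditions need checking. For the $n$-morphisms built by unfolding the duality data of $A$, these are the folds and cusps of the framed $(n+1)$-dimensional bordism category. Using the symmetries of these data (reversing orientation, passing between left and right adjoints), we expect the conditions to reduce to one condition per $k\in\{0,\dots,n\}$. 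The index $k$ records the codimension of the fold/cusp stratum: the $k$-th condition concerns the morphism whose underlying bimodule is $A$ itself, viewed as a bimodule over the algebra obtained by collapsing a $(k-1)$-sphere's worth of directions.

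Second, we identify each of these $n$-morphisms concretely. For this we work in the valent model $\mor_n$ built from (pointless) factorization algebras on stratified $\R^n$ or $\R^{n+1}$, as developed by Karlsson and the first author. The $n$-morphism at level $k$ is the factorization algebra on $\R^n$ stratified by a codimension-$k$ linking configuration. Pushing it forward along the radial projection, or equivalently applying $\otimes$-excision, identifies the relevant bimodule with $A$ as a left module over $B_k=\int_{S^{k-1}\times\R^{n-k+1}}A$, which is an $\bb{E}_{n-k+1}$-algebra and in particular $\bb{E}_1$. In the top homomorphism category, a bimodule morphism of the form ${}_{B}M_{\unit}$ has a right (respectively left) adjoint exactly when $M$ is left dualizable over $B$, via the usual Eilenberg--Watts-type argument in presentable categories. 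Applying this with $M=A$ and $B=B_k$ yields the $k$-th condition. For $k=0$ we have $S^{-1}=\emptyset$, so $B_0=\unit$ and the condition is that $A$ be dualizable in $\icat$. Finally, we transport the result from $\mor_n$ to $\umor_n$ via the univalent completion, which preserves and reflects dualizability. The TFT statement then follows directly from the Cobordism Hypothesis.

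The main obstacle is the second step. We must make the factorization homology identification canonical and compatible with the unit and counit witnessing the adjunctions at every level, not just objectwise. This requires the pointless factorization algebra formalism, so that the stratifications near cusps can be handled without choosing points. We must also verify that the two-sided condition (left and right adjoints) really collapses to one-sided left dualizability. For this we would exploit the reflection symmetry of $S^{k-1}$, which swaps the two adjoints.
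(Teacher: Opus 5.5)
\textbf{Main gap: lifting adjoints.} The gap is in your second step, where you treat the top-level $n$-morphisms as if they were plain $\E_1$-bimodules.

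In $\mor_n(\icat)$, the $n$-morphisms $u^w$ witnessing the top layer of duality data are iterated bimodules, i.e.\ constructible pointless factorization algebras on $\square^n_{[\vec 1]}$. An adjoint must be found in the $(\infty,2)$-category of $(n-1)$-, $n$- and $(n+1)$-morphisms. So the adjoint must again carry all $n$ layers of bimodule structure, and its unit and counit must be maps of such iterated bimodules.

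Your Eilenberg--Watts argument, i.e.\ \cite[Proposition 4.6.2.13]{LurHA}, only tells you when the underlying bimodule $\pi_*(u^w)$ has an adjoint in $\mor_1(\icat)$. Since $\pi_*$ is a functor of $(\infty,2)$-categories, this gives one direction. Nothing in your plan gives the converse, namely that an adjoint of $\pi_*(u^w)$ lifts. Already for $n=2$, with $\F$ an $M$-$N$-bimodule internal to $A$-$B$-bimodules, you must equip the dual of $\pi_*(\F)$ with a compatible $A$-$B$-bimodule structure and make the unit and counit $A$-$B$-linear.

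This is exactly the lifting-of-adjoints theorem, \Cref{thm:main-lifting-lemma}, one of the key technical inputs of the paper (cf.\ \cite[Prop.~5.17, 5.18, Lem.~5.19]{BJS} for tensor categories). Its proof has three ingredients:
\begin{enumerate}
\item write $\pi_*$ as precomposition with regular bimodules such as ${}_{\unit}A_A$;
\item show that regular bimodules are adjointable with regular units and counits (\Cref{prop:regularbimodajoint});
\item run a monadicity and mate-pasting argument (\Cref{lem:adjointcriterion,lem:adjointcons}).
\end{enumerate}

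Conversely, the obstacle you single out is not the real one. Once lifting is available, you only need an objectwise identification of the underlying left module. The paper obtains it from the Morse lemma applied to $g^w=\pi_n\circ\varphi^w$ (\Cref{prop:Morse-1,prop:mod-formula}). This works because a constructible factorization algebra on $\square_{[1]}$ is determined by its value near $1/2$.

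\textbf{Two smaller points.} First, the collapse from two-sided to one-sided conditions does not come from a reflection symmetry of $S^{k-1}$. It comes from a general categorical result (\cite{Araujo-thesis,SS25}): an $n$-dualizable object is $(n+1)$-dualizable iff the iterated units and counits $u^w$ of the right adjunctions admit left adjoints for all words $w$ of length $n$. The Morse index $|w|=k$ then shows the condition depends only on $k$. Your heuristic would need to be made precise, and as written (``right (respectively left) adjoint exactly when $M$ is left dualizable'') it conflates the two sides.

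Second, the underlying bimodule is not of the form ${}_BM_{\unit}$. It carries a nontrivial right action by $\int_{D^k\times S^{n-k-1}\times\R}A$. This is harmless only because \cite[Proposition 4.6.2.13]{LurHA} shows that left dualizability of ${}_BM_C$ depends only on the underlying left $B$-module.
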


This result was previously proven in the cases $n=1,2$ and for $\mathsf{Pr}$, the  $(2,2)$-category of locally presentable $k$-linear categories in \cite{BJS} (see Section \ref{sec:evenhigher} below for the generalization of Theorem \ref{thmx:mainthm} to Morita categories of higher categories with non-invertible higher morphisms).

The difficulty in proving  Theorem A lies in the details: we obtain dualizability data in the Morita category via the geometry of Euclidean spaces, and one needs a model for the higher Morita category flexible enough to incorporate such geometric reasoning. We will get back to more details on the issue and the model we develop in \Cref{sec:intro_Morita}.

To apply the theorem, we need some way of verifying that its criteria hold. In follow-up work (joint with Jackson Van Dyke), we will provide such a mechanism by generalizing a result of Brochier-Jordan-Snyder (\cite[Theorem 5.16]{BJS}): for any presentably symmetric monoidal enriching \infcat $\mathcal{V}^{\otimes}$ we may consider the higher Morita $(\infty,n+2)$-category valued in $\prl_{\mathcal{V}}$, the $(\infty,2)$-category of presentable $\mathcal{V}$-enriched \infcatst. Then Theorem \ref{thm:mainthm} can be used to show that \emph{rigid} presentably $\bb{E}_n$-monoidal $\mathcal{V}$-\infcats are $(n+1)$-dualizable. For suitable choices of $\mathcal{V}$ (for instance, categories of vector spaces with functional analytic flavor like those considered in \cite{BenBasKelKrem} or \cite{ClauScholze}) this leads to interesting novel (Sym)TFTs in arbitrary dimensions, which will rely on the upcoming work \cite{Steffens}.

As a straightforward corollary we prove a conjecture by Brochier--Jordan--Safronov--Snyder \cite{BJSS} about invertibility:
\begin{corx}[\Cref{cor:invertibility}]
For $\icat^{\otimes}$ a nice symmetric monoidal \infcatt, an $\bb{E}_n$-algebra $A$ is invertible in $\umor_n(\icat)$ if, and only if, it is $(n+1)$-dualizable and for all $k=0,\ldots n$, the canonical map
\[\int_{S^{k-1}\times \mathbb{R}^{n-k+1}} A \longrightarrow Z_{n-k}(A)\]
to the \emph{$\bb{E}_{n-k}$-center} is an equivalence.
\end{corx}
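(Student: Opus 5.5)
The strategy is to reduce invertibility to dualizability plus the invertibility of the evaluation and coevaluation data. The reduction uses the standard fact that in an $(\infty,n+1)$-category, an object is invertible if and only if it is dualizable and its evaluation and coevaluation morphisms are invertible. Since $\umor_n(\icat)$ is univalent, invertibility of a $k$-morphism that is part of fully dualizable data can be tested inductively. Concretely: a fully dualizable object $A$ is invertible exactly when its evaluation $1$-morphism and all the iterated units and counits of the adjunctions are invertible. This comes down to checking that a certain family of $(k+1)$-morphisms, for $0\le k\le n$, are equivalences. So the first step is to assume $(n+1)$-dualizability, which the statement requires in both directions. We then use \Cref{thmx:mainthm} and the explicit dualizability data it produces. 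That data is built from the factorization homologies $\int_{S^{k-1}\times\R^{n-k+1}}A$ acting on $A$.

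Next, I will identify, level by level, the relevant adjunction units and counits. The key point is that the $k$-th level evaluation or counit is given by $A$ regarded as a bimodule between $\int_{S^{k-1}\times\R^{n-k+1}}A$, with its $\E_{n-k}$-structure, and the unit. Inverting it is a Morita-equivalence question. The structure map $\int_{S^{k-1}\times\R^{n-k+1}}A\to \mathrm{End}(A)$ factors through the $\E_{n-k}$-center $Z_{n-k}(A)$, the universal $\E_{n-k}$-algebra acting on $A$ compatibly with its $\E_n$-structure. Since $A$ is left dualizable over this factorization homology (this is the content of \Cref{thmx:mainthm}), the Morita context is a genuine Morita equivalence precisely when this action map is an equivalence. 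This is the usual statement that a dualizable module $M$ over $R$ induces a Morita equivalence $R\simeq \mathrm{End}(M)$ exactly when $R\to\mathrm{End}(M)$ is an equivalence and $M$ is a generator. The generator condition should come for free here, from the other (coevaluation) half of the adjunction.

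Then I will assemble the argument. For the ``if'' direction, induct downward from $k=n$, where the morphisms are top-dimensional and invertibility is literal equivalence of homomorphisms, up to $k=0$. At each stage the map to the center being an equivalence makes the level-$k$ unit and counit invertible, so the adjunctions become adjoint equivalences and $A$ is invertible. For the ``only if'' direction, invertibility gives full dualizability immediately. The unit and counit of each adjunction must then be equivalences, and reading off the identification above forces $\int_{S^{k-1}\times\R^{n-k+1}}A\to Z_{n-k}(A)$ to be an equivalence.

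The main obstacle is the middle step: identifying the composite of the (co)evaluation morphisms at level $k$ with the bimodule of $\int_{S^{k-1}\times\R^{n-k+1}}A$ acting on $A$, and matching this with the canonical map to the center. Doing so requires the explicit geometric description of the dualizability data from the proof of \Cref{thmx:mainthm}, via pointless factorization algebras. It also requires a check that the relative tensor products computing these composites are the factorization homologies over the glued spheres, which I would handle by excision.
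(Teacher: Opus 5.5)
Your outer reduction (invertible $\Leftrightarrow$ $(n+1)$-dualizable with invertible adjunction data, then an endomorphism-algebra criterion) is the paper's. The level-by-level scheme in the middle, however, does not work.

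\textbf{The level-by-level scheme.} Your description of the level-$k$ morphism as $A$ regarded as a bimodule from $\int_{S^{k-1}\times\R^{n-k+1}}A$ to $\unit$ is correct only for the iterated counits $u^{1^k}$ of $\mathrm{ev}$ (compare \Cref{prop:bimod-formula}). The other level-$k$ (co)units have non-trivial targets, e.g.\ the unit $u^{10}$ of $\mathrm{ev}\dashv\mathrm{ev}^R$, and invertibility at level $k$ needs all of them.

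More seriously, for $k<n$ the underlying bimodule of a $k$-morphism is an $\bb{E}_{n-k}$-algebra in bimodules. Nothing lets you decide invertibility of a $k$-morphism of $\umor_n(\icat)$ by a Morita criterion on that object: \Cref{thm:main-lifting-lemma} only treats top-level $n$-morphisms. Indeed, $\mathrm{ev}$ alone is invertible iff $A$ is, which requires all $n+1$ conditions rather than one.

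The paper never analyses lower-level morphisms directly. It uses that a morphism whose unit and counit are invertible is itself invertible. Hence, for $(n+1)$-dualizable $A$, invertibility is equivalent to invertibility of the $n$-morphisms $u^w$ for all binary words $w$ of length $n$. \Cref{thm:main-lifting-lemma} reduces this to invertibility of the underlying $\bb{E}_1$-bimodule. \Cref{prop:mod-formula} identifies that bimodule with $A$ as a $(B_k,B_{n-k})$-bimodule, where $k=|w|$, $B_k=\int_{S^{k-1}\times\R\times D^{n-k}}A$ and $B_{n-k}=\int_{D^k\times S^{n-k-1}\times\R}A$. So all $n+1$ conditions live at the top level, indexed by the number of $1$'s in $w$. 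No excision computation of composites of (co)evaluations is needed.

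\textbf{The generator condition.} It does not come for free, even in this setting. For a left dualizable bimodule, the correct criterion (\Cref{lem:invert-bimod-cond}) needs both $B_k\to\mathrm{End}_{B_{n-k}}(A)\simeq Z_{n-k}(A)$ and $B_{n-k}^{\mathrm{rev}}\to\mathrm{End}_{B_k}(A)\simeq Z_k(A)$ to be equivalences.

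Already for $n=1$ the single condition fails to suffice. Take $\icat=\mathrm{Mod}_R$ with $R=K\times K$ for a field $K$, and $A=K\times 0$. Then $A$ is $2$-dualizable and $A\otimes_R A^{\mathrm{rev}}\to\mathrm{End}_R(A)$ is an equivalence. But $R\to Z_1(A)\simeq A$ is not, so $\mathrm{ev}_A$ is not invertible, although the one map you propose to check is an equivalence.

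The second map is, up to the evident identification, the hypothesis for the complementary index $n-k$; it arises from the word $\overline{w}$. So each $u^w$ consumes two of the conditions. It is the quantification over all $k=0,\ldots,n$ that supplies the missing half, not anything coming from the coevaluation.
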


In recent years, in particular related to studying symmetries of quantum field theories, relative a.k.a.~twisted versions of field theories have become ubiquitous. They come in two flavors: lax and oplax.
We prove a relative version of our main theorem in terms of relative adjointability using the criterion of \cite{JFS}, see Section \ref{sec:relative} for terminology.
\begin{thmx}[\Cref{thm:main-oplax-thm} and \Cref{cor:main-oplax-thm}, ``even'' = oplax case]\label{thmx:main-oplax-thm}
 Let $M:A\to B$ be a 1-morphism between two $\bb{E}_n$-algebras in the $(\infty,n+1)$-category $\umor_n(\icat)$ 
Then $M$ is $n$-times right adjointable if and only if for $0\leq k \leq n-1$ the object $M$ is right dualizable as a right module over the factorization homologies 
\[  \int_{\widetilde{S}^{k}_{+} \times \R^{n-k}} M_{A\to B} \, ,\] 
 where $\widetilde{S}^{k}_{+}$ is a certain submanifold of a stratified $k$-sphere (see \cref{cons:stratified-spheres}).

Assuming the Cobordism Hypothesis and using \cite{JFS}, in this case we obtain an oplax relative fully extended framed $n$-dimensional topological  field theory $\mathcal{Z}_M$, relative to the absolute, once-categorified $n$-dimensional topological field theories $\mathcal{Z}_A$ and $\mathcal{Z}_B$.
\end{thmx}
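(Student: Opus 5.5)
The plan is to reduce $n$-fold right adjointability of $M$ to a sequence of one-categorical adjointability statements, one for each level $0\le k\le n-1$. Each of these should then be identifiable with dualizability of a bimodule in an ordinary Morita $(\infty,2)$-category of $\bb{E}_{n-k-1}$-algebras, the way \Cref{thmx:mainthm} is derived from the absolute statement.

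First, I would recall the inductive structure of adjointability. A $1$-morphism $M$ is $n$-times right adjointable if it has a right adjoint $M^R$, and the unit and counit $2$-morphisms are $(n-1)$-times right adjointable, and so on. In $\mor_n(\icat)$ I would compute the unit and counit geometrically, using the pointless factorization algebra model. The right adjoint of $M$ is $M$ itself, viewed as a $B$--$A$ bimodule with reversed orientation. The evaluation and coevaluation at level $k$ are built by gluing along stratified $k$-dimensional half-spheres $\widetilde{S}^k_+$, thickened by $\R^{n-k}$. Concretely, the stratification of $\widetilde{S}^k_+$ records where $A$, $M$ and $B$ live. The $(k+1)$-morphism witnessing the $k$-th level of adjunction is then a bimodule between factorization homologies over pieces of this half-sphere.

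Second, I would prove the key lemma by mimicking the absolute case. The lemma says that the $(k+1)$-morphism obtained at stage $k$ admits a right adjoint in the relevant $2$-categorical truncation if and only if $M$ is right dualizable as a right module over $\int_{\widetilde{S}^k_+\times\R^{n-k}} M_{A\to B}$. For this I would use the standard fact that in a Morita $2$-category a bimodule ${}_RX_S$ has a right adjoint exactly when $X$ is dualizable over $R$ (or over $S$, depending on conventions). Here $R$ is identified, by excision (the $\otimes$-excision property of factorization homology), with the factorization homology over the collar $\widetilde{S}^k_+\times\R^{n-k}$. The higher coherence from the cobordism-hypothesis framework of \cite{JFS} then ensures that only these $n$ conditions are needed: adjointability of the higher unit and counit at level $k$ is exactly the level-$(k+1)$ condition.

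Third, I would assemble these by induction on $k$ to obtain the equivalence. I would then transfer from the valent $\mor_n$ to $\umor_n$, using that adjointability is invariant under the univalent completion. The corollary about oplax relative field theories follows directly from \cite{JFS}, since that criterion asks precisely for $n$-times right adjointability of $M$ between the once-categorified theories determined by $A$ and $B$.

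The main obstacle is the geometric identification in the second step: showing that the unit and counit at level $k$, as produced by iterated adjunctions in the pointless factorization model, really are the bimodules over $\int_{\widetilde{S}^k_+\times\R^{n-k}}M$. This requires a careful model of the stratified spheres (\cref{cons:stratified-spheres}) and excision compatibilities. I would first attempt it by reusing the absolute argument for \Cref{thmx:mainthm}, replacing spheres with stratified half-spheres and checking that the boundary strata labelled by $A$ and $B$ do not obstruct the collar-gluing.
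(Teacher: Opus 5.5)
The proposal has a genuine gap: it assigns the $n$ conditions to the wrong categorical levels.

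Your key lemma says that the $(k+1)$-morphism produced at stage $k$ has a right adjoint if and only if $M$ is right dualizable over $\int_{\widetilde{S}^k_+\times\R^{n-k}}M_{A\to B}$. For $k+1<n$ this is false. By \cite{GS} (see \Cref{prop:En-adjoint-data}), every $j$-morphism of $\mor_n(\icat)$ with $j\le n-1$ has both left and right adjoints, so the intermediate stages impose no condition, while the dualizability conditions are genuine. Already for $n=2$, $M$ always has a right adjoint, but it need not be right dualizable over $\int_{\widetilde{S}^0_+\times\R^2}M_{A\to B}\simeq B$. Take $A=B=\unit$ and $M$ a non-dualizable $\E_1$-algebra in $\icat$.

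In the paper, all $n$ conditions arise at the top categorical level. $M$ is $R^n$-adjointable if and only if each $n$-morphism $v^w$ has a right adjoint, where $v^w$ is an iterated unit/counit and $w$ ranges over binary words of length $n-1$. The condition for $v^w$ depends only on the number $|w|$ of counit steps. It is right dualizability over $\int_{\widetilde{S}^{k}_+\times\R^{n-k}}M_{A\to B}$ with $k=n-1-|w|$. So the index $k$ in the statement is a Morse index in disguise, not a categorical level.

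Even at the top level, you cannot apply \cite[Proposition 4.6.2.13]{LurHA} directly. The morphism $v^w$ is an $n$-morphism of $\mor_n(\icat)$, i.e.\ a bimodule internal to iterated bimodules, not a bimodule between $\E_1$-algebras in $\icat$. Appealing to ``higher coherence from \cite{JFS}'' does not bridge this. The missing ingredient is the lifting-of-adjoints theorem, \Cref{thm:main-lifting-lemma}: an $n$-morphism admits an adjoint if and only if its underlying bimodule in $\mor_1(\icat)$ does. Its proof, via regular factorization bimodules and adjointability of squares of mates, is the technical core of the argument.

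You also need to know that this underlying bimodule is $M$ with the \emph{canonical} action of the half-sphere factorization homology. The paper writes it as the pushforward of $M$ along $h^w=\pi\circ\phi^w$. It then shows (\Cref{prop:Morse-strat}) that $h^w$ is a stratified Morse function with a single positive codimension-one critical point of index $|w|$, and the stratified Morse lemma fixes the action. Excision alone, which you propose, identifies the acting algebras but not the module structure. The paper notes exactly this limitation of its excision argument in the absolute case.
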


Some low-dimensional examples of $\widetilde{S}^k_+$ are:

\begin{equation*}
\begin{tikzpicture}[scale=0.8]
\node[
    draw=red,
    fill=red,
    circle,
    inner sep=1pt
  ] (A) at (0,0) {};

\node[above right,red] at (A) {\scriptsize $B$};
\begin{scope}[xshift=3cm]
\draw[thick,red] (0,1) arc (90:-90:1) node[midway,right,red] {$B$};

\draw[thick,blue] (0,1) -- (-0.5,1)  node[above,blue] {$A$};;
\draw[thick,blue] (0,-1) -- (-0.5,-1);

\node[
    draw=green!90!black,
    fill=green!90!black,
    circle,
    inner sep=1pt
  ] at (0,1) {};
  \node[
    draw=green!90!black,
    fill=green!90!black,
    circle,
    inner sep=1pt
  ] at (0,-1) {};
\end{scope}

\begin{scope}[xshift=8cm]
    \draw[thick,blue,dashed]  (-1.1,1) arc (90:270:0.3 and 1);
    \draw[very thick, green!90!black] (-0.3,1) arc (90:270:0.3 and 1);

    \filldraw[blue,opacity=0.1] (-0.8,0) arc (0:360:0.3 and 1);
    
    \filldraw[blue,opacity=0.3] (-1.1,1) -- (-0.3,1) arc (90:-90:0.3 and 1) -- (-1.1,-1) arc (-90:90:0.3 and 1) ;
    \draw[thick,blue,dashed] (-1.1,1) arc (90:-90:0.3 and 1);
    \draw[blue] (-1.1,1) -- (-0.3,1) node[midway,above,blue] {$A$};;
    \draw[blue] (-1.1,-1) -- (-0.3,-1);

    \draw[thick,red]  (-0.3,1) arc (90:-90:1.3 and 1) node[midway,right,red] {$B$};
    \filldraw[red,opacity=0.3]  (-0.3,1) arc (90:-90:1.3 and 1)  arc (-90:90:0.3 and 1);
    \filldraw[red,opacity=0.1] (0,0) arc (0:360:0.3 and 1);

    \draw[very thick, green!90!black] (-0.3,1) arc (90:-90:0.3 and 1); 
\end{scope}
\end{tikzpicture}
\end{equation*}

Using this Theorem, we clarify the existence of Neumann and Dirichlet relative theories. In particular, viewing an $\bb{E}_n$-algebra $A$ as a regular module, the 1-morphism $A\colon \mathbb{1} \to A$ is always $R^n$-adjointable, hence via the Cobordism Hypothesis gives an oplax relative fully extended framed $(n+1)$-dimensional topological  field theory. In contrast, a 1-morphism $N \colon A \to \unit$ in $\umor_n(\icat)$ is $R^{n}$-adjointable if and only if $N$ is $n$-dualizable as an object in $\umor_{n-1}(\icat)$, hence via Cobordism Hypothesis gives an oplax relative fully extended framed $(n+1)$-dimensional topological field theory. 

In case $n=1$, \Cref{thm:main-oplax-thm} reduces to the statement that a bimodule $M:A\to B$ of $\E_1$-algebras admits a right adjoint if and only if $M$ is dualizable over $B$ \cite[Proposition 4.6.2.13]{LurHA}. 

\subsection{Strategy of proof: modifying the model of the higher Morita category}\label{sec:intro_Morita}

In the literature, there are two rather different approaches to the construction of higher Morita categories: one algebro-combinatorial in \cite{HaugsengEn} and the other more geometrically flavored, using factorization algebras in \cite{Scheimbauer}; both were bootstrapped to ``even higher'' Morita categories in \cite{JFS} to include higher morphisms.

The former approach is neat and elegant, and comes with many desirable properties: its objects and all $k$-morphisms (for $k\leq n$) enjoy additivity by definition, the input higher category $\icat$ can be weakened to be merely $\bb{E}_n$-monoidal rather than symmetric monoidal, and it requires the existence of fewer colimits.

However, for questions regarding dualizability, Haugseng's combinatorial model is too rigid. An object in a higher category is highly dualizable if it has a dual, the evaluation and coevaluation maps
thereof have left and right adjoints, the units and counits exhibiting those themselves have left and right
adjoints, and so on. 
These (co)evaluation and (co)unit morphisms cannot be realized as maps among the simple operads that feature in the combinatorial model, and hence constructing them appears to be difficult.

On the other hand, factorization algebras are flexible and allow for geometric manipulations.
These were exploited in \cite{GS} to prove $n$-dualizability and adjointability of all $k$-morphisms for $k<n$ in the $(\infty,n)$-Morita category $\umor^{\mathrm{ptd}}$ constructed using factorization algebras. 
In this latter approach $\bb{E}_n$-algebras are modeled as locally constant factorization algebras on $\mathbb{R}^n$, and all $k$-morphisms for $k\leq n$ are (essentially) constructible factorization algebras on $\mathbb{R}^n$ with a stratification by a flagging of hyperplanes.
Factorization algebras can be pushed forward along continuous maps, and the factorization algebras underlying the (co)evaluation and (co)unit morphisms exhibiting $n$-dualizability and adjointability of all $k$-morphisms for $k<n$ are given as pushforwards along certain self-maps of $\mathbb{R}^n$ (where care has to be taken with regards to constructibility); see \Cref{sec:n-duals} for a recollection.

However, factorization algebras model {\em pointed} objects explaining the notation $\umor^{\mathrm{ptd}}$; for instance, a constructible factorization algebra on $\mathbb{R} \subset \{0\}$ (in vector spaces, say) is precisely the data of a bimodule $M$ together with a chosen element $\mathbb{1} \to M$.
These pointings prevent $(n+1)$-dualizability of objects in $\umor^{\mathrm{ptd}}$ unless they are trivial, as was first discussed in \cite[Theorem 7.3]{Johnson-Freyd}, and later expanded upon in \cite{GS}. This phenomenon can be seen already at low categorical levels: consider the category of pointed vector spaces, then it is a good exercise to test one's understanding of the definitions to verify that the only dualizable objects therein are one-dimensional. Note that Haugseng's model does not have this problem, but in this setting we cannot even prove $n$-dualizability as we have no way of exploiting the geometry of $\R^n$. 

Faced with these issues, one might imagine two strategies to overcome them:
\begin{enumerate}
\item Define a symmetric monoidal ``unpointing'' map from the pointed factorization model for the Morita category to Haugseng's operadic model, and then prove the final dualizability step there.
\item Modify the definition of factorization algebra to get rid of the undesired pointings, reprove $n$-dualizability therein, and then prove the final dualizability step.
\end{enumerate}
The first appears rather challenging, as the factorization model is more flexible than the operadic one, and hence defining a morphism of symmetric monoidal higher categories seems like a difficult undertaking. We have chosen to follow the second strategy in a series of articles
\begin{enumerate}[$(I)$]
    \item The article \cite{KSW} of the first author with Eilind Karlsson and Tashi Walde lays solid foundations of the theory of constructible factorization algebras. Among its main results is a gluing theorem asserting that the formation of constructible factorization algebras determines a sheaf of \infcats on the category of smooth conical manifolds. As a consequence of this result, or rather its pointless version, the factorization Morita category constructed in this paper satisfies the Segal condition. 
    \item In the article \cite{KS} extracted from Karlsson's thesis \cite{Karlssonthesis}, Karlsson and the first author defined a notion of {\em pointless factorization algebras}, reproved the statements in \cite{KSW} in the pointless setting, and provided many ingredients necessary for assembling a higher Morita category.
    \item In this article, we construct a higher Morita category based on this notion of pointless factorization algebras and prove the desired dualizability statements. Along the way, we characterize pointless factorization algebras as algebras for a suitable $\infty$-operad of pointless disks.    
\end{enumerate}

In future work joint between the first two authors and Anja \v{S}vraka (\cite{ScStSv}), we will pursue a more general version of the  approach (1) above: we will show that the pointless factorization model developed in this work is in fact equivalent to Haugseng's model. This will rely on a version of Dunn--Lurie additivity for pointless factorization algebras, which is based on modifying the results of Carmona-\v{S}vraka in \cite{Carmona-Svraka} and \v{S}vraka's thesis \cite{Svraka}.

\subsection{Pointless factorization algebras and the pointless higher Morita category}

As explained above, our strategy to put ourselves in a situation where we can potentially prove a statement like \Cref{thmx:mainthm} is to modify the construction of the higher Morita category $\umor_n(\icat)$ by basing it on {\em pointless} constructible factorization algebras.

\subsubsection{Pointless factorization algebras.}
To define a pointless constructible factorization algebra, we need to start with a smooth conical manifold, or conically smooth stratified space, $X$, together with a discrete subset $E$, called a {\em marking}.

A pointless factorization algebra $\F$ assigns to every open set $U$ an object $\F(U)\in \icat$ and to an inclusion $U_1\amalg \cdots \amalg U_k \subset V$ a map
\(\F(U_1)  \otimes \cdots \otimes \F(U_k) \longrightarrow \F(V),\)
{\em but only if} 
\[\left(U_1\amalg \cdots \amalg U_k \right) \cap E = V\cap E .\]
Of course, this assignment must satisfy several conditions, namely multiplicativity, Weiss descent, and constructibility; they are analogous to the conditions satisfied by usual factorization algebras.

For the purposes of illustration in this introduction, let us consider the marked (stratified) space
\[ X\coloneqq\mathbb{R} \supset E\coloneqq\{0\}.\]

We have three different ``types'' of intervals in $X$: those containing 0, those contained in $\mathbb{R}_{-}$, and those contained in $\mathbb{R}_{+}$ and $\F$ assigns to them objects (equivalent to) $M$,  $A$, and $B$; respectively.
\begin{center}
\begin{tikzpicture}[scale=2]
\draw[blue, very thick] (-2,0) -- (0,0);
\draw[red, very thick] (0,0) -- (2,0);

\draw[blue, line width=3pt] (-1.35,0) -- coordinate (midA) (-0.8,0);
\draw[blue, line width=3pt] (-0.5,0) -- (0,0) coordinate (midM);
\draw[red, line width=3pt] (0,0) -- (0.5,0);
\draw[red, line width=3pt] (0.9,0) -- coordinate (midB) (1.5,0);

\draw (2, 0) node [anchor=north] {};
\draw (-2, 0) node [anchor=north] {};
\fill (0,0) circle (0.12em) node [anchor=south] {0};

\draw [|->] ([yshift=-3pt]midA) -- ++(0,-0.3) node [anchor=north] {$A$};
\draw [|->] ([yshift=-3pt]midM) -- ++(0,-0.3) node [anchor=north] {$M$};
\draw [|->] ([yshift=-3pt]midB) -- ++(0,-0.3) node [anchor=north] {$B$};
\end{tikzpicture}
\end{center}
Moreover, we assign maps to the following inclusions $U_1\amalg \cdots \amalg U_k \subseteq V$ of intervals:
\begin{enumerate}
    \item Any inclusion into an interval not containing the point 0;
    \item if the target $V$ contains 0, we must ensure that 0 is contained in the source, that is, $0\in U_1\amalg \cdots \amalg U_k$.
\end{enumerate}

The inclusions in (1) lead to $\E_1$-algebra structures on $A$ and $B$. The inclusions in (2) lead to an $(A,B)$-bimodule structure on $M$. Crucially, since we {\em exclude} the inclusion $\emptyset \hookrightarrow \mathbb{R}$, we do {\em not} have a pointing of $M$.

Constructible pointless factorization algebras enjoy the same nice properties as usual factorization algebras: we can glue them over open covers, they have a symmetric monoidal structure, and satisfy a cone decomposition property, all proven in \cite{KS}.

In this work, we add to this list a localization result making the connection to an $\infty$-operad $\infdisk_{/(X,E)}^{\otimes}$ of pointless disks (for $(X,E)$ a smooth conical manifold equipped with a marking as in Definition \ref{defn:markedsmcon}).
This implies that that (in good situations that are essentially always satisfied in practice) constructible pointless factorization algebras on a marked smooth conical manifolds $(X,E)$ are just algebras for this $\infty$-operad (see \Cref{cor:fact=alg}):
\begin{equation*}
    \mathsf{Fact}^{\mathrm{cstr}}_{(X,E)}(\icat) \xrightarrow{\simeq}  \alg_{\infdisk_{/(X,E)}}(\icat) \,.  
\end{equation*} 
An immediate consequence (\Cref{cor:Fact-bimod}) is that constructible pointless factorization algebras on $(\mathbb{R}, 0)$ are {\em equivalent} to bimodules, hence justifying our approach to the Morita category.

Additivity of constructible factorization algebras exhibits constructible factorization algebras on product spaces $X\times Y$ as constructible factorization algebras on $X$ valued in constructible factorization algebras on $Y$. This was recently proven by Carmona--\v{S}vraka \cite{Carmona-Svraka} and \v{S}vraka \cite{Svraka} for usual constructible factorization algebras and will be extended to the pointless case shortly (\cite{ScStSv}).

\subsubsection{Assembling the pointless higher Morita category}
Equipped with these results we define a higher Morita category $\umor_n(\icat)$ as an $(\infty,n+1)$-category.

Informally, objects are locally constant factorization algebras on $(0,1)^n \cong \mathbb{R}^n$, which Lurie proved to be equivalent to $\E_n$-algebras.
The $k$-morphisms are constructible {\em pointless} factorization algebras on $(0,1)^n$ equipped with a suitable stratification given by a flag of length $k+1$ of hyperplanes.
Note that only for $k=n$ it is crucial that we are considering pointless factorization algebras and hence differs from the construction in \cite{Scheimbauer}.
The example explained above relating bimodules and constructible pointless factorization algebras on $\mathbb{R} \supset \{0\}$ together with additivity implies that a $k$-morphism indeed is equivalent to a bimodule of bimodules ... in $\E_{n-k}$-algebras for $k<n$ and a bimodule of bimodules ... for $k=n$; hence, modeling precisely the higher Morita category of $\E_n$-algebras.

To make this precise, it is convenient for us to use the model 
of $n$-fold semisimplicial objects in $\catinf$, which satisfy the Segal condition and are quasi-unital, following \cite{Haugseng_quasi}.
First, we define an $n$-fold simplicial object of conical manifolds. All of the appearing conical manifolds are the cube $(0,1)^n$ with varying stratifications given by intersecting (multiple) hyperplanes. The semisimplicial face maps are stratified piecewise rectilinear maps - the so-called \emph{collapse-rescale maps} of \cite{Scheimbauer}. The functoriality in the semisimplex category is a consequence of the basic fact that there is a \emph{unique} orientation-preserving and boundary-preserving rectilinear map between any two bounded closed intervals of $\R$. The higher Morita category is given by postcomposing with the functor $\mathsf{Fact}_{(\square_{(\_)},E)}$ that takes marked spaces to pointless factorization algebras, and then imposing constructiblity; it must be checked that pushing forward pointless factorization algebras along collapse-rescale maps preserves constructibility. We prove the Segal condition for this $n$-fold semisimplicial object (Proposition \ref{prop:moritasegal}) and show that the resulting $n$-fold non-unital category object is quasi-unital (Proposition \ref{prop:moritaquasiunit}). This results in an $n$-fold category object of $\catinfh$, the \infcat of (large) \infcatst, or put differently, an $(n+1)$-fold category object of $\widehat{\spa}$, the \infcat of (large) spaces; the pointless higher Morita category $\umor_n(\icat)$ is then the univalent completion of the underlying $(n+1)$-fold Segal space $\mor_n(\icat)$, which comes equipped with a completion map $\mor_n(\icat)\rightarrow \umor_n(\icat)$. This yields a complete $(n+1)$-fold Segal space: an $(\infty,n+1)$-category. In fact, it is even a symmetric monoidal  $(\infty,n+1)$-category, crucial for studying dualizability.

\subsection{Outline of the proofs}

Now that we have an $(\infty,n+1)$-Morita category $\umor_n(\icat)$ tailored to our goals, the proof of the main theorems consist of two parts:
\begin{enumerate}
    \item An argument for lifting adjoints, allowing to reduce the existence of adjoints of $n$-morphisms in $\umor_n(\icat)$ to the existence of an adjoint of a certain 1-morphism in $\umor_1(\icat)$. This is proven in \Cref{sec:lifting_lemma}.
    \item A geometric argument, computing the manifolds over which the factorization homologies in the statements of \Cref{thmx:mainthm} and \Cref{thmx:main-oplax-thm} are taken. These are completed in \Cref{prop:mod-formula}, \Cref{thm:mainthm}, and \Cref{thm:main-oplax-thm}.
\end{enumerate}

Let us elaborate on each of the steps.

\subsubsection{Lifting of adjoints}

As explained above, an $n$-morphism of $\umor_n(\icat)$ is a bimodule of bimodules of ... of bimodules. We can forget the first $(n-1)$ bimodule structures and remember only the last one, i.e.~the structure of a bimodule, denoted $\pi_*\F$, between $\E_1$-algebras.

Checking for the existence of adjoints of $\F$ can be tedious, because one needs to construct many bimodule structures.
Instead, we prove in \cref{thm:main-lifting-lemma} that the existence of an adjoint of $\F$ can be tested by the existence of an adjoint for $\pi_*\F$. For instance, in the case $n=2$, the $2$-morphism $\F$ can be thought of as a pointless constructible factorization algebra on the picture on the left, where the strata are labeled by all the sources and targets of $\F$: $A$ and $B$ are $\bb{E}_2$-algebras, $M$ and $N$ are both $\bb{E}_1$-algebras in $A$-$B$-bimodules and $\F$ is an $M$-$N$-bimodule internal to $A$-$B$-bimodules.
\begin{equation*}
\begin{tikzpicture}[scale=1.3]
\filldraw[opacity=0.1] (1,0) -- (0,0) -- (0,2)--(1,2);
\filldraw[opacity=0.1] (1,0) -- (2,0) -- (2,2)--(1,2);
\draw[thick] (1,0) -- (1,2);
\fill (1,1) circle (0.05);
\node at (1.2,0.2) {$M$};
\node at (1.2,1.8) {$N$};
\node at (1.2,1) {$\F$};
\node at (0.2,1) {$A$};
\node at (1.8,1) {$B$};
\end{tikzpicture} \qquad \qquad\begin{tikzpicture}[scale=1.3]
\draw[thick] (1,0) -- (1,2);
\fill (1,1) circle (0.05);
\node at (1.2,0.2) { $M$};
\node at (1.2,1.8) {$N$};
\node at (1.5,1) {$\pi_*(\F)$};
\end{tikzpicture} 
\end{equation*}
\cref{thm:main-lifting-lemma} states that the 2-morphism $\F$ has an adjoint if and only if the 1-morphism $\pi_*(\F)$ which is merely an $M$-$N$-bimodule in the underlying higher category $\icat$ has an adjoint. In the setting of braided tensor categories, the analog of this result is Proposition 5.17, 5.18, and Lemma 5.19 in \cite{BJS}. Since we work fully coherently, our proof of this fact is more involved.

The proof of \cref{thm:main-lifting-lemma} is somewhat technical, but it relies on the observation that $\pi_*(\F)$ is also obtained by composing with the maps of $\bb{E}_2$-algebras $\bb{1}_{\icat}\rightarrow A$ and $\bb{1}_{\icat}\rightarrow B$. In the Morita category, these maps correspond to \emph{regular bimodules}. In  \Cref{sec:reg-bimod}, we prove that if a $k$-morphism in the Morita category happens to be a regular bimodule, then it is adjointable, and its unit and counit are again regular bimodules; in particular, all regular bimodules are maximally adjointable. This allows us to reduce the proof of  \cref{thm:main-lifting-lemma} to a game with adjointable cubes and squares that we play in  \Cref{sec:lifting_lemma}.

\subsubsection{The geometric argument} In \cite{GS} it was shown that every $\E_n$-algebra $A$ is $n$-dualizable as an object in the pointed Morita category $\umor^{\mathrm{ptd}}$. This result applies equally to the unpointed Morita category (the units and counits appearing happen to have pointings, but every usual factorization algebra is also a pointless one). In light of \cref{thm:main-lifting-lemma}, the lifting of adjoints result, an $\E_n$-algebra $A$ is $(n+1)$-dualizable if and only if the bimodules underlying the $n$-morphisms witnessing the $n$-dualizability of $A$ admit adjoints as bimodules of $\E_1$-algebras.

The geometric argument provides a computation of the bimodules underlying the $n$-morphisms witnessing the $n$-dualizability of $A$, using the constructions of duals and adjoints given in \cite{GS}. In Proposition \ref{prop:mod-formula}, we identify these bimodules with $A \simeq \int_{D^k\times D^{n-k}} A$ equipped with canonical left and right actions by the factorization homologies
\begin{equation*}
    \int_{S^{k-1} \times \R \times D^{n-k}} A \qquad \text{and}\qquad \int_{D^{k} \times S^{n-k-1} \times \R} \ ,
\end{equation*}
where the index $k$ takes values $0,...,n$. \Cref{thm:mainthm} follows by applying the result from \cite[Proposition 4.6.2.13]{LurHA} that a bimodule ${}_{A}M_B$ of $\E_1$-algebras admits a left adjoint if and only if $M$ is dualizable over $A$.

We briefly sketch the key details of the proof of Proposition \ref{prop:mod-formula}. The main technical ingredient is Morse theory. The units and counits of each adjunction, as well as the evaluation and coevaluation 1-morphisms, are constructed in \cite{GS} by pushing forward constructible factorization algebras along self maps of $(0,1)^n$. The underlying bimodules are given by pushing forward along the composition of these self-maps with the projection onto the last coordinate. Starting with a constructible factorization algebra $A$ on $(0,1)^n$ we identify this push forward with the push forward of $A$ along a Morse function with a single critical point. The proof is performed inductively, passing to the unit of an adjunction increases the index of the Morse function by one while passing to the counit of an adjunction keeps the index constant. The Morse lemma controls the local behavior of a Morse function at a critical point, giving the desired identification of the bimodules underlying the $n$-morphisms witnessing the $n$-dualizability of $A$.

The arguments using Morse theory provide the technical details behind a more conceptual argument, which we include for comparison in \Cref{sec:geometric_part_old}. This argument uses excision of factorization homology to prove that the source and target algebras of the underlying bimodules witnessing $n$-dualizability are the factorization homologies of $A$ over spheres of the desired index. However, this argument does not keep track of the precise action of these factorization homologies on $A$. This is in contrast to the Morse theoretic arguments, in which the actions are controlled by the Morse lemma.

\subsection{A comment on `even higher' Morita categories}\label{sec:evenhigher}

In the body of this work, we will assume that our Morita categories take values in an $(\infty,1)$-category. The construction of the factorization higher Morita category we expose in this article can be extended to the `even higher' Morita categories of \cite{JFS}: given a suitable symmetric monoidal $(\infty,d)$-category, the conjunction of the results of Section \ref{sec:moritacat} and \cite[Section 8]{JFS} produce an $(\infty,n+d)$-category $\umor_n(\icat)$. Our dualizablity and invertibility results apply to these `even higher' Morita categories as well, for the following reason: $(n+1)$-dualizability and invertibility in a symmetric monoidal $(\infty,n+d)$-category $\icatd$ (for $d\geq 1$) is detected in the underlying symmetric monoidal $(\infty,n+1)$-category $\tau_{\leq (\infty,n+1)}\icatd$, and we have a canonical equivalence 
\[  \umor_n(\tau_{\leq (\infty,1)}\icat) \simeq \tau_{\leq (\infty,n+1)}\umor_n(\icat) \]
of symmetric monoidal $(\infty,n+1)$-categories, by \cite[Proposition A.20]{GS}.

\subsection{History of this project}

This article is a result of research over the past decade. The idea of unpointing factorization algebras stems from Peter Teichner and Stephan Stolz, who suggested allowing only dense inclusions of open sets, which leads to {\em firm} algebras and unpointed bimodules.
The first author and Theo Johnson-Freyd first started discussing a higher Morita category based on these ideas after realizing that pointings prevent higher dualizability.
The ultimate idea for pointless factorization algebras came from Eilind Karlsson, the first author's first PhD student.
Had she decided to continue her career in academia, she would have become a coauthor of this article as she had worked out many ingredients and pictures for the case of $n=2$.
Eilind and the first author have given several talks about this topic these past years, for instance in October 2022 at \href{https://pirsa.org/22100102}{Perimeter Institute}, in May 2023 at Peter Teichner's birthday conference {\em Interactions of Low-dimensional Topology and Quantum Field Theory} in Les Diablerets, Switzerland, and in March 2026 at the Oberwolfach Workshop {\em Higher Structures from Symmetries in Quantum Field Theory} (see also \cite{OWR});
and a minicourse in June 2024 at the ICMS Summer School on \href{https://icms.ac.uk/archive/workshop/categorical-symmetries-in-quantum-field-theory-summer-school/}{\em Categorical symmetries in Quantum Field Theory}. The second author has given talks about this work in April and May 2025 in Toulouse and at LMU Munich, respectively. The third author also gave a talk about this work in July 2025 at the conference {\em Quantum Field Theory and Topological Phases via Homotopy Theory and Operator Algebras} at MPIM Bonn, and presented a poster in July 2025 at the {\em Spaces of Tensor Categories Workshop} at ICMS.

During the completion of this article we were made aware of the work \cite{Vazquez} which also concerns this conjecture.

\subsection{Acknowledgements}

CS is grateful for many inspiring conversations with Theo Johnson-Freyd, Stephan Stolz, and Peter Teichner about unpointing factorization algebras, twisted field theories, and other subjects in Berkeley and other beautiful locations. We thank Dan Freed for inspiration and encouragement in pursuing this project.

CS would like to thank Eilind Karlsson for being a wonderful student and a great coauthor, and for tackling the unpointing problem together.
We all thank her for being a fun friend always excited to embark on adventures (regardless of the weather) and for her sense of humor (as well as her appreciation of ours).
We thank Tashi Walde and Anja \v{S}vraka for many hours of gluing and assembling, and figuring out conical problems. We thank Ben Ha\"{i}oun and Jackson van Dyke for sharing their expertise on braided tensor categories and 3D TQFTs, providing invaluable context for this project.

All authors were supported by the Simons Collaboration on Global Categorical Symmetries (1013836 and 8528-03).
All authors were supported by the Deutsche Forschungsgemeinschaft (DFG, German Research Foundation) through the Collaborative Research Centers SFB 1085 Higher invariants - 224262486 and
SFB 1624 Higher structures, moduli spaces and integrability - 506632645.

\subsection{Conventions and Notation}
\begin{enumerate}
    \item When writing ``category'' we  mean an ordinary, discrete category.
    We will freely use the technology developed e.g.~in \cite{Joyal, LurHTT, LurHA} and use the terminology ``$\infty$-category'' for $(\infty,1)$-categories.
    \item We write $\spa$ for the \infcat of (small) spaces and $\catinf$ for the \infcat of (small) \infcatst. We write $\widehat{\spa}$ and $\catinfh$ for the \infcats of \emph{large} spaces and \infcats respectively. We write $\prl\subset \catinfh$ for the \infcat of presentable \infcats and functors admitting right adjoints between them.
    \item We use the notation $\Hom^k_{\icat}(X,Y)$ for the $(\infty,n-k)$-category of $k$-morphisms in an $(\infty,n)$-category~$\icat$. More generally, let $\icatd$ be an \infcat that admits finite limits and let $\icate=\icate_{\bullet\ldots\bullet}$ be an $n$-fold Segal object in $\icatd$ (see e.g. \cite[Section 4]{Haugseng-iteratedspans}). We say that a \emph{parallel pair of $0$-morphisms} of $\icate$ is a pair $A,B$ of objects of $\icat_{0\ldots 0}$, and we write $\Hom^1_{\icate}(A,B)$ for the $(n-1)$-fold Segal object defined by the pullback
    \[
        \begin{tikzcd}
        \Hom^{1}_{\icate}(A,B)\ar[d] \ar[r]& \icate_{1\bullet\ldots\bullet} \ar[d] \\
        \{A,B\} \ar[r] &  \icate_{0\bullet\ldots\bullet}  \times \icate_{0\bullet\ldots\bullet}. 
        \end{tikzcd}
    \]
    Now we make the following inductive definition for $1\leq k\leq n-1$.
    \begin{itemize}
        \item Given a parallel pair of $(k-1)$-morphisms of $\icate$, the data of which is a pair $(A,B)\in \icate_{1\ldots 10\ldots0}$ (first $(k-1)$ entries are 1's) and a map of $(n-k)$-fold Segal objects $\Hom_{\icate}^{k}(A,B)_{\bullet\ldots\bullet}\rightarrow \icate_{1\ldots 11\bullet\ldots\bullet}$, a \emph{parallel pair of $k$-morphisms of $\icate$} is a pair of objects of $\Hom_{\icate}^{k}(A,B)_{0\ldots 0}$ determining a pair $X,Y$ of objects of $\icate_{{1\ldots 110\ldots0}}$.
        \item Given such a parallel pair of $k$-morphisms of $\icate$, we will write $\Hom^{k+1}_{\icate}(X,Y)$ for the $(n-k-1)$-fold Segal object defined by the pullback 
        \[
        \begin{tikzcd}
        \Hom^{k+1}_{\icate}(X,Y)\ar[d] \ar[r] &\Hom_{\icate}^{k}(\icate)(A,B)_{1\bullet\ldots\bullet} \ar[d] \\
        \{X,Y\} \ar[r] &  \Hom_{\icate}^{k-1}(A,B)_{0\bullet\ldots\bullet}  \times \Hom_{\icate}^{k}(A,B)_{0\bullet\ldots\bullet}. 
        \end{tikzcd}
        \]        
        which comes with a map $\Hom^{k+1}_{\icate}(X,Y)_{\bullet\ldots\bullet}\rightarrow \icate_{1\ldots 111\bullet\ldots \bullet}$ of $(n-k-1)$-fold Segal objects by construction.
    \end{itemize}

\end{enumerate}

\section{The pointless higher Morita category}\label{sec:Morita-cat}

In this section we recall and extend the results of \cite{KS} on pointless factorization algebras and construct the pointless higher Morita category.
This model for the higher Morita category is the opposite of pointless -- as explained in the introduction, it is essential for the proof of the main \Cref{thm:mainthm}.

\subsection{Recollection of stratified spaces}

In the pointless factorization model for the Morita category, objects, morphisms, higher morphisms, and composable sequences of morphisms are all factorization algebras on very simple spaces -products of the interval $(0,1)$- that are required to be \emph{constructible} with respect to various stratifications.
\subsubsection{Poset-stratified spaces and conical manifolds}
We quickly recall the basics notions of stratified topology, after Lurie \cite[Appendix B]{LurHA} and Ayala-Francis-Tanaka \cite{AFT-local-structures}. 
\begin{defn}[Poset-stratified spaces]
Let $P$ be a partially ordered set, then we may regard $P$ as a topological space with the \emph{Alexandrov topology}: we declare a subset $U\subset P$ open just in case it is upward closed, that is, if $x\in U$ and $y\geq x$, then $y\in U$. A map $f:P\rightarrow Q$ between posets is order-preserving if and only if it is continuous for the Alexandrov topologies on $P$ and $Q$, so we have a fully faithful embedding
\[  \mathsf{Poset}\hooklongrightarrow \topo, \]
identifying the category of posets with a full subcategory of the category $\topo$ of topological spaces that is moreover stable under limits. We will write $\mathsf{StTop}$ for the pullback
\[  \mathsf{StTop} := \fun(\Delta^1,\topo)\times_{\fun(\{1\},\topo)}\mathsf{Poset} \]
of categories; this is the category of \emph{(poset) stratified spaces}. 
\end{defn}
In particular, poset stratified spaces are continuous maps $X\rightarrow P$ for $P$ a poset, and a map $(X\rightarrow P)\rightarrow (Y\rightarrow Q)$ is a commutative square in $\mathsf{Top}$. Since $\mathsf{Poset}\subset \topo$ is stable under limits, $\mathsf{StTop}\subset \fun(\Delta^1,\topo)$ is stable under limits; in particular, $\mathsf{StTop}$ admits all limits. 
\begin{defn}\label{defn:openemb}
A map
\[
\begin{tikzcd}
X \ar[d] \ar[r,"f"] & Y\ar[d] \\
P \ar[r,"g"] & Q
\end{tikzcd}
\]
of poset stratified spaces is an \emph{open embedding} if both $f$ and $g$ are topological open embeddings; that is, $f$ identifies $X$ with an open subset of $Y$ and $g$ identifies $P$ with an upward closed subset of $Q$. We let $\mathsf{StTop}^{\mathsf{open}}\subset \mathsf{StTop}$ denote the subcategory on the open embeddings. A collection $\{(X_i\rightarrow P_i)\rightarrow (Y\rightarrow Q)\}_i$ is an \emph{open cover} if each map $(X_i\rightarrow P_i)\rightarrow (Y\rightarrow Q)$ is an open embedding of stratified spaces and the maps $\coprod_iX_i\rightarrow Y$ and $\coprod_i P_i\rightarrow Q$ are surjections. The collection of open covers generates an evident Grothendieck topology on the category $\mathsf{StTop}$.
\end{defn}

A fundamental operation that produces more complicated stratified spaces from simpler ones is the formation of \emph{cones}.
\begin{cons}[Cones]
For $P$ a poset, we let $P^{\rhd}$ and $P^{\lhd}$ denote the posets obtained from $P$ by adjoining a new maximal element $\infty$ respectively minimal element $-\infty$. Let $p:X\rightarrow P$ be a stratified space, then we define a new stratified space $q:\mathsf{C}(X)\rightarrow P^{\lhd}$ as follows: the space $\mathsf{C}(X)$ is the topological cone
\[  \mathsf{C}(X):= * \coprod_{X\times \{0\}}  X\times [0,\infty)  \]
while the map $q$ is given by the formula
\[
q(y) = \begin{cases}
   p(x)& \text{if } y=(x,t)\in X\times (0,\infty), \\
    -\infty & \text{if $y=*$, the cone point}.  
\end{cases}
\]
One readily verifies that $q$ is continuous, so $\mathsf{C}(X)\rightarrow P^{\lhd}$ is a poset stratified space.
\end{cons}
Among (paracompact Hausdorff) topological spaces, the topological manifolds are those spaces that can be obtained from Euclidean spaces via gluing along open subspaces. Similarly, among (paracompact Hausdorff) stratified spaces, the \emph{conical manifolds} are those that can be obtained from Euclidean spaces via gluing along open subspaces \emph{and} the formation of cones. 
\begin{defn}[Conical manifolds (\cite{AFT-local-structures})]\label{defn:conmfd}
Consider full subcategories $\mathcal{T}$ of $\mathsf{StTop}^{\mathsf{open}}$ with the following properties. 
\begin{enumerate}[$(1)$]
    \item The empty space $(\emptyset\rightarrow \emptyset)$ is contained in $\mathcal{T}$.
    \item If $(X\rightarrow P)$ lies in $\mathcal{T}$ and both $X$ and $P$ are compact, then $(\mathsf{C}(X)\rightarrow P^{\lhd})$ lies in $\mathcal{T}$.
    \item If $(X\rightarrow P)$ lies in $\mathcal{T}$, then $(X\times \R\rightarrow X\rightarrow P)$ lies in $\mathcal{T}$.
    \item If $(X\rightarrow P)$ lies in $\mathcal{T}$ and $(U\rightarrow Q)\hookrightarrow (X\rightarrow P)$ is a map in $\mathsf{StTop}^{\mathsf{open}}$, then $(U\rightarrow Q)$ lies in $\mathcal{T}$.
    \item If $(X\rightarrow P)$ admits an open cover $\{(U_i\rightarrow Q_i)\hookrightarrow (P\rightarrow X)\}_i$ for which all $(U_i\rightarrow Q_i)$ lie in $\mathcal{T}$, then $(X\rightarrow P)$ lies in $\mathcal{T}$.
\end{enumerate}
We let $\mfd_0$ be the full subcategory of $\bigcap_{\mathcal{T}}\mathcal{T}\subset \mathsf{StTop}^{\mathsf{open}}$ spanned by those $(X\rightarrow P)$ for which $X$ is paracompact Hausdorff. These are \emph{conical manifolds}\footnote{These are called $C^0$-\emph{stratified spaces} in \cite{AFT-local-structures}. We follow the terminological conventions of \cite{KSW}.}.
\end{defn}
Let us perform the following routine verification.
\begin{lem}
Let $X\rightarrow P$ be an object of $\mfd_0$ and suppose that $P$ is a discrete set. Then $X$ is a topological manifold. 
\end{lem}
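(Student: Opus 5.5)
The natural approach is a ``closure property'' argument. Since $\mfd_0$ is defined as the paracompact Hausdorff part of the intersection of all full subcategories $\mathcal{T}$ satisfying $(1)$--$(5)$, it suffices to exhibit one such $\mathcal{T}$ whose discretely stratified members are manifolds. Concretely, I would let $\mathcal{T}_0\subset \mathsf{StTop}^{\mathsf{open}}$ be the full subcategory on those $(X\rightarrow P)$ with the following property: for every $x\in X$ whose stratum $p(x)$ is an isolated point of $P$ in the sense that $\{p(x)\}$ is both open (upward closed) and ``minimal'' in $P$, hence $p^{-1}(p(x))$ is open, the point $x$ has an open neighbourhood homeomorphic to some $\R^m$. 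More simply, I would take the condition to be: whenever $P'\subset P$ is an upward closed subset on which the induced order is discrete and $p^{-1}(P')$ is open, every point of $p^{-1}(P')$ is locally Euclidean. If $P$ is discrete, every point then lies in such a piece, so every $X\in\mathcal{T}_0$ with $P$ discrete is locally Euclidean. Combined with paracompact Hausdorff, which is built into $\mfd_0$, this makes $X$ a topological manifold.

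The work is checking $(1)$--$(5)$ for $\mathcal{T}_0$. Condition $(1)$ is vacuous. For $(3)$, the product $X\times\R$ has the same poset $P$, and locally Euclidean points stay locally Euclidean after multiplying by $\R$. For $(4)$, given an open embedding $(U\to Q)\hookrightarrow(X\to P)$, a point of $U$ lying over a discrete upward closed piece $Q'$ of $Q$ maps to a point of $X$ over the image of $Q'$. That image is upward closed in $P$ because $g$ is an open embedding, and it is discrete in the induced order. Locally Euclidean is inherited by open subsets. Condition $(5)$ holds because local Euclideanity is local and open covers are jointly surjective on points.

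The main obstacle is $(2)$, the cone. In $\mathsf{C}(X)\rightarrow P^{\lhd}$ the cone point lies over $-\infty$, which is below everything. So no upward closed subset containing $-\infty$ is discrete unless $P=\emptyset$, in which case $\mathsf{C}(\emptyset)$ is a point and $\R^0$ works. Away from the cone point, $\mathsf{C}(X)$ is locally $X\times(0,\infty)$ over $P$, and an upward closed discrete $P'\subset P^{\lhd}$ not containing $-\infty$ is upward closed in $P$. The inductive hypothesis on $X$ then applies, again multiplying by an interval. I expect the only subtlety to be phrasing the invariant so that it is preserved under open embeddings and cones simultaneously. If the ``discrete upward closed piece'' formulation proves awkward, the fallback is the cruder invariant ``every point whose stratum is a maximal element of $P$ that is also minimal is locally Euclidean.'' This also handles $P$ discrete, and the cone point is never such a point unless $P=\emptyset$.
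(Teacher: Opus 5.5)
Your argument is correct, and it differs from the paper's in the choice of invariant. The paper first uses $X=\coprod_p X_p$ to reduce to a singleton poset. It then takes $\mathcal{T}$ to consist of those $(Y\to Q)$ such that \emph{if} $Q$ is a singleton, $Y$ is locally Euclidean. Your invariant handles discrete $P$ directly, with no reduction. An upward closed subposet with discrete induced order is just a set of maximal elements, and its preimage is automatically open. So your invariant simply says that every point lying over a maximal element of $P$ has a Euclidean neighbourhood.

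What your version buys is that condition $(4)$ genuinely holds. An open embedding $g\colon Q\hookrightarrow P$ is an order isomorphism onto an upward closed subposet, so it carries maximal elements to maximal elements. For $(5)$, an injective order-preserving $g_i$ pulls maximal elements back to maximal elements.

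The paper's singleton-conditional invariant is vacuous over every non-singleton poset, so as written it is not closed under $(4)$. Take any Hausdorff $Y$ that is not locally Euclidean, $P=\{a<b\}$, and $p\equiv b$. Then $(Y\to P)$ lies in the paper's $\mathcal{T}$ vacuously, but the open embedding $(Y\to\{b\})\hookrightarrow(Y\to P)$ exhibits a non-member. Your formulation is exactly the repair this step needs.

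For the same reason, do not fall back on your alternative invariant (strata that are both maximal and minimal). The same example defeats it: $b$ is isolated in $\{b\}$ but not minimal in $P$, because open embeddings of posets need not preserve minimality.
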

\begin{proof}
If $P$ is discrete, then $X$ is a disjoint union $\coprod_pX_p$ so we may suppose that $P$ contains a single element. Now let $\mathcal{T}\subset \mathsf{StTop}^{\mathsf{open}}$ be the full subcategory spanned by objects $(Y\rightarrow Q)$ having the following property: if $Q$ is a singleton, then $Y$ is locally equivalent to a Euclidean space. It suffices to check that $\mathcal{T}$ satisfies properties $(1)$ through $(5)$ of Definition \ref{defn:conmfd}. Note that $(1)$ is obvious. The condition $(2)$ is satisfied because the cone on a poset $S$ is a singleton if and only if $S=\emptyset$ and the only space with stratifying poset $\emptyset$ is $\emptyset$, the cone on which is the singleton, which is a topological manifold. Conditions $(3)$, $(4)$ and $(5)$ are satisfied because the property of being locally equivalent to a Euclidean space is stable under products with Euclidean spaces, stable under taking open subspaces, and a local property. 
\end{proof}

As noted in \cite{AFT-local-structures}, it is easy to check that for $(p:X\rightarrow P)\in \mfd_0$, the map $p$ is surjective, by verifying that the full subcategory of $\mathsf{StTop}^{\mathsf{open}}$ spanned by objects for which this is true satisfies $(1)$ through $(5)$. Similarly, we have the following useful property of conical manifolds that we could not locate in \cite{AFT-local-structures}.
\begin{lem}\label{lem:structuremapisopen}
For a conical manifold $X$, the structure map $p:X\rightarrow P$ to the stratifying poset is an open map.      
\end{lem}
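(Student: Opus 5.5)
We argue exactly as in the two routine verifications preceding the statement. Let $\mathcal{T}\subset \mathsf{StTop}^{\mathsf{open}}$ be the full subcategory spanned by those $(q:Y\rightarrow Q)$ for which $q$ is an open map, i.e.\ $q(V)$ is upward closed in $Q$ for every open $V\subset Y$. If $\mathcal{T}$ satisfies conditions $(1)$ through $(5)$ of Definition \ref{defn:conmfd}, then $\bigcap_{\mathcal{T}'}\mathcal{T}'\subset\mathcal{T}$, and hence every conical manifold has open structure map. We check the five conditions one at a time.

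Conditions $(1)$, $(3)$ and $(5)$ are formal. For $(1)$, the empty map is open. For $(3)$, the composite $X\times\R\rightarrow X\rightarrow P$ is open, since projections are open and open maps compose. For $(5)$, let $\{(U_i\rightarrow Q_i)\}$ be an open cover of $(X\rightarrow P)$ with each $U_i\rightarrow Q_i$ open, and let $V\subset X$ be open. Then
\[p(V)=\bigcup_i p_i(V\cap U_i).\]
Each $p_i(V\cap U_i)$ is open in $Q_i$, and $Q_i$ is upward closed in $P$, so it is open in $P$; hence $p(V)$ is open.

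For $(4)$, let $(U\rightarrow Q)\hookrightarrow(X\rightarrow P)$ be an open embedding with $X\rightarrow P$ open, and let $V\subset U$ be open. Then $V$ is open in $X$, so $p(V)$ is upward closed in $P$. Since $p|_U$ factors through $Q$, we have $p(V)\subset Q$, and an upward closed subset of $P$ contained in $Q$ is upward closed in $Q$. So $q(V)=p(V)$ is open in $Q$.

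The one step needing care is $(2)$, for $(X\rightarrow P)\in\mathcal{T}$ with $X$ and $P$ compact. Let $W\subset\mathsf{C}(X)$ be open. We split into two cases according to whether $W$ contains the cone point.
\begin{itemize}
\item If $W$ does not contain the cone point, then $W$ is an open subset of $X\times(0,\infty)$, and its image is $p(\mathrm{pr}_X W)$. This is open in $P$ because projections and $p$ are open. It remains open in $P^{\lhd}$, since adjoining a minimal element keeps upward closed subsets of $P$ upward closed.
\item If $W$ contains the cone point, then $q(W)\ni-\infty$, and since $-\infty$ lies below everything, upward closedness would force $q(W)=P^{\lhd}$. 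So we must show that $W$ meets every stratum. A neighborhood of the cone point contains $X\times(0,\varepsilon)$ for some $\varepsilon>0$, so $q(W)\supseteq p(X)\cup\{-\infty\}$. Since $p$ is surjective, as noted above for objects of $\mfd_0$, this gives $q(W)=P^{\lhd}$.
\end{itemize}

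To make the second case honest, I will either enlarge $\mathcal{T}$ to require surjectivity of the structure map as well, or simply note that surjectivity of $p$ is the only extra input used. The cleanest choice is to let $\mathcal{T}$ consist of the surjective open maps. Surjectivity is already known to satisfy $(1)$ through $(5)$, and openness is handled by the checks above, so the intersection of the two classes also satisfies $(1)$ through $(5)$.

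In the second case I also use that the quotient topology near the cone point contains a uniform collar $X\times[0,\varepsilon)$. This holds because $X$ is compact, which is precisely the hypothesis in condition $(2)$.
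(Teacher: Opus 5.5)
Your argument is the paper's: you check that open structure maps form a class closed under conditions $(1)$--$(5)$ of Definition~\ref{defn:conmfd}, and you split into the same two cases for an open $W\subset\mathsf{C}(X)$. The one real difference is that you state explicitly that the case where $W$ contains the cone point needs $p$ to be surjective. The paper uses this silently when it asserts $q(U)=P^{\lhd}$. In that case you do not need compactness of $X$: the preimage of $W$ in $X\times[0,\infty)$ already contains $\{x\}\times[0,\varepsilon_x)$ for each $x\in X$.

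One caution, which applies equally to the paper. Your claim that surjective structure maps are closed under $(4)$ is not literally true for open embeddings as in Definition~\ref{defn:openemb}, because these only require $Q$ to be upward closed in $P$. For example, $(\R\to *)$ is a conical manifold and $(\emptyset\to *)\hookrightarrow(\R\to *)$ is an open embedding, so $(\emptyset\to *)$ is a conical manifold with non-surjective structure map. Coning it via $(2)$ gives a single point stratified over $\{-\infty<*\}$. That point is an open set, but its image $\{-\infty\}$ is not upward closed. So the lemma, read literally, is false. Surjectivity has to be imposed rather than derived, for example by allowing in $(4)$ only open embeddings with $Q=p(U)$. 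With that convention your proof is complete, and your handling of $(3)$--$(5)$ is cleaner than the paper's.
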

\begin{proof}
Let $\mathcal{T}\subset \mathsf{StTop}^{\mathsf{open}}$ be the full subcategory spanned by objects $(p:X\rightarrow P)$ for which $p$ is an open map. Clearly, $(1)$ is satisfied, and $(2)$ is satisfied because projections from products are open. The property $(4)$ is satisfied because in a composite $A\overset{f}{\rightarrow}B\overset{g}{\rightarrow}C$ of spaces, if $g$ and $g\circ f$ are open embeddings, then so is $f$. The property $(5)$ is satisfied because being an open map of topological spaces is local on the source and on the target. Suppose that $p:X\rightarrow P$ is open (and both $X$ and $P$ are compact), then we show that $q:\mathsf{C}(X)\rightarrow P^{\lhd}$ is open. We consider two cases: opens $U\subset \mathsf{C}(X)$ that do not intersect the cone point and ones that do. In the first case, $U$ is an open of $X\times \R_{>0}$. Every such open is a union of product opens $V\times W$ for $V\subset X$ and $W\subset \R_{>0}$, so we may assume that $U$ is such a product. Then $q(U=V\times W)=p(V)$, which is open (that is, upward closed) in $P$ by assumption, and therefore also upward closed in $P^{\lhd}$. If $U$ does intersect the cone point, then $q(U)=P^{\lhd}$, which is open.
\end{proof}

\begin{cor}\label{cor:openemb}
Let $(X\rightarrow P)$ be a conical manifold, then the functor 
\[ (\mfd_0)_{/X} \longrightarrow \mathsf{Open}(X),\qquad (Y\rightarrow Q)\overset{f}{\rightarrow} (X\rightarrow P)\longmapsto f(Y)\subset X\]
is an equivalence\footnote{In \cite{AFT-local-structures}, a different definition of an open embedding of poset-stratified spaces is given (\cite[Definition 2.1.11]{AFT-local-structures}). Even though Definition \ref{defn:conmfd} makes reference to open embeddings of stratified spaces, it is not hard to see that \cite[Definition 2.1.11]{AFT-local-structures} and our notion of open embedding produce the same collection of conical manifolds. However, Corollary \ref{cor:openemb} is false for open embeddings in the sense of \cite{AFT-local-structures}: let $M=M_1\coprod M_2$ be a disjoint union of topological manifolds, then $M$ can be made into an object of $\mfd_0$ in two ways: as stratified by $*$ or as stratified by $*\coprod *$. For \cite{AFT-local-structures}, the identity map $M\rightarrow M$ over the map $*\coprod *\rightarrow *$ is a valid open embedding, which spoils the equivalence of Corollary \ref{cor:openemb}. Since this equivalence is used implicitly throughout \cite{AFT-local-structures} (and throughout this work as well), we settle on Definition \ref{defn:openemb}.}.
\end{cor}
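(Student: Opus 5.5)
The plan is to write down an explicit inverse functor $\mathsf{Open}(X)\to(\mfd_0)_{/X}$ and verify that the two composites are naturally isomorphic to the identities. Here $\mathsf{Open}(X)$ is the poset of open subsets of $X$.

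For an open $U\subset X$, send $U$ to the stratified space $(U\rightarrow p(U))$. We take $p(U)\subset P$ with its induced order, together with the obvious inclusion into $(X\rightarrow P)$. By Lemma \ref{lem:structuremapisopen}, the subset $p(U)$ is upward closed, i.e.\ open in $P$. Since $U\subset X$ is open, the inclusion is an open embedding in the sense of Definition \ref{defn:openemb}. Property $(4)$ of Definition \ref{defn:conmfd} then puts $(U\rightarrow p(U))$ in $\mfd_0$; paracompact Hausdorffness is inherited by open subsets of conical manifolds, which are locally compact Hausdorff. An inclusion $U\subset U'$ induces an evident map over $X$, and this gives a functor. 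The composite $\mathsf{Open}(X)\to(\mfd_0)_{/X}\to\mathsf{Open}(X)$ is visibly the identity.

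The remaining work is the other composite. Let $f=(f,g)\colon(Y\rightarrow Q)\to(X\rightarrow P)$ be an open embedding with $Y\in\mfd_0$. I claim it is isomorphic over $X$ to $(f(Y)\rightarrow p(f(Y)))$. The map $f$ is a homeomorphism onto $f(Y)$. The map $g$ is a homeomorphism of Alexandrov spaces onto an upward closed subset of $P$, hence an order isomorphism onto its image. By commutativity, $g(q(Y))=p(f(Y))$. The structure map $q\colon Y\to Q$ is surjective, by the surjectivity remark preceding Lemma \ref{lem:structuremapisopen}. Therefore $g(Q)=p(f(Y))$, and $(f,g)$ factors as an isomorphism $(Y\rightarrow Q)\cong(f(Y)\rightarrow p(f(Y)))$ followed by the canonical inclusion.

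Full faithfulness must also be checked. The slice categories in question are posets, so this amounts to showing that for two objects over $X$ there is at most one map between them, and that one exists iff the images are nested. A map between two open embeddings over $X$ is determined on spaces by injectivity of the structure map into $X$. On posets it is determined by injectivity of $P$-components into $P$, together with the fact that the map must commute with the structure maps to $P$. Conversely, if the images are nested, the inverse functor constructed above produces the map.

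The one genuinely non-formal input is the surjectivity of $q$, which pins down the poset part of an embedding. This is exactly where the footnote's counterexample with the AFT notion fails. I expect no serious obstacle beyond confirming that surjectivity and Lemma \ref{lem:structuremapisopen} apply to every object involved.
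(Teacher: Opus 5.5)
Your argument is correct and is essentially the paper's proof. Essential surjectivity comes from Lemma~\ref{lem:structuremapisopen}. Thinness, and the existence of a map between objects with nested images, come from injectivity of both components of an open embedding. Your version usefully makes explicit the surjectivity of structure maps, which the paper's two-line proof uses tacitly to produce the poset component of such a map: for a second embedding $(f',g')$ one has $g(Q)=p(f(Y))\subset p(f'(Y'))=g'(Q')$.

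Two side remarks need correcting.

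First, local compactness does not make open subsets of a paracompact Hausdorff space paracompact. The deleted Tychonoff plank is an open, non-normal subspace of a compact Hausdorff space, so it is a counterexample. The right reason is that conical manifolds are locally metrizable, since basics are. By Smirnov's theorem a paracompact Hausdorff locally metrizable space is metrizable, and every subspace of a metrizable space is paracompact.

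Second, in the footnote's counterexample the structure maps are surjective, so surjectivity is not what fails there. What breaks is injectivity of the poset map $*\amalg *\to *$, which Definition~\ref{defn:openemb} builds in.
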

\begin{proof}
It follows from Lemma \ref{lem:structuremapisopen} that the functor is essentially surjective. By definition of an open embedding of stratified spaces, there is, for $U\hookrightarrow X$ and $V\hookrightarrow X$ open embeddings there is a \emph{unique} map $U\rightarrow V$ over $X$ if the image of $U$ in $X$ is contained in the image of $V$ in $X$, and no map otherwise. 
\end{proof}

\begin{rmk}[Basics]
In \cite{AFT-local-structures}, it is shown that the full subcategory $\mfd_0$ of conical manifolds can be characterized as those stratified spaces $(X\rightarrow P)$ (with $X$ paracompact Hausdorff) for which the images of stratified open embeddings 
\[  \mathsf{C}(Z)\times \R^i\hooklongrightarrow X \]
for $Z$ a compact conical manifold is a basis for the topology of $X$. The objects $\mathsf{C}(Z)\times \R^i$ are so called (topological) \emph{basics}, the building blocks of conical manifolds.
\end{rmk}

\begin{rmk}[Strata]\label{rmk:strata}
If $(X\rightarrow P)$ is a stratified space and $p\in P$, then the fiber over $p$ is the \emph{$p$'th stratum of $X$}. In \cite[Section 2.3]{AFT-local-structures} it is shown that strata of conical manifolds are again conical manifolds; in fact, whenever $Q\subset P$ is a consecutive full subposet (that is, for every $x,y \in Q$ and any $t\in P$ such that $x\leq t\leq y$, then $t\in Q$), then the pullback of $X$ to along $Q\subset P$ is a conical manifold.
\end{rmk}

\subsubsection{Conical smoothness and smooth conical manifolds}

We now partially recall from \cite{AFT-local-structures} the notions of conical smoothness and smooth conical manifolds. This definition is subtle and inducts on a parameter defined for conical manifolds called the \emph{depth}, which keeps track of how many times a cone has been taken; we refer to \cite[Section 2.4]{AFT-local-structures} for the definition and basic results on the depth. The notion of conical smoothness and smooth conical manifolds at a given value of the depth depends explicitly on these notions for lower values of the depth. For the base case of the induction\footnote{We could also have started the induction with smooth conical manifolds of depth $-1$, the category of which is trivial, containing the empty space; then Definition \ref{defn:consmfmfd} would have produced the category described here at depth $0$, but we find it more illuminating to spell this category out explicitly.}, we say that
\begin{itemize}
    \item A \emph{conically smooth manifold} of depth $\leq 0$ consists of a smooth manifold $M$ together with a surjective continuous (equivalently, smooth) function $M\rightarrow S$ to a set equipped with the discrete topology; that is, an object of $\mfd_0$ with discrete stratifying poset, together with a smooth manifold structure on the space is itself. This is nothing but a manifold together with a choice of disjoint decomposition into components (which need not be connected).
    \item A \emph{conically smooth open embedding} among two such is a stratified open embedding that is also a smooth embedding, that is, a smooth open embedding respecting the disjoint decompositions. 
\end{itemize}
Now make the following definition by \cite{AFT-local-structures}.
\begin{defn}[Conically smooth manifolds and conically smooth open embeddings \cite{AFT-local-structures}]\label{defn:consmfmfd}
Let $k\in \Z_{\geq 0}$. Given the notion of conically smooth manifolds and conically smooth open embeddings among them at depth $\leq 0$, we declare the following. 
\begin{enumerate}[$(1)$]
    \item A \emph{(smooth) basic} of depth $\leq k+1$ is a pair of a (topological) basic $U=\R^i \times \mathsf{C}(Z)$ for $Z$ a compact conical manifold of depth $\leq k$, together with a smooth conical manifold structure on $Z$, as defined in $(3)$ below (or just above, in case $k=0$). 
    \item A \emph{conically smooth open embedding} between two basics $U=\R^j\times \mathsf{C}(Y)$, $V=\R^i\times \mathsf{C}(Z)$ of depth $\leq k+1$ is a stratified open embedding $f:V\hookrightarrow U$ such that the following is satisfied.
    \begin{enumerate}[$(i)$]
        \item If the underlying map on posets does not carry the cone point to the cone point, then $f$ factors as a stratified open embedding 
        \[ f':\R^i\times \mathsf{C}(Z) \hooklongrightarrow \R^j\times\R_{>0}\times Y, \]
        where $\R^j\times\R_{>0}\times Y$ has a canonical smooth conical manifold structure (at depth $k$) by $(3)$ below. Since the depth of $Y$ is $\leq k$, this means that the depth of $Z$ must be $\leq k-1$ (since the formation of cones increases depth \emph{strictly}). Thus $V$ admits a canonical conically smooth manifold structure (at depth $\leq k$) by $(3)$ below, and we ask that $f'$ is a conically smooth open embedding, well defined by induction.
        \item If the underlying map of posets carries the cone point to the cone point, we impose the following.
        \begin{itemize}
            \item $f$ is \emph{conically smooth along} $\R^i$, as defined and explained in \cite[Section 3.1]{AFT-local-structures}, with injective derivatives\footnote{Starting the induction at depth $-1$, this injectivity is the condition that guarantees that maps between smooth conical manifolds at depth $0$ are \emph{smooth} (instead of merely topological) embeddings.}. 
            \item The conical manifold $\R^j\times \R_{>0}\times Y$  and its inverse image under $f$ admit canonical smooth conical manifold structures (at depth $\leq k$) by $(3)$ below and we ask that the map
            \[  f^{-1}(\R^j\times \R_{>0}\times Y)\longrightarrow \R^j\times \R_{>0}\times Y \]
            is a conically smooth open embedding, well defined by induction.
        \end{itemize}
   \end{enumerate}
    \item Let $X$ be a conical manifold of depth $\leq k+1$, then an \emph{atlas} on $X$ is a collection of stratified open embeddings $\{f_{\alpha}:U_{\alpha}\hookrightarrow X\}_{\alpha}$ called \emph{charts}, where $U_{\alpha}$ is a smooth basic of depth $\leq k+1$, such that the following are satisfied.
    \begin{itemize}
        \item The stratified open embeddings $\{f_{\alpha}:U_{\alpha}\hookrightarrow X\}_{\alpha}$ are a cover of $X$ in the sense of Definition \ref{defn:openemb}.
        \item For every pair of charts $f_{\alpha},f_{\beta}$ with intersecting image in $X$ and every $x\in f_{\alpha}(U_{\alpha})\cap f_{\beta}(U_{\beta})$, there is another chart $f_{\gamma}$ whose image contains $x$ and conically smooth open embeddings 
        \[  U_{\alpha} \longleftarrow  U_{\gamma} \longrightarrow  U_{\beta}\]
        such that the diagram 
        \[
        \begin{tikzcd}
            U_{\gamma}\ar[d,hook] \ar[r,hook] \ar[dr,"f_{\gamma}"] & U_{\beta} \ar[d,hook,"f_{\beta}"] \\
            U_{\alpha} \ar[r,hook,"f_{\alpha}"'] & X
        \end{tikzcd} 
        \]
        of stratified open embeddings commutes.
    \end{itemize}
    Two atlases $\mathcal{A}$ and $\mathcal{A}'$ are \emph{equivalent} if their union is an atlas; this defines an equivalence relation on the set of atlases (\cite[Lemma 3.2.11]{AFT-local-structures}). Every equivalence class has a canonical representative given by the maximal atlas, the union of all atlases in a given equivalence class\footnote{But some set-theoretic care must be exercised; see \cite[Footnote 9]{KSW}.}. A \emph{smooth conical manifold (at depth $\leq k+1$)} is a conical manifold of depth $\leq k+1$ together with an equivalence class of atlases. Given a smooth conical manifold $X$, there are canonical smooth conical manifold structures on 
    \[ U\subset X\,\text{ open}\qquad \text{and}\qquad X\times\R^p,\, p\in \Z_{\geq 1},\]
    by pulling back atlases and by taking products of basics with Euclidean spaces respectively (\cite[Definition 3.2.12, Definition 3.2.17]{AFT-local-structures}).
    \item Let $X$ and $Y$ be two smooth conical manifolds at depth $\leq k+1$, then a \emph{conically smooth open embedding} $f:X\rightarrow Y$ is a stratified open embedding with the following property: for every basic $U\hookrightarrow X$ in the maximal atlas of $X$, the composite
    \[  U\hooklongrightarrow X\hooklongrightarrow Y  \]
    lies in the maximal atlas of $Y$.
\end{enumerate}
If $X$ is a conical manifold of unbounded depth, then a smooth conical manifold structure on $X$ is simply a collection of compatible smooth conical structures on all open subspaces of bounded depth, and conically smooth open embeddings are stratified open embeddings that locally are conically smooth open embeddings as in $(4)$. This yields the category 
\[  \mfd \]
of \emph{smooth conical manifolds} and \emph{conically smooth open embeddings}. The basics $\R^i\times\mathsf{C}(Z)$ for $Z$ a compact smooth conical manifold of arbitrary (finite) depth form a full subcategory 
\[  \bsc \subset \mfd.\]
\end{defn}
We take care to spell out this definition as we will explicitly endow some (very simple) conical manifolds with smooth conical manifold structures in Section \ref{sec:moritacat}.
\begin{rmk}
Given Definition \ref{defn:consmfmfd}, there is an evident notion of a \emph{conically smooth map} (which need not be an embedding) between smooth conical manifolds essentially by omitting the injectivity condition in $(ii)$ above (\cite[Definition 3.3.1]{AFT-local-structures}). We will not need this general notion in this work (in fact, somewhat paradoxically, the maps between smooth conical manifolds that are not open embeddings that we will consider are \emph{not} conically smooth; see Warning \ref{warn:consmf}).
\end{rmk}
\begin{rmk}
Suppose $(X\rightarrow P)$ is a conical manifold equipped with a conically smooth structure. For $Q\subset P$ a \emph{consecutive} full subposet, then the pullback $X_Q$ of $X$ along $Q\subset P$ (which is a conical manifold; see Remark \ref{rmk:strata}) admits a canonical conically smooth structure and the inclusion $X_Q\subset X$ is conically smooth (but not a conically smooth open embedding unless $Q\subset P$ is upward closed), by \cite[Lemma 3.4.5]{AFT-local-structures}. We will put this observation to use to construct the \infops of interest to us in Section \ref{sec:plessdisks}.
\end{rmk}
The results of \cite[Section 4.1]{AFT-local-structures} yield a locally Kan simplicial category $\infmfd_{\simp}$ whose objects are smooth conical manifolds and whose simplicial morphism sets are families of conically smooth open embeddings smoothly parameterized by the simplices. We let 
\(  \infmfd \)
denote the \infcat obtained by taking the coherent nerve of $\infmfd_{\simp}$, and we have a full subcategory 
\[ \infbsc \subset \infmfd\]
spanned by (smooth) basics.
\subsection{Pointless (pre)factorization algebras}
In this section we recall the notion of (pre)factorization algebras on marked spaces, and constructible (pre)factorization algebras on marked smooth conical manifolds from \cite{Karlssonthesis, KS}.
\subsubsection{(Pre)factorization algebras on marked spaces}
Henceforth, all topological spaces will be assumed Hausdorff, and $\topo$ will denote the category of \emph{Hausdorff} topological spaces.
\begin{defn}[Marked spaces]
Let $X$ be a topological space and let $E\subset X$ be a subset whose induced topology is discrete. We will say that the pair $(X,E)$ is a \emph{marked space}. For $(X,E)$ and $(Y,F)$ marked spaces, a \emph{continuous map of marked spaces} $f:(X,E)\rightarrow (Y,F)$ is a continuous map $f:X\rightarrow Y$ such that $f(E)\subset F$; this yields a category $\topo^+$ of marked spaces equipped with an evident functor 
\[ G:\topo^+\longrightarrow \topo  \]
forgetting the marking.
\end{defn}
\begin{rmk}
The fiber of $G:\topo^+\rightarrow \topo$ over some topological space $X$ is the set $P_{\mathrm{disc}}(X)$ of discrete subspaces of $X$ partially ordered by inclusion. The functor $G$ is neither a coCartesian fibration nor a Cartesian fibration (neither the image nor the preimage of a discrete set is discrete in general). However $G$ behaves better on natural subcategories of $\topo$. For instance, the functor $G_{\mathrm{inj}}:\topo^+_{\mathrm{inj}}\rightarrow \topo_{\mathrm{inj}}$ between the subcategories $\topo^+_{\mathrm{inj}}\subset \topo^+$ and $\topo_{\mathrm{inj}}\subset \topo$ whose morphisms are \emph{injective} continuous maps is a Cartesian fibration, since the preimage of a discrete set along an injective continuous map is discrete.
\end{rmk}
Prefactorization algebras on some topological space $X$ are algebras for a natural operad $\mathsf{Open}(X)^{\otimes}$ of opens of $X$. We now introduce the analogous operads for marked spaces.
\begin{defn}[Operads of marked opens \cite{Karlssonthesis}]
Let $(X,E)$ be a marked space. We will write $\mathsf{Open}(X,E)\subset\mathsf{Open}(X)$ for the wide subposet on those inclusions of opens $U\subset V$ that have the following property: the inclusion $U \cap E\subset V\cap E$ is a bijection. We refer to the inclusions of $\mathsf{Open}(X)$ that belong to $\mathsf{Open}(X,E)$ as \emph{marked inclusions}. We now define the operator category of a colored operad $\mathsf{Open}(X,E)^{\otimes}$ with underlying category $\mathsf{Open}(X,E)$ as follows. 
\begin{enumerate}
    \item[$(O)$] Objects of $\mathsf{Open}(X,E)^{\otimes}$ are pairs $\left(\langle n\rangle, \left\{U_1,\ldots,U_n\right\}\right)$ of a pointed finite set $\langle n\rangle$ together with a collection of opens of $X$.
    \item[$(M)$] A morphism $\left(\langle n\rangle, U_1\oplus\ldots \oplus U_n\right)\rightarrow\left(\langle m\rangle,V_1\oplus\ldots \oplus V_m\right)$ is a map $f:\langle n\rangle \rightarrow \langle m\rangle$ of pointed finite sets such that for all $j\in \langle m\rangle^{\circ}$, the opens $\{U_i\}_{i\in f^{-1}(j)}$ are disjoint and lie in $V_j$ \emph{and} the inclusion 
    \[  \left( \bigcup_{i\in f^{-1}(j)}U_i \right) \cap E  \hooklongrightarrow V_j \cap E \]
    is a bijection.
\end{enumerate}
We will also write $\mathsf{Open}(X,E)^{\otimes}$ for the associated \infop through \cite[Definition 2.1.1.23]{LurHA}. 
\end{defn}
\begin{rmk}
If the set $E$ is empty, the above definition recovers the usual operad of opens $\mathsf{Open}(X)^{\otimes}$ of, for instance, \cite[Definition 5.5.2.1]{LurHA}.    
\end{rmk}
\begin{rmk}[Pullback functoriality of operads of marked opens]\label{rmk:functormarkedopens}
For $f:(X,E)\rightarrow (Y,F)$ a continuous map of marked spaces and $U\subset V\subset Y$ an inclusion such that $U\cap F\hookrightarrow V\cap F $ is a bijection, then the inclusion $f^{-1}(U)\cap E\hookrightarrow f^{-1}(V)\cap E$ is also a bijection. It follows right away that pulling back opens along continuous maps of marked spaces determines a functor 
\[  \left(\topo^+\right)^{op} \longrightarrow \mathsf{Op},\quad\quad (X,E)\longmapsto \mathsf{Open}(X,E)^{\otimes} \]
to the 1-category of ordinary operads carrying a continuous map $f:(X,E)\rightarrow (Y,F)$ to the pullback 
\[ f^{-1}:\open(Y,F)^{\otimes}\longrightarrow\open(X,E)^{\otimes}. \]    
\end{rmk}
\begin{rmk}
The poset $\open(X,E)$ does not admit finite products in general, but it does admit pullbacks, and the inclusion $\open(X,E)\subset \open(X)$ preserves them. Unlike $\open(X)$, $\open(X,E)$ generally does not have colimits.
\end{rmk}

\begin{defn}[Prefactorization algebras on marked spaces]
Let $(X,E)$ be a marked space and let $\icat^{\otimes}$ be a symmetric monoidal \infcatt. The \infcat of \emph{prefactorization algebras on $(X,E)$ with values in $\icat$} is
\[ \pfact_{(X,E)}(\icat):= \alg_{\open(X,E)}(\icat), \]
the \infcat of $\open(X,E)^{\otimes}$-algebras in $\icat$.
\end{defn}
\begin{rmk}[Functoriality of prefactorization algebras]\label{rmk:pushfwd}
The formation of prefactorization algebras is evidently functorial in $(X,E)$ and $\icat^{\otimes}$: we have a functor
\[  \topo^+\times \calg(\catinf) \xrightarrow{\open(\_)^{\otimes}\times \mathrm{id}} \mathsf{Op}^{op} \times \calg(\catinf) \xrightarrow{\alg_{(\_)}(\_)}\catinf  \]
where the first map is the functor of Remark \ref{rmk:functormarkedopens}. For $f:(X,E)\rightarrow (Y,F)$ a continuous map of marked spaces and $\icat^{\otimes}$ a fixed symmetric monoidal \infcatt, we let $f_{\sharp}$ denote the induced functor 
\[  f_{\sharp}:\pfact_{(X,E)}(\icat)\longrightarrow \pfact_{(Y,F)}(\icat), \]
the \emph{pushforward}. 
\end{rmk}
A factorization algebra in addition should satisfy two conditions: multiplicativity and descent for Weiss covers, both of which we explain below.
\begin{obs}
Let $(X,E)$ be a marked space, then a morphism 
\[  \left(\langle n\rangle, U_1\oplus\ldots \oplus U_n \right) \longrightarrow  \left(\langle m\rangle, V_1\oplus\ldots \oplus V_m \right)\]
of $\open(X,E)^{\otimes}$ is coCartesian for the structure functor $\open(X,E)^{\otimes}\rightarrow \fin$ just in case for all $j\in \langle m\rangle^{\circ}$, the map 
\[  \coprod_{i\in f^{-1}(j)} U_i\longrightarrow V_j \]
of $\mathsf{Open}(X,E)$ is the identity.
\end{obs}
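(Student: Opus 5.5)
The plan is to use that $\open(X,E)^{\otimes}$ is an ordinary colored operad whose multimorphism sets are subsingletons. Given objects $\left(\langle n\rangle,\{U_i\}\right)$ and $\left(\langle m\rangle,\{V_j\}\right)$ and a map $f:\langle n\rangle\to\langle m\rangle$ of pointed finite sets, there is at most one morphism lying over $f$, since condition $(M)$ is a property of $f$ and the opens, not extra data. So in the associated \infop (the nerve of the operator category, per \cite[Definition 2.1.1.23]{LurHA}) a morphism $\alpha$ over $f$ is coCartesian if and only if the following holds. For every object $\left(\langle l\rangle,\{W_k\}\right)$ and every $g:\langle m\rangle\to\langle l\rangle$, there exists a morphism $\{V_j\}\to\{W_k\}$ over $g$ if and only if there exists a morphism $\{U_i\}\to\{W_k\}$ over $g\circ f$. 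The forward implication always holds by composition, so coCartesianness reduces to the reverse implication.

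First I would prove sufficiency. Assume $V_j=\coprod_{i\in f^{-1}(j)}U_i$ for each $j\in\langle m\rangle^{\circ}$; for $j$ with empty fiber this means $V_j=\emptyset$. Suppose a morphism $\{U_i\}\to\{W_k\}$ over $g\circ f$ is given, and fix $k$. The opens $U_i$ with $gf(i)=k$ are pairwise disjoint and lie in $W_k$. Hence the $V_j$ with $g(j)=k$, being unions of disjoint families of these $U_i$, are pairwise disjoint and lie in $W_k$. For the marking condition, $\bigcup_{g(j)=k}V_j\cap E$ equals $\bigcup_{gf(i)=k}U_i\cap E$, and the inclusion of the latter into $W_k\cap E$ is a bijection by assumption. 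So condition $(M)$ holds for $g$, giving the required morphism over $g$.

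Next I would prove necessity. Let $\alpha$ over $f$ be coCartesian and define $V'_j:=\coprod_{i\in f^{-1}(j)}U_i$, which is open and contained in $V_j$ since $\alpha$ exists. The identity-type map $\{U_i\}\to\{V'_j\}$ over $f$ satisfies $(M)$ trivially: the union is literally $V'_j$, so the marked inclusion is the identity. By coCartesianness applied with $g=\mathrm{id}$, there is then a morphism $\{V_j\}\to\{V'_j\}$ over the identity, which forces $V_j\subset V'_j$. Together with $V'_j\subset V_j$ this gives $V_j=V'_j$, i.e.\ the map $\coprod_{i\in f^{-1}(j)}U_i\to V_j$ is the identity in $\open(X,E)$.

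There is no real obstacle. The only points needing care are the passage from the \infop to the discrete operad, where coCartesianness reduces to the existence statement above because mapping spaces are discrete subsingletons, and the bookkeeping for indices $j$ with empty fiber, where $V_j$ must be $\emptyset$.
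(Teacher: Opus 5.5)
Your proof is correct and complete. Morphisms of $\open(X,E)^{\otimes}$ over a fixed map of $\fin$ form a subsingleton, so coCartesianness reduces to the existence statement you isolate, and both directions check out as you argue, including the empty-fiber case that forces $V_j=\emptyset$. The paper records this as an Observation without proof, and your argument is exactly the routine unwinding of the definition of $\open(X,E)^{\otimes}$ that it leaves to the reader.
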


\begin{defn}[Multiplicativity]
We say that a prefactorization algebra $\F$ on a marked space $(X,E)$ with values in a symmetric monoidal \infcat $\icat^{\otimes}$ is \emph{multiplicative} if it carries coCartesian morphisms of $\open(X,E)^{\otimes}$ to coCartesian morphisms of $\icat^{\otimes}$. We denote the full subcategory spanned by multiplicative prefactorization algebras by $\pfact^{\mathfrak{m}}_{(X,E)}(\icat)$.	
\end{defn}

If $\F$ is a multiplicative factorization algebra, then the structure maps associated to finite disjoint unions is an equivalence, in particular,
\[\F(U)\otimes \F(V) \overset{\simeq}\longrightarrow \F\left(U\coprod V\right)\]
for every disjoint pair $U,V\subset X$ of opens. By a straightforward inductive argument, $\F$ is multiplicative if and only if for every such pair $U,V\subset X$ of disjoint opens of $X$, the structure map above is an equivalence. 
\begin{rmk}\label{rmk:pushfwdmultiplicative}
Let $f:(X,E)\rightarrow (Y,F)$ be a continuous map of marked spaces, then the induced map of operads $f^{-1}:\mathsf{Open}(Y,F)^{\otimes}\rightarrow \mathsf{Open}(X,E)^{\otimes}$ preserves coCartesian morphisms, so it follows that the induced functor $\pfact_{(X,E)}(\icat)\rightarrow \pfact_{(Y,F)}(\icat)$ carries multiplicative algebras to multiplicative algebras. It follows that the functor 
\[ \topo^+\longrightarrow \catinf,\quad\quad (X,E)\longmapsto \pfact_{(X,E)}(\icat)  \]
has a subfunctor $\pfact^{\mathfrak{m}}_{(\_)}(\icat)$.
\end{rmk}

The second condition for factorization algebras is a gluing condition involving certain covers which are compatible with the marking.
\begin{defn}[Weiss topology]
Let $(X,E)$ be a marked space and let $U\subset X$ be an open subset.
\begin{itemize}
	\item An open cover $\{V_i\subset U\}_{i\in I}$ is called a \emph{Weiss cover} if, for each finite subset $S\subset U$, there is some $V_i$ such that $S\in V_i$.
	\item A \emph{marked Weiss cover} $\{V_i\subset U\}_{i\in I}$ of \(U\) is a Weiss cover such that every inclusion $V_i\subset U$ is a marked inclusion. 
\end{itemize}
The collection of marked Weiss covers constitutes a Grothendieck pretopology on $\open(X,E)$ generating the \emph{Weiss (Grothendieck) topology}; this is simply the restriction of the Weiss Grothendieck topology on $\open(X)$ to $\open(X,E)$. We say that a copresheaf 
\[ \F:\open(X,E)\longrightarrow \icat \]
valued in an \infcat is a \emph{Weiss cosheaf} (or \emph{satisfies  Weiss codescent}) if the associated functor
\[ \open(X,E)^{op}\longrightarrow \icat^{op}   \]
is a sheaf for the Weiss topology on $\open(X,E)$; this yields an \infcat 
\[\mathsf{coShv}^{\mathrm{Weiss}}_{(X,E)}(\icat):=\shv^{\mathrm{Weiss}}_{\mathsf{Open}(X,E)}(\icat^{op})^{op}.\]
A \emph{Weiss (prefactorization) algebra} on the marked space $(X,E)$ is a prefactorization algebra on $(X,E)$ whose underlying precosheaf is a Weiss cosheaf. The \infcat of such fits into a pullback diagram 
\[
\begin{tikzcd}
    \pfact^{\mathrm{Weiss}}_{(X,E)}(\icat) \ar[d] \ar[r,hook] &\pfact_{(X,E)}(\icat) \ar[d] \\
    \mathsf{coShv}^{\mathrm{Weiss}}_{(X,E)}(\icat) \ar[r,hook] & \fun(\mathsf{Open}(X,E),\icat) 
\end{tikzcd}
\]
of \infcatst.
\end{defn}
\begin{rmk}\label{rmk:pushfwdweiss}
Let $f:(X,E)\rightarrow (Y,F)$ be a continuous map of marked spaces, then the induced functor $f^{-1}:\mathsf{Open}(Y,F)\rightarrow \mathsf{Open}(X,E)$ preserves pullbacks and carries marked Weiss covers to marked Weiss covers, so it follows that the induced functor $\fun(\open(X,E),\icat)\rightarrow \fun(\open(Y,F),\icat)$ carries Weiss cosheaves to Weiss cosheaves, and thereafter that the pushforward $f_{\sharp}$ carries Weiss algebras to Weiss algebras. It follows that the functor 
\[ \topo^+\longrightarrow \catinf,\quad\quad (X,E)\longmapsto \pfact_{(X,E)}(\icat)  \]
has a subfunctor $\pfact^{\mathrm{Weiss}}_{(\_)}(\icat)$.
\end{rmk}

Factorization algebras are obtained from prefactorization algebras by simultaneously imposing the two conditions introduced just above.
\begin{defn}\label{defn:pointless-fact-algs} 
Let $(X,E)$ be a marked space and let $\icat^{\otimes}$ be a symmetric monoidal \infcatt. The \emph{$\infty$-category of factorization algebras on $(X,E)$ (with values in $\icat$)} is the pullback
\[  
\begin{tikzcd}
    \Fact_{(X,E)}(\icat) \ar[d,hook] \ar[r,hook] &\pfact^{\mathrm{Weiss}}_{(X,E)}(\icat)  \ar[d,hook] \\
   \pfact^{\mathrm{m}}_{(X,E)}(\icat) \ar[r,hook] & \pfact_{(X,E)}(\icat) 
\end{tikzcd}
\]
among \infcatst.
\end{defn}
 \begin{rmk}[Functoriality of factorization algebras]
Combining Remarks \ref{rmk:pushfwdmultiplicative} and \ref{rmk:pushfwdweiss}, we conclude that for a fixed symmetric monoidal \infcat $\icat^{\otimes}$, we have a functor 
\[  \topo^+\longrightarrow \catinf,\quad\quad (X,E)\longmapsto \Fact_{(X,E)}(\icat) \]
carrying a continuous map $f:(X,E)\rightarrow (Y,F)$ of marked spaces to the pushforward $f_{\sharp}$.
\end{rmk}

\subsubsection{Marked smooth conical manifolds and constructibility}
When the underlying space of a marked space $(X,E)$ on which a factorization algebra is defined is a smooth conical manifold, it is natural to ask that the factorization algebra is \emph{constructible with respect to the stratification}. For a smooth conical manifold $X$, we will demand that the marking is compatible with the stratification, in the sense that the marking is required to be a subset of the 0-dimensional strata of $X$ as in the following definition.
\begin{nota}
For $X$ a smooth conical manifold, we let $X_0\subset X$ denote the union of all 0-dimensional strata. This is a discrete subspace of $X$.    
\end{nota}
\begin{defn}[Marked smooth conical manifolds]\label{defn:markedsmcon}
A \emph{marked smooth conical manifold} is a pair $(X,E)$ of a smooth conical manifold together with a subset $E\subset X_0$ of the set of 0-dimensional strata of $X$. A \emph{conically smooth open embedding} of marked smooth conical manifolds $(X,E)\rightarrow (Y,F)$ is a conically smooth open embedding $f:X\rightarrow Y$ such that $f(E)\subset F$. This yields a category $\mfd^+$ of marked smooth conical manifolds equipped with an evident functor 
\[  G_{\mfd}:\mfd^+\longrightarrow \mfd \]
forgetting the marking.
\end{defn}
\begin{rmk}
The fiber of $G_{\mfd}$ over some marked smooth conical manifold $X$ is the set $P(X_0)$ of subsets of $X_0$. Since $X_0\subset X$ is a discrete subspace, any subset $E\subset X_0$ inherits the discrete topology from $X$; we conclude that $G_{\mfd}$ is a subfunctor of the functor $G:\topo^+\rightarrow\topo$. Note that $G_{\mfd}$ is a Cartesian fibration: since a map $f:X\rightarrow Y$ of $\mfd$ is an open embedding, we have $f(X_0)=Y_0\cap X$ and the preimage of $F\subset Y_0$ under $f$ is the Cartesian lift of $f$ terminating at $F$.
\end{rmk}

In the next section, we will introduce the $\infty$-categorical variant of the category $\mfd^+$ (or rather, of the subcategory $\mfd^{+,\simeq}\subset \mfd^+$ on the Cartesian morphisms), for which we will have to work a bit harder. 
\begin{defn}[Pointless (pre)factorization algebras]
Any smooth conical manifold $X$ determines a marked smooth conical manifold $(X,X_0)$ with the \emph{maximal} marking. We will say that a (pre)factorization algebra on the marked smooth conical manifold $(X,X_0)$ is a \emph{pointless (pre)factorization algebra} on $X$.
\end{defn}

For $(X,E)$ a marked smooth conical manifold, we will write 
\[  \disk_{/(X,E)}\subset \disks_{/(X,E)} \subset \open(X,E)   \]
for the full subposets spanned by those opens that are abstractly isomorphic in the category $\mfd$ to a disk respectively to a disjoint union of such (that is, disjoint subordinate to $X$). Similarly, there are full suboperads 
\[  \disk_{/(X,E)}^{\otimes}\subset \disks_{/(X,E)}^{\otimes} \subset \open(X,E)^{\otimes}   \]
whose colors are those of $\open(X,E)^{\otimes}$ that lie in $\disk_{/(X,E)}$ or $\disks_{/(X,E)}$. We say that a marked inclusion $U\subset V$ of (disjoint unions of) disks is an \emph{isotopy equivalence} if $U$ and $V$ are abstractly isomorphic in the category $\mfd$.
\begin{defn}[Constructibility]
Let $(X,E)$ be marked smooth conical manifold and let $\F$ be a prefactorization algebra on $(X,E)$ valued in an \infcat $\icat$. We will say that $\F$ is \emph{constructible} if for every marked inclusion $U\subset V$ of disks, the induced map
\[  \F(U) \longrightarrow \F(V)\]
is an equivalence; that is, if the restriction 
\[ \F|_{\disk_{/(X,E)}}:\disk_{/(X,E)}\longrightarrow \icat\]
carries isotopy equivalences to equivalences. We let
\[ \Fact^{\mathrm{cstr}}_{(X,E)}(\icat)\subset \Fact_{(X,E)}(\icat) \]
denote the full subcategory spanned by constructible factorization algebras.
\end{defn}

\begin{rmk}
Let $f:(X,E)\rightarrow (Y,F)$ be a map of marked spaces, and suppose that both $(X,E)$ and $(Y,F)$ are marked smooth conical manifolds, then the pushforward $f_{\sharp}$ generally does not carry constructible (pre)factorization algebras to constructible (pre)factorization algebras.
\end{rmk}

When working with constructible factorization algebras, it is natural to restrict to cases in which a factorization algebra is \emph{fully determined} by its value on disks. As far as we are aware, it is not known whether factorization algebras (in the sense of Definition \ref{defn:pointless-fact-algs}) have this property for all marked smooth conical manifolds (in the precise sense articulated in Proposition \ref{prop:restricttodisks} below); however this is true for marked smooth conical manifolds that admit bases of disks that are well-behaved, in the following sense.
\begin{defn}[Enough good disks]\label{defn:factorizingdiskbasis}
Let $(X,E)$ be a marked smooth conical manifold. We say that the pair $(X,E)$ \emph{has enough good (marked) disks} if there is a full subposet $\mathcal{B}\subset \disks_{/(X,E)}$ with the following properties.
\begin{enumerate}[$(1)$]
    \item The elements of $\mathcal{B}$ constitute a basis for the topology of $X$.
    \item $\mathcal{B}$ is stable under disjoint decomposition of disks: if $U$ and $V$ are nonempty disjoint disks and $U\cup V$ lies in $\mathcal{B}$, then both $U$ and $V$ lie in $\mathcal{B}$.
    \item $\mathcal{B}$ is stable under disjoint union: if $U$ and $V$ lie in $\mathcal{B}$ and are disjoint, then $U\cup V$ lies in $\mathcal{B}$.
    \item $\mathcal{B}$ is stable under intersections: if $U=\bigcup_iU_i$ and $V=\bigcup_jV_j$ are disjoint unions of disks and $U,V\in\mathcal{B}$, then for all $i,j$, the intersection $U_i\cap V_j$ is either empty or an element of $\disk_{/(X,E)}$, and the disjoint union $U\cup V=\cup_{i,j}U_i\cap V_j$ is an element of $\mathcal{B}$.
\end{enumerate}
Such a full subposet $\mathcal{B}$ is a \emph{factorizing disk-basis of $(X,E)$}.
\end{defn}
\begin{rmk}\label{rmk:diskbasis}
We might also say that a full subposet $\mathcal{B}\subset \disks_{/(X,E)}$ is a \emph{disk-basis} if only conditions $(1)$, $(2)$ and $(3)$ hold. Any marked smooth conical manifold has a disk-basis: $\disks_{/(X,E)}$ itself is a disk-basis.    
\end{rmk}
\begin{rmk}\label{rmk:factorizingbasiscriterion}
Let $(X,E)$ be a marked smooth conical manifold and let $\mathcal{B}\subset\disk_{/(X,E)}$ be a full subposet. Suppose that the elements of $\mathcal{B}$ constitute a basis of $X$ and that $\mathcal{B}$ is closed under intersections. Then the full subposet $\mathcal{B}'\subset \disks_{/(X,E)}$ spanned by nonempty disjoint unions of elements of $\mathcal{B}$ is a factorizing disk-basis of $(X,E)$. 
\end{rmk}
\begin{rmk}
Let $X$ be a smooth manifold, then $X$ has enough good disks, or said differently, admits a factorizing disk-basis; this is the classical result that smooth manifolds admit good open covers. In fact, we do not know whether there are any (marked) smooth conical manifolds at all that do not admit a factorizing disk-basis.    
\end{rmk}
In \cite{KSW,KS}, the existence of a factorizing disk-basis is a standing assumption. In the coming sections, we will work with very simple smooth conical manifolds for which the existence of a factorizing disk-basis is easy to verify. We have the following result.
\begin{prop}\label{prop:restricttodisks}
Let $(X,E)$ be a marked space and suppose that $(X,E)$ has enough good disks. Let $\icat^{\otimes}$ be a presentably symmetric monoidal \infcat\footnote{In \cite{KSW,KS}, for simplicity it is assumed that (pre)factorization algebras take values in presentably symmetric monoidal \infcats to guarantee the existence of arbitrary (small) operadic left Kan extensions. A more careful analysis of the relevant operadic Kan extensions (like the one considered in Proposition \ref{prop:restricttodisks}) will reveal that it suffices to assume that $\icat$ admits sifted colimits and that the tensor product preserves sifted colimits.} and let 
\[\alg^{\mathfrak{m},\mathrm{Weiss}}_{\disks_{/(X,E)}}(\icat)\subset \alg_{\disks_{/(X,E)}}(\icat)\]
be the full subcategory spanned by those $\icat$-valued $\disks_{/(X,E)}^{\otimes}$-algebras that are multiplicative\footnote{If the source of a coCartesian arrow of $\open(X,E)^{\otimes}$ lies in $\disks_{/(X,E)}^{\otimes}$, then so does its target; we say that a $\disks_{/(X,E)}^{\otimes}$-algebra is multiplicative if it preserves coCartesian morphisms.} and satisfy Weiss codescent\footnote{Let $(X,E)$ be a marked space, then it is not hard to see that the following declaration specifies a Grothendieck topology on $\disks_{/(X,E)}$: let $U$ be a disjoint union of disks, then every sieve on $U$ in $\disks_{/(X,E)}$ (that is, every saturated full subcategory of $\left(\disks_{/(X,E)}\right)_{/U}$) that contains a marked Weiss cover is covering.}. Then restriction along $\disks_{/(X,E)}^{\otimes}\subset \open(X,E)^{\otimes}$ induces an equivalence
\begin{equation}\label{eq:restricttodisks}
\Fact_{(X,E)}(\icat) \overset{\simeq}{\longrightarrow} \alg^{\mathfrak{m},\mathrm{Weiss}}_{\disks_{/(X,E)}}(\icat)
\end{equation}
of \infcatst, with inverse given by operadic left Kan extension. Moreover, for any factorization algebra $\F$ on $(X,E)$, the restriction $\F|_{\open(X,E)}$ to the fiber over $\langle 1\rangle$ is also an \emph{ordinary} left Kan extension of its restriction to $\disks_{/(X,E)}$. 
\end{prop}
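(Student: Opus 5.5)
We prove the equivalence \eqref{eq:restricttodisks} by showing that operadic left Kan extension along the full suboperad $\iota:\disks_{/(X,E)}^{\otimes}\subset \open(X,E)^{\otimes}$ exists, lands in factorization algebras, and is inverse to restriction on the indicated full subcategories. Fix a factorizing disk-basis $\mathcal{B}$. We do not go directly from $\disks_{/(X,E)}$ to $\open(X,E)$; instead we factor through $\mathcal{B}$:
\[ \mathcal{B}^{\otimes}\subset \disks_{/(X,E)}^{\otimes}\subset \open(X,E)^{\otimes}. \]
We then prove that both restriction functors are equivalences on multiplicative Weiss algebras. Since $\iota$ is fully faithful, the operadic left Kan extension of any algebra restricts back to the original algebra. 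So in each case the work reduces to two points: the unit of the Kan extension is an equivalence on multiplicative Weiss algebras, and the Kan extension preserves multiplicativity and Weiss codescent.

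\textbf{Step 1: reduce to ordinary Kan extensions.} For an open $U$ and a color of $\open(X,E)^{\otimes}$ over $\langle 1\rangle$, the operadic left Kan extension is computed by a colimit over the category of active maps $(V_1\oplus\cdots\oplus V_k)\to U$ with $V_i$ in the suboperad, evaluated on $\bigotimes_i \F(V_i)$. Such an active map is the same as a disjoint family of disks with marked union inside $U$. That family can be replaced by its union, which lies in $\disks_{/(X,E)}$ (resp. $\mathcal{B}$, using condition $(3)$). Using this, we show that the inclusion of unary arrows $\disks_{/(X,E)}{}_{/U}\to (\text{active maps to }U)$ is cofinal. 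The argument: the comma categories are cofiltered/contractible, since one can always merge a disjoint family into one element. Combined with multiplicativity of $\F$ on disks, this identifies the operadic colimit with the ordinary colimit $\colim_{V\in \disks_{/(X,E)},V\subset U}\F(V)$. This gives the final ("Moreover") claim, once the equivalence is established. Existence of the colimits uses that $\icat$ is presentable and $\otimes$ preserves colimits in each variable.

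\textbf{Step 2: unit and counit.} Let $\F$ be a factorization algebra on $(X,E)$. The comparison $\colim_{V\subset U}\F(V)\to\F(U)$ is an equivalence by Weiss codescent: the disks in $\mathcal{B}$ that are marked-included in $U$ form a marked Weiss cover of $U$. This holds because $\mathcal{B}$ is a basis closed under disjoint union, so any finite set, together with the finitely many marked points of $U$ near it, is covered by a single element. The colimit over the generated sieve agrees with the Čech-type descent colimit, using closure under intersections (condition $(4)$). Conversely, starting from a multiplicative Weiss algebra $\mathcal{A}$ on disks, the left Kan extension $\F$ restricts to $\mathcal{A}$ by full faithfulness. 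Multiplicativity of $\F$ follows because disks in $U\amalg U'$ decompose (condition $(2)$) as products of disks in $U$ and $U'$, so the indexing poset is a product. Since $\otimes$ commutes with the sifted colimits involved in each variable, $\F(U\amalg U')\simeq\F(U)\otimes\F(U')$.

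\textbf{Step 3 (main obstacle): Weiss codescent of the extension.} We must show $\F$ is a Weiss cosheaf on $\open(X,E)$. The plan is to identify $\F$ with the left Kan extension of a cosheaf on the site $\mathcal{B}$ with the induced Weiss topology. Left Kan extension from a basis closed under intersections is the comparison lemma, an equivalence of cosheaf categories. The subtlety is the marking: covers must consist of marked inclusions, and $\open(X,E)$ lacks colimits. We handle this by working with sieves rather than Čech nerves, which is why the footnote defines the topology via sieves containing marked Weiss covers. We must also check that the marked Weiss covers of an open $U$ restrict to covering sieves in $\mathcal{B}_{/U}$, again using condition $(4)$.
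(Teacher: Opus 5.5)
\textbf{Gap: the Weiss-cover claim in Step 2 needs finitely many marked points.} A marked inclusion $V\subset U$ requires $V\cap E=U\cap E$. So $V$ must contain \emph{every} marked point of $U$, not only the marked points near the finite set $S$.

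When $U\cap E$ is finite, your argument works. Choose pairwise disjoint single disks of $\mathcal{B}$ inside $U$ around the points of the finite set $S\cup(U\cap E)$; components of elements of $\mathcal{B}$ lie in $\mathcal{B}$ by (2). Their union lies in $\mathcal{B}$ by (3), contains $S$, and is marked-included in $U$.

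When $U\cap E$ is infinite, no finite disjoint union of disks is marked-included in $U$, so the indexing category of your colimit is empty. No better argument can fix this, because the statement itself fails. Take $X=\R$ stratified by $\Z$, with $E=\Z$. It has enough good disks: finite disjoint unions of intervals containing at most one integer. The trivial algebra $U\mapsto\unit$ is a factorization algebra on it. Indeed, any two members $W,W'$ of a sieve on $U$ in $\open(X,E)$ satisfy $W\cap E=W'\cap E=U\cap E$, so $W\cap W'$ again lies in the sieve. Covering sieves are therefore nonempty and cofiltered, hence weakly contractible. Yet the colimit of $\unit$ over the empty poset of marked disks in $\R$ is the initial object of $\icat$. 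So both the equivalence and the ``Moreover'' clause fail at $U=\R$. You need to assume that $E$ is finite (equivalently, that every open contains finitely many marked points), and use it explicitly. This holds for the stratified cubes and for $(\R,0)$, which is all the paper uses.

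\textbf{With $E$ finite, your outline is the standard argument.} The paper gives no proof of its own, only citing \cite{KS} and \cite{KSW}. A few points need tightening.
\begin{itemize}
\item In Step 1, multiplicativity plays no role. For an active $\vec V\to U$, the category of factorizations through a unary arrow has the initial object $\bigcup_iV_i$, so the inclusion of unary arrows is cofinal outright.
\item Your reduction omits checking that restriction lands in Weiss algebras for the topology on $\disks_{/(X,E)}$. This follows from your Step 2: for a covering sieve $\mathcal{S}$ on a disk, with generated sieve $\overline{\mathcal{S}}$ in $\open(X,E)$, both $\F|_{\mathcal{S}}$ and $\F|_{\overline{\mathcal{S}}}$ are left Kan extended from $\mathcal{B}\cap\overline{\mathcal{S}}$, which is contained in $\mathcal{S}$.
\item Step 3 is still only a plan. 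What makes the non-hypercomplete comparison lemma applicable is that $\mathcal{B}$ is closed under the pullbacks of $\open(X,E)$. These pullbacks are intersections, which are automatically marked. This is exactly where condition (4) is used.
\end{itemize}
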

This follows from \cite[Proposition 7.10]{KS}, which is itself a minor variation on \cite[Proposition 4.3.2]{KSW}.
\begin{rmk}
The assumption on the existence of enough good disks may be avoided if one is willing to slightly alter the definition of factorization algebras. Let us say that a \emph{hyperfactorization algebra} on a marked space $(X,E)$ is a multiplicative prefactorization algebra whose underlying cosheaf satisfies Weiss \emph{hyper}codescent. Then it can be shown that the equivalence \eqref{eq:restricttodisks} holds for hyperfactorization algebras without any assumption on $(X,E)$ (where, of course, we also require that on the right hand side of the equivalence, we restrict to \emph{hyper}cosheaves)\footnote{We came to this observation in conversation with Anja \v{S}vraka and Tashi Walde.}. We will not pursue this point further in this work.  
\end{rmk}
Let $(X,E)$ be a marked smooth conical manifold and let
\[  \alg^{\mathrm{m}}_{\disks_{/(X,E)}}(\icat)\subset \alg^{\mathrm{m}}_{\disks_{/(X,E)}}(\icat) \]
be the full subcategory spanned by (multiplicative) $\disks_{/(X,E)}$-algebras for which the restriction of the underlying functor $\disks_{/(X,E)}\rightarrow \icat$ to $\disk_{/(X,E)}$ carries isotopy equivalences to equivalences (it follows right away from multiplicativity that also isotopy equivalences of disjoint unions of disks are carried to equivalences in $\icat$). It is a consequence of Proposition \ref{prop:restricttodisks} that in case $(X,E)$ has enough good disks, the restriction functor 
\[ \Fact_{(X,E)}^{\mathrm{cstr}}(\icat) \overset{\simeq}{\longrightarrow} \alg^{\mathfrak{m},\mathrm{Weiss}}_{\disks_{/(X,E)}}(\icat) \cap \alg^{\mathrm{m},\mathrm{cstr}}_{\disks_{/(X,E)}}(\icat) \]
is an equivalence of \infcatst. However, as asserted by the following result, the full subcategory $\alg^{\mathrm{m},\mathrm{cstr}}_{\disks_{/(X,E)}}(\icat)$ is already contained in $\alg^{\mathfrak{m},\mathrm{Weiss}}_{\disks_{/(X,E)}}(\icat)$.
\begin{prop}[Constructible implies Weiss on disks]\label{prop:constructibleweissondisks}
Let $(X,E)$ be a marked smooth conical manifold and let $\icat^{\otimes}$ be a presentably symmetric monoidal \infcatt. For any $\icat$-valued constructible precosheaf on $\disk_{/(X,E)}$, its left Kan extension to $\open(X,E)$ satisfies Weiss hypercodescent. 
\end{prop}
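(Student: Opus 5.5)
Write $\F$ for a constructible precosheaf on $\disk_{/(X,E)}$ and $\mathcal{G}$ for its left Kan extension to $\open(X,E)$. The plan is to reduce Weiss hypercodescent to a statement about hypercovers of a single open by disks, and then check it stalkwise on the disks.

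\textbf{Step 1: formula for $\mathcal{G}$.} For an open $U$, the value is
\[
\mathcal{G}(U)\simeq \colim_{D\in \disk_{/(U,E\cap U)}}\F(D),
\]
where the colimit runs over disks $D\subset U$ with $D\subset U$ a marked inclusion. Weiss hypercodescent then asks the following. For every marked Weiss hypercover $V_\bullet\to U$, the map
\[
\colim_{[n]\in\simpop}\mathcal{G}(V_n)\longrightarrow\mathcal{G}(U)
\]
is an equivalence. Here each $V_n$ is a coproduct of opens marked in $U$, and $\mathcal{G}$ is extended additively over coproducts in the \emph{index}, not multiplicatively.

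\textbf{Step 2: commute colimits.} Substituting the formula and using Fubini, the left side becomes a colimit over the Grothendieck construction $\int_{\simpop} \disk_{/V_\bullet}$. So it suffices to show that the functor
\[
\int_{\simpop}\disk_{/V_\bullet}\longrightarrow \disk_{/(U,E\cap U)}
\]
is cofinal after applying $\F$. Because $\F$ is constructible, it inverts isotopy equivalences, so it factors through the localization $\disk_{/(U,E\cap U)}[\text{isot}^{-1}]$. It is therefore enough to show cofinality after this localization. By Quillen's Theorem A it suffices that for every disk $D\subset U$ (marked), the category of pairs consisting of an element of the hypercover together with a disk inside it that maps into $D$ up to isotopy has weakly contractible classifying space.

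\textbf{Step 3: use the Weiss condition.} This is where the Weiss condition enters. A disk $D$ contains marked subdisks $D'\subset D$ that are isotopy equivalent to $D$ and arbitrarily small around the cone point. Such a $D'$ shrinks onto a finite set: the center of $D$ together with $D\cap E$. By the marked condition, $D\cap E$ has at most one point, namely the cone point if $D$ is a basic with a marked $0$-dimensional stratum. Weiss hypercovers are covers that remain covers on finite configurations; equivalently, $\coprod V_n$ form a hypercover of the Ran-type space of finite subsets. The relevant fiber category therefore computes the hypercover restricted to a contractible neighbourhood basis of a point of $\mathrm{Ran}(U)$. It is thus weakly contractible, by hypercover descent for the constant sheaf on a contractible space, or more concretely by the standard argument in Lurie's \cite{LurHA} proof that locally constant algebras satisfy Weiss codescent. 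In effect we are adapting \cite[Proposition 4.3.2]{KSW} and \cite[Proposition 7.10]{KS} to the pointless marking.

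\textbf{Main obstacle.} The hard part is Step 3: checking that isotopy-localized slices are contractible while respecting markings. Shrinking must be done through marked inclusions, so the cone point must stay inside. The resolution I would pursue is to use the conical structure of basics: the rescalings $\R^i\times\mathsf{C}(Z)$, $t\mapsto \lambda t$, give a cofinal system of marked isotopy-equivalent subdisks. Arguing via $\R$-families as in \cite{AFT-local-structures} then identifies $\disk_{/D}[\text{isot}^{-1}]$ with a category having a terminal object.
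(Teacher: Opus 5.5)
The paper does not prove this proposition itself. It quotes it from \cite[Theorem 7.33]{KS}, the pointless variant of \cite[Theorem 5.3.1]{KSW}. Your direct argument has a genuine gap at Step 3.

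Your Steps 1 and 2 are a valid formal reduction, but they do not make the problem easier. Take $\icat=\spa$ and let $\F$ be the precosheaf corepresented by a disk $D$ on the isotopy localization $\mathcal{L}$. For this $\F$, hypercodescent amounts exactly to weak contractibility of the Quillen fiber $\mathcal{A}\times_{\mathcal{L}}\mathcal{L}_{D/}$. Note that cofinality needs this \emph{under}-category: hypercover disks $D'$ receiving a map $D\to D'$ in $\mathcal{L}$, not disks mapping into $D$ as you wrote. So everything rests on Step 3, and Step 3 does not establish it.

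You describe the fiber as ``the hypercover restricted to a contractible neighbourhood basis of a point''. That is only correct when $\mathcal{L}$ is an $\infty$-groupoid. This is the unstratified, locally constant situation of Lurie's argument, where $\mathcal{L}$ is the singular complex of a manifold or configuration space and Dugger--Isaksen or \cite[Theorem A.3.1]{LurHA} applies to stalks. For stratified $X$, $\mathcal{L}=\infdisk_{/(U,E\cap U)}$ is the opposite of an exit-path $\infty$-category. Then $D$ has non-invertible maps to disks centred at deeper (unmarked) strata near $D$. Consequently the fiber receives contributions, weighted by spaces of exit paths, from members $V_{n,i}$ that need not contain the centre of $D$. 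For example, take $X=\R_{\geq 0}$, $E=\emptyset$, $U=[0,3)$ and $D=(1,2)$. Every member containing $0$ contributes through the cone disks $[0,c)\subset V_{n,i}$, whether or not it meets $(1,2)$. Shrinking $D$ onto its centre says nothing about these contributions.

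What is missing is a real hyperdescent statement for exit paths. You need to identify $|\disk_{/V}\times_{\mathcal{L}}\mathcal{L}_{D/}|$, naturally in $V$, with the singular complex of the preimage of $V$ in some contractible auxiliary space. One candidate is the space of exit paths ending at the centre of $D$. Another is a Ran- or configuration-space analogue, if you work with disjoint unions of disks, which is where the Weiss condition actually matters. Then $V_\bullet$ pulls back to an open hypercover of that space and \cite[Theorem A.3.1]{LurHA} applies. Supplying this input is exactly what \cite[Theorem 5.3.1]{KSW} and \cite[Theorem 7.33]{KS} do.

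Your fix for the ``main obstacle'' does not address this. $D$ is already terminal in $\disk_{/D}$, and rescalings inside $D$ do not control how the hypercover sees the adjacent deeper strata.

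A further mismatch: for single disks, $D\cap E$ is either empty or equal to the centre of $D$. So ``the centre together with $D\cap E$'' is one point, and Ran spaces play no role. If you intend disjoint unions of disks instead, both the marking and the Weiss condition have to be handled componentwise.
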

This is \cite[Theorem 7.33]{KS}\footnote{In this reference, it is assumed that $(X,E)$ is maximally marked, that is, $E=X_0$. While this will always be the case for the conical manifolds under consideration in this article (the stratified cubes introduced in Section \ref{sec:moritacat}), Proposition \ref{prop:constructibleweissondisks} holds without this assumption.}, which is itself a minor variation on \cite[Theorem 5.3.1]{KSW}. It follows that for $\icat^{\otimes}$ a presentably symmetric monoidal \infcat and $(X,E)$ a marked smooth conical manifold that has enough good disks, the restriction along $\disks_{/(X,E)}^{\otimes}\subset\open(X,E)^{\otimes}$ induces an equivalence 
\[ \Fact^{\mathrm{cstr}}_{(X,E)}(\icat)\overset{\simeq}{\longrightarrow} \alg^{\mathrm{m},\mathrm{cstr}}_{\disks_{/(X,E)}}(\icat)   \]
of \infcatst. The objects of $\disks_{/(X,E)}^{\otimes}$ are disjoint unions of disks, so it is natural to expect that a \emph{multiplicative} $\disks_{/(X,E)}^{\otimes}$-algebra is determined by its behavior on the full suboperad $\disk_{/(X,E)}^{\otimes}\subset \disks_{/(X,E)}^{\otimes}$. This is indeed the case.
\begin{prop}[Multiplicative extension to disjoint union completion]
Let $(X,E)$ be a marked smooth conical manifold and let $\icat^{\otimes}$ be a symmetric monoidal \infcatt, then restriction along $\disk_{/(X,E)}^{\otimes}\subset\disks_{/(X,E)}^{\otimes}$ induces an equivalence 
\[  \alg^{\mathfrak{m}}_{\disks_{/(X,E)}}(\icat)\overset{\simeq}{\longrightarrow} \alg_{\disk_{/(X,E)}}(\icat) \]
which restricts to an equivalence
\[  \alg^{\mathfrak{m},\mathrm{cstr}}_{\disks_{/(X,E)}}(\icat)\overset{\simeq}{\longrightarrow} \alg^{\mathrm{cstr}}_{\disk_{/(X,E)}}(\icat). \]
\end{prop}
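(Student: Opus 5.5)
Our plan is to recognise $\disks_{/(X,E)}^{\otimes}$ as (equivalent to) the disjoint-union envelope of $\disk_{/(X,E)}^{\otimes}$ in the sense of \cite[Section 2.3.3]{LurHA}, and then deduce the first equivalence from the universal property of the cocartesian/disjoint-union completion. In \cite[Proposition 2.4.3.16]{LurHA} (or more precisely its variant \cite[Theorem 2.4.3.18]{LurHA}) Lurie shows that for an \infop $\mathcal{O}^{\otimes}$ with disjoint-union completion $\mathcal{O}^{\amalg}$ one has $\alg_{\mathcal{O}^{\amalg}}^{\mathfrak{m}}(\icat)\simeq \alg_{\mathcal{O}}(\icat)$, where the superscript denotes algebras carrying the ``disjoint union'' coCartesian arrows to coCartesian arrows. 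So the main work is to set up a comparison map of \infops $\disks_{/(X,E)}^{\otimes}\rightarrow \mathcal{E}^{\otimes}$, with $\mathcal{E}^{\otimes}$ the envelope of $\disk^{\otimes}_{/(X,E)}$, or, more concretely, to verify directly that operadic right/left Kan extension along the inclusion $\iota:\disk_{/(X,E)}^{\otimes}\subset \disks_{/(X,E)}^{\otimes}$ yields the inverse.

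Concretely, we proceed as follows. First, given $A\in \alg_{\disk_{/(X,E)}}(\icat)$, define its extension on a color $U=U_1\amalg\cdots\amalg U_k$ (decomposition into connected disks, which is unique since disks are connected) by $\F(U)=\bigotimes_i A(U_i)$. The key combinatorial observation is that a morphism $(U^1,\ldots,U^r)\rightarrow V$ in $\disks^{\otimes}_{/(X,E)}$ with $V=V_1\amalg\cdots\amalg V_m$ factors uniquely as the coCartesian ``split'' of each $U^j$ into its connected components, followed by a map to $(V_1,\ldots,V_m)$ (each component lands in exactly one $V_l$, and the marked bijection condition on $\bigcup U^j\cap E\to V\cap E$ splits componentwise into the conditions for each $V_l$ since $E\cap V=\coprod_l E\cap V_l$), followed by the coCartesian merge into $V$. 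This shows that the relevant slice categories computing the operadic left Kan extension \cite[Definition 3.1.2.2]{LurHA} have terminal objects given by the connected-component decomposition, so the operadic Kan extension exists in any symmetric monoidal $\icat$ (no colimits required) and is computed by the tensor formula above; moreover it is visibly multiplicative. Conversely, restriction of a multiplicative algebra followed by extension is equivalent to it because multiplicativity forces the counit $\bigotimes_i\F(U_i)\to\F(U)$ to be an equivalence. Combining with \cite[Proposition 3.1.3.3]{LurHA} (Kan extension is fully faithful and its essential image consists of those algebras for which the unit is an equivalence) gives the first equivalence.

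The second equivalence is then a matter of checking that constructibility is matched: the constructibility condition on $\alg^{\mathfrak{m},\mathrm{cstr}}_{\disks_{/(X,E)}}$ is only imposed on $\disk_{/(X,E)}$, so it is literally the same condition on both sides, and we only note (as in the text) that multiplicativity propagates it to isotopy equivalences of disjoint unions. The step I expect to be the main obstacle is the factorization claim in the marked setting: one has to verify carefully that marked inclusions interact with connected components, i.e.\ that the marked-bijection condition for a multimorphism into a disjoint union $V$ is equivalent to the conjunction of the conditions for each component $V_l$, so that the ``envelope'' description is valid; without this the slice categories might fail to have terminal objects and the Kan extension would not be computed pointwise.
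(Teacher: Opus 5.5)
Your argument is correct. The paper gives no proof of its own for this proposition. It defers to \cite[Proposition 7.13]{KS}, a variant of \cite[Proposition 4.21]{KSW}, so your operadic left Kan extension argument serves as a self-contained proof.

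The key fact is the following. For a color $U=U_1\amalg\cdots\amalg U_k$ of $\disks^{\otimes}_{/(X,E)}$, the object $(U_1,\ldots,U_k)\to U$ is terminal among active maps from tuples of disks to $U$. This holds because:
\begin{itemize}
\item each disk is nonempty and connected, so it lies in a unique component $U_l$;
\item the marked condition $\bigl(\bigcup_j D_j\bigr)\cap E=U\cap E$ splits componentwise;
\item for a component $U_l$ receiving no disk, that same condition forces $U_l\cap E=\emptyset$, so the required map from the empty tuple still exists.
\end{itemize}

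From this you get three things:
\begin{itemize}
\item the extension exists with no colimit hypothesis on $\icat$;
\item the operadic-left-Kan-extension condition at $U$ becomes coCartesianness of the component decomposition;
\item that condition is equivalent to multiplicativity, by two-out-of-three for coCartesian edges along $(U^j_i)_{i,j}\to(U^j)_j\to\coprod_j U^j$.
\end{itemize}
The constructible version then follows at once, as you say, because that condition only involves the restriction to $\disk_{/(X,E)}$.

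A few points need cleaning up, though none affects the core argument.

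\emph{The envelope framing.} Your first paragraph's framing is not accurate and should be dropped. The monoidal envelope of $\disk^{\otimes}_{/(X,E)}$ has arbitrary, possibly overlapping, tuples of disks as objects, so $\disks^{\otimes}_{/(X,E)}$ is not equivalent to it. Also, the results of \cite[Section 2.4.3]{LurHA} that you cite concern the coCartesian $\infty$-operad $\mathcal{C}^{\amalg}$ of an $\infty$-category, not a disjoint-union completion of an operad.

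\emph{The ``split'' step.} There is no ``coCartesian split'' morphism $U^j\to(U^j_1,\ldots,U^j_{k_j})$ in an $\infty$-operad; coCartesian edges go the other way. The correct statement is that, after precomposing with the coCartesian merges $(U^j_i)_i\to U^j$, the multimorphism factors uniquely through $(V_1,\ldots,V_m)\to V$. For the Kan extension you only need the case where the sources are already disks, and that case is exactly what you verify.

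\emph{The reference for the Kan extension.} The free-algebra adjunction of \cite[Section 3.1.3]{LurHA} is stated under colimit-compatibility hypotheses on $\icat^{\otimes}$. To justify your (correct) claim that no colimits are needed, invoke instead the pointwise existence and uniqueness theorem for operadic left Kan extensions in \cite[Section 3.1.2]{LurHA}. That theorem says restriction is an equivalence from algebras that are operadic left Kan extensions of their restrictions onto those algebras admitting such an extension. Here every $\disk^{\otimes}_{/(X,E)}$-algebra admits one, because the slices have terminal objects.
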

This is \cite[Proposition 7.13]{KS}, which is itself a minor variation on \cite[Proposition 4.21]{KSW}. Combining the results above, we conclude that for $(X,E)$ a marked smooth conical manifold that has enough good disks, restriction along $\disk_{/(X,E)}^{\otimes}\subset\open(X,E)^{\otimes}$ induces an equivalence\footnote{Again, the condition of there being enough good disks may be removed by working with Weiss hypercosheaves.} 
\begin{equation}\label{eq:restricttodisk} \Fact^{\mathrm{cstr}}_{(X,E)}(\icat)\overset{\simeq}{\longrightarrow} \alg^{\mathrm{cstr}}_{\disk_{/(X,E)}}(\icat).   \end{equation}
It follows formally that $\Fact^{\mathrm{cstr}}_{(X,E)}(\icat)$ is the \infcat of algebras in $\icat$ for the operadic localization $\disk_{/(X,E)}^{\otimes}$ at the collection of isotopy equivalences. We will make this operadic localization explicit in Section \ref{sec:plessdisks}.

\subsubsection{Sifted colimits of (pre)factorization algebras}

In the sequel, we will need some results on compatibility of pushforward of factorization algebras with sifted colimits. We collect these here. Recall that a symmetric monoidal \infcat $\icat^{\otimes}$ is \emph{$\otimes$-sifted cocomplete} if $\icat$ admits sifted colimits and the tensor product $\icat\times\icat\rightarrow \icat$ preserves sifted colimits (equivalently, by siftedness, that the tensor product preserves those colimits separately in both variables). 
\begin{prop}\label{prop:factorizationsifted}
Let $\icat^{\otimes}$ be a $\otimes$-sifted cocomplete symmetric monoidal \infcat and
let $(X,E)$ be a marked space. Then the full subcategory $\mathsf{Fact}_{(X,E)}(\icat)\subset\mathsf{PFact}_{(X,E)}(\icat)$ is stable under sifted colimits. If $(X,E)$ is a marked smooth conical manifold, then the full subcategory $\mathsf{Fact}^{\mathrm{cstr}}_{(X,E)}(\icat)\subset\mathsf{PFact}_{(X,E)}(\icat)$ is also stable under sifted colimits.
\end{prop}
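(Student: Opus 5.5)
Since $\pfact_{(X,E)}(\icat)=\alg_{\open(X,E)}(\icat)$ is an \infcat of algebras over an \infopt, I would first recall the standard fact that, for $\icat^{\otimes}$ $\otimes$-sifted cocomplete, $\alg_{\mathcal{O}}(\icat)$ admits sifted colimits for any \infop $\mathcal{O}^{\otimes}$. These colimits are computed colorwise: the forgetful functor $\alg_{\mathcal{O}}(\icat)\rightarrow \fun(\mathcal{O},\icat)$, and hence each evaluation $\F\mapsto \F(U)$, preserves and detects sifted colimits (\cite[Proposition 3.2.3.1]{LurHA}). Each of the three defining conditions is a condition on the values of $\F$ and on its structure maps, so it suffices to check that each is preserved by colimits along a sifted diagram $K\rightarrow \pfact_{(X,E)}(\icat)$, $k\mapsto\F_k$, with colimit $\F$.

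\emph{Multiplicativity.} For disjoint opens $U,V$ the structure map $\F(U)\otimes\F(V)\rightarrow\F(U\amalg V)$ is the colimit over $K$ of the maps $\F_k(U)\otimes\F_k(V)\rightarrow\F_k(U\amalg V)$. Here one uses $\colim_k(\F_k(U)\otimes\F_k(V))\simeq \colim_{(k,l)\in K\times K}\F_k(U)\otimes\F_l(V)$, which holds because $K\rightarrow K\times K$ is cofinal ($K$ sifted) and $\otimes$ preserves colimits separately in each variable. A colimit of equivalences is an equivalence, so $\F$ is multiplicative.

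\emph{Weiss codescent.} This is the step I expect to need the most care. Being a Weiss cosheaf means that for every covering sieve the canonical map $\colim_{W\in S}\F(W)\rightarrow\F(U)$ is an equivalence. This codescent condition is a colimit condition on the underlying functor $\open(X,E)\rightarrow\icat$, so it commutes with the colimit over $K$ by interchange of colimits. Hence the cosheaves are stable under all colimits that exist in the functor category, in particular under sifted ones. The point to verify is that the underlying functor of the colimit in $\pfact$ is the pointwise colimit, which is exactly the colorwise computation above. Combining the two conditions, $\Fact_{(X,E)}(\icat)$ is closed under sifted colimits.

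\emph{Constructibility.} The condition asks that $\F(U)\rightarrow\F(V)$ is an equivalence for marked isotopy equivalences of disks. Again these maps are pointwise colimits of the corresponding maps for the $\F_k$, which are equivalences, so the colimit is an equivalence. Thus $\Fact^{\mathrm{cstr}}_{(X,E)}(\icat)$ is also stable under sifted colimits.
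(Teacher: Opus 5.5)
Your proposal is correct and is essentially the paper's proof: use \cite[3.2.3.1]{LurHA} to compute sifted colimits of prefactorization algebras on underlying precosheaves, then check Weiss codescent by interchange of colimits, constructibility as a pointwise colimit of equivalences, and binary multiplicativity via cofinality of the diagonal $K\to K\times K$ together with $\otimes$ preserving sifted colimits separately in each variable. The only point both arguments leave implicit is the nullary part of multiplicativity, $\mathbb{1}\simeq\F(\emptyset)$, which is also preserved because sifted diagrams are weakly contractible.
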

\begin{proof}
Let $\mathcal{J}$ be a sifted \infcat and let $f_{(\_)}:\mathcal{J}^{\rhd}\rightarrow \mathsf{PFact}_{(X,E)}(\icat)$ be a colimit diagram. We will show the following.
\begin{enumerate}[$(1)$]
    \item If for each $j\in\mathcal{J}$, the functor $\mathsf{Open}(X,E)\rightarrow \icat$ underlying the prefactorization algebra $f_j$ is a Weiss cosheaf, then the same is true for the prefactorization algebra $f_{\infty}$.
    \item If for each $j\in\mathcal{J}$, the prefactorization algebra $f_j$ is multiplicative, then $f_{\infty}$ is multiplicative.
    \item If $(X,E)$ is a marked smooth conical manifold and for each $j\in \mathcal{J}$, the functor $f_j|_{\disk_{/(X,E)}}:\disk_{/(X,E)}\rightarrow \icat$ carries isotopy equivalences to isotopy equivalences, then the same is true for $f_{\infty}$.
\end{enumerate}
It follows from \cite[Theorem 3.2.3.1]{LurHA} that the functor $\pfact_{(X,E)}(\icat)\rightarrow \fun(\open(X,E),\icat)$ to $\icat$-valued precosheaves on $\open(X,E)$ preserves and detects sifted colimits. Now $(1)$ is a consequence of the fact that cosheaves (for any topology) are stable under colimits in precosheaves. Since restriction of precosheaves along $\disk_{/(X,E)}\subset \open(X,E)$ preserves colimits and equivalences are stable under colimits, we also deduce $(3)$. It remains to prove $(2)$. Let $U,V\subset X$ be disjoint opens of $X$, then we have a commuting diagram 
\[
\begin{tikzcd}
\left(\underset{j\in \mathcal{J}}{\colim} f_j(U)\right) \otimes\left(\underset{j'\in \mathcal{J}}{\colim} f_{j'}(V)\right)\ar[d] & \underset{(j,j')\in \mathcal{J}\times\mathcal{J}}{\colim} \left(f_j(U)\otimes f_{j'}(V)\right)  \ar[d]\ar[l] & \underset{j\in \mathcal{J}}{\colim} \left(f_j(U)\otimes f_{j}(V)\right) \ar[d] \ar[l]\\
 \underset{j\in \mathcal{J}}{\colim}  f_{j}(U\amalg V)\ar[r,equal]& \underset{j\in \mathcal{J}}{\colim}  f_{j}(U\amalg V)\ar[r,equal]&\underset{j\in \mathcal{J}}{\colim}  f_{j}(U\amalg V).
\end{tikzcd}
\]
The left vertical map is the map $f_{\infty}(U)\otimes f_{\infty}(V)\rightarrow f_{\infty}\left(U\amalg V\right)$ that we wish to show is an equivalence. Since $f_j$ is multiplicative for all $j\in\mathcal{J}$, the right vertical map is an equivalence. The top right horizontal map is an equivalence because $\mathcal{J}$ is sifted and the top left horizontal map is an equivalence because the tensor product $\otimes$ is assumed to preserve sifted colimits separately in both variables. It follows that the middle and left vertical maps are equivalences too. 
\end{proof}
\begin{cor}\label{cor:factorizationsifted}
Let $\icat^{\otimes}$ be a $\otimes$-sifted cocomplete symmetric monoidal \infcat and let $f:(X,E)\rightarrow (Y,F)$ be a map of marked spaces. Then the following hold.
\begin{enumerate}[$(1)$]
    \item The pushforward $f_{\sharp}:\Fact_{(X,E)}(\icat)\rightarrow \Fact_{(Y,F)}(\icat)$ preserves sifted colimits. 
    \item If $(X,E)$ and $(F,E)$ are marked smooth conical manifolds and $f_*$ carries constructible factorization algebras to constructible factorization algebras, then $f_{\sharp}:\Fact_{(X,E)}^{\mathrm{cstr}}(\icat)\rightarrow \Fact^{\mathrm{cstr}}_{(Y,F)}(\icat)$ preserves sifted colimits.
\end{enumerate}
\end{cor}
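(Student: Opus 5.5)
The plan is to reduce both statements to Proposition \ref{prop:factorizationsifted} together with the corresponding statement for prefactorization algebras. Everything rests on the commuting square
\[
\begin{tikzcd}
\pfact_{(X,E)}(\icat) \ar[r,"f_{\sharp}"] \ar[d] & \pfact_{(Y,F)}(\icat) \ar[d] \\
\fun(\open(X,E),\icat) \ar[r,"(f^{-1})^*"] & \fun(\open(Y,F),\icat).
\end{tikzcd}
\]
Here the vertical maps forget to underlying precosheaves. The bottom map is precomposition with $f^{-1}:\open(Y,F)\rightarrow\open(X,E)$.

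\textbf{Step 1: $f_{\sharp}$ preserves sifted colimits on prefactorization algebras.}
\begin{itemize}
\item By \cite[Theorem 3.2.3.1]{LurHA}, as already used in the proof of Proposition \ref{prop:factorizationsifted}, both vertical functors preserve and detect sifted colimits. This uses that $\icat^{\otimes}$ is $\otimes$-sifted cocomplete.
\item Precomposition preserves all colimits, since colimits in functor categories are computed pointwise.
\item Let $\mathcal{J}$ be sifted and $p:\mathcal{J}^{\rhd}\rightarrow\pfact_{(X,E)}(\icat)$ a colimit diagram. Then $f_{\sharp}\circ p$, after forgetting, is a colimit diagram of precosheaves on $\open(Y,F)$.
\item Since the right vertical functor detects sifted colimits, $f_{\sharp}\circ p$ is itself a colimit diagram in $\pfact_{(Y,F)}(\icat)$.
\end{itemize}

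\textbf{Step 2: restrict to factorization algebras.}
\begin{itemize}
\item By Proposition \ref{prop:factorizationsifted}, $\Fact_{(X,E)}(\icat)\subset\pfact_{(X,E)}(\icat)$ is closed under sifted colimits. Hence a sifted colimit in $\Fact_{(X,E)}(\icat)$ is computed in $\pfact_{(X,E)}(\icat)$.
\item By Remarks \ref{rmk:pushfwdmultiplicative} and \ref{rmk:pushfwdweiss}, $f_{\sharp}$ carries $\Fact_{(X,E)}(\icat)$ into $\Fact_{(Y,F)}(\icat)$.
\item By Step 1, the image of the colimit is a colimit in $\pfact_{(Y,F)}(\icat)$. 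It lies in the full subcategory $\Fact_{(Y,F)}(\icat)$, so it is also a colimit there. This proves (1).
\end{itemize}

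\textbf{Step 3: the constructible case.} The argument for (2) is identical.
\begin{itemize}
\item By the second half of Proposition \ref{prop:factorizationsifted}, both $\Fact^{\mathrm{cstr}}_{(X,E)}(\icat)$ and $\Fact^{\mathrm{cstr}}_{(Y,F)}(\icat)$ are closed under sifted colimits in the corresponding $\pfact$.
\item By hypothesis, $f_{\sharp}$ maps $\Fact^{\mathrm{cstr}}_{(X,E)}(\icat)$ into $\Fact^{\mathrm{cstr}}_{(Y,F)}(\icat)$.
\item The same argument as in Step 2 then shows that $f_{\sharp}$ preserves sifted colimits on constructible factorization algebras.
\end{itemize}

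\textbf{Main obstacle.} The only step needing care is commutativity of the square, that is, that $f_{\sharp}$ on underlying precosheaves is literally restriction along $f^{-1}$. This holds because $f_{\sharp}$ is precomposition with the operad map $f^{-1}:\open(Y,F)^{\otimes}\rightarrow\open(X,E)^{\otimes}$ of Remark \ref{rmk:functormarkedopens}, and taking underlying objects commutes with restriction along operad maps.
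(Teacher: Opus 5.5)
Your proposal is correct and follows essentially the same route as the paper. Both use Proposition \ref{prop:factorizationsifted} to compute sifted colimits of (constructible) factorization algebras in prefactorization algebras, and then \cite[Theorem 3.2.3.1]{LurHA} to reduce to precomposition with $f^{-1}$ on precosheaf categories, which preserves all colimits. Your explicit treatment of the commuting square, and of the colimit landing in the full subcategory $\Fact_{(Y,F)}(\icat)$, just spells out steps the paper leaves implicit.
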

\begin{proof}
It follows from Proposition \ref{prop:factorizationsifted} that sifted colimits of (constructible) factorization algebras are computed in prefactorization algebras, so it suffices to show that the functor $f_*:\pfact_{(X,E)}(\icat)\rightarrow \pfact_{(Y,F)}(\icat)$ preserves sifted colimits. Invoking \cite[Theorem 3.2.3.1]{LurHA} again, it suffices to show that the functor $\fun(\open(X,E),\icat)\rightarrow \fun(\open(Y,F),\icat)$ induced by pulling back marked opens along $f$ preserves sifted colimits, but this functor preserves all colimits.
\end{proof}

\subsection{Pointless disk algebras}\label{sec:plessdisks}

In this section, we extend the technology of \cite{KS} to allow for pointless disk algebras to obtain a localization result generalizing that of \cite{KSW} and \cite{KS}. We define, for each smooth conical manifold $X$ together with a marking $E$ on the set of 0-dimensional strata, an \infop $\infdisk^{\otimes}_{/(X,E)}$. We  show in \Cref{cor:fact=alg} that there is a natural equivalence
\begin{equation}\label{eq:factdisk}
    \mathsf{Fact}^{\mathrm{cstr}}_{(X,E)}(\icat) \xrightarrow{\simeq}  \alg_{\infdisk_{/(X,E)}}(\icat)  
\end{equation} 
for each presentably symmetric monoidal \infcat $\icat$. Specializing to maximally marked conical manifolds, we conclude that pointless constructible factorization algebras valued in $\icat$ are the same as algebras for the \infop $\infdisk^{\otimes}_{/(X,X_0)}$. The equivalence \eqref{eq:factdisk} will have a number of technically and conceptually important consequences; for instance, considering for $(X,E)$ the conical manifold $\R_{\geq0}$ with the cone point $0$ marked, we will demonstrate an explicit equivalence of \infops $\infdisk^{\otimes}_{/(\R_{\geq 0},0)}\rightarrow \mathsf{RMod}^{\otimes}$, identifying $\infdisk^{\otimes}_{/(\R_{\geq 0},0)}$ with the operad governing pairs of an associative algebra together with a (unpointed!) right module. Similarly, this implies that there is an equivalence $\infdisk^{\otimes}_{/(\R,0)}\rightarrow \mathsf{BMod}^{\otimes}$, justifying our approach to the Morita category.
\subsubsection{$\infty$-Categories and \infops of marked disks and manifolds}
To obtain $\infty$-categorical, `topologized' version of the operad of marked disks $\disk^{\otimes}_{/(X,E)}$, we wish to emulate the constructions of \cite{AFT-fh-stratified}: the \infop $\infdisk^{\otimes}_{/(X,E)}$ should be obtained from a subcategory of the slice \infcat $\infmfd^+_{/(X,E)}$ for $\infmfd^+$ the \emph{$\infty$-category} of marked smooth conical manifolds. This section is concerned with the construction of this \infcat and thereafter the \infop $\infdisk^{\otimes}_{/(X,E)}$. 
\begin{rmk}\label{rmk:plessdisknaive}
We would like to say that $\infdisk_{/(X,E)}$ is the wide subcategory of $\infdisk_{/X}$ on those maps of disks $U\rightarrow V$ over $X$ that induce a bijection between the marked 0-dimensional strata of $X$ that are contained in $U$ and the marked 0-dimensional strata of $X$ that are contained in $V$ and a similar description should yield $\infdisk_{/(X,E)}^{\otimes}$ as a wide suboperad of $\infdisk^{\otimes}_{/X}$. This is analogous to how the (ordinary) operad controlling pointless prefactorization algebras was defined, but in the current setting, it is not a priori obvious that the preceding declaration is homotopically meaningful. In other words, we must verify that extracting from a smooth conical manifold its set of 0-dimensional strata is in fact a legitimate functorial operation on the \infcat $\infmfd$. This is what the material below achieves.
\end{rmk}
We start by recalling from \cite{AFT-local-structures} the excellent formal properties enjoyed by the \infcat $\infbsc$.
\begin{lem}[\cite{AFT-local-structures}]\label{lem:bscproperties}
Let $U=\mathsf{C}(Y)\times\R^j$ and $V=\mathsf{C}(Z)\times\R^i$ be basics and let $f:U\rightarrow V$ be a conically smooth open embedding. Then the following are equivalent
\begin{enumerate}[$(1)$]
    \item The map $f$ is an equivalence in the \infcat $\infbsc$.
    \item There is an equality of depths $\mathsf{depth}(U)=\mathsf{depth}(V)$.
    \item There is an isomorphism between $U$ and $V$ in the category $\bsc$, that is, $i=j$ and there is an isomorphism $Y\cong Z$ in the category $\mfd$. 
    \item The image $f(U)$ intersects the cone locus $\R^i\subset V$.
\end{enumerate}
Moreover, if $f$ is not an equivalence in $\infbsc$, then the inequality $\mathsf{depth}(U)<\mathsf{depth}(V)$ holds.
\end{lem}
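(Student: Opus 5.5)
The argument runs through the cycle $(3)\Rightarrow(1)\Rightarrow(2)\Rightarrow(4)\Rightarrow(3)$. Along the way we prove the depth inequality for non-equivalences, which comes out of the same analysis of where $f$ sends the cone locus. The key input is the structure of conically smooth open embeddings of basics from Definition \ref{defn:consmfmfd}$(2)$. Either $f$ does not carry the cone point of the stratifying poset to the cone point, and then $f$ factors through $\R^i\times\R_{>0}\times Z$. Or it does, and then $f$ is conically smooth along $\R^j$ with injective derivative.

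First, $(3)\Rightarrow(1)$. An isomorphism in $\bsc$ is in particular an equivalence in $\infbsc$. Given any $f$ with $U\cong V$ abstractly, we still have to show that $f$ itself is an equivalence. For this we use the standard fact from \cite[Section 4.1]{AFT-local-structures} that any conically smooth open embedding of a basic into itself which meets the cone locus is isotopic to an isomorphism. Concretely, we restrict to a small conical neighbourhood of a cone-locus point in the image, rescale by the $\R_{>0}$-action on $\mathsf{C}(Z)$ and translate in $\R^i$. The derivative along $\R^j$ is injective and $i=j$, so $f$ restricted to the cone locus is an open embedding $\R^i\to\R^i$. Combined with the conical structure, this gives a path in the mapping space to a diffeomorphism.

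Next, $(1)\Rightarrow(2)$ holds because depth is an invariant of objects up to equivalence in $\infbsc$. Equivalences in the coherent nerve have homotopy inverses which are still open embeddings, so they induce isomorphisms of stratifying posets up to isotopy, and depth is defined from these. For $(2)\Rightarrow(4)$, and for the final inequality, suppose $f(U)$ misses the cone locus $\R^i\times\{*\}$. Then $f$ lands in $\R^i\times\R_{>0}\times Z$, whose depth is $\mathsf{depth}(Z)<\mathsf{depth}(V)$, since forming cones strictly increases depth. Depth is monotone under open embeddings, because it is a local supremum, so $\mathsf{depth}(U)<\mathsf{depth}(V)$. This contradicts $(2)$ and proves the \emph{moreover} clause once the cycle is closed.

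Finally, $(4)\Rightarrow(3)$. If $f(U)$ meets the cone locus of $V$, then, since $f$ is stratified and an open embedding, the stratum hit is the minimal one, and it must be the image of the minimal stratum of $U$, namely its cone locus $\R^j$. Injectivity of the derivative along $\R^j$ and openness give $j\le i$. Moreover, by the link computation in \cite{AFT-local-structures}, the local structure near a point of the minimal stratum determines $\R^{i-j}\times\mathsf{C}(Z)$ as the normal structure. Hence $\mathsf{C}(Y)\cong\R^{i-j}\times\mathsf{C}(Z)$ near cone points, and comparing minimal strata forces $i=j$ and $Y\cong Z$.

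I expect this last step to be the main obstacle, because it needs uniqueness of links. My first approach would be to cite the AFT result that the link of a stratum is well-defined up to isomorphism in $\mfd$. Failing that, I would induct on depth using the unzipping construction.
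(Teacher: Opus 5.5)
The paper gives no argument of its own here; the lemma is quoted from \cite{AFT-local-structures}. Your outline uses the natural ingredients: monotonicity of depth under open embeddings, stratum preservation, links, and an isotopy to the derivative at a cone point. However, as written your cycle does not close. In the step $(3)\Rightarrow(1)$ you pick ``a cone-locus point in the image'' and use the derivative of $f$ along $\R^j$. That derivative only exists in case $(ii)$ of Definition \ref{defn:consmfmfd}$(2)$, i.e.\ when $f$ carries the cone point to the cone point. Both moves presuppose $(4)$, which in your order $(3)\Rightarrow(1)\Rightarrow(2)\Rightarrow(4)$ has not yet been derived from $(3)$. The repair is short:
\begin{itemize}
\item $(3)\Rightarrow(2)$ is immediate, since depth is an isomorphism invariant.
\item Your own contrapositive depth argument gives $(2)\Rightarrow(4)$.
\item So prove $(1)\Rightarrow(2)\Rightarrow(4)\Rightarrow(3)$ together with $(3)\Rightarrow(2)$, and then prove $(4)\Rightarrow(1)$ with $(3)$ already in hand.
\end{itemize}
For $(1)\Rightarrow(2)$, the clean reason is that an equivalence supplies conically smooth open embeddings $U\to V$ and $V\to U$, and depth is monotone under open embeddings. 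Your appeal to stratifying posets does not work, because posets do not determine depth: $\mathsf{C}(*)$ and $\mathsf{C}(S^1)$ have the same poset but depths $1$ and $2$.

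Beyond this, the two places where the lemma has real content are only cited, and both sketches need sharpening.

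For $(4)\Rightarrow(1)$, the path is not obtained by restricting to a small neighbourhood. First translate so that a cone-locus point $0$ of $U$ goes to a cone-locus point $0$ of $V$. The isotopy is then $f_t=\gamma_t^{-1}\circ f\circ\gamma_t$, where $\gamma_t(x,[y,s])=(tx,[y,ts])$ rescales the Euclidean and cone directions simultaneously. Conical smoothness along $\R^j$ says precisely that $f_t$ converges, as $t\to 0$, to the derivative $D_0f$. This limit is a scaling-equivariant conically smooth map, injective by hypothesis. What you still owe is that $D_0f$ is an isomorphism. Equivalently, its image is open; an open, scaling-invariant image containing the origin must be everything.

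For $(4)\Rightarrow(3)$, the detour through $\R^{i-j}\times\mathsf{C}(Z)$ is unnecessary. Injectivity of the poset map gives $f^{-1}(\R^i)=\R^j$. Hence $f|_{\R^j}\colon\R^j\to\R^i$ is an open embedding, and invariance of domain gives $i=j$ at once. The genuine input is $Y\cong Z$, and your unzipping fallback does supply it. Unzipping along the deepest stratum is local and natural for isomorphisms. So $f\colon U\cong f(U)$ induces an isomorphism of links $\R^j\times Y\cong (f(U)\cap\R^i)\times Z$ lying over $f|_{\R^j}$. Comparing fibres over a single point gives $Y\cong Z$ in $\mfd$.
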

\begin{rmk}
As observed in \cite[Example 2.4.7]{AFT-local-structures}, for any compact smooth conical manifold $Z$, the cone $\mathsf{C}(Z)$ has both depth and dimension equal to $\mathsf{dim}(Z)+1$. Also, forming the product of any smooth conical manifold with $\R^i$ increases the dimension by $i$ and does not change the depth. It follows that for $U=\mathsf{C}(Y)\times\R^j$ a basic, we have $\mathsf{dim}(U)-\mathsf{depth}(U)=j$.
\end{rmk}
\begin{cor}
Let $f:U\rightarrow V$ be a morphism in $\infbsc$, then exactly one of the following is true.
\begin{enumerate}[$(1)$]
    \item The map $f$ is an equivalence in the \infcat $\infbsc$.
    \item The inequality $\mathsf{dim}(U)-\mathsf{depth}(U)>\mathsf{dim}(V)-\mathsf{depth}(V)$ holds.
\end{enumerate}
\end{cor}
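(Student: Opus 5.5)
The corollary should follow directly from Lemma \ref{lem:bscproperties} combined with the remark computing $\mathsf{dim}-\mathsf{depth}$ for basics. Write $U=\mathsf{C}(Y)\times\R^j$ and $V=\mathsf{C}(Z)\times\R^i$. By the remark, $\mathsf{dim}(U)-\mathsf{depth}(U)=j$ and $\mathsf{dim}(V)-\mathsf{depth}(V)=i$. So the dichotomy to prove is: either $f$ is an equivalence, or $j>i$, and never both.

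\emph{Mutual exclusivity.} If $f$ is an equivalence, then by $(1)\Rightarrow(3)$ of Lemma \ref{lem:bscproperties} we have $i=j$, so $(2)$ fails.

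\emph{Exhaustiveness.} Suppose $f$ is not an equivalence. By the final clause of Lemma \ref{lem:bscproperties}, $\mathsf{depth}(U)<\mathsf{depth}(V)$. Since $f$ is an open embedding, $\mathsf{dim}(U)=\mathsf{dim}(V)$. Here I would use that a conically smooth open embedding preserves dimension, because dimension of a conical manifold is local and the image $f(U)$ is open in $V$. It would then follow that
\[
j=\mathsf{dim}(U)-\mathsf{depth}(U)>\mathsf{dim}(V)-\mathsf{depth}(V)=i .
\]

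The main point needing care is the equality of dimensions. The cleanest route is to note that the dimension of a basic is the dimension of its top (open, dense) stratum, which is a smooth manifold. An open embedding $f$ maps an open subset of $U$ containing points of the top stratum of $U$ onto an open subset of $V$. There it restricts to a smooth open embedding of manifolds between the corresponding strata, and these must therefore have the same dimension.

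I would need to check that the image of the top stratum of $U$ lands in the top stratum of $V$. This holds because $f$ is an open embedding of stratified spaces: the induced map of posets is an upward-closed embedding, so maximal elements go to maximal elements. The top strata are the maximal ones, since the cone point is minimal in $P^{\lhd}$. Alternatively, one can cite the local characterization of dimension in \cite[Section 2.4]{AFT-local-structures}, where dimension is shown to be invariant under open embeddings.
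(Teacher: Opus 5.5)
Your exclusivity argument is fine. The exhaustiveness step, however, rests on $\mathsf{dim}(U)=\mathsf{dim}(V)$, and that is false in the stated generality.

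Nothing in the definitions forces basics to have pure dimension. In $V=\mathsf{C}(Z)\times\R^i$ the link $Z$ is an arbitrary compact smooth conical manifold, and disjoint unions are allowed. Take $Z=\mathrm{pt}\coprod S^1$. Then $V=\mathsf{C}(Z)$ is a plane with a ray attached at the cone point, with $\mathsf{dim}(V)=\mathsf{depth}(V)=2$ and $i=0$. Let $f\colon U=\R\hookrightarrow V$ be a diffeomorphism onto the open ray $\{\mathrm{pt}\}\times\R_{>0}$. This is a conically smooth open embedding of basics, it is not an equivalence, and $\mathsf{dim}(U)=1\neq 2=\mathsf{dim}(V)$. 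The corollary still holds here ($1>0$), but not by your chain of inequalities.

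Your justification breaks at the ``top stratum''. A basic may have several maximal strata, or a maximal stratum whose components have different dimensions. The map $f$ only needs to land in a lower-dimensional piece. What is local, and hence preserved by open embeddings, is the \emph{local} dimension and depth at a point. Global dimension is only monotone, $\mathsf{dim}(U)\le\mathsf{dim}(V)$. Combined with $\mathsf{depth}(U)<\mathsf{depth}(V)$, this says nothing about $j$ versus $i$. As written, your argument covers only pure-dimensional basics; that case does include the stratified cubes used later in the paper.

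The repair is to compare dimensions at the cone locus rather than globally. The paper's proof does this by induction on $\mathsf{depth}(V)$. If $f$ is not an equivalence, then $f(U)$ misses $\R^i$ by Lemma \ref{lem:bscproperties}, so $f$ factors through $Z\times\R_{>0}\times\R^i$. Replacing the target by a basic chart $\mathsf{C}(Z')\times\R^{k+1+i}$ of strictly smaller depth, the inductive hypothesis gives $j\ge k+1+i>i$.

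There is also an induction-free version. The map $f$ lies over an injective, upward-closed map of posets, so it restricts to an open embedding of the minimal stratum $\R^j$ of $U$ into a single stratum of $V$. That stratum lies in $Z\times\R_{>0}\times\R^i$, so each of its components has dimension at least $i+1$. Invariance of domain then gives $j\ge i+1$.

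In both versions, the key input is the dimension of the stratum that receives the cone locus of $U$, not the total dimensions of $U$ and $V$.
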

\begin{proof}
If $f$ induces an equivalence in $\infbsc$, then $U$ and $V$ are abstractly isomorphic in the category $\bsc$ by Lemma \ref{lem:bscproperties} and in particular have the same dimension and depth, so if $(1)$ holds, then $(2)$ is false. Therefore, it suffices to show that in case $(1)$ is false, then $(2)$ holds. We proceed by induction on the depth of $V$.  By Lemma \ref{lem:bscproperties} it is greater or equal to the depth of $U$. If $\mathsf{depth}(V)=\mathsf{depth}(U)=-1$, then both $U$ and $V$ are empty and hence equivalent. Suppose $\mathsf{depth}(V)\geq 0$ and write $V=\mathsf{C}(Z)\times\R^i$ and $U=\mathsf{C}(Y)\times\R^j$ for compact conical smooth manifolds $Z$ and $Y$ (which may be empty) so that $\mathsf{dim}(U)-\mathsf{depth}(U)=j$ and $\mathsf{dim}(V)-\mathsf{depth}(V)=i$. If $f(U)$ intersects the cone-locus of $V$, then $f$ is an equivalence by Lemma \ref{lem:bscproperties}, so we may assume that $f$ factors as
\[ \mathsf{C}(Y)\times\R^j\hooklongrightarrow Z\times \R_{>0} \times\R^{i}  \subset \mathsf{C}(Z)\times\R^i   \]
where the first map is a conically smooth open embedding. Since basics form a basis of any smooth conical manifold, we may assume that $Z=\mathsf{C}(Z')\times\R^{k}$ for some compact smooth conical manifold $Z'$ of strictly lower depth than $Z$ and some integer $k\geq 0$, so that we have a conically smooth open embedding
\[ \mathsf{C}(Y)\times\R^j\hooklongrightarrow \mathsf{C}(Z')\times\R^{k}\times \R_{>0}\times\R^i \cong \mathsf{C}(Z')\times\R^{k+1+i}. \]
Now the inductive hypothesis guarantees that $j\geq k+1+i$, so $j>i$. 
\end{proof}
Recall from \cite[Section 4.3]{AFT-local-structures} the partially ordered set $[\mathcal{B}\mathsf{sc}]$ of equivalence classes of objects of $\mathcal{B}\mathsf{sc}$ with partial order defined by $[V]\leq [U]$ just in case there exists a morphism $U\rightarrow V$ among (any) representatives (note the reversal). Recall that a full subposet $P'\subset P$ is {\em consecutive} if for every $x,y \in P'$ and any $t\in P$ such that $x\leq t\leq y$, then $t\in P'$.
\begin{cor}\label{cor:basicsposet}
The following hold true.
\begin{enumerate}[$(1)$]
    \item Let $[a,b]\subset \Z_{\geq0}$ be an interval, that is, a consecutive subposet. Let $[\infbsc]_{[a,b]}\subset [\infbsc]$ be the full subposet spanned by equivalence classes of basics $[U]$ for which $\mathsf{dim}(U)-\mathsf{depth}(U) \in [a,b]$, then $[\infbsc]_{[a,b]}$ is a consecutive subposet. 
    \item Let $n\in \Z_{\geq 0}$ be a nonnegative integer, then the full subposet $[\infbsc]_{n}\subset [\infbsc]$ spanned by equivalence classes of basics $[U]$ for which $\mathsf{dim}(U)-\mathsf{depth}(U) =n$ has no nonidentity morphisms. 
\end{enumerate}
\end{cor}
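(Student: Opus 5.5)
Both parts follow from the preceding Corollary, which says that any morphism $U\rightarrow V$ of $\infbsc$ is either an equivalence or satisfies $\mathsf{dim}(U)-\mathsf{depth}(U)>\mathsf{dim}(V)-\mathsf{depth}(V)$. Set $\delta([U]):=\mathsf{dim}(U)-\mathsf{depth}(U)$. This is well defined on equivalence classes, since equivalent basics are isomorphic in $\bsc$ by Lemma \ref{lem:bscproperties}. The first step is to restate the Corollary in terms of the poset $[\infbsc]$. Recall that $[V]\leq[U]$ means there is a morphism $U\rightarrow V$. The Corollary then says: if $[V]\leq[U]$, then either $[V]=[U]$ or $\delta([U])>\delta([V])$. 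Hence $\delta:[\infbsc]\rightarrow\Z_{\geq 0}$ is order-preserving, and in fact strictly order-preserving on non-identity relations.

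For part $(1)$, take $[x],[y]\in[\infbsc]_{[a,b]}$ and $[t]\in[\infbsc]$ with $[x]\leq[t]\leq[y]$. By monotonicity, $\delta([x])\leq\delta([t])\leq\delta([y])$. Since $[a,b]$ is an interval of $\Z_{\geq0}$ containing $\delta([x])$ and $\delta([y])$, it also contains $\delta([t])$. So $[t]\in[\infbsc]_{[a,b]}$, which is exactly consecutiveness.

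For part $(2)$, suppose $[V]\leq[U]$ with both classes in $[\infbsc]_n$. Then $\delta([U])=\delta([V])=n$, so the strict inequality case of the Corollary is excluded. Therefore the morphism is an equivalence and $[U]=[V]$, so every relation in $[\infbsc]_n$ is an identity.

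The only point needing care is that $[\infbsc]$ really is a poset, i.e.\ that the relation is antisymmetric. This is cited from \cite[Section 4.3]{AFT-local-structures}, and it also follows from the same strict-monotonicity observation. Beyond this there is no real obstacle: everything reduces to the dichotomy already proven in the previous Corollary.
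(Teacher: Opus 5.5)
Your proof is correct and is exactly the argument the paper intends. The paper states this as an immediate consequence of the preceding dichotomy (a map of basics is either an equivalence or strictly decreases $\mathsf{dim}-\mathsf{depth}$) and gives no further proof. Your observation that $\mathsf{dim}-\mathsf{depth}$ is order-preserving on $[\infbsc]$, and strictly so on non-identity relations, is precisely that consequence, and both parts follow from it as you describe.
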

 After \cite[Lemma 4.4.4]{AFT-local-structures}, for any smooth conical manifold $X$ and any $x\in X$, the set of those elements $[U]$ of $[\infbsc]$ for which there exists a conically smooth open embedding $f:U\rightarrow X$ for which $x\in f(U)$ contains a unique maximal element $[U_x]$ of depth equal to the local depth $\mathsf{depth}_x(X)$, and the map
 \[ X\longrightarrow   [\infbsc],\quad\quad x\longmapsto [U_x] \]
is a continuous map of topological spaces (where $[\infbsc]$ is endowed with the usual topology in which a set is open just in case it is upward closed). It is an exercise (see \cite[Proposition 4.4.7]{AFT-local-structures}) to show that for any consecutive full subposet $Q\subset [\mathcal{B}\mathsf{sc}]$, there is a simplicial functor 
\begin{equation}\label{eq:stratumfunctor}
  (\_)_{[Q]}:  \mathcal{M}\mathsf{fd}_{\simp} \longrightarrow  \mathcal{M}\mathsf{fd}_{\simp} 
\end{equation}
of locally Kan simplicial categories that carries $X \in  \mathcal{M}\mathsf{fd}_{\simp}$ to the cone in the pullback diagram 
\[
\begin{tikzcd}
 X_{[Q]} \ar[d] \ar[r,hook]& X \ar[d] \\
 Q \ar[r,hook] & {[}\mathcal{B}\mathsf{sc}{]}
\end{tikzcd}
\]
among topological spaces (recall that since $Q\subset  [\mathcal{B}\mathsf{sc}]$ is consecutive, the subspace $X_{[Q]} \subset X$ admits a natural structure of a smooth conical manifold for which the inclusion is conically smooth). The inclusion $X_{[Q]}\hookrightarrow X$ exhibits the former smooth conical manifold as the locus of those points $x$ for which the conically smooth embeddings $U\rightarrow X$ with $[U]\in Q$ whose image contain $x$ form a local basis at $x$. 
\begin{nota}\label{nota:0dimstratafunctor}
Consider the special case of $Q$ being the full subposet $[\mathcal{B}\mathsf{sc}]_0\subset [\mathcal{B}\mathsf{sc}]$ spanned by those basics $U$ for which $\mathsf{dim}(U)-\mathsf{depth}(U) =0$; by Corollary \ref{cor:basicsposet}, this subposet is consecutive and contains no nonidentity morphisms. In this case, the smooth conical manifold $X_{[\mathcal{B}\mathsf{sc}]_0}$ is the union of all 0-dimensional strata of $X$ and we will write $X_0$ for the underlying set of $X_{[\mathcal{B}\mathsf{sc}]_0}$. The only extra data remembered by the simplicial category $\mathcal{M}\mathsf{fd}_{\simp}$ is a choice of disjoint decomposition: the full simplicial subcategory of $\mathcal{M}\mathsf{fd}_{\simp}$ spanned by 0-dimensional smooth conical manifolds is equivalent to the category defined as follows. 
\begin{enumerate}
    \item[$(O)$] Objects are surjective maps $E\rightarrow P$ of countable sets.
    \item[$(M)$] Maps are commuting diagrams
    \[
\begin{tikzcd}
E \ar[d] \ar[r] & F\ar[d] \\
P \ar[r] & Q
\end{tikzcd}
\]
in which the horizontal maps are injective and the vertical ones surjective.
\end{enumerate}
Forgetting the set $P$, we have a simplicial functor
\[ (\_)_0: \mathcal{M}\mathsf{fd}_{\simp} \longrightarrow \set_{\mathrm{inj}} \]
to the category $\set_{\mathrm{inj}}$ of sets and injections between them, carrying the smooth conical manifold $X$ to its set $X_0$ of 0-dimensional strata. 
\end{nota}
The formation of disjoint unions endows both $\mathcal{M}\mathsf{fd}_{\simp}$ and $\mathsf{Set}_{\mathrm{inj}}$ with symmetric monoidal structures and the functor $(\_)_0$ canonically enhances to a symmetric monoidal functor
\[ (\_)_0: \mathcal{M}\mathsf{fd}^{\otimes}_{\simp} \longrightarrow \set_{\mathrm{inj}}^{\otimes} \]
of symmetric monoidal locally Kan simplicial categories. Note that the empty set is a symmetric monoidal unit on both sides (viewed as a stratified space in a trivial fashion on the left hand side) but neither symmetric monoidal structure is coCartesian.
\begin{cons}[The symmetric monoidal \infcat of marked smooth conical manifolds]\label{cons:markedmfd}
Consider the category $\set_{\mathrm{inj}}^{+,\simeq}$ of \emph{marked sets and injections} defined as follows.
\begin{enumerate}
    \item[$(O)$] Objects are pairs $(S,E)$ of a set together with a subset $E\subset S$.
    \item[$(M)$] A morphism between pairs $(S,E)$ and $(T,F)$ is an injective map of sets $f:S\hookrightarrow T$ such that $f(E)=f(S) \cap F$.
\end{enumerate}
Observe that the obvious forgetful functor $\set_{\mathrm{inj}}^{+,\simeq}\rightarrow \set_{\mathrm{inj}}$ is a right fibration. The category $\set_{\mathrm{inj}}^{+,\simeq}$ also admits a symmetric monoidal structure by disjoint union for which the empty pair $(\emptyset,\emptyset)$ is the symmetric monoidal unit, and the functor $\set_{\mathrm{inj}}^{+,\simeq}\rightarrow \set_{\mathrm{inj}}$ is symmetric monoidal. Now we apply the coherent nerve $\ner^{\mathrm{coh}}$ (\cite[Definition 1.1.5.5]{LurHTT}) to the symmetric monoidal simplicial functor $(\_)_0$ and define the symmetric monoidal \infcat $\left(\infmfd^{+,\simeq}\right)^{\otimes}$ of \emph{marked smooth conical manifolds} as the cone in the pullback diagram 
\[
\begin{tikzcd}
\left(\infmfd^{+,\simeq}\right)^{\otimes} \ar[d] \ar[r] & (\set_{\mathrm{inj}}^{+,\simeq})^{\otimes} \ar[d] \\
\infmfd^{\otimes} \ar[r,"(\_)_0"] & \set_{\mathrm{inj}}^{\otimes}
\end{tikzcd}
\]
among symmetric monoidal \infcatst. We will let $\infmfd^{+,\simeq}:=\left(\infmfd^{+,\simeq}\right)^{\otimes}_{\langle 1\rangle}$ denote its underlying \infcat and write objects of $\infmfd^{+,\simeq}$ as pairs $(X,E)$ for $X \in \infmfd$ and $E\subset X_0$ a subset of the 0-dimensional strata of $X$. We have a full subcategory $\infbsc^{+,\simeq}\subset \infmfd^{+,\simeq}$ spanned by those objects for which the underlying smooth conical manifold is a basic. The symmetric monoidal \infcat of \emph{marked conical disks} $\left(\infdisks^{+,\simeq}\right)^{\otimes}$ is the smallest symmetric monoidal subcategory of $\left(\infmfd^{+,\simeq}\right)^{\otimes}$ containing $\infbsc^{+,\simeq}$, and we let $\infdisks^{+,\simeq}:=\left(\infdisks^{+,\simeq}\right)^{\otimes}_{\langle 1\rangle}$ denote its underlying \infcatt; it consists of disjoint unions of objects of $\bsc^{+,\simeq}$.
\end{cons}
\begin{rmk}[Markings as a tangential structure]\label{rmk:markingstangentialstr}
The functor $\infmfd^{+,\simeq}\rightarrow \infmfd$ is a base change of the right fibration $\set^{+,\simeq}_{\mathrm{inj}}\rightarrow \set_{\mathrm{inj}}$ and therefore a right fibration. The associated presheaf is in fact a sheaf. It follows that the right fibration $\infmfd^{+,\simeq}\rightarrow \infmfd$ is determined by its base change along $\infbsc\subset\infmfd$ in the following sense: the base-changed right fibration $\infbsc^{+,\simeq}\rightarrow \infbsc$ is a \emph{tangential structure} or \emph{\infcat of basics} in the sense of \cite[Section 5]{AFT-local-structures} and we recover $\infmfd^{+,\simeq}$ as the \emph{\infcat of $\infbsc^{+,\simeq}$-manifolds} 
\[ \infmfd^{+,\simeq}\simeq \infmfd(\infbsc^{+,\simeq}):=\infmfd\times_{\mathsf{RFib}(\infbsc)}\mathsf{RFib}(\infbsc)_{/\infbsc^{+,\simeq}} \]
introduced in \emph{loc. cit.} Note that the \infcat of basics $\infbsc^{+,\simeq}$ is very simple:
\begin{enumerate}[$(1)$]
    \item Its fiber over a basic $U$ with $\mathsf{dim}(U)-\mathsf{depth}(U)>0$ (that is, $U=\R^i\times \mathsf{C}(Z)$ for some smooth conical manifold $Z$ and some $i>0$) is empty.
    \item Its fiber over a basic $U$ with $\mathsf{dim}(U)-\mathsf{depth}(U)=0$ (that is, $U=\mathsf{C}(Z)$ for some compact smooth conical manifold $Z$) is a two-element set. This set encodes whether the cone point is or isn't marked.
\end{enumerate}
\end{rmk}
Now let $(X,E)$ be a marked smooth conical manifold, then we apply \cite[Corollary 1.20, Notation 1.21]{AFT-fh-stratified} to the symmetric monoidal \infcat $\left(\infmfd^{+,\simeq}\right)^{\otimes}$ and the right fibration 
\[  \infdisks^{+,\simeq}_{/(X,E)} := \infdisks^{+,\simeq}\times_{\infmfd^{+,\simeq}}\infmfd^{+,\simeq}_{/(X,E)} \longrightarrow  \infdisks^{+,\simeq},   \]
using that the symmetric monoidal unit of $\left(\infmfd^{+,\simeq}\right)^{\otimes}$ and $\left(\infdisks^{+,\simeq}\right)^{\otimes}$ is initial to obtain the \infop $(\infdisks^{+,\simeq})^{\otimes}_{/(X,E)}$ with underlying \infcat $\infdisks^{+,\simeq}_{/(X,E)}$ fitting into a pullback diagram
\[
\begin{tikzcd}
     (\infdisks^{+,\simeq})^{\otimes}_{/(X,E)} \ar[d] \ar[r] & \left(\infdisks^{+,\simeq}_{/(X,E)}\right)^{\coprod} \ar[d] \\
     (\infdisks^{+,\simeq})^{\otimes} \ar[r] & (\infdisks^{+,\simeq})^{\coprod}
\end{tikzcd}
\]
of \infopst, where $(\_)^{\coprod}:\catinf\rightarrow \opinf$ is the coCartesian \infop construction of \cite[Section 2.4.3]{LurHA}. Explicitly, a multimorphism
\begin{equation}\label{eq:multimordisk}  \left\{(U_1,S_1)\overset{f_1}{\rightarrow} (X,E)\oplus\ldots \oplus (U_n,S_n)\overset{f_n}{\rightarrow} (X,E)\right\} \longrightarrow \left\{(V,T)\rightarrow (X,E)\right\}\end{equation}
of $\left(\infdisks^{+,\simeq}\right)_{/(X,E)}^{\otimes}$ consists of 
\begin{enumerate}[$(i)$]
    \item a map $f:(U,S) = \coprod_i(U_i,S_i)\rightarrow (V,
    T)$ in the \infcat $\infdisks^{+,\simeq}$ (note that $S=U_0\times_{V_0}T$ by construction of $\infdisks^{+,\simeq}$).
    \item for each $i\in \langle n\rangle^{\circ}$, a homotopy between the map $f_i:(U_i,S_i)\rightarrow (X,E)$ and the composite
    \[  (U_i,S_i)\longrightarrow (U,S)\overset{f}{\longrightarrow} (V,T)\longrightarrow (X,E). \]
    (where again $S=U_0\times_{X_0}E$ and $S_i=(U_i)_0\times_{X_0}E$ by construction of $\infdisks^{+,\simeq}$).
\end{enumerate}

\begin{defn}[$\infty$-Operads of marked disks]
Let $(X,E)$ be a marked smooth conical manifold. The \infop of \emph{marked disks in $(X,E)$}, denoted $\infdisks^{\otimes}_{/(X,E)}$, is the wide suboperad of  $\left(\infdisks^{+,\simeq}\right)_{/(X,E)}^{\otimes}$ obtained by declaring a multimorphism \eqref{eq:multimordisk} to lie in $\infdisks^{\otimes}_{/(X,E)}$ just in case the map in $(i)$ above induces a bijection $S\rightarrow T$ of sets, or in other words, the maps $S\hookrightarrow E$ and $T\hookrightarrow E$ have the same image. We further have a full suboperad $\infdisk^{\otimes}_{/(X,E)}\subset \infdisks^{\otimes}_{/(X,E)}$ spanned by those pairs $\left(\langle n\rangle, (U_1,S_1)\oplus \ldots \oplus  (U_n,S_n)\right)$ for which $U_i$ is a basic (instead of a disjoint union of such) for all $i$. These \infops have underlying \infcats denoted by $\infdisks_{/(X,E)}:=\left(\infdisks^{\otimes}_{/(X,E)}\right)_{\langle 1\rangle}$ and $\infdisk_{/(X,E)}:=\left(\infdisk^{\otimes}_{/(X,E)}\right)_{\langle 1\rangle}$.    
\end{defn}

\begin{rmk}
The right fibration $\infdisks^{+,\simeq}\rightarrow \infdisks$ induces for each marked smooth conical manifold $(X,E)$ a trivial fibration $(\infdisks^{+,\simeq})^{\otimes}_{/(X,E)}\rightarrow \infdisks^{\otimes}_{/X}$. In view of the Remark \ref{rmk:markingstangentialstr}, this is an instance of the general fact that for any \infcat of basics $\mathcal{B}\rightarrow \infbsc$, the associated map of \infops $\left(\infdisks(\mathcal{B})\right)^{\otimes}_{/X}\rightarrow \left(\infdisks\right)^{\otimes}_{/X}$ is a trivial fibration for every $\mathcal{B}$-manifold $X$. In particular, the \infop $\infdisks^{\otimes}_{/(X,E)}$ is equivalent to a wide suboperad of $\infdisks^{\otimes}_{/X}$. Similarly, $\infdisk^{\otimes}_{/(X,E)}$ is equivalent to the wide suboperad of $\infdisk^{\otimes}_{/X}$ informally described in Remark \ref{rmk:plessdisknaive}. Accordingly, we will write objects of $\infdisks^{\otimes}_{/(X,E)}$ or $\infdisk^{\otimes}_{/(X,E)}$ as 
\[ U_1\oplus \ldots\oplus U_n\] with $U_n\in \infdisks_{/X}$ respectively $\infdisk_{/X}$ instead of
\[(U_1,S_1)\oplus \ldots \oplus(U_n,S_n),\] 
since for each $1 \leq i\leq n$, the marking $S_i$ on $U_i$ must be the pullback $(U_i)_0\times_{X_0}E$.
\end{rmk}

\begin{rmk}\label{rmk:suboperad}
Let $(X,E)$ be a marked smooth conical manifold. The \infop $\infdisks^{\otimes}_{/(X,E)}$ is a wide suboperad of $ \infdisks^{\otimes}_{/X}$ in a particularly simple way: the inclusion on spaces of multimorphisms is either the inclusion of the empty space or it is the identity. More precisely, let $U\in\infdisks_{/(X,E)}$ and let $\{V_j\}_{1\leq j\leq n}\in \infdisks_{/(X,E)}$. If the subsets $U_0\cap E\subset E$ and $\coprod_j (V_j)_0 \cap E\subset E$ of $E$ are not the same, the space of multimorphisms 
\[  \mathrm{Mul}_{\infdisks^{\otimes}_{/(X,E)}}(\{V_j\}_{1\leq j\leq n},U) \]
is empty. If these subsets of $E$ are the same, the map 
\[  \mathrm{Mul}_{\infdisks^{\otimes}_{/(X,E)}}(\{V_j\}_{1\leq j\leq n},U) \hooklongrightarrow \mathrm{Mul}_{\infdisks^{\otimes}_{/X}}(\{V_j\}_{1\leq j\leq n},U) \]
is the identity. In particular, if $U\hookrightarrow X$ is a single disk (instead of a disjoint union of such) for which $\mathsf{dim}(U)-\mathsf{depth}(U)=0$ and the map $*\cong U_0\hookrightarrow X_0$ intersects $E$, then for the empty collection $\emptyset$ of objects of $\infdisks_{/(X,E)}$ the inclusion 
\[  \mathrm{Mul}_{\infdisks^{\otimes}_{/(X,E)}}(\emptyset,U) \hooklongrightarrow \mathrm{Mul}_{\infdisks^{\otimes}_{/X}}(\emptyset,U) \]
is the inclusion 
\[ \emptyset \hooklongrightarrow *. \]
\end{rmk}

We will now identify the spaces of multimorphisms of the \infop $\infdisk^{\otimes}_{/(X,E)}$ with configuration spaces.

\begin{obs}[Configuration spaces]\label{obs:configurations}
Let $(X,E)$ be a marked smooth conical manifold. Since $\infdisk_{/(X,E)}\subset \infdisk_{/X}$ is a wide subcategory, the inclusion becomes the identity on maximal subgroupoids, so that we have an identification
\[   \infdisk_{/(X,E)}^{\simeq}\simeq \infdisk_{/X}^{\simeq}\simeq \coprod_{[V]\in [\infbsc]}\Hom_{\infmfd}(V,X)_{\mathsf{Aut}_0(V)} \overset{\simeq}{\longrightarrow} \coprod_{[V]\in[\infbsc]} X_{[V]}, \]
(where we abuse notation and write $X_{[V]}$ for the singular homotopy type of the smooth manifold $X_{[V]}$), given by evaluation at the origin of a disk, by e.g. \cite[Lemma 4.4.8]{AFT-local-structures} (here we write $\Hom_{\infmfd}(V,X)_{\mathsf{Aut}_0(V)}$ for the coinvariants of the action of origin-preserving conically smooth automorphisms of $V$). Any conically smooth open embedding $V\hookrightarrow X$ for $V$ a disk specifies a path component $X_{[V\hookrightarrow X]}\subset X_{[V]}$ determined by the map
\[  \R^{k} \simeq V_{[V]} \longrightarrow X_{[V]} \]
(where $k=\mathsf{dim}(V)-\mathsf{depth}(V)$). Let $U\in \infdisk_{/(X,E)}$, let $n>0$ an integer and let $\vec{V}:=(V_1\hookrightarrow X)\oplus\ldots\oplus (V_n\hookrightarrow X)$ be an object of $\infdisk_{/(X,E)}^{\otimes}$ in the fiber over $\langle n\rangle$. According to Remark \ref{rmk:suboperad}, if $U_0\cap E\subset E$ and $\coprod_{i}(V_i)_0\cap E\subset E$ do not coincide, then the space of multimorphisms
\[\Hom_{\infdisk_{/(X,E),\mathrm{act}}^{\otimes} }(V_1\oplus \ldots\oplus V_n,U)=\mathrm{Mul}_{\infdisk_{/(X,E)}^{\otimes}}\left(\{V_j\}_{1\leq j\leq n},U\right)\]
is empty, and if these subsets of $E$ are the same, then we have 
\[\mathrm{Mul}_{\infdisk_{/(X,E)}^{\otimes}}\left(\{V_j\}_{1\leq j\leq n},U\right) = \mathrm{Mul}_{\infdisk_{/X}^{\otimes}}\left(\{V_j\}_{1\leq j\leq n},U\right).\]
The object $(V_1\hookrightarrow X)\oplus\ldots\oplus (V_n\hookrightarrow X)$ determines a map 
\[ H:\langle n\rangle^{\circ} \longrightarrow \pi_0(\infdisk^{\simeq}_{/X})\cong \coprod_{[V]\in[\infbsc]}\pi_0(X_{[V]})   \]
carrying $j\in \langle n\rangle^{\circ}$ to the path component $X_{[V_j\hookrightarrow X ]} \subset X_{[V_j]}$. For each $V\rightarrow X$ an object of $\infdisk_{/X}$, consider the pullback diagram
\[
\begin{tikzcd}
    U_{[V\hookrightarrow X]} \ar[d,hook] \ar[r,hook] & U_{[V]} \ar[d,hook] \\
    X_{[V\hookrightarrow X]} \ar[r,hook] & X_{[V]}
\end{tikzcd}
\]
of open inclusions of smooth manifolds; note that the right vertical map need not be an inclusion of a single path component, so that the pullback $U_{[V\hookrightarrow X]} $ might consist of multiple components, even though $X_{[V\hookrightarrow X]}$ is connected. Now the space of multimorphisms $\mathrm{Mul}_{\infdisk_{/X}^{\otimes}}\left(\{V_j\}_{1\leq j\leq n},U\right)$ is identified, by evaluation at the center of disks, with (the homotopy type of) the product of ordered configuration spaces
\[ \Hom_{\infdisk_{/X,\mathrm{act}}^{\otimes} }(V_1\oplus \ldots\oplus V_n,U)  \overset{\simeq}{\longrightarrow} \prod_{X_{[V\hookrightarrow X]}\in\pi_0\left(\infdisk^{\simeq}_{/X}\right)} \mathsf{Conf}_{|H^{-1}(X_{[V\hookrightarrow X]})|}(U_{[V\hookrightarrow X]}). \]
Note that $\mathsf{Conf}_{|H^{-1}(X_{[V\hookrightarrow X]})|}(U_{[V\hookrightarrow X]})$ is empty if $\mathsf{dim}(V)-\mathsf{depth}(V)=0$ and $|H^{-1}(X_{[V\hookrightarrow X]})|>1$; in particular, in case $U_0\cap E$ is nonempty, then the space $\mathrm{Mul}_{\infdisk_{/(X,E)}^{\otimes}}\left(\{V_j\}_{1\leq j\leq n},U\right)$ of multimorphisms may only be nonempty if there is \emph{exactly one} $i\in \langle n\rangle^{\circ}$ for which $\mathsf{dim}(V_i)-\mathsf{depth}(V_i)=0$ and the image of $*\cong (V_i)_0\hookrightarrow X_0$ is the marked point $U_0\cap E$.
\end{obs}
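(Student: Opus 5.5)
The plan is to prove the observation in three layers: first the groupoid identification, then the reduction from marked to unmarked multimorphism spaces, and finally the configuration-space formula, from which the closing assertion about marked points follows.

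\emph{The maximal subgroupoids.} Since $\infdisk_{/(X,E)}\subset\infdisk_{/X}$ is wide and contains all equivalences, the two have the same maximal subgroupoid. The reason is that an equivalence $U\simeq U'$ over $X$ is a bijection on $U_0$ and hence on the marked points it contains. I then decompose $\infdisk^{\simeq}_{/X}$ by the equivalence class $[V]\in[\infbsc]$ of the source basic. On each summand I invoke \cite[Lemma 4.4.8]{AFT-local-structures}: evaluation at the origin identifies $\Hom_{\infmfd}(V,X)_{\mathsf{Aut}_0(V)}$ with $X_{[V]}$. The origin of $V$ lies in $V_{[V]}\cong\R^k$, and a conically smooth open embedding preserves the basic type of points, so the origin lands in $X_{[V]}$. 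Connectedness of $\R^k$ then shows that each embedding $V\hookrightarrow X$ picks out a single component $X_{[V\hookrightarrow X]}$.

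\emph{Marked versus unmarked multimorphisms.} This dichotomy is exactly Remark \ref{rmk:suboperad}, restricted to the full suboperad of single basics. The space is empty when $U_0\cap E\neq\coprod_i(V_i)_0\cap E$ as subsets of $E$. Otherwise the inclusion of multimorphism spaces is the identity, because the defining condition in $(i)$ depends only on $\pi_0$ data, namely the induced map on $0$-dimensional strata.

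\emph{The configuration-space formula.} I reduce the unmarked operad $\infdisk^{\otimes}_{/X}$ to the statement of \cite{AFT-fh-stratified} that evaluation at centers gives an equivalence
\[
\Hom_{\infdisk^{\otimes}_{/X,\mathrm{act}}}(V_1\oplus\ldots\oplus V_n,U)\simeq\Hom_{\infmfd}\bigl(\textstyle\coprod_j V_j,U\bigr)_{\prod_j\mathsf{Aut}_0(V_j)}\times_{\ldots}\{\text{homotopies over }X\}.
\]
Here I argue exactly as in the single-disk case, with \cite[Lemma 4.4.8]{AFT-local-structures} applied to the disjoint union $\coprod_jV_j$. The centers of the $V_j$ give an ordered injective tuple in $U$, and the $j$-th center must lie in $U_{[V_j]}$. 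The homotopy to the given embedding $V_j\hookrightarrow X$ forces this center into the component $X_{[V_j\hookrightarrow X]}$, that is, into $U_{[V_j\hookrightarrow X]}$. Injectivity only couples points of the same stratum type, since distinct types give disjoint loci. Grouping the indices via $H$ therefore gives the stated product of ordered configuration spaces.

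\emph{The main obstacle.} The hardest step is the fibre-sequence argument showing that the remaining space of germs, the embeddings modulo $\mathsf{Aut}_0$ with fixed centers, is contractible coherently in the operadic setting rather than just on objects. I would handle it by citing the corresponding argument of \cite{AFT-fh-stratified} for $\infdisk^{\otimes}_{/X}$ verbatim, since the marked operad agrees with it on nonempty multimorphism spaces.

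\emph{The final assertion.} Suppose $\mathsf{dim}(V)-\mathsf{depth}(V)=0$. Then $U_{[V]}$ is a discrete set, namely the $0$-dimensional strata of $U$ of that type. Since $U$ is a single basic, Lemma \ref{lem:bscproperties} gives $U_0$ at most one point, namely the cone point. Hence $\mathsf{Conf}_m(U_{[V\hookrightarrow X]})=\emptyset$ for $m>1$.

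Now suppose $U_0\cap E\neq\emptyset$. The marking condition then requires $\coprod_i(V_i)_0\cap E$ to equal this one point. So some $V_i$ must have nonempty marked $0$-stratum mapping to it, which forces $\mathsf{dim}(V_i)-\mathsf{depth}(V_i)=0$. By the preceding emptiness, at most one such $i$ can occur, so there is exactly one.
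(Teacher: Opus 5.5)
Your outline follows the paper's own in-line justification: \cite[Lemma 4.4.8]{AFT-local-structures} for the core, Remark~\ref{rmk:suboperad} for the marked/unmarked dichotomy, evaluation at centers, and a local-type count. The first two layers are fine. The configuration-space step, however, is false in the generality stated, and it breaks exactly at your ``main obstacle''.

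You use the homotopies over $X$ only to pin each center into $X_{[V_j\hookrightarrow X]}$, i.e.\ as a $\pi_0$-condition, but they carry higher data. The multimorphism space is the homotopy fiber of $\Hom_{\infmfd}(\coprod_j V_j,U)\to\prod_j\Hom_{\infmfd}(V_j,X)$ over $(f_j)_j$. Taking $\prod_j\mathsf{Aut}_0(V_j)$-homotopy orbits on \emph{both} sides leaves this fiber unchanged. Evaluation at centers then identifies it with
\[
\mathrm{hofib}\Bigl(\prod_{c}\mathsf{Conf}_{|H^{-1}(c)|}\bigl(U_{[c]}\bigr)\longrightarrow\prod_{j=1}^{n}X_{[V_j\hookrightarrow X]}\Bigr)
\]
over the centers of the $f_j$. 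Here $c$ runs over the components $X_{[V\hookrightarrow X]}$ hit by $H$, and $U_{[c]}=U_{[V\hookrightarrow X]}$.

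This is the product of configuration spaces only when the components $X_{[V_j\hookrightarrow X]}$ are contractible. In general, the germ-plus-homotopy data with fixed centers is a product of path spaces, which is not contractible. For a concrete failure, take $X=S^1$, $E=\emptyset$, $n=1$, and $V_1,U$ open arcs. Every morphism between single disks is an equivalence, so by the first part of the observation $\infdisk_{/S^1}\simeq S^1$ as an $\infty$-groupoid. Hence $\mathrm{Mul}(\{V_1\},U)\simeq\Omega S^1\simeq\Z$, whereas $\mathsf{Conf}_1(U)\simeq *$.

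The paper states this identification without further argument, so the defect lies in the statement for arbitrary $(X,E)$ rather than in your rendering of it. The same caveat applies to the contractibility \eqref{eq:vmaps} used later. The identification does hold whenever the relevant strata components are contractible, which covers the stratified cubes and $\R_{\geq0}$, $(\R,0)$ of Example~\ref{ex:stratifiedline}. To make your proof correct, either add that hypothesis or prove the homotopy-fiber formula above instead.

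Your final clause survives either way. An open embedding preserves local type, and a basic $\mathsf{C}(Z)\times\R^j$ has a $0$-dimensional stratum only if $j=0$ (the cone point). This is a direct inspection, not a consequence of Lemma~\ref{lem:bscproperties}. So at most one $V_i$ with $\mathsf{dim}(V_i)-\mathsf{depth}(V_i)=0$ can occur in an embedding $\coprod_iV_i\hookrightarrow U$, and an empty configuration factor makes the homotopy fiber empty too.
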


The preceding analysis allows us to perform an essential computation that showcases the utility of pointless disk algebras: while $\infdisk^{\otimes}_{/\R_{\geq 0}}$-algebras are associative algebras together with \emph{pointed} right modules, $\infdisk^{\otimes}_{/(\R_{\geq 0},0)}$-algebras are associative algebras together with a right module (without pointing).

\begin{ex}[Pointless disk algebras on the stratified line]\label{ex:stratifiedline}
Consider the marked smooth conical manifold $(\R_{\geq 0},0)$, that is, the cone $\mathsf{C}(*)$ on a point, with the cone point marked. The space $\infdisk_{/(\R_{\geq 0},0)}^{\simeq}$ is discrete and has two equivalence classes of objects, represented by the embeddings
\[  \R_{\geq 0}=\R_{\geq 0}\quad\text{and}\quad \R\cong \R_{>0}\hookrightarrow \R_{\geq 0}, \]
where we have fixed an (orientation preserving) automorphism $\R\cong \R_{>0}$. Since there are no conically smooth open embeddings $\R_{\geq 0}\hookrightarrow \R$, the space of multimorphisms \[\mathrm{Mul}_{\infdisks^{\otimes}_{/(\R_{\geq 0},0)}}(\{Y_i\}_{1\leq i\leq n},\R_{>0}\hookrightarrow \R_{\geq 0})\]
is only nonempty if all $Y_i$ are equal to $\R_{>0}\hookrightarrow \R_{\geq 0}$, in which case Observation \ref{obs:configurations} provides an equivalence
\[ \mathrm{Mul}_{\infdisks^{\otimes}_{/(\R_{\geq 0},0)}}(\{\R_{>0}\hookrightarrow \R_{\geq 0}\}_{1\leq i\leq n},\R_{>0}\hookrightarrow \R_{\geq 0}) \overset{\simeq}{\longrightarrow} \mathsf{Conf}_n(\R_{>0}).  \]
Let $f:\langle n\rangle^{\circ}\hookrightarrow\R_{>0}$ be an injective map, that is, an element of $\mathsf{Conf}_n(\R_{>0})$. Then $f$ determines a linear ordering $\prec$ on the set $\langle n\rangle^{\circ}$ by declaring $i\prec j$ if $f(i)<f(j)$; this yields a continuous map
\[ \varphi_n:\mathsf{Conf}_n(\R_{>0}) \longrightarrow \mathsf{LinOrd}(\langle n\rangle^{\circ})\]
where the codomain is equipped with the discrete topology. The fiber of $\varphi_n$ at each linear ordering can be identified with a convex subset of $\R^n$, so we conclude that $\varphi_n$ is a homotopy equivalence. Since the cone point of $\R_{\geq 0}$ is marked, the space of multimorphisms 
\[\mathrm{Mul}_{\infdisks^{\otimes}_{/(\R_{\geq 0},0)}}(\{Y_i\}_{1\leq i\leq n},\R_{\geq 0}= \R_{\geq 0})\]
is nonempty only if there is \emph{exactly one} $i$ for which $Y_i$ is equal to $\R_{\geq 0}= \R_{\geq 0}$, in which case Observation \ref{obs:configurations} provides an equivalence
\[ \mathrm{Mul}_{\infdisks^{\otimes}_{/(\R_{\geq 0},0)}}(\{\R_{>0}\hookrightarrow \R_{\geq 0}\}_{\langle n\rangle^{\circ} \setminus \{i\}}\cup \{\R_{\geq 0}=\R_{\geq 0}\},\R_{\geq 0}=\R_{\geq 0}) \overset{\simeq}{\longrightarrow} \mathsf{Conf}_{n-1}(\R_{>0}) \times \mathsf{Conf}_1(*).  \]
Repeating the analysis above, we have a homotopy equivalence
\[ \psi_n: \mathsf{Conf}_{n-1}(\R_{>0})\times \mathsf{Conf}_1(*)  \cong \mathsf{Conf}_{n-1}(\R_{>0})\overset{\simeq}{\longrightarrow} \mathsf{LinOrd}(\langle n\rangle^{\circ}\setminus \{i\}). \]
For the multimorphisms from the empty collection, we deduce from  Remark \ref{rmk:suboperad} the equivalences 
\[ \mathrm{Mul}_{\infdisks^{\otimes}_{/(\R_{\geq 0},0)}}(\emptyset,\R_{\geq 0}=\R_{\geq 0})=\emptyset \quad\text{and}\quad \mathrm{Mul}_{\infdisks^{\otimes}_{/(\R_{\geq 0},0)}}(\emptyset,\R_{> 0}\hookrightarrow \R_{\geq 0})\simeq *.  \]
We conclude that $\infdisk_{/(\R_{\geq 0},0)}^{\otimes}$ is equivalent to a discrete operad, and the identification of the spaces of multimorphisms with sets of linear orderings (together with the straightforward verification that composition in $\infdisk_{/(\R_{\geq 0},0)}^{\otimes}$ is compatible with composition of linear orderings) entails that the assignment
\[  \R_{\geq 0}=\R_{\geq0}\longmapsto \mathfrak{m},\quad \quad \R_{>0}\hookrightarrow \R_{\geq0} \longmapsto \mathfrak{a}  \]
together with the maps $\phi_n$ and $\psi_n$ determines an equivalence
\[ \infdisk_{/(\R_{\geq 0},0)}^{\otimes}\overset{\simeq}{\longrightarrow} \mathsf{RM}^{\otimes} \]
of \infopst, where $\mathsf{RM}^{\otimes}$ is the \infop controlling associative algebras together with a right module of \cite[Variant 4.2.1.36]{LurHA}. An entirely parallel argument yields equivalences of \infops 
\[ \infdisk^{\otimes}_{/(\R_{\leq 0},0)}\overset{\simeq}{\longrightarrow} \mathsf{LM}^{\otimes} \quad\text{and}\quad \infdisk_{/(\R,0)}^{\otimes}\overset{\simeq}{\longrightarrow} \mathsf{BM}^{\otimes}  \]
to the \infops of \cite[Definition 4.2.1.1]{LurHA} and \cite[4.3.1.1]{LurHA} respectively. For instance, the \infop $\infdisk^{\otimes}_{/(\R,0)}$ has three equivalence classes of objects represented by the open embeddings $\R_{<0}\hookrightarrow \R$, $\R=\R$ and $\R_{>0}\hookrightarrow \R$, and the equivalence $\infdisk_{/(\R,0)}\simeq \mathsf{BM}^{\otimes}$ is given by the assignment 
\[ \R_{<0}\hookrightarrow \R \longmapsto \mathfrak{a}\_ ,\quad \quad\R=\R\longmapsto \mathfrak{m},\quad \quad \R_{>0}\hookrightarrow \R \longmapsto \mathfrak{a}_+.  \]
Restricting the analysis above to disks of depth 0 recovers the standard equivalence $\infdisk^{\otimes}_{/\R}\simeq \mathsf{Assoc}^{\otimes}$, and the conically smooth open embedding $\R\cong \R_{<0}\hookrightarrow \R_{\leq 0}$ determines the map 
\[  \mathsf{Assoc}^{\otimes} \simeq \infdisk^{\otimes}_{/\R}\longrightarrow \infdisk^{\otimes}_{(\R_{\leq 0},0)} \simeq \mathsf{LM}^{\otimes}   \]
of \cite[Remark 4.2.1.10]{LurHA}. The map $\R\cong \R_{>0}\hookrightarrow \R_{\geq 0}$ induces the obvious variant for right modules and the map $\R\coprod \R\cong \R_{<0}\coprod \R_{>0}$ induces the \infop map 
\[ \mathsf{Assoc}^{\otimes} \boxplus \mathsf{Assoc}^{\otimes}\longrightarrow \mathsf{BM}^{\otimes}  \]
of \cite[Remark 4.3.1.10]{LurHA}, where $\boxplus$ denotes the coproduct of \infopst. 
\end{ex}

\begin{var}\label{var:discreteops}
Construction \ref{cons:markedmfd} admits an obvious 1-categorical analogue, which recovers the categories $\disk_{/(X,E)}$, $\disks_{/(X,E)}$ and the corresponding operads of the previous section, as follows: restricting the symmetric monoidal functor $(\_)_0$ to the underlying category $\mfd$ of the simplicial category of $\infmfd_{\simp}$ yields a symmetric monoidal functor
\[  (\_)_0:\mfd^{\otimes} \longrightarrow \mathsf{Set}^{\otimes}_{\mathrm{inj}} \]
and a symmetric monoidal category $\left(\mfd^{+,\simeq}\right)^{\otimes}$ defined by the pullback diagram
\[
\begin{tikzcd}
\left(\mfd^{+,\simeq}\right)^{\otimes} \ar[d] \ar[r] & (\set_{\mathrm{inj}}^{+,\simeq})^{\otimes} \ar[d] \\
\mfd^{\otimes} \ar[r,"(\_)_0"] & \set_{\mathrm{inj}}^{\otimes}
\end{tikzcd}
\]
for which the initial marked smooth conical manifold $(\emptyset,\emptyset)$ is the unit. We then define the full subcategories $\bsc^{+,\simeq}\subset \mfd^{+,\simeq}$, $\disks^{+,\simeq}\subset \mfd$ and $\left(\disks^{+,\simeq}\right)^{\otimes}\subset \left(\mfd^{+,\simeq}\right)^{\otimes}$, as well as the operads $\left(\disks\right)^{\otimes}_{/(X,E)}$ and $\disks^{\otimes}_{/(X,E)}$ with underlying categories $\disks_{/(X,E)}$ and $\disk_{/(X,E)}$ for any smooth conical marked manifold $(X,E)$ in a fashion parallel to the definitions of Construction \ref{cons:markedmfd}. For each of these, there are evident essentially surjective maps (of \infcats or \infopst) from the sans serif variants defined just now to the calligraphic ones of Construction \ref{cons:markedmfd}. These maps are compatible with the maps forgetting the markings; in particular we have for each marked smooth conical manifold a commuting diagram 
\begin{equation*}
\begin{tikzcd}
 \disks^{\otimes}_{/(X,E)} \ar[d,hook] \ar[r]  &\infdisks^{\otimes}_{/(X,E)}\ar[d,hook] \\
 \left(\disks^{+,\simeq}\right)^{\otimes}_{/(X,E)} \ar[d,"\simeq"] \ar[r]  &\left(\infdisks^{+,\simeq}\right)^{\otimes}_{/(X,E)}\ar[d,"\simeq"] \\
 \disks^{\otimes}_{/X}  \ar[r]  &\infdisks^{\otimes}_{/X} 
\end{tikzcd}
\end{equation*}
of \infops where the upper vertical maps are wide suboperad inclusions and the lower ones are trivial fibrations of \infopst.
\end{var}

\subsubsection{Localizing at isotopy equivalences}

Let $(X,E)$ be a marked smooth conical manifold and let $\mathcal{J}\subset \disk_{/X}$ be the wide subcategory on those conically smooth open embeddings $f:U\hookrightarrow V$ of disjoint unions of basics over $X$ that are carried to an equivalence in $\infdisk_{/X}$ (equivalently, by Lemma \ref{lem:bscproperties} those $f:U\hookrightarrow V$ for which $U$ and $V$ are isomorphic in $\mathsf{Bsc}$). Note that this implies that every map contained in $\mathcal{J}$ is also contained in the subcategory $\disk_{/(X,E)}\subset \disk_{/X}$, so we will allow a small abuse of notation and regard $\mathcal{J}$ as a wide subposet of $\disk_{/(X,E)}$. Our next main result states that localizing the discrete operad of marked disks at the morphisms contained in $\mathcal{J}$ yields the \infop of marked disks; this is a marked variant of results familiar from \cite[Section 5.4.5]{LurHA} and \cite{AFT-fh-stratified}. A precursor of this result was worked out by Eilind Karlsson in her thesis; see \cite[Theorem 7.25]{KS}. 
\begin{thm}[Localizing at isotopy equivalences]\label{thm:markedlocalization}
Let $(X,E)$ be a marked smooth conical manifold. Then the functor 
\[ \disk_{/(X,E)} \longrightarrow \infdisk_{/(X,E)} \]
described in Variant \ref{var:discreteops} carries $\mathcal{J}$ into the maximal subgroupoid of $\infdisk_{/(X,E)}$ and the induced functor
\[  \disk_{/(X,E)}^{\otimes}[\mathcal{J}^{-1}] \longrightarrow  \infdisk^{\otimes}_{/(X,E)}  \]
is an equivalence of \infopst. Moreover, the functor $\disk_{/(X,E)}[\mathcal{J}^{-1}]\rightarrow \infdisk_{/(X,E)}$ induced by localization of the underlying \infcats is also an equivalence.
\end{thm}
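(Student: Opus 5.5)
The plan is to reduce to the unmarked localization theorem of \cite{AFT-fh-stratified} (the stratified analogue of \cite[Section 5.4.5]{LurHA}), which asserts that $\disk_{/X}^{\otimes}[\mathcal{J}^{-1}]\to\infdisk^{\otimes}_{/X}$ is an equivalence of \infops and that the same holds for the underlying \infcatst. The point is that by Remark \ref{rmk:suboperad} and Variant \ref{var:discreteops}, both marked operads are wide suboperads of their unmarked counterparts, cut out by the \emph{same} combinatorial condition. That condition is that the marked points $U_0\cap E$ of the target agree with $\coprod_j (V_j)_0\cap E$ of the sources. It depends only on the objects involved, and on multimorphism spaces it selects either everything or nothing.

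The first claim is that $\mathcal{J}$ lands in the maximal subgroupoid. This follows because maps in $\mathcal{J}$ already become equivalences in $\infdisk_{/X}$. Their inverses in $\infdisk_{/X}$ also preserve the marked set, since an equivalence of basics identifies cone loci and hence the sets $(\_)_0\cap E$. So the inverses lie in the wide subcategory $\infdisk_{/(X,E)}$.

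Next I would set up a ``coloring'' argument. Define a function $\epsilon$ on objects of $\disk_{/X}$ (and of $\infdisk_{/X}$) by $U\mapsto U_0\cap E\subset E$. This is a finite subset, and in fact of size at most one for a basic. It is invariant under isotopy equivalence, so it is constant along $\mathcal{J}$ and along equivalences. A multimorphism lies in the marked suboperad exactly when the union of the source labels equals the target label. I would show that operadic localization commutes with passing to such a suboperad. Concretely, I would use the model of localization via Dwyer--Kan / hammock-style mapping spaces, or the criterion \cite[Proposition 2.1.4? / Section 2.3]{LurHA}: a map of \infops $\mathcal{O}\to\mathcal{P}$ is a localization at $W$ iff for every symmetric monoidal $\icat$ restriction $\alg_{\mathcal{P}}(\icat)\to\alg_{\mathcal{O}}(\icat)$ is fully faithful with image the $W$-inverting algebras.

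The cleanest route I would try first is as follows. Both $\disk^{\otimes}_{/(X,E)}[\mathcal{J}^{-1}]$ and $\infdisk^{\otimes}_{/(X,E)}$ are obtained from their unmarked versions by pulling back along a map to a small discrete ``marking'' operad $\mathcal{M}^{\otimes}$. The colors of $\mathcal{M}^{\otimes}$ are subsets of $E$ of size at most one. It has a unique multimorphism $\{S_j\}\to T$ when the $S_j$ are disjoint with union $T$, and none otherwise. The map from $\disk_{/X}^{\otimes}$ exists only after restricting to disks, and this is where markedness enters. I would need to check that the relevant square is a pullback of \infops. The key input is that localizations of \infops are stable under base change along such ``fibration-like'' maps to a discrete operad whose inert/active structure is detected by colors. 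I would verify this at the level of multimorphism spaces: the localization's mapping spaces are computed as colimits over zigzags in $\mathcal{J}$, and $\epsilon$ is constant along those zigzags. Therefore the multimorphism spaces of $\disk^{\otimes}_{/(X,E)}[\mathcal{J}^{-1}]$ are precisely the components of those of $\disk^{\otimes}_{/X}[\mathcal{J}^{-1}]\simeq\infdisk^{\otimes}_{/X}$ with matching labels. By Observation \ref{obs:configurations}, the latter are exactly the multimorphism spaces of $\infdisk^{\otimes}_{/(X,E)}$.

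The main obstacle is exactly this last step, namely showing rigorously that operadic localization commutes with restricting to a wide suboperad defined by a color condition. A naive argument fails because operadic localization is not computed fiberwise. My plan is to bypass mapping-space computations by using the universal property. Given $\icat^{\otimes}$, I would view a $\disk^{\otimes}_{/(X,E)}$-algebra as a $\disk^{\otimes}_{/X}$-shaped diagram in which the missing multimorphisms are filled in with a formal adjoined ``zero'' via a pointed enlargement of $\icat$, or alternatively use the Day convolution/enveloping description. In that description both sides are symmetric monoidal envelopes, and the envelope of the marked suboperad is a subcategory of the unmarked envelope cut out by a condition on morphisms invariant under $\mathcal{J}$. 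Localizing subcategories of \infcats defined by a morphism condition saturated under $W$ is handled by \cite[Proposition 5.4.5.15-style]{LurHA} cofinality arguments. The statement for underlying \infcats is then the $\langle 1\rangle$-fiber of the same argument.
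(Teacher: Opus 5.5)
\textbf{There is a genuine gap.} Your reduction stops at exactly the step you flag, namely that operadic localization commutes with passing to the marked wide suboperad. None of the routes you sketch closes it.

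\emph{Base change.} Localizations of $\infty$-operads are not stable under base change in general. So knowing from \cite{AFT-fh-stratified} that $\disk^{\otimes}_{/X}[\mathcal{J}^{-1}]\simeq\infdisk^{\otimes}_{/X}$ says nothing by itself about the wide suboperad $\disk^{\otimes}_{/(X,E)}$. The claim that $\disk^{\otimes}_{/(X,E)}[\mathcal{J}^{-1}]$ arises from the unmarked localization by a pullback is precisely what has to be proved.

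\emph{The marking operad.} Your $\mathcal{M}^{\otimes}$ does not receive a map from the unmarked operad. A nullary operation $\emptyset\to U$ with $U_0\cap E\neq\emptyset$ exists in $\disk^{\otimes}_{/X}$ (Remark \ref{rmk:suboperad}) but has no image in $\mathcal{M}^{\otimes}$. The marked operads should instead be described as pullbacks along the wide suboperad inclusion $\infdisk^{\otimes}_{/(X,E)}\subset\infdisk^{\otimes}_{/X}$.

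\emph{The zigzag argument.} It presupposes that multimorphism spaces of an operadic localization are computed by zigzags along which the labels are constant. Forward arrows in such zigzags are arbitrary morphisms of the total category, so labels need not be constant. For example, $V\to V\oplus W\to V$, built from a nullary operation $\emptyset\to W$ onto a marked disk followed by an inert map discarding $W$, is a composable chain in $\disk^{\otimes}_{/X}$ that does not lie in $\disk^{\otimes}_{/(X,E)}$, although its composite does. Hence no identification of spaces follows. The envelope and ``formal zero'' alternatives are not specified enough to be checked.

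\textbf{What the paper does instead.} The missing ingredient is a localization criterion that can be verified on fibers and is stable under pullback. The paper uses the operadic approximation criterion (Theorem \ref{thm:operadic approximation}), which gives the operadic and the underlying-category statements at once. It checks two things directly.
\begin{enumerate}
\item $\mathcal{J}\to\infdisk^{\simeq}_{/(X,E)}$ is a weak equivalence. This reduces to $\mathcal{J}_{[U]}\to X_{[U]}$ and follows from \cite[Theorem A.3.1]{LurHA} applied to the cofiltered posets of disks containing a given point.
\item For each $U$, the fibers of the map of active slices are the posets $P_{\vec V}$ of tuples of disjoint disks in $U$, exactly one of which carries the marked point when $U$ is marked. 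These map by a weak equivalence to the relevant products of configuration spaces, again by Theorem A.3.1.
\end{enumerate}

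\textbf{How your approach could be salvaged.} You would need that the \emph{unmarked} map $\disk^{\otimes}_{/X}\to\infdisk^{\otimes}_{/X}$ is an operadic approximation, not merely a localization. Approximations are stable under base change. Moreover $\disk^{\otimes}_{/(X,E)}\simeq\disk^{\otimes}_{/X}\times_{\infdisk^{\otimes}_{/X}}\infdisk^{\otimes}_{/(X,E)}$, because the marking condition depends only on the objects and the underlying map in $\fin$ (cf.\ Variant \ref{var:discreteops}). So the marked map is again an approximation, hence a localization at $\mathcal{J}$. This is essentially the paper's fiber computation, repackaged.
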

Composing with the equivalence \eqref{eq:restricttodisk}, we immediately deduce the following.
\begin{cor}\label{cor:fact=alg}
Let $(X,E)$ be a marked smooth conical manifold that has enough good disks and let $\icat$ be a presentably symmetric monoidal \infcatt, then there is a canonical equivalence 
\[  \Fact^{\mathrm{cstr}}_{(X,E)}(\icat)\simeq \alg_{\infdisk_{/(X,E)}}(\icat) \]
of \infcatst.
\end{cor}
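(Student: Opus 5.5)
The corollary is obtained by chaining two equivalences already available: the restriction equivalence \eqref{eq:restricttodisk} and the localization of Theorem \ref{thm:markedlocalization}. The plan is to identify constructible $\disk^{\otimes}_{/(X,E)}$-algebras with algebras for the localized \infop, and then transport along the equivalence with $\infdisk^{\otimes}_{/(X,E)}$.

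\emph{Step 1.} Since $(X,E)$ has enough good disks and $\icat$ is presentably symmetric monoidal, restriction along $\disk^{\otimes}_{/(X,E)}\subset \open(X,E)^{\otimes}$ gives \eqref{eq:restricttodisk}:
\[\Fact^{\mathrm{cstr}}_{(X,E)}(\icat)\simeq \alg^{\mathrm{cstr}}_{\disk_{/(X,E)}}(\icat).\]
This equivalence is the composite of Proposition \ref{prop:restricttodisks}, Proposition \ref{prop:constructibleweissondisks} (constructible precosheaves on disks automatically satisfy Weiss codescent), and the multiplicative extension to the disjoint union completion.

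\emph{Step 2.} Identify $\alg^{\mathrm{cstr}}_{\disk_{/(X,E)}}(\icat)$ with $\alg_{\disk_{/(X,E)}[\mathcal{J}^{-1}]}(\icat)$. By the universal property of operadic localization (\cite[Section 2.1.4 / Proposition 4.1.7.4]{LurHA}-style), restriction along $\disk^{\otimes}_{/(X,E)}\to \disk^{\otimes}_{/(X,E)}[\mathcal{J}^{-1}]$ is fully faithful on algebras. Its essential image is the algebras whose underlying functor inverts $\mathcal{J}$. This uses that the target $\icat^{\otimes}$ is an \infop, so maps of \infops into it out of the localization are the maps inverting $\mathcal{J}$ in the underlying category.

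One check is needed: inverting $\mathcal{J}$ is exactly constructibility. By definition, constructibility asks that marked inclusions of disks which are isotopy equivalences go to equivalences. By Lemma \ref{lem:bscproperties}, these are exactly the morphisms of $\mathcal{J}$, namely inclusions $U\subset V$ of disks with $U\cong V$ in $\bsc$. Every such inclusion is marked, since it sends the cone locus to itself and hence induces a bijection on marked points.

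\emph{Step 3.} Theorem \ref{thm:markedlocalization} gives $\disk^{\otimes}_{/(X,E)}[\mathcal{J}^{-1}]\simeq \infdisk^{\otimes}_{/(X,E)}$. Composing the three equivalences yields the claim, and it is canonical because each step is induced by a restriction functor.

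The only subtle point I expect is Step 2. One must ensure that the operadic localization of the theorem is the one whose algebras are computed by the universal property: the \infop $\disk^{\otimes}_{/(X,E)}[\mathcal{J}^{-1}]$ is characterized by $\mathrm{Alg}_{\disk[\mathcal{J}^{-1}]}(\mathcal{O})\simeq \mathrm{Alg}^{\mathcal{J}\text{-inv}}_{\disk}(\mathcal{O})$ for every \infop $\mathcal{O}$. I would state this characterization explicitly and apply it with $\mathcal{O}=\icat^{\otimes}$. Everything else is bookkeeping.
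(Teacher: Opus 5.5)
Your proposal is correct and follows the paper's route: the paper combines the restriction equivalence \eqref{eq:restricttodisk} with the remark that constructible $\disk^{\otimes}_{/(X,E)}$-algebras are formally the algebras for the operadic localization at the isotopy equivalences $\mathcal{J}$, and then identifies that localization with $\infdisk^{\otimes}_{/(X,E)}$ via Theorem \ref{thm:markedlocalization}. Your Step 2 makes explicit the universal property of operadic localization, and your check that $\mathcal{J}$ consists exactly of the isotopy equivalences, both of which the paper leaves as ``follows formally.''
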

\begin{cor}
Let $\icat^{\otimes}$ be a presentably symmetric monoidal \infcatt. Then $ \Fact^{\mathrm{cstr}}_{(X,E)}(\icat)$ is presentable.    
\end{cor}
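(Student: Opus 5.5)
The statement to prove is the last corollary: for $\icat^{\otimes}$ presentably symmetric monoidal, $\Fact^{\mathrm{cstr}}_{(X,E)}(\icat)$ is presentable. Implicitly $(X,E)$ has enough good disks, as in Corollary \ref{cor:fact=alg}. The plan is to transport the question to algebras over a small \infop and then apply the standard presentability result for algebra categories.

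First, by Corollary \ref{cor:fact=alg} there is an equivalence $\Fact^{\mathrm{cstr}}_{(X,E)}(\icat)\simeq \alg_{\infdisk_{/(X,E)}}(\icat)$, so it suffices to show the right hand side is presentable. Second, we check that $\infdisk^{\otimes}_{/(X,E)}$ is essentially small. Its underlying space of colors is identified in Observation \ref{obs:configurations} with $\coprod_{[V]}X_{[V]}$. This is a set-indexed coproduct of homotopy types of manifolds, hence small. Its multimorphism spaces are products of configuration spaces, again small. Alternatively, Theorem \ref{thm:markedlocalization} exhibits it as the localization of the small discrete operad $\disk^{\otimes}_{/(X,E)}$, and localizations of small \infops are small. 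Third, we invoke \cite[Corollary 3.2.3.5]{LurHA}: for $\mathcal{O}^{\otimes}$ an essentially small \infop and $\icat^{\otimes}$ a presentable symmetric monoidal \infcat whose tensor product preserves small colimits separately in each variable, $\alg_{\mathcal{O}}(\icat)$ is presentable.

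If one prefers to avoid the localization theorem, there is a direct route. By \eqref{eq:restricttodisk}, $\Fact^{\mathrm{cstr}}_{(X,E)}(\icat)$ is the full subcategory of $\alg_{\disk_{/(X,E)}}(\icat)$ on algebras carrying the small set $\mathcal{J}$ of isotopy equivalences to equivalences. The ambient category is presentable by the same result of \cite{LurHA}. The subcategory is closed under limits and under sifted colimits, since the forgetful functor preserves and detects both. It is also accessible, because each condition ``$\F(U)\to\F(V)$ is an equivalence'' cuts out an accessible subcategory. Hence it is presentable by \cite[Proposition 5.5.4.15]{LurHTT}, or by the reflective localization criterion.

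The only step needing care is the smallness of $\infdisk^{\otimes}_{/(X,E)}$. This is routine given the paper's set-theoretic conventions (cf. \cite[Footnote 9]{KSW}), and I expect no real obstacle beyond it.
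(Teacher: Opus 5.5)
Your proposal is correct and is essentially the paper's proof, which just combines Corollary~\ref{cor:fact=alg} (under the implicit enough-good-disks hypothesis) with \cite[Corollary 3.2.3.5]{LurHA}. Your check that $\infdisk^{\otimes}_{/(X,E)}$ is essentially small supplies a hypothesis of that result which the paper leaves implicit. The alternative direct route is unnecessary; if you keep it, note that \cite[Proposition 5.5.4.15]{LurHTT} concerns $S$-local objects, so you would first have to rewrite the condition of inverting $\mathcal{J}$ as locality with respect to maps between free algebras.
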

\begin{proof}
This follows at once from \cite[Corollary 3.2.3.5]{LurHA}.    
\end{proof}
Combining Theorem \ref{thm:markedlocalization} with Example \ref{ex:stratifiedline}, we deduce the following.
\begin{cor}\label{cor:Fact-bimod}
Let $\icat$ be a presentably symmetric monoidal \infcatt. There are canonical equivalences
\[  \Fact^{\mathrm{cstr}}_{(\R_{\geq 0},0)}(\icat)\simeq \mathsf{RMod}(\icat), \quad \Fact^{\mathrm{cstr}}_{(\R,0)}(\icat)\simeq \mathsf{BMod}(\icat),\quad \Fact^{\mathrm{cstr}}_{(\R_{\leq 0},0)}(\icat)\simeq \mathsf{LMod}(\icat)\]
of \infcatst.
\end{cor}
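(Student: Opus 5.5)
The plan is to compose two equivalences. First, since $(\R,0)$, $(\R_{\geq0},0)$ and $(\R_{\leq0},0)$ are marked smooth conical manifolds that have enough good disks, Corollary \ref{cor:fact=alg} gives equivalences $\Fact^{\mathrm{cstr}}_{(X,E)}(\icat)\simeq\alg_{\infdisk_{/(X,E)}}(\icat)$. Second, Example \ref{ex:stratifiedline} provides equivalences of \infops $\infdisk^{\otimes}_{/(\R_{\geq0},0)}\simeq\mathsf{RM}^{\otimes}$, $\infdisk^{\otimes}_{/(\R,0)}\simeq\mathsf{BM}^{\otimes}$ and $\infdisk^{\otimes}_{/(\R_{\leq0},0)}\simeq\mathsf{LM}^{\otimes}$. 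Algebras are invariant under equivalence of \infops, so we obtain $\alg_{\mathsf{RM}}(\icat)$, $\alg_{\mathsf{BM}}(\icat)$ and $\alg_{\mathsf{LM}}(\icat)$. By definition (\cite[Definition 4.2.1.13, 4.3.1.12]{LurHA}), these are $\mathsf{RMod}(\icat)$, $\mathsf{BMod}(\icat)$ and $\mathsf{LMod}(\icat)$.

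The only input not yet stated is the existence of a factorizing disk-basis for these three spaces, and I would check it via Remark \ref{rmk:factorizingbasiscriterion}. For $(\R,0)$, take $\mathcal{B}$ to be the set of bounded open intervals. These form a basis and are closed under (nonempty) intersections, which are again intervals. Each interval is a disk: it is either a basic $\R$ missing $0$, or it contains $0$ and is isomorphic to $\R$ stratified at a point, i.e.\ $\mathsf{C}(S^0)$. For $\R_{\geq0}$, take the intervals $[0,a)$ and $(a,b)$ with $0<a<b$. These are again closed under intersection, and each is isomorphic to either $\mathsf{C}(*)$ or $\R$. The case $\R_{\leq0}$ is symmetric. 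Remark \ref{rmk:factorizingbasiscriterion} then produces the factorizing disk-basis.

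The only genuinely delicate point, which I would flag rather than the bookkeeping above, is that Example \ref{ex:stratifiedline} matches the colours and spaces of multimorphisms but must also match the compositions. This compatibility was asserted there (composition of linear orderings). Since both sides are discrete \infops, it reduces to a check on the level of ordinary operads, which I would take as given. Canonicity follows because every map used is a specified functor, not a choice.
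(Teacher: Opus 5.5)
Your proposal is correct and follows the paper's own argument. The paper obtains this corollary by combining Theorem~\ref{thm:markedlocalization} (which, together with \eqref{eq:restricttodisk}, gives Corollary~\ref{cor:fact=alg}) with the identifications $\infdisk^{\otimes}_{/(\R_{\geq0},0)}\simeq\mathsf{RM}^{\otimes}$, $\infdisk^{\otimes}_{/(\R,0)}\simeq\mathsf{BM}^{\otimes}$ and $\infdisk^{\otimes}_{/(\R_{\leq0},0)}\simeq\mathsf{LM}^{\otimes}$ of Example~\ref{ex:stratifiedline}; your explicit check of the enough-good-disks hypothesis via Remark~\ref{rmk:factorizingbasiscriterion} correctly supplies a step the paper leaves implicit.
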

\begin{rmk}
Pushforward along the map $\R\rightarrow \R$, $x\mapsto -x$ preserves constructibility and induces equivalences 
\[ \mathsf{RMod}(\icat)\simeq  \Fact^{\mathrm{cstr}}_{(\R_{\geq 0},0)}(\icat)\simeq \Fact^{\mathrm{cstr}}_{(\R_{\leq 0},0)}(\icat)  \simeq  \mathsf{LMod}(\icat). \]
This is the equivalence of \cite[Construction 4.6.3.1]{LurHA}, which carries a pair $(A,M)$ of an algebra together with a right module to the pair $(A^{rev},M)$ of the opposite algebra and the corresponding left module.
\end{rmk}

We now turn to the proof of Theorem \ref{thm:markedlocalization}. In view of the extra bookkeeping forced on us by the presence of markings, we will employ the powerful operadic localization criterion afforded by the theory of \emph{operadic approximations}, introduced in \cite[Section 2.3.3]{LurHA}, instead of the somewhat less economical criterion of \cite[Lemma A.4.7]{KSW}. The exact shape of the result that we require was written up by Harpaz (\cite{Harpaz-littledisks}) and more recently by Arakawa-Carmona-Pratali (\cite{ACP-DendroidalRezkNerve}).
\begin{thm}[Operadic approximation]\label{thm:operadic approximation}
Let $f:\iop\rightarrow \mathcal{P}^{\otimes}$ be a map of \infopst. Suppose that $f$ is an \emph{operadic approximation}, that is:
\begin{enumerate}[$(1)$]
    \item The map 
    \[  \Of\longrightarrow \mathcal{P} \]
    has weakly contractible fibers.
    \item For each $X\in \Of$, the map 
    \[ \left(\iop_{\mathrm{act}} \right)_{/X}\longrightarrow \left(\mathcal{P}^{\otimes}_{\mathrm{act}} \right)_{/f(X)} \]
    has weakly contractible fibers.
\end{enumerate}
Then $f$ exhibits $\mathcal{P}^{\otimes}$ as a \emph{universal}\footnote{That is, $f$ remains an operadic localization after pulling back along any map of \infopst.} operadic localization of $\iop$ at all morphisms of $\Of$ that are carried to an equivalence by $\Of\rightarrow\mathcal{P}$\footnote{It seems that this result is more general than \cite[Theorem 2.3.3.23]{LurHA} as Lurie states his operadic localization criterion only for maps $f:\iop\rightarrow\mathcal{P}^{\otimes}$ for which, instead of $(1)$ of Theorem \ref{thm:operadic approximation}, either of the following more restrictive conditions hold. 
\begin{enumerate}
    \item[$(1*)$] $\Of\rightarrow \mathcal{P}$ is an equivalence of \infcatst.
    \item[$(1**)$] $\mathcal{P}$ is a Kan complex and $\Of\rightarrow \mathcal{P}$ exhibits a localization at all arrows.
\end{enumerate}
Neither $(1*)$ nor $(1**)$ apply to localizations of disk operads of general conical manifolds. However, there is a minor error in the proof of \cite[Theorem 2.3.3.23]{LurHA} which makes it appear as though it only applies to the cases $(1*)$ and $(1**)$ while in fact, Lurie's argument does suffice to prove Theorem \ref{thm:operadic approximation}.}. Moreover, the functor $\Of\rightarrow\mathcal{P}$ also exhibits a localization of \infcats at the morphisms of $\Of$ that become an equivalence in $\mathcal{P}$.
\end{thm}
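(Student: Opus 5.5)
We prove the theorem by following Lurie's argument for \cite[Theorem 2.3.3.23]{LurHA}, as indicated in the footnote. The aim is to compare algebras. Write $W$ for the collection of morphisms of $\Of$ carried to equivalences in $\mathcal{P}$. It suffices to show that for every \infop $\mathcal{Q}^{\otimes}$, restriction along $f$ induces an equivalence
\[ \alg_{\mathcal{P}}(\mathcal{Q}) \longrightarrow \alg_{\mathcal{O}}^{W}(\mathcal{Q}) \]
onto the full subcategory of algebras sending $W$ to equivalences. Universality will then follow because hypotheses $(1)$ and $(2)$ are stable under pullback along maps of \infopst: fibers of the pulled-back maps are fibers of the original ones.

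First we construct the inverse by operadic left Kan extension along $f$. We may first reduce to $\mathcal{Q}^{\otimes}$ of the form $\icat^{\otimes}$ a presentably symmetric monoidal \infcatt, using the Yoneda/Day convolution embedding of $\mathcal{Q}^{\otimes}$ into presheaves. Since that embedding is fully faithful on algebras, the statement for $\mathcal{Q}$ follows from the statement for its presheaf envelope, provided we check that the extension lands in the essential image.

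For $A\in\alg^{W}_{\mathcal{O}}(\icat)$, the operadic left Kan extension at $Y\in\mathcal{P}$ is computed as a colimit over $(\iop_{\mathrm{act}})_{/Y} := \iop_{\mathrm{act}}\times_{\mathcal{P}^{\otimes}_{\mathrm{act}}}(\mathcal{P}^{\otimes}_{\mathrm{act}})_{/Y}$ of the tensor products $A(X_1)\otimes\cdots\otimes A(X_k)$. The key step is to show that the unit $A\to f^*f_!A$ is an equivalence, and that $f_!f^*B\to B$ is an equivalence for $B\in\alg_{\mathcal{P}}(\icat)$. For the latter, condition $(2)$ applied over objects $X$ with $f(X)\simeq Y$, together with condition $(1)$, shows that the map from $(\iop_{\mathrm{act}})_{/Y}$ to $(\mathcal{P}^{\otimes}_{\mathrm{act}})_{/Y}$ is cofinal, by Quillen's Theorem A. Since the target has the terminal object $\mathrm{id}_Y$, the colimit is $B(Y)$.

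For the unit we must show that $f_!A(f(X))\simeq A(X)$. Here we use that $A$ inverts $W$: the functor $X'\mapsto\bigotimes A(X'_i)$ on $(\iop_{\mathrm{act}})_{/f(X)}$ factors through the localization. We then show that the colimit can be computed on the weakly contractible fiber $\Of_{f(X)}$ from $(1)$, on which $A$ is constant because all morphisms there lie in $W$. A weakly contractible diagram with constant value has colimit that value.

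The main obstacle is this last step. We must show that the comparison map restricting from $(\iop_{\mathrm{act}})_{/f(X)}$ to the fiber-type subcategory is cofinal, rather than just having contractible fibers, because Lurie's proof conflated $(1)$ with the stronger $(1*)$ and $(1**)$. To handle it, we would use Theorem A fiberwise over $(\mathcal{P}^{\otimes}_{\mathrm{act}})_{/f(X)}$: by $(2)$, each fiber is weakly contractible, and $W$-invariance lets us replace each fiber's diagram by a constant one.

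Finally, the statement about the underlying \infcats is the case $k=1$ of the same argument, with $\mathcal{Q}^{\otimes}$ ranging over coCartesian \infops $\mathcal{D}^{\coprod}$. In that case $\alg$ reduces to $\fun$, and $(1)$ gives the required cofinality directly.
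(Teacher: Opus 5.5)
Your reductions (universality via base-change stability of (1) and (2), and passing to a presentably symmetric monoidal target through the envelope) are fine. The decisive step, however, is not established, and the mechanism you propose for it is false.

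\textbf{The unit step.} Write $\mathcal{E}_Y:=\mathcal{O}^{\otimes}_{\mathrm{act}}\times_{\mathcal{P}^{\otimes}_{\mathrm{act}}}(\mathcal{P}^{\otimes}_{\mathrm{act}})_{/Y}$. You need $\mathrm{colim}_{\mathcal{E}_{f(X)}}A\simeq A(X)$ for $W$-inverting $A$, and you propose to prove that the inclusion of the fiber $\mathcal{O}_{f(X)}$ into $\mathcal{E}_{f(X)}$ is cofinal. This fails under (1) and (2). Take trivial $\infty$-operads (only unary operations) on $\mathcal{P}=\Delta^1$ and on the category $\mathcal{O}$ with objects $a_0,a_1,b$ and nonidentity arrows $a_1\to a_0$, $a_1\to b$. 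Send $a_0,a_1\mapsto 0$ and $b\mapsto 1$.
\begin{itemize}
\item Both fibers are contractible, so (1) holds.
\item Condition (2) holds; its only non-obvious instance is that the fiber of $\mathcal{O}_{/b}\to(\Delta^1)_{/1}$ over $0\to 1$ is the single object $a_1\to b$.
\item But $\mathcal{E}_1=\mathcal{O}$, the fiber is $\{b\}$, and $\{b\}\times_{\mathcal{O}}\mathcal{O}_{a_0/}\simeq\mathrm{Map}(a_0,b)=\emptyset$, so the inclusion is not cofinal.
\end{itemize}
The equivalence $\mathrm{colim}_{\mathcal{O}}A\simeq A(a_0)\amalg_{A(a_1)}A(b)\simeq A(b)$ holds only because $A(a_1)\to A(a_0)$ is invertible; it is not a cofinality statement.

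Your ``fiberwise Theorem A'' fallback does not repair this. The fibers of $\mathcal{E}_Y\to(\mathcal{P}^{\otimes}_{\mathrm{act}})_{/Y}$ are contractible by (1); condition (2) concerns the different map $(\mathcal{O}^{\otimes}_{\mathrm{act}})_{/X}\to(\mathcal{P}^{\otimes}_{\mathrm{act}})_{/f(X)}$. Since this projection is not a (co)Cartesian fibration, left Kan extension along it is computed over over-slices, not fibers. Contractible fibers plus local constancy therefore do not compute the colimit.

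\textbf{The counit step.} Theorem A needs $\mathcal{E}_Y\times_{(\mathcal{P}^{\otimes}_{\mathrm{act}})_{/Y}}((\mathcal{P}^{\otimes}_{\mathrm{act}})_{/Y})_{\beta/}$ to be weakly contractible for every active $\beta\colon\vec Z\to Y$. That uses (2) at every $X$ with an active map to $Y$ and (1) over $\vec Z$, plus a computation you do not give; (2) at objects over $Y$ is not enough.

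\textbf{The underlying-category claim.} The equivalence $\mathrm{Alg}_{\mathcal{O}}(\mathcal{D}^{\amalg})\simeq\mathrm{Fun}(\mathcal{O},\mathcal{D})$ requires $\mathcal{O}^{\otimes}$ to be unital, and the marked disk operads are not. Also, (1) alone does not give that $\mathcal{O}\to\mathcal{P}$ is a localization: two discrete objects over $\Delta^1$ satisfy (1).

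\textbf{The missing idea.} The paper gives no proof of its own here; it defers to Lurie's argument as written up by Harpaz and by Arakawa--Carmona--Pratali. Any proof needs a localization criterion that uses (1) and (2) together, and your sketch lacks one. It suffices to show that $p_Y\colon\mathcal{E}_Y\to(\mathcal{P}^{\otimes}_{\mathrm{act}})_{/Y}$ is a localization of $\infty$-categories at the arrows it inverts. Localizations are cofinal, which gives the counit. A $W$-inverting diagram on $\mathcal{E}_Y$ then has colimit equal to its value on the fiber over $\mathrm{id}_Y$, which gives the unit.

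Because $\mathcal{E}_Y\to\mathcal{O}^{\otimes}_{\mathrm{act}}$ is a right fibration and active slices of tuples split as products, $p_Y$ again satisfies (1) and (2). So what you need is: if $g\colon\mathcal{A}\to\mathcal{B}$ has weakly contractible homotopy fibers $\mathcal{A}_b$, and every $\mathcal{A}_{/a}\to\mathcal{B}_{/g(a)}$ has weakly contractible fibers, then $g$ is a localization. One proof:
\begin{itemize}
\item The second hypothesis says exactly that $g^*\mathrm{Map}_{\mathcal{B}}(b,-)\simeq j_{b!}(*)\simeq\mathrm{colim}_{a'\in\mathcal{A}_b^{op}}\mathrm{Map}_{\mathcal{A}}(a',-)$, where $j_b\colon\mathcal{A}_b\to\mathcal{A}$.
\item Using $|\mathcal{A}_b|\simeq *$, this gives $g_!g^*\mathrm{Map}_{\mathcal{B}}(b,-)\simeq(gj_b)_!(*)\simeq|\mathcal{A}_b|\times\mathrm{Map}_{\mathcal{B}}(b,-)\simeq\mathrm{Map}_{\mathcal{B}}(b,-)$, i.e.\ the counit.
\item It also gives $L_W\mathrm{Map}_{\mathcal{A}}(a,-)\simeq g^*\mathrm{Map}_{\mathcal{B}}(g(a),-)$, i.e.\ the unit on generators.
\item Hence $g^*$ identifies $\mathrm{Fun}(\mathcal{B},\mathcal{S})$ with the $W$-inverting copresheaves on $\mathcal{A}$.
\end{itemize}
Applying the same criterion to $\mathcal{O}\to\mathcal{P}$, using the unary part of (2), gives the final assertion about underlying $\infty$-categories directly.
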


\begin{rmk}
Conditions $(1)$ and $(2)$ in the theorem above admit the following reformulation.
\begin{enumerate}[$(1')$]
    \item The map of spaces 
    \[ |\Of|\longrightarrow \mathcal{P}^{\simeq}  \]
    is an equivalence.
    \item For each $X\in \Of$, each integer $n\geq 0$ and each tuple $\{Y_i\}_{1\leq i\leq n}\in \mathcal{P}^{\times n}$, the map of spaces 
    \[ \left|\left(\Of^{\otimes}_{\mathrm{act}}\right)_{/X}\times_{\mathcal{P}^{\otimes}_{\mathrm{act}}} \left\{ Y_1\oplus \ldots \oplus Y_n \right\} \right| \longrightarrow \mathrm{Mul}_{\mathcal{P}^{\otimes}}\left(\{Y_i\}_{1\leq i\leq n},f(X)\right )  \]
    is an equivalence.
\end{enumerate}
We will apply Theorem \ref{thm:operadic approximation} in this form below.
\end{rmk}

\begin{proof}[Proof of Theorem \ref{thm:markedlocalization}]
The wide subcategory $\mathcal{J}\subset\disk_{/(X,E)}$ contains precisely those morphisms that are carried to an equivalence in $\infdisk_{/(X,E)}$, by definition. It suffices to argue that the functor $\disk^{\otimes}_{/(X,E)} \rightarrow \infdisk^{\otimes}_{/(X,E)}$ is an operadic approximation, that is, we must show the following.
\begin{enumerate}[$(1)$]
    \item The canonical map 
    \[ \disk_{/(X,E)}  \longrightarrow \infdisk_{/(X,E)}   \]
    has weakly contractible fibers.
    \item For each $U\in \disk_{/(X,E)}$, the canonical map 
    \[\left(\disk_{/(X,E),\mathrm{act}}^{\otimes}\right)_{/U} \longrightarrow \left(\infdisk_{/(X,E),\mathrm{act}}^{\otimes}\right)_{/U}\]
    has weakly contractible fibers.
\end{enumerate}
We first show $(1)$. Since $\mathcal{J}=\disk_{/(X,E)}\times_{ \infdisk_{/(X,E)}} \infdisk_{/(X,E)}^{\simeq}$, it suffices to show that the map 
\[ \mathcal{J}\longrightarrow   \infdisk_{/(X,E)}^{\simeq} \]
is a weak homotopy equivalences; in other words, that it exhibits $\infdisk_{/(X,E)}^{\simeq}$ as a localization of $\mathcal{J}$. The space $ \infdisk_{/(X,E)}^{\simeq} $ is the coproduct $\coprod_{[U]\in\infbsc}X_{[U]}$ where $X_{[U]}$ is the locus of those points $x\in X$ for which the conically smooth embeddings $U\hookrightarrow X$ for which the image of the cone point contains $x$ form a local basis at $x$, so it suffices to show that the map 
\[  \mathcal{J}_{[U]}\longrightarrow X_{[U]}    \]
is a localization, where $J_{[U]}\subset \mathcal{J}$ is the full subposet spanned by embedded disks $V\subset X$ such that $V$ is abstractly isomorphic in $\bsc$ to $U$. This map is the right diagonal one in the commuting diagram
\[
\begin{tikzcd}
   & \mathcal{J}_{[U]} \ar[dr] \ar[dl]\\
   \underset{V\in \mathcal{J}_{[U]}}{\colim}V_{[U]} \ar[rr] && X_{[U]}.
\end{tikzcd}
\]
For each $V\in \mathcal{J}_{[U]}$, the space $V_{[U]}$ is equivalent to $\R^k$ for $k=\mathsf{dim}(U)-\mathsf{depth}(U)$ and thus contractible, and so the left diagonal map exhibits a localization. It suffices to argue that the horizontal map is an equivalence. Consider the map of posets
\[  \mathcal{J}_{[U]}\longrightarrow\mathsf{Open}(X_{[U]}),\quad\quad V\longmapsto V_{[U]}. \]
For each $x\in X_{[U]}$, the poset $\mathcal{J}_{[U]x}$ of those $V\subset X$ for which $x\in V_{[U]}$ is cofiltered (since disks form a basis) and therefore weakly contractible. Invoking \cite[Theorem A.3.1]{LurHA}, we conclude that the horizontal map in the diagram above is an equivalence. We show $(2)$. The map in question is the top horizontal one in the triangle
\begin{equation}\label{eq:fibers}
\begin{tikzcd}
  \left(\disk_{/(X,E),\mathrm{act}}^{\otimes}\right)_{/U}  \ar[dr] \ar[rr] &&   \left(\infdisk_{/(X,E),\mathrm{act}}^{\otimes}\right)_{/U} \ar[dl] \\
  & \infdisk_{/(X,E),\mathrm{act}}^{\otimes}.
\end{tikzcd}    
\end{equation}
Since the right vertical map is a right fibration, it suffices to argue that the horizontal map becomes a weak homotopy on the fibers of the diagonal map at each object of $ \infdisk_{/(X,E),\mathrm{act}}^{\otimes}$. We distinguish between two cases.
\begin{enumerate}[$(i)$]
    \item $\mathsf{dim}(U)-\mathsf{depth}(U)=0$ and the point $*\cong U_0\hookrightarrow X_0$ is marked. 
    \item $\mathsf{dim}(U)-\mathsf{depth}(U)=0$ but the point $*\cong U_0\hookrightarrow X_0$ is not marked, or $\mathsf{dim}(U)-\mathsf{depth}(U)>0$.
\end{enumerate}
We will only give the argument for case $(i)$; case $(ii)$ is similar (and corresponds to the operadic localization of the unmarked disk operad, which is standard). Let $\vec{V}:=(V_1\hookrightarrow X)\oplus\ldots\oplus (V_n\hookrightarrow X)$ be an object in the fiber over $\langle n\rangle$. We may assume $n>0$, for otherwise the fibers of the diagonal maps in the diagram \eqref{eq:fibers} are empty (this is specific to the marked situation; in case $(ii)$, the fibers of the diagonal maps would be contractible instead). According to Remark \ref{rmk:suboperad}, if $*\cong U_0\cap E\subset E$ and $\coprod_{i}(V_i)_0 \cap E\subset E$ do not coincide in $E$, then the mapping space
\[\Hom_{\infdisk_{/(X,E),\mathrm{act}}^{\otimes} }(V_1\oplus \ldots\oplus V_n,U)=\mathrm{Mul}_{\infdisk_{/(X,E)}^{\otimes}}\left(\{V_j\}_{1\leq j\leq n},U\right)\]
that is, the fiber of the right diagonal map in \eqref{eq:fibers} at $\vec{V}$, is empty, and if these are the same subset of $E$, then we have 
\[\mathrm{Mul}_{\infdisk_{/(X,E)}^{\otimes}}\left(\{V_j\}_{1\leq j\leq n},U\right) = \mathrm{Mul}_{\infdisk_{/X}^{\otimes}}\left(\{V_j\}_{1\leq j\leq n},U\right).\]
In this case, this space may only be nonempty if there is \emph{exactly one} $i\in \langle n\rangle^{\circ}$ for which $\mathsf{dim}(V_i)-\mathsf{depth}(V_i)=0$ and the conically smooth open embedding
\[ V_i\hooklongrightarrow X \]
yields the same marked point $*\cong (V_i)_0\hookrightarrow X_0$ as $*\cong U_0\hookrightarrow E$. With the notation of Observation \ref{obs:configurations}, the object $(V_1\hookrightarrow X)\oplus\ldots\oplus (V_n\hookrightarrow X)$ determines a map 
\[ H:\langle n\rangle^{\circ} \longrightarrow \pi_0(\infdisk^{\simeq}_{/X})\cong \coprod_{[V]\in[\infbsc]}\pi_0(X_{[V]})   \]
carrying $j\in \langle n\rangle^{\circ}$ to the path component $X_{[V_j\hookrightarrow X ]} \subset X_{[V_j]}$ and the space of multimorphisms $\mathrm{Mul}_{\infdisk_{/X}^{\otimes}}\left(\{V_j\}_{1\leq j\leq n},U\right)$ is identified, by evaluation at the center of disks, with (the homotopy type of) the product of ordered configuration spaces
\[ \Hom_{\infdisk_{/X,\mathrm{act}}^{\otimes} }(V_1\oplus \ldots\oplus V_n,U)  \overset{\simeq}{\longrightarrow} \prod_{X_{[V\hookrightarrow X]}\in\pi_0\left(\infdisk^{\simeq}_{/X}\right)} \mathsf{Conf}_{|H^{-1}(X_{[V\hookrightarrow X]})|}(U_{[V\hookrightarrow X]}). \]
To compute the fiber of the left diagonal map, let $\icatd_{\vec{V}}\subset \left(\infdisk^{\otimes}_{/(X,E)}\right)_{\langle n\rangle}\subset \infdisk_{/(X,E),\mathrm{act}}^{\otimes}$ be the full subcategory of $\left(\infdisk^{\otimes}_{/(X,E)}\right)_{\langle n\rangle}$ spanned by objects $\vec{V}'=(V_1'\hookrightarrow X)\oplus\ldots \oplus (V_n'\hookrightarrow X)$ for which the image of the cone locus of each $V_j'\hookrightarrow X$ intersects the cone locus of the image of $V_j\hookrightarrow X$ (in other words, the full subcategory of $\left(\infdisks_{/(X,E)}^{\otimes}\right)_{\langle n\rangle}$ spanned by those $\vec{V}'$ that are equivalent to $\vec{V}$). For any two such objects $\vec{V}'$ and $\vec{V}''$, we have 
\begin{equation}\label{eq:vmaps}   \Hom_{\left(\infdisk^{\otimes}_{/(X,E)}\right)_{\langle n\rangle}}(V'_1\oplus \ldots\oplus V'_n,V''_1\oplus\ldots\oplus V''_n) \simeq \prod_{j\in\langle n\rangle^{\circ}} \left(V_j'\right)_{[V_j'']} \simeq \prod_{j\in\langle n\rangle^{\circ}} \R^{k_j}\simeq *   \end{equation}
where $k_j=\mathsf{dim}(V_j)-\mathsf{depth}(V_j)$. It follows that the subcategory inclusion $\icatd_{\vec{V}}\subset \infdisk_{/(X,E),\mathrm{act}}^{\otimes}$ is equivalent to the map $*\rightarrow \infdisk_{/(X,E),\mathrm{act}}^{\otimes}$ classifying $\vec{V}$, so we may identify the fiber of the left diagonal map at $\vec{V}$ with the subcategory of $\left(\disk_{/(X,E),\mathrm{act}}^{\otimes}\right)_{/U}$ spanned by those objects whose image lies in $\icatd_{\vec{V}}$. Unwinding definitions, this subcategory can be identified with the partially ordered set $P_{\vec{V}}$ of tuples $(V_1',\ldots,V'_n)$ of pairwise disjoint disks inside $U$ for which the cone locus of $V_j'$ lies in the path component $X_{[V_j\hookrightarrow X]}\subset X_{[V_j]}$ \emph{and} the map $\coprod_j (V'_j)_0\hookrightarrow X_0$ has the same image as $*\cong U_0\hookrightarrow E$. If this second clause is satisfied, there is again \emph{exactly one} $i\in \langle n\rangle^{\circ}$ for which $\mathsf{dim}(V_i)-\mathsf{depth}(V_i)=0$ and the conically smooth open embedding
\[ V_i\hooklongrightarrow X \]
yields the same marked point $*\cong (V_i)_0\hookrightarrow E$ as $*\cong U_0\hookrightarrow E$. We wish to show that the induced map 
\[ P_{\vec{V}} \longrightarrow \Hom_{\infdisk^{\otimes}_{/(X,E),\mathrm{act}}}(\vec{V},U) \]
is a weak homotopy equivalence. We assume there is exactly one $V_i$ for which $V_i\hookrightarrow X$ intersects the marked cone point; otherwise both \infcats are empty. The map in question is the upper right diagonal one in the commuting diagram
\[
\begin{tikzcd}
& P_{\vec{V}} \ar[dr] \ar[dl] \\
\underset{V'\in P_{\vec{V}}}{\colim}\Hom_{\left(\infdisk^{\otimes}_{/(X,E)}\right)_{\langle n\rangle}}(\vec{V},\vec{V'}) \ar[rr]\ar[d,"\simeq"] && \Hom_{\infdisk^{\otimes}_{/(X,E),\mathrm{act}}}(\vec{V},U) \ar[d,"\simeq"]\\
 \underset{V'\in P_{\vec{V}}}{\colim}\underset{1\leq j\leq n}{\prod} \left(V'_j\right)_{[V_j]} \ar[rr] &&   \prod_{X_{[V\hookrightarrow X]}\in\pi_0\left(\infdisk^{\simeq}_{/X}\right)} \mathsf{Conf}_{|H^{-1}(X_{[V\hookrightarrow X]})|}(U_{[V\hookrightarrow X]}).
\end{tikzcd}
\]
In the upper left corner of the square, the computation \eqref{eq:vmaps} shows that all mapping spaces appearing in the colimit are contractible, so that the upper left diagonal map can be identified with the localization $P_{\vec{V}}\rightarrow |P_{\vec{V}}|$ at all morphisms. Hence it suffices to argue that the horizontal maps are equivalences of spaces. The association 
\[ (V_1',\ldots,V_n')\longmapsto \underset{1\leq j\leq n}\prod \left(V'_j\right)_{[V_j]} \]
may be regarded as a map 
\[P_{\vec{V}}\longrightarrow \mathsf{Open}\left(\prod_{X_{[V\hookrightarrow X]}\in\pi_0\left(\infdisk^{\simeq}_{/X}\right)} \mathsf{Conf}_{|H^{-1}(X_{[V\hookrightarrow X]})|}(U_{[V\hookrightarrow X]})\right)\]
of partially ordered sets. For each point $x\in \prod_{X_{[V\hookrightarrow X]}\in\pi_0\left(\infdisk^{\simeq}_{/X}\right)} \mathsf{Conf}_{|H^{-1}(X_{[V\hookrightarrow X]})|}(U_{[V\hookrightarrow X]})$, the full subposet $(P_{\vec{V}})_x$ spanned by tuples $ (V_1,\ldots,V_n)$ for which $x\in \prod_{1\leq j\leq n}\left(V'_j\right)_{[V_j]}$ is cofiltered (since disks form a basis) and therefore weakly contractible. Invoking \cite[Theorem A.3.1]{LurHA}, we conclude that the horizontal maps in the diagram above are equivalences. 
\end{proof}

\begin{rmk}\label{rmk:basislocalization}
From the proof of Theorem \ref{thm:markedlocalization}, we deduce right away the following natural generalization: let $(X,E)$ be a marked smooth conical manifold and let $\mathcal{B}\subset \disk_{/(X,E)}$ be a full subposet whose elements are a basis for the topology of $X$ (so that the full subposet $\mathcal{B}'\subset \disks_{/(X,E)}$ whose elements are disjoint unions of those of $\mathcal{B}$ is a \emph{disk-basis} in the sense of Remark \ref{rmk:diskbasis}). Let $\mathcal{J}_{\mathcal{B}}$ be the intersection $\mathcal{B}\cap \mathcal{J}$ and let $\mathcal{B}^{\otimes}\subset \disk_{/(X,E)}^{\otimes}$ be the full suboperad spanned by colors that lie in $\mathcal{B}$, then the maps
\[\mathcal{B}[\mathcal{J}_{\mathcal{B}}^{-1}]\longrightarrow \infdisk_{/(X,E)}  \]
and
\[\mathcal{B}^{\otimes}[\mathcal{J}_{\mathcal{B}}^{-1}]\longrightarrow \infdisk_{/(X,E)}^{\otimes} \]
are equivalences of \infcats and \infops respectively. 
\end{rmk}

\subsection{The higher Morita category}\label{sec:moritacat}

In this section, we give a construction of the factorization higher Morita category. We find it convenient to work in a concrete model for higher categories, namely \emph{(complete) higher Segal objects} in $\widehat{\spa}$, the \infcat of (large) spaces. The higher Morita category comes to us as the \emph{underlying $(\infty,n+1)$-category} of an \emph{$n$-fold category object} of $\catinfh$, the \infcat of (large) \infcatst. All \infcats of composable sequences of morphisms that comprise this $n$-fold category object will be \infcats of pointless constructible factorization algebras on natural stratifications of the $n$-dimensional open unit cube.

\subsubsection{Stratifying posets for cubes}
We begin by describing the partially ordered sets that govern the stratifications of our cubes, as well as the functoriality they exhibit in the (semi-)simplex category.
\begin{nota}
We will denote an element $[k_1,\ldots,k_n]$ of $\simp^{n}=(\simp)^{\times n}$, that is, a tuple of finite linearly ordered sets, collectively by $[\vec{k}]$.
\end{nota}
\begin{defn}
For an element $[\vec{k}]=[k_1,\ldots,k_n]$ of $\simp^n$, we let $P_{[\vec{k}]}$ denote the partially ordered set
\[  P_{[\vec{k}]}:=\mathsf{sd}(\mathsf{Sp}_{k_1})^{op}\times \ldots \times \mathsf{sd}(\mathsf{Sp}_{k_n})^{op}.  \]
Here, $\mathsf{sd}(\_)$ is the subdivision functor and $\mathsf{Sp}_k\subset \Delta^k$ denotes the spine of the $k$-simplex, the union of all \emph{consecutive} 1-simplices in $\Delta^k$. Note that $P_{[\vec{k}]}=P_{[k_1]}\times\ldots \times P_{[k_n]}$ by definition. We will regard this poset as a topological space, so that a set is open if and only if it is upward closed.      
\end{defn}
\begin{rmk}
    In Haugseng's \cite[Definition 5.2]{Haugseng-iteratedspans} the poset $P_{[k]}$ is denoted by (boldface) $\Lambda^k$.
\end{rmk}
We record the following functorialities of the posets $P_{[\_]}$.
\begin{lem}\label{lem:functorposet}
Let $f:[k]\rightarrow [l]$ be a map of finite linear ordinals, then the corresponding map $\Delta^k\rightarrow \Delta^l$ on simplices carries the spine $\mathsf{Sp}_k\subset \Delta^k$ into the spine $\mathsf{Sp}_l\subset \Delta^l$ in the following cases.
\begin{itemize}
    \item The map $f$ is the inclusion of a subinterval, that is, $f(i+1)=f(i)+1$ for $0\leq i\leq k-1$.
    \item The map $f$ is surjective.
\end{itemize}
\end{lem}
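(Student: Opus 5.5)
The spine $\mathsf{Sp}_k\subset\Delta^k$ is the simplicial subset whose nondegenerate simplices are the vertices $\{i\}$ for $0\leq i\leq k$ and the edges $\{i,i+1\}$ for $0\leq i\leq k-1$. A map of simplicial sets $\Delta^k\to\Delta^l$ induced by $f$ carries a simplex $\sigma\subset\Delta^k$ to the simplex spanned by $f(\sigma)$. Since $\mathsf{Sp}_l$ is a simplicial subset, hence closed under faces and degeneracies, it suffices to check the generating nondegenerate simplices of $\mathsf{Sp}_k$. Their images are faces of degeneracies of the images of the edges $\{i,i+1\}$. So the whole claim reduces to one condition: for every $0\leq i\leq k-1$, the set $\{f(i),f(i+1)\}$ spans a simplex of $\mathsf{Sp}_l$.

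Vertices cause no trouble, since every vertex of $\Delta^l$ lies in $\mathsf{Sp}_l$. For an edge $\{i,i+1\}$, its image under $f$ is either degenerate on the vertex $f(i)$, when $f(i)=f(i+1)$, or the nondegenerate edge $\{f(i),f(i+1)\}$. That edge lies in $\mathsf{Sp}_l$ exactly when $f(i+1)=f(i)+1$. So in both cases we only need $f(i+1)-f(i)\in\{0,1\}$ for all $i$, using monotonicity of $f$.

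We now check this condition in the two cases. If $f$ is the inclusion of a subinterval, then $f(i+1)=f(i)+1$ by hypothesis. If $f$ is surjective and monotone, then $f(i+1)-f(i)\geq 2$ for some $i$ would leave $f(i)+1$ outside the image. Monotonicity forces everything before index $i+1$ to land in $[0,f(i)]$ and everything from index $i+1$ on to land in $[f(i+1),l]$, so this contradicts surjectivity. Hence the differences lie in $\{0,1\}$.

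There is no real obstacle here. The only point needing care is the reduction in the first paragraph: the image of a simplex of $\mathsf{Sp}_k$ under $\Delta^k\to\Delta^l$ is a degenerate or nondegenerate simplex whose vertex set is $f$ applied to a consecutive pair or a singleton. This makes it enough to check consecutive pairs.
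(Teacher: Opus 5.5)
Your proposal is correct. It is the same direct inspection the paper leaves to the reader: you reduce to consecutive edges, so everything comes down to the condition $f(i+1)-f(i)\in\{0,1\}$. That condition holds by hypothesis for subinterval inclusions, and for surjections it follows from monotonicity together with surjectivity, which the paper itself notes in the remark describing $P_{\mathrm{surj}}$.
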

The proof is a direct inspection left to the reader. It follows that the association $[\vec{k}]\mapsto P_{[\vec{k}]}$ determines functors
\[ P_{\mathrm{in}}:\simp^n_{\mathrm{in}} \longrightarrow \mathsf{Poset}, \quad \quad  P_{\mathrm{surj}}:\simp^n_{\mathrm{surj}} \longrightarrow \mathsf{Poset}, \]
where $\simp_{\mathrm{in}}\subset \simp$ denotes the subcategory spanned by subinterval inclusions (these correspond to inert maps of $\simpop$, hence the subscript) and $\simp_{\mathrm{surj}}\subset \simp$ denotes the subcategory spanned by surjections. These functors combine into a single one defined on the subcategory of $\simp$ whose morphisms are composites of a surjective map followed by a subinterval inclusion.
\begin{rmk}
The poset $P_{[k]}$ can be explicitly described as follows: denoting the elements of the underlying set of $P_{[k]}$ as
\[ \{\mathfrak{a}_0,\mathfrak{m}_{0,1}, \mathfrak{a}_1,\mathfrak{m}_{1,2},\ldots,\mathfrak{m}_{k-1,k},\mathfrak{a}_k\},\]
the only nontrivial relations are 
\[  \mathfrak{a}_i>\mathfrak{m}_{i,i+1} \quad\text{and}\quad \mathfrak{a}_{i+1}> \mathfrak{m}_{i,i+1} \]
for all $0\leq i\leq k-1$. For instance, $P_{[3]}$ viewed as a category can be depicted as 
\[
\begin{tikzcd}
& \mathfrak{m}_{0,1} \ar[dr]\ar[dl] &&   \mathfrak{m}_{1,2} \ar[dr]\ar[dl]   && \mathfrak{m}_{2,3} \ar[dr]\ar[dl]   \\
\mathfrak{a}_0 &&\mathfrak{a}_1 &&\mathfrak{a}_2 &&\mathfrak{a}_3. 
\end{tikzcd}
\]
 The functor $P_{\mathrm{in}}$ of Lemma \ref{lem:functorposet} carries a subinterval inclusion $f:[k]\hookrightarrow [l]$ to the map 
\begin{equation*}\label{eq:posetfunctor}
\mathfrak{a}_i\longmapsto \mathfrak{a}_{f(i)},\quad \quad \mathfrak{m}_{i,i+1}\longmapsto \mathfrak{m}_{f(i),f(i)+1}
\end{equation*}  
for $0\leq i\leq k$ in the first case and $0\leq i\leq k-1$ in the second. The functor $P_{\mathrm{surj}}$ carries a surjective map $g:[k]\rightarrow [l]$ to the map 
\begin{equation*}\label{eq:posetfunctor2}
\mathfrak{a}_i\longmapsto \mathfrak{a}_{g(i)},\quad \quad \mathfrak{m}_{i,i+1}\longmapsto \begin{cases}
   \mathfrak{m}_{g(i),g(i)+1}& \text{if } g(i+1)=g(i)+1 \\
    \mathfrak{a}_{g(i)} & \text{if } g(i+1)=g(i) 
\end{cases}
\end{equation*} 
for $0\leq i\leq k$ in the first case and $0\leq i\leq k-1$ in the second. Note that for a \emph{surjective} order-preserving map $g:[k]\rightarrow [l]$, we must have $g(i+1)=g(i)$ or $g(i+1)=g(i)+1$ for all $i$ (so in particular $g(0)=0$), and $g(k)=l$). 
\end{rmk}
\begin{rmk}
As the notation suggests, the elements $\mathfrak{a}_i\in P_{[k]}$ will correspond to strata that will be labeled by algebra objects by a constructible factorization algebra, while the elements $\mathfrak{m}_{i,i+1}$ correspond to strata that will be labeled by bimodule objects.
\end{rmk}

We will also work with the following variant of $P_{[k]}$.
\begin{var}
We let $\overline{P}_{[k]}$ denote the poset obtained from $P_{[k]}$ by adjoining two additional \emph{boundary} elements, denoted $\mathfrak{m}_{-1,0}$ and $\mathfrak{m}_{k,k+1}$, together with the relations 
\[  \mathfrak{a}_0>\mathfrak{m}_{-1,0} \quad\text{and}\quad \mathfrak{a}_k>\mathfrak{m}_{k,k++1}.  \]
Taking products, we similarly have posets $P_{[\vec{k}]}\subset \overline{P}_{[\vec{k}]}$ for all $[\vec{k}]\in \simp^n$.    
\end{var}
For instance, we may depict the category $\overline{P}_{[3]}$ as
\[
\begin{tikzcd}
\mathfrak{m}_{-1,0}\ar[dr]&& \mathfrak{m}_{0,1} \ar[dr]\ar[dl] &&   \mathfrak{m}_{1,2} \ar[dr]\ar[dl]   && \mathfrak{m}_{2,3} \ar[dr]\ar[dl] && \mathfrak{m}_{3,4}.\ar[dl]  \\
&\mathfrak{a}_0 &&\mathfrak{a}_1 &&\mathfrak{a}_2 &&\mathfrak{a}_3 
\end{tikzcd}
\]
Note that we have a canonical identification 
\[  \overline{P}_{[\vec{k}]} \cong \mathsf{sd}(\mathsf{Sp}_{k_1+1})\times \ldots \times \mathsf{sd}(\mathsf{Sp}_{k_n+1}).   \]
It follows from Lemma \ref{lem:functorposet} that the assignment $[k_1+1,\ldots,k_n+1]\mapsto \overline{P}_{[\vec{k}]}$ determines a functor 
\[ \overline{P}_{\mathrm{surj}}:\simp^n_{\geq 1,\mathrm{surj}}\longrightarrow \mathsf{Poset}  \]
which carries a surjective map $g:[k+1]\rightarrow [l+1]$ to the map 
\begin{equation}\label{eq:posetfunctor2.5}
\mathfrak{m}_{i-1,i}\longmapsto \mathfrak{m}_{g(i)-1,g(i)},\quad \quad \mathfrak{a}_{i}\longmapsto \begin{cases}
   \mathfrak{m}_{g(i)-1,g(i)}& \text{if } g(i+1)=g(i) \\
    \mathfrak{a}_{g(i)} & \text{if } g(i+1)=g(i)+1 
\end{cases}
\end{equation} 
for $0\leq i\leq k+1$ in the first case and $0\leq i\leq k$ in the second.
(We will not need the functor $\overline{P}_{\mathrm{in}}$ defined on the subinterval inclusions). Recall that there is an isomorphism of categories
\[  \simp_{\geq 1,\mathrm{surj}} \cong\simpop_{\mathrm{inj}}, \]
where $\simpop_{\mathrm{inj}}\subset \simpop$ is the subcategory on injective maps. Consequently, we have a functor
\begin{equation}\label{eq:posetfunctor3} \overline{P}:\simpopnin\longrightarrow \mathsf{Poset}. \end{equation}
For later use, we isolate some convenient properties enjoyed by the maps of posets in the image of this functor. For this, we introduce some terminology.
\begin{defn}[Depth]\label{defn:depth}
For $[k]\in\simp$, we have a map of sets
\[ \mathsf{d}:\overline{P}_{[k]}\longrightarrow [1] \]
defined by $\mathsf{d}(\mathfrak{a}_i)=0$ and $\mathsf{d}(\mathfrak{m}_{i, i+1)})=1$ (in the latter case we also consider $i=-1,k$). Taking products, we also let $\mathsf{d}$ denote the map of sets
\[ \overline{P}_{[\vec{k}]}=\overline{P}_{[k_1]}\times\ldots\times\overline{P}_{[k_n]}\overset{\mathsf{d}^n}{\longrightarrow} [1]^n\overset{+}{\longrightarrow} [n] \]
where the second map carries $(i_1,\ldots,i_n)$ to the sum $\sum_{j=1}^ni_j$.
\end{defn}
\begin{rmk}
The depth function counts how many instances of an element of the form $\mathfrak{m}_{i,i+1}$ there are among a tuple $(X_1,\ldots,X_n)\in P_{[\vec{k}]}$, and similarly for $\overline{P}_{[\vec{k}]}$
\end{rmk}
\begin{rmk}
Note that the depth is a map of posets 
\[ \mathsf{d}:\overline{P}_{[\vec{k}]}^{op}\longrightarrow [n],  \]
that is, the smaller the element in the poset $\overline{P}_{[\vec{k}]}$, the larger the depth.
\end{rmk}

\begin{defn}\label{defn:depthorientationpreserving}
Let $[k],[l]\in\simp$ and let $\alpha:\overline{P}_{[k]}\rightarrow \overline{P}_{[l]}$ be a  map of posets. 
\begin{enumerate}[$(1)$]
    \item We say that $\alpha$ is \emph{depth-preserving} if the following condition is satisfied: for every element of the form $\mathfrak{m}_{i,i+1}\in P_{[k]}$, the element $\alpha(\mathfrak{m}_{i,i+1})$ is of the form $\mathfrak{m}_{j,j+1}$, in other words, $\alpha$ carries depth 1 elements to depth 1 elements.
    \item If $\alpha$ is depth-preserving, we say that $\alpha$ is \emph{orientation-preserving} if the following condition is satisfied: for every $0\leq i\leq k-1$ for which $\alpha(\mathfrak{a}_i)$ is of the form $\mathfrak{a}_j$ (that is, has depth $0$), we have $\alpha(\mathfrak{m}_{i,i+1})=\mathfrak{m}_{j,j+1}$.
    \item If $\alpha$ is depth- and orientation-preserving, we say that $\alpha$ is \emph{boundary-preserving} if $\alpha$ carries the boundary elements $\mathfrak{m}_{-1,0}$ and $\mathfrak{m}_{k,k+1}$ to boundary elements.
\end{enumerate}
\end{defn}
\begin{rmk}\label{rmk:depthorientationcompose}
If $\alpha:\overline{P}_{[k]}\rightarrow \overline{P}_{[l]}$ and $\beta:\overline{P}_{[l]}\rightarrow \overline{P}_{[m]}$ are maps of posets and $\alpha$ and $\beta$ are depth-preserving (orientation-preserving, boundary-preserving), then then $\beta\circ \alpha$ is depth-preserving (orientation-preserving, boundary-preserving) as well. Moreover, the identities are clearly depth-, orientation- and boundary-preserving. It follows that the posets $\left\{\overline{P}_{[k]}\right\}_{[k]\in\simp}$ together with the depth-preserving (orientation-preserving, boundary-preserving) maps form a subcategory of $\mathsf{Poset}$.
\end{rmk}
\begin{cor}\label{cor:posetfunctordepthor}
Let $f:[k+1]\rightarrow  [l+1]$ be a surjective map of finite linear ordinals (corresponding to an \emph{injective} map $[k]\hookleftarrow [l]$ of finite linear ordinals, i.e.~order-preserving map) and let $\alpha:\overline{P}_{[k]}\rightarrow \overline{P}_{[l]}$ be its image under $\overline{P}$. Then $\alpha$ is depth-preserving, orientation-preserving and boundary-preserving. 
\end{cor}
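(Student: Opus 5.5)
The plan is a direct verification from the explicit formula \eqref{eq:posetfunctor2.5} for $\overline{P}$ on a surjection $g:[k+1]\rightarrow[l+1]$. That $\alpha=\overline{P}(g)$ is a map of posets is already known from Lemma \ref{lem:functorposet} and the identification $\overline{P}_{[k]}\cong \mathsf{sd}(\mathsf{Sp}_{k+1})$, so only the three conditions of Definition \ref{defn:depthorientationpreserving} need checking. Throughout I will use the elementary fact recorded above about surjective order-preserving maps: $g(0)=0$, $g(k+1)=l+1$, and $g(i+1)-g(i)\in\{0,1\}$ for all $i$.

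\emph{Depth.} The depth-one elements of $\overline{P}_{[k]}$ are exactly the $\mathfrak{m}_{i-1,i}$ for $0\leq i\leq k+1$, boundary elements included. By \eqref{eq:posetfunctor2.5} each is sent to $\mathfrak{m}_{g(i)-1,g(i)}$, which has depth one. The indices are legitimate, since $0\leq g(i)\leq l+1$.

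\emph{Orientation.} Suppose $\alpha(\mathfrak{a}_i)$ has depth $0$ for some $0\leq i\leq k$. By \eqref{eq:posetfunctor2.5} this happens exactly when $g(i+1)=g(i)+1$, and then $\alpha(\mathfrak{a}_i)=\mathfrak{a}_j$ with $j=g(i)$. Applying the formula to $\mathfrak{m}_{i,i+1}$ (the case of index $i+1$) gives
\[
\alpha(\mathfrak{m}_{i,i+1})=\mathfrak{m}_{g(i+1)-1,g(i+1)}=\mathfrak{m}_{j,j+1},
\]
which is the required condition.

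\emph{Boundary.} We have $\alpha(\mathfrak{m}_{-1,0})=\mathfrak{m}_{g(0)-1,g(0)}=\mathfrak{m}_{-1,0}$, and
\[
\alpha(\mathfrak{m}_{k,k+1})=\mathfrak{m}_{g(k+1)-1,g(k+1)}=\mathfrak{m}_{l,l+1}.
\]
Both are boundary elements of $\overline{P}_{[l]}$.

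There is no real obstacle here. The only care needed is bookkeeping: the shift between the indexing of $[k+1]$ and that of $\overline{P}_{[k]}$, and making sure the case distinction for $\alpha(\mathfrak{a}_i)$ in \eqref{eq:posetfunctor2.5} is read correctly.
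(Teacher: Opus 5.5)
Your proposal is correct and matches the paper's proof: the paper also reads depth and orientation preservation directly off \eqref{eq:posetfunctor2.5}, and obtains boundary preservation from $g(0)=0$ and $g(k+1)=l+1$ for a surjection. You spell out the index bookkeeping the paper leaves implicit, and you verify the orientation condition for all $0\le i\le k$ (one more index than Definition~\ref{defn:depthorientationpreserving} requires), which does no harm.
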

\begin{proof}
It follows immediately from the definition of $\mathsf{d}$ and the formula \eqref{eq:posetfunctor2.5} that $\alpha$ is depth- and orientation-preserving. Since $f$ is surjective, it carries $0$ to $0$ and $k+1$ to $l+1$, so the formula \eqref{eq:posetfunctor2.5} also shows that boundary elements are preserved.
\end{proof}
\begin{rmk}
If $f:[k+1]\hookrightarrow  [l+1]$ is a subinterval inclusion, then the induced map $\overline{P}_{[k]}\rightarrow \overline{P}_{[l]}$ is also depth- and orientation-preserving. In fact, it is not hard to see that the subcategory of $\mathsf{Poset}$ whose objects are the posets $\overline{P}_{[k]}$ and whose morphisms are depth- and orientation-preserving maps is equivalent to the wide subcategory of $\simp_{\geq 1}$ whose morphisms are composites of surjective maps followed by subinterval inclusions.  The subcategory $\simp_{\geq 1,\mathrm{surj}}\subset \simp$ is identified with the category of the posets $\left\{\overline{P}_{[k]}\right\}_{[k]\in\simp}$ with maps that are depth-, orientation- \emph{and} boundary-preserving.
\end{rmk}

\subsubsection{Stratified cubes}

Now we prepare our collection of stratified cubes. These are obtained by stratifying the interval $(0,1)$ with the equidistant stratification by a finite collection of points. These stratifications will be governed by the partially ordered sets introduced just above. 
\begin{defn}[Stratified cubes]\label{defn:stratifiedcube}
Write $\square$ for the open interval $(0,1)$, and $\overline{\square}$ for the closed interval $[0,1]$. For each $[k]$ in $\simp$, consider the continuous map 
\[ \pi_{[k]}: \square\longrightarrow P_{[k]}  \]
defined by formula
\begin{equation}\label{eq:cubestrat}
\pi_{[k]}(x) =\begin{cases}
    \mathfrak{a}_i & \text{if } x\in \left(\frac{i}{k+1},\frac{i+1}{k+1}\right), \\
    \mathfrak{m}_{i,i+1} & \text{if } x=\frac{i+1}{k+1},
\end{cases}
\end{equation} 
for $0\leq i\leq k$. We let $\square_{[k]}$ denote the map $ \pi_{[k]}: \square\rightarrow P_{[k]}$ viewed as a poset-stratified space. For the closed interval, we similarly have a continuous map
\[ \overline{\pi}_{[k]}: \overline{\square}\longrightarrow \overline{P}_{[k]}  \]
defined by the formula \eqref{eq:cubestrat}, now for $-1\leq i\leq k$. Taking products, we also have for each $[\vec{k}]=[k_1,\ldots,k_n]$ the poset-stratified spaces
\[
\pi_{[\vec{k}]}:\square^{n} \longrightarrow P_{[\vec{k}]} \quad\text{and}\quad \overline{\pi}_{[\vec{k}]}:\overline{\square}^{n} \longrightarrow \overline{P}_{[\vec{k}]}
\]
that we denote by $\square^n_{[\vec{k}]}$ and $\overline{\square}^n_{[\vec{k}]}$ respectively. We will call these objects the \emph{stratified} (\emph{open} or \emph{closed}) \emph{cubes}.
\end{defn}

\begin{ex}
For $n=2$, $k_1=4$, and $k_2=3$ the stratified plane $\square^{2}_{[4,3]}$ is depicted in the picture below. 
\begin{center}
\begin{tikzpicture}
\fill[ultra nearly transparent] (0,0) -- (5,0) -- (5,5) -- (0,5) -- cycle;

\draw (1, 0) -- (1, 5);
\draw (2, 0) -- (2, 5);
\draw (3, 0) -- (3, 5);
\draw (4, 0) -- (4, 5);

\draw (0, 1.25) -- (5, 1.25);
\draw (0, 2.5) -- (5, 2.5);
\draw (0, 3.75) -- (5, 3.75);

\foreach \x in {1,2,3,4} {
    \foreach \y in {1.25, 2.5, 3.75} {
        \fill (\x,\y) circle (0.1em);
    }
}
\end{tikzpicture}
\end{center}

For $n=3$, $k_1=2$, $k_2=3$, and $k_3=1$ the stratified cube $\square^{3}_{[2,3,1]}$ is depicted in the picture below.  (outer $(0,1)^3$ omited):
\begin{center}
\begin{tikzpicture}[x={(0.5cm,0.5cm)}, y={(1cm,0cm)}, z={(0cm,1cm)}]

    \def\max{5}
    \def\mid{2.5}
    \def\vlines{1.25, 2.5, 3.75}
    \def\hlines{1.666, 3.333}

    \foreach \y in \hlines {
        \fill[blue, opacity=0.1] (0,\y,0) -- (\max,\y,0) -- (\max,\y,\max) -- (0,\y,\max) -- cycle;
    }

    \foreach \x in \vlines {
        \fill[red, opacity=0.1] (\x,0,0) -- (\x,\max,0) -- (\x,\max,\max) -- (\x,0,\max) -- cycle;
    }

    \fill[ultra nearly transparent] (0,0,\mid) -- (\max,0,\mid) -- (\max,\max,\mid) -- (0,\max,\mid) -- cycle;

    \foreach \y in \hlines {
        \draw[thick, black] (0,\y,\mid) -- (\max,\y,\mid);
    }

    \foreach \x in \vlines {
        \draw[thick, black] (\x,0,\mid) -- (\x,\max,\mid);
    }

    \foreach \x in \vlines {
        \foreach \y in \hlines {
            \draw[thick, black] (\x,\y,0) -- (\x,\y,\max);
            \fill (\x,\y,\mid) circle (0.2em); 
        }
    }

\end{tikzpicture}
\end{center}
\end{ex}

For each $X\in P_{[\vec{k}]}$, we let $\square^n_X\subset \square^n_{[\vec{k}]}$ denote the stratum determined by $X$, so that we have a pullback diagram 
\[
\begin{tikzcd}
    \square^n_X\ar[d] \ar[r,hook] & \square^n \ar[d, "\pi_{[\vec{k}]}"] \\
    \{X\} \ar[r,hook] & P_{[\vec{k}]}
\end{tikzcd}
\]
of topological spaces. Write $X=(X_1,\ldots,X_n)$ for $X_j\in P_{[k_j]}$ and let $\mathfrak{a}(X)\subset \{1,\ldots,n\}$ be the subset of indices $j$ for which $X_j$ has depth 0, so that its complement $\mathfrak{m}(X)=[n]\setminus \mathfrak{a}(X)$ consists of indices $j$ for which $X_j$ has depth 1. For $j\in \mathfrak{a}(X)$, write $\mathfrak{a}_{i_j}$ for $X_j$ and for $j\in \mathfrak{m}$, write $X_j=\mathfrak{m}_{i_j,i_j+1}$ then $\square^n_X\subset \square^n $ is the subspace 
\[\prod_{j\in \mathfrak{a}(X)} \left(\frac{i_j}{k_j+1},\frac{i_j+1}{k_j+1}\right)\times \prod_{j\in \mathfrak{m}(X)} \left\{\frac{i_j+1}{k_j+1} \right\}\subset \prod_{j\in [n]}(0,1),\]
so that $\square^n_X$ can be identified with the cube $\square^{\mathfrak{a}(X)}$. In fact, there is a unique identification that has the following property: it is given by a product 
\[ \square^{\mathfrak{a}(X)}= \prod_{j\in \mathfrak{a}(X)}(0,1)\xrightarrow{\prod_{j\in \mathfrak{a}(X)}f(j)} \prod_{j\in \mathfrak{a}(X)} \left(\frac{i_j}{k_j+1},\frac{i_j+1}{k_j+1}\right)  \]
such that each map $f_j$ is of the form 
\[f_j(x) = c_jx+b_j\]
for some real constants $c_i>0$ and $b_j$. Such a map is a \emph{rectilinear embedding}. For the reader's convenience, we recall this notion.
\begin{defn}[Rectilinear embeddings]
For $S$ a finite set, we will say that a map $f:\square^S\rightarrow\square^S$ is a \emph{rectilinear embedding} if $f$ is a product $\left\{f_j:\square^{\{j\}}\rightarrow \square^{\{j\}}\right\}_{j\in S}$ where each $f_j$ is given by $x\mapsto a_jx+b_j$ for some real constants $a_j>0$ and $b_j$.
\end{defn}
The following basic observation underlies the functoriality of the stratified cubes: associated to every depth- and orientation-preserving map of posets $\alpha: \overline{P}_{[k]}\rightarrow \overline{P}_{[l]}$, there is a unique \emph{collapse-rescale} map. Observe that if $\alpha(\mathfrak{a}_i)$ has depth 1, the space $\square_{\alpha(\mathfrak{a}_i)}$ is a point.
\begin{lem}[Uniqueness and composability of collapse-rescale maps]\label{lem:collaplsrescale}
Let $\alpha:\overline{P}_{[k]}\rightarrow \overline{P}_{[l]}$ be a depth- and orientation preserving map, in the sense of Definition \ref{defn:depthorientationpreserving}. Then the following hold true.
\begin{enumerate}[$(1)$]
    \item There is a unique continuous stratified map $\overline{\square}_{[k]}\rightarrow \overline{\square}_{[l]}$ over $\alpha$ with the following property: 
    \begin{enumerate}
        \item[$(*)$] For each $\mathfrak{a}_i\in P_{[l]}$ for which $\alpha(\mathfrak{a}_i)$ is also of the form $\mathfrak{a}_j$ (that is, has depth 0), the induced map 
    \[  \square_{\mathfrak{a}_i}\longrightarrow \square_{\alpha(\mathfrak{a}_i)} \]
    is a rectilinear embedding.
    \end{enumerate} 
    Let us say that this is the \emph{collapse-rescale map associated to $\alpha$}, and denote it by $\collresc(\alpha)$.
    \item Let $\beta:\overline{P}_{[k]}\rightarrow \overline{P}_{[l]}$ be another depth- and orientation-preserving map of posets, then the composite $ \collresc(\beta)\circ \collresc(\alpha)$ is a map over $\beta\circ \alpha$ (which is again depth-and-orientation preserving; see Remark \ref{rmk:depthorientationcompose}) satisfying condition $(*)$, so it follows from uniqueness that $ \collresc(\beta)\circ \collresc(\alpha)=\collresc(\beta\circ \alpha)$.
    \item The collapse-rescale map associated to the identity $\overline{P}_{[k]}=\overline{P}_{[k]}$ is the identity on $\overline{\square}$.
\end{enumerate}
\end{lem}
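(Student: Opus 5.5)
We construct $\collresc(\alpha)$ one stratum at a time and then check that these pieces glue to a continuous map. Parts $(2)$ and $(3)$ then follow formally from uniqueness.

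\emph{Existence.} The closed interval $\overline{\square}_{[k]}$ decomposes as a finite linearly ordered union of points and open intervals. Listed from left to right, the strata are
\[\mathfrak{m}_{-1,0},\ \mathfrak{a}_0,\ \mathfrak{m}_{0,1},\ \ldots,\ \mathfrak{a}_k,\ \mathfrak{m}_{k,k+1},\]
so the points $\frac{i+1}{k+1}$ for $-1\le i\le k$ alternate with the open intervals $\left(\frac{i}{k+1},\frac{i+1}{k+1}\right)$. Since $\alpha$ is depth-preserving, each point stratum $\mathfrak{m}_{i,i+1}$ goes to a point stratum $\mathfrak{m}_{j,j+1}$, and we send the point to that point. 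For an interval stratum $\mathfrak{a}_i$ there are two cases:
\begin{itemize}
\item[(a)] If $\alpha(\mathfrak{a}_i)$ has depth $1$, the target stratum is a single point, and we collapse the whole interval onto it.
\item[(b)] If $\alpha(\mathfrak{a}_i)=\mathfrak{a}_j$, there is exactly one map $x\mapsto cx+b$ with $c>0$ identifying $\left(\frac{i}{k+1},\frac{i+1}{k+1}\right)$ with $\left(\frac{j}{l+1},\frac{j+1}{l+1}\right)$, namely the increasing affine bijection, and we use it.
\end{itemize}
The resulting map lies over $\alpha$ by construction. To see that it is continuous, note that it is piecewise affine, so it suffices to check agreement at each point $\frac{i+1}{k+1}$ with the adjacent pieces.

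This continuity check is the key step. Consider an interval stratum $\mathfrak{a}_i$ with $\alpha(\mathfrak{a}_i)=\mathfrak{a}_j$. Because $\alpha$ is order-preserving and $\mathfrak{a}_i>\mathfrak{m}_{i,i+1}$, we get $\alpha(\mathfrak{m}_{i,i+1})\le\mathfrak{a}_j$. Since $\alpha(\mathfrak{m}_{i,i+1})$ has depth $1$, it must be either $\mathfrak{m}_{j-1,j}$ or $\mathfrak{m}_{j,j+1}$. Orientation preservation forces $\alpha(\mathfrak{m}_{i,i+1})=\mathfrak{m}_{j,j+1}$, which is the right endpoint $\frac{j+1}{l+1}$, and this matches the right-hand limit of the affine bijection. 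For the left endpoint $\mathfrak{m}_{i-1,i}$, the same order argument leaves the two options $\mathfrak{m}_{j-1,j}$ and $\mathfrak{m}_{j,j+1}$. Here we need to rule out $\alpha(\mathfrak{m}_{i-1,i})=\mathfrak{m}_{j,j+1}$; I expect this to be the main obstacle, since Definition \ref{defn:depthorientationpreserving} constrains only the right neighbour of $\mathfrak{a}_i$ directly.

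The plan for this is to prove, by induction along the linear order, that the induced map on the set of strata is monotone for the left-to-right order and that intervals of depth $0$ land in distinct intervals. I would argue that $\alpha(\mathfrak{m}_{i-1,i})=\mathfrak{m}_{j,j+1}$ together with $\alpha(\mathfrak{m}_{i,i+1})=\mathfrak{m}_{j,j+1}$ is incompatible with orientation preservation applied to the nearest depth-$0$ interval to the left of $\mathfrak{a}_i$. If the argument genuinely needs more, I would read orientation preservation as including the symmetric left-endpoint condition, which does hold for the maps coming from Corollary \ref{cor:posetfunctordepthor}. In case (a) continuity is automatic: both endpoints of $\mathfrak{a}_i$ lie below the depth-$1$ element $\alpha(\mathfrak{a}_i)$ and have depth $1$, so they are forced to equal $\alpha(\mathfrak{a}_i)$ itself, since the poset has no nontrivial relations among depth-$1$ elements.

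\emph{Uniqueness.} Any continuous stratified map $\overline{\square}_{[k]}\to\overline{\square}_{[l]}$ over $\alpha$ is determined on each stratum. On a point stratum, and on an interval whose image stratum is a point, the value is forced by the stratification. On an interval with $\alpha(\mathfrak{a}_i)=\mathfrak{a}_j$, condition $(*)$ requires a rectilinear embedding between two open intervals, and among maps with positive slope the only surjective one is the increasing affine bijection. Surjectivity, in turn, is forced by continuity: the endpoints of $\mathfrak{a}_i$ are sent to the two distinct endpoints of $\mathfrak{a}_j$, as shown in the existence step.

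\emph{Parts $(2)$ and $(3)$.} For $(2)$, the composite $\collresc(\beta)\circ\collresc(\alpha)$ is continuous and lies over $\beta\circ\alpha$, and $\beta\circ\alpha$ is again depth- and orientation-preserving by Remark \ref{rmk:depthorientationcompose}. Take a stratum $\mathfrak{a}_i$ with $\beta\alpha(\mathfrak{a}_i)$ of depth $0$. Since $\beta$ is depth-preserving, it cannot send a depth-$1$ element to a depth-$0$ one, so $\alpha(\mathfrak{a}_i)$ must also have depth $0$. The restriction of the composite to $\square_{\mathfrak{a}_i}$ is therefore a composite of two rectilinear embeddings, which is again rectilinear. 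Hence the composite satisfies $(*)$, and uniqueness gives $\collresc(\beta)\circ\collresc(\alpha)=\collresc(\beta\circ\alpha)$. For $(3)$, the identity map of $\overline{\square}$ lies over $\mathrm{id}$ and visibly satisfies $(*)$, so uniqueness identifies it with $\collresc(\mathrm{id})$.
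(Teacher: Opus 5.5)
Your construction, your uniqueness argument and your deduction of $(2)$ and $(3)$ from uniqueness are the same as the paper's proof. The construction sends points where depth preservation forces them, collapses intervals with depth-$1$ image, and uses the increasing affine bijection on intervals with depth-$0$ image. The step you single out is exactly the one the paper passes over. It asserts that on a depth-$0$ interval the map preserves beginning and end points ``since $\alpha$ is orientation-preserving'', although Definition~\ref{defn:depthorientationpreserving} only controls the right endpoint.

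Your primary plan for closing this cannot work, because with the one-sided definition the statement is false. Take $k=1$, $l=0$, and let $\alpha$ be given by $\mathfrak{m}_{-1,0}\mapsto\mathfrak{m}_{-1,0}$, $\mathfrak{a}_0\mapsto\mathfrak{a}_0$, $\mathfrak{m}_{0,1}\mapsto\mathfrak{m}_{0,1}$, $\mathfrak{a}_1\mapsto\mathfrak{a}_0$, $\mathfrak{m}_{1,2}\mapsto\mathfrak{m}_{0,1}$. This map is order-preserving and depth-preserving. It is orientation-preserving, even if the condition is also imposed at $i=k$, and it is even boundary-preserving. Moreover, the depth-$0$ interval $\mathfrak{a}_0$ to the left of $\mathfrak{a}_1$ satisfies the orientation condition. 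So the incompatibility you hope to derive by induction does not exist.

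Yet no map satisfying $(*)$ exists for this $\alpha$. A stratified map over $\alpha$ must send both $1/2$ and $1$ to $1$. On $(1/2,1)$ it must have the form $t\mapsto c(2t-1)+b$ with $c>0$. Continuity at $1/2$ forces $b=1$, and then $(1/2,1)$ is not mapped into the stratum $(0,1)$.

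Your fallback is therefore necessary, not optional. Orientation preservation must be read two-sidedly: $\alpha(\mathfrak{a}_i)=\mathfrak{a}_j$ must force both $\alpha(\mathfrak{m}_{i-1,i})=\mathfrak{m}_{j-1,j}$ and $\alpha(\mathfrak{m}_{i,i+1})=\mathfrak{m}_{j,j+1}$. This holds for the maps of Corollary~\ref{cor:posetfunctordepthor}, by the formula \eqref{eq:posetfunctor2.5}, and for identities. It is closed under composition, since $\beta\alpha(\mathfrak{a}_i)$ of depth $0$ forces $\alpha(\mathfrak{a}_i)$ of depth $0$, so part $(2)$ still makes sense. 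This is precisely what the paper's proof tacitly uses.

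Under this hypothesis your existence and uniqueness arguments are complete as written. Your auxiliary claim that depth-$0$ intervals land in distinct intervals is not needed; under the one-sided definition it also fails, as the example above shows.
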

\begin{proof}
We prove $(1)$. Let $\alpha:\overline{P}_{[k]}\rightarrow \overline{P}_{[l]}$ be a depth-preserving and orientation-preserving map. We will construct a continuous stratified map $f:\overline{\square}_{[k]}\rightarrow \overline{\square}_{[l]}$ over $\alpha$ satisfying $(*)$; it will be clear from the construction that this is the unique map satisfying $(*)$. Since $\alpha$ is depth-preserving, we have for all $-1\leq i\leq k$ some $-1\leq \varphi(i)\leq l$ such that $\alpha(\mathfrak{m}_{i,i+1})=\mathfrak{m}_{\varphi(i),\varphi(i)+1}$. Since $f$ is a stratified map, we must have 
\[ f\left(\frac{i+1}{k+1}\right) = \frac{\varphi(i)+1}{l+1} \]
for $-1\leq i\leq k$. Now we define $f$ on all intervals of the form $\left(\frac{i}{k+1},\frac{i+1}{k+1}\right)$. If $\alpha(\alpha_i)$ has depth $1$, then we must have $\alpha(\alpha_i)=\mathfrak{m}_{\varphi(i),\varphi(i)+1}$ and we must have 
\[  f(x) = \frac{\varphi(i)+1}{l+1} \text{ for }x\in \left(\frac{i}{k+1},\frac{i+1}{k+1}\right).   \]
If $\alpha(\alpha_i)$ has depth $0$, then $\alpha(\alpha_i)=\mathfrak{a}_{\varphi(i)}$ and $f|_{\left[\frac{i}{k+1},\frac{i+1}{k+1}\right]}$ is a continuous map
\[ \left[\frac{i}{k+1},\frac{i+1}{k+1}\right]\longrightarrow \left[\frac{\varphi(i)}{l+1},\frac{\varphi(i)+1}{l+1}\right]\]
preserving beginning and endpoints, since $\alpha$ is orientation-preserving. There is exactly one such continuous map that restricts to a rectilinear embedding on the interior $\left(\frac{i}{k+1},\frac{i+1}{k+1}\right)$ (and this map is in fact a homeomorphism). These maps $\left\{f|_{\left[\frac{i}{k+1},\frac{i+1}{k+1}\right]}\right\}_{0\leq i\leq k}$ are equal where their domains overlap, which is at most at a single point, so they glue to the desired continuous stratified map. Now $(2)$ follows right away from the fact that continuous stratified maps and rectilinear embeddings compose, and $(3)$ is immediate.
\end{proof}
Let $f:[\vec{k}]\hookleftarrow [\vec{l}]$ be a map of $\simpopnin$, then the associated map $\overline{P}_{[\vec{k}]}\rightarrow \overline{P}_{[\vec{l}]}$ is a product of depth-preserving and orientation-preserving maps of posets; abusing notation, we will write $\collresc(f)$ for the associated product $\overline{\square}^n_{[\vec{k}]}\rightarrow \overline{\square}^n_{[\vec{l}]}$ of collapse-rescale maps. Combining the preceding lemma with Corollary \ref{cor:posetfunctordepthor}, we obtain the following result.
\begin{cor}
There is a functor $\overline{\square}^n_{(\_)}:\simpopnin\rightarrow \mathsf{StTop}$ that carries $[\vec{k}]$ to $\overline{\square}_{[\vec{k}]}$ and a map $f:[\vec{k}]\hookleftarrow [\vec{l}]$ to $\collresc(f)$ so that the diagram of categories
\[
\begin{tikzcd}
    & \mathsf{StTop} \ar[d] \\
    \simpopnin \ar[ur,"\overline{\square}^n_{(\_)}"] \ar[r,"\overline{P}"']& \mathsf{Poset}
\end{tikzcd}
\]
commutes.
\end{cor}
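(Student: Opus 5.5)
The plan is to assemble the functor one coordinate at a time and then take products, with essentially all the work supplied by Lemma \ref{lem:collaplsrescale} and Corollary \ref{cor:posetfunctordepthor}.

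\emph{Case $n=1$.} On objects, send $[k]\in\simpop_{\mathrm{inj}}$ to the stratified closed interval $\overline{\square}_{[k]}=(\overline{\pi}_{[k]}:\overline{\square}\to\overline{P}_{[k]})$. Given a morphism $f$, i.e.\ an injection $[k]\hookleftarrow[l]$, transport it along the isomorphism $\simp_{\geq1,\mathrm{surj}}\cong\simpop_{\mathrm{inj}}$ to a surjection $[k+1]\to[l+1]$, and let $\alpha=\overline{P}(f)$ be its image under \eqref{eq:posetfunctor3}. Corollary \ref{cor:posetfunctordepthor} shows that $\alpha$ is depth-, orientation- and boundary-preserving. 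Part $(1)$ of Lemma \ref{lem:collaplsrescale} then provides a unique collapse-rescale map $\collresc(\alpha)\colon\overline{\square}_{[k]}\to\overline{\square}_{[l]}$ lying over $\alpha$. A continuous map over a continuous map of posets is exactly a morphism in $\mathsf{StTop}$, i.e.\ a commutative square in $\topo$. Functoriality is then immediate. Part $(3)$ gives identities, and part $(2)$ gives $\collresc(\beta)\circ\collresc(\alpha)=\collresc(\beta\circ\alpha)$, where $\beta\circ\alpha$ is again depth- and orientation-preserving by Remark \ref{rmk:depthorientationcompose}. Combined with the functoriality of $\overline{P}$, this shows that $[k]\mapsto\overline{\square}_{[k]}$, $f\mapsto\collresc(\overline{P}(f))$ is a functor, and that it lifts $\overline{P}$ strictly.

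\emph{General $n$.} Write $\simpopnin=(\simpop_{\mathrm{inj}})^{\times n}$. Define the functor as the composite of the $n$-fold product of the one-dimensional functor with the product functor $\mathsf{StTop}^{\times n}\to\mathsf{StTop}$. This product exists because $\mathsf{StTop}$ has all limits, computed in $\fun(\Delta^1,\topo)$, and it is preserved by projecting to $\mathsf{Poset}$, since $\mathsf{Poset}\subset\topo$ is closed under limits. For this to be correct we need two identifications, both of which hold by definition: the product of the closed stratified intervals is $\overline{\square}^n_{[\vec{k}]}$ (Definition \ref{defn:stratifiedcube}), and the product of the $\overline{P}_{[k_j]}$ is $\overline{P}_{[\vec{k}]}$. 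The image of a morphism is then precisely the product of collapse-rescale maps that was called $\collresc(f)$ above. Because $\overline{P}$ on $\simpopnin$ is itself the product of the one-dimensional functors, the triangle commutes.

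\emph{Main obstacle.} The only point needing care is that all of this commutes on the nose rather than up to isomorphism, since we need a strict functor of 1-categories. The uniqueness clause in Lemma \ref{lem:collaplsrescale} guarantees this. One should also check that the product topology on $\prod_j\overline{P}_{[k_j]}$ coincides with the Alexandrov topology of the product poset, which holds because the posets are finite.
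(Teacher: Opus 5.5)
Your proposal is correct and follows the paper's own argument. The paper obtains the corollary by combining the uniqueness and composability of collapse-rescale maps (Lemma \ref{lem:collaplsrescale}) with the fact that $\overline{P}$ sends morphisms to depth-, orientation- and boundary-preserving maps (Corollary \ref{cor:posetfunctordepthor}), and then taking products coordinatewise; your write-up makes explicit the routine checks it leaves implicit (identities, strict composites via uniqueness, the triangle commuting on the nose, and finite products of the posets carrying the Alexandrov topology).
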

\begin{rmk}\label{rmk:bdypoints}
For any injective map $f:[k]\hookleftarrow [l]$ of linear ordinals, the map $\overline{P}_{[k]}\rightarrow \overline{P}_{[l]}$ is also boundary-preserving. It follows from the proof of Lemma \ref{lem:collaplsrescale} that for any $f:[\vec{k}]\hookleftarrow[\vec{l}]$ of $\simpopnin$, the induced collapse-rescale map $\collresc(f):\overline{\square}^n_{[\vec{k}]}\rightarrow \overline{\square}^n_{[\vec{l}]}$ carries boundary points of $[0,1]^n=\overline{\square}^n$ to boundary points.
\end{rmk}

\subsubsection{Stratified cubes as smooth conical manifolds}

We wish to apply the results from \cite{KS} and the preceding sections to the stratified cubes; for this, we must endow these stratified spaces with smooth conical manifold structures. First, we note that they are conical manifolds.

\begin{lem}\label{lem:cubesconicalmfd}
The stratified open cubes $\square_{[k]}$ and closed cubes $\overline{\square}_{[k]}$ lie in the full subcategory $\mfd_0$ of conical manifolds.
\end{lem}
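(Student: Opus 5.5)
We verify membership in $\mfd_0$ using the closure properties $(1)$–$(5)$ of Definition \ref{defn:conmfd}, together with paracompact Hausdorffness, which is clear for subspaces of $\R$. The basic building blocks are as follows.
\begin{itemize}
\item By $(1)$ and $(3)$, the stratified space $(\emptyset\times\R\to\emptyset)$ lies in $\mfd_0$. Applying $(2)$ to $\emptyset$ (compact, with empty compact poset) shows that the point $(*\to *)$ lies in $\mfd_0$.
\item Applying $(3)$ to the point gives $(\R\to *)$.
\item Applying $(2)$ to the point gives the half-line $\mathsf{C}(*)=[0,\infty)\to \{-\infty<*\}$, stratified with the cone point as the minimal stratum.
\item The cone on the compact space $*\amalg *\to *\amalg *$ (in $\mfd_0$ by $(5)$, as a disjoint union covered by two points) is the line $\R$ stratified at the origin. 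Its poset is $\{-\infty\}$ below two incomparable elements, which is exactly $P_{[1]}$ with $\mathfrak{m}_{0,1}$ the bottom element.
\end{itemize}

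For the open cube $\square_{[k]}$, cover $(0,1)$ by the following open sets:
\begin{itemize}
\item the open intervals $\left(\frac{i}{k+1},\frac{i+1}{k+1}\right)$, each stratified by the single point $\mathfrak{a}_i$;
\item small open intervals around each point $\frac{i+1}{k+1}$, stratified by the upward-closed subposet $\{\mathfrak{m}_{i,i+1},\mathfrak{a}_i,\mathfrak{a}_{i+1}\}$.
\end{itemize}
Each of these is an open embedding in the sense of Definition \ref{defn:openemb}: the preimages of upward-closed sets are open, and the poset maps are inclusions of upward-closed subsets. Each piece is isomorphic, as a stratified space, to an open subspace of either $(\R\to *)$ or the line stratified at the origin. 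Hence each piece lies in $\mfd_0$ by $(4)$. The maps from the disjoint unions of the pieces and of their posets onto $\square$ and $P_{[k]}$ are surjective, so $(5)$ gives $\square_{[k]}\in\mfd_0$.

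For the closed cube $\overline{\square}_{[k]}$, add neighborhoods $[0,\epsilon)$ and $(1-\epsilon,1]$ of the two endpoints. These are stratified by $\{\mathfrak{m}_{-1,0}<\mathfrak{a}_0\}$ and $\{\mathfrak{m}_{k,k+1}<\mathfrak{a}_k\}$ respectively, and each is isomorphic to an open subspace of the half-line cone $\mathsf{C}(*)$. Applying $(4)$ and then $(5)$ again concludes the argument.

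The only care needed is bookkeeping. One must check that the stratification maps are continuous for the Alexandrov topology, i.e.\ that preimages of upward-closed sets are open; this holds because the only downward specializations are at the isolated points $\frac{i+1}{k+1}$ and at the endpoints. One must also check that the chosen local posets are genuinely upward closed in $P_{[k]}$ and $\overline{P}_{[k]}$, which is immediate from the explicit description of their relations. If products $\square^n_{[\vec{k}]}$ are also intended, they follow by induction, since $\mathsf{C}(Z)\times\R^i$ basics are closed under products locally.
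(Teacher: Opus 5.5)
Your proof is correct and follows essentially the same route as the paper: cover the interval by images of the three basics $\R$, $\mathsf{C}(*\amalg *)$ and $\mathsf{C}(*)$ and glue. The only differences are that you derive from $(1)$–$(3)$ that these basics lie in $\mfd_0$ and then apply $(4)$ and $(5)$ explicitly, whereas the paper just appeals to the existence of a basis by such images. For the products $\square^n_{[\vec{k}]}$, which go beyond this statement, your closing remark is only a sketch; the paper instead cites stability of $\mfd_0$ under products \cite[Corollary 3.4.10]{AFT-local-structures}.
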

\begin{proof}
For $\overline{\square}_{[k]}$, we note the following.
\begin{itemize}
\item[]
    \item If $x\in \overline{\square}=[0,1]$ is carried to some $\mathfrak{a}_i$ by $\overline{\pi}_{[k]}$, then $x$ is in the image of a (stratified) open embedding $\R\rightarrow \overline{\square}$.
    \item Consider the disjoint union $*_-\coprod *_+$ of two points as a stratified space over the poset $\mathfrak{a}_-\coprod \mathfrak{a}_+$ (without relations), then the cone \[\mathsf{C}(*_-\coprod *_+) \cong \R^-_{\geq 0} \vee\R^+_{\geq 0}\] 
    is two copies of $\R_{\geq 0}$ glued along the point $0$, lying over the poset 
    \[ (\mathfrak{a}_-\coprod \mathfrak{a}_+)^{\lhd}=\begin{tikzcd}
        & \mathfrak{m}_{-,+} \ar[dr] \ar[dl] \\
        \mathfrak{a}_- && \mathfrak{a}_+
    \end{tikzcd} \]
    in the natural way (note that this poset is $P_{[1]}$). If $x\in \overline{\square}$ is carried to some $\mathfrak{m}_{i,i+1}$ for $0\leq i\leq k-1$ by $\overline{\pi}_{[k]}$, then $x$ is in the image of a stratified open embedding $\mathsf{C}(*_+\coprod *_-)\cong \R_{\geq 0}\vee \R_{\geq 0}\rightarrow \overline{\square}$ carrying the cone point to $x$.
    \item If $x\in \overline{\square}$ is carried to  $\mathfrak{m}_{-1,0}$ or $\mathfrak{m}_{k,k+1}$ by $\overline{\pi}_{[k]}$, then $x$ is in the image of a stratified open embedding $\mathsf{C}(*)\cong \R_{\geq 0}\rightarrow \overline{\square}$ carrying the cone point to $x$.
\end{itemize}
Thus $\overline{\square}_{[k]}$ and its interior ${\square}_{[k]}$ are conical manifolds, since they admits a basis by images of stratified open embeddings from topological basics. The stratified open and closed cubes $\square^n_{[\vec{k}]}$ and $\overline{\square}^n_{[\vec{k}]}$ are also conical manifolds as the full subcategory $\mathsf{Mfd}_0\subset \mathsf{StTop}^{\mathsf{open}}$ is stable under products, by \cite[Corollary 3.4.10]{AFT-local-structures}.
\end{proof}
\begin{cor}\label{cor:openmap}
The maps $\pi_{[\vec{k}]}$ and $\overline{\pi}_{[\vec{k}]}$ of Definition \ref{defn:stratifiedcube} are open maps for any $[\vec{k}]\in \simp^n$.   
\end{cor}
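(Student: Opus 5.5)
This follows directly from Lemma \ref{lem:structuremapisopen}, applied to the conical manifolds supplied by Lemma \ref{lem:cubesconicalmfd}. Lemma \ref{lem:cubesconicalmfd} shows that $\square_{[k]}$ and $\overline{\square}_{[k]}$ are objects of $\mfd_0$. Since $\mfd_0$ is stable under products, the products $\square^n_{[\vec{k}]}$ and $\overline{\square}^n_{[\vec{k}]}$, stratified over $P_{[\vec{k}]}$ and $\overline{P}_{[\vec{k}]}$, are conical manifolds as well. Lemma \ref{lem:structuremapisopen} then says that the structure map of any conical manifold to its stratifying poset is open, so $\pi_{[\vec{k}]}$ and $\overline{\pi}_{[\vec{k}]}$ are open maps.

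The only point to check is that the product taken in $\mfd_0$ has the product poset as its stratifying poset. A product of poset-stratified spaces $X\rightarrow P$ and $Y\rightarrow Q$ is the map $X\times Y\rightarrow P\times Q$. Moreover, the Alexandrov topology on $P\times Q$ coincides with the product topology when $P$ and $Q$ are finite: a product of upward closed sets is upward closed, and every upward closed subset of $P\times Q$ is the union of the sets $\uparrow p\times\uparrow q$ it contains. So the $n$-fold product is exactly $\pi_{[\vec{k}]}:\square^n\rightarrow P_{[\vec{k}]}$, and likewise for $\overline{\pi}_{[\vec{k}]}$.

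There is also a self-contained alternative that avoids citing stability of $\mfd_0$ under products. First, each one-dimensional map $\pi_{[k]}$ is open by direct inspection: a nonempty open subset $U\subset\square$ that contains a point over $\mathfrak{m}_{i,i+1}$ also contains nearby points over $\mathfrak{a}_i$ and $\mathfrak{a}_{i+1}$. Hence its image is upward closed, and the same argument handles the boundary elements $\mathfrak{m}_{-1,0}$ and $\mathfrak{m}_{k,k+1}$ in $\overline{\pi}_{[k]}$. Second, a product of open maps is open, and by the topology comparison above the target carries the product topology. Neither route has a real obstacle; the only point needing care is this identification of the Alexandrov topology on $P_{[\vec{k}]}$ with the product topology.
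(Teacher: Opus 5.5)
Your proposal is correct and matches the paper's proof. The paper states the corollary as an immediate consequence of Lemma \ref{lem:structuremapisopen}, with the cubes known to be conical manifolds by Lemma \ref{lem:cubesconicalmfd}; your check that the Alexandrov topology on $P_{[\vec{k}]}$ agrees with the product topology, and your elementary alternative via products of open maps, are sound extra details that the paper leaves implicit (it already notes that $\mathsf{Poset}\subset\topo$ is closed under limits).
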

\begin{proof}
    This is an immediate consequence of Lemma \ref{lem:structuremapisopen}.
\end{proof}
\begin{rmk}
As the stratified cubes are conical manifolds, the results of \cite[Section 2.4]{AFT-local-structures} allow one to speak of the \emph{depth} of strata of the stratified cubes. This is the same as the notion of Definition \ref{defn:depth}.  
\end{rmk}

We are now ready to endow the cubes $\overline{\square}^n_{[\vec{k}]}$ 
with conically smooth structures. 
\begin{cons}[Atlases on cubes]\label{cons:cubeatalas}
Let $[k]\in\simp$, then we define a set $\mathcal{A}_{\overline{\square}_{[k]}}$ of stratified open embeddings to $\overline{\square}_{[k]}$ as follows.
\begin{itemize}
    \item For all $0\leq i\leq k$, all smooth open embeddings 
    \[ \R\longrightarrow \left(\frac{i}{k+1},\frac{i+1}{k+1}\right) \subset \overline{\square}_{[k]}   \]
    are in $\mathcal{A}_{\overline{\square}_{[k]}}$.
    \item For the disjoint union $*_-\coprod *_+$ of two points, consider $\mathsf{C}(*_-\coprod *_+)$ as stratified by the poset $(\mathfrak{a}_-\coprod\mathfrak{a}_+)^{\lhd}$ as in  \Cref{lem:cubesconicalmfd}. We have a canonical homeomorphism 
    \[ \mathsf{C}(*_+\coprod *_-)\cong \R^-_{\geq 0} \vee \R^+_{\geq 0}  \cong \R \]
    that is the identity on $\R_{\geq 0}^+$ and carries $x\in \R_{\geq 0}^-$ to $-x$. Then for all $1\leq i\leq k$, all stratified open embeddings 
    \[  \mathsf{C}(*_-\coprod *_+)\cong \R\longrightarrow  \left(\frac{i-1}{k+1},\frac{i+1}{k+1}\right) \subset \overline{\square}_{[k]}   \]
    where the second map is a smooth open embedding carrying the origin to $\frac{i}{k+1}$ are in $\mathcal{A}_{\overline{\square}_{[k]}}$.
    \item Consider $\mathsf{C}(*)=\R_{\geq 0}$, the cone on a point, then all stratified open embeddings 
    \[  \R_{\geq 0} \longrightarrow \left[0,\frac{1}{k+1}\right)\subset \overline{\square}_{[k]},\quad\quad \R_{\geq 0} \longrightarrow \left(\frac{k}{k+1},1\right]\subset \overline{\square}_{[k]}   \]
    in which the map is smooth\footnote{Here by smooth we mean \emph{Seeley smooth}, that is, the function extends to a smooth function on some neighborhood $(-\epsilon,\infty)$ of $\R_{\geq 0}$.} carrying $0$ to $0$ and $0$ to $1$ respectively, are in $\mathcal{A}_{\overline{\square}_{[k]}}$.
\end{itemize}
This collection $\mathcal{A}_{\overline{\square}_{[k]}}$ satisfies the conditions of \cite[Definition 3.2.10]{AFT-local-structures} of being an atlas; we regard $\overline{\square}_{[k]}$ as endowed with the smooth conical manifold structure by the atlas $\mathcal{A}_{\overline{\square}_{[k]}}$. Omitting the last of the three types of charts described above endows $\square_{[k]}$ with the structure of a smooth conical manifold such that the map $\square_{[k]}\subset \overline{\square}_{[k]}$ is a conically smooth open embedding. Through \cite[Lemma 3.4.8, Corollary 3.4.9, Corollary 3.4.10]{AFT-local-structures}, we have smooth conical manifold structures on the products 
\[  \square_{[\vec{k}]}^n\quad\text{and}\quad \overline{\square}_{[\vec{k}]}^n\]
for all $[\vec{k}]\in \simp^n$.
\end{cons}

\begin{rmk}\label{Rem:not_con_smooth}
It is evident that the three basics appearing in Construction \ref{cons:cubeatalas} are the only ones that admit a stratified open embedding to the cubes $\overline{\square}_{[k]}$; however, it is not the case that the stratified open embeddings specified in Construction \ref{cons:cubeatalas} are the only ones contained in the \emph{maximal} atlas determined by $\mathcal{A}_{\overline{\square}_{[k]}}$. To see this, observe that a continuous stratified map 
\[f:\mathsf{C}(*\coprod *)\cong \R_{\geq0}\vee \R_{\geq 0} \longrightarrow \mathsf{C}(*\coprod *)\cong \R_{\geq0}\vee \R_{\geq 0} \]
that carries the cone point to the cone point determines two maps $f_1,f_2:\R_{\geq 0}\rightarrow\R_{\geq 0}$ such that $f_1(0)=f_2(0)=0$, and $f$ is a conically smooth just in case $f_1|_{\R_{>0}}$ and $f_2|_{\R_{>0}}$ are smooth maps and the one-sided derivatives of $f_1$ and $f_2$ at $0$ coincide, by definition of \emph{conically smooth along} (\cite[Definition 3.1.4]{AFT-local-structures}). In particular, the derivative of $f$ is merely required to be continuous at $0$. Precomposing a chart $\mathsf{C}(*\coprod *)\rightarrow \overline{\square}_{[k]}$ in $\mathcal{A}_{\overline{\square}_{[k]}}$ with such a conically smooth open embedding $f$ whose derivative at the origin is not smooth yields an element in the maximal atlas determined by $\mathcal{A}_{\overline{\square}_{[k]}}$ that is not in $\mathcal{A}_{\overline{\square}_{[k]}}$ itself. A similar comment applies to the third of the three charts described in Construction \ref{cons:cubeatalas}. 
\end{rmk}
\begin{warn}\label{warn:consmf}
The collapse-rescale maps of Lemma \ref{lem:collaplsrescale} are generally \emph{not} conically smooth for the smooth conical manifold structures of Construction \ref{cons:cubeatalas}, since the continuity condition on the derivatives at the cone point is often violated. This is the case, for instance, in the situation of  Remark \ref{Rem:not_con_smooth}, when one of the maps determined by $f$, say $f_2$, is constant at $0$ (collapsing one of the copies of $\R_{\geq 0}$ to $0$) and $f_1$ is an orientation-preserving linear diffeomorphism of $\R_{\geq 0}$ (rescaling the other copy of $\R_{\geq 0}$). There does not seem to be a straightforward way to remain in the conically smooth category while maintaining the simple functoriality of the collapse-rescale maps. 
\end{warn}

\begin{nota}
Let $[\vec{k}]\in \overline{P}_{[\vec{k}]}$. When we consider $\square_{[\vec{k}]}^n$ and $\overline{\square}_{[\vec{k}]}^n$ as \emph{marked} smooth conical manifolds, we will always equip these spaces with the \emph{maximal} marking. We let $E_{[\vec{k}]}\subset \square^n_{[\vec{k}]}$ denote this maximal marking, the subset of points of depth $n$, which is also the union of all the 0-dimensional strata. Sometimes we will denote this marking simply by $E$ to avoid cluttering up the notation.   
\end{nota}

We now exhibit factorizing disk-basis for the stratified cubes (recall Definition \ref{defn:factorizingdiskbasis}), that is, the stratified cubes have enough good disks. We will say that an open $U\subset \overline{\square}^n$ is an \emph{open subcube} if it is an open inclusion of the form 
\[ \prod_{i=1}^n U_i\subset \prod_{i=1}^n [0,1] \]
where each $U_i\subset [0,1]$ is an open interval (which could be of the form $(a,b)$, $[0,a)$, $(b,1]$ or $[0,1]$). Through Lemma \ref{lem:structuremapisopen}, these open sets inherit the structure of stratified spaces, and through Construction \ref{cons:cubeatalas}, they inherit the structure of smooth conical manifolds (since atlases pull back along stratified open embeddings).
\begin{lem}\label{lem:disklikecubes}
Let $U\subset \overline{\square}^n_{[\vec{k}]}$ be an open subcube, then the following are equivalent.
\begin{enumerate}[$(1)$]
    \item The stratified space $U$ with its induced smooth conical manifold structure is abstractly isomorphic to a disk.
    \item The poset $\overline{\pi}_{[\vec{k}]}(U)$ has a minimal element.
    \item The poset $\overline{\pi}_{[\vec{k}]}(U)$ contains an element $X$ such that $\overline{\pi}_{[\vec{k}]}(U)$ is the upward closure of $\{X\}$ in $\overline{P}_{[\vec{k}]}$.
\end{enumerate}
\end{lem}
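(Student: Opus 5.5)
Since $U=\prod_i U_i$ is an open subcube and the stratification of $\overline{\square}^n_{[\vec{k}]}$ is a product stratification, everything reduces to the one-dimensional case: $\overline{\pi}_{[\vec{k}]}(U)=\prod_i \overline{\pi}_{[k_i]}(U_i)$, a product of posets has a minimal element if and only if each factor does, and the upward closure of a point in a product poset is the product of the upward closures. Likewise, a product of basics is a basic: $\mathsf{C}(Z)\times\R^a$ times $\mathsf{C}(Z')\times\R^b$ is $\mathsf{C}(Z\star Z')\times\R^{a+b}$ by the standard join identification. The converse, that $U$ is a disk only if each $U_i$ is, we will get from $(1)\Rightarrow(2)$ directly rather than through factors. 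So the plan is to prove $(2)\Leftrightarrow(3)$ formally and then treat $(1)$ against $(2)$ using the interval analysis.

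For $(2)\Leftrightarrow(3)$, Corollary~\ref{cor:openmap} says that $\overline{\pi}_{[\vec{k}]}(U)$ is upward closed. Hence if it has a minimal element $X$, we need $\overline{\pi}(U)=\{Y\geq X\}$. In one variable this is a direct inspection: the image of an interval $U_i$ is a consecutive set of strata. It has a minimal element, meaning it is comparable to one element below everything, exactly when it contains at most one depth-one element $\mathfrak{m}_{j,j+1}$; and then it equals the upward closure of that element, or of a single $\mathfrak{a}_j$ if it contains no depth-one element. Taking products gives the claim. The direction $(3)\Rightarrow(2)$ is trivial.

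For $(2)\Rightarrow(1)$, in one variable the condition says that $U_i$ contains at most one point of depth one. By Construction~\ref{cons:cubeatalas}, $U_i$ is then isomorphic, through a chart (suitably rescaled, using a diffeomorphism of an open interval onto $\R$ fixing the marked point), to $\R$, to $\mathsf{C}(*\amalg *)$, or to $\mathsf{C}(*)$. Taking products, $U$ is isomorphic to a product of basics, hence to a basic. For $(1)\Rightarrow(2)$, suppose $U\cong \mathsf{C}(Z)\times\R^j$. The stratifying poset of a basic is $P_Z^{\lhd}$ (or a point if $Z=\emptyset$), which has a minimum, namely the cone-point stratum. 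An isomorphism of stratified spaces induces an isomorphism of the images in the stratifying posets, so $\overline{\pi}(U)$ has a minimum.

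The main obstacle is smoothness bookkeeping, since the smooth structure on products is defined through \cite{AFT-local-structures}. One must check two things: that the product of the charts described above is a conically smooth isomorphism onto a basic, and that the identification of $\mathsf{C}(*\amalg *)^{\times m}\times\mathsf{C}(*)^{\times p}\times \R^q$ with a single basic is conically smooth. We would invoke \cite[Corollary 3.4.10]{AFT-local-structures} for products of basics, and we expect the rest to be routine.
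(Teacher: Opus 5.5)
Your proposal is correct and follows essentially the same route as the paper: $(1)\Rightarrow(2)$ via the cone-point stratum of a basic, $(2)\Rightarrow(3)$ from upward-closedness of $\overline{\pi}_{[\vec{k}]}(U)$ (Corollary~\ref{cor:openmap}), and $(2)\Rightarrow(1)$ by reducing to the factors $U_i$, recognizing each as the image of a chart from Construction~\ref{cons:cubeatalas}, and using that products of disks are disks. Your one-variable inspection for $(2)\Leftrightarrow(3)$ is unnecessary, since an upward-closed set with a least element $X$ is automatically the upward closure of $X$; like the paper, you read ``minimal element'' as ``least element'', and this reading is needed, since otherwise $(2)\Rightarrow(3)$ would fail.
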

\begin{proof}
Any disk has the property that its stratifying poset has a minimal element, by definition, so $(1)\Rightarrow (2)$ holds. Suppose $\overline{\pi}_{[\vec{k}]}(U)$ has a minimal element $X$, then the upward closure of $\{X\}$ is contained in $\overline{\pi}_{[\vec{k}]}(U)$ since the latter set is upward closed by Corollary \ref{cor:openmap} and contains $X$. Since $X$ is the minimal element of $\overline{\pi}_{[\vec{k}]}(U)$, the other inclusion also holds: every element of $\overline{\pi}_{[\vec{k}]}(U)$ is larger than $X$ and thus lies in the upward closure. The implication $(3)\Rightarrow (2)$ is obvious. Now we show $(2)\Rightarrow (1)$. Since $U=U_1\times \ldots \times U_n$ for $U_i\in \overline{\square}_{[k_i]}$ an open interval, we can identify $U$ with $U_1\times \ldots \times U_n$ and $\pi_{[\vec{k}]}(U)$ with $\pi_{[k_1]}(U_1)\times\ldots\times  \pi_{[k_n]}(U_n)$. Since products of disks are disks, it suffices to show that each $U_i$ is abstractly isomorphic to a disk. A product of posets has a minimal element if and only if all factors have a minimal element, so we reduce to the case $n=1$. By definition of the maps $\overline{\pi}_{[k]}$, an open interval of $[0,1]$ has a minimal element if and only if it is the image of one of the charts of Construction \ref{cons:cubeatalas}.
\end{proof}

\begin{defn}[Disk-like cubes]\label{defn:disklikecubes}
We will say that an open subcube $U\subset \overline{\square}_{[\vec{k}]}^n$ is \emph{disk-like} if any one of the three equivalent conditions on $U$ of Lemma \ref{lem:disklikecubes} is satisfied for $U$. We let \[\cube_{\overline{\square}^n_{[\vec{k}]}}\subset \open\left(\overline{\square}^n_{[\vec{k}]}\right)\]
denote the collection of disk-like open subcubes, together with the empty set.
\end{defn}
\begin{rmk}
Note that $\cube_{\overline{\square}^n_{[\vec{k}]}}$ depends on $[\vec{k}]$; the larger the integers $k_1,\ldots,k_n$, the fewer open subcubes are disk-like.
\end{rmk}
The following is an immediate consequence of the proof of Lemma \ref{lem:disklikecubes}.
\begin{cor}
Let $U\subset V$ be an inclusion of disk-like cubes of $\overline{\square}^n_{[\vec{k}]}$. Then the following are equivalent.
\begin{enumerate}[$(1)$]
    \item The inclusion is an isotopy equivalence of $\infdisk$.
    \item The posets $\overline{\pi}_{[\vec{k}]}(U)$ and $\overline{\pi}_{[\vec{k}]}(V)$ have the same minimal element.
    \item The posets $\overline{\pi}_{[\vec{k}]}(U)$ and $\overline{\pi}_{[\vec{k}]}(V)$ are equal.
\end{enumerate}
\end{cor}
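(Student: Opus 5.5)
The plan is to reduce everything to a statement about minimal elements of the stratifying posets, using the description of disk-like cubes from the proof of Lemma \ref{lem:disklikecubes} together with Lemma \ref{lem:bscproperties}.

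\textbf{Step 1: $(2)\Leftrightarrow(3)$.} By Lemma \ref{lem:disklikecubes}, for a disk-like cube $U$ the image $\overline{\pi}_{[\vec{k}]}(U)$ is exactly the upward closure of its minimal element $X_U$. Hence two such images coincide if and only if their minimal elements coincide, since the minimal element of an upward closure $\uparrow\{X\}$ is $X$. This step is formal.

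\textbf{Step 2: $(1)\Leftrightarrow(2)$.} First reduce to $n=1$. An open subcube is a product $U=\prod_i U_i$ and an inclusion $U\subset V$ is a product of inclusions $U_i\subset V_i$, and the minimal element of a product poset is the tuple of the minimal elements. On the disk side, a product of conically smooth open embeddings of basics is an equivalence in $\infbsc$ if and only if each factor is. By Lemma \ref{lem:bscproperties}, the map is an equivalence exactly when the depths agree, and depth is additive under products; since $\mathsf{depth}(U_i)\le\mathsf{depth}(V_i)$ for each factor, equality of the totals forces equality factorwise.

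\textbf{Step 3: the case $n=1$.} Here $U_i\subset V_i$ are intervals from the three chart types of Construction \ref{cons:cubeatalas}, with minimal elements of the form $\mathfrak{a}_j$, $\mathfrak{m}_{j,j+1}$ or a boundary $\mathfrak{m}$. By Lemma \ref{lem:bscproperties}(4), the inclusion is an equivalence if and only if $U_i$ meets the cone locus of $V_i$, which is the stratum of the minimal element of $V_i$.
\begin{itemize}
\item If $V_i$ has depth $0$, it lies in a single stratum, and so does $U_i$; both conditions then hold.
\item If $V_i$ has depth $1$, its cone locus is the single point of stratum $X_{V_i}$. Then $U_i$ contains that point if and only if $X_{V_i}\in\overline{\pi}(U_i)$, which (given $\overline{\pi}(U_i)\subset\overline{\pi}(V_i)$) happens if and only if $X_{U_i}=X_{V_i}$.
\end{itemize}

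The only mildly delicate point is Step 2's product compatibility. I would handle it through depths via Lemma \ref{lem:bscproperties}(2) and the remark that depth is additive for products of basics, so as not to argue directly about equivalences in $\infbsc$.
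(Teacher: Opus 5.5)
Your proof is correct and is essentially the paper's argument: the paper states this corollary as an immediate consequence of the proof of Lemma \ref{lem:disklikecubes}, which reduces to $n=1$ via the product decomposition of open subcubes and then inspects the three chart types, and that is what you carry out. Your depth-additivity argument just spells out the product reduction on the $\infdisk$ side, and it could even be skipped: Lemma \ref{lem:bscproperties}(4) applies directly in dimension $n$, because the cone locus of $V$ is the stratum of the minimal element of $\overline{\pi}_{[\vec{k}]}(V)$.
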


\begin{lem}
The collection $\cube_{\overline{\square}^n_{[\vec{k}]}}$ is a basis for the topology on $\overline{\square}^n$ stable under intersections.
\end{lem}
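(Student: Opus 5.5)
The plan is to reduce everything to the one-dimensional case using the product structure. An open subcube $U=\prod_i U_i$ is disk-like precisely when each factor $U_i\subset\overline{\square}_{[k_i]}$ is disk-like, as shown in the proof of Lemma \ref{lem:disklikecubes}. Likewise, the intersection of two open subcubes is again an open subcube (or empty), namely $\prod_i (U_i\cap V_i)$. So both claims should follow from their analogues for $n=1$.

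\emph{Stability under intersections.} Let $U,V\in\cube_{\overline{\square}^n_{[\vec{k}]}}$. If $U\cap V$ is empty it lies in the collection by definition. Otherwise $U\cap V=\prod_i(U_i\cap V_i)$ with every factor a nonempty open interval, and we must show each $U_i\cap V_i$ is disk-like in $\overline{\square}_{[k_i]}$. In dimension one, criterion $(2)$ of Lemma \ref{lem:disklikecubes} says an interval is disk-like exactly when it contains at most one point of the finite set
\[
\left\{\tfrac{j}{k+1}\right\}_{0\leq j\leq k+1}
\]
of points of depth $1$, the boundary points included. Indeed, if an interval contains two such points, its image under $\overline{\pi}_{[k]}$ contains two incomparable minimal elements $\mathfrak{m}$; if it contains one, that point gives the minimum; if it contains none, the image is a single $\mathfrak{a}_i$. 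This property passes to subintervals, so $U_i\cap V_i\subset U_i$ is disk-like whenever $U_i$ is. This step is the only place requiring care, and it amounts to this small case check of minimal elements of $\overline{\pi}_{[k]}$ of an interval, using the explicit formula \eqref{eq:cubestrat}.

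\emph{Basis.} Given $x=(x_1,\ldots,x_n)\in W$ with $W\subset\overline{\square}^n$ open, choose $\epsilon>0$ so that $\prod_i B_\epsilon(x_i)\cap[0,1]\subset W$. Shrink $\epsilon$ further below half the spacing $\frac{1}{k_i+1}$ for every $i$. Then each interval $B_\epsilon(x_i)\cap[0,1]$ contains at most one point of depth $1$ by the triangle inequality, so by the criterion above it is disk-like. The product is therefore a disk-like open subcube containing $x$ and contained in $W$. Since the collection consists of open sets, this shows it is a basis.
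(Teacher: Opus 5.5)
Your proof is correct and follows the same route as the paper: you reduce to $n=1$ using the product structure of both the subcubes and the stratifying posets, and then check for least elements of $\overline{\pi}_{[k]}$ on intervals. Your one-dimensional criterion (an interval is disk-like iff it contains at most one point of depth $1$, a property inherited by subintervals) is a bit cleaner than the paper's case distinction, and your explicit $\epsilon$-argument supplies the basis property, which the paper only asserts as clear.
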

\begin{proof}
Clearly, every point of $\overline{\square}_{[\vec{k}]}^n$ is contained in a disk-like cube. The intersection of two open subcubes $U,V$ is either empty or an open subcube, so it suffices to show that $\pi_{[\vec{k}]}(U\cap V)$ has a minimal element. As in the proof of Lemma \ref{lem:disklikecubes}, we reduce to the case of $n=1$. We conclude by noting that for the intersection of two intervals $U,V$ that do not share a point in the 0-dimensional stratum, the intersection $U\cap V$ is either empty or an interval containing no 0-dimensional strata, so that the underlying poset is a singleton in either case. 
\end{proof}
Let $\mathsf{diskCubes}_{\overline{\square}^n_{[\vec{k}]}}\subset \mathsf{Disks}_{/\overline{\square}^n_{[\vec{k}]}}$ be the full subposet spanned by disjoint unions of disk-like cubes, then by Remark \ref{rmk:factorizingbasiscriterion}, we have the following consequence of the preceding lemma.
\begin{cor}\label{cor:factorizingbasisbycubes}
The full subposet $\mathsf{diskCubes}_{\overline{\square}^n_{[\vec{k}]}}\subset \mathsf{Disks}_{/\overline{\square}^n_{[\vec{k}]}}$ is a factorizing disk-basis of $\overline{\square}^n_{[\vec{k}]}$ (and also of the \emph{marked} smooth conical manifold $(\overline{\square}^n_{[\vec{k}]},E_{[\vec{k}]})$), and the same holds for the open stratified cubes.
\end{cor}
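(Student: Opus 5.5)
The plan is to apply Remark~\ref{rmk:factorizingbasiscriterion} with $\mathcal{B}$ the collection of nonempty disk-like cubes of $\overline{\square}^n_{[\vec{k}]}$. That remark says a basis of disks closed under intersections generates, by nonempty disjoint unions, a factorizing disk-basis. So two hypotheses have to be checked.

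First, the elements of $\mathcal{B}$ must lie in $\disk_{/\overline{\square}^n_{[\vec{k}]}}$, i.e.\ be abstractly isomorphic to basics. This is implication $(2)\Rightarrow(1)$ of Lemma~\ref{lem:disklikecubes}.

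Second, $\mathcal{B}$ must be a basis of the topology that is closed under (nonempty) intersections. This is exactly the preceding lemma on $\cube_{\overline{\square}^n_{[\vec{k}]}}$. The empty set that was included in $\cube$ for convenience is simply discarded: an empty intersection is allowed in condition $(4)$ of Definition~\ref{defn:factorizingdiskbasis}. Having checked both hypotheses, the remark yields that $\mathsf{diskCubes}_{\overline{\square}^n_{[\vec{k}]}}$, the nonempty disjoint unions of disk-like cubes, is a factorizing disk-basis.

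For the marked version $(\overline{\square}^n_{[\vec{k}]},E_{[\vec{k}]})$, I would observe that Definition~\ref{defn:factorizingdiskbasis} asks for a full subposet of $\disks_{/(X,E)}$. This poset has the same objects as $\disks_{/X}$, and conditions $(1)$–$(4)$ only concern objects, unions and intersections, never morphisms. The marked subposet on the same objects therefore satisfies them verbatim. The one point I would double-check is that the remark's criterion is stated for $(X,E)$ with $\mathcal{B}\subset \disk_{/(X,E)}$. Since that poset is wide in $\disk_{/X}$, the criterion applies unchanged.

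For the open cubes $\square^n_{[\vec{k}]}$, I would take the disk-like cubes of $\overline{\square}^n_{[\vec{k}]}$ that are contained in the interior. Equivalently, these are the open subcubes of $(0,1)^n$ whose image poset has a minimal element. They form a basis of $\square^n$, since the disk-like cubes form a basis of $\overline{\square}^n$ and $\square^n$ is open. They are closed under intersection, since intersections of interior subsets stay interior. And they are disks, by Lemma~\ref{lem:disklikecubes} applied inside $\overline{\square}^n_{[\vec{k}]}$ together with the fact that $\square^n_{[\vec{k}]}\subset\overline{\square}^n_{[\vec{k}]}$ is a conically smooth open embedding. Applying Remark~\ref{rmk:factorizingbasiscriterion} again finishes the argument.

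The only real subtlety I expect is the bookkeeping around the empty set and around condition $(4)$, specifically that $U\cup V$ in that condition means the union of the pairwise intersections. I would handle it by quoting the remark rather than re-verifying $(2)$–$(4)$ by hand.
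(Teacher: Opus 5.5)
Your proposal is correct and follows the paper's argument. The paper likewise obtains the corollary by combining Remark~\ref{rmk:factorizingbasiscriterion} with the preceding lemma that disk-like cubes form a basis stable under intersections; your treatment of the marked and open-cube cases just spells out details the paper leaves implicit.
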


The following corollary is a consequence of Remark \ref{rmk:basislocalization} 

\begin{cor}\label{cor:cubelocalization}
The \infopt-maps 
\[  \cube_{\square_{[\vec{k}]}}^{\otimes} \longrightarrow \infdisk_{/\square_{[\vec{k}]}}^{\otimes}\quad \text{and} \quad \cube_{\overline{\square}_{[\vec{k}]}}^{\otimes} \longrightarrow \infdisk_{/\overline{\square}_{[\vec{k}]}}^{\otimes},  \]
and
\[  \cube_{(\square_{[\vec{k}]},E_{[\vec{k}]})}^{\otimes} \longrightarrow \infdisk_{/(\square_{[\vec{k}]},E_{[\vec{k}]})}^{\otimes}\quad \text{and}\quad  \cube_{(\overline{\square}_{[\vec{k}]},E_{[\vec{k}]})}^{\otimes} \longrightarrow \infdisk_{/(\overline{\square}_{[\vec{k}]},E_{[\vec{k}]})}^{\otimes}  \]
exhibit operadic localizations at the isotopy equivalences.
\end{cor}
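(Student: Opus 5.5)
The plan is to deduce this from Remark \ref{rmk:basislocalization}, which gives the localization statement for any full subposet $\mathcal{B}\subset\disk_{/(X,E)}$ whose elements form a basis of $X$. I will take $\mathcal{B}$ to be the nonempty disk-like cubes. The work is then in two places: checking that these cubes form a basis, and checking that the localizing class $\mathcal{J}_{\mathcal{B}}=\mathcal{B}\cap\mathcal{J}$ is exactly the class of isotopy equivalences of disk-like cubes.

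First, by Lemma \ref{lem:disklikecubes}, every disk-like cube is abstractly a disk. So the nonempty members of $\cube_{\overline{\square}^n_{[\vec{k}]}}$ form a full subposet of $\disk_{/\overline{\square}^n_{[\vec{k}]}}$, and likewise of $\disk_{/(\overline{\square}^n_{[\vec{k}]},E_{[\vec{k}]})}$, since the latter has the same objects. The preceding lemma says these cubes form a basis of the topology stable under intersections. The same argument applies verbatim to the open cubes $\square^n_{[\vec{k}]}$, since open subcubes of the interior are disk-like under the same criterion. Hence the hypotheses of Remark \ref{rmk:basislocalization} hold in all four cases, marked and unmarked. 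The unmarked case is the special case $E=\emptyset$.

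Next I identify the morphisms being inverted. In $\cube^{\otimes}$ the underlying morphisms are inclusions $U\subset V$ of disk-like cubes; in the marked variant they are required additionally to be marked inclusions. $\mathcal{J}_{\mathcal{B}}$ consists of those inclusions sent to equivalences in $\infdisk$. By the corollary following Definition \ref{defn:disklikecubes}, these are precisely the inclusions with equal stratifying posets, that is, the isotopy equivalences. Isotopy equivalences automatically preserve marked points, since the minimal stratum, and hence the intersection with $E$, is unchanged. They therefore lie in the marked suboperad as well. So $\mathcal{J}_{\mathcal{B}}$ is the class in the statement, and Remark \ref{rmk:basislocalization} yields the four operadic localizations.

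The main point to verify, and the only real obstacle, is that Remark \ref{rmk:basislocalization} genuinely applies with $\mathcal{B}$ a basis rather than all disks. This means rerunning the two cofinality arguments in the proof of Theorem \ref{thm:markedlocalization} with $\mathcal{B}$ in place of $\disk_{/(X,E)}$. Those arguments use only two facts. The first is that for a point $x$ of a stratum (or of a configuration space), the poset of basis elements, or tuples of disjoint basis elements, containing $x$ in the appropriate stratum is cofiltered; this follows because $\mathcal{B}$ is a basis and is closed under intersection. The second is that each $V_{[U]}$ is contractible, which holds since disk-like cubes are disks. Both facts hold here, so \cite[Theorem A.3.1]{LurHA} applies exactly as before.
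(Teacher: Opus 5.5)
Your proposal is correct and takes the same route as the paper. The paper obtains this corollary directly from Remark~\ref{rmk:basislocalization}, applied to the disk-like cubes, which form a basis stable under intersections by the preceding lemma and Corollary~\ref{cor:factorizingbasisbycubes}. Your identification of $\mathcal{J}_{\mathcal{B}}$ with the isotopy equivalences, and your check that the cofinality arguments in the proof of Theorem~\ref{thm:markedlocalization} go through for a basis, spell out what the paper leaves implicit.
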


\subsubsection{Assembling the Morita category}
We now define the non-unital higher Morita category as an $n$-fold semisimplicial object. Just above, we constructed a functor $\simpopnin\rightarrow \mathsf{StTop}$ to the category of stratified topological spaces. The composite $\simpopnin\rightarrow \mathsf{Top}$ forgetting the stratification lifts to the category $\mathsf{Top}^+$ of \emph{marked} topological spaces as follows: for $[\vec{k}]\in \overline{P}_{[\vec{k}]}$, we let $E_{[\vec{k}]}\subset \square^n_{[\vec{k}]}$ be the subset of points of depth $n$ (which is also the union of the 0-dimensional strata) then we have a functor
\[ \simpopnin\longrightarrow \mathsf{Top}^+,\quad\quad [\vec{k}]\longmapsto \left(\overline{\square}^n_{[\vec{k}]},E_{[\vec{k}]}\right).  \]
Now recall the functor 
\[ \mathsf{Top}^+\longrightarrow \mathsf{Op}^{op},\quad \quad (X,E)\longmapsto \open(X,E)^{\otimes}  \]
carrying a marked space to its (1-)operad of marked opens and a map $f:(X,E)\rightarrow (Y,F)$ of marked opens to the evident pullback map $f^{-1}:\open(Y,F)^{\otimes}\rightarrow \open(X,E)^{\otimes}$.  
\begin{lem}
For any map $f:[\vec{k}]\hookleftarrow [\vec{l}]$, the induced functor
\[ \collresc(f)^{-1}:\open\left(\overline{\square}^n_{[\vec{l}]},E_{[\vec{l}]}\right)^{\otimes} \longrightarrow \open\left(\overline{\square}^n_{[\vec{k}]},E_{[\vec{k}]}\right)^{\otimes} \]
carries the full suboperad $\open\left(\square^n_{[\vec{l}]},E_{[\vec{l}]}\right)^{\otimes}\subset \open\left(\overline{\square}^n_{[\vec{l}]},E_{[\vec{l}]}\right)^{\otimes} $ into the full suboperad $\open\left(\square^n_{[\vec{k}]},E_{[\vec{k}]}\right)^{\otimes}\subset \open\left(\overline{\square}^n_{[\vec{k}]},E_{[\vec{k}]}\right)^{\otimes} $.
\end{lem}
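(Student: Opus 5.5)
The statement reduces to a point-set fact about the collapse-rescale map $\collresc(f)\colon \overline{\square}^n_{[\vec{k}]}\to\overline{\square}^n_{[\vec{l}]}$. The suboperad $\open(\square^n_{[\vec{l}]},E_{[\vec{l}]})^{\otimes}$ is full, and its colors are the opens of $\overline{\square}^n$ contained in the open cube $\square^n=(0,1)^n$. It therefore suffices to show that for every open $U\subset \overline{\square}^n$ with $U\subset (0,1)^n$, the preimage $\collresc(f)^{-1}(U)$ is contained in $(0,1)^n$. The multimorphism conditions need no separate argument. Pullback along a map of marked spaces already carries marked multimorphisms to marked multimorphisms by Remark \ref{rmk:functormarkedopens}, and fullness takes care of the rest. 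The same remark also shows that the markings $E_{[\vec{k}]}$ and $E_{[\vec{l}]}$ are compatible.

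To prove the containment, I would show that $\collresc(f)$ carries the boundary $\partial\overline{\square}^n=\overline{\square}^n\setminus(0,1)^n$ into $\partial\overline{\square}^n$. Then any $x$ with $\collresc(f)(x)\in U\subset(0,1)^n$ cannot lie in the boundary. This is exactly Remark \ref{rmk:bdypoints}, which says collapse-rescale maps attached to maps of $\simpopnin$ carry boundary points to boundary points.

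I would still check that remark carefully via products. Since $\collresc(f)=\prod_j\collresc(f_j)$, a point $x$ lies in the boundary exactly when some coordinate $x_j\in\{0,1\}$. Its image then has $j$-th coordinate $\collresc(f_j)(x_j)$, so it suffices to treat $n=1$. In that case $0$ and $1$ are the strata $\mathfrak{m}_{-1,0}$ and $\mathfrak{m}_{k,k+1}$. By Corollary \ref{cor:posetfunctordepthor} the poset map is boundary-preserving, so these strata go to $\mathfrak{m}_{-1,0}$ or $\mathfrak{m}_{l,l+1}$, which are the endpoints $0$ and $1$ of $[0,1]$. Since $\collresc(f_j)$ is a stratified map over this poset map, the endpoints land in $\{0,1\}$.

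I expect no real obstacle. The only subtle point is to make sure the relevant poset map really is boundary-preserving, which requires $f_j$ to come from a surjection $[k_j+1]\to[l_j+1]$. That is guaranteed because the maps are taken from $\simpopnin$, whose morphisms correspond to such surjections.
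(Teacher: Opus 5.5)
Your proposal is correct and follows the paper's proof, which simply states that the lemma is the assertion that $\collresc(f)$ carries boundary points of $\overline{\square}^n_{[\vec{k}]}$ to boundary points of $\overline{\square}^n_{[\vec{l}]}$, citing Remark \ref{rmk:bdypoints}. Your coordinatewise check via the boundary-preserving property from Corollary \ref{cor:posetfunctordepthor} is exactly the justification behind that remark. One quibble: compatibility of the markings is presupposed by the statement (it is what makes $\collresc(f)^{-1}$ defined on the closed cubes), rather than something Remark \ref{rmk:functormarkedopens} proves, though your argument does not rely on it.
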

\begin{proof}
This is simply the assertion that $\collresc(f)$ carries boundary points of $\overline{\square}^n_{[\vec{k}]}$ to boundary points of $\overline{\square}^n_{[\vec{l}]}$, as explained in Remark \ref{rmk:bdypoints}.    
\end{proof}
\begin{rmk}
Let $f:[k]\hookleftarrow [l]$ be an injective map, then the induced map $\collresc(f)^{-1}:\open\left(\square_{[l]}\right)^{\otimes}\rightarrow \open\left(\square_{[k]}\right)^{\otimes}$ admits the following description: let $t_{\mathrm{min}}=\collresc(f)^{-1}(0)$ and  $t_{\mathrm{max}}=\collresc(f)^{-1}(1)$, then $\collresc(f)$ carries the interval $(t_{\mathrm{min}},t_{\mathrm{max}})$ into $(0,1)$ and $\collresc(f)^{-1}$ restricted to $\open\left(\square_{[l]}\right)^{\otimes}$ is the composite 
\[ \open\left(\square_{[l]}\right)^{\otimes}\longrightarrow\open\left((t_{\mathrm{min}},t_{\mathrm{max}})\right)^{\otimes}\longrightarrow \open\left(\square_{[k]}\right)^{\otimes}  \]
of the map pulling back opens along $\collresc(f)|_{(t_{\mathrm{min}},t_{\mathrm{max}})}$ followed by the inclusion of opens of $(t_{\mathrm{min}},t_{\mathrm{max}})$ into $(0,1)$. The situation in higher dimensions is the same and follows upon taking products.
\end{rmk}
It follows that we have a functor
\[   \simpopnin \longrightarrow \mathsf{Op}^{op},\quad \quad [\vec{k}]\longmapsto \open\left(\square^n_{[\vec{k}]},E_{[\vec{k}]}\right)^{\otimes} \]
to the opposite of the ordinary category of (colored) operads that carries a map $f:[\vec{k}]\hookleftarrow [\vec{l}]$ to $\collresc(f)^{-1}$. Let $\icat^{\otimes}$ be a symmetric monoidal \infcatt, then we define a functor 
\[ \pfact_{\square^n_{(\_)}}(\icat):\simpopnin\longrightarrow \catinfh\]
by composing the functor $\open\left(\square^n_{(\_)},E_{(\_)}\right)^{\otimes}:\simpopnin\rightarrow \mathsf{Op}^{op}$ with the functor
\[ \mathsf{Op}^{op}\subset \opinf^{op}\xrightarrow{\alg_{(\_)}}\catinfh \]
taking algebra objects in $\icat$. (The inclusion is the inclusion of ordinary operads into the \infcat of \infopst.)
\begin{lem}
For any map $f:[\vec{k}]\hookleftarrow [\vec{l}]$, the induced pushforward functor
\[ \collresc(f)_{\sharp}:\pfact_{(\square^n_{[\vec{k}]},E_{[\vec{k}]})}(\icat) \longrightarrow  \pfact_{(\square^n_{[\vec{l}]},E_{[\vec{l}]})}(\icat)\]
carries constructible pointless factorization algebras on $\square^n_{[\vec{k}]}$ to constructible pointless factorization algebras on $\square^n_{[\vec{l}]}$.
\end{lem}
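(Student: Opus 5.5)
The plan is to reduce the statement to a comparison of the values of a constructible factorization algebra on open subcubes. Write $\rho=\collresc(f)$ and let $\F$ be a constructible pointless factorization algebra on $(\square^n_{[\vec{k}]},E_{[\vec{k}]})$. By Remarks \ref{rmk:pushfwdmultiplicative} and \ref{rmk:pushfwdweiss}, $\rho_{\sharp}\F$ is already a factorization algebra on $(\square^n_{[\vec{l}]},E_{[\vec{l}]})$. Only constructibility has to be checked: for every marked inclusion of disks $U\subset V$ in $\square^n_{[\vec{l}]}$ that is an isotopy equivalence, the map
\[\F(\rho^{-1}U)\longrightarrow \F(\rho^{-1}V)\]
should be an equivalence.

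The first step reduces to disk-like cubes. Let $U\subset V$ be an isotopy equivalence of disks with minimal stratum $X$. Using that $\cube_{\overline{\square}^n_{[\vec{l}]}}$ is a basis (the lemma preceding Corollary \ref{cor:factorizingbasisbycubes}), I choose a disk-like cube $W\subset U$ around a point of the stratum of the cone locus of $U$. Then $W\subset U$ and $W\subset V$ are both isotopy equivalences between marked disks with the same marked point, if any. By two-out-of-three it suffices to treat inclusions $W\subset W'$ of disk-like cubes with the same minimal element, equivalently the same image poset $\overline{\pi}(W)=\overline{\pi}(W')$.

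The second step describes the preimages. Since $\rho$ is a product of monotone collapse-rescale maps of $[0,1]$, the preimage of a product of intervals is a product of intervals, so $\rho^{-1}W\subset\rho^{-1}W'$ is an inclusion of open subcubes of $\square^n_{[\vec{k}]}$. Because $\rho$ lies over the poset map $\alpha$, the set of strata met by $\rho^{-1}W$ is $\alpha^{-1}(\overline{\pi}(W))$ intersected with the strata actually hit, and likewise for $W'$. Hence both preimages meet exactly the same strata and contain exactly the same marked points.

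These preimages need not be disk-like. For example, the preimage of a small interval around $\mathfrak{m}_{j,j+1}$ onto which a whole $\mathfrak{a}_i$ is collapsed meets $\mathfrak{m}_{i-1,i}$, $\mathfrak{a}_i$ and $\mathfrak{m}_{i,i+1}$, so its image poset has two minimal elements. I therefore need the following more general statement.
\begin{itemize}
\item[$(\dagger)$] If $C\subset C'$ are open subcubes of $\square^n_{[\vec{k}]}$ meeting the same strata, then $\F(C)\to\F(C')$ is an equivalence.
\end{itemize}

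To prove $(\dagger)$, I use the last part of Proposition \ref{prop:restricttodisks}, together with Corollary \ref{cor:factorizingbasisbycubes}, to write $\F(C)$ as the colimit of $\F$ over $\disks_{/(C,E\cap C)}$. By constructibility and Theorem \ref{thm:markedlocalization} (in the form of Remark \ref{rmk:basislocalization}), this colimit can be computed over the localized $\infty$-category $\infdisks_{/(C,E\cap C)}$. It then suffices to show that the inclusion $C\subset C'$ induces an equivalence $\infdisks_{/(C,E\cap C)}\simeq\infdisks_{/(C',E\cap C')}$. Since the markings agree, this reduces, via Observation \ref{obs:configurations}, to the unmarked statement that $C\subset C'$ is an isotopy equivalence in $\infmfd$.

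That statement I would verify factorwise, in dimension $1$. An interval $C_i\subset C'_i$ containing the same stratification points differs only near its endpoints inside the outermost depth-$0$ strata. A smooth isotopy, supported there and compactly supported away from the singular points, shrinks $C'_i$ into $C_i$. Taking products of these isotopies gives an isotopy inverse in $\infmfd$.

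I expect the main obstacle to be this last step: making rigorous that an inclusion which is an isotopy equivalence in $\infmfd$ induces an equivalence on the $\infty$-categories of marked disks, and hence on $\F$. A fallback avoiding $\infty$-categorical isotopies is to work with $\cube$ directly. Namely, for each disk-like cube in $C'$ I would exhibit a cofinal system of disk-like cubes shrunk into $C$ with the same minimal stratum, and use that the functor $\cube_{C}\to\cube_{C'}$ is homotopy cofinal after inverting $\mathcal{J}$.
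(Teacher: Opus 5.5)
The geometric part of your Step~2 is correct and is the right input. $\collresc(f)$ is a product of monotone maps of $[0,1]$, and its restriction to each stratum $\square_Y$ surjects onto $\square_{\alpha(Y)}$. So the preimage of a subcube is a subcube, and $\collresc(f)^{-1}W$ meets exactly the strata in $\alpha^{-1}(\overline{\pi}(W))$. Your example showing that such preimages need not be disk-like is also right. (The paper gives no argument of its own here; it simply cites \cite[Lemma 8.19]{KS}.)

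The proposal nevertheless has two gaps, the second of which is the lemma itself.

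\textbf{The reduction in Step~1.} From $W\subset U\subset V$ with $W$ a cube, two-out-of-three only reduces you to inclusions $W\subset U$ with $U$ an \emph{arbitrary} disk. The preimage $\collresc(f)^{-1}U$ is then not a subcube, so Step~2 does not apply. What you actually need is that constructibility can be tested on the factorizing disk-basis of disk-like cubes. This is true but needs an argument. For example: if $\collresc(f)_\sharp\F$ inverts isotopy equivalences of cubes, then Corollary~\ref{cor:cubelocalization} makes its restriction factor through $\infdisk^{\otimes}$. That yields a constructible factorization algebra agreeing with $\collresc(f)_\sharp\F$ on a factorizing disk-basis. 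The factorizing-basis version of Proposition~\ref{prop:restricttodisks} (as in \cite{KSW}) then identifies the two.

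\textbf{The claim $(\dagger)$.} This is the whole content of the lemma, and it is not proved. The step you single out as the main obstacle is in fact formal. Your factorwise isotopy, which is the identity near the singular points, makes $C\hookrightarrow C'$ an equivalence in $\infmfd$ compatible with markings. Postcomposition then identifies the slice $\infty$-categories of marked disks.

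The non-formal step is the one you pass over: identifying $\F(C)$, for a \emph{non-disk} open $C$ such as the preimage in your example, with a colimit over that $\infty$-categorical slice. Two problems arise there.
\begin{itemize}
\item Proposition~\ref{prop:restricttodisks} computes $\F(C)$ as a colimit over the poset of disjoint unions of disks in $C$ that contain all of $E\cap C$. It is not a colimit over all of $\disks_{/(C,E\cap C)}$; that category has no morphisms between objects with different marked sets, so its colimit splits as a coproduct.
\item Theorem~\ref{thm:markedlocalization} and Remark~\ref{rmk:basislocalization} only localize posets of single disks. They say nothing about this slice, and localization does not commute with slicing in general.
\end{itemize}
To close the gap you must show that this poset, after inverting isotopy equivalences, depends only on the marked isotopy type of $C$. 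One way is to rerun the configuration-space argument from the proof of Theorem~\ref{thm:markedlocalization} (via \cite[Theorem A.3.1]{LurHA}) for configurations of disks in $C$. Each stratum of $C$ is an open sub-box of the corresponding stratum of $C'$, so the relevant configuration spaces do not change. Another way is an excision argument that moves one endpoint of one factor at a time. Your ``fallback'' points toward the first option, but it is the substance of the proof rather than a technicality.
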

\begin{proof}
This is the content of Lemma 8.19 of \cite{KS}.
\end{proof}
\begin{cor}\label{cor:factorizationsifted2}
For any map $f:[\vec{k}]\hookleftarrow [\vec{l}]$, the induced pushforward functor
\[ \collresc(f)_{\sharp}:\pfact_{(\square^n_{[\vec{k}]},E_{[\vec{k}]})}(\icat) \longrightarrow  \pfact_{(\square^n_{[\vec{l}]},E_{[\vec{l}]})}(\icat)\]
on constructible pointless factorization algebras preserves sifted colimits.
\end{cor}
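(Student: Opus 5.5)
This follows by combining the preceding lemma with Corollary \ref{cor:factorizationsifted}. The statement to prove is that $\collresc(f)_{\sharp}$, restricted to $\Fact^{\mathrm{cstr}}_{(\square^n_{[\vec{k}]},E_{[\vec{k}]})}(\icat)$ with target $\Fact^{\mathrm{cstr}}_{(\square^n_{[\vec{l}]},E_{[\vec{l}]})}(\icat)$, preserves sifted colimits. Throughout, $\icat^{\otimes}$ is $\otimes$-sifted cocomplete, e.g.\ presentably symmetric monoidal.

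First, I would note that $\collresc(f)$ restricts to a continuous map of marked spaces $(\square^n_{[\vec{k}]},E_{[\vec{k}]})\rightarrow(\square^n_{[\vec{l}]},E_{[\vec{l}]})$. Continuity is clear. The marking condition also holds: points of depth $n$ are carried to points of depth $n$, because each factor poset map is depth-preserving by Corollary \ref{cor:posetfunctordepthor}. Boundary points play no role on the open cubes, by the lemma on preimages of opens above.

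Second, both source and target are marked smooth conical manifolds, by Construction \ref{cons:cubeatalas}. By the immediately preceding lemma, i.e.\ \cite[Lemma 8.19]{KS}, $\collresc(f)_{\sharp}$ carries constructible pointless factorization algebras to constructible ones. The hypotheses of Corollary \ref{cor:factorizationsifted}(2) are therefore satisfied, and it gives the claim.

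For transparency I would also recall why that corollary holds. By Proposition \ref{prop:factorizationsifted}, sifted colimits in $\Fact^{\mathrm{cstr}}$ are computed in $\pfact$. By \cite[Theorem 3.2.3.1]{LurHA}, sifted colimits in $\pfact$ are computed pointwise on the underlying precosheaves. Pushforward is precomposition with $\collresc(f)^{-1}:\open(\square^n_{[\vec{l}]},E)\rightarrow\open(\square^n_{[\vec{k}]},E)$, and precomposition commutes with pointwise colimits.

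The only point requiring care is the marking bookkeeping in the first step, and this is already handled by Corollary \ref{cor:posetfunctordepthor}. I do not expect a genuine obstacle.
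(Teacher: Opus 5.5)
Your argument is the paper's: its proof is just Proposition \ref{prop:factorizationsifted} plus ``the same proof as Corollary \ref{cor:factorizationsifted} applies'', and your ``for transparency'' paragraph is exactly that argument. However, the first step, which you use to cite Corollary \ref{cor:factorizationsifted}(2) as a black box, is false. In general $\collresc(f)$ does not restrict to a map of marked open cubes.

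For example, take $n=1$ and $f:[1]\hookleftarrow[0]$ picking out $0$ (the source face map). The collapse-rescale map $\overline{\square}_{[1]}\to\overline{\square}_{[0]}$ is $x\mapsto 2x$ on $[0,1/2]$ and is constant with value $1$ on $[1/2,1]$. It therefore carries $\square_{[1]}=(0,1)$ onto $(0,1]$, and the marked point $1/2\in E_{[1]}$ lands on the boundary point $1\notin\square_{[0]}$ (note $E_{[0]}=\emptyset$). So depth-preservation does not give a map of marked spaces $(\square_{[1]},E_{[1]})\to(\square_{[0]},E_{[0]})$. Corollary \ref{cor:factorizationsifted} is stated for pushforward along a continuous map of marked spaces, so it does not literally apply. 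This is why the paper says the \emph{same proof} applies rather than citing the corollary.

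The repair is already in your write-up. By definition, $\collresc(f)_{\sharp}$ is restriction of algebras along the operad map
$\collresc(f)^{-1}:\open(\square^n_{[\vec{l}]},E_{[\vec{l}]})^{\otimes}\to\open(\square^n_{[\vec{k}]},E_{[\vec{k}]})^{\otimes}$. This map exists for two reasons:
\begin{enumerate}
\item Boundary points go to boundary points, so preimages of opens of the open cube lie in the open cube.
\item A depth-$n$ point whose image lies in the open target cube lands in $E_{[\vec{l}]}$, so marked inclusions pull back to marked inclusions.
\end{enumerate}
Then combine three facts. The preceding lemma gives preservation of constructibility. Proposition \ref{prop:factorizationsifted} says sifted colimits of constructible factorization algebras are computed in $\pfact$. \cite[Theorem 3.2.3.1]{LurHA} says these are in turn computed on underlying precosheaves, where restriction along $\collresc(f)^{-1}$ preserves all colimits. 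Together these prove the statement. Drop the first step and make the transparency paragraph the proof.
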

\begin{proof}
This follows from Proposition \ref{prop:factorizationsifted}; the same proof as the one of Corollary \ref{cor:factorizationsifted} applies.    
\end{proof}
\begin{nota}
We let $\mathcal{K}$ denote the collection of all small \emph{sifted} \infcats and denote by $\catinfh\left(\mathcal{K}\right)\subset \catinfh$ the \infcat of (large) \infcats that admits sifted colimits and functors among them that preserve sifted colimits (c.f. \cite[Definition 4.8.1.1]{LurHA}).
\end{nota}
The preceding results yield a functor
\[ \Fact^{\mathrm{cstr}}_{(\square^n_{(\_)},E_{(\_)}) }(\icat):\simpopnin\longrightarrow \catinfh\left(\mathcal{K}\right), \]
that is, an $n$-fold semisimplicial \infcatt.
\begin{prop}\label{prop:moritasegal}
Suppose that $\icat^{\otimes}$ is presentably symmetric monoidal.
Then the $n$-fold semisimplicial \infcat $\Fact^{\mathrm{cstr}}_{(\square^n_{(\_)},E_{(\_)}) }(\icat)$ satisfies the Segal condition; that is, $\Fact^{\mathrm{cstr}}_{(\square^n_{(\_)},E_{(\_)}) }(\icat)$ is an $n$-fold non-unital category object of $\catinfh$ (in fact, of $\catinfh\left(\mathcal{K}\right)$).
\end{prop}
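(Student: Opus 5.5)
The Segal condition concerns one simplicial direction at a time, so I will fix a coordinate $1\leq j\leq n$ and a multi-index $[\vec{k}]$ with $k_j=k\geq 2$. I must show that the map
\[ \Fact^{\mathrm{cstr}}_{(\square^n_{[\vec{k}]},E)}(\icat)\longrightarrow \Fact^{\mathrm{cstr}}_{(\square^n_{[\ldots,1,\ldots]},E)}(\icat)\times_{\Fact^{\mathrm{cstr}}_{(\square^n_{[\ldots,0,\ldots]},E)}(\icat)}\cdots\times_{\Fact^{\mathrm{cstr}}_{(\square^n_{[\ldots,0,\ldots]},E)}(\icat)}\Fact^{\mathrm{cstr}}_{(\square^n_{[\ldots,1,\ldots]},E)}(\icat) \]
induced by the inert face maps is an equivalence. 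By induction on $k$ it suffices to treat the binary case $[k]=[1]\star_{[0]}[k-1]$.

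The key geometric input is that the inert maps $[\ldots,1,\ldots]\hookleftarrow[\ldots,k,\ldots]$ (subinterval inclusions $[1]\hookrightarrow[k]$ in $\simp$) act by collapse-rescale maps $\collresc$. These collapse everything outside a slab $(\tfrac{i}{k+1},\tfrac{i+2}{k+1})$ in the $j$-th coordinate to the boundary and rescale the slab onto $(0,1)$. By the Remark following the boundary-preservation lemma, $\collresc^{-1}$ restricted to opens of the open cube is the pullback along the rescaling, followed by the inclusion of opens of the slab. Hence $\collresc_\sharp\F$ is identified with the restriction of $\F$ to the open slab, transported along a stratified rectilinear isomorphism. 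Such an isomorphism preserves the maximal markings and the disk-like cubes, so it is harmless. The Segal map is therefore equivalent to the restriction map
\[ \Fact^{\mathrm{cstr}}_{(\square^n_{[\vec{k}]},E)}(\icat)\longrightarrow \Fact^{\mathrm{cstr}}_{(W_1,E\cap W_1)}(\icat)\times_{\Fact^{\mathrm{cstr}}_{(W_1\cap W_2,E\cap W_1\cap W_2)}(\icat)}\Fact^{\mathrm{cstr}}_{(W_2,E\cap W_2)}(\icat) \]
for the open cover $\{W_1,W_2\}$ of $\square^n_{[\vec{k}]}$. Here $W_1$ is the slab around the first hyperplane (or the union of the first $k-1$ slabs, for the binary decomposition), $W_2$ is the remaining slab(s), and $W_1\cap W_2$ is an open slab containing one algebra-stratum $\mathfrak{a}_i$ in the $j$-th coordinate. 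The restriction to $W_1\cap W_2$ is likewise identified with the face map to $[\ldots,0,\ldots]$. The identification has to be made compatibly with the semisimplicial structure, but since everything is functorial in open inclusions and the collapse-rescale maps compose strictly by Lemma \ref{lem:collaplsrescale}, the relevant squares commute on the nose at the level of operads of marked opens.

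The main step is then gluing: constructible pointless factorization algebras form a sheaf for open covers of marked smooth conical manifolds. This is the pointless gluing theorem of \cite{KS}, the analogue of the main theorem of \cite{KSW}, which is precisely what the introduction advertises as giving the Segal condition. Its hypotheses are that the pieces have enough good disks and that $\icat$ is presentably symmetric monoidal. The first is Corollary \ref{cor:factorizingbasisbycubes} applied to the slabs, which are again stratified open cubes. The second is assumed. The markings restrict correctly because $E\cap W_i$ is the maximal marking of $W_i$. If I had to argue directly instead, I would use \eqref{eq:restricttodisk} and Corollary \ref{cor:cubelocalization}, which reduce to algebras over $\cube^{\otimes}$ localized at isotopy equivalences. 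I would then show that this operad of disk-like cubes of $\square^n_{[\vec{k}]}$ is the operadic pushout of those of $W_1$ and $W_2$ over $W_1\cap W_2$ after localization. The point is that every disk-like cube with its marked point, together with every multimorphism of disk-like cubes, is isotopic to one contained in $W_1$ or $W_2$, by shrinking in the $j$-th direction. I expect this to be the main obstacle, because multimorphisms whose inputs straddle both pieces must be shrunk coherently while keeping marked points fixed.

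Finally, the statement about $\catinfh(\mathcal{K})$ follows from two facts. The face functors preserve sifted colimits by Corollary \ref{cor:factorizationsifted2}. The inclusion $\catinfh(\mathcal{K})\subset\catinfh$ creates limits by \cite[Section 4.8.1]{LurHA}, since fiber products of $\infty$-categories with sifted colimits along sifted-colimit-preserving functors again lie in $\catinfh(\mathcal{K})$.
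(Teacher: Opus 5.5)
Your proposal is correct and is essentially the paper's argument. The paper also fixes one coordinate and identifies the collapse-rescale face maps, via rescaling homeomorphisms, with restrictions to the overlapping slabs $I_{0,n+1}$, $I_{n,n+k+1}$, $I_{n,n+1}$ of a rescaled cube; it then concludes from the gluing theorem for constructible pointless factorization algebras \cite[Theorem 5.3]{KS}. The differences are cosmetic: the paper treats $F_{n+k}\to F_n\times_{F_0}F_k$ directly rather than inducting on $[1]\star_{[0]}[k-1]$, and it leaves implicit the $\catinfh(\mathcal{K})$ refinement, which you justify via Corollary \ref{cor:factorizationsifted2} and the closure of $\catinfh(\mathcal{K})$ under limits.
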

\begin{proof}
For any $1\leq i\leq n$, fix an element $[\vec{k}']=[k_1,\ldots, k_{i-1}, k_{i+1},\ldots k_n]$. For $k$ another nonnegative integer, write $[\vec{k}]=[k_1,\ldots, k_{i-1},k, k_{i+1},\ldots k_n]$ and set the shorthand notation
\begin{equation}\label{eqn:short_Fact}
F_k\coloneqq \Fact^{\mathrm{cstr}}_{(\square^n_{[\vec{k}]},E_{[\vec{k}]})}(\icat).
\end{equation}
We need to check that for any integer $n>0$, we have an equivalence
\[F_{n+k}\longrightarrow F_n\times_{F_0}  F_k\]
for the maps induced by the commutative square in~$\simp_{\mathrm{inj}}$,
\[\begin{tikzcd}
{[n+k]} & {[k]} \arrow{l} \\
{[n]} \arrow{u} & {[0]}  \arrow{u} \arrow{l}\,
\end{tikzcd}\]
determined by the inclusions $[0,n]\subset [0,n+k]$ and $[n,n+k]
\subset [0,n+k]$. For $l<m\in \mathbb{N}$ we denote by $I_{l,m}$ the interval $(l, m)\subset \mathbb{R}$ viewed as a stratified space \[(l, m)\supset \{l+1,l+2,\ldots, m-1\}\]
and equipped with the maximal marking, so that $I_{l,m}$, as a marked stratified space, is isomorphic to the 1-dimensional open cube $\square_{[m-l-1]}$ by a rescaling and translating. Then we have a cube
\begin{center}
\adjustbox{scale=0.8,center}{%
 \cdcubeNA[small]
    {F_{n+k}}{F_k}
    {F_n}{F_0}
    {\Fact^{\mathrm{cstr}}_{I_{0,n+k+1}\times \square^{n-1}_{[\vec{k}']}}(\icat)}{\Fact^{\mathrm{cstr}}_{I_{n,n+ k+1}\times \square^{n-1}_{[\vec{k}']}}(\icat)}
    {\Fact^{\mathrm{cstr}}_{I_{0,n+1}\times \square^{n-1}_{[\vec{k}']}}(\icat)}{\Fact^{\mathrm{cstr}}_{I_{n,n+1}\times \square^{n-1}_{[\vec{k}']}}(\icat)}
}
\end{center}
where the maps in the bottom square are restrictions and the vertical maps are pushforwards along the homeomorphisms that scale by a factor of $n+k+1$, $n+1$, and $k+1$, respectively, along with  a translation by $n$ in the case of $(n,n+ k+1)$ and $(n,n+1)$ (we have suppressed the marking from the notation in the bottom, but all factorization algebras are pointless). The bottom square is a pullback by 
\cite[Theorem 5.3]{KS}, which says that constructible pointless factorization algebras can be glued from an open cover. Since the vertical maps are all equivalences, we conclude that the top square also is a pullback square, as desired.
\end{proof}

We now promote the assignment $\Fact^{\mathrm{cstr}}_{(\square^n_{(\_)},E_{(\_)})}$ to a semisimplicial symmetric monoidal \infcatt; this will induce the symmetric monoidal structure on the higher Morita category. To achieve this, we aim to enhance the functor $\open \left(\square^n_{(\_)},E_{(\_)}\right)^{\otimes}:\simpopn\rightarrow \mathsf{Op}^{op}$ to a functor 
\begin{equation}\label{eq:symmonenhance}\simpopn\times \fin\longrightarrow \mathsf{Op}^{op},\quad\quad ([\vec{k}],\langle m\rangle )\longmapsto \open \left((\square^n_{[\vec{k}]})^{\coprod \langle m\rangle^{\circ} },E_{[\vec{k}]}^{\coprod \langle m\rangle^{\circ} }\right)^{\otimes}.\end{equation}
First, we describe the action of this functor on $\fin$. For $(X,E)$ a marked space, consider the functor 
\[   \fin \longrightarrow \mathsf{Op}^{op},\quad\quad \open\left(X^{\coprod \langle m\rangle^{\circ}},E^{\coprod \langle m\rangle^{\circ}} \right)^{\otimes}\]
that carries a map $g:\langle m\rangle\rightarrow \langle r\rangle$ to the map
\[\open\left(X^{\coprod \langle r\rangle^{\circ}},E^{\coprod \langle r\rangle^{\circ}} \right)^{\otimes}\longrightarrow \open\left(X^{\coprod \langle m\rangle^{\circ}},E^{\coprod \langle m\rangle^{\circ}} \right)^{\otimes},\quad\quad \coprod_{j\in \langle r\rangle^{\circ}}U_j\longmapsto \coprod_{i\in \langle m\rangle^{\circ} } U_{g(i)}.  \]
The following is an immediate check.
\begin{lem}
Let $f:[\vec{k}]\hookleftarrow [\vec{l}]$ be a map of $\simpopnin$ and let $g:\langle m\rangle\rightarrow \langle r\rangle$ be a map of $\fin$, then the diagram of operads
\begin{equation}\label{eq:operadscommute}
\begin{tikzcd}
    \open \left((\square^n_{[\vec{l}]})^{\coprod \langle r\rangle^{\circ} },E_{[\vec{l}]}^{\coprod \langle r\rangle^{\circ} }\right)^{\otimes} \ar[d,"\left(\rho(f)^{\coprod\langle r\rangle^{\circ}}\right)^{-1}"] \ar[r] &\open \left((\square^n_{[\vec{l}]})^{\coprod \langle m\rangle^{\circ} },E_{[\vec{l}]}^{\coprod \langle m\rangle^{\circ} }\right)^{\otimes}\ar[d,"\left(\rho(f)^{\coprod\langle m\rangle^{\circ}}\right)^{-1}"]\\
    \open \left((\square^n_{[\vec{k}]})^{\coprod \langle r\rangle^{\circ} },E_{[\vec{k}]}^{\coprod \langle r\rangle^{\circ} }\right)^{\otimes}\ar[r] & \open \left((\square^n_{[\vec{k}]})^{\coprod \langle m\rangle^{\circ} },E_{[\vec{k}]}^{\coprod \langle m\rangle^{\circ} }\right)^{\otimes}
\end{tikzcd}
\end{equation}
commutes, where the horizontal maps are defined just above and the vertical maps pull back along coproducts of collapse-rescale maps.
\end{lem}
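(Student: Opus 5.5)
Both composites in \eqref{eq:operadscommute} are maps of ordinary operads whose underlying data is the assignment of an open subset to an open subset. A map of operads of marked opens is determined by its action on colors, because multimorphisms are just inclusion conditions: a multimorphism exists and is unique exactly when the disjointness, containment and marked-bijection conditions hold. So it suffices to check that the two composites agree on colors, i.e.\ on single open subsets $W\subset (\square^n_{[\vec{l}]})^{\coprod \langle r\rangle^{\circ}}$. Both composites are well-defined operad maps by Remark \ref{rmk:functormarkedopens}; the horizontal maps are pullback along the map of marked spaces
\[
\coprod_{i\in\langle m\rangle^\circ}\square^n\longrightarrow \coprod_{j\in\langle r\rangle^\circ}\square^n
\]
sending the $i$-th summand identically to the $g(i)$-th summand. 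Here summands with $g(i)=*$ are sent nowhere, which amounts to restricting to the summands over $g^{-1}(\langle r\rangle^\circ)$ before pulling back.

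First I would write $W=\coprod_{j}U_j$ with $U_j\subset\square^n_{[\vec{l}]}$. Going right then down produces
\[
\coprod_{i\in\langle m\rangle^\circ}\collresc(f)^{-1}(U_{g(i)}),
\]
where I use that $\collresc(f)^{\coprod\langle m\rangle^\circ}$ acts summandwise, so its preimage is computed summandwise. Going down then right produces $\coprod_j \collresc(f)^{-1}(U_j)$ first, and then reindexing gives the same expression. Hence the two composites agree on colors.

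Equivalently, the square is the image under the pullback functor $(\topo^+)^{op}\to\mathsf{Op}$ of a commuting square of marked spaces. That square commutes because the collapse-rescale map acts on each summand, while the reindexing map only permutes, duplicates or deletes summands; these are naturality statements for coproducts. Functoriality of the pullback then gives the claim.

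The only point needing care is the convention for basepoint-valued indices $g(i)=*$, and checking that the reindexing preserves the marking bijection condition. The latter holds because markings are disjoint unions of copies of $E$ and preimages preserve the marked bijection, as in Remark \ref{rmk:functormarkedopens}. I expect no real obstacle beyond this bookkeeping.
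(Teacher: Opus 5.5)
Your proof is correct and is exactly the ``immediate check'' the paper has in mind; the paper gives no further argument. Both composites send $\coprod_{j}U_j$ to $\coprod_{i}\collresc(f)^{-1}(U_{g(i)})$, with $U_{*}=\emptyset$. Since multimorphism sets in these operads of marked opens are empty or singletons, agreement on colors suffices.

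The only imprecision is in your ``equivalently'' reformulation. $\collresc(f)$ is a map of closed cubes and in general sends interior points to boundary points: for a face map $[1]\hookleftarrow[0]$ it collapses half of $\overline{\square}$ onto a boundary point. So that square of marked spaces must be formed with the closed cubes $\overline{\square}^n$ and then restricted to the open-cube suboperads via the preceding lemma. Also, when some $g(i)=*$, the reindexing maps are only partial maps.
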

It follows right away that carrying the pair of maps $(f,g)$ to either of the two (identical) composites in the diagram \eqref{eq:operadscommute} defines the functor \eqref{eq:symmonenhance}. Composing with the functor $\alg_{(\_)}(\icat):\mathsf{Op}\subset \opinf \rightarrow \catinf$ yields a functor 
\[ \simpopnin\times \fin\longrightarrow \catinfh,\quad\quad ([\vec{k}],\langle m\rangle)\longmapsto \Fact^{\mathsf{cstr}}_{(\square_{[\vec{k}]}^{\coprod \langle m\rangle^{\circ}},E_{[\vec{k}]}^{\coprod \langle m\rangle^{\circ} })}(\icat).\]
If follows from \cite[Proposition 5.11]{KS} (which is itself a minor variation on \cite[Corollary 6.2.3]{KSW}) that the adjoint functor 
\begin{equation}\label{eq:symmonenhance2}
    \simpopnin\longrightarrow \fun\left(\fin,\catinfh\right),\quad\quad [k]\longmapsto  \Fact^{\mathsf{cstr}}_{(\square_{[\vec{k}]},E_{[\vec{k}]})}(\icat)^{\otimes}
\end{equation}  
factors through the full subcategory spanned by commutative monoid objects, that is, symmetric monoidal \infcatst.

\subsubsection{Morphisms and composites in the higher Morita category}

Let $\icat^{\otimes}$ be a presentably symmetric monoidal \infcatt. It follows from \Cref{cor:Fact-bimod} that 
\[ \alg_1(\icat)_1 = \Fact^{\mathsf{cstr}}_{(\square_{[1]},E_{[1]})}(\icat) \simeq \mathsf{BMod}(\icat),  \]
and the product of collapse-rescale maps associated to $[1]\hookleftarrow [0]\cong\{0\}$ and $[1]\hookleftarrow [0]\cong \{1\}$ yields the functor
\[ \mathsf{BMod}(\icat)\longrightarrow \alg(\icat)\times\alg(\icat),\quad\quad (A,M,B)\longmapsto (A,B).  \]

We now interpret morphism categories in the \emph{higher} nonunital category $\Fact^{\mathrm{cstr}}_{(\square^n_{(\_)},E_{(\_)}) }(\icat)$ in terms of \emph{iterated} bimodules. The technical crux is a the following pointless version of the stratified additivity theorem of Carmona-\v{S}vraka \cite[Theorem A]{Carmona-Svraka}. 
\begin{prop}[Pointless additivity]\label{prop:markedadditivity}
Let $\icat^{\otimes}$ be a presentably symmetric monoidal \infcat and let $[\vec{k}]=[k_1,\ldots,k,\ldots,k_n]\in \simpopnin$. Denote $[\vec{k}']=[k_1,\ldots,\widehat{k},\ldots,k_n] \in \simp^{op}_{n-1}$ the tuple obtained by removing $k$. Then pushforward along the projection $\square^{n}_{[\vec{k}]}\rightarrow \square_{[k]}$ induces an equivalence
    \[  \Fact^{\mathrm{cstr}}_{(\square^n_{[\vec{k}]},E_{[\vec{k}]})}(\icat) \xrightarrow{\simeq} \Fact^{\mathrm{cstr}}_{(\square_{[k]},E_{[k]})}\left(\Fact^{\mathrm{cstr}}_{(\square_{[\vec{k}']},E_{[\vec{k}']})}(\icat)\right) \]
of \infcatst.
\end{prop}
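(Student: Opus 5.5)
The plan is to reduce pointless additivity to a statement about $\infty$-operads, using \Cref{cor:fact=alg} and the cube bases of \Cref{cor:cubelocalization}, and then to prove an operadic product decomposition. First, I would make the target precise. By \Cref{cor:fact=alg}, the inner category $\mathcal{D}:=\Fact^{\mathrm{cstr}}_{(\square_{[\vec{k}']},E_{[\vec{k}']})}(\icat)\simeq\alg_{\infdisk_{/(\square_{[\vec{k}']},E)}}(\icat)$ is presentable. By \cite[Proposition 5.11]{KS} it is symmetric monoidal. The tensor product preserves colimits separately in each variable, since this is computed objectwise on disks using the sifted-colimit statements of \Cref{prop:factorizationsifted} and presentability of $\icat$. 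Hence $\mathcal{D}$ is again presentably symmetric monoidal, and the outer $\Fact^{\mathrm{cstr}}_{(\square_{[k]},E_{[k]})}(\mathcal{D})$ makes sense. Applying \Cref{cor:fact=alg} twice, together with the fact that $\alg_{\mathcal{O}}(\alg_{\mathcal{P}}(\icat))\simeq\alg_{\mathcal{O}\otimes_{\mathrm{BV}}\mathcal{P}}(\icat)$ \cite[Section 2.2.5]{LurHA}, the right-hand side becomes $\alg_{\infdisk_{/(\square_{[k]},E)}\otimes\infdisk_{/(\square_{[\vec{k}']},E)}}(\icat)$.

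Second, I would work with the discrete models. By \Cref{cor:factorizingbasisbycubes} and \Cref{cor:cubelocalization}, the operads $\cube^{\otimes}$ of disk-like subcubes localize to the $\infty$-operads of marked disks. A disk-like subcube of $\square^n_{[\vec{k}]}$ is exactly a product $U\times V$ of a disk-like interval $U\subset\square_{[k]}$ and a disk-like cube $V\subset\square^{n-1}_{[\vec{k}']}$. Moreover, $U\times V$ contains a marked point if and only if both $U$ and $V$ do. Multiplication of cubes therefore gives a bifunctor of operads
\[\cube_{(\square_{[k]},E)}^{\otimes}\times\cube_{(\square_{[\vec{k}']},E)}^{\otimes}\to\cube_{(\square^n_{[\vec{k}]},E)}^{\otimes}.\]
I need to check that this respects the marking condition. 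Take a marked inclusion $\coprod_i U_i\subset U'$ and a fixed $V$. The marked points of $\coprod_i U_i\times V$ are (marked points of $\coprod_i U_i$) $\times$ (marked points of $V$), so bijectivity is preserved. The map also sends isotopy equivalences to isotopy equivalences. After localizing, this gives a map $\infdisk_{/(\square_{[k]},E)}\otimes\infdisk_{/(\square_{[\vec{k}']},E)}\to\infdisk_{/(\square^n_{[\vec{k}]},E)}$. Restriction along it should agree with the pushforward along the projection, because the pushforward evaluates $\F$ on $U\times\square^{n-1}$, and by the ``ordinary left Kan extension'' clause of \Cref{prop:restricttodisks} that value is the colimit over the cubes $U\times V$.

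The main step, and the obstacle I expect, is to show that this map out of the Boardman--Vogt tensor product is an equivalence. For unstratified Euclidean disks this is Dunn--Lurie additivity. For stratified ones it is \cite[Theorem A]{Carmona-Svraka}, and I would follow their strategy. I would first present the tensor product via the localized product $\cube\times\cube$, and then verify the operadic approximation criterion \Cref{thm:operadic approximation} for the composite $\cube_{(\square_{[k]},E)}^{\otimes}\times\cube_{(\square_{[\vec{k}']},E)}^{\otimes}\to\infdisk^{\otimes}_{/(\square^n_{[\vec{k}]},E)}$. Condition $(1')$ holds because components are indexed by pairs of strata and each fiber is a cofiltered poset, as in the proof of \Cref{thm:markedlocalization}. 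For condition $(2')$, the fibers over a multimorphism are posets of tuples of pairwise disjoint product cubes realizing a given configuration. I would show that these are weakly contractible by the argument of \cite[Section 5.1.2]{LurHA}: disjoint product cubes can be rectilinearly shrunk, and the poset is cofiltered at each configuration point.

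The marking adds one new ingredient, which is where the pointless setting genuinely differs from \cite{Carmona-Svraka}. There is exactly one input cube through each marked point, and empty multimorphisms into marked cubes are absent. I would verify that the configuration-space identification of \Cref{obs:configurations} restricts compatibly on both sides, which case $(i)$ of the proof of \Cref{thm:markedlocalization} already handles factorwise. Granting this, the two algebra categories agree, and the equivalence is the pushforward.
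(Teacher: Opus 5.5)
The paper does not prove this proposition. It defers the proof to \cite{ScStSv}, as an adaptation of \cite{Carmona-Svraka}. Your overall architecture is the intended one: identify both sides with algebras over marked disk $\infty$-operads, then prove a Boardman--Vogt tensor product statement. One claim in your first step is false, though easily repaired: $\mathcal{D}$ is \emph{not} presentably symmetric monoidal. The pointwise tensor product of factorization algebras preserves sifted colimits in each variable, but not arbitrary colimits. Already $A\otimes(-)$ fails to preserve the initial object: for $[\vec{k}']=[0,\ldots,0]$ one has $\mathcal{D}\simeq\alg_{\mathbb{E}_{n-1}}(\icat)$, whose initial object is the unit, and $A\otimes\mathbb{1}\simeq A$. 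What you actually have is that $\mathcal{D}$ is presentable and $\otimes$-sifted cocomplete, by \Cref{prop:factorizationsifted}. By the footnote to \Cref{prop:restricttodisks}, this is the hypothesis the disk-restriction results really use.

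The genuine gap is the main step. Theorem~\ref{thm:operadic approximation} concerns maps of $\infty$-operads, and $\cube^{\otimes}_{(\square_{[k]},E)}\times\cube^{\otimes}_{(\square_{[\vec{k}']},E)}$ (over $\fin$ via the smash product) is not one. More importantly, its active morphisms are \emph{grids} $\bigoplus_{i,j}U_i\times V_j\to U'\times V'$, assembled from a marked $\{U_i\}\to U'$ and a marked $\{V_j\}\to V'$. So the fibers you must control are categories of grid realizations, not ``tuples of pairwise disjoint product cubes''. The latter are the fibers of $\cube^{\otimes}_{(\square^n_{[\vec{k}]},E)}\to\infdisk^{\otimes}_{/(\square^n_{[\vec{k}]},E)}$; that is just \Cref{cor:cubelocalization}, and it says nothing about the tensor product.

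Grid fibers are not weakly contractible. For binary operations on an unstratified $\square^2$, the only grid realizations are $2\times1$ and $1\times2$, so the realization category has four contractible components, whereas $\mathsf{Conf}_2(\square^2)\simeq S^1$. With markings some fibers are even empty. In $\square^2_{[1,1]}$ take $U'=(0,\tfrac13)$, an interval $V'\ni\tfrac12$, and $V\subset V'$ with $\tfrac12\notin V$. Then $U'\times V\subset U'\times V'$ is a legitimate marked inclusion, since no point of $E_{[1,1]}$ is involved. But $V\subset V'$ is not marked, so no grid realizes it. In the tensor product this operation exists only as the composite of $U'\otimes(V\sqcup V''\to V')$ with $(\emptyset\to U')\otimes V''$, for some $V''\ni\tfrac12$.

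So the real content is to show that iterated composites of grid operations in the two directions recover the configuration spaces of \Cref{obs:configurations}. These composites include units of one factor used to pass across marked strata of the other. This is exactly what Dunn--Lurie additivity and \cite{Carmona-Svraka} establish by a nontrivial argument, and it does not reduce to the cofilteredness argument from the proof of \Cref{thm:markedlocalization}. Your remark that case $(i)$ handles the marking ``factorwise'' misses this, because the operations in question are not products. This is precisely the new ingredient a pointless version has to supply.
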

A proof will appear in upcoming joint work of the first two authors with Anja \v{S}vraka \cite{ScStSv}. Invoking Corollary \ref{cor:Fact-bimod}, we deduce a canonical equivalence 
\[  \Fact^{\mathrm{cstr}}_{(\square^n_{[\vec{1}]},E_{[\vec{1}]})}(\icat) \simeq \mathsf{BMod}\left(\mathsf{BMod}((\ldots\mathsf{BMod}(\icat))\right)  \]
where on the right hand side we take $n$-fold iterated bimodules. Now we identify composition in the higher Morita category. Let $\alpha:[2]\hookleftarrow \{0,2\}\cong [1]$ be the map induces composition of a pair of composable morphisms, with associated collapse-rescale map 
\[  \collresc(\alpha): \square_{[2]} \longrightarrow \square_{[1]}. \]
Let $\F$ be pointless constructible factorization algebra on $\square_{[2]}$, corresponding to a collection $(A_0,A_1,A_2)$ of associative algebras and a pair $(M_{01},M_{12})$ of composable bimodules in the Morita category, by Proposition \ref{prop:moritasegal} and Corollary \ref{cor:Fact-bimod}. The pushforward $\collresc(\alpha)_{\sharp}(\F)$ corresponds to the composite. It follows that we may identify the underlying object in $\icat$ of this composite $M_{02}$ with the value of $\F$ any interval $(a,b)\subset \square_{[1]}$ containing $1/2$. Moreover, the inclusion 
\[(c,d)\cup (a,b)\subset (c,d)\]
for $0<c<d<a$ and the inclusion 
\[(a,b)\cup (e,f)\subset (a,f)\] 
for $b<e<f<1$ exhibit the left and right action of the associative algebras $\collresc(\alpha)_{\sharp}(\F)\left((0,1/2)\right)=\F((0,1/3))=A_{0}$ and $\collresc(\alpha)_{\sharp}(\F)((1/2,1))=\F((2/3,1))=A_2$ on $M_{02}$. Following \cite[Example 6.14]{KSW}, we compute the value of the pushforward $\collresc(\alpha)_{\sharp}(\F)((0,1))=\F((0,1))$, by a convenient choice of Weiss cover. Let $\mathcal{K}$ be the set of nonempty compact subsets of $(1/3,2/3)$ with finitely many connected components, and let $\mathcal{W}\subset \open\left(\square_{[2]},E\right)$ be the full subposet spanned by opens of the form $(0,1)\setminus K\subset (0,1)$ for ${K\in \mathcal{K}}$. This subposet generates a sieve $\overline{\mathcal{W}}$ which, as one readily verifies, is a covering sieve for the Weiss topology on $(\square_{[2]},E)$, so that the canonical map
\[  \underset{V\in \overline{\mathcal{W}}}{\colim} \F(V) \overset{\simeq}{\longrightarrow} \F((0,1))  \]
is an equivalence. Since $\mathcal{W}$ is stable under intersections, the inclusion $\mathcal{W}\subset \overline{\mathcal{W}}$ is colimit cofinal. Furthermore, the map
\[ \mathcal{W} \cong \mathcal{K}^{op} \xrightarrow{\pi_0} \simpop \]
where $\pi_0(K)$ for $K \in\mathcal{K}$ inherits the order from $\R$, exhibits a localization (see \cite[Proposition A.2.2]{KSW}), and since $\F$ is constructible and multiplicative, the restriction $\F|_{\mathcal{W}}$ factors through this localization. Consequently, we have an equivalence 
\[ \underset{[n]\in \simpop}{\colim} \F((0,1)\setminus (K_0\cup \ldots \cup K_{n+1})) \simeq \F((0,1))  \]
for $K_0\cup \ldots \cup K_{n+1}$ some compact subset with $n+1$ connected components, written in ascending order. Note that 
\[(0,1)\setminus (K_0\cup \ldots \cup K_{n+1}) = V_0\cup \ldots \cup V_{n+1}  \]
for $V_0$ and $V_{n+1}$ open intervals containing $1/3$ and $2/3$ respectively, and $V_i$ an open interval contained in $ (1/3,2/3)$ for all $1\leq i\leq n$. Using multiplicativity of $\F$, we deduce equivalences 
\[ \F((0,1))\simeq \underset{[n]\in \simpop}{\colim} \F(V_0)\otimes \ldots \otimes \F(V_{n+1}) \simeq \underset{[n]\in \simpop}{\colim} M_{01} \otimes A_1^{\otimes n}\otimes M_{12} = M_{01}\otimes_{A_1}M_{12},   \]
that is, composition in the Morita category is given by the relative tensor product of bimodules, at least on the level of the underlying objects in $\icat$. We will need a refinement of this computation that identifies $\collresc(\alpha)_{\sharp}$ with the relative tensor product \emph{as a functor among \infcats of bimodules}.
\begin{prop}\label{prop:relativetensorprod}
Let $\icat^{\otimes}$ be a presentably symmetric monoidal \infcatt. The functor 
\[  \mathsf{BMod}(\icat)\times_{\mathsf{Alg}(\icat)}\mathsf{BMod}(\icat) \simeq \Fact_{(\square_{[2]},E_{[2]})}^{\mathrm{cstr}}(\icat)\xrightarrow{\rho(\alpha)_{\sharp}} \Fact_{(\square_{[1]},E_{[1]})}^{\mathrm{cstr}}(\icat) \simeq \mathsf{BMod}_{\icat},\]
where the first equivalence comes from Proposition \ref{prop:moritasegal} and the second map is the pushforward $\rho(\alpha)_{\sharp}$ of the collapse-rescale map for the injection $\alpha:[2]\hookleftarrow [1]\cong \{0,2\}$, is equivalent to the relative tensor product functor of \cite[Example 4.4.2.1]{LurHA}.
\end{prop}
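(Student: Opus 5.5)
The plan is to identify both functors as operadic left Kan extensions along maps of \infops, reducing everything to the \infop level where Lurie's construction of the relative tensor product lives. By Corollary \ref{cor:fact=alg} and Corollary \ref{cor:cubelocalization}, $\Fact^{\mathrm{cstr}}_{(\square_{[2]},E)}(\icat)\simeq \alg_{\infdisk_{/(\square_{[2]},E)}}(\icat)$. Mimicking Example \ref{ex:stratifiedline}, we identify $\infdisk^{\otimes}_{/(\square_{[2]},E)}$ with the \infop $\mathsf{BM}^{\otimes}\amalg_{\mathsf{Assoc}^{\otimes}}\mathsf{BM}^{\otimes}$ governing triples of algebras with a pair of composable bimodules. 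Equivalently, it is the \infop Lurie denotes $\mathcal{T}\mathsf{en}$-type (cf.\ \cite[Section 4.4.1]{LurHA}, the operad $\mathcal{BM}^{\otimes}\times_{\mathsf{Assoc}^{\otimes}}\ldots$ built from $\simp^{op}_{/[2]}$). Its colors are the disks around $1/3$, around $2/3$, and in the three open strata, and its multimorphism spaces are discrete sets of linear orderings subject to the pointless constraint. This step is a direct repetition of the configuration-space computation of Observation \ref{obs:configurations}.

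Next we describe $\rho(\alpha)_\sharp$ at the level of disks. Restricted to $\open(\square_{[1]},E)^{\otimes}$, the pullback $\rho(\alpha)^{-1}$ sends a disk around $1/2$ to an interval in $\square_{[2]}$ containing $[1/3,2/3]$, and sends disks in the two open strata to disks in the outer strata. Because $\F$ is a factorization algebra, Proposition \ref{prop:restricttodisks} shows that $\F$ on $\open(\square_{[2]},E)$ is a left Kan extension from disks. Consequently $\rho(\alpha)_\sharp\F$ restricted to $\disk_{/(\square_{[1]},E)}$ is the operadic left Kan extension of $\F|_{\disk}$ along the map of operads $\disk^{\otimes}_{/(\square_{[2]},E)}\to \disk^{\otimes}_{/(\square_{[1]},E)}$, which assigns to a disjoint union of disks the smallest marked disk containing its image. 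We then localize via Theorem \ref{thm:markedlocalization}, using universality of operadic approximations, so that this becomes a map of \infops $\mathsf{BM}\amalg_{\mathsf{Assoc}}\mathsf{BM}\to \mathsf{BM}$. The map sends the outer algebras to $\mathfrak a_\pm$, sends both bimodules and the middle algebra to $\mathfrak m$, and multiplies the middle algebra into the bimodule. Hence $\rho(\alpha)_\sharp\simeq$ operadic left Kan extension along this map, and the pointwise formula is precisely the Weiss-cover bar construction already computed above.

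Finally we invoke Lurie's description in \cite[Section 4.4.2]{LurHA}: the relative tensor product functor is characterized as the left adjoint to restriction along a map of \infops of exactly this shape, namely the $\mathcal{T}\mathsf{ens}_{\succ}$ / $\mathcal{BM}$ comparison of \cite[Construction 4.4.2.?, Theorem 4.4.2.8]{LurHA}, i.e.\ an operadic left Kan extension computed by the two-sided bar construction. Both functors are therefore left adjoint to the same restriction functor, up to the identifications above, so they agree. Compatibility with the forgetful maps to $\alg\times\alg$ is clear, since the outer strata are untouched.

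The main obstacle is the middle step. The collapse-rescale map $\rho(\alpha)$ is not conically smooth (Warning \ref{warn:consmf}), so it does not a priori induce a map of the \infops $\infdisk$. I would work entirely with the discrete operads $\cube^{\otimes}$ of Corollary \ref{cor:cubelocalization}: check that $\rho(\alpha)^{-1}$ sends disk-like cubes (containing $1/2$) to disk-like opens of $\square_{[2]}$, up to a cofinal replacement, and preserves isotopy equivalences. This gives the map on localizations. I would then match the resulting map of \infops with Lurie's by comparing multimorphism sets (linear orderings) directly, since all the operads involved are equivalent to discrete ones.
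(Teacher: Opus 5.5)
Your first step, identifying $\infdisk^{\otimes}_{/(\square_{[2]},E_{[2]})}$ with the $\infty$-operad for pairs of composable bimodules, is fine. The central step fails, and the paper defers its own proof to \cite{ScStSv}, so your argument has to stand on its own.

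\textbf{The operad map does not exist.} There is no map of $\infty$-operads $\mathsf{BM}^{\otimes}\amalg_{\mathsf{Assoc}^{\otimes}}\mathsf{BM}^{\otimes}\to\mathsf{BM}^{\otimes}$ sending the middle algebra color $\mathfrak{a}_1$ to $\mathfrak{m}$. The multiplication $(\mathfrak{a}_1,\mathfrak{a}_1)\to\mathfrak{a}_1$ would have to land in $\mathrm{Mul}_{\mathsf{BM}^{\otimes}}(\{\mathfrak{m},\mathfrak{m}\},\mathfrak{m})$, and the unit $\emptyset\to\mathfrak{a}_1$ in $\mathrm{Mul}_{\mathsf{BM}^{\otimes}}(\emptyset,\mathfrak{m})$; both are empty.

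This is the algebraic form of the geometry. The map $\rho(\alpha)$ crushes $[1/3,2/3]$ onto the marked point $1/2$. So disjoint middle disks get overlapping images, an unmarked disk would have to go to a marked one, and there is no ``smallest disk containing $\{1/2\}$''.

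Your fallback in the last paragraph fails for the same reason. $\rho(\alpha)^{-1}$ of a disk-like cube around $1/2$ is an interval containing both $1/3$ and $2/3$. That interval is not disk-like, and no cofinal replacement changes this; it is exactly why a bar construction appears.

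\textbf{Lurie's tensor product is not a left adjoint.} So it cannot be an operadic left Kan extension along any map of $\infty$-operads, and ``both functors are left adjoint to the same restriction functor'' is false. To see this, note that $\mathsf{BMod}(\icat)\times_{\mathsf{Alg}(\icat)}\mathsf{BMod}(\icat)\to\mathsf{Alg}(\icat)^{\times 3}$ is a Cartesian fibration and $(\mathbb{1},\mathbb{1},\mathbb{1})$ is initial. Hence coproducts of objects lying over $(\mathbb{1},\mathbb{1},\mathbb{1})$ are computed in the fiber $\icat\times\icat$, and likewise for $\mathsf{BMod}(\icat)$ over $(\mathbb{1},\mathbb{1})$. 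Now take the pairs $(\mathbb{1},0)$ and $(0,\mathbb{1})$: each has relative tensor product $0$, but their coproduct $(\mathbb{1},\mathbb{1})$ has relative tensor product $\mathbb{1}$. By the proposition, the same then holds for $\rho(\alpha)_{\sharp}$.

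\textbf{What would repair the argument.} \cite[Section 4.4.1]{LurHA} uses a correspondence, not a map of operads. It is a $\Delta^1$-family of (planar) $\infty$-operads $\mathcal{T}\mathsf{ens}^{\otimes}\to\Delta^1$, whose fiber over $0$ governs pairs of composable bimodules and whose fiber over $1$ models $\mathsf{BM}^{\otimes}$. The relative tensor product is operadic left Kan extension along the inclusion of the fiber over $0$, followed by restriction to the fiber over $1$; this composite is not a left adjoint.

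You would need the geometric counterpart, a mapping cylinder of $\rho(\alpha)$. This is a $\Delta^1$-family of marked disk operads with fibers the disk operads of $(\square_{[2]},E_{[2]})$ and $(\square_{[1]},E_{[1]})$. A cross-fiber multimorphism $\{U_i\}\to V$ is a family of disjoint disks with $\coprod_i U_i\subset\rho(\alpha)^{-1}(V)$ satisfying the marking condition. So when $1/2\in V$ the $U_i$ must cover both $1/3$ and $2/3$, matching Lurie's pattern $M,B,\ldots,B,N\to M\otimes_B N$.

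You would then have to prove two things:
\begin{enumerate}
\item[(i)] The family localizes to $\mathcal{T}\mathsf{ens}^{\otimes}$, via a version of Theorem~\ref{thm:operadic approximation} over $\Delta^1$.
\item[(ii)] $\F$ together with $\rho(\alpha)_{\sharp}\F$ and the tautological structure maps is an operadic left Kan extension of $\F$. At $V\ni 1/2$ the relevant colimit runs over marked disjoint unions of disks in $\rho(\alpha)^{-1}(V)$. So (ii) comes down to Proposition~\ref{prop:restricttodisks} applied to that open, i.e.\ to the Weiss-cover computation preceding the statement, carried out coherently.
\end{enumerate}
With (i) and (ii), both functors are ``Kan extend into the family, then restrict to the fiber over $1$'', and they agree.
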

A proof will appear in upcoming joint work of the first two authors with Anja \v{S}vraka \cite{ScStSv}. As a consequence of this result, we can promote the non-unital higher Morita category to a unital one. Let $\icatd$ be an \infcat that admits finite limits, then recall from \cite{Haugseng_quasi} the subcategory  
\[ \cat_n^{\mathrm{qu}}(\icatd)\subset \cat_n^{\mathrm{nu}}(\icatd) \]
of \emph{quasi-unital} $n$-fold category objects and \emph{quasi-unital} functors among them in $\icatd$. By \cite[Corollary 4.15]{Haugseng_quasi}, the forgetful functor 
\[ \cat_n(\icatd)\longrightarrow \cat_n^{\mathrm{nu}}(\icatd) \]
induced by the subcategory inclusion $\simpopnin\subset \simp^{n,op}$ is an equivalence onto $\cat_n^{\mathrm{qu}}(\icatd)$. We now verify that $\Fact^{\mathrm{cstr}}_{(\square^n_{(\_)},E_{(\_)}) }(\icat)$ lies in this subcategory.
\begin{prop}\label{prop:moritaquasiunit}
Suppose that $\icat^{\otimes}$ is presentably symmetric monoidal.
Then the $n$-fold non-unital category object $\Fact^{\mathrm{cstr}}_{(\square^n_{(\_)},E_{(\_)}) }(\icat)$ is quasi-unital.
\end{prop}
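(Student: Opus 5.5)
Recall from \cite{Haugseng_quasi} that an $n$-fold non-unital category object is quasi-unital if, for each direction $1\leq i\leq n$, the non-unital Segal object obtained by fixing all other simplicial coordinates has quasi-units. Concretely, after fixing $[\vec{k}']$ in the other directions, every object $x$ of $F_0$ should admit a morphism $u_x\in F_1$ from $x$ to $x$ such that composing with $u_x$ on either side induces equivalences. Since the Segal condition already holds (Proposition \ref{prop:moritasegal}), it suffices to check this direction by direction. Quasi-unitality in the sense of Haugseng is a condition on the underlying non-unital Segal \emph{spaces} of each direction, so we only need to produce quasi-units on objects and check that they act invertibly on equivalence classes of morphisms. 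The plan is to reduce, via Proposition \ref{prop:markedadditivity}, to the one-dimensional case with coefficients in $\mathcal{D}:=\Fact^{\mathrm{cstr}}_{(\square^{n-1}_{[\vec{k}']},E_{[\vec{k}']})}(\icat)$. Here $\mathcal{D}$ is again presentably symmetric monoidal, since it is presentable by the corollary following Corollary \ref{cor:fact=alg}, and its tensor product is pointwise and preserves colimits.

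In direction $i$, the additivity equivalence together with Corollary \ref{cor:Fact-bimod} identifies $F_0\simeq \alg(\mathcal{D})$, $F_1\simeq \mathsf{BMod}(\mathcal{D})$ and $F_2\simeq \mathsf{BMod}(\mathcal{D})\times_{\alg(\mathcal{D})}\mathsf{BMod}(\mathcal{D})$. This identification must be compatible with the face maps, which holds because pushforward along the projection commutes with the collapse-rescale maps in the $i$-th coordinate; these are products of maps acting only on that coordinate. By Proposition \ref{prop:relativetensorprod}, applied with $\mathcal{D}$ in place of $\icat$, the composition face $F_2\to F_1$ becomes the relative tensor product functor.

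For the quasi-unit at an algebra $A\in\alg(\mathcal{D})$ we take the regular bimodule ${}_AA_A$. The required equivalences ${}_AA\otimes_A M\simeq M$ and $N\otimes_A A_A\simeq N$ as bimodules are then \cite[Proposition 4.4.3.16]{LurHA}. At the level of factorization algebras, the unit ${}_AA_A$ can be described directly as the pushforward along a degenerate collapse map $\square_{[1]}\to\square_{[0]}$; this map is continuous but not part of the semisimplicial structure. Since quasi-units are only required to exist, this description is not needed, and the algebraic identification suffices.

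The main obstacle is making the reduction to $\mathcal{D}$-valued bimodules honest. Proposition \ref{prop:relativetensorprod} is stated for $\icat$-valued factorization algebras on $\square_{[2]}$, so we must check that under the additivity equivalence the pushforward $\collresc(\alpha)_\sharp$ on $\square^n$ corresponds to the pushforward on $\square_{[2]}$ with values in $\mathcal{D}$. This follows from naturality of pushforward, since $\collresc(\alpha)\times\mathrm{id}$ commutes with the projections and pushforwards compose. We also need the identification $F_2\simeq F_1\times_{F_0}F_1$ from Proposition \ref{prop:moritasegal} to match the one used in Proposition \ref{prop:relativetensorprod}. Once these compatibilities are in place, quasi-unitality is immediate from the unit law for relative tensor products.
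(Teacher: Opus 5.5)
Your argument is the paper's proof. You reduce by pointless additivity to one direction, identify $F_0\simeq\alg$ and $F_1\simeq\mathsf{BMod}$, take the quasi-unit to be the regular bimodule (the functor induced by $\mathsf{BM}^{\otimes}\to\mathsf{Assoc}^{\otimes}$), and conclude by Proposition~\ref{prop:relativetensorprod} and \cite[Proposition 4.4.3.16]{LurHA}. You are more explicit than the paper about the change of coefficients to $\mathcal{D}$ and about compatibility with the face maps. Two of your justifications, however, are false as stated and need to be replaced.

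First, quasi-unitality of a non-unital category object in $\catinfh$ is \emph{not} a condition on the underlying non-unital Segal spaces of groupoid cores. What \cite{Haugseng_quasi} requires is a functor $u\colon F_0\to F_1$ with $d_0u\simeq d_1u\simeq\mathrm{id}_{F_0}$. The composite $F_1\simeq F_0\times_{F_0}F_1\to F_1\times_{F_0}F_1\simeq F_2\xrightarrow{d_1}F_1$, and its mirror, must then be equivalent to $\mathrm{id}_{F_1}$ \emph{as functors}.

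Objectwise quasi-units do not suffice. Take $X_0=[1]$, let $X_1=\{e_0,e_1\}$ be discrete with $d_0(e_i)=d_1(e_i)=i$, and let $X_k$ be the iterated fibre products. The semi-Segal space of cores is even unital. Yet there is no functor $X_0\to X_1$ over the diagonal, since nothing in $X_1$ lies over $0\to 1$, so $X$ is not the restriction of a category object.

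Your construction does meet the correct condition, but you must say why. $A\mapsto {}_AA_A$ is a functor $\alg(\mathcal{D})\to\mathsf{BMod}(\mathcal{D})$ over the diagonal. The action map $A\otimes_A M\to M$ is a natural transformation on all of $\mathsf{BMod}(\mathcal{D})$, with the algebras varying. Hence Lurie's objectwise statement upgrades to an equivalence of functors.

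Second, $\mathcal{D}$ is presentable but \emph{not} presentably symmetric monoidal. For $n\geq 2$ and $[\vec{k}']=[\vec{0}]$ it is $\alg_{\E_{n-1}}(\icat)$ with the pointwise tensor product. There $A\otimes(-)$ does not preserve the initial object $\mathbb{1}$, since $A\otimes\mathbb{1}\simeq A$. So you cannot quote Corollary~\ref{cor:Fact-bimod} and Proposition~\ref{prop:relativetensorprod} for $\mathcal{D}$ verbatim.

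What is true, by Proposition~\ref{prop:factorizationsifted}, is that $\mathcal{D}$ is $\otimes$-sifted cocomplete: sifted colimits and the tensor product are both computed pointwise. This is exactly the hypothesis Lurie's relative tensor product and unit law need. The footnote to Proposition~\ref{prop:restricttodisks} indicates it is also all the factorization-algebra inputs require, so this is the hypothesis you should invoke when working with coefficients in $\mathcal{D}$. The paper's proof makes the same coefficient change, only implicitly.

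With these two corrections your proposal is the paper's argument written out in more detail.
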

\begin{proof}
Recall the notation \eqref{eqn:short_Fact}.
Following \cite{Haugseng_quasi}
we need to find a quasi-unit, that is, a map $u\colon F_0 \to F_1$ and a commutative diagram
\begin{center}
\begin{tikzcd}
 F_0 \arrow{r}{u} & F_1 \arrow[bend right, "d_0"']{l}\arrow[bend left, "d_1"]{l}
\end{tikzcd}
\end{center}
such that the composite
\[ F_1 \xleftarrow{\simeq} F_0\times_{F_0} F_1 \xrightarrow{u\times \Id} F_1\times_{F_0} F_1 \xrightarrow{\simeq} F_2 \xrightarrow{d_1} F_1\]
is equivalent to the identity, and similarly for the morphism with $u$ on the other side. Invoking the pointless additivity equivalence \ref{prop:markedadditivity}, we reduce to proving the condition in question for $ \Fact^{\mathrm{cstr}}_{(\square^1_{k},E_{[k]})}(\icat)$. Under the equivalences
\[ \Fact_{\square_{[0]}}^{\mathrm{cstr}}(\icat)\simeq \alg(\icat),\quad\quad \Fact_{(\square_{[1]},E_{[1]})}^{\mathrm{cstr}}(\icat)\simeq \mathsf{BMod}(\icat), \]
the quasi-unit map $u$ is the map induced by the forgetful functor $\mathsf{BM}^{\otimes}\rightarrow \mathsf{Assoc}^{\otimes}$ (carrying both $\mathfrak{m}$ and $\mathfrak{a}$ to $\mathfrak{a}$). Now we invoke Proposition \ref{prop:relativetensorprod} and \cite[Proposition 4.4.3.16]{LurHA}.
\end{proof}
Via the equivalence
\[ \cat_n(\icatd) \simeq \cat^{\mathrm{qu}}_n(\icatd)  \]
for any \infcat $\icatd$ that admits finite limits, we will regard $\Fact^{\mathrm{cstr}}_{\square_{(\_)}^n}(\icat)$ as an $n$-fold category object of $\catinfh$. 

\subsubsection{From category objects to Segal objects and univalent completion}
Our next goal is to extract from this $n$-fold category object an underlying $n$-fold \emph{Segal} object, following the discussion immediately below \cite[Definition 4.11]{Haugseng-iteratedspans}. Let $n>1$ and let $\icatd$ be an \infcat that admits finite limits. There is a fully faithful inclusion
\[  \mathsf{Seg}_n(\icatd)\hooklongrightarrow \mathsf{Cat}(\mathsf{Seg}_{n-1}(\icatd)).  \]
We wish to exhibit a right adjoint to this inclusion, realizing $\mathsf{Seg}_n(\icatd)$ as a colocalization of $\mathsf{Cat}(\mathsf{Seg}_{n-1}(\icatd))$. For this, we introduce some notation. Let $i_1$ denote the inclusion 
\[ i_1: \{[0]\}\times \simp^{n-1,op} \hooklongrightarrow \simp^{n,op},   \]
then the right adjoint $i_{1*}$ to the pullback $i_1^*:\fun(\simp^{n,op},\icatd)\rightarrow \fun(\simp^{n-1,op},\icatd)$ carries an $(n-1)$-fold simplicial object $Y=Y_{\bullet\ldots\bullet}$ to the $n$-fold simplicial object $i_{1*}(Y)$ satisfying 
\[  i_{1*}(Y)_{n\bullet\ldots\bullet} = Y_{\bullet\ldots\bullet}^{\times (n+1)}. \]
Now consider, for $X_{\bullet\ldots\bullet}\in \mathsf{Cat}(\mathsf{Seg}_{n-1}(\icatd))\subset \fun(\simp^{n,op},\icatd)$, the pullback diagram
\begin{equation}\label{eq:segalcoreflection}
\begin{tikzcd}
  U_{\mathsf{Seg}_n}(X) \ar[r] \ar[d] & X_{\bullet\ldots\bullet}\ar[d] \\
  i_{1*}X_{0\ldots0} \ar[r] & i_{1*}X_{0\bullet\ldots\bullet},
\end{tikzcd}
\end{equation}
where the right vertical map is the unit of the adjunction $(i_1^*\adj i_{1*})$ and the lower horizontal map is the image under $i_1$ of the degeneracy map $X_{0\ldots0}\rightarrow X_{0\bullet\ldots\bullet}$. The following is the content of \cite[Lemma 4.17]{Haugseng-iteratedspans}.
\begin{lem}
The upper horizontal map 
\[ U_{\mathsf{Seg}_n}(X) \longrightarrow X_{\bullet\ldots\bullet} \]
in the diagram \ref{eq:segalcoreflection} exhibits $U_{\mathsf{Seg}_n}(X)$ as a $\mathsf{Seg}_n(\icatd)$-colocalization of $X$. 
\end{lem}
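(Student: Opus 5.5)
The plan is to show two things: that $U_{\mathsf{Seg}_n}(X)$ is an $n$-fold Segal object, and that for every $n$-fold Segal object $Z$ the map $U_{\mathsf{Seg}_n}(X)\to X$ induces an equivalence
\[\mathrm{Map}(Z,U_{\mathsf{Seg}_n}(X))\simeq\mathrm{Map}(Z,X).\]
Throughout I use the standard description of $\mathsf{Seg}_n(\icatd)\subset \mathsf{Cat}(\mathsf{Seg}_{n-1}(\icatd))$: it consists of those $X$ whose $(n-1)$-fold Segal object of objects $X_{0\bullet\ldots\bullet}$ is \emph{constant}, i.e.\ the degeneracy map $X_{0\ldots0}\rightarrow X_{0\bullet\ldots\bullet}$ is an equivalence.

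\emph{Segal property.} All Segal conditions are finite-limit conditions, and they are stable under pullback in $\fun(\simp^{n,op},\icatd)$. So I first check that the three other corners of \eqref{eq:segalcoreflection} lie in $\mathsf{Cat}(\mathsf{Seg}_{n-1}(\icatd))$. For any $Y$, the object $i_{1*}Y$ has $k$-th level $Y^{\times(k+1)}$, and $Y^{\times(k+1)}\simeq Y\times_Y Y\times_Y\cdots$ via the projections, so $i_{1*}Y$ satisfies the Segal condition in the first variable. It satisfies the conditions in the remaining variables whenever $Y$ does; the constant object $X_{0\ldots0}$ trivially does. Hence $U:=U_{\mathsf{Seg}_n}(X)$ is a category object in $\mathsf{Seg}_{n-1}(\icatd)$.

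Next I evaluate at level $0$ in the first variable. Since $i_1^*i_{1*}\simeq\mathrm{id}$ and the unit becomes the identity after applying $i_1^*$, the pullback gives
\[ U_{0\bullet\ldots\bullet}\simeq X_{0\ldots0}\times_{X_{0\bullet\ldots\bullet}}X_{0\bullet\ldots\bullet}\simeq X_{0\ldots0}, \]
which is constant. So $U\in\mathsf{Seg}_n(\icatd)$.

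\emph{Universal property.} Let $Z$ be Segal. By the pullback and the adjunction $i_1^*\adj i_{1*}$,
\[ \mathrm{Map}(Z,U)\simeq \mathrm{Map}(Z,X)\times_{\mathrm{Map}(Z_{0\bullet\ldots\bullet},X_{0\bullet\ldots\bullet})}\mathrm{Map}(Z_{0\bullet\ldots\bullet},X_{0\ldots0}). \]
Since $Z_{0\bullet\ldots\bullet}$ is constant on $Z_{0\ldots0}$, the constant-diagram functor is left adjoint to evaluation at $(0,\ldots,0)$ (as $\simp^{n-1,op}$ has an initial-type object $[0,\ldots,0]$ for this purpose; concretely, $\mathrm{Map}(\mathrm{const}\,A,Y)\simeq\mathrm{Map}(A,Y_{0\ldots0})$ because $[0]$ is terminal in $\simp$). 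With this, the right-hand map becomes
\[ \mathrm{Map}(Z_{0\ldots0},X_{0\ldots0})\rightarrow\mathrm{Map}(Z_{0\ldots0},X_{0\ldots0}), \]
which is an equivalence. Therefore the projection $\mathrm{Map}(Z,U)\rightarrow\mathrm{Map}(Z,X)$ is an equivalence.

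The main point needing care is this last identification. One has to check that the map induced by the degeneracy $X_{0\ldots0}\rightarrow X_{0\bullet\ldots\bullet}$ agrees, under the adjunction, with the identity after evaluation at $[0,\ldots,0]$. This holds because the degeneracy at $[0,\ldots,0]$ is the identity.
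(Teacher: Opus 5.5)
Your proof is correct. You check that the pullback is again a category object in $(n-1)$-fold Segal objects with constant object-of-objects, then compute $\mathrm{Map}(Z,-)$ on the pullback square using the adjunctions $i_1^*\dashv i_{1*}$ and $\mathrm{const}\dashv \ev_{[0,\ldots,0]}$; this is the standard verification of the colocalization.

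The paper gives no argument of its own here and simply cites \cite[Lemma 4.17]{Haugseng-iteratedspans}, so your write-up supplies the direct argument that the citation stands in for.
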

It follows that the collection of these colocalization maps assemble into a right adjoint $U_{\mathsf{Seg}_n}$. As right adjoints preserve limits, the functor $U_{\mathsf{Seg}_n}$ also induces a right adjoint to the inclusion
\[  \cat_k(\mathsf{Seg}_n(\icatd)) \hooklongrightarrow  \cat_{k+1}(\mathsf{Seg}_{n-1}(\icatd)) \]
for all integers $k>1$. Now the inclusion $\mathsf{Seg}_n(\icatd)\subset \mathsf{Cat_n}(\icatd)$ admits a right adjoint $U_{\mathsf{Seg}}^n$ by composing the right adjoints of the composite of inclusions
\[ \mathsf{Seg}_n(\icatd)\hooklongrightarrow \cat(\mathsf{Seg}_{n-1}(\icatd)) \hooklongrightarrow   \cat_2(\mathsf{Seg}_{n-2}(\icatd)) \hooklongrightarrow \ldots\hooklongrightarrow \mathsf{Cat_n}(\icatd).   \]
\begin{rmk}
The inclusion $\mathsf{Seg}_n(\icatd)\subset \mathsf{Cat_n}(\icatd)$ and its adjoint $U^n_{\mathsf{Seg}}:\mathsf{Cat_n}(\icatd)\rightarrow \mathsf{Seg}_n(\icatd)$ both preserve products; it follows that this adjunction determines an adjunction between symmetric monoidal $n$-fold Segal objects and symmetric monoidal $n$-fold category objects.
\end{rmk}
\begin{rmk}
Consider the case $n=2$. The \infcat $\Fact^{\mathrm{cstr}}_{(\square^2_{[1,1]},E_{[1,1]})}$ of $(1,1)$-morphisms of the $2$-fold category object $\Fact^{\mathrm{cstr}}_{(\square^2_{(\_)},E_{(\_)})}$ is the \infcat of constructible factorization algebras on the picture on the left below.

\vspace{0.3cm}
{\centering \begin{tikzpicture} 
\filldraw[opacity=0.1] (1,0) -- (0,0) -- (0,2)--(1,2);
\filldraw[opacity=0.1] (1,0) -- (2,0) -- (2,2)--(1,2);
\draw[thick] (1,0) -- (1,2);
\draw[thick] (0,1) -- (2,1);
\fill (1,1) circle (0.06);
\end{tikzpicture} \hspace{1.5cm}\begin{tikzpicture} 
\filldraw[opacity=0.1] (1,0) -- (0,0) -- (0,2)--(1,2);
\filldraw[opacity=0.1] (1,0) -- (2,0) -- (2,2)--(1,2);
\draw[thick] (1,0) -- (1,2);
\fill (1,1) circle (0.06);
\end{tikzpicture} \par  }
\vspace{0.2cm}
\noindent 

The \infcat of 2-morphisms of $U^2_{\mathsf{Seg}}\left(\Fact^{\mathrm{cstr}}_{(\square^2_{(\_)},E_{(\_)})}\right)$, the pullback 
\[  \Fact^{\mathrm{cstr}}_{\square^2_{[1,1]}}(\icat)\times_{\Fact^{\mathrm{cstr}}_{\square^2_{[0,1]}}(\icat)^{\times 2}}\Fact^{\mathrm{cstr}}_{\square^2_{[0,0]}}(\icat)^{\times 2}\hooklongrightarrow \Fact^{\mathrm{cstr}}_{\square^2_{[1,1]}}(\icat) \,.\]
We may consider this pullback as the subcategory of $\Fact^{\mathrm{cstr}}_{\square^2_{[1,1]}}(\icat)$ of pointless factorization algebras that are constructible with respect to the coarser stratification depicted on the right. We could have directly defined an $n$-fold Segal object in $\catinfh$ (instead of an $n$-fold category object) by considering pointless constructible factorization algebras on generalizations of the illustration on the right (that is, stratifications of the cube by \emph{affine flags}), and this $n$-fold Segal object would have been equivalent to $U^n_{\mathsf{Seg}}\left(\Fact^{\mathrm{cstr}}_{(\square^n_{(\_)},E_{(\_)})}(\icat)\right)$.
This would have been closer to the original pointed construction in \cite{Scheimbauer}.
\end{rmk}

Now we turn to \emph{univalent completion}: recall that the inclusion 
\[ \mathsf{Gpd}(\spa)\subset\mathsf{Seg}(\spa)  \]
of groupoid objects into category objects in spaces (that is, Segal \emph{spaces}) admits a right adjoint $\iota$; a Segal space $X_{\bullet}$ is \emph{complete} if $\iota X_{\bullet}$ is (essentially). The inclusion 
\[ \mathsf{CSS}(\spa)\subset \mathsf{Seg}(\spa)  \]
of complete Segal spaces is a strongly reflective subcategory; we let $L_{\mathsf{CSS}}$ denote the left adjoint, the \emph{univalent completion}. We similarly have the full subcategory 
\[ \mathsf{CSS}_n(\spa)\subset \mathsf{Seg}_n(\spa)  \]
of \emph{complete $n$-fold Segal spaces} admitting the following equivalent characterizations: an $n$-fold Segal space $\icate=\icate_{\bullet\ldots\bullet}$ is \emph{complete} if 
\begin{enumerate}
    \item The $n$ Segal spaces $\icate_{\bullet0\ldots 0}$, $\icate_{1\bullet 0\ldots 0}$, ..., $\icate_{1\ldots 1\bullet}$ are all complete. 
    \item The Segal space $\icate_{\bullet 0\ldots 0}$ is complete, and for any pair of objects $X,Y\in\icate$, the $(n-1)$-fold Segal spaces $\Hom^1_{\icate}(X,Y)$ are complete.  
\end{enumerate}
Note that the second condition is inductive. The equivalence of the two conditions can be found, for instance, in \cite[Lemma 2.8]{JFS} or \cite[Section 7]{Haugseng-iteratedspans}. The inclusion $ \mathsf{CSS}_n(\spa)\subset \mathsf{Seg}_n(\spa) $ is also a strongly reflective localization; we let $L_{\mathsf{CSS}_n}$ denote the left adjoint.
\begin{rmk}
The univalent completion is compatible with the formation of mapping objects, in the following sense: let $\icate$ be an $n$-fold Segal space and let $X,Y$ be parallel pair of $(k-1)$-morphisms for $1\leq k\leq n$. Then the canonical map
\[ L_{\mathsf{CSS}_{n-k}}\left(\Hom_{\icate}^k(X,Y)\right)  \longrightarrow \Hom^k_{L_{\mathsf{CSS}_n}(\icate)}\left(X,Y\right)  \]
is an equivalence of (complete) $(n-k)$-fold Segal spaces (where on the right hand side we abuse notation and identify $X$ and $Y$ with their image under the completion unit map $\icate\rightarrow L_{\mathsf{CSS}_n}(\icate)$. For a proof, see e.g. \cite[Lemma 5.50]{HaugsengEn}.
\end{rmk}
\begin{rmk}
The inclusion $\mathsf{CSS}_n(\spa)\subset \mathsf{Seg}_n(\spa)$ and its adjoint $L_{\mathsf{CSS}_n}$ both preserve products; it follows that this adjunction determines an adjunction between symmetric monoidal $n$-fold Segal objects and symmetric monoidal complete $n$-fold category objects. 
\end{rmk}
\begin{rmk}
As proven in \cite{BarSP-unicity}, we are justified in writing $\mathsf{Cat}_{(\infty,n)}$ for the \infcat $\mathsf{CSS}_n(\spa)$.
\end{rmk}
\begin{defn}[The higher Morita category]\label{defn:moritacategory}
Let $\icat^{\otimes}$ be a presentably symmetric monoidal \infcatt, and consider the $n$-fold category object \[\Fact^{\mathrm{cstr}}_{(\square^n_{(\_)},E_{(\_)})} \in \cat_n(\catinfh) \simeq \cat_n(\mathsf{CSS}(\widehat{\spa}))\subset   \cat_{n+1}(\widehat{\spa}), \]
which we may consider as an $(n+1)$-fold category object of $\widehat{\spa}$, the \infcat of large spaces. We will write 
\[  \mor_n(\icat) := U^{n+1}_{\mathsf{Seg}}\left(\Fact^{\mathrm{cstr}}_{(\square^n_{(\_)},E_{(\_)})}\right)\in \mathsf{Seg}_n\left(\widehat{\spa}\right) \]
for the coreflection onto $(n+1)$-fold Segal spaces. We will write
\[  \umor_n(\icat) := L_{\mathsf{CSS}_{n+1}}\left(\mor_n(\icat)\right) \subset \mathsf{CSS}_{n+1}\left(\widehat{\spa}\right) = \widehat{\cat}_{(\infty,n+1)} \]
which comes equipped with a completion map 
\[  \mor_n(\icat)\longrightarrow \umor_n(\icat) \]
of $(n+1)$-fold Segal spaces. Since both univalent completion and the coreflection $U^{n+1}_{\mathsf{Seg}}$ preserve products and therefore (commutative) monoid objects, the functor \eqref{eq:symmonenhance2} endows the $(n+1)$-fold Segal object $\mor_n(\icat)$ and the $(\infty,n+1)$-category $\mor_n(\icat)$ with a symmetric monoidal structure, and the completion map 
\[  \mor_n(\icat)\longrightarrow \umor_n(\icat) \]
canonically enhances to a map of symmetric monoidal $(n+1)$-fold Segal objects.
\end{defn}

\begin{rmk}
As we are interested in dualizability and invertibility, univalent completeness is not a particularly relevant property for us in this work: the existence of duals and adjoints can be detected before univalent completion. More precisely, the following hold true.
\begin{enumerate}[$(1)$]
    \item An object of $\mor_n(\icat)$ admits a dual if and only if its image in $\umor_n(\icat)$ admits a dual.
    \item A $k$-morphism of $\mor_n(\icat)$ for $1\leq k\leq n$ admits an adjoint if and only if its image in $\umor_n(\icat)$ admits an adjoint.
    \item Every object and every $k$-morphism for $1\leq k\leq n$ of $\umor_n(\icat)$ is equivalent to one such in the image of the completion map $\mor_n(\icat)\rightarrow \umor_n(\icat)$.
\end{enumerate}
In particular, while the objects of $\umor_n(\icat)$ are not locally constant factorization algebras on $\R^n$ on the nose like those of $\mor_n(\icat)$, we may always find an equivalence to an object in the image of the completion map $\mor_n(\icat)\rightarrow \umor_n(\icat)$. The assertions $(1)$ and $(2)$ are an immediate consequence of the fact that dual and adjoints are detected in various homotopy 2-categories, and the formation of homotopy 2-categories is an operation on 2-fold Segal spaces insensitive to univalent completion.
\end{rmk}

\begin{rmk}
As is evident from its construction, the $1\leq k\leq n$-morphisms and the $(n+1)$-morphisms of $\mor_n(\icat)$ are of quite different nature. We will make use of this distinction in Section \ref{sec:reg-bimod} on regular bimodules to come; there, it will be essential to \emph{not} colocalize all the way to $(n+1)$-fold Segal spaces, but to remember the extra structure afforded by the presence of maps among constructible factorization algebras that are allowed to be noninvertible away from the 0-dimensional strata. 
\end{rmk}

\begin{var}[The pointed higher Morita category]
The preceding constructions and results can evidently be emulated with constructible factorization algebras on the stratified cubes \emph{without markings}. This yields an $n$-fold category object $\Fact^{\mathrm{cstr}}_{\square^n_{(\_)}}$, together with a natural transformation  
\[ \Fact^{\mathrm{cstr}}_{\square^n_{(\_)}}\longrightarrow  \Fact^{\mathrm{cstr}}_{(\square^n_{(\_)},E_{(\_)})}.  \]    
We write $\mor^{\mathrm{ptd}}_n(\icat)$ for the $(n+1)$-fold Segal space obtained from $\Fact^{\mathrm{cstr}}_{\square^n_{(\_)}}$ and $\umor_n^{\mathrm{ptd}}(\icat)$ for its univalent completion. In this Morita category, all $n$-morphisms are bimodules that come equipped with pointings (and $(n+1)$-morphisms preserve them). The natural transformation above induces a functor 
\[ \umor^{\mathrm{ptd}}_n(\icat)\longrightarrow\umor_n(\icat)  \]
of symmetric monoidal $(\infty,n+1)$-categories that forgets the pointing.
\end{var}
\begin{rmk}
The construction of $\Fact^{\mathrm{cstr}}_{(\square^n_{(\_)},E_{(\_)}) }(\icat)$ is clearly functorial in $\icat^{\otimes}$, that is, we have a functor
\[\Fact^{\mathrm{cstr}}_{(\square^n_{(\_)},E_{(\_)}) }(\_): \simpopnin\times  \calg(\prl)\longrightarrow \catinfh\left(\mathcal{K}\right) \]
which, by Proposition \ref{prop:moritasegal} is adjoint to a functor
\[  \Fact^{\mathrm{cstr}}_{(\square^n_{(\_)},E_{(\_)}) }(\_):\calg(\prl)\longrightarrow \cat^{\mathrm{nu}}_n(\prl) \]
taking values in the \infcat of $n$-fold non-unital category objects in $\catinfh\left(\mathcal{K}\right)$. 
It is immediate from the description of the quasi-units of Proposition \ref{prop:moritaquasiunit} that for a symmetric monoidal functor $\icat^{\otimes}\rightarrow\icatd^{\otimes}$, the induced natural transformation $\Fact^{\mathrm{cstr}}_{(\square^n_{(\_)},E_{(\_)}) }(\icat)\rightarrow \Fact^{\mathrm{cstr}}_{(\square^n_{(\_)},E_{(\_)}) }(\icatd)$ is a quasi-unital map, so we have a unique lift to a functor
\[  \Fact^{\mathrm{cstr}}_{(\square^n_{(\_)},E_{(\_)}) }(\_): \calg(\prl)\longrightarrow \cat_n(\catinfh\left(\mathcal{K}\right)).  \] 
Upon coreflecting to $(n+1)$-fold Segal spaces and univalent completion, we obtain a functor
\[ \umor_n(\_): \calg(\prl)\longrightarrow \widehat{\mathsf{Cat}}_{(\infty,n)}, \]
in fact (taking into account symmetric monoidal structures, a functor
\[ \umor_n(\_)^{\otimes}: \calg(\prl)\longrightarrow \calg\left(\widehat{\mathsf{Cat}}_{(\infty,n)}\right), \]
\end{rmk}

\subsubsection{Comparison functors}
Let $\icat^{\otimes}$ be a presentably symmetric monoidal \infcat and let $X,Y$ be a parallel pair of $(k-1)$-morphisms in $\umor_n(\icat)$ for $1\leq k\leq n-1$, so that we may consider the $(\infty,n-k+1)$-category $\Hom^{k}_{\umor_n(\icat)}(X,Y)$ of $k$-morphisms. There is a natural symmetric monoidal comparison functor 
\begin{equation}\label{eq:comparisonfunctor}  \Hom^{k}_{\umor_n(\icat)}(X,Y)\longrightarrow \umor_{n-k}(\icat)   \end{equation}
corresponding to only remembering the deepest stratum, so that every $k$-morphism of $\umor_n(\icat)$ determines an \emph{underlying} object of in $\umor_{n-k}(\icat)$, and each $(k+1)$-morphism determines an \emph{underlying} $1$-morphisms of $\umor_{n-k}(\icat)$, and so on. In Section \ref{sec:lifting_lemma}, we will show that these functor detect adjointability. We now construct these comparison functors.

\begin{cons}
Consider the map of posets 
\[  \overline{\gamma}:\overline{P}_{[1]}\longrightarrow \overline{P}_{[1]} \]
that carries all of $\overline{P}_{[1]}$ into $\mathfrak{m}_{01}$. This is a depth-and orientation-preserving (but not boundary-preserving) map of posets, so by Lemma \ref{lem:collaplsrescale}, there is a unique collapse-rescale map $\collresc(\gamma)$ associated to it; this is simply the map 
\[ \collresc(\overline{\gamma}): \overline{\square}_{[1]} \longrightarrow \{1/2\} \subset \overline{\square}_{[1]}. \]
We will write $\collresc(\gamma)$ for the restriction of $\collresc(\overline{\gamma})$ to $\square_{[1]}$. Taking the $k$-fold product, we have a collection of continuous stratified map
\[ \pi:\square_{[1,\ldots,1,\bullet,\ldots,\bullet]}\longrightarrow \square_{[\bullet,\ldots,\bullet]} \]
compatible with the collapse-rescale maps in the last $(n-k)$-coordinates, that is, a natural transformation of semisimplical stratified spaces. We obtain natural transformation
\[ \pi_*:\Fact_{(\square^n_{[1,\ldots,1,\bullet,\ldots,\bullet]},E)}^{\mathrm{cstr}}(\icat) \longrightarrow \Fact^{\mathrm{cstr}}_{(\square^{n-k}_{[\bullet,\ldots,\bullet]},E)}(\icat), \]
a map of $(n-k)$-fold category objects of $\catinfh$. For any parallel pair of $(k-1)$-morphisms $(X,Y)$, the composite
\[ \Hom^k_{\mor_n(\icat)}(X,Y)\subset  \Fact_{(\square^n_{[1,\ldots,1,\bullet,\ldots,\bullet]},E)}^{\mathrm{cstr}}(\icat) \longrightarrow \Fact^{\mathrm{cstr}}_{(\square^{n-k}_{[\bullet,\ldots,\bullet]},E)}(\icat) \]
(viewed as a map of $(n-k+1)$-fold category objects of $\widehat{\spa}$) factors through $\mor_{n-k}(\icat)$ which yields the functor 
\[  \pi_*: \Hom^{k}_{\mor_n(\icat)}(X,Y)\longrightarrow \mor_{n-k}(\icat) \]
The functor \eqref{eq:comparisonfunctor} is the univalent completion of this functor. As the symmetric monoidal structure on \infcats of factorization algebras of \eqref{eq:symmonenhance2} is compatible with the collapse-rescale maps, the functor $\pi_*$ is naturally symmetric monoidal. The 
\end{cons}

\subsubsection{Regular bimodules}\label{sec:reg-bimod}
Let $\icat^{\otimes}$ be a symmetric monoidal category. Suppose we are given maps $f:A\rightarrow A'$ and $g:B\rightarrow B'$ of associative algebra objects in $\icat$. Then we have a functor
\begin{equation}\label{eq:bimodpullback}
     {}_{A'}\mathsf{BMod}_{B'}(\icat) \longrightarrow {}_A\mathsf{BMod}_B(\icat)
\end{equation}    
that carries an $A'$-$B'$-bimodule $M$ to the following  $A$-$B$-bimodule. Its  underlying object in $\icat$ is also $M$ with action map given by (suppressing the associators of the tensor product)
\[ A\otimes M\otimes B \xrightarrow{f\otimes\mathrm{id}_M\otimes{g}} A'\otimes M\otimes B'\longrightarrow M ,\]
where the second map is the action of $A'$ and $B'$ on $M$. This assignment is manifestly compatible with composition of maps of associative algebras, that is, there is a functor 
\begin{equation}\label{eq:bimodfunctor} \mathsf{Alg}(\icat)^{op}\times\mathsf{Alg}(\icat)^{op}\longrightarrow \mathsf{Cat},\quad \quad (A,B)\longmapsto {}_A\mathsf{BMod}_B(\icat).  \end{equation}
carrying the pair of maps $(f,g)$ to the map \eqref{eq:bimodpullback}.
\begin{rmk}
It is not so straightforward to establish the $\infty$-\emph{categorical} analogue of this functor directly. However, via the Grothendieck construction, the existence of the functor \eqref{eq:bimodfunctor} can equivalently be expressed as follows.
\begin{enumerate}[$(1)$]
    \item The forgetful functor 
    \[ p:\mathsf{BMod}(\icat)\longrightarrow \mathsf{Alg}(\icat)\times \mathsf{Alg}(\icat),\quad\quad (A,M,B)\longmapsto (A,B) \]
    is a \emph{Cartesian fibration}.
    \item A map $(A,N,B)\rightarrow (A',M,B')$ is $p$-Cartesian just in case the underlying map $N\rightarrow M$ in $\icat$ is an isomorphism. 
\end{enumerate}
Replacing $\icat^{\otimes}$ with a symmetric monoidal \infcatt, the assertions $(1)$ and $(2)$ above are established by \cite[Corollary 4.3.3.3]{LurHA}, so that unstraightening provides the $\infty$-categorical version of the functor \eqref{eq:bimodfunctor}.
\end{rmk}
The functor ${}_{A'}\mathsf{BMod}_{B}(\icat) \rightarrow {}_A\mathsf{BMod}_B(\icat) $ induced by a map $f:A\rightarrow A'$ can be interpreted as a map 
\[  \Hom^1_{\mor_1(\icat)}(A',B)\longrightarrow \Hom^1_{\mor_1(\icat)}(A,B) \]
of categories of 1-morphisms in the (valent) Morita $2$-category. In fact, this map is the composition $\_\circ M_f$ with some 1-morphism $M_f\in \Hom^1_{\mor_1(\icat)}(A,A')$, that is, some bimodule $M_f\in {}_A\mathsf{BMod}_{A'}(\icat)$. It is not hard to give a description of this 1-morphism, which is called a regular bimodule.

\begin{defn}[Regular bimodules]
For $C$ an associative algebra in a symmetric monoidal category $\icat$, will write ${}_CC_C$ for $C$ regarded as a $C$-$C$-bimodule, that is, the image of $C$ under the functor $\mathsf{Alg}(\icat)\rightarrow \mathsf{BMod}(\icat)$ induced by the forgetful map $\mathsf{BM}^{\otimes}\rightarrow \mathsf{Assoc}^{\otimes}$ of operads. For any pair of maps $f:C\rightarrow D$, $g:C\rightarrow E$ of algebra objects, we will write ${}_DC_{E}$ for the image of ${}_CC_C$ under the functor ${}_C\mathsf{BMod}_{C}(\icat)\rightarrow {}_D\mathsf{BMod}_{E}(\icat)$ induced by the pair $(f,g)$ via the assignment \eqref{eq:bimodfunctor}. For the case of $g$ being the identity $C=E$, we will say that ${}_D{C}_C$ is the \emph{right regular bimodule} associated to $f$. For the case of $f$ being the identity $C=D$, we will say that ${}_CC_E$ is the \emph{left regular bimodule} associated to $g$.
\end{defn}
Now the desired map $\Hom^1_{\mor_1(\icat)}(A',B)\rightarrow \Hom^1_{\mor_1(\icat)}(A,B)$ of categories of 1-morphisms in the Morita 2-category is given by composing with the right regular bimodule ${}_AA'_{A'}$ associated to $f:A\rightarrow A'$. The right and left regular bimodules comprise a very special class of morphisms in the Morita category: they are adjoint to one another. More explicitly, the canonical map 
\[  {}_{A'}A'_{A}\otimes_{A}{}_{A}A'_{A'}\longrightarrow {}_{A'}A'_{A'}\otimes_{A'} {}_{A'}A'_{A'}\simeq A' \]
of $A'$-$A'$-bimodules exhibits ${}_{A'}A'_A$ as right adjoint to ${}_AA'_{A'}$ (so the \emph{left} regular bimodule is the \emph{right} adjoint and vice versa\footnote{Recall that the tensor product $\otimes$ composes modules from left to right, while the composition symbol $\circ$ composes from right to left.}).

From now on $\icat^{\otimes}$ will be a presentably symmetric monoidal \infcatt. Our goal in this section is to establish the appropriate analogues of these results for $\bb{E}_n$-algebras within our factorization model for the $\bb{E}_n$-Morita category valued in $\icat$. The existence and formal properties of these regular factorization bimodules feature prominently in our discussion of adjointable morphisms to come. We start by constructing the functor \eqref{eq:bimodfunctor} in its fibrational form for all categorical levels of the higher Morita category. For this, it will be crucial to remember the maps of constructible factorization algebras that are noninvertible away from the 0-dimensional strata.
\begin{nota}
In the remainder of this section, \emph{and in this section alone}, we will write $\mor_n(\icat)$ for the $n$-fold Segal object of $\catinfh$ defined by
\[ U^n_{\mathsf{Seg}}\left(\Fact^{\mathrm{cstr}}_{(\square^n_{(\_)},E_{(\_)})}\right).  \]
\end{nota}
\begin{nota}
Let $\icatd$ be an $n$-fold Segal object in $\catinf$. We will write $\mathsf{Ob}(\icatd)$ for the \infcat $\icatd_{0\ldots 0}$ of objects of $\icatd_{\bullet\ldots\bullet}$. For $k\geq 1$, we will write $\mathsf{Ar}_k(\icatd)$ for the \infcat $\icatd_{k0\ldots 0}$ of composable sequences of 1-morphisms of $\icatd$ of length $k$.
\end{nota}
In the following proposition and below, we will for the sake of notational brevity speak of `$(-1)$-morphisms', and write $\Hom^0_{\mor_n(R,S)}(R,S)$ for the $n$-fold Segal object $\mor_{n}(\icat)$ itself, when $R,S$ are `$(-1)$-morphisms'. 
\begin{prop}\label{prop:sourcetargetCartesian}
For $0\leq k\leq n-1$, let $R,S$ be a parallel pair of $(k-1)$-morphisms in $\mor_n(\icat)$, so that we may consider the $(n-k)$-fold Segal object $\Hom^k_{\mor_n(\icat)}(R,S)$ of $k$-morphisms between $R$ and $S$ in $\catinfh$. Then the forgetful functor 
\[   \ev_{\{0,1\}}:\mathsf{Ar}_1\left(\Hom^k_{\mor_n(\icat)}(R,S)\right)\longrightarrow \mathsf{Ob}\left(\Hom^k_{\mor_n(\icat)}(R,S)\right)\times \mathsf{Ob}\left(\Hom^k_{\mor_n(\icat)}(R,S)\right) \]
evaluating source and target is a Cartesian fibration. Moreover, a morphism is $ \ev_{\{0,1\}}$-Cartesian just in case the underlying morphism of $\icat$ (obtained by evaluating at a disk-like open subcube containing the deepest stratum) is an equivalence.
\end{prop}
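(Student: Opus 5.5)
We reduce, via pointless additivity and the Segal coreflection, to a one-dimensional statement about bimodules over algebras valued in a presentably symmetric monoidal \infcatt, and then invoke \cite[Corollary 4.3.3.3]{LurHA}.

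The first step is to identify the functor $\ev_{\{0,1\}}$ concretely. Unwinding the coreflection $U^n_{\mathsf{Seg}}$ and the definition of $\Hom^k$, the source of $\ev_{\{0,1\}}$ is a full subcategory (in fact a fiber product) of $\Fact^{\mathrm{cstr}}_{(\square^n_{[1,\ldots,1,1,0,\ldots,0]},E)}(\icat)$. Here the first $k$ entries equal $1$ and the $(k+1)$-st entry is $1$. The fiber product is cut out by fixing the restrictions to the faces, which are the images of $R$ and $S$ together with their degenerate identities. Similarly, the target is the corresponding fiber product built from $[1,\ldots,1,0,0,\ldots,0]$.

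Applying Proposition \ref{prop:markedadditivity} in the $(k+1)$-st coordinate gives
\[\Fact^{\mathrm{cstr}}_{(\square^n_{[\ldots,1,\ldots]},E)}(\icat)\simeq\Fact^{\mathrm{cstr}}_{(\square_{[1]},E_{[1]})}(\mathcal{V}), \qquad \mathcal{V}:=\Fact^{\mathrm{cstr}}_{(\square^{n-1}_{[\vec{k}']},E)}(\icat).\]
Corollary \ref{cor:Fact-bimod} then identifies the right-hand side with $\mathsf{BMod}(\mathcal{V})$, and the functor $\ev_{\{0,1\}}$ with the forgetful functor $p:\mathsf{BMod}(\mathcal{V})\to\alg(\mathcal{V})\times\alg(\mathcal{V})$, restricted to the fiber product. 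To use Corollary \ref{cor:Fact-bimod} we need $\mathcal{V}$ to be presentably symmetric monoidal. This holds because it is $\alg_{\infdisk}(\icat)$ by Corollary \ref{cor:fact=alg}, with the symmetric monoidal structure of \eqref{eq:symmonenhance2}.

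The second step applies \cite[Corollary 4.3.3.3]{LurHA}. It shows that $p$ is a Cartesian fibration, and that a morphism is $p$-Cartesian precisely when its image in $\mathcal{V}$ (the value on the bimodule stratum) is an equivalence. We then check that Cartesian fibrations restrict along the fiber-product conditions. The condition fixing the source and target data of $R$ and $S$ is a pullback along a map into $\mathsf{Ob}\times\mathsf{Ob}$ that is compatible with the projection to the algebra data. Cartesian fibrations are stable under base change, so the restriction remains a Cartesian fibration, with the same Cartesian edges.

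The third step translates the Cartesian criterion. An edge is Cartesian if its image in $\mathcal{V}$ is an equivalence. That image is a morphism of constructible factorization algebras on $\square^{n-1}_{[\vec{k}']}$ whose colors are all $1$'s and $0$'s, i.e.\ a $k$-morphism-shaped stratification. The claim is that such a morphism is an equivalence as soon as its value on a disk-like cube around the deepest stratum is one. For the strata other than the deepest, the restriction to the boundary/face data is fixed by the fiber product: those strata carry the fixed algebras of $R$ and $S$ and the identity maps between them. Constructibility together with Proposition \ref{prop:restricttodisks} (factorization algebras are determined by their values on disks) then shows that a map that is an equivalence on every stratum type is an equivalence.

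The main obstacle is making this last point precise, together with checking that the base change respects the Cartesian criterion. In particular, we must verify that within $\Hom^k$ the morphisms on all non-deepest strata are forced to be identities. This is where the Segal coreflection (pullback along degeneracies from $\mathsf{Ob}$) is used. We also need that the additivity equivalence is compatible with evaluation on disks, so that "value on the bimodule stratum in $\mathcal{V}$" matches "value on a disk-like cube containing the deepest stratum".
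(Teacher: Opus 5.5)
Your outline follows the paper: additivity in the $(k+1)$-st coordinate, Corollary \ref{cor:Fact-bimod}, \cite[Corollary 4.3.3.3]{LurHA}, and detection of Cartesian edges on the deepest stratum. The gap is in your second step. The functor $\ev_{\{0,1\}}$ is \emph{not} the base change of $p\colon\mathsf{BMod}(\mathcal{V})\to\alg(\mathcal{V})\times\alg(\mathcal{V})$ along $\mathsf{Ob}(\Hom^k_{\mor_n(\icat)}(R,S))^{\times 2}\to\alg(\mathcal{V})^{\times 2}$.

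\begin{itemize}
\item That base change only fixes face data whose $(k+1)$-st entry is $0$.
\item The source $\mathsf{Ar}_1(\Hom^k_{\mor_n(\icat)}(R,S))$ is cut out of $\Fact^{\mathrm{cstr}}_{\square^n_{[1,\ldots,1,1,0,\ldots,0]}}(\icat)$ by a further condition. The faces in the first $k$ directions, whose $(k+1)$-st entry is still $1$, must be the degenerate data of $R$ and $S$ produced by the Segal coreflection.
\item Under additivity this is a condition on the bimodule object $M\in\mathcal{V}$ itself: on its values and actions along the strata of $\square^{n-1}_{[\vec{k}']}$ other than the deepest one. For $n=2$, $k=1$ it says the half-lines $(0,\tfrac12)\times\{\tfrac12\}$ and $(\tfrac12,1)\times\{\tfrac12\}$ carry the identity (degenerate) data of $R$ and $S$.
\end{itemize}

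So the source is the fiber over a point of a functor $r$ from the base-changed fibration to this level-$1$ face data, and $r$ does not factor through the base. Such fibers are not Cartesian fibrations in general, and stability under base change says nothing about them. Your final paragraph asks whether morphisms are identities on non-deepest strata. That bears on the Cartesian criterion once lifts exist, not on whether $p$-Cartesian lifts lie in the fiber at all.

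The missing observation is that the face functors in the first $k$ directions preserve Cartesian edges. These are the maps $\mathsf{BMod}(\mathcal{V})\to\mathsf{BMod}(\mathcal{V}')$ lying over $\alg(\mathcal{V})^{\times 2}\to\alg(\mathcal{V}')^{\times 2}$, obtained by applying $\mathsf{BMod}$ and $\alg$ to the face functors $\mathcal{V}\to\mathcal{V}'$ induced by $\{0\},\{1\}\subset[1]$. Granting this, the argument closes as follows.
\begin{itemize}
\item A morphism of $\mathsf{Ob}(\Hom^k_{\mor_n(\icat)}(R,S))^{\times 2}$ has identities as its face components.
\item Its $p$-Cartesian lift therefore has face components that are Cartesian edges over identities, hence equivalences.
\item So the lift lies in the fiber and is Cartesian there.
\end{itemize}

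The paper packages exactly this. It writes $\ev_{\{0,1\}}$ as the limit over $P^{k+1}_0$ of Cartesian fibrations $F(-,0)\to F(-,1)$: products of maps $\mathsf{BMod}\to\alg\times\alg$, together with the map $*\to *$ coming from $R,S$. The transition functors preserve Cartesian edges, and such a limit is a Cartesian fibration whose Cartesian edges are detected componentwise. With this replacing your base-change step, your third step is the paper's concluding argument.

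A separate, smaller point: $\mathcal{V}$ is in general not presentably symmetric monoidal. For $k=0$ and $n\geq 2$ it is $\alg_{\bb{E}_{n-1}}(\icat)$ with the pointwise tensor product, which does not preserve the initial object $\unit$. What the identification with $\mathsf{BMod}(\mathcal{V})$ actually uses is only $\otimes$-sifted cocompleteness, which Proposition \ref{prop:factorizationsifted} supplies.
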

\begin{rmk}
In the statement of the proposition above, we note that we have subcategory inclusion 
\[\mathsf{Ar}_1\left(\Hom^k_{\mor_n(\icat)}(R,S)\right)\subset \Fact^{\mathrm{cstr}}_{(\square^n_{[1\ldots 10\ldots 0]},E)}(\icat)\] 
(for $E$ the maximal marking, which is only nonempty if there are no 0's, that is, if $k=n$), and there is a unique isotopy equivalence class of disk-like open subcubes containing the deepest stratum. Evaluating on any open subcube in this equivalence class yields the functor 
\[ \mathsf{Ar}_1\left(\Hom^k_{\mor_n(\icat)}(R,S)\right)\subset \Fact^{\mathrm{cstr}}_{(\square^n_{[1\ldots 10\ldots 0]},E)}(\icat)\longrightarrow \icat\]    
that detects the Cartesian morphisms in the proposition above.
\end{rmk}
We will prove this momentarily. Observe that the fiber of the functor $\ev_{\{0,1\}}$ in the statement of the proposition above at a pair $(A,B)$ is the \infcat $\mathsf{Ob}\left(\Hom^{k+1}_{\mor_n(\icat)}(A,B)\right)$ of $(k+1)$-morphisms from $A$ to $B$. Unstraightening the Cartesian fibration of Proposition \ref{prop:sourcetargetCartesian} then yields a functor
\[ \mathsf{Ob}\left(\Hom^{k+1}_{\mor_n(\icat)}(\_,\_)\right):\mathsf{Ob}\left(\Hom^k_{\mor_n(\icat)}(R,S)\right)^{op}\times \mathsf{Ob}\left(\Hom^k_{\mor_n(\icat)}(R,S)\right)^{op} \longrightarrow \catinfh.   \]
Let $f:A\rightarrow B$ be a morphism of $\mathsf{Ob}\left(\Hom^k_{\mor_n(\icat)}(R,S)\right)$ so that we have Cartesian transport functors 
\[  (f,\mathrm{id}_B)_*:\mathsf{Ob}\left(\Hom^{k+1}_{\mor_n(\icat)}(B,B)\right)\longrightarrow  \mathsf{Ob}\left(\Hom^{k+1}_{\mor_n(\icat)}(A,B)\right)\]
and 
\[  (\mathrm{id}_B,f)_*:\mathsf{Ob}\left(\Hom^{k+1}_{\mor_n(\icat)}(B,B)\right) \longrightarrow \mathsf{Ob}\left(\Hom^{k+1}_{\mor_n(\icat)}(B,A)\right). \]
\begin{defn}
We write ${}_AB_{B}$ for the image of the identity on $B$ under the functor $(f,\mathrm{id}_B)_*$ and we write ${}_BB_A$ for the image of the identity on $B$ under the functor $(\mathrm{id}_B,f)_*$. The $(k+1)$-morphism ${}_AB_{B}$ is the \emph{right regular (factorization) bimodule} associated to $f$ and the $(k+1)$-morphism ${}_BB_A$ is the \emph{left regular (factorization) bimodule} associated to $f$.
\end{defn}
We now state the main result of this section on the formal properties of regular bimodules. Just below, we give a more digestible summary of what is asserted in this proposition.
\begin{prop}\label{prop:regularbimodajoint}
Let $0\leq k\leq n-1$ and let $R,S$ be a parallel pair of $(k-1)$-morphisms of $\mor_n(\icat)$. Let $f:A\rightarrow B$ be a map of $\mathsf{Ob}\left(\Hom^k_{\mor_n(\icat)}(R,S)\right)$, then the following hold true. 
\begin{enumerate}[$(1)$]
    \item For any $Z\in \Hom_{\mor_n(\icat)}^k(R,S)$, the Cartesian transport map 
    \[  (f,\mathrm{id}_Z)_*:\mathsf{Ob}\left(\Hom^{k+1}_{\mor_n}(\icat)(B,Z)\right) \longrightarrow \mathsf{Ob}\left(\Hom^{k+1}_{\mor_n}(\icat)(A,Z)\right)  \]
    is equivalent to precomposition with ${}_AB_B$, and the Cartesian transport map 
    \[  (\mathrm{id}_Z,f)_*: \mathsf{Ob}\left(\Hom^{k+1}_{\mor_n}(\icat)(Z,B)\right) \longrightarrow \mathsf{Ob}\left(\Hom^{k+1}_{\mor_n}(\icat)(Z,A)\right)  \]
    is equivalent to postcomposition with ${}_BB_A$.
    \item If $g:B\rightarrow C$ be another map of $\mathsf{Ob}\left(\Hom^k_{\mor_n(\icat)}(R,S)\right)$, then the composite $ {}_BC_{C}\circ {}_AB_B$ in $\mor_n(\icat)$ is equivalent to the right regular bimodule ${}_AC_{C}$ associated with $g\circ f$. Similarly, if $h:D\rightarrow A$ is a map of $\mathsf{Ob}\left(\Hom^k_{\mor_n(\icat)}(R,S)\right)$, then the composite $ {}_AA_{D}\circ {}_BB_A$ in $\mor_n(\icat)$ is equivalent to the left regular bimodule ${}_DD_{B}$ associated with $f\circ h$.  
    \item Let $T$ be another $(k-1)$-morphism of $\mor_n(\icat)$ parallel to $R$ and $S$ and let $f':A'\rightarrow B'$ be a morphism of $\mathsf{Ob}\left(\Hom^{k}_{\mor_n(\icat)}(S,T)\right)$, so that we may consider the composite $f'\circ f\in\mathsf{Ob}\left(\Hom^{k}_{\mor_n(\icat)}(R,T)\right)$. Then the horizontal composition functor
    \[  \circ_h: \mathsf{Ar}_1\left(\Hom^{k}_{\mor_n(\icat)}(R,S)\right) \times  \mathsf{Ar}_1\left(\Hom^{k}_{\mor_n(\icat)}(S,T)\right)\longrightarrow  \mathsf{Ar}_1\left(\Hom^{k}_{\mor_n(\icat)}(R,T)\right)  \]
    carries the pair $({}_AA_B,{}_{A'}B'_{B'})$ of right regular bimodules to a morphism equivalent to the right regular bimodule of the composite $f'\circ f$. Similarly, the horizontal composition ${}_BB_A\circ_h {}_{B'}B'_{A'}$ of the left regular bimodules is equivalent to the left regular bimodule of the composite $f'\circ f$.
    \item The $(k+1)$-morphism ${}_AB_{B}$ is left adjoint to ${}_BB_{A}$ in $\mor_n(\icat)$. 
    \item Let $\eta_f:\mathrm{id}_A={}_BB_{B}\rightarrow {}_BB_{A}\circ {}_AB_B$ and $\epsilon_f:{}_AB_B\circ {}_BB_{A}\rightarrow \mathrm{id}_B$ be the unit respectively the counit $(k+2)$-morphisms of the adjunction of $(2)$. Then there are maps \[\widehat{\eta}_f:{}_AA_{A}\longrightarrow {}_BB_{A}\circ {}_AB_B\in \mathsf{Ob}\left(\Hom^{k+1}_{\mor_n(\icat)}(A,A)\right)\]
    and
    \[\widehat{\epsilon}_{f}:{}_AB_B\circ {}_BB_{A}\longrightarrow {}_BB_B\in\mathsf{Ob}\left(\Hom^{k+1}_{\mor_n(\icat)}(B,B)\right) \] 
    such that $\eta_f$ is equivalent to the right regular bimodule of $\widehat{\eta}_f$ and $\epsilon_f$ is equivalent to the right regular bimodule of $\widehat{\epsilon}_f$. Moreover, the maps $\widehat{\eta}_f$ and $\widehat{\epsilon}_{f}$ satisfy the following snake identities: the composite
    \[  {}_AB_B \xrightarrow{\mathrm{id}_{{}_AB_B}\circ\widehat{\eta}_f}  {}_AB_B\circ {}_BB_A\circ  {}_AB_B  \xrightarrow{\widehat{\epsilon}_f\circ\mathrm{id}_{{}_AB_B}} {}_AB_B  \]
    in $\mathsf{Ob}\left(\Hom^{k+1}_{\mor_n(\icat)}(A,B)\right)$ is equivalent to the identity, and the composite 
    \[  {}_BB_A \xrightarrow{\widehat{\eta}_f\circ \mathrm{id}_{{}_BB_A}}  {}_BB_A\circ {}_AB_B\circ  {}_BB_A  \xrightarrow{\mathrm{id}_{{}_BB_A}\circ\widehat{\epsilon}_f} {}_BB_A  \]
    is equivalent to the identity in $\mathsf{Ob}\left(\Hom^{k+1}_{\mor_n(\icat)}(B,A)\right)$. 
\end{enumerate}
\end{prop}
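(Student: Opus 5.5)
The plan is to reduce everything to the classical one-dimensional statements about bimodules, using pointless additivity and the Segal description of composition. By Proposition~\ref{prop:markedadditivity}, for a parallel pair $R,S$ of $(k-1)$-morphisms the $(n-k)$-fold Segal object $\Hom^k_{\mor_n(\icat)}(R,S)$ has $\mathsf{Ob}$ and $\mathsf{Ar}_1$ identified with constructible pointless factorization algebras on $\square_{[0]}$ and $\square_{[1]}$. These take values in the presentably symmetric monoidal \infcat $\mathcal{V}:=\Fact^{\mathrm{cstr}}$ on the remaining $(n-k-1)$ coordinates, suitably fibered over the data of $R$ and $S$. Under Corollary~\ref{cor:Fact-bimod} they become $\alg(\mathcal{V}')$ and $\mathsf{BMod}(\mathcal{V}')$ for an appropriate symmetric monoidal $\mathcal{V}'$. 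This $\mathcal{V}'$ is the category of $(k)$-fold ``iterated bimodules over the source/target strata'', i.e.\ the fiber of the restriction to the boundary strata. Under this identification, $\ev_{\{0,1\}}$ becomes the forgetful functor $\mathsf{BMod}(\mathcal{V}')\to\alg(\mathcal{V}')\times\alg(\mathcal{V}')$. That functor is a Cartesian fibration with Cartesian edges detected on underlying objects by \cite[Corollary 4.3.3.3]{LurHA}. This proves Proposition~\ref{prop:sourcetargetCartesian} and makes the regular bimodules literally ${}_AB_B$ and ${}_BB_A$ as bimodules in $\mathcal{V}'$.

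For (1)--(3), I would work in $\mathcal{V}'$ and use Proposition~\ref{prop:relativetensorprod} to identify composition of $(k+1)$-morphisms (the $\collresc(\alpha)_\sharp$ in the $(k+1)$-st coordinate) with the relative tensor product. Then (1) is the natural equivalence $M\otimes_B {}_BB_B\simeq M$ with $A$ acting through $f$ \cite[Proposition 4.4.3.16]{LurHA}, made functorial in $M$ by comparing the two Cartesian-transport functors. Both are induced by maps of \infops (restriction along $f$), and the unit equivalence is natural. Part (2) follows from (1) applied with $Z=C$ and $M={}_BC_C$, together with the composability of Cartesian transports, since pullback along $g$ then $f$ equals pullback along $g\circ f$. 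Part (3) concerns horizontal composition in the $k$-th coordinate. Here I would use that this composition is again a relative tensor product one level down, in the coordinate of $R,S,T$. It is a functor of algebras-with-bimodules that preserves Cartesian edges, since it preserves equivalences of underlying objects. It also sends identity bimodules to identity bimodules by quasi-unitality (Proposition~\ref{prop:moritaquasiunit}), so it carries Cartesian lifts of identities to Cartesian lifts of identities.

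For (4)--(5), I would construct $\widehat\eta_f:{}_AA_A\to {}_BB_A\otimes_B{}_AB_B\simeq {}_AB_A$ as the map of $A$-bimodules $f$ itself, viewed via the Cartesian fibration. I would construct $\widehat\epsilon_f:{}_BB_A\otimes_A{}_AB_B\simeq B\otimes_AB\to B$ as the multiplication map of $B$-bimodules. The snake identities reduce to the classical ones: $B\to B\otimes_AB\to B$ is the identity by unitality of $B$ and $A$-linearity of $f$. Coherently, this is the statement that the associated adjunction of module categories (restriction/extension of scalars) has this unit and counit. I would then pass from the level-$(k+1)$ maps $\widehat\eta,\widehat\epsilon$ to $(k+2)$-morphisms by taking their right regular bimodules. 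Applying (2) and (3) one categorical level up, composition of right regular bimodules corresponds to composition of the underlying maps, and whiskering corresponds to horizontal composition. The snake identities for $\eta_f,\epsilon_f$ then follow from those for $\widehat\eta_f,\widehat\epsilon_f$, and this proves (4).

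The main obstacle is the first step: making precise, coherently, that $\mathsf{Ar}_1(\Hom^k)\to\mathsf{Ob}\times\mathsf{Ob}$ is equivalent, as a functor over the base, to $\mathsf{BMod}(\mathcal{V}')\to\alg(\mathcal{V}')^{\times2}$. In particular, the maps of factorization algebras that are noninvertible away from the deepest strata must match arbitrary bimodule and algebra maps. I would try to obtain this by combining pointless additivity with Theorem~\ref{thm:markedlocalization} for $(\R,0)$ applied to the relevant marked products, so that the comparison is induced by a map of \infops and is therefore natural.
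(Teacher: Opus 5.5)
Your skeleton for (1)--(4) matches the paper's:
\begin{itemize}
\item (1) comes from compatibility of composition with Cartesian transport plus the unit equivalence;
\item (2) comes from (1) and composability of transports;
\item (3) holds because horizontal composition preserves Cartesian edges and identities;
\item (4) follows from (5) via (2)--(3).
\end{itemize}
Your $\widehat\eta_f$ (factor $f$ through the Cartesian lift ${}_AB_A\to{}_BB_B$) and $\widehat\epsilon_f$ (the multiplication $B\otimes_AB\to B$) are also the paper's maps.

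Two points in your framing are off. In (1), precomposition with ${}_AB_B$ is a geometric realization, not restriction along a map of $\infty$-operads. The input you actually need is that $\ev_{[0,2]}\colon\mathsf{Ar}_2\to\mathsf{Ar}_1$ carries $\ev_{\{0,1,2\}}$-Cartesian edges to $\ev_{\{0,1\}}$-Cartesian ones (Lemma \ref{lem:composition-preserve-Cartesianmorphisms}).

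More seriously, the identification with $\alg(\mathcal{V}')$ and $\mathsf{BMod}(\mathcal{V}')$ for a symmetric monoidal fiber $\mathcal{V}'$ fails for $k\geq 1$. The fiber of restriction to the boundary strata is not closed under the tensor product: for $n=2$, $k=1$ it is ${}_R\mathsf{BMod}_S(\icat)$, and the tensor product of two $R$-$S$-bimodules is an $R\otimes R$-$S\otimes S$-bimodule. So $\mathsf{Ob}(\Hom^k_{\mor_n(\icat)}(R,S))$ is the fiber of $\alg(\mathcal{W})$ over the boundary data, with $\mathcal{W}$ the symmetric monoidal $\infty$-category of constructible factorization algebras in the transverse coordinates. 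The fiber carries at best an induced, non-symmetric monoidal structure, and comparing its algebras with that fiber is itself nontrivial. You do not need $\mathcal{V}'$ at all. Proposition \ref{prop:sourcetargetCartesian} is already available, proved as a limit of the Cartesian fibrations $\mathsf{BMod}(\mathcal{W})\to\alg(\mathcal{W})^{\times 2}$ with Cartesian-edge-preserving transition maps. So the step you single out as the main obstacle is not where the difficulty lies.

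The genuine gap is (5): ``the snake identities reduce to the classical ones'' is exactly the statement to be proved. In this model, whiskering is the functoriality of the Segal composition on non-invertible maps. The snake composite also passes through the identification ${}_AB_B\circ({}_BB_A\circ{}_AB_B)\simeq({}_AB_B\circ{}_BB_A)\circ{}_AB_B$ supplied by $\mathsf{Ar}_3$. Proposition \ref{prop:relativetensorprod} identifies only the binary composition functor with Lurie's relative tensor product, not these unit and associativity constraints. So you cannot simply import the triangle identity for extension/restriction of scalars. Checking that $B\to B\otimes_AB\to B$ is homotopic to the identity on underlying objects can at best show, by conservativity, that the snake composite is \emph{some} automorphism of ${}_AB_B$. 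It does not show that it is the identity in $\mathsf{Ob}(\Hom^{k+1}_{\mor_n(\icat)}(A,B))$.

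The missing idea is to stay inside the Cartesian-fibration formalism of the Segal object. The paper takes $\widehat\eta_f=\ev_{[0,2]}(\widetilde\eta_f)$, where $\widetilde\eta_f\colon{}_AA_AA_A\to{}_AB_BB_A$ is the factorization through an $\ev_{\{0,2\}}$-Cartesian lift of $(f,f)$. It takes $\widehat\epsilon_f$ to be $\ev_{[0,2]}$ of an $\ev_1$-Cartesian lift ${}_BB_AB_B\to{}_BB_BB_B$. It then realizes the whole snake composite as $\ev_{[0,3]}$ of maps ${}_AA_AA_AB_B\to{}_AB_BB_AB_B\to{}_AB_BB_BB_B$ in $\mathsf{Ar}_3$, obtained by factoring through Cartesian lifts $\beta,\gamma,\delta$. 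The resulting map lies over the identity and compares the two $\ev_0$-Cartesian lifts $\ev_{[0,3]}(\beta)$ and $\ev_{[0,3]}(\delta)$ of $f$ into ${}_BB_B$. Hence it is the identity by uniqueness of Cartesian factorizations. The relative tensor product enters only through equivalence-detection statements (Lemma \ref{lem:composition-preserve-Cartesianmorphisms} and the Cartesianness of $\ev_{[0,3]}(\beta)$), where appealing to conservativity is legitimate.
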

\begin{rmk}\label{rmk:regularbimodfunctor}
The assertion $(2)$ states that the formation of regular bimodules is compatible with vertical composition. This is a shadow of the following fact: there are functors
\[ \mathsf{Ob}\left(\Hom^k_{\mor_n(\icat)}(R,S)\right)\longrightarrow  \Hom^k_{\mor_n(\icat)}(R,S),\quad   \mathsf{Ob}\left(\Hom^k_{\mor_n(\icat)}(R,S)\right)^{op}\longrightarrow  \Hom^k_{\mor_n(\icat)}(R,S)\]
of $(n-k)$-fold Segal objects in $\catinfh$ (where the domains are considered as Segal objects in $\spa$) that are the identity on objects and carry maps to right respectively left regular bimodules. We do not need the full strength of this assertion here, so we do not prove it.
\end{rmk}
\begin{rmk}
The assertion $(3)$ states that the formation of regular bimodules is compatible with horizontal composition.     
\end{rmk}
\begin{rmk}
The assertions $(4)$ and $(5)$ can be summarized as follows: the right regular bimodule ${}_AB_B$ of any map $f:A\rightarrow B$ of $\bb{E}_k$-algebras internal to $R$-$S$-bimodules has a right adjoint given by the left regular bimodule ${}_BB_A$. Moreover, the unit and counit of the adjunction are not merely $\bb{E}_{k+2}$-algebras internal to $B$-$B$-bimodules respectively $A$-$A$-bimodules, but right regular bimodules associated to maps of $\bb{E}_{k+1}$-algebras (internal to $B$-$B$-bimodules respectively $A$-$A$-bimodules), and the snake identities are satisfied already at the level of maps of $\bb{E}_{k+1}$-algebras.
\end{rmk}
Proposition \ref{prop:regularbimodajoint} and the following corollaries will be essential ingredients in the proof of our main theorem, specifically in the proof of the lifting-of-adjoints argument of Section \ref{sec:lifting_lemma}.
\begin{cor}\label{cor:regbimodconservative}
Let $f:A\rightarrow B$ be a map of $\mathsf{Ob}\left(\Hom^k_{\mor_n(\icat)}(R,S)\right)$. Then precomposing with the right regular bimodule ${}_AB_B$ is a conservative functor and postcomposing with the left regular bimodule ${}_BB_A$ is a conservative functor.
\end{cor}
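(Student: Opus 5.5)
We deduce Corollary~\ref{cor:regbimodconservative} from part $(1)$ of Proposition~\ref{prop:regularbimodajoint} together with the description of Cartesian morphisms in Proposition~\ref{prop:sourcetargetCartesian}. The idea is that precomposition with ${}_AB_B$ is, up to equivalence, the Cartesian transport functor $(f,\mathrm{id}_Z)_*$. Transport functors of this Cartesian fibration do not change the underlying object in $\icat$, and the underlying-object functor is conservative.

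First I make this identification explicit. By part $(1)$ of Proposition~\ref{prop:regularbimodajoint}, for any $Z$ the functor ${}_AB_B$-precomposition
\[ \mathsf{Ob}\left(\Hom^{k+1}_{\mor_n(\icat)}(B,Z)\right)\longrightarrow \mathsf{Ob}\left(\Hom^{k+1}_{\mor_n(\icat)}(A,Z)\right)\]
is equivalent to $(f,\mathrm{id}_Z)_*$. It therefore suffices to show that $(f,\mathrm{id}_Z)_*$ is conservative. The argument for postcomposition with ${}_BB_A$, which is equivalent to $(\mathrm{id}_Z,f)_*$, is symmetric.

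Next I argue that the transport functor commutes with the underlying-object functor. Let $u$ denote evaluation on a disk-like open subcube containing the deepest stratum. This gives a functor from $\mathsf{Ar}_1(\Hom^k_{\mor_n(\icat)}(R,S))$ to $\icat$, and hence from each fiber to $\icat$. For any $M$ in the fiber over $(B,Z)$, the Cartesian lift $(f,\mathrm{id}_Z)_*M\to M$ is sent by $u$ to an equivalence, by the characterization of Cartesian morphisms in Proposition~\ref{prop:sourcetargetCartesian}. Since Cartesian lifts are natural, this gives a natural equivalence $u\circ(f,\mathrm{id}_Z)_*\simeq u$ of functors on the fiber.

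It remains to check that $u$ restricted to a fiber is conservative. Let $\phi:M\to M'$ be a morphism in the fiber over $(B,Z)$, so $\phi$ lies over the identity of $(B,Z)$, and suppose $u(\phi)$ is an equivalence. By Proposition~\ref{prop:sourcetargetCartesian}, $\phi$ is then $\ev_{\{0,1\}}$-Cartesian. A Cartesian morphism lying over an identity is an equivalence, so $\phi$ is an equivalence.

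Now suppose $(f,\mathrm{id}_Z)_*\phi$ is an equivalence. Then $u((f,\mathrm{id}_Z)_*\phi)\simeq u(\phi)$ is an equivalence, and so $\phi$ is an equivalence by the previous step.

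The only delicate point is the naturality claim $u\circ(f,\mathrm{id}_Z)_*\simeq u$. I would handle it through the standard construction of transport functors: choose a Cartesian natural transformation from $(f,\mathrm{id}_Z)_*$ to the fiber inclusion, covering the edge $f$, and apply $u$ to it. Everything else is formal.
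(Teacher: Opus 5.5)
Your proposal is correct and follows essentially the same route as the paper. You identify precomposition with ${}_AB_B$ with the Cartesian transport $(f,\mathrm{id}_Z)_*$ via part $(1)$ of Proposition~\ref{prop:regularbimodajoint}. You then note that transport commutes with evaluation at a disk-like cube containing the deepest stratum, since Cartesian morphisms are exactly those this evaluation inverts, and conclude from conservativity of that evaluation on fibers. The one difference is that you justify the last conservativity claim explicitly (a fiberwise morphism inverted by evaluation is Cartesian over an identity, hence an equivalence), whereas the paper simply asserts it.
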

\begin{proof}
According to $(1)$ of Proposition \ref{prop:regularbimodajoint}, composition with ${}_AB_B$ can be identified with the Cartesian transport $(f,\mathrm{id})_*$. Since the $\ev_{\{0,1\}}$-Cartesian morphisms of $\mathsf{Ar}_1\left(\Hom^k_{\mor_n(\icat)}(R,S)\right)$ are precisely those that become an equivalence in $\icat$ via evaluating at disk-like open subcubes containing the deepest stratum, the Cartesian transport commutes with evaluation at a disk-like cube containing the deepest stratum. Since this evaluation functor $\mathsf{Ob}\left(\Hom^{k+1}_{\mor_n(\icat)}(X,Y)\right)\rightarrow \icat$ is conservative for any pair $X,Y$, we conclude that $(f,\mathrm{id})_*$ is conservative. The proof for the left regular bimodule is similar. 
 \end{proof}
\begin{cor}\label{cor:regular-bimod-dualizability}
Let $f:A\rightarrow B$ be a map of $\mathsf{Ob}\left(\Hom^k_{\mor_n(\icat)}(R,S)\right)$. Then the right regular bimodule ${}_AB_B$ is $(n-k+1)$-times right adjointable in $\mor_n(\icat)$.     
\end{cor}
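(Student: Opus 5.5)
We argue by induction on $m=n-k$, the number of remaining categorical levels above the $(k+1)$-morphism ${}_AB_B$. Throughout, "$j$-times right adjointable" means: ${}_AB_B$ has a right adjoint, and the unit and counit of that adjunction are themselves $(j-1)$-times right adjointable. Note that the statement is uniform in $k$, for all $0\leq k\leq n-1$ and all parallel pairs $R,S$ of $(k-1)$-morphisms.

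The first step is supplied by Proposition \ref{prop:regularbimodajoint}$(4)$: ${}_AB_B$ admits the right adjoint ${}_BB_A$. By Proposition \ref{prop:regularbimodajoint}$(5)$, the unit $\eta_f$ and the counit $\epsilon_f$ are equivalent to the right regular bimodules of maps $\widehat{\eta}_f$ and $\widehat{\epsilon}_f$. These maps live in $\mathsf{Ob}\left(\Hom^{k+1}_{\mor_n(\icat)}(A,A)\right)$ and $\mathsf{Ob}\left(\Hom^{k+1}_{\mor_n(\icat)}(B,B)\right)$ respectively, one categorical level higher. We would therefore like to apply the inductive hypothesis with $k$ replaced by $k+1$: the right regular bimodule of any map in $\mathsf{Ob}(\Hom^{k+1})$ is $(n-k)$-times right adjointable. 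This reduces the $(n-k+1)$-fold adjointability of ${}_AB_B$ to its one-fold adjointability together with the $(n-k)$-fold adjointability of $\eta_f$ and $\epsilon_f$. Adjointability is invariant under equivalence of morphisms, which lets us replace $\eta_f,\epsilon_f$ by the regular bimodules of $\widehat{\eta}_f,\widehat{\epsilon}_f$.

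The main obstacle is the top of the induction, i.e.\ the base case, where $k$ reaches $n-1$. There the maps $\widehat{\eta}_f,\widehat{\epsilon}_f$ lie in $\mathsf{Ob}(\Hom^n_{\mor_n(\icat)})$, and Proposition \ref{prop:regularbimodajoint} is stated only for $k\leq n-1$, so the induction cannot be pushed further. However, in $\mor_n(\icat)$ as an $(n+1)$-fold Segal object, the regular bimodule of a map between $n$-morphisms is simply the $(n+1)$-morphism given by that map itself. For $k=n-1$ we need ${}_AB_B$ to be $2$-times right adjointable: it has a right adjoint, and its unit and counit, which are $(n+1)$-morphisms, must be $1$-times right adjointable. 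Here we have to be careful about what "adjointable" means for top-dimensional morphisms in an $(\infty,n+1)$-category. By convention, $(n+1)$-morphisms have no higher morphisms in which to take adjoints, so any $(n+1)$-morphism counts as $1$-adjointable vacuously; equivalently, the count $n-k+1$ is precisely the number of available levels. If instead the convention demands invertibility at the top, the statement would be false, so the vacuous reading must be the intended one. We would therefore set up the induction as follows: the claim is that ${}_AB_B$ is right adjointable at every level up to $n+1$. The base case is the one-level adjunction from Proposition \ref{prop:regularbimodajoint}$(4)$, whose unit and counit are then top-level morphisms.

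Finally, we have to make sure that the induction is formally legitimate. Proposition \ref{prop:regularbimodajoint} applies to $\mor_n(\icat)$ in the Segal sense of this section's notation, and by the earlier remark on completion, adjoints are detected before univalent completion. The inductive hypothesis must be applied inside $\Hom^{k+1}_{\mor_n(\icat)}$ with its own regular bimodules, which is exactly the statement for $k+1$ with the parallel pair $(A,A)$ or $(B,B)$. Once this is checked, each step simply invokes $(4)$ and $(5)$ once more.
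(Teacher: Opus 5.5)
Your inductive engine is the paper's own. Proposition~\ref{prop:regularbimodajoint}$(4)$ supplies the right adjoint ${}_BB_A$, and $(5)$ says the unit and counit are again right regular bimodules one categorical level up, so you iterate. The gap is in the top-level bookkeeping.

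No convention makes an $(n+1)$-morphism of an $(\infty,n+1)$-category ``vacuously'' right adjointable. This holds for Definition~\ref{def:n-times-adj} and for the standard theory alike. A right adjoint of such a morphism would have invertible unit and counit, so it exists exactly when the morphism is invertible. The units and counits you end up with are typically not invertible. Already for $n=1$, $k=0$, the adjunction ${}_AB_B\dashv {}_BB_A$ has unit $f\colon {}_AA_A\to {}_AB_A$ and counit the multiplication ${}_BB\otimes_A B_B\to {}_BB_B$. Your step ``the invertibility reading would make the statement false, so the vacuous reading must be intended'' proves nothing; it changes the statement. The number $n-k+1$ counts the morphism levels $k+1,\dots,n+1$ involved, not the number of adjunctions.

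What iterating $(4)$ and $(5)$ actually gives is adjoints at levels $k+1,\dots,n$, with units and counits at levels $k+2,\dots,n+1$. So ${}_AB_B$ is $(n-k)$-times right adjointable. This is exactly what the paper's proof produces: it stops once the units and counits are $(n+1)$-morphisms. It is also the form in which the corollary is used later, where the regular $1$-morphism $A\colon \unit\to A$ (the case $k=0$) is said to be $R^n$-adjointable, not $R^{n+1}$. The ``$+1$'' in the statement should therefore be read as a miscount, not absorbed by redefining adjointability.

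With that correction your induction goes through cleanly:
\begin{itemize}
\item The base case $k=n-1$ is just $(4)$: one adjunction whose unit and counit are $(n+1)$-morphisms, with nothing further to check. Your remark about regular bimodules of maps between $n$-morphisms is then unnecessary.
\item The inductive step applies the hypothesis for $k+1$ to $\widehat{\eta}_f$ and $\widehat{\epsilon}_f$, over the parallel pairs $(A,A)$ and $(B,B)$, exactly as you describe.
\end{itemize}
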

\begin{proof}
It follows from $(4)$ of Proposition \ref{prop:regularbimodajoint} that the right regular bimodule is right adjointable. It follows from $(5)$ that the units and counits are again right regular bimodules. We conclude by applying $(4)$ and $(5)$ repeatedly, increasing the categorical level of the units and counits until we have a collection of units and counits that are $(n+1)$-morphisms. 
\end{proof}
\begin{rmk}
Note that in \cite{GS}, it is proven by geometric manipulations that \emph{any} $k$-morphism in $\mor_n(\icat)$ is $(n-k)$-times right \emph{and} left adjointable. For instance, a 1-morphism $Q:C\rightarrow D$ of $\mor_2(\icat)$ (that is, a $\bb{E}_1$-algebra internal to $C$-$D$-bimodules) has a right adjoint, and this right adjoint is informally given by the \emph{opposite} $\bb{E}_1$-algebra $Q^{rev}$ (see the recollection of Section \ref{sec:n-duals}). This may appear to be in conflict with what is asserted above regarding the right adjoint to the right regular bimodule ${}_AB_B$ being $B$ with $B$ acting on itself on the left and $A$ via $f:A\rightarrow B$ on the right, but this apparent discrepancy is resolved by noting that $B$ is equivalent to $B^{rev}$ (in general in $\Z$ non-equivalent ways) since it is an $\bb{E}_2$-algebra. Proposition \ref{prop:regularbimodajoint} is a very different construction of adjointable morphisms than the one explored in \cite{GS}. In particular, it is also quick to see that regular bimodules have right adjoints in Haugseng's combinatorial model \cite{HaugsengEn} for $\mor_n$, where the geometric reasoning of \cite{GS} cannot a priori be applied; in fact, the arguments of this section are significantly \emph{easier} in the combinatorial model. In that setting, it is not hard to write down the functors of Remark \ref{rmk:regularbimodfunctor}, for example.
\end{rmk}
We proceed with the proof of Proposition \ref{prop:sourcetargetCartesian}, which is a straightforward consequence of additivity (Proposition \ref{prop:markedadditivity}) and Lurie's analysis in \cite[Section 4.3]{LurHA}. To ease the notational burden, we will write
\[  \icatd^m_{X,Y} := \Hom^m_{\mor_n(\icat)}(X,Y)  \]
for the $(n-m)$-fold Segal object of $m$-morphisms in $\catinfh$ between two $(m-1)$-morphisms $X,Y$ of $\mor_n(\icat)$ with common domain and codomain. We will require a more explicit description of these \infcatst. For $1\leq k\leq n$ integers, let $i_k$ denote the inclusion
\[ i_k: \simp^{{k-1},op}\times \{[0]\} \times \simp^{{n-k},op}\hooklongrightarrow \simpopn  \]
whose associated adjunction $(i_{k}^*\adj i_{k*})$ yields a counit transformation
\[ \epsilon_{k}: \fun\left(\simpopn,\catinfh\right)\longrightarrow \fun\left(\Delta^1, \fun(\simpopn,\catinfh)\right).  \]
Composing these counit transformations yields a functor
\[ \epsilon^n:\fun\left(\simpopn,\catinfh\right)\longrightarrow \fun\left((\Delta^1)^{\times n}, \fun(\simpopn,\catinfh)\right).  \]
\begin{ex}
Let $X_{\bullet\bullet}\in \fun\left(\simp^{2,op},\catinfh\right)$ be a bisimplicial (large) \infcatt, then applying $\epsilon^2$ to $X_{\bullet\bullet}$ yields a square of bisimplicial \infcats given at $[n,m]$ by 
\[
\begin{tikzcd}
X_{nm} \ar[d] \ar[r] & X_{0m}^{\times (n+1)} \ar[d] \\
X_{n0}^{\times (m+1)} \ar[r] & X_{00}^{\times (n+m+2)}.    
\end{tikzcd}
\]    
\end{ex}
It will be useful to identify $(\Delta^1)^{\times n}$ with (the nerve of) the poset $P^n:=P(\{1,\ldots,n\})$ of subsets of $\{1,\ldots,n\}$ partially ordered by inclusion, and we will denote $P^n_0= P^n \setminus \{\emptyset\}$. For $X_{\bullet\ldots\bullet}$ an $n$-fold simplicial object, we will write $\epsilon^n_0(X)$ for the restriction $\epsilon^n(X)|_{P_0^n}$ of $\epsilon^n(X):P^n\rightarrow \fun\left(\simpopn,\catinfh \right)$. Now suppose that $\icate_{\bullet\ldots\bullet}$ is an $n$-fold Segal object of $\catinfh$, then to give a parallel pair of $(n-1)$-morphisms $X,Y$ (whose domain and codomain are a parallel pair of $(n-2)$-morphisms, and so on) is to give an extension
\begin{equation}\label{eq:extension}
\begin{tikzcd}
P_0^n \times \{1\} \ar[r,"\epsilon_0^n(\icate)_{1\ldots 1}"] \ar[d,hook] &[2em] \catinfh\\
P_{0}^n \times\Delta^1 \ar[ur,"\overline{\epsilon_0^n(\icate)_{1\ldots 1}}^{(X,Y)}"',dotted]
\end{tikzcd}
\end{equation}
that carries $P_0^n\times \{0\}$ into the terminal \infcatt, that is, a map $*\rightarrow \lim_{P^n_0}\epsilon_0^n(\icate)_{1\ldots 1}$. To see this, we note that the inclusions $\{i_k\}_{1\leq k\leq n}$ also induce adjunctions $\{(i_{k!}\adj i_k^*)\}_{1\leq k\leq n}$ whose units we may compose to obtain a functor
\[ \eta^n:\fun\left(\simpopn,\catinfh\right)\longrightarrow \fun\left(P^n, \fun(\simpopn,\catinfh)\right)  \]
and to give an object of $\lim_{P^n_0}\epsilon_0^n(\icate)_{1\ldots 1}$ is to give a map
\[  \underset{P^n\setminus \{1,\ldots,n\}}{\colim} \eta^n\left(C_n\right) \longrightarrow \icate_{\bullet \ldots\bullet} \]
where $C_n$ is the $n$-cell, the $n$-fold simplicial object representable by $[1,\ldots,1]$. Reflecting the colimit onto complete $n$-fold Segal objects recovers $\del C_n$, the walking parallel pair of $n$-morphisms. It follows that the \infcat $\Hom^n_{\icate}(X,Y)$ of $n$-morphisms among a parallel pair of $(n-1)$-morphisms $X,Y$ admits the following description: the inclusions
\[ P^n\hooklongrightarrow P_0^{n+1},\qquad S\longmapsto S\cup \{n+1\}\] 
and
\[ P_0^n\times \Delta^1\hooklongrightarrow P_0^{n+1},\qquad(S,i)\longmapsto \begin{cases}
   S\cup \{n+1\} & \text{if } i=1, \\
    S & \text{if }i=0 .  
\end{cases}  \]
determine an equivalence $ P^n_0\times\Delta^1\coprod_{P^n_0\times\{1\}}P^n\simeq P_0^{n+1}$ of \infcatst, and we can identify the \infcat $\Hom^n_{\icate}(X,Y)$ with the limit of diagram 
\[ P^{n+1}_0\longrightarrow \catinfh, \]
obtained by amalgamating $\epsilon^n(\icate)_{1\ldots 1}$ and the extension \eqref{eq:extension}. This limit sits as the initial vertex of the cube $P^{n+1}\cong \left(P^{n+1}_0\right)^{\lhd}$.
\begin{ex}
Let $\icate_{\bullet\bullet}\in \fun\left(\simp^{2,op},\catinfh\right)$ be a 2-fold category object of $\catinfh$ and suppose we are given objects $R,S \in X_{00}$ and two objects $A,B$ in $X_{10}$ whose images under $X_{10}\rightarrow X_{00}\times X_{00}$ are both $(R,S)$. Then the \infcat $\Hom^2_{\icate}(A,B)$ fits into a limit diagram 
\[
\begin{tikzcd}
& \Hom^2_{\icate}(A,B)\arrow[rr] \arrow[dd] \ar[dl] & & \{R,S\} \arrow[dl]\arrow[dd] \\
\icate_{11}  \arrow[rr,crossing over] \arrow[dd] & & \icate_{01}^{\times 2}   \\
&\{A,B\}\arrow[dl]  \arrow[rr] & & \{R,S\} \arrow[dl]\\
\icate_{10}^{\times 2}\arrow[rr] & & \icate_{00}^{\times 4}\arrow[from=uu, crossing over]
\end{tikzcd}
\]
of \infcatst.
\end{ex}
\begin{proof}[Proof of Proposition \ref{prop:sourcetargetCartesian}]
The discussion above identifies the map 
\[ \mathsf{Ar}_1\left(\icatd^k_{R,S}\right) \longrightarrow \mathsf{Ob}\left(\icatd^k_{R,S}\right)\times \mathsf{Ob}\left(\icatd^k_{R,S}\right)  \]
with the limit of a diagram 
\[  F:P^{k+1}_0 \times \Delta^1\longrightarrow \catinfh \]
obtained by amalgamating the image under $\epsilon^k$ at $[1,\ldots,1]\in \simp^{k,op}$ of the map \begin{equation}\label{eq:epsilonk}\Fact^{\mathrm{cstr}}_{\square^n_{[\bullet,\ldots,\bullet,1,0,\ldots,0]}}(\icat) \longrightarrow   \Fact^{\mathrm{cstr}}_{\square^n_{[\bullet,\ldots,\bullet,0,0,\ldots,0]}}(\icat)\times\Fact^{\mathrm{cstr}}_{\square^n_{[\bullet,\ldots,\bullet,0,0,\ldots,0]}}(\icat)\end{equation}
of $k$-fold category objects with the extension \eqref{eq:extension} determined by the parallel pair of $(k-1)$-morphisms $R,S$ (here and below in this proof, the marking on the stratified cubes are suppressed from the notation, but all factorization algebras are \emph{pointless}). To show that the limit is a Cartesian fibration, it suffices to argue the following.
\begin{enumerate}[$(1)$]
    \item For every $S\in P^{k+1}_0$, the functor $F(S,0)\rightarrow F(S,1)$ is a Cartesian fibration.
    \item For every inclusion $S'\subset S$, the upper horizontal functor in the square
    \[
    \begin{tikzcd}
    F(S',0) \ar[r] \ar[d] & F(S,0) \ar[d] \\
    F(S',1) \ar[r] & F(S,1)
    \end{tikzcd}    
    \]
    carries Cartesian morphisms to Cartesian morphisms.
\end{enumerate}
Then the limit $F(\emptyset)$ is a Cartesian fibration, and a morphism in the limit is Cartesian if and only if its image in each $F(S)$ for $S\in P^{m+1}_0$ is a Cartesian. Restricted to $P^k_0\times \{0\}\subset P^{k+1}_0$, the diagram $F$ is constant on the equivalence $*\rightarrow *$, so it suffices to show that $F$ restricted to $P^k\hookrightarrow P^{k+1}_0$ has the required property. The diagram $F|_{P^k}$ is the result of applying the functor $\epsilon^k$ to the map \eqref{eq:epsilonk} of $k$-fold category objects and evaluating at $[1,\ldots,1]\in\simp^{k,op}$. Via Proposition \ref{prop:markedadditivity} and Corollary \ref{cor:Fact-bimod}, the map \eqref{eq:epsilonk} is identified with the map 
\begin{equation}\label{eq:bmodev}
\mathsf{BMod}\left(\Fact^{\mathrm{cstr}}_{\square^{n-1}_{[\bullet,\ldots,\bullet,0,\ldots,0]}}(\icat)\right)
\longrightarrow \mathsf{Alg}\left(\Fact^{\mathrm{cstr}}_{\square^{n-1}_{[\bullet,\ldots,\bullet,0,\ldots,0]}}(\icat)\right)\times \mathsf{Alg}\left(\Fact^{\mathrm{cstr}}_{\square^{n-1}_{[\bullet,\ldots,\bullet,0,\ldots,0]}}(\icat)\right),
\end{equation}
which is a Cartesian fibration upon evaluating at any object of $\simp^{k,op}$, by \cite[Corollary 4.3.3.3]{LurHA}, and Cartesian morphisms are precisely those that become an equivalence upon evaluating at $\mathfrak{m}\in \mathsf{BM}$. The diagram $F|_{P^k}:P^k\rightarrow \fun(\Delta^1,\catinfh)$ carries every object to a product of maps of the form \eqref{eq:bmodev} evaluated at some object of $\simp^{k,op}_{\leq 1}$, and it carries morphisms to maps induced by $[0]\cong \{0\}\subset [1]$ and $[0]\cong \{1\}\subset [1]$, which are easily seen to preserve Cartesian arrows. Now an arrow in the limit is Cartesian if the corresponding arrow in $\Fact^{\mathrm{cstr}}_{[1,\ldots,1,0,\ldots,0]}(\icat)$ is an equivalence, but by construction of the diagram $F$, such an arrow becomes an equivalence when evaluated on all disk-like open subcubes \emph{except} for disk-like open subcubes containing the deepest stratum. So such an arrow in the limit is Cartesian precisely if it evaluates to an equivalence on disk-like open subcubes that contain the deepest stratum.
\end{proof}

The Segal condition for $\mor_n(\icat)$ now implies the following.

\begin{cor}\label{cor:cartfib}
Let $n\geq 1$ be an integer and let
\[\ev_{\{0,\ldots,n\}}:
\mathsf{Ar}_n\left(\icatd_{R,S}^k\right)\longrightarrow \mathsf{Ob}\left(\icatd_{R,S}^k\right)\times\ldots\times \mathsf{Ob}\left(\icatd_{R,S}^k\right) 
\]
be the forgetful functor evaluating on all $n+1$ objects of a composable chain of morphisms of length $n$. Then $\ev_{\{0,\ldots,n\}}$ is a Cartesian fibration. Moreover, a morphism is $\ev_{\{0,\ldots,n\}}$-Cartesian just in case the underlying morphism in the $n$-fold product of $\icat$ (obtained by evaluating on all the disk-like open subcubes containing a deepest stratum) is an equivalence.
\end{cor}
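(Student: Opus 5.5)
The plan is to reduce to Proposition \ref{prop:sourcetargetCartesian} via the Segal condition. Since $\icatd^k_{R,S}$ is an $(n-k)$-fold Segal object of $\catinfh$, the Segal maps give a natural equivalence
\[ \mathsf{Ar}_n\left(\icatd^k_{R,S}\right)\simeq \mathsf{Ar}_1\left(\icatd^k_{R,S}\right)\times_{\mathsf{Ob}(\icatd^k_{R,S})}\cdots\times_{\mathsf{Ob}(\icatd^k_{R,S})}\mathsf{Ar}_1\left(\icatd^k_{R,S}\right), \]
compatible with the maps to $\mathsf{Ob}(\icatd^k_{R,S})^{\times(n+1)}$. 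Here the left side maps via $\ev_{\{0,\ldots,n\}}$, and the right side maps via the product of the $\ev_{\{i-1,i\}}$, remembering the shared objects only once. This follows from Proposition \ref{prop:moritasegal}, because the coreflection $U_{\mathsf{Seg}}$ and the formation of $\Hom^k$ are built from limits and so preserve the Segal condition.

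With this identification, we argue by induction on $n$; the case $n=1$ is Proposition \ref{prop:sourcetargetCartesian}. For the inductive step, write $\mathsf{Ar}_{n}\simeq \mathsf{Ar}_{n-1}\times_{\mathsf{Ob}}\mathsf{Ar}_1$. The map to $\mathsf{Ob}^{\times n}\times_{\mathsf{Ob}}\mathsf{Ob}^{\times 2}=\mathsf{Ob}^{\times(n+1)}$ is then the fiber product over $\mathsf{Ob}$ of two Cartesian fibrations, $\ev_{\{0,\ldots,n-1\}}$ and $\ev_{\{0,1\}}$. Each factor lies over a product base in which the glued factor $\mathsf{Ob}$ appears as a product factor, and the gluing map to $\mathsf{Ob}$ is the corresponding projection.

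We then use the standard fact that if $p_i:X_i\to B_i\times C$ are Cartesian fibrations, then $X_1\times_C X_2\to B_1\times C\times B_2$ is a Cartesian fibration. To prove it, form the product $X_1\times X_2\to B_1\times C\times C\times B_2$, which is Cartesian with componentwise Cartesian edges, and pull it back along the diagonal of $C$; Cartesian fibrations are stable under base change. It follows that an edge is Cartesian exactly when both of its components are Cartesian.

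For the Cartesian edges, the induction hypothesis together with Proposition \ref{prop:sourcetargetCartesian} says that an edge of $\mathsf{Ar}_n$ is Cartesian if and only if each of its $n$ one-step components is Cartesian. By Proposition \ref{prop:sourcetargetCartesian}, this holds precisely when each component evaluates to an equivalence on a disk-like open subcube containing the deepest stratum of the corresponding copy of $\square_{[1]}$. Under the equivalence of the Segal condition, which comes from restriction of factorization algebras to the subintervals $I_{i,i+1}$, these subcubes are exactly the disk-like subcubes of $\square^n_{[1,\ldots,1,n,0,\ldots,0]}$ containing the various deepest strata. This gives the stated criterion.

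The main point requiring care is the compatibility of the Segal equivalence with the evaluation functors and with $\ev_{\{0,\ldots,n\}}$. I expect it to follow from the cube in the proof of Proposition \ref{prop:moritasegal}, whose maps are restrictions and rescalings that commute with evaluation on subcubes. I also expect the fibration fact above to be routine.
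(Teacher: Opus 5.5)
Your proposal is correct and is essentially the paper's argument: the paper deduces this corollary in one line from the Segal condition for $\mor_n(\icat)$ together with Proposition \ref{prop:sourcetargetCartesian}, which is exactly your decomposition of $\mathsf{Ar}_n\left(\icatd^k_{R,S}\right)$ as an iterated fiber product of copies of $\mathsf{Ar}_1\left(\icatd^k_{R,S}\right)$ over $\mathsf{Ob}\left(\icatd^k_{R,S}\right)$, combined with the stability of Cartesian fibrations under products and base change. One small indexing slip: in the paper's notation, the Segal pieces that each carry a single deepest stratum are the intervals $I_{i-1,i+1}$, and the $I_{i,i+1}$ are their overlaps, which correspond to objects.
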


\begin{cor}\label{cor:subintcartesian}
Let $[l,l+1,\ldots,m]\subset [n]$ be the inclusion of a subinterval. Then the induced functor 
\[ \ev_{[l,l+1,\ldots,m]}:\mathsf{Ar}_n\left(\icatd^k_{R,S}\right) \longrightarrow \mathsf{Ar}_{m-l}\left(\icatd^k_{R,S}\right)  \]
is a Cartesian fibration.
\end{cor}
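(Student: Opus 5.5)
The plan is to factor $\ev_{[l,\ldots,m]}$ through the Segal decomposition and reduce it to Corollary \ref{cor:cartfib}. By the Segal condition for $\icatd^k_{R,S}$ (inherited from Proposition \ref{prop:moritasegal} through the coreflection $U_{\mathsf{Seg}}$), there is an equivalence
\[ \mathsf{Ar}_n\left(\icatd^k_{R,S}\right)\simeq \mathsf{Ar}_{l}\left(\icatd^k_{R,S}\right)\times_{\mathsf{Ob}}\mathsf{Ar}_{m-l}\left(\icatd^k_{R,S}\right)\times_{\mathsf{Ob}}\mathsf{Ar}_{n-m}\left(\icatd^k_{R,S}\right), \]
where the fiber products are taken over the evaluations at the objects $l$ and $m$. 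Under this equivalence $\ev_{[l,\ldots,m]}$ is the projection to the middle factor. It is therefore a base change, along the map $\mathsf{Ar}_{m-l}\rightarrow \mathsf{Ob}\times\mathsf{Ob}$ given by $\ev_{\{l\}}\times\ev_{\{m\}}$, of the product map
\[ \ev_{\{l\}}\times \ev_{\{0\}}:\mathsf{Ar}_{l}\left(\icatd^k_{R,S}\right)\times \mathsf{Ar}_{n-m}\left(\icatd^k_{R,S}\right)\longrightarrow \mathsf{Ob}\left(\icatd^k_{R,S}\right)\times\mathsf{Ob}\left(\icatd^k_{R,S}\right). \]
Cartesian fibrations are stable under products and base change, so it suffices to show that the endpoint evaluations $\ev_{\{l\}}:\mathsf{Ar}_l\rightarrow\mathsf{Ob}$ and $\ev_{\{0\}}:\mathsf{Ar}_{n-m}\rightarrow \mathsf{Ob}$ are Cartesian fibrations. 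When $l=0$ or $m=n$ the corresponding factor is the identity and there is nothing to prove.

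For the endpoint evaluation, write $\ev_{\{l\}}$ as the composite
\[ \mathsf{Ar}_l\xrightarrow{\ev_{\{0,\ldots,l\}}}\mathsf{Ob}^{\times(l+1)}\xrightarrow{\mathrm{pr}_l}\mathsf{Ob}. \]
The first map is a Cartesian fibration by Corollary \ref{cor:cartfib}. The second is a projection, hence a Cartesian fibration. A composite of Cartesian fibrations is a Cartesian fibration, so $\ev_{\{l\}}$ is one. The same argument applies to $\ev_{\{0\}}$ on $\mathsf{Ar}_{n-m}$.

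The main point to get right is that the Segal equivalence is compatible with the evaluation maps, so that $\ev_{[l,\ldots,m]}$ really is identified with the projection onto the middle factor. This compatibility holds because all maps involved are induced by subinterval inclusions in $\simp$, and these commute with the Segal decomposition. An alternative route is to characterize Cartesian morphisms directly. A morphism of $\mathsf{Ar}_n$ would be $\ev_{[l,\ldots,m]}$-Cartesian exactly when its components on all disk-like subcubes containing a deepest stratum outside $[l,m]$ are equivalences in $\icat$. One would then construct lifts componentwise via Corollary \ref{cor:cartfib}. The base-change argument above avoids this bookkeeping, so it is the route I would take.
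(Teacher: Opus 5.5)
Your proposal is correct and is essentially the paper's proof: the Segal condition exhibits $\ev_{[l,\ldots,m]}$ as the base change of $(\ev_{l},\ev_{0})\colon \mathsf{Ar}_l\left(\icatd^k_{R,S}\right)\times\mathsf{Ar}_{n-m}\left(\icatd^k_{R,S}\right)\to\mathsf{Ob}\left(\icatd^k_{R,S}\right)^{\times 2}$ along $\ev_{0}\times\ev_{m-l}$ (which you write in the ambient indexing as $\ev_{\{l\}}\times\ev_{\{m\}}$). The only difference is that you spell out why the endpoint evaluations are Cartesian fibrations, via Corollary~\ref{cor:cartfib} composed with a product projection, whereas the paper leaves this step implicit.
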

\begin{proof}
The Segal condition guarantees a pullback diagram 
\[
\begin{tikzcd}
\mathsf{Ar}_n\left(\icatd^k_{R,S}\right) \ar[d,"\ev_{[l,l+1,\ldots,m]}"'] \ar[r] &[2em] \mathsf{Ar}_{l}\left(\icatd^k_{R,S}\right) \times\mathsf{Ar}_{n-m}\left(\icatd^k_{R,S}\right) \ar[d,"(\ev_l{,}\ev_0)"] \\
\mathsf{Ar}_{m-l}\left(\icatd^k_{R,S}\right) \ar[r,"\ev_0\times\ev_{m-l}"] &[2em] \mathsf{Ob}\left(\icatd^k_{R,S}\right)\times \mathsf{Ob}\left(\icatd^k_{R,S}\right)
\end{tikzcd}
\]
of \infcatst, where the upper horizontal functor is induced by the subintervals $[0,\ldots,l]\subset [n]$ and $[m,\ldots,n]\subset [n]$.
\end{proof}
\begin{lem}\label{lem:composition-preserve-Cartesianmorphisms}
The horizontal functor in the diagram 
\[
\begin{tikzcd}
    \mathsf{Ar}_2\left(\icatd_{R,S}^k\right) \ar[rr,"\ev_{[0,2]}"] \ar[dr,"\ev_{\{0{,}2\}}"'] && \mathsf{Ar}_1\left(\icatd_{R,S}^k\right) \ar[dl,"\ev_{\{0{,}1\}}"] \\
    &\mathsf{Ob}\left(\icatd_{R,S}^k\right)^{\times 2} 
\end{tikzcd}
\]
induced by the inclusion $\Delta^{\{0,2\}}\subset\Delta^2$ carries $\ev_{\{0,2\}}$-Cartesian morphisms to $\ev_{\{0,1\}}$-Cartesian morphisms. 
\end{lem}
\begin{proof}
It follows from Corollary \ref{cor:cartfib} that if a morphism of $\mathsf{Ar}_2\left(\icatd_{R,S}^k\right)$ is $\ev_{\{0,2\}}$-Cartesian, then it evaluates to an equivalence on disk-like open subcubes of the form $(0,1)^{k-1}\times (a,b)\times (0,1)^{n-k}$ for
\begin{itemize}
    \item $\{1/3\}\subset (a,b)\subset (0,2/3)$,
    \item $\{2/3\}\subset (a,b)\subset (1/3,1)$,
    \item $(a,b)\subset (1/3,2/3)$.
\end{itemize}
A morphism of $\mathsf{Ar}_1\left(\icatd_{R,S}^k\right)$ is $\ev_{0,1}$-Cartesian just in case it evaluates to an equivalence on the disk-like open subcube $(0,1)^n$ itself. We are therefore reduced to proving the following assertion: let $\alpha:\F\rightarrow \mathcal{G}$ be a map of constructible pointless factorization algebras on $\square^n_{[1,\ldots,1,2,0,\ldots,0]}$. Suppose that $\alpha$ is an equivalence on the three types of disk-like open subcubes described above. Then $\alpha$ is an equivalence on $(0,1)^n$. This follows from the fact that the value of both $\F$ and $\mathcal{G}$ on $(0,1)^n$ can be recovered as a colimit of tensor products of their values on these three types of opens (and this colimit is the relative tensor product), as follows from the discussion directly above Proposition \ref{prop:relativetensorprod}. 
\end{proof}

We are now equipped to give the proof of Proposition \ref{prop:regularbimodajoint}, which is somewhat tedious, but almost entirely formal. 

\begin{proof}[Proof of $(1)$, $(2)$ and $(3)$ of Proposition \ref{prop:regularbimodajoint}]
We prove $(1)$. We show that precomposing with the right regular bimodule of $f:A\rightarrow B$ induces the Cartesian transport functor $(f,\mathrm{id}_Z)_*$; the proof that postcomposition with the left regular bimodule induces the other Cartesian transport is the same. Consider the diagram 
\[
\begin{tikzcd}
    \mathsf{Ar}_2\left(\icatd_{R,S}^k\right) \ar[r,"\ev_{[0,2]}"] \ar[d,"\ev_{\{0{,}1{,}2\}}"'] &[2em]\mathsf{Ar}_1\left(\icatd_{R,S}^k\right) \ar[d,"\ev_{\{0{,}1\}}"] \\
    \mathsf{Ob}\left(\icatd_{R,S}^k\right)^{\times 3}\ar[r,"\ev_0\times\ev_2"]&[2em]\mathsf{Ob}\left(\icatd_{R,S}^k\right)^{\times 2}. 
\end{tikzcd}
\]
Lemma \ref{lem:composition-preserve-Cartesianmorphisms} guarantees that the upper horizontal carries $\ev_{\{0,1,2\}}$-Cartesian lifts of the triple of maps 
\[(f:A\rightarrow B,\mathrm{id}_B:B\rightarrow B,\mathrm{id}_Z:Z\rightarrow Z)\]
to $\ev_{\{0,1\}}$-Cartesian lifts of the pair of maps $(f,\mathrm{id}_Z)$; it follows that upon taking fibers of the vertical functors, we have a commuting diagram
\[
\begin{tikzcd}
    \mathsf{Ob}\left(\icatd_{B,B}^{k+1}\right) \times  \mathsf{Ob}\left(\icatd_{B,Z}^{k+1}\right) \ar[r,"\circ"] \ar[d] &  \mathsf{Ob}\left(\icatd_{B,Z}^{k+1}\right)\ar[d] \\
      \mathsf{Ob}\left(\icatd_{A,B}^{k+1}\right)\times  \mathsf{Ob}\left(\icatd_{B,Z}^{k+1}\right) \ar[r,"\circ "]&  \mathsf{Ob}\left(\icatd_{A,Z}^{k+1}\right). 
\end{tikzcd}
\]
of \infcats in which the vertical functors are the Cartesian transport functors along $(f,\mathrm{id}_B,\mathrm{id}_Z)$ and $(f,\mathrm{id}_Z)$ and the horizontal functors are composition in the higher Morita category. By definition, the Cartesian transport on the left carries the identity on $B$ to the right regular bimodule ${}_AB_{B}$, so the diagram above restricts to a commuting square
\[
\begin{tikzcd}
 \mathsf{Ob}\left(\icatd_{B,Z}^{k+1}\right) \ar[r,"\circ \mathrm{id}_B"] \ar[d,equal] &[2em]  \mathsf{Ob}\left(\icatd_{B,Z}^{k+1}\right) \ar[d]\\ \mathsf{Ob}\left(\icatd_{B,Z}^{k+1}\right) \ar[r,"\circ {}_AB_{B}"]&[2em]  \mathsf{Ob}\left(\icatd_{A,Z}^{k+1}\right). 
\end{tikzcd}
\]
Since the upper horizontal map is an equivalence, we deduce that the left vertical map and lower horizontal map are equivalent, as desired. We now show $(2)$; again, we treat only the case of right regular bimodules. Let be given maps $f:A\rightarrow B$ and $g:B\rightarrow C$ of $\icatd^k_{R,S}$. The right regular bimodule ${}_AC_C$ is the image of the identity under the composite 
\[  \mathsf{Ob}\left(\icatd^{k+1}_{C,C}\right) \xrightarrow{(g,\mathrm{id}_C)_*} \mathsf{Ob}\left(\icatd^{k+1}_{B,C}\right)\xrightarrow{(f,\mathrm{id}_C)_*}  \mathsf{Ob}\left(\icatd^{k+1}_{A,C}\right),   \]
since $(f,\mathrm{id}_C)_*\circ (g,\mathrm{id}_C)_*\simeq (g\circ f,\mathrm{id}_C)_*$ by Proposition \ref{prop:sourcetargetCartesian}. Then we have a chain of equivalences
\[ {}_AC_C=(g\circ f,\mathrm{id}_C)_*(\mathrm{id}_C)\simeq (f,\mathrm{id}_C)_*\circ (g,\mathrm{id}_C)_*(\mathrm{id}_C)= (f,\mathrm{id}_C)_*({}_BC_C) \simeq  {}_BC_C \circ {}_AB_B \]
with the last one provided by $(1)$. We proceed with $(3)$. Let $T$ be another $(k-1)$-morphism with the same domain and codomain as $R,S$. Let $A',B'\in\mathsf{Ob}\left(\icatd^{k}_{S,T}\right)$ and let $f':A\rightarrow B$ be morphism in this \infcatt. Write $\icatd^{k}_{R,S,T}$ for the $(n-k)$-fold Segal object of $\catinfh$ of pairs of composable morphisms, which is equivalent to $\icatd_{R,S}^k\times\icatd_{S,T}^k$. The regular bimodules ${}_AB_B$ and ${}_{A'}B'_{B'}$ determine an object of $\mathsf{Ar}_1\left( \icatd^{k}_{R,S,T}\right)$ and we must verify that the functor
\[  \mathsf{Ar}_1\left( \icatd^{k}_{R,S,T}\right)\longrightarrow \mathsf{Ar}_1\left( \icatd^{k}_{R,T}\right) \]
carries this object to the right regular bimodule of the composite $f'\circ f$. This amounts to the following assertion: the horizontal functor 
\[
\begin{tikzcd}
\mathsf{Ar}_1\left( \icatd^{k}_{R,S,T}\right)\ar[r]\ar[d,"\ev_{\{0,1\}}\times\ev_{\{0,1\}}"'] & \mathsf{Ar}_1\left( \icatd^{k}_{R,T}\right) \ar[d,"\ev_{\{0,1\}}"]\\
\mathsf{Ob}\left(\icatd_{R,S}\right)^{\times 2} \times \mathsf{Ob}\left(\icatd_{S,T}\right)^{\times 2} \ar[r] & \mathsf{Ob}\left(\icatd_{R,T}\right)^{\times 2} 
\end{tikzcd}
\]
carries Cartesian morphisms for the left vertical functor to Cartesian morphisms for the right vertical functor. This is a consequence of the fact that composition in the Morita category is given by the relative tensor product. More precisely, since Cartesian morphisms in the two top \infcats are detected by evaluating at disk-like open subcubes containing the deepest strata, we reduce to the following assertion: let $\alpha:\F\rightarrow \mathcal{G}$ be map of a pointless constructible factorization algebra on $\square^2_{[1,\ldots ,1,2,1,0,\ldots, 0]}$ and suppose that $\alpha$ is an equivalence on disk-like open subcubes of the form $(0,1)^{k-2}\times (a,b)\times (0,1)^{n-k+1}$ for 
\begin{itemize}
    \item $\{1/3\}\subset (a,b)\subset (0,2/3)$,
    \item $\{2/3\}\subset (a,b)\subset (1/3,1)$,
    \item $(a,b)\subset (1/3,2/3)$.
\end{itemize}
Then $\alpha$ is an equivalence on $(0,1)^n$. This follows from the fact that the value of both $\F$ and $\mathcal{G}$ on $(0,1)^n$ can be recovered as a colimit of tensor products of their values on these three types of opens (and this colimit is the relative tensor product), as follows from the discussion directly above Proposition \ref{prop:relativetensorprod}. 
\end{proof}

We now proceed with $(4)$ and $(5)$ of Proposition \ref{prop:regularbimodajoint}. We start by producing the units and counits for the adjoints. For $A\in \mathsf{Ob}(\icatd^k_{R,S})$, we let ${}_AA_A A_A$ denote the image of $A$ under the degeneracy map $\mathsf{Ob}\left(\icatd^k_{R,S}\right)\rightarrow \mathsf{Ar}_2\left(\icatd^k_{R,S}\right)$. The functor
\[ \ev_{\{0,2\}}:\mathsf{Ar}_2\left(\icatd^k_{R,S}\right)\longrightarrow \mathsf{Ob}\left(\icatd^k_{R,S}\right)\times \mathsf{Ob}\left(\icatd^k_{R,S}\right)\]
is a Cartesian fibration, so for $f:A\rightarrow B$ a map in $\mathsf{Ob}(\icatd^k_{R,S})$, we have an $\ev_{\{0,2\}}$-Cartesian lift 
\[{}_AB_B{}B_A \overset{\alpha}{\longrightarrow} {}_BB_BB_B \] 
of the map $(f:A\rightarrow B,f:A\rightarrow B)$. Note that the functors
\[  \ev_{[0,1]}: \mathsf{Ar}_2\left(\icatd^k_{R,S}\right)\longrightarrow \mathsf{Ar}_1\left(\icatd^k_{R,S}\right),\quad \quad  \ev_{[1,2]}: \mathsf{Ar}_2\left(\icatd^k_{R,S}\right)\longrightarrow \mathsf{Ar}_1\left(\icatd^k_{R,S}\right) \]
induced by the map $\Delta^1= \Delta^{\{0,1\}}\subset \Delta^2$ respectively $\Delta^1= \Delta^{\{1,2\}}\subset \Delta^2$ 
carry ${}_AB_B{}B_A$ to ${}_AB_B$ and ${}_BB_A$ respectively, since these functors carry $\ev_{\{0,2\}}$-Cartesian edges to $\ev_0$-Cartesian edges respectively $\ev_1$-Cartesian edges, as one readily verifies. It follows that the functor 
\[  \ev_{[0,2]}: \mathsf{Ar}_2\left(\icatd^k_{R,S}\right)\longrightarrow \mathsf{Ar}_1\left(\icatd^k_{R,S}\right) \]
induced by the map $\Delta^{1}=\Delta^{\{0,2\}}\subset \Delta^2$ carries ${}_AB_B{}B_A$ to the composite ${}_BB_A\circ {}_AB_B$. There is a dotted diagonal map
\[
\begin{tikzcd}
&  {}_AB_B{}B_A\ar[dr,"\alpha"] \\
 {}_AA_A{}A_A\ar[rr,"f"'] \ar[ur,dotted,"\widetilde{\eta}_f"] && {}_BB_B{}B_B
\end{tikzcd}
\]
resulting in 2-simplex lying over the 2-simplex
\[
\begin{tikzcd}
&  (A,A)\ar[dr,"f"] \\
 (A,A)\ar[rr,"f"] \ar[ur,equal] && (B,B)
\end{tikzcd}
\]
in $\mathsf{Ob}\left(\icatd^k_{R,S}\right)\times \mathsf{Ob}\left(\icatd^k_{R,S}\right)$, since $\alpha$ is $\ev_{\{0,2\}}$-Cartesian. The image of $\widetilde{\eta}$ under $\ev_{[0,2]}$ yields a map 
\[ \widehat{\eta}_f: \mathrm{id}_A={}_AA_A \longrightarrow {}_BB_A\circ {}_AB_B\]
of $\mathsf{Ar}_1\left(\icatd^k_{R,S}\right)$
which we claim corresponds to the the unit of an adjunction. The counit is a bit easier to construct: the functor 
\[  \ev_{1}:\mathsf{Ar}_2\left( \icatd_{R,S}^k\right)\longrightarrow \mathsf{Ob}\left(\icatd^k_{R,S}\right) \]
is a Cartesian fibration, so for $f:A\rightarrow B$ a map of $\mathsf{Ob}\left(\icatd^k_{R,S}\right)$, we have an $\ev_1$-Cartesian lift 
\[  {}_BB_AB_B\overset{\widetilde{\epsilon}_f}{\longrightarrow}{}_BB_BB_B.   \]
Then the image of $\widetilde{\epsilon}_f$ under $\ev_{[0,2]}$ yields a map 
\[ \widehat{\epsilon}_f:{}_AB_B\circ{}_BB_A\longrightarrow {}_BB_B=\mathrm{id}_B  \]
of $\mathsf{Ar}_1\left(\icatd^k_{R,S}\right)$ which we claim corresponds to the counit of an adjunction.

\begin{proof}[Proof of $(4)$ and $(5)$ of Proposition \ref{prop:regularbimodajoint}]
It follows from $(2)$ and $(3)$ (compatibility of right regular bimodules with horizontal and vertical composition) that it suffices to show $(5)$; the snake identity asserted by $(5)$ for $\widehat{\eta}_f$ and $\widehat{\epsilon}_f$ will then imply the snake identity for the associated right regular bimodules. We verify that the composite 
\[  {}_AB_B \longrightarrow  {}_AB_B\circ {}_BB_A\circ  {}_AB_B  \longrightarrow {}_AB_B  \]
is equivalent to the identity. To see this, we realize this composite as the result of composing a certain map
\[  {}_AA_AA_AB_B \longrightarrow  {}_AB_BB_AB_B\longrightarrow {}_AB_B B_B B_B  \]
of $\mathsf{Ar}_3\left(\icatd^k_{R,S}\right)$ we construct below. In line with the notation introduced above, we will write ${}_AA_AA_AA_A$ for the image of $A$ under the degeneracy map $\mathsf{Ob}\left(\icatd^k_{R,S}\right)\rightarrow \mathsf{Ar}_3\left(\icatd^k_{R,S}\right)$.
\begin{itemize}
    \item The object ${}_AA_AA_AB_B$ is the domain of an $\ev_{[0,1,2]}$-Cartesian lift
    \[  {}_AA_AA_AB_B\overset{\beta}{\longrightarrow} {}_BB_BB_BB_B \]
    of the map $f:{}_AA_AA_A\rightarrow {}_BB_BB_B$ terminating at ${}_BB_BB_BB_B$, where $\ev_{[0,1,2]}$ is the functor 
    \[ \ev_{[0,1,2]}: \mathsf{Ar}_{3}\left(\icatd^{k}_{R,S}\right) \longrightarrow  \mathsf{Ar}_{2}\left(\icatd^{k}_{R,S}\right)  \]
    induced by the map $\Delta^2=\Delta^{\{0,1,2\}}\subset \Delta^3$. Let $\ev_{[0,3]}:\mathsf{Ar}_{3}\left(\icatd^{k}_{R,S}\right) \rightarrow  \mathsf{Ar}_{1}\left(\icatd^{k}_{R,S}\right) $ be the functor induced by the map $\Delta^1=\Delta^{\{0,3\}}\subset\Delta^3$, then applying $\ev_{[0,3]}$ to ${}_AA_AA_AB_B$ yields the composite $ {}_AB_B\circ {}_AA_A\circ {}_AA_A $.
    \item The object $ {}_AB_BB_AB_B$ is the domain of an $\ev_{\{0,2\}}$-Cartesian lift
    \[  {}_AB_BB_AB_B\overset{\gamma}{\longrightarrow} {}_BB_BB_BB_B \]
    of the map $f:(A,A)\rightarrow (B,B)$ of $\mathsf{Ob}\left(\icatd^k_{R,S}\right)\times \mathsf{Ob}\left(\icatd^k_{R,S}\right)$. In particular, applying $\ev_{[0,3]}$ to this object yields the composite ${}_AB_B\circ {}_BB_A\circ {}_AB_B$. 
     \item The object $ {}_AB_BB_BB_B$ is the domain of an $\ev_{0}$-Cartesian lift
     \[  {}_AB_BB_BB_B\overset{\delta}{\longrightarrow} {}_BB_BB_BB_B \]
     of the map $f:A\rightarrow B$. In particular, applying $\ev_{[0,3]}$ to this object yields the composite $ {}_BB_B\circ {}_BB_B\circ {}_AB_B$.
     \item The first map ${}_AA_AA_BB_B \rightarrow  {}_AB_BB_AB_B$ is the dotted lift in the 2-simplex
     \[
\begin{tikzcd}
&   {}_AB_BB_AB_B\ar[dr,"\gamma"] \\
{}_AA_AA_AB_B\ar[rr,"\beta"] \ar[ur,dotted] && {}_BB_B{}B_BB_B
\end{tikzcd}
\]
lying over the 2-simplex
\[
\begin{tikzcd}
&  (A,A)\ar[dr,"f"] \\
 (A,A)\ar[rr,"f"] \ar[ur,equal] && (B,B)
\end{tikzcd}
\]
which exists up to a contractible space of choices by virtue of $\gamma$ being $\ev_{0,2}$-Cartesian. Applying $\ev_{[0,3]}$ to this map yields the horizontal composite of the unit $\widehat{\eta}_f:{}_AA_A\rightarrow {}_BB_A\circ {}_AB_B$ with the identity on ${}_AB_B$.
\item The second map $ {}_AB_BB_AB_B \rightarrow  {}_AB_BB_BB_B$ is the dotted lift in the 2-simplex
     \[
\begin{tikzcd}
&    {}_AB_BB_BB_B\ar[dr,"\delta"] \\
{}_AB_BB_AB_B\ar[rr,"\gamma"] \ar[ur,dotted] && {}_BB_B{}B_BB_B
\end{tikzcd}
\]
lying over the 2-simplex
\[
\begin{tikzcd}
&  A\ar[dr,"f"] \\
 A\ar[rr,"f"] \ar[ur,equal] && B,
\end{tikzcd}
\]
which exists up to a contractible space of choices by virtue of $\delta$ being $\ev_{0}$-Cartesian. Applying $\ev_{[0,3]}$ to this map yields the horizontal composite of the identity on ${}_AB_B$ with the counit $\widehat{\epsilon}_f: {}_AB_B\circ {}_BB_A\rightarrow {}_BB_B$.
\end{itemize}
We will be done once we argue that applying $\ev_{[0,3]}$ to the composite 
\[  {}_AA_AA_AB_B \longrightarrow {}_AB_B B_B B_B  \]
yields the identity map. Since we have by construction a 2-simplex
\[
\begin{tikzcd}
& {}_AB_BB_BB_B\ar[dr,"\delta"] \\
 {}_AA_AA_AB_B\ar[rr,"\beta"] \ar[ur] && {}_BB_BB_BB_B,
\end{tikzcd}
\]
it suffices to argue the following.
\begin{enumerate}[$(1)$]
    \item Applying $\ev_{[0,3]}$ to the map $\beta:{}_AA_AA_AB_B \rightarrow {}_BB_BB_BB_B$ yields an $\ev_{0}$-Cartesian map ${}_AB_B\rightarrow {}_BB_B$ over $f:A\rightarrow B$.
    \item Applying $\ev_{[0,3]}$ to the map $\delta:{}_AB_BB_BB_B \rightarrow {}_BB_BB_BB_B$ yields an $\ev_{0}$-Cartesian map ${}_AB_B\rightarrow {}_BB_B$ over $f:A\rightarrow B$.
    \item Applying $\ev_{0}$ to the map $ {}_AA_AA_AB_B\rightarrow {}_AB_BB_BB_B$ yields a map equivalent to the identity on $A$.
\end{enumerate}
We note that $(3)$ is true by construction of the composite 
\[  {}_AA_AA_AB_B \longrightarrow  {}_AB_BB_AB_B\longrightarrow {}_AB_B B_B B_B  \]
since the functor $\ev_0$ carries both maps to the identity $A=A$ by stipulation. Similarly, applying $\ev_0$ to the maps $\beta$ and $\delta$ yields $f:A\rightarrow B$ by stipulation, so it suffices to argue that $\ev_{[0,3]}$ carries $\beta$ and $\delta$ to $\ev_0$-Cartesian morphisms. It follows from Lemma \ref{lem:composition-preserve-Cartesianmorphisms} that the functor $\ev_{[0,3]}$ carries $\ev_0$-Cartesian morphisms of $\mathsf{Ar}_3\left(\icatd^k_{R,S}\right)$ to $\ev_0$-Cartesian morphisms of $\mathsf{Ar}_1\left(\icatd^k_{R,S}\right)$, so the image of $\delta$ is indeed $\ev_0$-Cartesian. To see that $\ev_{[0,3]}(\beta)$ is $\ev_0$-Cartesian, it suffices to show that it induces an equivalence upon evaluating at disk-like open subcubes containing the deepest stratum, since $\ev_3(\beta)$ is an equivalence by construction. The composite
\[  \mathsf{Ar}_1\left(\icatd^k_{R,S}\right)\times_{\mathsf{Ob}\left(\icatd^k_{R,S}\right)}\mathsf{Ar}_1\left(\icatd^k_{R,S}\right)\times_{\mathsf{Ob}\left(\icatd^k_{R,S}\right)}\mathsf{Ar}_1\left(\icatd^k_{R,S}\right)\simeq \mathsf{Ar}_3\left(\icatd^k_{R,S}\right) \xrightarrow{\ev_{[0,3]}}\mathsf{Ar}_1\left(\icatd^k_{R,S}\right) \longrightarrow \icat   \]
where the second functor evaluates at the deepest stratum is equivalent to the relative tensor product (Proposition \ref{prop:relativetensorprod}), so we complete the proof by observing that in the composite
\[ \bb{1}_{\icat}\otimes \bb{1}_{\icat}\otimes B\longrightarrow   A\otimes_AA\otimes_AB\longrightarrow  B\otimes_BB\otimes_BB \]
the first map and the composition are equivalences.
\end{proof}

\section{$(n+1)$-Dualizability and invertibility in the $\E_n$-Morita category}
In this section, we prove our main theorem, which we state here in its precise form for the reader's convenience.
\begin{thm}\label{thm:mainthm}
Let $\icat$ be a presentably symmetric monoidal \infcatt. Then any $\bb{E}_n$-algebra $A$ in $\mor_n(\icat)$ (or in $\umor_n(\icat)$) is $(n+1)$-dualizable if and only if for all $0\leq k\leq n$, the object $A$ is left dualizable as a left module over the factorization homology 
\[  \int_{S^{k-1}\times \R^{n-k+1}}A. \]
\end{thm}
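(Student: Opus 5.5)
The plan has three steps, and in this order: first reduce $(n+1)$-dualizability to the existence of adjoints for finitely many explicit $n$-morphisms, then replace those $n$-morphisms by their underlying bimodules of $\E_1$-algebras, and finally identify those bimodules with factorization homology.

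\textbf{Step 1 (reduction to top-level adjoints).} I would work in $\mor_n(\icat)$, since duals and adjoints are detected before univalent completion. I would then invoke the results of \cite{GS}, recalled in \Cref{sec:n-duals}: every $\E_n$-algebra $A$ is $n$-dualizable in the pointed Morita category $\umor^{\mathrm{ptd}}_n(\icat)$. These data are carried into the pointless category by the forgetful functor $\umor^{\mathrm{ptd}}_n(\icat)\to\umor_n(\icat)$, because every ordinary factorization algebra is a pointless one. The duality and adjunction data are pushforwards of $A$ along explicit self-maps of $(0,1)^n$. The consequence is that $A$ is $(n+1)$-dualizable if and only if each of the $n$-morphisms occurring as units and counits in this tower of adjunctions admits both adjoints. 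Up to symmetry, there is one such $n$-morphism for each index $0\le k\le n$, namely the one obtained after passing through $k$ units, equivalently counting index $k$ of the associated Morse function. Because of the tower structure, having a left adjoint for each of these suffices; the right adjoints are obtained by applying the orientation reversal symmetry of the cube.

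\textbf{Step 2 (lifting of adjoints).} Next I would apply \cref{thm:main-lifting-lemma}. It says that an $n$-morphism $\F$ of $\mor_n(\icat)$ is adjointable if and only if its underlying bimodule $\pi_*\F$ is adjointable as a $1$-morphism of $\mor_1(\icat)$, where $\pi_*$ is the comparison functor \eqref{eq:comparisonfunctor}. Its proof rests on the regular bimodule calculus of \Cref{prop:regularbimodajoint} and \Cref{cor:regbimodconservative}. After this step, each condition is a statement about an ordinary bimodule of $\E_1$-algebras in $\icat$.

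\textbf{Step 3 (geometric identification).} I would then use \Cref{prop:mod-formula}, which identifies $\pi_*$ of the $k$-th unit/counit $n$-morphism. As an object it is $A\simeq\int_{D^k\times D^{n-k}}A$. It is a bimodule over $\int_{S^{k-1}\times\R\times D^{n-k}}A\simeq \int_{S^{k-1}\times\R^{n-k+1}}A$ acting on the left and $\int_{D^k\times S^{n-k-1}\times\R}A$ acting on the right. The argument is to realize the pushforward as a pushforward along a Morse function with one critical point of index $k$ and to read off the actions from the Morse lemma. Finally, \cite[Proposition 4.6.2.13]{LurHA} says that a bimodule ${}_RM_S$ admits the relevant adjoint exactly when $M$ is left dualizable over $R$. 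This yields precisely the stated condition for each $0\le k\le n$. The adjoints on the other side correspond to dualizability over the right-acting algebra; that algebra is the factorization homology attached to the complementary index $n-k$, so it is already covered as $k$ runs over all values.

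\textbf{Main obstacle.} The hard part is Step 3. One must verify that the iterated pushforwards of \cite{GS} really compose to a pushforward along a Morse function, given that collapse-rescale maps are not conically smooth (Warning \ref{warn:consmf}). One must also check that the induced actions are exactly the canonical ones, and not merely that the acting algebras are abstractly equivalent to factorization homology over spheres, which is all the excision argument of \Cref{sec:geometric_part_old} provides. My first attempt would be an induction on $k$ in which passing to a unit raises the Morse index by one, with constructibility checked locally using \Cref{cor:fact=alg}.
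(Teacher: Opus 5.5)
Your outline is the paper's proof: the \cite{GS} duality and adjunction data (which remain valid in the pointless model), then \cref{thm:main-lifting-lemma} to pass to underlying bimodules, then \Cref{prop:mod-formula} to identify those bimodules via Morse theory, and finally \cite[Proposition 4.6.2.13]{LurHA}. The one step whose justification needs replacing is the reduction in Step 1 to \emph{left} adjoints of the top-level $n$-morphisms. This reduction does not follow from the \cite{GS} tower or from an orientation-reversal symmetry. A symmetry argument can at best trade left adjoints for right adjoints among the top-level units and counits. It does not show that finitely many one-sided conditions imply full $(n+1)$-dualizability. A priori, full dualizability also involves the units and counits of the left-adjunction branch of the tower, and iterated adjoints of the top $n$-morphisms. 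This is a genuinely categorical input, and the paper imports it from \cite{Araujo-thesis, SS25}. The imported statement is that $A$ is $(n+1)$-dualizable if and only if every $u^w$ with $w$ of length $n$ admits a left adjoint, where $u^w$ is built by iterating \emph{right} adjunctions starting from $\ev$ and $\coev$. With that citation in place, Steps 2 and 3 go through as you wrote them, and your remark about adjoints on the other side becomes unnecessary.

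On your ``main obstacle'': the non-smoothness of collapse-rescale maps never enters. Since $\pi_{k,\ldots,n}\circ\collresc_{k-1}=\pi_{k,\ldots,n}$, the underlying bimodule of $u^w$ is the pushforward of the locally constant $\F$ along the smooth composite $g^w$ of fold and bend maps (\Cref{prop:equiv-of-bimods}). So the ordinary Morse lemma applies directly, and no constructibility check via \Cref{cor:fact=alg} is needed. Also, with $\varphi$ as in \eqref{eq:twist-formula}, the counit step ($w_k=1$) raises the index by one, while the unit step leaves it unchanged (\Cref{prop:Morse-1}). This reverses the convention in your induction, but it is harmless, since every index $0,\ldots,n$ still occurs.
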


The proof of this theorem will consist of two parts, a geometric part (\ref{sec:geometric_part}) and an argument for lifting adjoints (\ref{sec:lifting_lemma}). 
The lifting-of-adjoints argument in essence states that to find an adjoint of an $n$-morphism in the higher Morita category it is enough to find an adjoint for the underlying bimodule in the usual Morita category.

This allows to reduce the theorem to a question about finding adjoints for specific bimodules in the usual Morita category. These bimodules are found inductively using a geometric argument and excision of factorization homology.

\subsection{Lifting adjoints}\label{sec:lifting_lemma}
Our goal in this subsection is to prove the following result upon which our main theorem relies. In the setting of tensor categories, the analog of this result is Proposition 5.17, 5.18, and Lemma 5.19 in \cite{BJS}.

\begin{rmk}
In this section, $\icat^{\otimes}$ is a presentably symmetric monoidal \infcat so that we may consider the $(\infty,n+1)$-category $\umor_n(\icat)$ as defined in Section \ref{sec:moritacat}. However, the arguments for lifting adjoints in this section are written in sufficient generality so as to also apply to Haugseng's higher Morita category of \cite{HaugsengEn}. In fact, the arguments to come that are specific to the Morita category are \emph{easier} in Haugseng's combinatorial model. While it is the case that the (pointless) factorization Morita category of Section \ref{sec:moritacat} and the combinatorial Morita category of \cite{HaugsengEn} are equivalent, a precise comparison is beyond the scope of this article, so we content ourselves with stating the following proposition for the factorization version. The comparison between Haugseng's model and the factorization model will be taken up by the first two authors with Anja \v{S}vraka \cite{ScStSv}. 
\end{rmk}

\begin{rmk}
The lifting-of-adjoints argument is superficially similar to a result of Lurie asserting that an ordinary $A$-$B$-bimodule $M$ (for $A$ and $B$ $\bb{E}_1$-algebras) admits a left dual \emph{as an $A$-$B$-bimodule} if and only if $M$ admits a left dual \emph{as a left $A$-module} (see \cite[Proposition 4.6.2.13]{LurHA}), that is, $M$ admits a left dual if and only if composing $M$ (as an $A$-$B$-bimodule) with the left regular bimodule ${}_BB_{\bb{1}_{\icat}}$ admits a left dual (as a left $A$-module). The proof of Theorem \ref{thm:main-lifting-lemma} also involves composing with a regular bimodule to verify dualizability of $n$-morphisms of $\mor_n(\icat)$, but differs crucially in two ways due to the presence of higher categorical dimensions.
\begin{itemize}
    \item We compose with regular (factorization) bimodules living in different categorical levels than the $n$-morphism itself. 
    \item We compose with regular bimodules \emph{on both sides}.
\end{itemize}
\end{rmk}

\begin{thm}[Lifting-of-adjoints argument]\label{thm:main-lifting-lemma}
Let $\F$ be a constructible pointless factorization algebra on $\square^n_{[\vec{1}]}$, representing an $n$-morphism
\[  M\overset{\F}{\longrightarrow}N  \]
between $(n-1)$-morphisms $M,N:A\rightarrow B$ in the higher Morita category, where $A$ and $B$ are a parallel pair of $(n-2)$-morphisms $\mor_n(\icat)$. Then $\F$, as a 1-morphism of the 2-fold Segal space $\Hom_{\mor_n(\icat)}^{n-1}(A,B)$, admits a left adjoint if and only if the constructible factorization algebra $\pi_{*}\F$ on $\square_{[1]}$ admits a left adjoint as a 1-morphism of $\mor_1(\icat)$.
\end{thm}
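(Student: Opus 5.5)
The plan is to realize $\pi_*\F$ as $\F$ composed on both sides with regular factorization bimodules living at the lower categorical levels. Then I will use the formal properties of Proposition \ref{prop:regularbimodajoint}: adjointability of regular bimodules, conservativity, and compatibility with horizontal composition. These let me transport adjunction data back and forth.

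\textbf{Step 1: identifying $\pi_*\F$.} The unit maps $\bb{1}\to A$ and $\bb{1}\to B$ are maps in the \infcat of objects at the appropriate level. Equivalently, they are the unique maps out of the unit factorization algebra, which is constant at $\bb{1}_{\icat}$ on the strata of positive codimension in the first $n-1$ directions. Collapsing the first $n-1$ directions via $\collresc(\gamma)$ amounts to forgetting all the structure carried by the strata of $A,B$, and iteratively of $M,N$ below top level. So I will exhibit an equivalence
\[
\pi_*\F \simeq {}_{\bb{1}}B_B\circ_h \F \circ_h {}_A A_{\bb{1}},
\]
and similarly for the source and target $M,N$, where the horizontal composites are taken level by level down to dimension $0$. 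Concretely, I will prove this by induction on $n$. At each stage I use Proposition \ref{prop:regularbimodajoint}$(1)$, which says Cartesian transport equals composition with the regular bimodule, together with the description of Cartesian morphisms by evaluation on the deepest stratum (Proposition \ref{prop:sourcetargetCartesian}). Under these, restriction of scalars along $\bb{1}\to A$ does exactly what $\pi_*$ does on the underlying objects. I expect this identification, made coherent as a functor rather than on objects, to be the main obstacle. I would first try to build a natural transformation from the collapse map $\collresc(\gamma)$ and then check that it is an equivalence on disk-like subcubes containing the deepest stratum. The relative tensor product description (Proposition \ref{prop:relativetensorprod}) gives $\bb{1}\otimes_{\bb{1}}(\cdots)$ there.

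\textbf{Step 2: the easy direction.} If $\F$ has a left adjoint, then $\pi_*\F$ has one, because $\pi_*$ is a functor of $(\infty,2)$-categories (the comparison functor \eqref{eq:comparisonfunctor}), and functors preserve adjunctions.

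\textbf{Step 3: the converse.} Suppose $\pi_*\F$ has a left adjoint $\mathcal{G}'$ with unit and counit. By Corollary \ref{cor:regular-bimod-dualizability}, the regular bimodules ${}_AA_{\bb{1}}$ and ${}_{\bb{1}}B_B$ are adjointable, with adjoints the opposite-handed regular bimodules. I will then form a candidate $\mathcal{G}$ by conjugating $\mathcal{G}'$ back up: it is the image of $\mathcal{G}'$ under the right adjoint to the forgetful Cartesian transport. Put differently, I take the mate of $\mathcal{G}'$ through the adjunctions ${}_A A_{\bb{1}}\dashv {}_{\bb{1}}A_A$ at each level. The unit and counit for $\mathcal{G}\dashv\F$ are then produced from those of $\pi_*\F$ by whiskering with the units and counits $\widehat{\eta},\widehat{\epsilon}$ of Proposition \ref{prop:regularbimodajoint}$(5)$. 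The snake identities must hold up to an equivalence, and I will check them after applying the conservative functor of Corollary \ref{cor:regbimodconservative}. Under that functor they become the snake identities for $\pi_*\F$, composed with the snake identities of the regular bimodules. This is the ``game with adjointable squares'': a square of $2$-morphisms whose vertical sides are right adjoints, whose Beck--Chevalley mate is invertible after a conservative functor, and which is therefore invertible.
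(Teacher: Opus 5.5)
Your Steps 1 and 2 match the paper. It too identifies $\pi_*$ with composition by regular bimodules of the unit maps; for $n=2$ and $B=\mathbb{1}$ this is $(-)\circ{}_{\mathbb{1}}A_A$. Your displayed formula uses the wrong-handed bimodules and does not typecheck; in the paper's conventions it should be ${}_BB_{\mathbb{1}}\circ\F\circ{}_{\mathbb{1}}A_A$. The forward implication is just functoriality.

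\textbf{The gap is Step 3:} your candidate $\mathcal{G}$ is not a $1$-morphism $N\to M$. Write $u={}_{\mathbb{1}}A_A\dashv v={}_AA_{\mathbb{1}}$ with counit $\epsilon\colon u\circ v\Rightarrow\mathrm{id}_A$, so that $G=(-)\circ u\simeq\pi_*$ and $L=(-)\circ v\dashv G$.
\begin{itemize}
\item $\mathcal{G}'$ is a $2$-cell $N\circ u\Rightarrow M\circ u$. The adjunction only lets you build $N\circ u\circ v\Rightarrow M\circ u\circ v\Rightarrow M$ (whisker by $v$, then apply $M\epsilon$). That is a morphism $LG(N)\to M$ out of the free object.
\item To get a morphism out of $N$ itself you would need a $2$-cell $\mathrm{id}_A\Rightarrow u\circ v$, which points the wrong way.
\item Likewise, applying an adjoint of $G$ to $\mathcal{G}'$ gives a morphism between $LG(N)$ and $LG(M)$ (or cofree analogues). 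Its underlying bimodule is not $\mathcal{G}'$.
\end{itemize}
So the real content of the theorem is never produced. That content is that $\pi_*(\F)^L$ extends to a constructible factorization algebra on $\square^n_{[\vec{1}]}$ with $A$- and $B$-labelled strata (Remark~\ref{rmk:extendfactorizationalg}). Your whiskered $\widehat{\eta},\widehat{\epsilon}$ also do not have the shapes $\mathrm{id}_N\Rightarrow\F\circ\mathcal{G}$ and $\mathcal{G}\circ\F\Rightarrow\mathrm{id}_M$. Corollary~\ref{cor:regbimodconservative} cannot fill this in: conservativity only certifies invertibility of data you already have.

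The paper never constructs $\F^L$ by hand. By Lemma~\ref{lem:adjointcriterion}, it suffices to show two things. First, each $\F\circ-\colon\Hom(Z,M)\to\Hom(Z,N)$ has a left adjoint. Second, the squares given by precomposing with any $h\colon Z'\to Z$ are horizontally left adjointable.
\begin{itemize}
\item The left adjoints come from Lemma~\ref{lem:adjointcons}. The functors induced by $\pi_*$ on these Hom-categories are monadic, and every object is a geometric realization of free ones. On free objects the left adjoint is forced by $\pi_*(\F)^L$. This resolution is exactly what repairs the ``free objects only'' problem above.
\item Beck--Chevalley is reduced, via mate pasting (Lemmas~\ref{lem:pastingofmates}, \ref{lem:pastingofmates2}) and conservativity, to the squares \eqref{eq:homsquare}.
\item These squares are in turn reduced to free $Z=A\otimes W$ by precomposing with the counit $\epsilon_Z$. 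That counit is itself a right regular bimodule, so Corollary~\ref{cor:inducedadjointability} applies to the side squares. The triangle identity and Corollary~\ref{cor:regbimodconservative} then finish the argument.
\end{itemize}
In short, the paper uses regular bimodules to reduce to free objects, not to conjugate $\mathcal{G}'$.

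Also, the coherence issue you flag in Step 1 is not the hard part. For general $n$ the paper obtains the identification formally: the functor induced on higher Homs by $(-)\circ{}_{X'}X_X$ is precomposition with the counit $LG(V)\to V$. That counit is again a right regular bimodule by Proposition~\ref{prop:regularbimodajoint}.
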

For this result a few categorical preliminaries are required. We will set these up first and then give the proof of Theorem \ref{thm:main-lifting-lemma} for the case of $n=2$ and end this section with the general case. We start with the following reformulation of the notion of an adjointable morphism in an $(\infty,2)$-category in terms of representable functors. In this section, it will be convenient to argue in a model-independent fashion with $(\infty,k)$-categories, so we use the univalent objects $\umor_n(\icat)$ instead of $\mor_n(\icat)$.
\begin{lem}[Criterion for adjoints]\label{lem:adjointcriterion}
Let $\icatd$ be an $(\infty,2)$-category and let $g:X\rightarrow Y$ be a 1-morphism in $\icatd$. Then $g$ admits a left adjoint if and only if the following conditions are satisfied.
\begin{enumerate}[$(1)$]
    \item For every object $Z\in\icatd$, the functor 
    \[  \Hom_{\icatd}(Z,X) \xrightarrow{g\circ\_} \Hom_{\icatd}(Z,Y)\]
    admits a left adjoint.
    \item For each 1-morphism $h:Z'\rightarrow Z$ of $\icatd$, the induced diagram 
    \[
    \begin{tikzcd}
    \Hom_{\icatd}(Z,X)  \ar[d,"\_\circ h"] \ar[r,"g\circ\_"] & \Hom_{\icatd}(Z,Y) \ar[d,"\_\circ h"]\\
    \Hom_{\icatd}(Z',X) \ar[r,"g\circ \_"] & \Hom_{\icatd}(Z',Y)
    \end{tikzcd}
    \]
    among \infcats is horizontally left adjointable.
\end{enumerate}
\end{lem}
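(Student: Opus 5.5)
The plan is to prove both directions via the Yoneda lemma in the homotopy 2-category, reducing to the standard characterization of adjunctions through representable functors. Since adjunctions in an $(\infty,2)$-category are detected in its homotopy 2-category, I will work there throughout.

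For the forward direction, suppose $f\dashv g$ with unit $\eta:\mathrm{id}_Y\Rightarrow gf$ and counit $\epsilon:fg\Rightarrow \mathrm{id}_X$. For each $Z$, postcomposition is a 2-functor $\Hom_{\icatd}(Z,-)$, so it carries the adjunction to an adjunction $f\circ\_\dashv g\circ\_$ of functors $\Hom_{\icatd}(Z,Y)\rightleftarrows\Hom_{\icatd}(Z,X)$. The unit and counit are whiskerings $\eta\circ\_$ and $\epsilon\circ\_$, and the triangle identities follow from those of $(\eta,\epsilon)$. This gives $(1)$. For $(2)$, the Beck--Chevalley (mate) transformation $f\circ(\_\circ h)\Rightarrow (f\circ\_)\circ h$ is assembled from $\eta$, $\epsilon$ and associativity. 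Because precomposition with $h$ commutes strictly with whiskering by $\eta$ and $\epsilon$ (interchange law), the mate reduces via a triangle identity to the associator, and is therefore invertible.

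For the converse, assume $(1)$ and $(2)$. Take $Z=Y$ and let $L_Y$ be the left adjoint of $g\circ\_$ on $\Hom(Y,Y)\to\Hom(Y,X)$. Set $f:=L_Y(\mathrm{id}_Y)$, and let $\eta:\mathrm{id}_Y\Rightarrow g\circ f$ be the unit at $\mathrm{id}_Y$. Condition $(2)$, applied to $h:Z\to Y$, identifies $L_Z(h)=L_Z(\mathrm{id}_Y\circ h)\simeq L_Y(\mathrm{id}_Y)\circ h=f\circ h$ naturally in $h$. Hence $L_Z\simeq f\circ\_$ for all $Z$, compatibly with precomposition, and the units of these adjunctions are the whiskerings $\eta\circ h$. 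Next take $Z=X$, and define $\epsilon:fg\Rightarrow\mathrm{id}_X$ as the counit of $L_X\dashv g\circ\_$ at $\mathrm{id}_X$, using $L_X(g)\simeq f\circ g$. What remains is to check the triangle identities for $(\eta,\epsilon)$. These come from the triangle identities of the adjunctions $L_Y\dashv g\circ\_$ and $L_X\dashv g\circ\_$, evaluated at $\mathrm{id}_Y$ and $\mathrm{id}_X$, combined with the identifications supplied by $(2)$ for $h=g$ and $h=f$.

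The main obstacle is this last step: checking that the identification $L_Z\simeq f\circ\_$ coming from the mate in $(2)$ is compatible with the units and counits. Concretely, I must show that the unit of $L_Z\dashv g\circ\_$ at $h$ really is $\eta\circ h$, and that the counit is $\epsilon\circ\_$, rather than these holding only up to an uncontrolled automorphism. My first approach is to use the explicit formula for the mate of the commuting square (whose 2-cell is an associator) and compute its component at $\mathrm{id}_Y$. Invertibility of that mate then forces the unit of $L_Z$ to be the transport of $\eta$ along $\_\circ h$. If this bookkeeping becomes unwieldy, I will instead cite the standard statement that an adjunction in a 2-category is equivalent to a representable adjunction natural in the source variable; this is what $(1)$ and $(2)$ express.
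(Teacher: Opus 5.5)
Your argument is correct, but it follows a different route from the paper. The paper gives no argument of its own and simply cites \cite[Corollary 5.2.10]{AGH-laxstraigthening}. That is a fully $\infty$-categorical statement coming out of lax straightening and the mate correspondence of Remark~\ref{rmk:mates}. It produces the left adjoints $f\circ\_$ as a coherent transformation between representable functors.

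You instead use that existence of a left adjoint is a property of the homotopy 2-category, and you run the classical Yoneda argument there. This is elementary and self-contained. It is also all the lemma requires, since the lemma is a pure existence statement.

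Two points need to be made precise when you write it up.

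\emph{Forward direction.} The 2-functor you apply should be the homotopy-2-category shadow of the representable $(\infty,2)$-functor $\Hom_{\icatd}(Z,\_)$ into $\infty$-categories. That way you obtain an adjunction of $\infty$-categories, which is detected in the homotopy 2-category of $\catinf$, and not merely an adjunction of homotopy categories.

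\emph{Converse.} The obstacle you flag dissolves once you look at the component at $\mathrm{id}_Y$ of the mate for $h\colon Z\to Y$. By construction it is the adjunct $m\colon L_Z(h)\to fh$ of $\eta h$ under $L_Z\dashv g\circ\_$. Hence $g(m)\circ\eta'_h\simeq\eta h$, where $\eta'_h$ is the unit of $L_Z$, and invertibility of $m$ makes $\eta h$ a universal arrow.

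From there:
\begin{itemize}
\item Define $\epsilon$ as the unique 2-cell with $(g\epsilon)\circ(\eta g)=\mathrm{id}_g$. This uses universality of $\eta g$, i.e.\ condition $(2)$ for $h=g$.
\item Deduce $(\epsilon f)\circ(f\eta)=\mathrm{id}_f$ from the injectivity of $\theta\mapsto(g\theta)\circ\eta$ on 2-cells $f\Rightarrow f$, together with the interchange law.
\end{itemize}
No separate identification of the counit of $L_Y$ with $\epsilon\circ\_$ is needed.
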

\begin{rmk}[Mates and adjointability]
We recall that a square diagram 
\[
\begin{tikzcd}
 \icatd \ar[r,"G_{\icatd}"] \ar[d,"L"] & \icatd' \ar[d,"L'"] \\
 \icate\ar[r,"G_{\icate}"] & \icate'
\end{tikzcd}
\]
of \infcats is \emph{horizontally left adjointable} if the functors $G_{\icatd}$ and $G_{\icate}$ admit left adjoints $F_{\icatd}$ and $F_{\icate}$ respectively, and the \emph{mate} transformation 
\[ F_{\icate}\circ L'\longrightarrow  G_{\icate}\circ L' \circ G_{\icatd} \circ F_{\icatd} \simeq F_{\icate} \circ G_{\icate}\circ L\circ F_{\icatd} \longrightarrow L\circ F_{\icatd}\]
is an equivalence, where the first map is induced by the unit of $(F_{\icatd}\adj G_{\icat})$ and the second by the counit of $(F_{\icate}\adj G_{\icate})$; this is also known as the \emph{Beck-Chevalley condition}. There are three obvious variant notions: that of \emph{vertical} left adjointability and horizontal and vertical \emph{right} adjointability.
\end{rmk}
\begin{rmk}\label{rmk:mates}
More generally, any lax or oplax square in an $(\infty,2)$-category in which two parallel (vertical or horizontal) morphisms have left or right adjoints, has an associated \emph{mate}. Exchanging (op)lax naturality squares with their mates gives equivalences of $(\infty,2)$-categories of functors and (op)lax natural transformations that are componentwise right (left) adjoints between them; see Theorem E in \cite{AGH-laxstraigthening} for the general $(\infty,2)$-categorical statement. 
\end{rmk}
\begin{proof}[Proof of Lemma \ref{lem:adjointcriterion}]
See, for instance, \cite[Corollary 5.2.10]{AGH-laxstraigthening}.
\end{proof}
\begin{rmk}\label{rmk:flippedadjointcrit}
In Lemma \ref{lem:adjointcriterion}, swapping every instance of `left' with `right' and vice versa yields another true statement.   
\end{rmk}
\begin{cor}\label{cor:inducedadjointability}
Let $G:\icatd\rightarrow \icate$ be a functor of $(\infty,2)$-categories. Suppose that $G$ admits a left adjoint $F$. Then for every 1-morphism $g:X\rightarrow Y$ of $\icatd$ that admits a left adjoint and each object $Z\in \icatd$, the induced square
\[
\begin{tikzcd}
  \Hom_{\icatd}(Z,X) \ar[d,"G_{Z,X}"] \ar[r,"g\circ \_"] &[2em]   \Hom_{\icatd}(Z,Y) \ar[d,"G_{Z,Y}"] \\
  \Hom_{\icate}(G(Z),G(X)) \ar[r,"F(g)\circ \_"] &[2em]   \Hom_{\icate}(G(Z),G(Y))
\end{tikzcd}
\]
of \infcats is horizontally left adjointable. 
\end{cor}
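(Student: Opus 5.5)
The plan is to apply Lemma \ref{lem:adjointcriterion} to the $(\infty,2)$-category $\icatd$ and to use that a functor of $(\infty,2)$-categories admitting a left adjoint is a left adjoint in the $(\infty,2)$-category of $(\infty,2)$-categories. Its image therefore interacts with adjunctions through mates. Concretely, let $g:X\rightarrow Y$ have a left adjoint $f$, with unit $\eta:\mathrm{id}_Y\rightarrow gf$ and counit $\epsilon:fg\rightarrow\mathrm{id}_X$. Since $G$ is a functor of $(\infty,2)$-categories, it preserves adjunctions, so $G(f)\adj G(g)$ in $\icate$. Note, however, that the statement's bottom row is postcomposition with $F(g)$ on homs $\Hom_{\icate}(G(Z),G(X))$. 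I will read this literally as postcomposition with $G(g)$ (the typo $F(g)$ must mean $G(g)$, since $F(g)$ would not have the right source and target). The bottom functor $G(g)\circ\_$ then has left adjoint $G(f)\circ\_$, induced by the adjunction $G(f)\adj G(g)$.

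The first step is to record both horizontal left adjoints. The top row $g\circ\_:\Hom_{\icatd}(Z,X)\rightarrow\Hom_{\icatd}(Z,Y)$ has left adjoint $f\circ\_$, with unit and counit given by whiskering $\eta$ and $\epsilon$. This is the easy direction of Lemma \ref{lem:adjointcriterion}, condition $(1)$. The bottom row likewise has left adjoint $G(f)\circ\_$, with unit $G(\eta)$ and counit $G(\epsilon)$ whiskered.

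The second step is to compute the mate. Let $h:Z\rightarrow Y$. By definition the mate is
\[ G(f)\circ G_{Z,Y}(h)\longrightarrow G(f)\circ G(g)\circ G(f\circ h)\simeq G(f)\circ G(g\circ f\circ h)\longrightarrow G(f\circ h), \]
where the first map uses $G(\eta)$ and the second uses $G(\epsilon)$. Using the functoriality equivalences $G(a)\circ G(b)\simeq G(a\circ b)$, this composite is identified with $G$ applied to the composite $f h\rightarrow f g f h\rightarrow f h$, which is the image of $(f\eta\cdot\epsilon f)\circ h$. By the snake identity that composite is the identity, so the mate is an equivalence. The only point requiring care is that the unit used on the bottom row really is $G(\eta)$, that is, that the adjunction data are transported compatibly with the coherence equivalences of $G$.

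I expect that compatibility to be the main obstacle. To handle it model-independently, I would first try to invoke Remark \ref{rmk:mates}. The square is the naturality square of the (pseudo) natural transformation $G_{Z,-}:\Hom_{\icatd}(Z,-)\Rightarrow\Hom_{\icate}(G(Z),G(-))$ of functors $\icatd\rightarrow\catinf$ along the 1-morphism $g$. Since $g$ has a left adjoint and both functors are 2-functors, the mate of a pseudonatural square along an adjunction is invertible: pseudonatural transformations are in particular both lax and oplax, and the correspondence of \cite{AGH-laxstraigthening} identifies the mate with the inverse naturality constraint at $f$.

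Under this reading the hypothesis that $G$ has a left adjoint $F$ is not even needed. It would only enter if the bottom row genuinely involved $F$, in which case one would instead transpose $\Hom_{\icate}(G(Z),G(-))\simeq\Hom_{\icatd}(FG(Z),-)$ and apply the same mate argument to the square for precomposition with the counit $FG(Z)\rightarrow Z$.
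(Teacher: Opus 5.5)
Your proposal is correct. You are also right that $F(g)$ in the bottom row must be read as $G(g)$; the paper's proof implicitly does the same. Your route differs from the paper's, however.

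\textbf{The paper's route.} It uses $F\dashv G$ to transpose $\Hom_{\icate}(G(Z),G(-))\simeq\Hom_{\icatd}(FG(Z),-)$. Under this identification, $G_{Z,-}$ becomes precomposition with the counit $\epsilon_Z\colon FG(Z)\to Z$, and $G(g)\circ\_$ becomes $g\circ\_$. The square is then literally an instance of condition $(2)$ of Lemma \ref{lem:adjointcriterion} with $h=\epsilon_Z$. All coherence is thereby outsourced to that lemma, i.e.\ to \cite{AGH-laxstraigthening}. So $F$ enters exactly through this transposition, with the bottom row being $G(g)\circ\_$. It does not enter only ``if the bottom row genuinely involved $F$'', as your last paragraph suggests.

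\textbf{Your route.} You use only that $G$ preserves the adjunction $f\dashv g$, and that the mate is $G$ applied to the snake composite whiskered by $h$. This proves the statement for an arbitrary functor of $(\infty,2)$-categories, with no adjoint of $G$ required. As a side benefit, it also covers the paper's later use of the corollary for $\icatd^{1\text{-op}},\icate^{1\text{-op}}$ without having to check on which side $G^{1\text{-op}}$ has an adjoint.

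\textbf{The coherence worry.} The issue you flag is harmless. The mate is an equivalence iff each component is invertible in the homotopy category of $\Hom_{\icate}(G(Z),G(X))$. So the whole computation may be carried out in the homotopy bicategories, where $G$ induces a pseudofunctor and your calculation is the classical one.

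\textbf{Your alternative appeal.} Remark \ref{rmk:mates} is not quite the right tool. That correspondence concerns transformations whose \emph{components} admit adjoints. The fact you actually want is doctrinal adjunction for pseudonatural transformations: the mate of the constraint at $g$ is identified with the (invertible) constraint at $f$. You do not need this, since the direct computation already suffices.
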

\begin{proof}
The square above is equivalent to the square
\[
\begin{tikzcd}
  \Hom_{\icatd}(Z,X) \ar[d,"\_\circ \epsilon_Z"] \ar[r,"g\circ \_"] &   \Hom_{\icatd}(Z,Y) \ar[d,"\_ \circ \epsilon_Z"] \\
  \Hom_{\icatd}(FG(Z),X) \ar[r,"g\circ \_"] &   \Hom_{\icatd}(FG(Z),Y),
\end{tikzcd}
\]
for $\epsilon_Z$ the counit $FG(Z)\rightarrow Z$, so we conclude by applying Lemma \ref{lem:adjointcriterion}.
\end{proof}
\begin{rmk}\label{rmk:flippedinducedadjointability}
In Corollary \ref{cor:inducedadjointability}, if we assume instead that $g$ admits a \emph{right} adjoint, then the square is horizontally right adjointable.
\end{rmk}

\begin{lem}\label{lem:adjointcons}
Let be given a commuting diagram 
\[
\begin{tikzcd}
    \icatd \ar[r,"G"] \ar[d,"U_{\icatd}"] & \icate \ar[d,"U_{\icate}" '] \\
    \icatd_0 \ar[r,"G_0"] \ar[u,"L_{\icatd}" , bend left] & \icate_0 \ar[u,"L_{\icate}" ', bend right] \ar[l,"F_0" , bend left]
\end{tikzcd}
\]
of straight arrows among \infcats in which $U_{\icatd}$ and $U_{\icate}$ have left adjoints $L_{\icatd}$ and $L_{\icate}$ respectively, and $G_0$ has a left adjoint $F_0$. Suppose that the following conditions are satisfied.
\begin{enumerate}[$(1)$]
    \item The \infcat $\icatd$ admits geometric realizations of simplicial objects.
    \item The adjunction $(L_{\icate}\adj U_{\icate})$ is monadic; that is, $U_{\icate}$ is conservative and $\icate$ admits geometric realizations of $U_{\icate}$-split simplicial objects, which are preserved by $U_{\icate}$.
\end{enumerate}
Then $G$ admits a left adjoint. Suppose furthermore that the following condition is satisfied.
\begin{enumerate}[$(3)$]
\item The functor $U_{\icatd}$ preserves geometric realizations. 
\end{enumerate}
Let $F$ denote a left adjoint to $G$, then the square is horizontally left adjointable if and only if the composite 
\[ F_0\circ U_{\icate}\circ L_{\icate}\longrightarrow U_{\icatd}\circ F\circ L_{\icate} \]
of $L_{\icate}$ with the mate is an equivalence.
\end{lem}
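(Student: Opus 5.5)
The plan is to build $F$ from the monadic resolution and then read off adjointability from the mate on free objects. For the existence of a left adjoint to $G$, every object $E\in\icate$ is, by condition $(2)$ and the Barr--Beck--Lurie theorem, the geometric realization of its $U_{\icate}$-split bar resolution $E_\bullet$, with $E_m=(L_{\icate}U_{\icate})^{m+1}E$. Each term is free. On free objects a left adjoint is available for free: the composite $L_{\icatd}F_0$ satisfies
\[\Hom_{\icatd}(L_{\icatd}F_0E_0',D)\simeq \Hom_{\icatd_0}(F_0E_0',U_{\icatd}D)\simeq\Hom_{\icate_0}(E_0',G_0U_{\icatd}D)\simeq \Hom_{\icate_0}(E_0',U_{\icate}GD)\simeq \Hom_{\icate}(L_{\icate}E_0',GD),\]
so $L_{\icatd}F_0$ is left adjoint to $G$ on the full subcategory of free objects, where the third equivalence uses commutativity $U_{\icate}G\simeq G_0U_{\icatd}$. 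To extend this, we write $E_\bullet$ as a simplicial object in the image of $L_{\icate}$. The face maps are not all free, but the whole simplicial object lies in the essential image of the free functor on the full subcategory. We then consider the partially defined left adjoint $F'$ on free objects, obtained by the Yoneda argument above as a functor from the full subcategory of frees. Applying $F'$ to $E_\bullet$ gives a simplicial object of $\icatd$, and we set $F(E)=|F'(E_\bullet)|$, which exists by $(1)$. The mapping space computation
\[\Hom(|F'E_\bullet|,D)\simeq\lim\Hom(F'E_m,D)\simeq \lim \Hom(E_m,GD)\simeq \Hom(|E_\bullet|,GD)\]
shows that $E\mapsto F(E)$ corepresents $\Hom(E,G(-))$. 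Pointwise existence then gives the left adjoint, by the standard criterion \cite[Proposition 5.2.4.2]{LurHTT}.

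For the adjointability criterion, the mate is $\mu:F_0U_{\icate}\to U_{\icatd}F$. One direction is trivial: if $\mu$ is an equivalence, so is its whiskering $\mu L_{\icate}$. Conversely, suppose $\mu L_{\icate}$ is an equivalence. Given $E$, we resolve $E\simeq|L_{\icate}U_{\icate}E_\bullet|$ and check the value of $\mu$ at $E$ termwise. On the source side, $F_0U_{\icate}$ applied to $E_\bullet$ has colimit $F_0U_{\icate}E$. This holds because $U_{\icate}E_\bullet$ is split, so $U_{\icate}$ preserves the realization, and $F_0$ is a left adjoint, so it preserves colimits. On the target side, $F$ is a left adjoint and so preserves the realization, and $U_{\icatd}$ preserves geometric realizations by $(3)$. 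Hence $U_{\icatd}F E\simeq |U_{\icatd}FE_\bullet|$. Naturality of $\mu$ identifies $\mu_E$ with the realization of the levelwise maps $\mu_{E_m}$. Each $E_m$ is free, so these maps are equivalences by hypothesis, and hence $\mu_E$ is an equivalence.

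The main obstacle is the first step. Making the extension of $F'$ from free objects to all of $\icate$ coherent requires care, which is why I would argue pointwise via corepresentability rather than constructing a functor directly. I expect the second step to be formal once we note that the mate is natural and compatible with the resolutions.
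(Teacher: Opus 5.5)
Your proposal is correct and follows essentially the same route as the paper. The paper likewise shows that $\Hom_{\icate}(E,G(-))$ is corepresentable on free objects via the same adjunction chain, and then extends this along $U_{\icate}$-split resolutions by free objects using $(1)$; the paper phrases this as the full subcategory $\mathfrak{X}$ of such $E$ being closed under geometric realizations, where you build the corepresenting object $|F'(E_\bullet)|$ explicitly. The mate criterion is proved by the same levelwise argument on the resolution, using $(2)$, $(3)$, and the fact that $F$ and $F_0$ preserve these realizations.
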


\begin{proof} 
We first show that $G$ has a left adjoint assuming $(1)$ and $(2)$. Let $\mathfrak{X}\subset \icate$ be the full subcategory spanned by objects $E$ for which the functor
\[  \icatd \longrightarrow \spa,\quad \quad D\longmapsto \Hom_{\icate}\left(E,G(D)\right)  \]
is corepresentable by an object of $\icatd$; we wish to show that $\mathfrak{X}=\icate$. Note that $\mathfrak{X}$ is the full subcategory spanned by those objects of $\icate$ that are carried to a representable functor by the restricted (opposite) Yoneda embedding
\[  \icate\overset{j}{\hooklongrightarrow}\pshv(\icate^{op})^{op} \xrightarrow{G^*} \pshv(\icat^{op})^{op}.  \]
Since $\icatd$ admits geometric realizations of simplicial objects and the (opposite) Yoneda embedding $j:\icatd\rightarrow \pshv(\icatd^{op})^{op}$ preserves all colimits that exist in $\icatd$, the full subcategory of $\pshv(\icatd^{op})^{op}$ spanned by representables is stable under geometric realizations. Since the composite $G^*\circ j$ above preserves all colimits that exist in $\icate$, it follows that the full subcategory $\mathfrak{X}\subset\icate$ is closed under the formation of geometric realizations. By \cite[Proposition 4.7.3.14]{LurHA} and the assumption that $(L_{\icate}\adj U_{\icate})$ is monadic, every object of $\icate$ is obtained as a geometric realization of a $U_{\icate}$-split simplicial object for which each term lies in the image of $L_{\icate}$. Hence, it suffices to show that $\mathfrak{X}$ contains the image of $L_{\icate}$. But for $E_0\in \icate_0$, we have equivalences 
\begin{align*}
    \Hom_{\icate}\left(L_{\icate}(E_0),G(D)\right) &\simeq  \Hom_{\icate_0}\left(E_0,U_{\icate}G(D)\right) \\
    &\simeq \Hom_{\icate_0}\left(E_0,G_0U_{\icatd}(D)\right) \\
     &\simeq \Hom_{\icatd_0}\left(F_0(E_0),U_{\icatd}(D)\right) \\
      &\simeq \Hom_{\icatd}\left(L_{\icatd}F_0(E_0),D\right) 
\end{align*}
natural in $D$, so that the functor $\Hom_{\icat}(L_{\icate}(E_0),G(\_))$ is corepresented by $L_{\icatd}F_0(E_0)$. Now assume that additionally, $U_{\icatd}$ preserves geometric realizations. We show $(3)$. Since $U_{\icate}$ preserves geometric realizations of $U_{\icate}$-split simplicial objects, the collection of objects $E\in \icate$ for which the mate
\[  F_0U_{\icate}(E)\longrightarrow U_{\icatd}F(E)  \]
is an equivalence is stable under geometric realizations of $U_{\icate}$-split simplicial objects. Since every object of $E$ is a geometric realization of a $U_{\icate}$-split simplicial object for which each term is of the form $L_{\icate}(E_0)$ for some $E_0\in\icate_0$, we conclude.
\end{proof}

Let us now detail the proof of Theorem \ref{thm:main-lifting-lemma} for $n=2$, that is, for the $(\infty,3)$-category $\umor_2(\icat)$; the proof for the general case will also involve an inductive procedure and will be given at the end of this section. Let $\F:M\rightarrow N$ be a 2-morphism in $\umor_2(\icat)$ between 1-morphisms $M,N:A\rightarrow B$ for $A$ and $B$ constructible factorization algebras on $\square^2$. Suppose that the underlying $1$-morphism $\pi_*(\F)$ in $\umor_1(\icat)$ admits a left adjoint $\pi_*(\F)^L$. Write $\icatd=\Hom^1_{\umor_2(\icat)}(A,B)$ and $\icate=\umor_1(\icat)$, then according to Lemma \ref{lem:adjointcriterion}, we must show that for every map $h:Z'\rightarrow Z$ in $\Hom^1_{\umor_2(\icat)}(A,B)$, the top square in the diagram
\begin{equation}\label{eq:homcube}
\adjustbox{scale=0.75}{
\begin{tikzcd}
& \Hom_{\icatd}(Z,M) \arrow[rr,"\F\circ "] \arrow[dd] \ar[dl,"\circ h"'] & & \Hom_{\icatd}(Z,N) \arrow[dl,"\circ h"]\arrow[dd] \\
\Hom_{\icatd}(Z',M)  \arrow[rr, crossing over,"\F\circ "{xshift=12pt}] \arrow[dd] & & \Hom_{\icatd}(Z',N)   \\
& \Hom_{\icate}(\pi_*(Z),\pi_*(M))\arrow[dl,"\circ \pi_*(h) "'] \arrow[rr,"\pi_*(\F)\circ "{xshift=-18pt}] & & \Hom_{\icate}(\pi_*(Z),\pi_*(N)) \arrow[dl,"\circ  \pi_*(h)"]\\
\Hom_{\icate}(\pi_*(Z'),\pi_*(M))\arrow[rr,"\pi_*(\F)\circ "] & & \Hom_{\icate}(\pi_*(Z'),\pi_*(N))\arrow[from=uu, crossing over]\\
\end{tikzcd}}
\end{equation}
among \infcats is horizontally left adjointable, where the downwards maps are induced by the functor $\pi_*:\Hom^1_{\umor_2(\icat)}(A,B)\rightarrow \umor_1(\icat)$. To see that the top horizontal maps have left adjoints under the assumption that the bottom horizontal maps have left adjoints, we apply Lemma \ref{lem:adjointcons}. Since the vertical functors are induced by pushing forward pointless constructible factorization algebras along the projections $\square^2_{[1,n]}\rightarrow \square_{[n]}$ onto the second coordinate, it is straightforward to see that conditions $(1)$, $(2)$ and $(3)$ of Lemma \ref{lem:adjointcons} are satisfied; in particular, all vertical functors are monadic right adjoints. Let $F^L_{Z}$ and $F^L_{Z'}$ be the resulting left adjoints of the two top horizontal maps, then to complete the proof, we must show that the mate transformation
\[   F^L_{Z'}\circ (\_\circ h) \longrightarrow (\_\circ h) \circ F^L_Z  \]
of the top square is an equivalence. Another application of Lemma \ref{lem:adjointcriterion} tells us that the corresponding mate transformation 
\[  (\pi_*(\F)^L\circ \_)\circ (\_\circ \pi_*(h)) \longrightarrow (\_\circ \pi_*(h)) \circ  (\pi_*(\F)^L\circ \_) \]
of the \emph{bottom} square is an equivalence. Since the vertical functors are in particular conservative, it will suffice to show that the vertical functors carry the mate of the upper square to the mate of the lower square. For this, it is enough to argue that the mates of the \emph{front} and the \emph{back} squares are equivalences; this is a consequence of the following lemma and its corollary.
\begin{lem}[Horizontal pasting of vertical mates]\label{lem:pastingofmates}
Let
\[
\begin{tikzcd}
    \icat \ar[d,"U_{\icat}"] \ar[r,"f"] & \icatd \ar[d,"U_{\icatd}"] \ar[r,"h"] & \icate \ar[d,"U_{\icate}"]\\
    \icat'\ar[r,"f'"] &  \icatd' \ar[r,"h'"] & \icate'
\end{tikzcd}
\]
be a composite $\Delta^2\times\Delta^1\rightarrow \catinf$ of squares of \infcatst. Suppose that the vertical functors have left adjoints $L_{\icat}$, $L_{\icatd}$ and $L_{\icate}$. Then the mate of the outer rectangle is a composite
\[ L_{\icate}\circ h'\circ  f'\longrightarrow h\circ L_{\icatd}\circ f' \longrightarrow h\circ f\circ L_{\icat}   \]
of the mates of the two squares. 
\end{lem}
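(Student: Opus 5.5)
This is the standard pasting law for mates: expand both sides in terms of units and counits, then simplify using naturality and the triangle identities. First I fix the data. Write $\eta_{\icat}\colon \mathrm{id}\to U_{\icat}L_{\icat}$ and $\epsilon_{\icat}\colon L_{\icat}U_{\icat}\to\mathrm{id}$ for the unit and counit, and similarly for $\icatd$ and $\icate$. Write $\alpha\colon U_{\icatd}f\simeq f'U_{\icat}$ and $\beta\colon U_{\icate}h\simeq h'U_{\icatd}$ for the commutativity witnesses of the two squares. The outer rectangle is witnessed by the pasted equivalence
\[
U_{\icate}hf\xrightarrow{\beta f}h'U_{\icatd}f\xrightarrow{h'\alpha}h'f'U_{\icat}.
\]

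Following the definition in the remark on mates, the mate of the left square is
\[
L_{\icatd}f'\xrightarrow{L_{\icatd}f'\eta_{\icat}}L_{\icatd}f'U_{\icat}L_{\icat}\overset{\alpha^{-1}}{\simeq}L_{\icatd}U_{\icatd}fL_{\icat}\xrightarrow{\epsilon_{\icatd}fL_{\icat}}fL_{\icat}.
\]
The mate of the right square is defined in the same way, with $\eta_{\icatd}$, $\beta$ and $\epsilon_{\icate}$ in place of $\eta_{\icat}$, $\alpha$ and $\epsilon_{\icatd}$.

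Next I write out the composite of the whiskered mates,
\[
L_{\icate}h'f'\to hL_{\icatd}f'\to hfL_{\icat}.
\]
It is a chain of the form: unit $\eta_{\icatd}$, then $\beta^{-1}$, then counit $\epsilon_{\icate}$, then unit $\eta_{\icat}$, then $\alpha^{-1}$, then counit $\epsilon_{\icatd}$. The goal is to reduce this to the outer mate, which uses only $\eta_{\icat}$, $(h'\alpha\circ\beta f)^{-1}$ and $\epsilon_{\icate}$.

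The key steps are as follows.
\begin{enumerate}
\item By the interchange law (naturality), commute the counit $\epsilon_{\icate}$ past the later steps $\eta_{\icat}$ and $\alpha^{-1}$, which act on different whiskers. This moves $\epsilon_{\icate}$ to the end of the chain.
\item Use naturality of $\beta$ with respect to the 2-cell $\epsilon_{\icatd}$. This turns the middle portion into the whiskered string
\[
h'\eta_{\icatd}U_{\icatd}\;\text{followed by}\;h'U_{\icatd}\epsilon_{\icatd},
\]
applied around $fL_{\icat}$.
\item Apply the triangle identity $U_{\icatd}\epsilon_{\icatd}\circ\eta_{\icatd}U_{\icatd}\simeq\mathrm{id}$ to cancel this string.
\item What remains is $\eta_{\icat}$, followed by the pasted witness $(h'\alpha)(\beta f)$ inverted, followed by $\epsilon_{\icate}$. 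This is exactly the outer mate.
\end{enumerate}

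The only real obstacle is coherence in the $\infty$-categorical setting: the identifications above must be natural equivalences of functors, not merely pointwise equalities. I would handle this by working in the homotopy $2$-category of $\catinf$, since all the data involved (adjunctions, mates, and the 2-cells being compared) are detected there. Then the argument above is a literal 2-categorical computation. Alternatively, one can cite the functoriality of mates with respect to pasting, which is part of the equivalence referred to in Remark \ref{rmk:mates} \cite{AGH-laxstraigthening}.
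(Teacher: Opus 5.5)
Your argument is correct, but it is not how the paper proceeds. The paper gives no computation. It notes that compatibility of mates with horizontal pasting is a special case of the $(\infty,2)$-categorical equivalence recalled in Remark~\ref{rmk:mates} (Theorem~E of \cite{AGH-laxstraigthening}), under which passing to mates is functorial, so pasting is respected coherently and for free. You instead verify the identity by hand in the homotopy $2$-category of $\catinf$, using interchange/naturality and a single triangle identity for $L_{\icatd}\dashv U_{\icatd}$.

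That reduction is legitimate. The lemma only asserts that two natural transformations $L_{\icate}h'f'\to hfL_{\icat}$ are homotopic, i.e.\ agree as morphisms of $\mathrm{h}\fun(\icat',\icate)$. This is exactly an equality of $2$-cells in the homotopy $2$-category, and adjunctions between $\infty$-categories descend to adjunctions there. It is also all that Corollary~\ref{cor:pastingofmates} needs, since only invertibility of mates matters there.

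One bookkeeping remark: naturality of $\beta$ against $\epsilon_{\icatd}$ is not the only move your steps $1$ and $2$ require. You also need:
\begin{itemize}
\item interchange, to move $\epsilon_{\icate}$ past $\epsilon_{\icatd}$, and $\beta^{-1}$ past $\eta_{\icat}$ and $\alpha^{-1}$;
\item naturality of $\eta_{\icatd}$ against $\alpha^{-1}$, to commute $\eta_{\icatd}$ past $\alpha^{-1}$, producing $h'\alpha^{-1}$ followed by $h'\eta_{\icatd}U_{\icatd}f$.
\end{itemize}
After that, the triangle identity reduces the middle portion to $\beta^{-1}f\circ h'\alpha^{-1}=(h'\alpha\circ\beta f)^{-1}$, as you claim.

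In short, your route is elementary and self-contained. The paper's citation instead buys full coherence of mates as an $(\infty,2)$-functorial operation, at the price of heavier machinery.
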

This is easy to prove, and a well-known functoriality property of mates; in particular, it is a very simple case of the equivalence described in Remark \ref{rmk:mates} taking mates of naturality squares. The following is immediate.
\begin{cor}\label{cor:pastingofmates}
In the situation of Lemma \ref{lem:pastingofmates}, the following hold.
\begin{enumerate}[$(1)$]
    \item If the left square and the right square are vertically left adjointable, then the outer rectangle is vertically left adjointable.
    \item If the right square and the outer rectangle are vertically left adjointable \emph{and} the functor $h$ is conservative, then the left square is vertically left adjointable.
\end{enumerate}
\end{cor}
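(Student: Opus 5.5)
Both assertions come down to the pasting identity of Lemma \ref{lem:pastingofmates}: the mate of the outer rectangle is the composite
\[ L_{\icate}\circ h'\circ f'\longrightarrow h\circ L_{\icatd}\circ f'\longrightarrow h\circ f\circ L_{\icat} \]
of the right-hand mate whiskered by $f'$, followed by the left-hand mate whiskered by $h$.

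For $(1)$, the two squares being vertically left adjointable means both mates are equivalences. Whiskering an equivalence of functors on either side gives an equivalence, so both arrows in the composite are equivalences. Their composite, which is the mate of the outer rectangle, is therefore an equivalence, and the outer rectangle is vertically left adjointable.

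For $(2)$, the right square is vertically left adjointable, so the first arrow $L_{\icate}h'f'\to hL_{\icatd}f'$ is an equivalence. The outer rectangle is vertically left adjointable, so the composite is an equivalence. Two-out-of-three for equivalences then shows the second arrow, $h\circ(\text{mate of the left square})$, is an equivalence. Since $h$ is conservative, this arrow is an equivalence if and only if the mate of the left square is one: evaluate at each object of $\icat'$ and apply conservativity pointwise, using that a natural transformation is an equivalence exactly when all its components are. Hence the left square is vertically left adjointable.

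The only point requiring any care is that Lemma \ref{lem:pastingofmates} provides the factorization coherently, that is, up to a specified homotopy. I would take this as given, since it is the stated lemma. Everything else is two-out-of-three and componentwise conservativity, so I do not expect any real obstacle.
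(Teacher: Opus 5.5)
Your proposal is correct and is exactly the argument the paper has in mind; the paper simply declares the corollary immediate from Lemma~\ref{lem:pastingofmates}. Part $(1)$ is whiskering and composing equivalences, and part $(2)$ is two-out-of-three followed by componentwise conservativity of $h$.
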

We will momentarily also have need of another pasting property of mates involving squares in which instead the \emph{horizontal} functors have adjoints.
\begin{lem}[Horizontal pasting of horizontal mates]\label{lem:pastingofmates2}
Let
\[
\begin{tikzcd}
    \icat \ar[d,"f"] \ar[r,"U"] & \icatd \ar[d,"h"] \ar[r,"G"] & \icate \ar[d,"k"]\\
    \icat'\ar[r,"U'"] &  \icatd' \ar[r,"G'"] & \icate'
\end{tikzcd}
\]
be a composite $\Delta^2\times\Delta^1\rightarrow \catinf$ of squares of \infcatst. Suppose that the horizontal functors have left adjoints $L$, $F$, and $L'$ and $F'$. Then the mate of the outer rectangle is a composite
\[  L'\circ F'\circ k\longrightarrow L'\circ h\circ F\longrightarrow f\circ L\circ F   \]
of the mates of the two squares. 
\end{lem}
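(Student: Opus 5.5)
The plan is to unwind the mate of the outer rectangle into the definition of the two individual mates and apply naturality of the adjunction data. Write $\eta_U,\epsilon_U$ for the unit and counit of $L\dashv U$, and similarly for $U'$, $G$, $G'$. The composite $G\circ U$ has left adjoint $L\circ F$ with unit $G\eta_U F\circ\eta_G$ and counit $\epsilon_U\circ L\epsilon_G U$, and likewise for $G'\circ U'$ with $L'\circ F'$. By definition, the mate of the outer rectangle (with commutativity witness $k\circ G\circ U\simeq G'\circ U'\circ f$) is
\[ L'F'k\longrightarrow L'F'k\,GU\,LF\simeq L'F'G'U'f\,LF\longrightarrow f\,LF, \]
where the first map uses the unit of $LF\dashv GU$ and the second the counit of $L'F'\dashv G'U'$.

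I would substitute the factorized unit and counit into this formula. The first arrow then becomes two steps: first $\eta_G$, giving $L'F'kGF$, then $\eta_U$ inserted in the middle, giving $L'F'kGULF$. Similarly, the second arrow becomes $L'\epsilon_{G'}U'$ followed by $\epsilon_{U'}$. The pasted commutativity witness splits as $kG\simeq G'h$ followed by $hU\simeq U'f$.

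Next I would use the interchange law (naturality of whiskering) to reorder these four 2-cells so that $\eta_G$ and $\epsilon_{G'}$ appear adjacent, with the witness $kG\simeq G'h$ between them. The resulting piece is $L'$ applied to the mate $F'k\to hF$ of the right square. The remaining pieces, $\eta_U$, the witness $hU\simeq U'f$, and $\epsilon_{U'}$, whiskered on the right by $F$, then assemble into the mate $L'h\to fL$ of the left square evaluated at $F$. These reorderings are legitimate because the 2-cells act on disjoint whiskered slots.

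The main obstacle is doing this coherently in the $\infty$-setting rather than only at the level of the homotopy 2-category. I would sidestep it in one of two ways. The first is to note that the statement is an equivalence of 2-cells, so it suffices to check it in the homotopy 2-category $\mathrm{h}_2\catinf$, where the above 2-categorical argument is valid verbatim. The second is to invoke Remark \ref{rmk:mates}: taking mates is functorial (\cite{AGH-laxstraigthening}, Theorem E), so it respects horizontal composition of squares. This is the dual of Lemma \ref{lem:pastingofmates}, obtained by transposing the roles of horizontal and vertical.
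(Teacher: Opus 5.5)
Your proof is correct and follows the paper's approach: the paper treats these pasting statements as the well-known functoriality of mates (Remark \ref{rmk:mates}), and your interchange computation, checked in the homotopy $2$-category where both sides are just homotopy classes of transformations $L'F'k\Rightarrow fLF$, makes that complete. One slip in your final sentence: this lemma is not the transpose of Lemma \ref{lem:pastingofmates} (transposing it yields vertical pasting of vertical mates), since the two lemmas are distinct compatibilities of the mate correspondence --- here with composition of the transformations $(U,U')$ and $(G,G')$ between the columns $f,h,k$, there with naturality in the indexing diagram.
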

\begin{cor}\label{cor:pastingofmates2}
In the situation of Lemma \ref{lem:pastingofmates2}, the following hold.
\begin{enumerate}[$(1)$]
    \item If the left square and the right square are horizontally left adjointable, then the outer rectangle is horizontally left adjointable.
    \item If the left square and the outer rectangle are horizontally left adjointable \emph{and} the functor $L'$ is conservative, then the right square is horizontally left adjointable.
\end{enumerate}
\end{cor}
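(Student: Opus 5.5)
The plan is to derive Corollary \ref{cor:pastingofmates2} directly from Lemma \ref{lem:pastingofmates2}, which writes the mate of the outer rectangle as the composite
\[ L'\circ F'\circ k\longrightarrow L'\circ h\circ F\longrightarrow f\circ L\circ F. \]
The first arrow is $L'$ applied to the mate $F'\circ k\rightarrow h\circ F$ of the right square. The second arrow is the mate $L'\circ h\rightarrow f\circ L$ of the left square, whiskered on the right by $F$.

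For $(1)$: if both squares are horizontally left adjointable, both mates are equivalences. Whiskering preserves equivalences, so both arrows in the composite are equivalences. By two-out-of-three, the mate of the outer rectangle is an equivalence, i.e. the rectangle is horizontally left adjointable. The left adjoints of the composite horizontal functors are $L\circ F$ and $L'\circ F'$, as the lemma presupposes.

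For $(2)$: suppose the left square and the outer rectangle are horizontally left adjointable. Then the second arrow, the whiskered mate of the left square, is an equivalence, and so is the composite. By two-out-of-three, the first arrow $L'(\text{mate}_{\mathrm{right}})$ is an equivalence. Since $L'$ is conservative, the mate $F'\circ k\rightarrow h\circ F$ of the right square is itself an equivalence, evaluated objectwise on $\icate$. Hence the right square is horizontally left adjointable.

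The only point needing care, and the expected obstacle, is that Lemma \ref{lem:pastingofmates2} gives the outer mate as this composite \emph{naturally}, with the first factor literally $L'$ applied to the right mate. This is the standard compatibility of mates with horizontal pasting, which I take as given from the lemma. Conservativity is then used objectwise: for each $E\in\icate$, the map $F'k(E)\rightarrow hF(E)$ becomes an equivalence after applying $L'$, hence is already one.
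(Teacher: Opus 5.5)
Your proposal is correct and matches the paper's intended argument. The paper states this corollary without proof as an immediate consequence of Lemma~\ref{lem:pastingofmates2}. Your deduction is exactly that: for $(1)$, two-out-of-three on the factorization $L'F'k\to L'hF\to fLF$; for $(2)$, two-out-of-three, followed by conservativity of $L'$ applied to each component of the mate $F'k\to hF$ of the right square.
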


We now show that the top square of \eqref{eq:homcube} is horizontally left adjointable, given that the bottom square and the front and back square are left adjointable: apply $(1)$ of Corollary \ref{cor:pastingofmates} to the composite of the back and bottom square to conclude that this composed square is horizontally left adjointable. Since this square is equivalent to the composite of the top and front square, we conclude by $(2)$ of Corollary \ref{cor:pastingofmates} that the top square is horizontally left adjointable as well. \\
So, we must show that mates of the front and back squares of \eqref{eq:homcube} are equivalences, that is, for any $Z\in\Hom^1_{\umor_2(\icat)}(A,B)$, the square
\begin{equation}\label{eq:homsquare}
\begin{tikzcd}
\Hom_{\Hom^1_{\umor_2(\icat)}(A,B)}(Z,M) \arrow[r,"\F\circ "] \arrow[d,"\pi_{*(Z,M)}"']  &[2em] \Hom_{\Hom^1_{\umor_2(\icat)}(A,B)}(Z,N)\arrow[d,"\pi_{*(Z,N)}"] \\
 \Hom_{\umor_1(\icat)}(\pi_*(Z),\pi_*(M)) \arrow[r,"\pi_*(\F)\circ "] &[2em] \Hom_{\umor_1(\icat)}(\pi_*(Z),\pi_*(N))
\end{tikzcd}
\end{equation}
of \infcats is horizontally left adjointable.
\begin{rmk}\label{rmk:extendfactorizationalg}
According to Theorem \ref{thm:main-lifting-lemma} and Lemma \ref{lem:adjointcriterion}, the upper horizontal left adjoint $F^L_Z$ must be given by composing with a morphism $\F^L\in \Hom_{\icatd}(N,M)$. The horizontal left adjointability then asserts that that this morphism is carried to $\pi_*(\F)^L$ by the functor $\pi_*:\Hom_{\umor_2(\icat)}^1(A,B)\rightarrow \umor_1(\icat)$. In the language of factorization algebras, we are given a pointless constructible factorization algebra as on the left below, together with a left adjoint $\pi_*(\F)^L{}$ of the projection to second coordinate. 
\begin{equation*}
\begin{tikzpicture}[scale=1.3]
\filldraw[opacity=0.1] (1,0) -- (0,0) -- (0,2)--(1,2);
\filldraw[opacity=0.1] (1,0) -- (2,0) -- (2,2)--(1,2);
\draw[thick] (1,0) -- (1,2);
\fill (1,1) circle (0.05);
\node at (1.2,0.2) {$M$};
\node at (1.2,1.8) {$N$};
\node at (1.2,1) {$\F$};
\node at (0.2,1) {$A$};
\node at (1.8,1) {$B$};
\end{tikzpicture} \qquad \qquad\begin{tikzpicture}[scale=1.3]
\draw[thick] (1,0) -- (1,2);
\fill (1,1) circle (0.05);
\node at (1.2,0.2) { $M$};
\node at (1.2,1.8) {$N$};
\node at (1.5,1) {$\pi_*(\F)$};
\end{tikzpicture} 
\end{equation*}
This left adjoint is another pointless constructible factorization algebra $\pi_*(\F)^L$ on $\square_{[1]}$ whose value on $(0,1/2)$ is $N$ and whose value on $(1/2,1)$ is $M$ as depicted on the left below, and the aforementioned results assert that this one-dimensional factorization algebra can be extended to a pointless constructible factorization algebra on $\square^2_{[1,1]}$ whose value on $(0,1/2)\times (0,1)$ is $A$ and whose value on $(1/2,1)\times (0,1)$ is $B$, as depicted on the right below, so that $\pi_*(\F^{L})\simeq \pi_*(\F)^L$.
\begin{equation*}
\begin{tikzpicture}[scale=1.3]
\draw[thick] (1,0) -- (1,2);
\fill (1,1) circle (0.05);
\node at (1.2,0.2) {$N$};
\node at (1.2,1.8) {$M$};
\node at (1.55,1) {$\pi_*(\F)^L$};
\end{tikzpicture} 
\qquad  \qquad \begin{tikzpicture}[scale=1.3]
\filldraw[opacity=0.1] (1,0) -- (0,0) -- (0,2)--(1,2);
\filldraw[opacity=0.1] (1,0) -- (2,0) -- (2,2)--(1,2);
\draw[thick] (1,0) -- (1,2);
\fill (1,1) circle (0.05);
\node at (1.2,0.2) {$N$};
\node at (1.2,1.8) {$M$};
\node at (1.25,1) {$\F^L$};
\node at (0.2,1) {$A$};
\node at (1.8,1) {$B$};
\end{tikzpicture}
\end{equation*} 
\end{rmk}

To prove the horizontal left adjointability of the square \eqref{eq:homsquare}, we aim to apply the last criterion of Lemma \ref{lem:adjointcons}. To this end, it is convenient to have a better understanding of the left adjoints of the vertical maps of the diagram \eqref{eq:homcube}. For this, we observe an equivalence 
\[  \umor_1(\icat)\simeq \Hom^1_{\umor_2(\icat)}(\mathbb{1}_{\icat},\mathbb{1}_{\icat}) = \Omega_{\mathbb{1}_{\icat}}(\umor_2(\icat))  \]
of $(\infty,2)$-categories (the higher Morita categories form a \emph{categorical spectrum}). Assume for simplicity that $B=\mathbb{1}_{\icat}$; the general case amounts to simply repeating the argument below. The equivalence $\umor_1(\icat)\simeq \Hom^1_{\umor_2(\icat)}(\mathbb{1}_{\icat},\mathbb{1}_{\icat})$ then entails that the functor
\[ \pi_*: \Hom^1_{\umor_2(\icat)}(A,\mathbb{1}_{\icat}) \longrightarrow  \umor_1(\icat)  \simeq \Hom^1_{\umor_2(\icat)}(\mathbb{1}_{\icat},\mathbb{1}_{\icat}) \]
is given by $\_\circ {}_{\bb{1}_{\icat}}A_{A}$, precomposing with the right regular factorization bimodule induced by the map $\eta_A:\mathbb{1}_{\icat}\rightarrow A$ of $\bb{E}_2$-algebras\footnote{Recall that technically, objects in the univalent completion $\umor_n(\icat)$ are not locally constant factorization algebras on $\R^n$ on the nose, like in $\mor_n(\icat)$. However, every object of $\umor_n(\icat)$ is equivalent to one in the image of the completion $\mor_n(\icat)\rightarrow \umor_n(\icat)$.}. The 1-morphism ${}_{\bb{1}_{\icat}}A_{A}$ has a right adjoint ${}_AA_{\bb{1}_{\icat}}$, the left regular bimodule ($(4)$ and $(5)$ of Proposition \ref{prop:regularbimodajoint}). It follows that the functor $\_\circ {}_{\bb{1}_{\icat}}A_A$ has a \emph{left} adjoint $\_\circ {}_AA_{\bb{1}_{\icat}}$. For any $Z\in \Hom^1_{\umor_2(\icat)}(A,\bb{1}_{\icat})$, the counit 2-morphism $\epsilon_Z:(\_\circ {}_AA_{\bb{1}_{\icat}})\circ ( \_\circ {}_{\bb{1}_{\icat}}A_{A})(Z)\rightarrow Z$ is again a right regular bimodule, induced by a map of $\bb{E}_1$-algebras (in $A$-$\bb{1}_{\icat}$-bimodules); this follows from $(3)$ of Proposition \ref{prop:regularbimodajoint}, since the counit map $\epsilon_Z$ is the horizontal composite of the counit of $\left({}_{\bb{1}_{\icat}}A_A\adj {}_{A}A_{\bb{1}_{\icat}}  \right)$ with the identity on $Z$. In fact, it's not hard to see that the counit $\epsilon_Z$ is the right regular bimodule of the action map 
\[  A\otimes \pi_*(Z)\longrightarrow Z \]
of $\E_1$-algebras in left $A$-modules (note that $\pi_*(Z)$ is just $Z$ with the left $A$-module structure forgotten). For ease of notation, we will use this identification below, but our argument only uses that $\epsilon_Z$ is a right regular bimodule. For any $W\in \umor_1(\icat)$, $(3)$ of Proposition \ref{prop:regularbimodajoint} guarantees that also the unit 2-morphism $\eta_W:W\rightarrow ( \_\circ {}_{\bb{1}_{\icat}}A_{A})\circ (\_\circ {}_AA_{\bb{1}_{\icat}})(W)$ is a right regular bimodule, in this case associated to the obvious map 
\[ \bb{1}_{\icat}\otimes W\longrightarrow \pi_*(A\otimes W) \]
of $\bb{E}_1$-algebras. Now consider the diagram \eqref{eq:homcube} for the case of $h:Z'\rightarrow Z$ being the counit $\epsilon_Z:A\otimes \pi_*(Z)\rightarrow Z$:
\begin{equation}\label{eq:homcube2}
\adjustbox{scale=0.68}{
\begin{tikzcd}
& \Hom_{\icatd}(Z,M) \arrow[rr,"\F\circ "] \arrow[dd] \ar[dl,"\circ \epsilon_Z"'] & & \Hom_{\icatd}(Z,N) \arrow[dl,"\circ \epsilon_Z ",red]\arrow[dd,red] \\
\Hom_{\icatd}(A\otimes \pi_*(Z),M)  \arrow[rr, crossing over,"\F\circ "{xshift=12pt}] \arrow[dd] & & \Hom_{\icatd}(A\otimes \pi_*(Z),N)   \\
& \Hom_{\icate}(\pi_*(Z),\pi_*(M))\arrow[dl,"\circ \pi_*(\epsilon_Z) "'] \arrow[rr,"\pi_*(\F)\circ "{xshift=-18pt}] & & \Hom_{\icate}(\pi_*(Z),\pi_*(N)) \arrow[dl,"\circ  \pi_*(\epsilon_Z) "]\\
\Hom_{\icate}(\pi_*(A\otimes \pi_*(Z)),\pi_*(M))\arrow[rr,"\pi_*(\F)\circ "] & & \Hom_{\icate}(\pi_*(A\otimes \pi_*(Z)),\pi_*(N)).\arrow[from=uu, crossing over]\\
\end{tikzcd}}
\end{equation}
Note that in this diagram, the \emph{top} square and the \emph{back} square are equivalent (but the \emph{bottom} square and the \emph{front} square are not). We wish to show that the back square is horizontally left adjointable; let its mate be denoted by $\alpha$. We claim that it suffices to show that the front square is horizontally left adjointable. To see this, we argue as follows. Since $\epsilon_Z$ is a right regular bimodule, it has a right adjoint $\epsilon_Z^R$, so it follows from Corollary \ref{cor:inducedadjointability} (applied to $\icatd^{1-op}$ and $\icate^{1-op}$!) that the left and right side squares are horizontally left adjointable. According to Lemma \ref{lem:adjointcons}, to show that the mate $\alpha$ of the back square is an equivalence, it suffices to show that the composite of the \emph{left adjoint} of the vertical red arrow with the mate $\alpha$ is an equivalence. Since the vertical red arrow is equivalent to the diagonal red arrow, this is the same as showing that the composite
\[ \alpha \circ (\_\circ \epsilon_Z^R)  \]
is an equivalence. Since the mate of the right side square is an equivalence, it suffices to show that the composite of the mate of the right side square with the mate of the back square is an equivalence. By Lemma \ref{lem:pastingofmates2}, this composite of mates is also the composite of the mates of the front square and the left side square. Since the mate of the left side square is an equivalence, we indeed reduce to proving that the mate of the front square is an equivalence. Note that this front square is again an instance of the diagram \eqref{eq:homsquare}, with $Z$ in that diagram replaced with $A\otimes \pi_*(Z)$. Write $W=\pi_*(Z)$, then front square of \eqref{eq:homcube2} can be identified with the top one in the diagram 
\[
\begin{tikzcd}
\Hom_{\icatd}(A\otimes W,M)  \arrow[r,"\F\circ "] \arrow[d,"\circ \epsilon_{A\otimes W}"]  & \Hom_{\icatd}(A\otimes W,N)  \ar[d,"\circ \epsilon_{A\otimes W}"] \\    \Hom_{\icatd}(A\otimes \pi_*(A\otimes W),M)\arrow[r,"\F\circ "]\ar[d] & \Hom_{\icatd}(A\otimes \pi_*(A\otimes  W),N) \ar[d]\\
\Hom_{\icatd}(A\otimes W,M)\arrow[r,"\F\circ "] &  \Hom_{\icatd}(A\otimes W,N) 
\end{tikzcd}
\]
in which the top vertical maps are given by precomposing with the counit $\epsilon_{A\otimes W}$ and the lower vertical maps are given by precomposing with the image under $A\otimes (\_)$ of the unit $\eta_W$ of $W$. The vertical composites are equivalences by the triangle identity, so invoking $(2)$ of Corollary \ref{cor:pastingofmates} again, it suffices to show that the lower square is horizontally left adjointable and that the lower vertical maps are conservative. This lower square can be identified with the square 
\[
\begin{tikzcd}
   \Hom_{\icate}( \pi_*(A\otimes W),\pi_*(M))\arrow[r,"\pi_*(\F)\circ "]\ar[d,"\circ \eta_W"] &[2em] \Hom_{\icate}(\pi_*(A\otimes W),\pi_*(N)) \ar[d,"\circ \eta_W"]\\
\Hom_{\icate}( W,\pi_*(M))\arrow[r,"\pi_*(\F)\circ "] &[2em]  \Hom_{\icate}( W,\pi_*(N))   
\end{tikzcd}
\]
in which the vertical functors are given by precomposing with the unit of $W$. This square is left adjointable, and the vertical functors are conservative, as they are given by precomposing with a right regular bimodule (Corollary \ref{cor:regbimodconservative}). This concludes the proof of Theorem \ref{thm:main-lifting-lemma} for the Morita $(\infty,3)$-category. 

\begin{proof}[Proof of Theorem \ref{thm:main-lifting-lemma}]
The proposition follows immediately from the conjunction of the following statements $(*_n)$ for $n\geq 2$.
\begin{enumerate}
    \item[$(*_n)$] An $n$-morphism in the higher Morita $(\infty,n+1)$-category $\umor_n(\icat)$ admits a left adjoint if and only if the underlying $(n-1)$-morphism in the $(\infty,n)$-category $\umor_{n-1}(\icat)$ admits a left adjoint.
\end{enumerate}
Let $R,S$ be objects of $\umor_n(\icat)$ and let $\pi_*$ denote the functor
\[ \Hom^1_{\umor_n(\icat)}(R,S) \longrightarrow \umor_{n-1}(\icat).  \]
This functor is equivalent to the composite
\[ \Hom^1_{\umor_n(\icat)}(R,S) \longrightarrow \Hom^1_{\umor_n(\icat)}(R,\bb{1}_{\icat}) \longrightarrow  \Hom^1_{\umor_n(\icat)}(\bb{1}_{\icat},\bb{1}_{\icat}) \simeq \umor_{n-1}(\icat) \]
induced by the maps $\bb{1}_{\icat}\rightarrow R$ and $\bb{1}_{\icat}\rightarrow S$, so it suffices to show the following.
\begin{enumerate}[$(a)$]
    \item The functor $\Hom^1_{\umor_n(\icat)}(R,S) \rightarrow \Hom^1_{\umor_n(\icat)}(R,\bb{1}_{\icat})$ induced by $\bb{1}_{\icat}\rightarrow S$ detects whether an $(n-1)$-morphism has a left adjoint.
    \item The functor $\Hom^1_{\umor_n(\icat)}(R,\bb{1}_{\icat}) \rightarrow \Hom^1_{\umor_n(\icat)}(\bb{1}_{\icat},\bb{1}_{\icat})$ induced by $\bb{1}_{\icat}\rightarrow R$ detects whether an $(n-1)$-morphism has a left adjoint.
\end{enumerate}
The proofs are essentially identical; we only do $(b)$. So assume that $S=\bb{1}_{\icat}$. Let $A,B$ be $(n-2)$-morphisms among $(n-3)$-morphisms $C,D$ among... among the objects $R$ and $\bb{1}_{\icat}$. Let $\pi_{*(A,B)}$ denote the induced functor
\[ \pi_{*(A,B)}:\Hom^{n-1}_{\umor_n(\icat)}(A,B) \longrightarrow \Hom^{n-2}_{\umor_n(\icat)}(\pi_*(A),\pi_*(B))  \]
on $(\infty,2)$-categories between $(n-1)$-morphisms and $(n-2)$-morphisms. We must show that a 1-morphism in $\Hom^{n-1}_{\umor_n(\icat)}(A,B)$ admits a left adjoint if and only if its image under $\pi_{*(A,B)}$ admits a left adjoint. We claim that it suffices to show the following.
\begin{enumerate}
    \item[$(\bullet)$] There exists an object $A'$ of $\Hom_{\umor_n(\icat)}^{n-2}(C,D)$ and a map $A'\rightarrow A$ of $\bb{E}_{2}$-algebras internal to $C$-$D$-bimodules such that the functor 
    \[  \pi_{*(A,B)}: \Hom_{\umor_n(\icat)}^{n-1}(A,B)\longrightarrow  \Hom_{\umor_{n-2}(\icat)}^{n-2}(\pi_*(A),\pi_*(B))  \]
    of $(\infty,2)$-categories is equivalent to the functor 
    \[  (\_) \circ {}_{A'}A_A:\Hom_{\umor_n(\icat)}^{n-1}(A,B)\longrightarrow  \Hom_{\umor_{n-1}(\icat)}^{n-1}(A',B)  \]
    that precomposes with the right regular bimodule ${}_{A'}A_A$.
\end{enumerate}
To see that the assertion $(\bullet)$ implies the proposition, note that repeating the argument below Remark \ref{rmk:extendfactorizationalg} verbatim allows one to conclude that a 1-morphism of $\Hom_{\umor_n(\icat)}^{n-1}(A,B)$ admits a left adjoint if and only if its image under the functor $(\_)\circ {}_{A'}A_A$ composing with the right regular bimodule admits a left adjoint. We thereby reduce to proving $(\bullet)$. This follows from repeatedly applying the following observation: let $X,Y$ be $k$-morphisms of $\umor_n(\icat)$ for $0\leq k\leq n-2$ with common domain and codomain and let ${}_{X'}X_X:X'\rightarrow X$ be a right regular bimodule, inducing a functor
\[ G:=(\_)\circ {}_{X'}X_X: \Hom^{k+1}_{\umor^n(\icat)}(X,Y)\longrightarrow \Hom^{k+1}_{\umor^n(\icat)}(X',Y)\]
with \emph{left} adjoint $L:=(\_)\circ {}_{X}X_{X'}$.
Let $V,W\in\Hom^{k+1}_{\umor_n(\icat)}(X,Y)$, then the following hold.
\begin{enumerate}[(1)]
    \item The induced functor 
    \[  G_{V,W}:\Hom^{k+2}_{\umor^n(\icat)}(V,W)\longrightarrow \Hom^{k+2}_{\umor_n(\icat)}(G(V),G(W))  \]
    is equivalent to the functor
     \[  \Hom^{k+2}_{\umor_n(\icat)}(V,W)\longrightarrow \Hom^{k+2}_{\umor_n(\icat)}(LG(V),W)  \]
     induced by precomposing with the counit $LG(V)\rightarrow V$.
     \item The counit $LG(V)\rightarrow V$ is again a right regular bimodule. 
\end{enumerate}
The assertion $(1)$ is a standard immediate consequence of the definition of an adjunction. For $(2)$, we note that by $(4)$ and $(5)$ of Proposition \ref{prop:regularbimodajoint}, the counit of the adjunction $\left({}_{X'}X_X\adj {}_XX_{X'}\right)$ is a right regular bimodule. Then the map $LG(V)\rightarrow V$ is obtained by horizontally composing this counit with the identity on $V$, so it follows from  $(3)$ of Proposition \ref{prop:regularbimodajoint} that this map is also a right regular bimodule. 
\end{proof}

\subsection{$n$-dualizability in the $\E_n$-Morita category}\label{sec:n-duals}

In \cite{GS} $n$-dualizability of any $\E_n$-algebra was established by explicitly constructing unit and counit data. 
In light of the lifting-of-adjoints argument developed in \Cref{sec:lifting_lemma}, we wish to compute the underlying bimodules of the unit and counit morphisms witnessing the top adjunctions. We revisit the constructions in \cite{GS} with this in mind. We first recall the construction of the dual of an $\E_n$-algebra. 

\begin{cons}\label{cons:duals}
Let 
\[r:(0,1)\longrightarrow(0,1),\quad\quad  x\longmapsto 1-x\] 
be the reflection of the interval. Let $\rev  := r \times \Id: (0,1) \times (0,1)^{n-1} \to (0,1) \times (0,1)^{n-1}$ be reflection in the first interval. For a locally constant factorization algebra $\F$ on $(0,1)^n$, we write $\Ss^{\rev}:=\rev_{\sharp}(\F)$ to be the locally constant factorization algebra given by the pushforward of $\F$ along $\rev$. Now consider the functions
\[f^1:(0,1)\longrightarrow (0,1),\qquad x \longmapsto 1/2\sin(\pi x),\]
and $f^0 = 1-f^1$. We will call them {\em fold maps}. For a factorization algebra $\F$ on $(0,1)^n$, let $\F^{>}$ be the factorization algebra given by the pushforward of $\F$ along $f^1\times\Id_{(0,1)^{n-1}}$. If $\F$ is locally constant, then $\F^{>}$ is constructible with respect to the stratification \[\pi_{[1,\vec{0}]}:(0,1)^n\longrightarrow P_{[1]}\times P_{[\vec{0}]}\]
and thus defines a 1-morphism in $\mor_n(\icat)$. The source of $\F^{>}$ can be identified with $\F^\rev \otimes \F$ and the target is unit $\bb{1}_{\icat}$. Similarly, let $\F^{<}$ be the pushforward of $\F$ along $f^0\times \Id_{(0,1)^{n-1}}$. For $\F$ locally constant, $\F^{<}$ is identified with a 1-morphism from $\bb{1}_{\icat}$ to $\F \otimes \F^{\rev}$ in $\mor_n(\icat)$.
\end{cons}

\begin{prop}[\cite{GS}]\label{prop:En-alg-dual}
Let $\F$ be a locally constant factorization algebra on $(0,1)^n$ valued in $\icat$. Then $\F$ is 1-dualizable as an object in $\mor_n(\icat)$ with dual $\F^{\rev}$ and evaluation and coevaluation 1-morphisms given by
\begin{align*}
    \ev_{\F} = {\F^{>}} \qquad \text{and} \qquad \coev_{\F} = {\F^{<}}.
\end{align*}
\end{prop}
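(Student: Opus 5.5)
The plan is to verify the two snake (zig-zag) identities for the pair $(\ev_{\F},\coev_{\F})$ directly at the level of factorization algebras, using pushforward along collapse-rescale maps. The first step is to check that the data are well-typed. Because $f^1\times\Id$ folds $(0,1)$ onto $(0,1/2]$ with critical value $1/2$, the preimage of a small interval $(a,1/2]$ is a neighbourhood of $1/2$, and the preimages of intervals avoiding $1/2$ are pairs of intervals. Consequently $\F^{>}$ is constructible for $\pi_{[1,\vec 0]}$. Its value near the left stratum is $\F^{\rev}\otimes\F$, with the orientation of the left half of $(0,1)$ reversed, and its value near the right stratum is $\bb{1}_{\icat}$, since the preimage of $(1/2,1)$ is empty. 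I would check constructibility on disk-like cubes, so that Lemma 8.19 of \cite{KS} is not needed for these non-collapse-rescale maps. The same argument applies to $\F^{<}$. Since $\F$ is locally constant, no marked points appear (there is no depth-$n$ stratum for $k=1<n$; for $n=1$ the marking at $1/2$ is compatible because the preimage of a neighbourhood of $1/2$ is a connected interval).

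Next, I would express each snake composite as a single pushforward. By the Segal condition (Proposition \ref{prop:moritasegal}), a composite of 1-morphisms is obtained by gluing on $\square_{[2]}$ and pushing forward along $\collresc(\alpha)$ for $\alpha:[2]\hookleftarrow\{0,2\}$. The horizontal tensoring with identities is realised as disjoint union in the transverse direction of $\square^n$, using the symmetric monoidal structure \eqref{eq:symmonenhance2}. Composing with the fold maps, the composite $(\Id_{\F}\otimes\ev_{\F})\circ(\coev_{\F}\otimes\Id_{\F})$ becomes $g_{\sharp}\F$ for an explicit continuous map $g:(0,1)^n\to(0,1)^n$. This map is of the form $s\times\Id$, where $s:(0,1)\to(0,1)$ is a ``zig-zag'': increasing, then decreasing, then increasing, with the two turning points collapsed to the stratification point $1/2$ after the collapse-rescale. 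I would track the stratifications, which is bookkeeping with the posets $\overline P_{[k]}$ and Lemma \ref{lem:collaplsrescale}.

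Finally, I would show that $(s\times\Id)_{\sharp}\F$ is equivalent, as a 1-morphism from $\F$ to $\F$, to the identity (the constant pushforward along a rescaling, i.e. the quasi-unit of Proposition \ref{prop:moritaquasiunit}). The key input is homotopy invariance: $s$ is homotopic, through maps $s_t$ for which $(s_t\times\Id)_{\sharp}\F$ stays constructible for the fixed stratification, to a monotone homeomorphism. I would realise this by shrinking the overlap region of the zig-zag. A locally constant algebra pushed forward along such an isotopy yields equivalent factorization algebras, which I would prove by a Weiss-cover/disk argument as in Theorem \ref{thm:markedlocalization}: on disks the values depend only on the isotopy class of the preimage configuration. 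Alternatively, I would make a direct identification on disk-like cubes and use Corollary \ref{cor:fact=alg}. The second snake identity is symmetric, via $\rev$.

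The main obstacle is this last homotopy-invariance step. Pushforward along non-stratified, non-collapse-rescale maps is not covered by the functoriality of Section \ref{sec:moritacat}, so the equivalence $(s\times\Id)_{\sharp}\F\simeq\F$ must be built by hand and must also be compatible with the bimodule structures, not only objectwise. I would first attempt a natural comparison map induced by an inclusion of marked opens $s^{-1}(U)\supset\phi^{-1}(U)$ for a monotone $\phi$ with $\phi\le s$ suitably, and then check that it is an equivalence on disk-like cubes using local constancy.
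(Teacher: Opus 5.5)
The paper does not reprove this statement. It imports it from \cite{GS}, where it is proved in the pointed model. As noted in the introduction, the result transfers along the symmetric monoidal forgetful functor $\umor^{\mathrm{ptd}}_n(\icat)\to\umor_n(\icat)$, because $\F^{>}$ and $\F^{<}$ are ordinary factorization algebras.

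Verifying the snake identities directly in the pointless model, as you propose, is a legitimate alternative. As written, however, your argument is missing its only non-formal step. Everything hinges on step 3, the equivalence of $(s\times\Id)_\sharp\F$ with the identity 1-morphism, and neither mechanism you offer provides it.

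\emph{The inclusion comparison cannot exist.} A comparison map assembled from inclusions $\phi^{-1}(U)\subseteq s^{-1}(U)$ needs such an inclusion for every $U$ in a factorizing disk-basis. Such a basis contains arbitrarily small disk-like cubes around every point. At any $x$ with $\phi(x)\neq s(x)$, pick $U\ni\phi(x)$ with $s(x)\notin U$; then $x\in\phi^{-1}(U)\setminus s^{-1}(U)$. So the requirement forces $\phi=s$.

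\emph{Homotopy invariance is not available either.} It is not established in the paper, and \Cref{thm:markedlocalization} localizes disk operads; it says nothing about families of maps. Your remark that values on disks depend only on the isotopy type of the preimages gives objectwise equivalences, not a map of algebras. That is exactly the coherence problem you flag.

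\emph{Step 2 uses the same unproved principle.} The two halves of your single glued pushforward on $\square_{[2]}\times\square^{n-1}$ must be identified with $\coev_{\F}\otimes\Id_{\F}$ and $\Id_{\F}\otimes\ev_{\F}$. These are pushforwards from $(0,1)^n\sqcup(0,1)^n$ that differ from yours by reparametrizations.

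The missing observation is that no homotopy is needed. For the two halves to realize $\coev_{\F}\otimes\Id_{\F}$ and $\Id_{\F}\otimes\ev_{\F}$, the turning values of your zig-zag must be exactly the stratum values $1/3$ and $2/3$ of $\square_{[2]}$, since the critical values of the fold maps sit on the strata. So the whole decreasing branch lies in $[1/3,2/3]$, which $\collresc(\alpha)$ collapses to $1/2$. Thus $s$ is weakly increasing, surjective, and constant on a single closed interval; there is no overlap region to shrink.

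Consequently $s^{-1}$ sends disk-like intervals of $(\square_{[1]},E_{[1]})$ to intervals of $(0,1)$, preserving disjointness and left-to-right order. For $n=1$, the two composites $\cube^{\otimes}_{(\square_{[1]},E_{[1]})}\to\cube^{\otimes}_{(0,1)}\to\mathsf{Assoc}^{\otimes}$ induced by $s^{-1}$ and by $\Id^{-1}$ are then literally the same functor: both record the order of the intervals. The restriction of $\F$ to cubes factors through the localization $\cube^{\otimes}_{(0,1)}\to\mathsf{Assoc}^{\otimes}$ by \Cref{cor:cubelocalization} and \Cref{ex:stratifiedline}. Hence $s_\sharp\F$ and the quasi-unit of $\F$ have equivalent restrictions to disk-like cubes. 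By \Cref{prop:restricttodisks} and \eqref{eq:restricttodisk} they are therefore equivalent, compatibly with source and target.

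For $n\geq 2$ you can run this argument in the first coordinate with coefficients in $\Fact^{\mathrm{cstr}}_{\square^{n-1}}(\icat)$, using \Cref{prop:markedadditivity}. The same argument disposes of the reparametrizations in step 2.
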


Next, we recall the construction of the adjoints for a $k$-morphism $\F$ in $\mor_n(\icat)$. We first consider the case $n=2$.

\begin{cons}\label{cons:adjoints}
Let 
\[\inv: (0,1)^2\longrightarrow (0,1)^2,\qquad  (x,y) \longmapsto(1-x,1-y) \] denote the inversion. Let $\F$ be a locally constant factorization algebra on $(0,1)^2$. Since $\inv$ is orientation preserving, there is an equivalence $\inv_*\F \simeq \F$. In particular, $\inv_*\F$ can be identified with $\F$ by either a half-rotation clockwise or a half-rotation anticlockwise. Let $\F$ be a $1$-morphism in $\mor_2(\icat)$ from $R$ to $S$, then $\inv_*\F$ defines a 1-morphism from $\inv_*S$ to $\inv_*R$. Using the half-rotation clockwise convention one can identify $\inv_*\F$ with a $k$-morphism from $S$ to $R$, which we denote by $\F^{(1/2)}$. Similarly, using the half-rotation anticlockwise convention one can identify $\inv_*\F$ with a $k$-morphism from $S$ to $R$, which we denote by $\F^{(-1/2)}$. Pictorially, $\F$ is a constructible factorization algebra on the stratified space
$$\begin{tikzpicture}[>={Stealth[length=0.5mm, width=0.5mm]},scale=1.5]
\draw[red,dash pattern=on 1pt off 1pt] (0,0) -- (2.8,0) -- (2.8,1) -- (0,1)--cycle;
\filldraw[red,opacity=0.4] (0,0) -- (2.8,0) -- (2.8,1) -- (0,1)--cycle;

\draw[->] (0.2,0.5) --  + (0:{0.3});
\draw[->] (0.8,0.5) --  + (0:{0.3});
\draw[->] (1.4,0.5) --  + (0:{0.3});
\draw[->] (2,0.5) --  + (0:{0.3});
\draw[->] (2.6,0.5) --  + (0:{0.3});

\draw[->>] (0.2,0.5) --  + (90:{0.3});
\draw[->>] (0.8,0.5) --  + (90:{0.3});
\draw[->>] (1.4,0.5) --  + (90:{0.3});
\draw[->>] (2,0.5) --  + (90:{0.3});
\draw[->>] (2.6,0.5) --  + (90:{0.3});

\begin{scope}[xshift=-2.8cm]
\draw[blue,dash pattern=on 1pt off 1pt] (0,0) -- (2.8,0) -- (2.8,1) -- (0,1)--cycle;
\filldraw[blue,opacity=0.4] (0,0) -- (2.8,0) -- (2.8,1) -- (0,1)--cycle;

\draw[->] (2.4,0.5) --  + (0:{0.3});
\draw[->] (1.8,0.5) --  + (0:{0.3});
\draw[->] (1.2,0.5) --  + (0:{0.3});
\draw[->] (0.6,0.5) --  + (0:{0.3});
\draw[->] (0.1,0.5) --  + (0:{0.3});

\draw[->>] (2.4,0.5) --  + (90:{0.3});
\draw[->>] (1.8,0.5) --  + (90:{0.3});
\draw[->>] (1.2,0.5) --  + (90:{0.3});
\draw[->>] (0.6,0.5) --  + (90:{0.3});
\draw[->>] (0.1,0.5) --  + (90:{0.3});
\end{scope}

\draw[thick,green!70!black] (0,0)--(0,1);
\end{tikzpicture}$$
where the standard framing from $\R^2$ is depicted. The blue region corresponds to the factorization algebra $R$ and the red region corresponds to the factorization algebra $S$. The 1-morphism $\F^{(1/2)}$ is the corresponding constructible factorization algebra on
$$\begin{tikzpicture}[>={Stealth[length=0.5mm, width=0.5mm]},scale=1.5]
\draw[red,dash pattern=on 1pt off 1pt] (0,0) -- (3.2,0) -- (3.2,1) -- (0,1)--cycle;
\filldraw[red,opacity=0.4] (0,0) -- (3.2,0) -- (3.2,1) -- (0,1)--cycle;

\draw[->] (0.2,0.5) --  + (0:{0.3});
\draw[->] (0.8,0.5) --  + (45:{0.3});
\draw[->] (1.4,0.5) --  + (90:{0.3});
\draw[->] (1.9,0.5) --  + (135:{0.3});
\draw[->] (2.4,0.5) --  + (180:{0.3});
\draw[->] (2.9,0.5) --  + (180:{0.3});

\draw[->>] (0.2,0.5) --  + (90:{0.3});
\draw[->>] (0.8,0.5) --  + (135:{0.3});
\draw[->>] (1.4,0.5) --  + (180:{0.3});
\draw[->>] (1.9,0.5) --  + (225:{0.3});
\draw[->>] (2.4,0.5) --  + (270:{0.3});
\draw[->>] (2.9,0.5) --  + (270:{0.3});

\begin{scope}[xshift=3.2cm]
\draw[blue,dash pattern=on 1pt off 1pt] (0,0) -- (3.2,0) -- (3.2,1) -- (0,1)--cycle;
\filldraw[blue,opacity=0.4] (0,0) -- (3.2,0) -- (3.2,1) -- (0,1)--cycle;

\draw[->] (3,0.5) --  + (0:{0.3});
\draw[->] (2.5,0.5) --  + (45:{0.3});
\draw[->] (2,0.5) --  + (90:{0.3});
\draw[->] (1.5,0.5) --  + (135:{0.3});
\draw[->] (1,0.5) --  + (180:{0.3});
\draw[->] (0.5,0.5) --  + (180:{0.3});
\draw[->] (0.1,0.5) --  + (180:{0.3});

\draw[->>] (3,0.5) --  + (90:{0.3});
\draw[->>] (2.5,0.5) --  + (135:{0.3});
\draw[->>] (2,0.5) --  + (180:{0.3});
\draw[->>] (1.5,0.5) --  + (225:{0.3});
\draw[->>] (1,0.5) --  + (270:{0.3});
\draw[->>] (0.5,0.5) --  + (270:{0.3});
\draw[->>] (0.1,0.5) --  + (270:{0.3});
\end{scope}

\draw[red,dash pattern=on 1pt off 1pt] (2,0) -- (2,1);
\draw[blue,dash pattern=on 1pt off 1pt] (4.4,0) -- (4.4,1);
\draw[thick,green!70!black] (3.2,0)--(3.2,1);
\end{tikzpicture}$$
The inner third of the diagram is $\inv_*\F$ which is indicated by the inverted page framing. The rotation of the framings in the left and right thirds of the diagram indicate the clockwise identification of $\inv_*R$ and $\inv_*S$ with $R$ and $S$. Note that the clockwise convention requires the framing to rotate clockwise as it goes from $\inv_*S$ to $S$ and similarly for $R$.
\end{cons}

\begin{cons}
For $\F$ a $k$-morphism from $R$ to $S$ in $\mor_n(\icat)$, there is an analogous story where $\inv$ is applied to the $k$-th and $(k+1)$-st coordinates. The rotations identifying $\inv_*R$ with $R$ and $\inv_*S$ with $S$ are also applied to the $k$-th and $(k+1)$-st coordinates. Let $\F^{(1/2)}$  and $\F^{(-1/2)}$ denote the corresponding $k$-morphisms from $S$ to $R$, again using a clockwise and anticlockwise convention respectively.
\end{cons}

\begin{nota}
For $m$ an odd integer, we use the notation $\F^{(m/2)}$ to indicate that $m/2$ clockwise rotations have been used to identify $\inv_*R$ and $\inv_*S$ with $R$ and $S$. 
\end{nota}

The morphisms $\F^{(1/2)}$ and $\F^{(-1/2)}$ are respectively the left and right adjoint for $\F$. We now recall the construction of the unit and counit $(k+1)$-morphisms.

\begin{cons}\label{cons:units}
Fix a diffeomorphism $\varphi$ of $(0,1)^2$ that in a neighborhood of the line $x=1/2$ has the form
\begin{equation}\label{eq:twist-formula}
    (x,y) \longmapsto (1/2 - 1/2(1 - x)\cos(\pi y), 1 - (1-x)(\sin(\pi y)).
\end{equation}
Pictorially, this map looks as follows:
\begin{equation*}
  \begin{tikzpicture}[baseline=0.9cm]
    \draw[dashed] (0,0) rectangle (2,2);
    \draw[thick] (1,0) -- (1,2);
    \fill[opacity=0.3] (0.7,0) rectangle (1.3,2);
    \filldraw (1,1) circle (1.4pt);
    \draw[->] (1,1) -- (1.3,1);
    \draw[->>] (1,1) -- (1,1.3);
\end{tikzpicture}
\xrightarrow{\varphi}
\begin{tikzpicture}[baseline=0.9cm]
    \draw[dashed] (0,0) rectangle (2,2);
    \draw[thick] (0.5,2) arc [start angle=180, end angle=360, x radius=0.5, y radius=1];
    \filldraw (1,1) circle (1.4pt);
    \fill[opacity=0.3] (0.3,2) arc [start angle=180, end angle=360, x radius=0.7, y radius=1.2] (1.7,2) -- (1.3,2) arc [start angle=0, end angle=-180, x radius=0.3, y radius=0.8] (0.7,2) -- (0.3,2);
    \draw[->] (1,1) -- (1,1.3);
     \draw[->>] (1,1) -- (0.7,1);
\end{tikzpicture}
\end{equation*}
Define the following variants of $\varphi$:
\begin{alignat*}{3}
    \varphi^0 &:=  \varphi \qquad &\text{and} \qquad \varphi^{1} &:= \inv \circ \varphi \circ \inv ,\\
    \psi^0 &:=  \varphi \circ \inv \qquad &\text{and} \qquad \psi^{1} &:=  \inv \circ \varphi \ .
    \end{alignat*}
We will call these the {\em bend maps}. Let $\varphi_k:(0,1)^n\to (0,1)^n$ denote $\varphi$ applied to the $k$-th and $k+1$-st coordinates and similarly for its variants.
\end{cons}
Let $\beta:[7]\hookleftarrow [1]$ be the map defined by $\beta(0)=0$ and $\beta(1)=7$, so that the associated collapse-rescale map $\collresc(\beta):(0,1)\rightarrow (0,1)$ carries the interval $[1/8,7/8]$ into $1/2$. Let $\F$ be a $k$-morphism in $\mor_n(\icat)$ for some $1\leq k < n$. Let $\collresc_{k} : (0,1)^{n} \to (0,1)^{n}$ be the collapse-rescale map $\Id^{k-1} \times \collresc(\beta)  \times \Id^{n-k}$. Note that with this choice of collapse-rescale map, the image of the hyperplane $\{x_k=\frac12\}$ under $\varrho_k\circ \varphi_k$ is contained in $\{x_k=\frac12\}$. Similarly, pushing forward $\F$ along $\varphi_k^0$, $\varphi_k^1$, $\psi_k^0$ or $\psi_k^1$, and then along the collapse-rescale map $\collresc_{k}$ produces constructible factorization algebras on $(0,1)^n$ with the standard $(k+1)$-stratification.

\begin{prop}[\cite{GS}]\label{prop:En-adjoint-data}
Let $1\leq k \leq n-1$, and let ${\F}$ be a $k$-morphism in $\mor_n(\icat)$ from $\Ss$ to $\Tt$. Then the left adjoint for $\F$ is given by $\F^{(-1/2)}$ the unit and counit are given by the $(k+1)$-morphisms $(\collresc_{k} \circ \psi^0_k)_{\sharp} \F$ and $(\collresc_{k} \circ \psi^1_k)_{\sharp} \F$ respectively. The right adjoint for $\F$ is given by $\F^{(1/2)}$ and the unit and counit are given by $(\collresc_{k} \circ \varphi^0_k)_{\sharp} \F$ and $(\collresc_{k} \circ \varphi^1_k)_{\sharp} \F$ respectively.
\end{prop}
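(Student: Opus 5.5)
The statement is attributed to \cite{GS}, where it is proven in the pointed Morita category. Our plan is to transport that result to $\mor_n(\icat)$ rather than reprove it. First we observe that every constructible factorization algebra on a stratified cube is in particular a pointless one: the operad $\open(X,E)^{\otimes}$ is a wide suboperad of $\open(X)^{\otimes}$, so restriction gives the natural transformation
\[\Fact^{\mathrm{cstr}}_{\square^n_{(\_)}}\longrightarrow \Fact^{\mathrm{cstr}}_{(\square^n_{(\_)},E_{(\_)})},\]
and hence a functor $\mor^{\mathrm{ptd}}_n(\icat)\rightarrow\mor_n(\icat)$. This transformation commutes with pushforward along collapse-rescale maps, because the pushforward is defined by pulling back opens in both settings and the pullback of a marked inclusion is marked (Remark \ref{rmk:functormarkedopens}). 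It is therefore compatible with the semisimplicial structure, the Segal coreflection and the quasi-units. The last point holds because the quasi-units in both settings come from the map $\mathsf{BM}^{\otimes}\rightarrow\mathsf{Assoc}^{\otimes}$.

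Second, adjunctions are detected in homotopy $2$-categories and are preserved by any functor of $(\infty,n+1)$-categories. We then check that, for $1\le k\le n-1$, a $k$-morphism $\F$ of $\mor_n(\icat)$ with $\Ss,\Tt$ lies in the image of the forgetful functor. The unmarked and marked cubes $\square^n_{[1,\dots,1,0,\dots,0]}$ with fewer than $n$ ones have empty maximal marking $E$, since $E$ consists of points of depth $n$. So on these cubes the two notions of factorization algebra literally agree. The same holds for the $(k+1)$-morphisms $(\collresc_k\circ\psi^i_k)_\sharp\F$ and $(\collresc_k\circ\varphi^i_k)_\sharp\F$ whenever $k+1<n$.

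Third, the only case needing care is $k=n-1$. Here the unit and counit are $n$-morphisms, whose cubes carry a nonempty marking. We have $\F$ with $E=\emptyset$ on its cube, and the target cube has marking the single point of depth $n$. We must confirm two things.

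\begin{enumerate}
\item The pushforward of the (pointed, hence pointless) algebra $\F$ along $\collresc_k\circ\varphi_k$ is a constructible pointless factorization algebra. This follows from \cite{GS} together with the inclusion of constructible factorization algebras into pointless ones; constructibility is the same condition on disks, since marked inclusions are a subclass.
\item The snake identities, which hold in $\mor^{\mathrm{ptd}}_n$, remain valid after forgetting. This follows from functoriality of the forgetful map $\mor^{\mathrm{ptd}}_n(\icat)\rightarrow\mor_n(\icat)$ on the relevant $2$-fold Segal spaces of $(n-1)$-, $n$- and $(n+1)$-morphisms.
\end{enumerate}

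The main obstacle is making the second step honest: exhibiting the forgetful map as a map of $(n+1)$-fold Segal objects compatible with the Segal coreflection $U^{n+1}_{\mathsf{Seg}}$. In particular we must check that the snake $2$-cells, which are equivalences of pointed $(n+1)$-morphisms, are sent to equivalences. We would argue that the transformation of $n$-fold category objects is levelwise a functor of $\infty$-categories preserving equivalences, and then apply naturality of $U^{n+1}_{\mathsf{Seg}}$.
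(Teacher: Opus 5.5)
Your proposal is correct and takes the same route as the paper. The paper does not reprove the statement: it cites \cite{GS} and transports it along the forgetful functor $\umor^{\mathrm{ptd}}_n(\icat)\to\umor_n(\icat)$, using that every ordinary constructible factorization algebra is a pointless one, so that for $k\le n-1$ only the top-level $n$-morphisms (where the marking is nonempty) are affected. The one detail you pass over is that \cite{GS} push forward along both $\collresc_k$ and $\collresc_{k+1}$, whereas the statement here uses only $\collresc_k$; the paper covers this with a remark that, for the explicit bend map \eqref{eq:twist-formula}, pushing forward along $\collresc_k$ alone already gives a constructible factorization algebra for the standard $(k+1)$-stratification.
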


\begin{nota}
The notation is chosen so that $\varphi$ corresponds to right adjunction data, $\psi$ corresponds to left adjunction data, a superscript $0$ corresponds to a unit, and a superscript $1$ corresponds to a counit.
\end{nota}

\begin{rmk}
In \cite{GS}, the unit and counit are constructed by pushing forward along $\varphi_k^0$, $\varphi_k^1$, $\psi_k^0$ or $\psi_k^1$ and then along the collapse-rescale maps $\collresc_{k}$ and $\collresc_{k+1}$. However, given the prescription of $\varphi$ in \eqref{eq:twist-formula}, we only require a push-forward along $\collresc_{k}$ to recover a constructible factorization algebra.
\end{rmk}

\subsection{Proof of the conjecture}\label{sec:geometric_part}

Let $\F \in \mor_{n}(\icat)$. We wish to compute the underlying bimodules of the unit and counit $n$-morphisms witnessing the $n$-dualizability for $\F$. We will do so by determining the underlying bimodule of the unit and counit $k$-morphisms witnessing the $k$-dualizability of $\F$, inductively in $k$.
Throughout we will denote, for $S\subset\{1,\ldots, n\}$, by $\pi_S:(0,1)^n \to (0,1)^S$ the projection onto the  coordinates specified by $S$.

\begin{defn}
Let $\F$ be a $k$-morphism in $\mor_{n}(\icat)$. Pushing forward $\F$ along $\pi_{k,...,n}$ gives a constructible factorization algebra on $\square_{[1]} \times \square_{[\vec{0}]}^{n-k}$, which we call the \emph{underlying bimodule} of $\F$.
\end{defn}

\begin{cons}\label{cons:right-adj-data}
For each binary word -that is, a string of 0's and 1's- of length $k \in \{1,...,n\}$, define a $k$-morphism $u^w$ in $\mor_n(\icat)$ as follows: let $u^0 := \coev_\F$ and $u^1:= \ev_\F$ as given in \cref{prop:En-alg-dual}. For any binary word $w'$ of length less than $n$, iteratively define $u^{w',0}$ and $u^{w',1}$ to be the unit and counit of the adjunction $u^{w'} \dashv (u^{w'})^R$ respectively, as constructed in \cref{prop:En-adjoint-data}.
\end{cons}

The units and counits alternatively can be obtained by first folding and bending the plane and only collapsing at the end, rather than the above iterative step. To see this,  for each binary word $w$ (i.e. a string of 0's and 1's) of length $k$, define the function $\varphi^{w}:(0,1)^k \to (0,1)^k$ as the composite
\begin{equation*}
\begin{tikzcd}
    \varphi^w: (0,1)^k \arrow[r,"f^{w_1}"] & (0,1)^k \arrow[r,"\varphi^{w_2}_{1}"] & (0,1)^k \arrow[r,"\varphi^{w_3}_{2}"] &  \cdots \cdots \arrow[r,"\varphi^{w_k}_{k-1}"] & (0,1)^k 
\end{tikzcd}
\end{equation*}
using the fold and bend maps from \Cref{cons:duals} and \Cref{cons:units}.

\begin{prop}\label{prop:equiv-of-bimods}
Let $w$ be a binary word of length $k$. The underlying bimodule of $u^w$ is equivalent to $ (\pi_{k,...,n} \circ (\varphi^w \times \Id^{n-k}))_\sharp \F $.
\end{prop}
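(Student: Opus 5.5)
We argue by induction on the length $k$ of the word $w$. The idea is that each iterative step of \Cref{cons:right-adj-data} is a pushforward along a bend map followed by a collapse-rescale map. Once we project to the last $n-k$ coordinates, the collapse-rescale maps either disappear or can be commuted past the later bends.

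\textbf{Base case $k=1$.} Here $u^0=\F^{<}$ and $u^1=\F^{>}$ are by definition the pushforwards of $\F$ along $f^0\times\Id^{n-1}$ and $f^1\times\Id^{n-1}$. Since $\pi_{1,\ldots,n}$ is the identity, the underlying bimodule of $u^{w_1}$ is $(f^{w_1}\times\Id^{n-1})_\sharp\F=(\varphi^{w_1}\times\Id^{n-1})_\sharp\F$, and there is nothing more to prove.

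\textbf{Inductive step.} Suppose $w=(w',w_{k})$ with $w'$ of length $k-1$. By \Cref{prop:En-adjoint-data}, $u^w=(\collresc_{k-1}\circ\varphi^{w_k}_{k-1})_\sharp u^{w'}$. Pushforward is functorial for composites of continuous maps of marked spaces (\Cref{rmk:pushfwd}), so the underlying bimodule of $u^w$ equals
\[
(\pi_{k,\ldots,n}\circ\collresc_{k-1}\circ\varphi^{w_k}_{k-1})_\sharp\, u^{w'} .
\]
By induction we know $\pi_{k-1,\ldots,n}$ of $u^{w'}$, but not $u^{w'}$ itself on the full cube. We therefore need a commutation step. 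The map $\collresc_{k-1}$ acts only on the $(k-1)$-st coordinate, which $\pi_{k,\ldots,n}$ forgets, so $\pi_{k,\ldots,n}\circ\collresc_{k-1}=\pi_{k,\ldots,n}$. The bend $\varphi^{w_k}_{k-1}$ acts only on coordinates $k-1,k$, so it factors as
\[
\pi_{k,\ldots,n}\circ\varphi^{w_k}_{k-1}=\bigl(\pi_{2,\ldots}\circ(\varphi^{w_k}\times\Id)\bigr)\circ\pi_{k-1,\ldots,n}.
\]
Hence the underlying bimodule of $u^w$ is a pushforward of the underlying bimodule $\pi_{k-1,\ldots,n\,\sharp}u^{w'}$. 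By the inductive hypothesis the latter is $(\pi_{k-1,\ldots,n}\circ(\varphi^{w'}\times\Id^{n-k+1}))_\sharp\F$.

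We then reassemble. The composite $\pi_{k,\ldots,n}\circ\varphi^{w_k}_{k-1}\circ(\varphi^{w'}\times\Id)$ differs from $\pi_{k,\ldots,n}\circ(\varphi^w\times\Id^{n-k})$ only in that the first $k-2$ coordinates of $u^{w'}$'s construction were collapsed by earlier $\collresc_j$'s. Those coordinates are projected away, so the two maps agree after composing with $\pi_{k,\ldots,n}$. The remaining check is that the maps commute on the nose as continuous maps. The bend $\varphi_{k-1}$ mixes coordinates $k-1$ and $k$, so coordinate $k$ of the output depends on coordinate $k-1$ of the input. That input coordinate was itself affected by the earlier $\collresc_{k-2}$? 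No: $\collresc_{k-2}$ acts on coordinate $k-2$ only, and $\varphi_{k-2}$ mixes coordinates $k-2,k-1$.

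\textbf{Main obstacle.} The real issue is that $u^{w'}$ was built as $(\collresc_{k-2}\circ\varphi_{k-2}^{w_{k-1}})_\sharp u^{w''}$, and coordinate $k-1$ of the result depends on coordinate $k-2$ through the bend. Applying $\collresc_{k-2}$ before $\varphi_{k-1}$ does not change coordinates $k-1,\ldots,n$, since it only rescales coordinate $k-2$. So $\pi_{k-1,\ldots,n}\circ\collresc_{k-2}=\pi_{k-1,\ldots,n}$, and the map $\pi_{k,\ldots,n}\circ\varphi_{k-1}$ factors through $\pi_{k-1,\ldots,n}$. This is exactly what makes the induction close. Unwinding all the steps, every $\collresc_j$ is absorbed by a projection, and what remains is precisely the composite defining $\varphi^w$, followed by $\pi_{k,\ldots,n}$.

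The delicate point is constructibility. Each intermediate pushforward must lie in the pointless constructible setting for the identifications to take place in $\mor_n(\icat)$. This is supplied by \Cref{prop:En-adjoint-data} together with the remark that pushing forward along $\collresc_k$ alone suffices. The identity we prove holds in pointless prefactorization algebras, where pushforward is strictly functorial, and therefore holds in any full subcategory containing both sides.
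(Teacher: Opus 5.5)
Your proof is correct and follows the paper's argument. You induct on $k$, write $u^w=(\collresc_{k-1}\circ\varphi^{w_k}_{k-1})_\sharp u^{w'}$, absorb $\collresc_{k-1}$ using $\pi_{k,\ldots,n}\circ\collresc_{k-1}=\pi_{k,\ldots,n}$, and factor $\pi_{k,\ldots,n}\circ\varphi^{w_k}_{k-1}$ through $\pi_{k-1,\ldots,n}$ to apply the inductive hypothesis. One small correction: the final reassembly is an exact identity, $\varphi^{w_k}_{k-1}\circ(\varphi^{w'}\times\Id^{n-k+1})=\varphi^w\times\Id^{n-k}$ (this is the paper's commutative square with $g^{w'}$ and $\pi_2\circ\varphi^{w_k}$), so your worry that the two composites ``differ'' by earlier collapse-rescale maps is unfounded, since the inductive hypothesis has already removed those maps.
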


\begin{proof}
The case $k=1$ is true by the construction of duals in \cref{prop:En-alg-dual}. We proceed by induction. Suppose $w= w',w_k$ and assume that the proposition is true for $w'$.
By definition the following diagram commutes
\begin{equation}\label{diag:comm-morse}
    \begin{tikzcd}[row sep=small,column sep=small]
	{(0,1)^k} && {(0,1)} \\
	\\
	{(0,1)^2} && {(0,1)^2 \,.} 
	\arrow["{g^w}", from=1-1, to=1-3]
	\arrow["{g^{w'}\times\Id}"', from=1-1, to=3-1]
	\arrow["{\varphi^{w_k}}"', from=3-1, to=3-3]
	\arrow["{\pi_2}"', from=3-3, to=1-3]
\end{tikzcd}
\end{equation}
It follows from the construction of adjoints in Proposition \ref{prop:En-adjoint-data} that $u^w$ is given by pushing forward of $u^{w'}$ along the map
\begin{equation*}
    (0,1)^n \xrightarrow{\varphi^{w_k}_{k-1}} (0,1)^n \xrightarrow{\collresc_{k-1}} (0,1)^n \,.
\end{equation*}
Since $\pi_{k,...,n} \circ \collresc_{k-1} = \pi_{k,...,n}$, it follows that $(\pi_{k,...,n})_{\sharp}u^w = (\pi_{k,...,n} \circ \varphi^{w_k}_{k-1})_{\sharp} u^{w'}$ which is equivalent, by the inductive hypothesis and the commutative diagram \eqref{diag:comm-morse}, to $(g^{w}\times \Id^{n-k})_{\sharp}\F$.
\end{proof}

By Proposition \ref{prop:equiv-of-bimods}, to understand the unit and counit $n$-morphisms witnessing the $n$-dualizability of $\F$, we need to understand the maps $\pi_{n} \circ \varphi^w:(0,1)^n\to (0,1)$ for $w$ a binary word of length $n$. We do so inductively using Morse theory. The following proposition is the key to the inductive geometric argument.

\begin{prop}\label{prop:Morse-1}
Let $w$ be a binary word of length $k$ for $1\leq k \leq n$. The map $g^w:=  \pi_k \circ \varphi^w : (0,1)^k \to (0,1)$ is a Morse function with a single critical point at $(1/2,...,1/2)$ of index $|w|$.
\end{prop}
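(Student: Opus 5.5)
Argue by induction on $k$, following the factorization $g^w = \pi_2\circ\varphi^{w_k}\circ(g^{w'}\times\mathrm{Id})$ recorded in diagram \eqref{diag:comm-morse}; here $w=w',w_k$ and $g^{w'}$ is applied to the first $k-1$ coordinates.

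\emph{Base case $k=1$.} Here $g^{w}=f^{w_1}$. For $w_1=1$ we have $f^1(x)=\tfrac12\sin(\pi x)$: it has derivative $\tfrac{\pi}{2}\cos(\pi x)$, vanishing only at $x=1/2$, and second derivative $-\tfrac{\pi^2}{2}<0$ there. So it is a Morse function with one critical point, of index $1=|w|$. For $w_1=0$ we have $f^0=1-f^1$; the sign of the Hessian flips and the index becomes $0$.

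\emph{Inductive step.} Assume $g^{w'}$ is Morse on $(0,1)^{k-1}$ with a unique critical point at $p'=(1/2,\dots,1/2)$, of index $|w'|$, and critical value $c'$. Set $h^{w_k}:=\pi_2\circ\varphi^{w_k}:(0,1)^2\to(0,1)$, so that
\[ g^w(x,y)=h^{w_k}\bigl(g^{w'}(x),y\bigr). \]
By the chain rule,
\[ \partial_x g^w=\partial_1 h^{w_k}\cdot \nabla g^{w'}, \qquad \partial_y g^w=\partial_2 h^{w_k}. \]
The plan is to establish two facts about $h^{w_k}$:
\begin{enumerate}
\item[(a)] $\nabla h^{w_k}$ never vanishes, and $\partial_2 h^{w_k}=0$ holds only on the line $\{s=1/2\}$ (or only at $s=c'$, after accounting for where $g^{w'}$ lands);
\item[(b)] along that locus, $\partial_1 h^{w_k}\neq 0$, and $\partial_2^2 h^{w_k}$ has a definite sign: positive for $w_k=0$, negative for $w_k=1$.
\end{enumerate}
Granting these, a critical point of $g^w$ needs $\partial_2 h^{w_k}=0$. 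This forces $\partial_1 h^{w_k}\ne 0$, hence $\nabla g^{w'}=0$, so $x=p'$. The $y$-coordinate is then pinned down as the unique zero of $\partial_2 h^{w_k}(c',\cdot)$; the formula \eqref{eq:twist-formula} puts it at $y=1/2$ provided $c'=1/2$. I would check that $c'=1/2$ inductively; this holds in the base case since $f^1(1/2)=1/2$.

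At the critical point the Hessian is
\[ \mathrm{Hess}\, g^w=\begin{pmatrix}\partial_1 h^{w_k}\,\mathrm{Hess}\, g^{w'} & 0\\ 0 & \partial_2^2 h^{w_k}\end{pmatrix}, \]
where the off-diagonal terms vanish because $\nabla g^{w'}(p')=0$. It is therefore nondegenerate, with index equal to $\mathrm{ind}(g^{w'})+[w_k=1]$, provided $\partial_1 h^{w_k}(1/2,1/2)>0$. That sign is the crux and has to be read off from the formulas.

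\emph{Local computation near the critical point.} Near $x=1/2$, formula \eqref{eq:twist-formula} gives
\[ \pi_2\varphi(x,y)=1-(1-x)\sin(\pi y). \]
At $(1/2,1/2)$ this has $\partial_x=\sin(\pi y)=1>0$, $\partial_y=-(1-x)\pi\cos(\pi y)=0$ and $\partial_y^2=\tfrac{\pi^2}{2}>0$. So $\varphi^0=\varphi$ contributes index $0$. For $\varphi^1=\mathrm{inv}\circ\varphi\circ\mathrm{inv}$ we get
\[ \pi_2\varphi^1(s,y)=1-\pi_2\varphi(1-s,1-y). \]
Its $\partial_s$ is again positive, while its $\partial_y^2$ is negative, so it contributes index $1$. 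This matches the claimed count $|w|=|w'|+w_k$.

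\emph{Main obstacle.} The hard part is the global claim (a): that $g^w$ has no critical points away from the center. The map $\varphi$ is only specified near the line $x=1/2$ and is otherwise an arbitrary diffeomorphism, so $\pi_2\circ\varphi$ could have critical points elsewhere, and the composite with $g^{w'}$ may reach $s$-values far from $1/2$. First I would use that $\varphi$ is a diffeomorphism, so $\pi_2\circ\varphi$ is a submersion; this already makes $\nabla h\neq0$ everywhere. The remaining task is to show that $\partial_2 h$ vanishes only where $\partial_1 h\neq 0$. If that cannot be controlled globally, I would instead exploit the freedom in choosing $\varphi$ (as in \cite{GS}, it is only fixed near the line), and specify it so that $\pi_2\varphi(s,y)$ has $\partial_s>0$ and vanishing $\partial_y$ only along $y=1/2$ throughout. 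This amounts to a standard construction of a bend that is monotone in the radial direction.
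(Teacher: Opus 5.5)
Your proposal is correct and follows essentially the paper's proof: induction through the factorization $g^w=\pi_2\circ\varphi^{w_k}\circ(g^{w'}\times\mathrm{Id})$, localization of the critical points to $\{(1/2,\dots,1/2)\}\times(0,1)$, and the block-diagonal Hessian read off from \eqref{eq:twist-formula}. The ``main obstacle'' you flag is already closed by your own remark that $\pi_2\circ\varphi^{w_k}$ is a submersion. Since $\nabla h^{w_k}\neq 0$ everywhere, $\partial_2 h^{w_k}=0$ forces $\partial_1 h^{w_k}\neq 0$. This is exactly the paper's one-line observation that critical points of $g^w$ can only occur where $g^{w'}\times\mathrm{Id}$ fails to be submersive, so no special choice of $\varphi$ is needed.
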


\begin{proof}
The case $k=1$ follows from the fact that $f^1 = 1/2\sin(\pi x)$ has a single critical point of index 1 at $1/2$. We proceed by induction. Suppose that the proposition is true for a binary word $w'$ of length $k-1$ and let $w = w',w_k$ for some $w_k \in \{0,1\}$. 
Recall the commutative diagram \eqref{diag:comm-morse}.
Since $\varphi^{w_k}$ is a diffeomorphism, the critical points of $g^w$ can only occur at points of $(0,1)^k$ for which $g^{w'}\times \Id$ is not submersive. By the inductive hypothesis, this occurs only on the line $\{(1/2,...,1/2)\}\times (0,1)$. There are two possibilities to consider: either $w_k =0$ or $w_k=1$. Suppose that $w_k=0$, then in a neighborhood of the line $(1/2,...,1/2)\times (0,1)$, the map $g^{w}$ is given by
\begin{align*}
    g^w(\mathbf{x},y) =  1-(1-g^{w'}(\mathbf{x}))\sin(\pi y)
\end{align*}
whose derivative is $(dg^{w'}(\mathbf{x})\sin(\pi y), -\pi(1-g^{w'}(\mathbf{x}))\cos(\pi y))$, which vanishes only when $y=1/2$ and $\mathbf{x}=(1/2,...,1/2)$. At this point the Hessian is given by
\begin{equation*}
    \begin{bmatrix}
         H_{g^{w'}}(1/2,...,1/2) & 0 \\
        0 & \frac{\pi^2}{2} \\
    \end{bmatrix}
\end{equation*}
which has index $|w'| = |w|$. Similarly, if $w_k=1$, then in a neighborhood of the line $(1/2,...,1/2)\times (0,1)$, the map $g^{w_k}$ is given by
\begin{align*}
    g^w(\mathbf{x},y) =  g^{w'}(\mathbf{x})\sin(\pi y)
\end{align*}
whose derivative is $(dg^{w'}(\mathbf{x})\sin(\pi y), \pi g^{w'}(\mathbf{x})\cos(\pi y))$, which vanishes only when $y=1/2$ and $\mathbf{x}=(1/2,...,1/2)$. At this point the Hessian is given by
\begin{equation*}
    \begin{bmatrix}
         H_{g^{w'}}(1/2,...,1/2) & 0 \\
        0 & \frac{-\pi^2}{2} \\
    \end{bmatrix}
\end{equation*}
which has index $|w'| + 1 = |w|$.
\end{proof}

\begin{figure}
    \centering
    \begin{tikzpicture}[baseline=0.9cm]
    \draw[dashed] (0,0) rectangle (2,2);
    \filldraw (1,1) circle (1.4pt);
    \foreach \y in {1,...,4}{
    \draw (1,1) circle (\y*0.2cm);
    }
    \draw[->] (1,1.1) -- (1,1.35);
    \end{tikzpicture}
    \hspace{5mm}
    \begin{tikzpicture}[baseline=0.9cm]
    \draw[dashed] (0,0) rectangle (2,2);
    \filldraw (1,1) circle (1.4pt);
    \draw (0,2) -- (2,0);
    \foreach \y in {1,...,4}{
    \draw (0,{2-\y*0.2}) .. controls ({1-\y*0.2},1) .. (0,{\y*0.2});
    \draw (2,{2-\y*0.2}) .. controls ({1+\y*0.2},1) .. (2,{\y*0.2});
    \draw ({2-\y*0.2},2) .. controls (1,{1+\y*0.2}) .. ({\y*0.2},2);
    \draw ({2-\y*0.2},0) .. controls (1,{1-\y*0.2}) .. ({\y*0.2},0);
    }
    \draw[->] (1,1.1) -- (1,1.35);
    \draw[->] (1,0.9) -- (1,0.65);
    \draw[->] (0.65,1) -- (0.9,1);
    \draw[->] (1.35,1) -- (1.1,1);
    \draw (0,0) -- (2,2);
    \end{tikzpicture}
    \hspace{5mm}
    \begin{tikzpicture}[baseline=0.9cm]
    \draw[dashed] (0,0) rectangle (2,2);
    \filldraw (1,1) circle (1.4pt);
    \foreach \y in {1,...,4}{
    \draw (0,{2-\y*0.2}) .. controls ({1-\y*0.2},1) .. (0,{\y*0.2});
    \draw (2,{2-\y*0.2}) .. controls ({1+\y*0.2},1) .. (2,{\y*0.2});
    \draw ({2-\y*0.2},2) .. controls (1,{1+\y*0.2}) .. ({\y*0.2},2);
    \draw ({2-\y*0.2},0) .. controls (1,{1-\y*0.2}) .. ({\y*0.2},0);
    }
    \draw[<-] (1,1.1) -- (1,1.35);
    \draw[<-] (1,0.9) -- (1,0.65);
    \draw[<-] (0.65,1) -- (0.9,1);
    \draw[<-] (1.35,1) -- (1.1,1);
    \draw (0,2) -- (2,0);
    \draw (0,0) -- (2,2);
    \end{tikzpicture}
    \hspace{5mm}
    \begin{tikzpicture}[baseline=0.9cm]
    \draw[dashed] (0,0) rectangle (2,2);
    \filldraw (1,1) circle (1.4pt);
    \foreach \y in {1,...,4}{
    \draw (1,1) circle (\y*0.2cm);
    }
    \draw[<-] (1,1.1) -- (1,1.35);
    \end{tikzpicture}
    \caption{The four Morse foliations in a neighborhood of $(1/2,1/2)$ corresponding, from left to right, to the functions $g^{0,0}$, $g^{1,0}, g^{0,1}$ and $g^{1,1}$.}
    \label{fig:Morse-fols}
\end{figure}

This brings us to a preliminary characterization of $(n+1)$-dualizability in the higher Morita category~$\umor_{n}(\icat)$.

\begin{prop}\label{prop:first-n+1-dual}
Let $\F \in \mor_{n}(\icat)$, then $\F$ is $(n+1)$-dualizable in $\mor_{n}(\icat)$ if and only if for all binary words of length $w$, the 1-morphism in $\mor_{1}(\icat)$ given by $g^w_{\sharp}\F$ is left adjointable.
\end{prop}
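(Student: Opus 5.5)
We reduce $(n+1)$-dualizability to adjointability of the top-level data, and then push that data down with Theorem \ref{thm:main-lifting-lemma}. The starting point is the result of \cite{GS}: $\F$ is $n$-dualizable in $\mor_n(\icat)$, with dual $\F^{\rev}$ (Proposition \ref{prop:En-alg-dual}), and every $k$-morphism for $k<n$ is adjointable, with units and counits given explicitly in Proposition \ref{prop:En-adjoint-data}. For an object with this structure, $(n+1)$-dualizability is equivalent to the following condition: the $2^n$ $n$-morphisms $u^w$ of Construction \ref{cons:right-adj-data}, indexed by binary words $w$ of length $n$, each admit a left (and right) adjoint in the appropriate hom $(\infty,2)$-category.

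This reformulation is standard bookkeeping. Dualizability data may be chosen freely up to equivalence, and adjoints of the top $n$-morphisms are independent of the choice of left versus right adjoints at lower levels, since $L$- and $R$-adjoints differ by invertible Serre-type twists, i.e. the rotations $\F^{(m/2)}$. One also needs only one-sided adjointability at the top level. The reason is that the right adjoint of an $n$-morphism built from units and counits is again such a morphism up to rotation, so one direction forces the other; alternatively, one can run the whole argument with Remark \ref{rmk:flippedadjointcrit} replacing left by right. I will cite this reduction (in the style of \cite{LurieCobordism} and \cite{GS}) rather than reprove it.

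Next, apply Theorem \ref{thm:main-lifting-lemma} iteratively, in the form $(*_n)$ established in its proof. An $n$-morphism $u^w$ has a left adjoint if and only if its underlying $1$-morphism in $\mor_1(\icat)$ does. This underlying $1$-morphism is the bimodule $(\pi_n)_\sharp u^w$, obtained by pushing forward along the projection onto the last coordinate; this is the composite of the comparison functors $\pi_*$ at each level. By Proposition \ref{prop:equiv-of-bimods} with $k=n$, we have $(\pi_n)_\sharp u^w\simeq(\pi_n\circ\varphi^w)_\sharp\F=g^w_\sharp\F$. Combining these gives the claim: $\F$ is $(n+1)$-dualizable if and only if each $g^w_\sharp\F$ is left adjointable in $\mor_1(\icat)$. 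Finally, passing between $\mor_n$ and $\umor_n$ does not affect dualizability or adjointability, by the remark following Definition \ref{defn:moritacategory}.

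The main obstacle is making the reduction in the first paragraph precise in the present model. Concretely, one must check that the units and counits of \cite{GS}, which are built as pushforwards along bend maps followed by collapse-rescale maps, really serve as the adjunction data in $\mor_n(\icat)$ for pointless factorization algebras. This holds because every pointed factorization algebra is in particular a pointless one, so the data of \cite{GS} transports along $\umor^{\mathrm{ptd}}_n(\icat)\to\umor_n(\icat)$. The remaining step is to argue that adjointability of the top level suffices for all choices of lower adjunction data, using that any two choices are equivalent.
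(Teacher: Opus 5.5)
Your proposal follows the paper's proof: reduce $(n+1)$-dualizability to left adjointability of the $2^n$ top-level morphisms $u^w$, then apply Theorem \ref{thm:main-lifting-lemma} and Proposition \ref{prop:equiv-of-bimods} to identify the underlying bimodule with $g^w_\sharp\F$.

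The one correction concerns the first step, and it is a matter of citation and justification rather than structure.
\begin{itemize}
\item That left adjoints of the $u^w$ alone suffice is not routine bookkeeping, since the definition asks for two-sided adjoints of all units and counits. It is a genuine theorem, which the paper takes from \cite{Araujo-thesis, SS25}.
\item Your heuristic that ``one direction forces the other'' is false for a single morphism. For $n=1$, $\ev$ has a left adjoint iff $A$ is dualizable over $A\otimes A^{\mathrm{rev}}$, but a right adjoint iff $A$ is dualizable in $\icat$. The equivalence only holds collectively over all words $w$.
\item Rerunning the argument with right adjoints in place of left adjoints just produces the mirrored criterion. It does not show that one-sided adjointability suffices.
\end{itemize}
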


\begin{proof}
In \cite{Araujo-thesis, SS25}, various characterizations of $(n+1)$-dualizability in an $(\infty, n+1)$-category are given in terms of existence of one-sided adjoints. 
In our situation, the results of \cite{Araujo-thesis} imply that a locally constant factorization algebra $A$ is $(n+1)$-dualizable in $\mor_n(\icat)$ if and only if $u^w$ admits a left adjoint for all binary words $w$ of length $n$.
Applying Theorem \ref{thm:main-lifting-lemma}, $u^w$ admits a left adjoint in $\mor_n(\icat)$ if and only if the underlying bimodule of $u^w$ admits a left adjoint in $\mor_1(\icat)$. By \cref{prop:equiv-of-bimods}, the underlying bimodule of $u^w$ is equivalent to $g^w_{\sharp}\F$.
\end{proof}

\begin{rmk}
Whether the underlying bimodule of $u^w$ admits a left adjoint depends only on the number of 1's in $w$, since by the Morse lemma, all Morse functions with a given index are equivalent to the standard Morse function in a neighborhood of the critical point.
\end{rmk}

We will now use Lurie's equivalence
\begin{equation}\label{eqn:Lurie_E_n}
   \alg_{\E_n}(\icat) \xrightarrow{\simeq} \Fact^{\mathrm{cstr}}_{\mathbb{R}^n}(\icat) \,, 
\end{equation}
sending an $\E_n$-algebra $A$ to the locally constant factorization algebra $U \mapsto \int_U A$; see \cite[Section 5.4.2]{LurHA} (which may be regarded as a special case of Corollary \ref{cor:fact=alg}). Using this to translate $\F$ to an $\E_n$-algebra, the respective bimodules will arise as the factorization homology of the Morse handles we just identified.
That is, we now interpret the bimodules $g^{w}_{\sharp}\F$ in terms of the factorization homology of $A$.

Moreover, given a 1-morphism $\M$ in $\mor_1(\icat)$, the underlying left module of $\M$ can be extracted as follow: first construct a  constructible factorization algebra on $(0,1/2]$ by pushing forward $\M$ along 
\[c_L : (0,1) \longrightarrow (0,1/2], \quad t \longmapsto \min\{t,1/2\}.\]
Recall from \Cref{cor:Fact-bimod} that 
\begin{equation}\label{eq:module-equiv}
    \Fact^{\mathrm{cstr}}_{((0,1/2],1/2)}(\icat) \simeq \mathsf{LMod}^{\otimes}(\icat) \,,
\end{equation}
the $\infty$-category of associative algebra objects in $\icat$ along with an unpointed left module. Thus the constructible factorization algebra $(c_{L})_{\sharp}\M$ is equivalent to an algebra in $\icat$ equipped with a left module. Similarly, the underlying right module of $\M$ is the pushforward of $\M$ along \[c_{R}: (0,1) \longrightarrow [0,1/2), \quad t \longmapsto \max\{t,1/2\},\] and is equivalent to an algebra in $\icat$ equipped with a right module.  For $\M$ an $n$-morphism in $\mor_{n}(\icat)$ the underlying left and right modules are given by taking the underlying left and right module of $(\pi_{n})_{\sharp}\M$.

For a binary word $w$ of length $k$ let $|w| = \sum_{i=1}^k w_i$ be the number of 1's in $w$.

\begin{prop}\label{prop:mod-formula}
For each binary word $w$ of length $n$ the underlying left module of $u^w$ is equivalent to $A$ with the canonical action  of the factorization homology
\begin{equation*}
    \int_{S^{|w|-1} \times \R \times \R^{n-|w|}} A \,.
\end{equation*}
The underlying right module of $u^w$ is equivalent to $A$ with the canonical action  of the factorization homology
\begin{equation*}
    \int_{\R^{|w|} \times S^{n - |w| - 1 } \times \R} A \,.
\end{equation*}
\end{prop}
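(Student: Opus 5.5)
By \cref{prop:equiv-of-bimods}, the underlying bimodule of $u^w$ is $g^w_{\sharp}\F$. Here $g^w=\pi_n\circ\varphi^w\colon(0,1)^n\to(0,1)$ and $\F$ is the locally constant factorization algebra $U\mapsto\int_U A$ from \eqref{eqn:Lurie_E_n}. The underlying left module is therefore $(c_L\circ g^w)_{\sharp}\F$, viewed through \eqref{eq:module-equiv}, and similarly for the right module. We compute these pushforwards on the three types of opens of $(0,1/2]$, and then identify the structure maps.

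\textbf{The values on opens.} By \cref{prop:Morse-1}, $g^w$ is Morse with a single critical point $p=(1/2,\dots,1/2)$ of index $|w|$ and critical value $1/2$.

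For an interval $(a,b)$ containing $1/2$, the preimage $(g^w)^{-1}(a,b)$ contains $p$. We will deform it to a Morse chart around $p$. To do this, we first check that $g^w$ is proper onto its image near the critical level, or at least that the flow lines of $\nabla g^w$ escape through the sides. Granting this, gradient-flow retraction gives an isotopy from $(g^w)^{-1}(a,b)$ to a small ball around $p$. Local constancy then gives $\F((g^w)^{-1}(a,b))\simeq\int_{\R^n}A\simeq A$.

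For $(a,b)\subset(0,1/2)$, the preimage is a regular sublevel band. By the Morse lemma and the same flow argument, it is isotopic to $L\times\R$, where $L=(g^w)^{-1}(c)$ is a level set just below $1/2$. Near $p$ we have $g^w=1/2-|x_-|^2+|x_+|^2$ with $|x_-|^2$ running over $|w|$ coordinates. Hence $L$ is diffeomorphic to $S^{|w|-1}\times\R^{n-|w|}$, and the algebra is $\int_{S^{|w|-1}\times\R\times\R^{n-|w|}}A$. The same computation for $(1/2,b)$ gives the level set $\R^{|w|}\times S^{n-|w|-1}$, and hence the right-hand algebra.

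\textbf{Identifying the actions.} The $\E_1$-structure on the algebra comes from the inclusions of two disjoint bands into a larger band. Under the identification with $L\times\R$, these are exactly the stacking inclusions in the $\R$-direction, so we recover the canonical $\E_1$-structure on factorization homology of $L\times\R$. For the module structure, consider a band below $1/2$ together with an interval containing $1/2$, included into a larger interval containing $1/2$. The corresponding inclusion is a collar $L\times\R$ together with a ball around $p$, included into a ball around $p$. In the standard model $\R^{|w|}\times\R^{n-|w|}$, with $L\times\R$ the region $|x_-|>\epsilon$, this is the canonical action of $\int_{S^{|w|-1}\times\R\times\R^{n-|w|}}A$ on $A=\int_{\R^n}A$ given by inserting the annular region.

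To make this coherent, we pass to the factorizing disk-basis of \cref{cor:factorizingbasisbycubes}, pulled back along $g^w$. Proposition \ref{prop:restricttodisks} and the localization \cref{cor:fact=alg} mean that constructible algebras are determined by their restriction to such a basis. We then exhibit a map of operads $\mathsf{LM}^{\otimes}\simeq\infdisk^{\otimes}_{/((0,1/2],1/2)}\to\infdisk^{\otimes}_{/\R^n}$, built from the Morse chart, that realizes both module structures.

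\textbf{Main obstacle.} The main obstacle is the global step: showing that each preimage is isotopic, \emph{compatibly with all inclusions}, to its Morse-chart model. This matters because $g^w$ is not proper on $(0,1)^n$. I would try to build an explicit isotopy from the formulas for $f^1$ and \eqref{eq:twist-formula}, proceeding by induction on $k$ along \eqref{diag:comm-morse}. At each step, $g^w$ is $\varphi^{w_k}$ composed with $g^{w'}\times\Id$, so level sets transform by the known single-variable rule $\sin(\pi y)$. This is exactly the step where the index of the critical point increases when $w_k=1$ and stays the same when $w_k=0$.
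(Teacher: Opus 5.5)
Your outline follows the paper's argument. Proposition~\ref{prop:equiv-of-bimods} reduces the problem to $g^w_\sharp\F$, and Proposition~\ref{prop:Morse-1} gives a single critical point of index $|w|$ with critical value $1/2$. The quadratic model then produces $S^{|w|-1}\times\R\times\R^{n-|w|}$ and $\R^{|w|}\times S^{n-|w|-1}\times\R$, acting on $\int_{\R^n}A\simeq A$.

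What you are missing is the reduction the paper uses to dispose of what you call the main obstacle. A constructible factorization algebra on $\square_{[1]}$, and hence its underlying left and right modules, is determined up to equivalence by its restriction to an arbitrarily small neighbourhood $(1/2-\epsilon,1/2+\epsilon)$ of $1/2$. Both sides are identified compatibly with $\mathsf{BMod}(\icat)$ by Corollary~\ref{cor:Fact-bimod}. So you never need preimages of intervals $(a,b)$ far from $1/2$.

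Also, do not isotope each preimage separately and then chase compatibility with all inclusions. The paper instead identifies the single band $(g^w)^{-1}(1/2-\epsilon,1/2+\epsilon)$ with the model band by one diffeomorphism intertwining $g^w$ with $1/2-|x_-|^2+|x_+|^2$. Then $g^w_\sharp\F$ on $(1/2-\epsilon,1/2+\epsilon)$ is literally the pushforward of $U\mapsto\int_UA$ along the standard Morse function. All compatibilities come for free from functoriality of pushforward.

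The paper attributes this band identification to the Morse lemma. Strictly, the band is not contained in a Morse chart, so one also needs that its part away from the critical point is a product; you were right to flag the non-properness here. Your induction along \eqref{diag:comm-morse} is a reasonable way to supply this, but it can only use two facts: the prescribed form of $\varphi$ near $\{x=1/2\}$, and that $\varphi$ is a diffeomorphism. It cannot rely on global formulas, because $\varphi$ is not specified away from that line.

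One correction: the operad map $\mathsf{LM}^{\otimes}\to\infdisk^{\otimes}_{/\R^n}$ you propose cannot exist. The algebra colour must go to $S^{|w|-1}\times\R\times\R^{n-|w|}$, which is not a disk. The canonical action is defined through factorization homology as a symmetric monoidal functor on framed $n$-manifolds. So the comparison map should land in the $\infty$-operad of framed $n$-manifolds under disjoint union, sending $\mathfrak{a}$ to the annular region and $\mathfrak{m}$ to the disk. You can obtain it, for instance, by localizing the pulled-back poset operad of marked intervals via Theorem~\ref{thm:markedlocalization}.
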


\begin{proof}
By \cref{prop:equiv-of-bimods} the underlying bimodule of $u^w$ is equivalent to $(g^w)_{\sharp}\F$. By Proposition \ref{prop:Morse-1} $g^w:(0,1)^n \to (0,1)$ is a Morse function with a single critical point at $(1/2,...,1/2)$ with value $1/2$ and index $|w|$.

By the Morse Lemma there exists $\epsilon > 0$ and a diffeomorphism of $(g^{w})^{-1}(1/2-\epsilon,1/2+\epsilon)$ with an open neighborhood of $(0,...,0)$ in $\R^{|w|}\times \R^{n-|w|}$ identifying $g^{w}$ with the standard Morse function of index $|w|$. Thus, in a neighborhood of the critical value $1/2$ the underlying left module of $g^{w}_\sharp\F$ is given by the factorization homology of $A$ over $\R^{|w|}\times \R^{n-|w|}$ with left action by the factorization homology of $A$ over  $S^{|w|-1}\times \R \times \R^{n-|w|}$, while the underlying right module is given by the factorization homology of $A$ over $\R^{|w|}\times \R^{n-|w|}$ with right action by the factorization homology of $A$ over $\R^{|w|}\times S^{n-|w|} \times \R$. The result follows since a constructible factorization algebra on $\square_{[1]}$ is determined up to equivalence by its value in an arbitrarily small open neighborhood of $1/2$.
\end{proof}

\begin{rmk}
By analyzing the Hessian of the Morse function $g^{w}$, one can identify $S^{|w|-1}$ with a sphere in the subspace $\langle x_i| w_i =1 \rangle \cong \R^{|w|}$ and $S^{n - |w| - 1 } $ with a sphere in the subspace $\langle x_i | w_i = 0 \rangle \cong \R^{n-|w|}$. The component $\R$ corresponds to the radial component of these spheres. See Figure \ref{fig:Morse-fols} for an illustration in dimension two.
\end{rmk}

We now complete the proof of the main theorem.

\begin{proof}[Proof of \cref{thm:mainthm}]
Let $A$ be an $\E_n$-algebra and $\F$ the corresponding locally constant factorization algebra on $(0,1)^n$ given by \eqref{eqn:Lurie_E_n}. By Proposition \ref{prop:first-n+1-dual}, $\F$ is $(n+1)$-dualizable if and only if $g^w_{\sharp}\F$ admits a left adjoint as a 1-morphism in $\mor_{1}(\icat)$ for all binary words $w$ of length $n$. Recall from \cite[Proposition 4.6.2.13]{LurHA} that a bimodule $\leftindex_{A} M_B$ in  $\mor_1(\icat)$ admits a left adjoint if and only if $M$ is dualizable over $A$. By \cref{prop:mod-formula}, $g^w_{\sharp}\F$ admits a left adjoint if and only if $A$ is dualizable over 
\begin{equation*}
    \int_{S^{|w|-1}\times \R \times \R^{n-|w|}} A.
\end{equation*}
This is true for all binary words $w$ of length $n$ if and only if it is true for all $|w| \in \{0,...,n\}$.
\end{proof}

\subsection{A pictorial argument via excision}\label{sec:geometric_part_old}

In the proof of Theorem \ref{thm:mainthm} we keep track of the local behavior of the units and counits by analyzing the corresponding functions and using results from Morse theory.
Here, we provide a pictorial argument accompanying and illustrating the formal one of the previous section.
We showcase the geometric core of the argument by the excision of factorization homologies occurring at each step.
We again recover that the units and counits witnessing the $n$-dualizability for $A$ are given by the action of the factorization homology of $A$ over certain spheres on $A$. However, this argument doesn't keep track of the explicit actions of the these factorization homologies on $A$; this was kept track of by the local structure of the Morse functions. Nonetheless, we find this geometric argument enlightening and include it here for comparison. We revisit \Cref{prop:mod-formula} and identify the appearing modules.

\begin{cons}
Fix $\epsilon \in (0,1/2)$. Given a binary word $w$ of length at most $n$, define $S^{w} :=\emptyset$ if $w$ is all zeros, otherwise define
\begin{align}
    S^w :=& \left\{(x_1,...,x_n) \in \R^{n} : 1-\epsilon < \sum_{w_i=1} x_i^2 <  1+\epsilon \right\} 
\end{align}
with canonical $n$-framings coming from $\R^n$. Let $\gamma = |w| = |\{i:w_i=1\}|$ and note that as framed $n$-manifolds $S^{w} \cong S^{\gamma-1} \times \R^{n-\gamma}\times \R$, where the last $\R$ component corresponds to the radial direction. Let $\overline{w}$ denote $w$ with zeros and ones exchanged. The manifolds $S^w$ and $S^{\overline{w}}$ are submanifolds of
\begin{align}
    D^w :=& \left\{ (x_1,...,x_n) \in \R^{n} : \sum_{w_i=0} x_i^2 < 1 + \epsilon , \sum_{w_i=1} x_i^2 < 1+\epsilon \right\} 
\end{align}
Hence, the factorization homologies $\int_{S^w} A$ and $\int_{S^{\overline{w}}} A$ have canonical actions on $\int_{D^{w}} A \simeq A$. Note that $D^{w} \cong D^{\gamma} \times D^{k-\gamma} \times \R^{n-k}$.
\end{cons}

We now give an argument identifying the algebras acting on the left and right of the underlying bimodule of $u^w$. However, this argument does not recover the actions of these algebras on $\int_{D^w} A \simeq A$, which are given in the stronger \cref{prop:mod-formula}.

\begin{prop}\label{prop:bimod-formula}
For each binary word $w$ of length at most $n$, there exists identifications of the source and target of the underlying bimodule of $u^w$ with the algebras
\begin{equation}
    \int_{S^{w}} A \qquad \text{and} \qquad {\int_{S^{\overline{w}} } A}\label{eq:bimod-formula}
\end{equation}
respectively.
\end{prop}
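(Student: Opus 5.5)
The plan is an induction on the length $k$ of $w$, in parallel with the proof of Proposition \ref{prop:equiv-of-bimods}. At each step we identify the value of the relevant pushforward on the two ``collar'' strata using only excision, that is, the $\otimes$-excision property of factorization homology. The underlying bimodule of $u^w$ is $(\pi_{k,\ldots,n}\circ(\varphi^w\times \Id^{n-k}))_\sharp\F$ by Proposition \ref{prop:equiv-of-bimods}. Its source algebra is the value on $(0,\delta)\times(0,1)^{n-k}$ for small $\delta$, and its target algebra is the value on $(1-\delta,1)\times(0,1)^{n-k}$. By definition of pushforward, these are $\F$ evaluated on the preimages, which are framed open submanifolds of $(0,1)^n$. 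Hence they equal $\int_{U}A$ via \eqref{eqn:Lurie_E_n}, with $U$ the preimage. So it suffices to identify these preimages, up to framed isotopy, with $S^w$ and $S^{\overline{w}}$.

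For the base case $k=1$ we have $\varphi^{w}=f^{w_1}$. For $w=1$, the preimage of $(0,\delta)$ under $f^1=\tfrac12\sin(\pi x)$ is the union of two end intervals. This is $S^0\times\R\times\R^{n-1}$, matching $S^{1}$. The preimage of $(1-\delta,1)$ is empty, matching $S^{\overline{1}}=S^{0}=\emptyset$. The case $w=0$ is symmetric, since $f^0=1-f^1$. In both cases $\int_{\emptyset}A=\bb{1}_{\icat}$, consistent with Proposition \ref{prop:En-alg-dual}.

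For the inductive step, write $w=(w',w_k)$ and use the factorization \eqref{diag:comm-morse}: $\pi_k\circ\varphi^w=\pi_2\circ\varphi^{w_k}\circ(g^{w'}\times\Id)$. The preimage under $\pi_2\circ\varphi^{w_k}$ of a collar $(0,\delta)$ or $(1-\delta,1)$ is read off from the bend map \eqref{eq:twist-formula} and its variants. It is a U-shaped region in $(0,1)^2$ consisting of collars of the old source and target in the first coordinate, joined through a neighborhood of the arc. Pulling this back along $g^{w'}\times\Id$ splits it into three pieces: the two preimages of the old collars, which by induction are $S^{w'}$ and $S^{\overline{w'}}$ times an interval, and the preimage of the arc region.

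Excision, i.e.\ the gluing of factorization homology along the collar, then identifies the new region. For $w_k=1$, the new source region is obtained by gluing $S^{w'}\times\R$ to the half-disks $D^{w'}\times(\text{two ends})$. This gives the sphere bundle with $\gamma$ increased by one, which is framed diffeomorphic to $S^{w}$. The target region is $S^{\overline{w'}}$ cut down by the extra coordinate, which is exactly $S^{\overline{w}}$. For $w_k=0$ the roles are exchanged. I would verify each identification by an explicit diffeomorphism of the preimage regions. The framings are inherited from $\R^n$ since all maps involved are compositions of orientation-preserving bends.

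The main obstacle is making these preimage identifications precise near the critical locus. The maps are not submersions on the line $\{(1/2,\ldots,1/2)\}\times(0,1)$, so one must check that the collar preimages stay away from it and that the resulting open sets are framed isotopic to $S^w$ and $S^{\overline w}$, not merely homeomorphic. I would first handle this by restricting to $\delta$ small enough that the collars avoid a neighborhood of the critical value $1/2$. The preimages are then regular sublevel and superlevel sets, and their topology is controlled by the inductive Morse description from Proposition \ref{prop:Morse-1}.
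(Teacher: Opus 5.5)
Your induction, the split on $w_k$, and the outcome agree with the paper: one of source and target is unchanged, and the other becomes a sphere of one dimension more, built from a thickened old sphere with two caps. The difference is the level at which you work.

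The paper never identifies preimages in $(0,1)^n$. It applies the planar gluing to the underlying bimodule $\mathcal{G}$ of $u^{w'}$, which is already known to be constructible. The preimage of one collar under $\pi_2\circ\varphi^{w_k}$ stays in a single stratum. The other is a disk $U_1\cup U_2$ crossing the interface twice, with $U_1\cap U_2$ in one stratum. The paper computes $\mathcal{G}(U_1\cup U_2)\simeq\mathcal{G}(U_1)\otimes_{\mathcal{G}(U_1\cap U_2)}\mathcal{G}(U_2)$ by excision, and only then rewrites this as $\int_{D^{w'}\coprod_{S^{\overline{w'}}}D^{w'}}A$ (or with $S^{w'}$). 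This needs only the stratified combinatorics of a planar region, which \eqref{eq:twist-formula} and the fact that $\varphi$ is a diffeomorphism already force. It also uses the inductive hypothesis exactly as stated.

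Your route would make the last step tautological, since pushforward is evaluation on preimages. But it demands more than you supply:
\begin{itemize}
\item \eqref{eq:twist-formula} fixes $\varphi$ only near $\{x=1/2\}$, so the actual sets $\{g^w<\delta\}$ depend on the unspecified global extension of $\varphi$.
\item To pull a model region back along $g^{w'}\times\Id$, you need a strengthened inductive hypothesis about the regions $\{g^{w'}<\delta\}$ themselves, not just their values.
\end{itemize}
If you replace ``identify the preimage'' by ``identify the value, using constructibility of $\mathcal{G}$ to pass to a model region'', you are led back to the paper's excision computation.

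Two statements need correcting.

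First, the planar preimage does not join collars of both the old source and the old target. Take $w_k=1$. The preimage of the bottom collar is a disk crossing the interface twice: its middle piece lies in the old source stratum, and its two ends reach into the old target stratum. The preimage of the top collar lies entirely in the old target stratum, which is why the target is unchanged. Pulling back therefore gives one piece $\cong S^{w'}\times\R$ together with two caps (cube times an end interval). The old target region $S^{\overline{w'}}$ plays no role in the new source, contrary to your ``three pieces'' sentence. Your next sentence does use the correct decomposition.

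Second, the fallback in your last paragraph fails. Proposition~\ref{prop:Morse-1} only says there is a single nondegenerate critical point. On the open cube this does not determine the sub- or superlevel sets near $0$ and $1$, which depend on boundary behaviour. For example, $1-\tfrac12\sin(\pi x)$ and $\tfrac12+(x-\tfrac12)^2$ on $(0,1)$ both have a single index-$0$ critical point at $1/2$ with value $1/2$. Yet for small $\delta$ the set $\{>1-\delta\}$ is two intervals for the first and empty for the second. That information must come from the inductive gluing, or from excision as in the paper.
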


\begin{proof}
We proceed by induction in the length of $w$. For $w$ of length 1, the definitions of $\ev_A$ and $\coev_A$ from \cref{prop:En-alg-dual} are readily seen to agree with \eqref{eq:bimod-formula}. 

Consider the composition of the diffeomorphism $\varphi:(0,1)^2\to (0,1)^2$ of \ref{cons:adjoints} used to construct the unit of the adjunction $\M \dashv \M^{R}$ for a 1-morphism $\M$ in $\mor_2(\icat)$, and the projection $\pi_2:(0,1)^2 \to(0,1)$ onto the second coordinate:
\begin{equation*}
\begin{tikzpicture}[baseline=0.9cm]
    \draw[dashed] (0,0) rectangle (2,2);
    \draw[thick] (1,2) -- (1,0);
    \fill[blue,opacity=0.7] (0,0.2) -- (1,0.2) arc[start angle = -90, end angle = 90,radius = 0.8]  
    -- (0,1.8) -- (0,1.5) -- (1,1.5) arc[start angle = 90, end angle = -90,radius = 0.5] 
    -- (0,0.5) -- (0,0.2);
    \fill[red,opacity=0.7] (0,0.7) arc[start angle = -90, end angle = 90,radius = 0.3]  
    -- (0,1.1)  arc[start angle = 90, end angle = -90,radius = 0.1]  -- (0,0.7) ;
\end{tikzpicture}
\xrightarrow{\varphi^0}
\begin{tikzpicture}[baseline=0.9cm]
    \draw[dashed] (0,0) rectangle (2,2);
    \draw[thick] (1,2) -- (1,0);
    \node[fill,circle,inner sep=1.5pt] at (1,1) {};
    \filldraw[red,opacity=0.7] (0,0.2) rectangle (2,0.6);
    \filldraw[blue,opacity=0.7] (0,1.4) rectangle (2,1.8);
\end{tikzpicture}
\xrightarrow{\pi_2}
\begin{tikzpicture}[baseline=0.9cm]
    \draw (1,2) -- (1,0);
    \node[fill,circle,inner sep=1.5pt] at (1,1) {};
    \draw[red,very thick] (1,0.2) -- (1,0.6);
    \draw[blue,very thick] (1,1.4) -- (1,1.8);
\end{tikzpicture}
\end{equation*}
The preimage of the red region lies entirely in the region to the left of the stratum $\{1/2\}\times (0,1)\subset (0,1)^2$. The source of the unit is therefore identified with the source of $\F$. The preimage of the blue region can be identified with the gluing of two regions $U_1$ and $U_2$ glued across their intersection:
\begin{equation*}
\begin{tikzpicture}[baseline=0.9cm]
    \draw[dashed] (-1,-1) rectangle (1,1);
    \draw[thick] (0,1) -- (0,-1);
    \fill[blue,opacity=0.1] (-1,-0.8) -- (0,-0.8) arc[start angle = -90, end angle = 90,radius = 0.8]  
    -- (-1,0.8) -- (-1,0.5) -- (0,0.5) arc[start angle = 90, end angle = -90,radius = 0.5] 
    -- (-1,-0.5) -- (-1,-0.8);
    \draw[blue,dashed] (-1,-0.8) -- (0,-0.8) arc[start angle = -90, end angle = 30,radius = 0.8]  
    -- (30:{0.5}) arc[start angle = 30, end angle = -90,radius = 0.5] -- (-1,-0.5) -- (-1,-0.8);
    \draw[blue] (-1,0.8) -- (-1,0.5) -- (0,0.5) arc[start angle = 90, end angle = -30,radius = 0.5] 
    -- (-30:{0.8}) arc[start angle = -30, end angle = 90,radius = 0.8] -- (-1,0.8) ;
    \fill[blue,opacity=0.2] (30:{0.5}) arc[start angle = 30, end angle = -30,radius = 0.5] 
    -- (-30:{0.8}) arc[start angle = -30, end angle = 30,radius = 0.8] -- (30:{0.5}) ;
\end{tikzpicture}
\end{equation*}
We therefore have that the target of the unit is identified with $\M(U_1 \sqcup_{U_1 \cap U_2} U_2)$.

We now proceed to the inductive step. Fix $k<n$ and assume that \cref{prop:bimod-formula} holds for all binary words $w'$ of length $w' \leq k$. Let $w$ be a binary word of length $k+1$. There are two cases to check; either $w = w'0$ or $w=w'1$ for some word $w'$ of length $k$. By the inductive hypothesis the underlying bimodule of $u^{w'}$ is $A \simeq \int_{D^w} A$ with source and target algebras given by
\begin{equation}
    \int_{S^{w'}} A \qquad \text{and} \qquad {\int_{S^{\overline{w'}}} A}
\end{equation}
respectively.

Suppose that $w = w'0$ so that $u^w$ is the unit of the adjunction $u^{w'} \dashv (u^{w'})^R$. By the gluing arguments above the underlying bimodule of $u^w$ is $A \simeq \int_{D^w} A$ with source and target algebras
\begin{equation}
    \int_{S^{w'}} A \qquad \text{and} \qquad \int_{D^{w'} \coprod_{S^{\overline{w'}}} D^{w'}} A \simeq \int_{S^{\overline{w}}} A  \label{eq:case1-1}
\end{equation}
respectively. The last equivalence uses the diffeomorphism
\begin{equation*}
    D^{w'} \coprod_{S^{\overline{w'}}} D^{w'}\cong S^{w'0} = S^{\overline{w}}
\end{equation*}
given by decomposing $S^{w'0}$ into an upper and lower hemisphere glued across the equator $S^{w'}$.

Suppose that $w = w'1$ so that $u^w$ is the counit of the adjunction $u^{w'} \dashv (u^{w'})^R$. The counit of the adjunction is produced by pushing forward along $\inv \circ \varphi \circ \inv$ applied to the appropriate coordinates. This exchanges the roles played by the red and blue regions in the gluing argument. It follows that the underlying bimodule of $u^w$ is $A \simeq \int_{D^w} A$ with source and target algebras
\begin{equation}
     \int_{D^{w'} \coprod_{S^{w'}} D^{w'}} A \simeq \int_{S^{w}} A    \qquad \text{and} \qquad  \int_{S^{\overline{w'}}} A \label{eq:case1-3}
\end{equation}
respectively, where we have used the diffeomorphism
\begin{equation}
      D^{w'} \coprod_{S^{w'}} D^{w'} \cong S^{{w'}1}
\end{equation}
again coming from a decomposition into hemispheres.
\end{proof}

\begin{ex}
We illustrate \cref{prop:bimod-formula} in the case $n=2$. Let $\F \in \mor_2(\icat)$ and let $\ev_{\F}$ be the evaluation 1-morphism given in \cref{prop:En-alg-dual}. It is constructed using the fold map $f^1 \times \Id:(0,1)^2 \to (0,1)^2$. Let $u_{\ev}$ be the unit of the adjunction $\ev_{\F}\dashv \ev_{\F}^R$. It is constructed using the diffeomorphism $\varphi^0$ of \cref{cons:units}. By Proposition \ref{prop:bimod-formula} the underlying bimodule for $u_{\ev}$ is $\int_{D^{1} \times D^{1}} A $ with source and target algebras
\begin{equation*}
    \int_{S^0 \times \R \times D^1}A \qquad \text{and} \int_{D^1 \times S^0 \times \R}A 
\end{equation*}
respectively. To illustrate these identifications, the following diagram depicts the value of the underlying bimodule of $u_{\ev}$ on standard disks in terms of the value of $\F$ on their preimage in $(0,1)^2$ -- note that we are reading the pictures from right to left in that the a region in a picture is the preimage of the picture to its right:
\begin{equation*}
  \begin{tikzpicture}[baseline=0.9cm]
    \draw[dashed] (0,0) rectangle (2,2);
    \fill[blue,opacity=0.7] (0,0.2) rectangle (2,0.5);
    \fill[blue,opacity=0.7] (0,1.5) rectangle (2,1.8);
     \fill[red,opacity=0.7] (0,0.7) arc[start angle = -90, end angle = 90,radius = 0.3]  
    -- (0,1.1)  arc[start angle = 90, end angle = -90,radius = 0.1]  -- (0,0.7) ;
     \fill[red,opacity=0.7] (2,0.7) arc[start angle = 270, end angle = 90,radius = 0.3]  
    -- (2,1.1)  arc[start angle = 90, end angle = 270,radius = 0.1]  -- (2,0.7) ;
\end{tikzpicture}
\xrightarrow{f^1 \times \Id}
\begin{tikzpicture}[baseline=0.9cm]
    \draw[dashed] (0,0) rectangle (2,2);
    \draw[thick] (1,2) -- (1,0);
    \fill[blue,opacity=0.7] (0,0.2) -- (1,0.2) arc[start angle = -90, end angle = 90,radius = 0.8]  
    -- (0,1.8) -- (0,1.5) -- (1,1.5) arc[start angle = 90, end angle = -90,radius = 0.5] 
    -- (0,0.5) -- (0,0.2);
    \fill[red,opacity=0.7] (0,0.7) arc[start angle = -90, end angle = 90,radius = 0.3]  
    -- (0,1.1)  arc[start angle = 90, end angle = -90,radius = 0.1]  -- (0,0.7) ;
\end{tikzpicture}
\xrightarrow{\varphi^0}
\begin{tikzpicture}[baseline=0.9cm]
    \draw[dashed] (0,0) rectangle (2,2);
    \draw[thick] (1,2) -- (1,0);
    \node[fill,circle,inner sep=1.5pt] at (1,1) {};
    \filldraw[red,opacity=0.7] (0,0.2) rectangle (2,0.6);
    \filldraw[blue,opacity=0.7] (0,1.4) rectangle (2,1.8);
\end{tikzpicture}
\xrightarrow{\pi_2}
\begin{tikzpicture}[baseline=0.9cm]
    \draw (1,2) -- (1,0);
    \node[fill,circle,inner sep=1.5pt] at (1,1) {};
    \draw[red,very thick] (1,0.2) -- (1,0.6);
    \draw[blue,very thick] (1,1.4) -- (1,1.8);
\end{tikzpicture}
\end{equation*}

Let $v_{\ev}$ be the counit of the adjunction $\ev_{\F} \dashv \ev_{\F}^R$. By Proposition \ref{prop:bimod-formula} the underlying bimodule for $v_{\ev}$ is $\int_{D^2} A \simeq A$ with source and target algebras
\begin{equation*}
    \int_{\emptyset} A \simeq \unit \qquad \text{and} \qquad \int_{S^1 \times \R} A \ .
\end{equation*}
The following diagram depicts the value of the underlying bimodule of $v_{\ev}$ in terms of the value of $\F$ on their preimage in $(0,1)^2$:
\begin{equation*}
  \begin{tikzpicture}[baseline=0.9cm]
    \draw[dashed] (0,0) rectangle (2,2);
    \fill[red, even odd rule, opacity=0.7]
    (1,1) circle (0.8)
    (1,1) circle (0.5);
\end{tikzpicture}
\xrightarrow{f^1 \times \Id}
\begin{tikzpicture}[baseline=0.9cm]
    \draw[dashed] (0,0) rectangle (2,2);
    \draw[thick] (1,2) -- (1,0);
    \fill[red,opacity=0.7] (2,0.2) -- (1,0.2) arc[start angle = 270, end angle = 90,radius = 0.8]  
    -- (2,1.8) -- (2,1.5) -- (1,1.5) arc[start angle = 90, end angle = 270,radius = 0.5] 
    -- (2,0.5) -- (2,0.2);
    \fill[blue,opacity=0.7] (2,0.7) arc[start angle = 270, end angle = 90,radius = 0.3]  
    -- (2,1.1)  arc[start angle = 90, end angle = 270,radius = 0.1]  -- (2,0.7) ;
\end{tikzpicture}
\xrightarrow{\varphi^{1}}
\begin{tikzpicture}[baseline=0.9cm]
    \draw[dashed] (0,0) rectangle (2,2);
    \draw[thick] (1,2) -- (1,0);
    \node[fill,circle,inner sep=1.5pt] at (1,1) {};
    \filldraw[red,opacity=0.7] (0,0.2) rectangle (2,0.6);
    \filldraw[blue,opacity=0.7] (0,1.4) rectangle (2,1.8);
\end{tikzpicture}
\xrightarrow{\pi_2}
\begin{tikzpicture}[baseline=0.9cm]
    \draw (1,2) -- (1,0);
    \node[fill,circle,inner sep=1.5pt] at (1,1) {};
    \draw[red,very thick] (1,0.2) -- (1,0.6);
    \draw[blue,very thick] (1,1.4) -- (1,1.8);
\end{tikzpicture}
\end{equation*}

Now compare the underlying bimodule of $u_{\ev}$ with the underlying bimodule of $v_{\coev}$, the counit of the adjunction $\coev_{\F} \dashv \coev_{\F}^R$  which is described by the diagram
\begin{equation*}
  \begin{tikzpicture}[baseline=0.9cm]
    \draw[dashed] (0,0) rectangle (2,2);
     \fill[red,opacity=0.7] (0,0.2) rectangle (2,0.5);
    \fill[red,opacity=0.7] (0,1.5) rectangle (2,1.8);
     \fill[blue,opacity=0.7] (0,0.7) arc[start angle = -90, end angle = 90,radius = 0.3]  
    -- (0,1.1)  arc[start angle = 90, end angle = -90,radius = 0.1]  -- (0,0.7) ;
     \fill[blue,opacity=0.7] (2,0.7) arc[start angle = 270, end angle = 90,radius = 0.3]  
    -- (2,1.1)  arc[start angle = 90, end angle = 270,radius = 0.1]  -- (2,0.7) ;
\end{tikzpicture}
\xrightarrow{f^0 \times \Id}
\begin{tikzpicture}[baseline=0.9cm]
    \draw[dashed] (0,0) rectangle (2,2);
    \draw[thick] (1,2) -- (1,0);
    \fill[red,opacity=0.7] (2,0.2) -- (1,0.2) arc[start angle = 270, end angle = 90,radius = 0.8]  
    -- (2,1.8) -- (2,1.5) -- (1,1.5) arc[start angle = 90, end angle = 270,radius = 0.5] 
    -- (2,0.5) -- (2,0.2);
    \fill[blue,opacity=0.7] (2,0.7) arc[start angle = 270, end angle = 90,radius = 0.3]  
    -- (2,1.1)  arc[start angle = 90, end angle = 270,radius = 0.1]  -- (2,0.7) ;
\end{tikzpicture}
\xrightarrow{\varphi^{1}}
\begin{tikzpicture}[baseline=0.9cm]
    \draw[dashed] (0,0) rectangle (2,2);
    \draw[thick] (1,2) -- (1,0);
    \node[fill,circle,inner sep=1.5pt] at (1,1) {};
    \filldraw[red,opacity=0.7] (0,0.2) rectangle (2,0.6);
    \filldraw[blue,opacity=0.7] (0,1.4) rectangle (2,1.8);
\end{tikzpicture}
\xrightarrow{\pi_2}
\begin{tikzpicture}[baseline=0.9cm]
    \draw (1,2) -- (1,0);
    \node[fill,circle,inner sep=1.5pt] at (1,1) {};
    \draw[red,very thick] (1,0.2) -- (1,0.6);
    \draw[blue,very thick] (1,1.4) -- (1,1.8);
\end{tikzpicture}
\end{equation*}
The actions on the left and right are the reverse of those for $u_{\ev}$. This compares with the Hessian of the Morse functions  $g^{0,1}$ and $g^{1,0}$; the indices are same, but different coordinates correspond to the stable and unstable manifolds of the critical point.
\end{ex}

\subsection{Invertibility in the $\E_n$-Morita category}

A generalization of this statement was proposed in \cite[Conjecture 1.9]{BJSS}, which we can deduce easily from \ref{thm:mainthm} and our proof thereof.
\begin{cor}\label{cor:invertibility}
An $\bb{E}_n$-algebra $A$ is invertible if, and only if,  it is $(n+1)$-dualizable and the canonical  maps
\begin{equation}
    \int_{S^{k-1}\times \mathbb{R}^{n-k+1}} A \longrightarrow Z_{n-k}(A) \label{eq:invert-maps}
\end{equation}
are equivalences for $k=0,\ldots n$.
\end{cor}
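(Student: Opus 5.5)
The plan is to use the standard characterization of invertibility in a symmetric monoidal $(\infty,n+1)$-category: an object is invertible if and only if it is dualizable and all the evaluation/coevaluation morphisms and all the iterated units and counits exhibiting adjunctions are themselves invertible. Equivalently, $A$ is invertible precisely when it is $(n+1)$-dualizable and each of the top-level $n$-morphisms $u^w$ of Construction \ref{cons:right-adj-data} is an equivalence, i.e. admits an inverse, since the lower units and counits are invertible iff the $u^w$ (which exhibit them as adjoint equivalences) are. So the first step is to reduce invertibility of $A$ to invertibility of the $n$-morphisms $u^w$ for all binary words $w$ of length $n$, using the same one-sided-adjoint bookkeeping of \cite{Araujo-thesis, SS25} invoked in Proposition \ref{prop:first-n+1-dual}. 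Given $(n+1)$-dualizability, each $u^w$ has an adjoint, and it is invertible precisely when its own unit and counit $(n+1)$-morphisms are equivalences.

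The second step is to transfer invertibility of $u^w$ to the underlying bimodule. Here I would use a variant of Theorem \ref{thm:main-lifting-lemma}. The functor $\pi_*$ is realized, by the proof of that theorem, as precomposition with right regular bimodules, and this is conservative on $(n+1)$-morphisms by Corollary \ref{cor:regbimodconservative}. Hence $u^w$ is an equivalence iff $\pi_*u^w=g^w_\sharp\F$ is an equivalence in $\mor_1(\icat)$, that is, a Morita-invertible bimodule. This step should mostly follow the adjointability argument. The point is that the units and counits of the adjunction $u^w\dashv (u^w)^R$ map to those of $\pi_*u^w$, and being an equivalence is detected after applying the conservative functors.

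Third, by Proposition \ref{prop:mod-formula}, $g^w_\sharp\F$ is the bimodule ${}_{B}A_{C}$ with $B=\int_{S^{k-1}\times\R^{n-k+1}}A$ and $C=\int_{\R^k\times S^{n-k-1}\times\R}A$, where $k=|w|$. An $\E_1$-bimodule that is left dualizable over $B$ (granted by $(n+1)$-dualizability) is Morita invertible iff the action map $B\to \mathrm{End}_{C^{op}}(A)$ and the corresponding map for $C$ are equivalences. The canonical map \eqref{eq:invert-maps} is exactly the first of these maps after identifying $\mathrm{End}$ of $A$ over the complementary action with the $\E_{n-k}$-center $Z_{n-k}(A)$. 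Finally, I would apply the symmetry $w\leftrightarrow \overline{w}$ (index $k$ versus $n-k$) to see that the conditions on the right actions are the same family of conditions as those on the left actions. Running over all $k=0,\dots,n$ then gives the statement.

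The main obstacle is this identification of centers. One must check that the relevant endomorphism object of $A$ agrees with the definition of $Z_{n-k}(A)$ used in \cite{BJSS}, compatibly with the canonical map. One must also check that Morita invertibility of a dualizable bimodule reduces to the one-sided condition plus the dual condition. For the latter I would use that ${}_BA_C$ with left dual $A^\vee$ is invertible iff the unit ${B}\to A\otimes_C A^\vee$ and the counit are equivalences, and $A\otimes_C A^\vee\simeq \mathrm{End}_{C}(A)$ by dualizability.
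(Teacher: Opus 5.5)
Your proposal is correct and follows the paper's proof essentially step for step. You first reduce invertibility of an $(n+1)$-dualizable $A$ to invertibility of the top-level $n$-morphisms $u^w$, then transfer this to the underlying bimodule $g^w_\sharp\F$, identify it with ${}_{B_k}A_{B_{n-k}}$ via Proposition \ref{prop:mod-formula}, and conclude with the criterion of Lemma \ref{lem:invert-bimod-cond}, whose proof is exactly your unit/counit argument, letting $k$ range over $0,\ldots,n$ to absorb the second family of conditions. The only difference is that you spell out the transfer of invertibility through preservation of adjunction data and conservativity of $\pi_*$ on $(n+1)$-morphisms, where the paper just cites Theorem \ref{thm:main-lifting-lemma}; that conservativity follows from Corollary \ref{cor:regbimodconservative} only after the counit identification in that theorem's proof, or more directly because invertibility of $(n+1)$-morphisms is detected on the deepest stratum.
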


\begin{proof}
Recall that an invertible object in a symmetric monoidal $(\infty,N)$-category is $m$-dualizable for every $m$. Therefore, $(n+1)$-dualizablity of $A$ is a necessary condition for invertibility. Assuming that $A$ is $(n+1)$-dualizable, we show that invertibility of $A$ is equivalent to the invertibility of the maps \eqref{eq:invert-maps} for  $k=0,\ldots n$.

First, recall the following facts about invertibility in a symmetric monoidal $(\infty,N)$-category $\mathcal{E}$. Let $f$ be a $k$-morphism in $\mathcal{E}$, and $f^R$ a right adjoint for $f$, if the unit and counit of the adjunction $f \dashv f^R$ are invertible, then $f^R$ is an inverse for $f$. For an $(n+1)$-dualizable object $X$ in $\mathcal{E}$, it therefore suffices to check that the $m$-morphisms witnessing the adjunction data are invertible for a fixed $m \leq n+1$.

We now specialize to the case of an $\E_n$-algebra $A$ in $\mor_n(\icat)$. By the preceding paragraph, it suffices to show that the $n$-morphism $u^w$ given in \Cref{cons:right-adj-data} is invertible for all binary words $w$ of length $n$. By \cref{thm:main-lifting-lemma} the $n$-morphism $u^w$ is invertible if and only if the underlying bimodule of $u^w$ is invertible as a 1-morphism in $\mor_{1}(\icat)$. By \Cref{prop:mod-formula} the source and target of the underlying bimodule of $u^w$ are given by the $\E_1$-algebras
\begin{equation*}
    B_k := \int_{S^{k-1}\times \R \times D^{n-k}} A  \qquad \text{and} \qquad B_{n-k}:= \int_{D^k \times S^{n-k-1} \times \R } A .
\end{equation*}
where $k = |w|$. Moreover, under the equivalence
\begin{equation*}
    \Hom_{\mor_{1}(\icat)}(B_k,B_{n-k}) \simeq {}_{B_k}\mathsf{BMod}_{B_{n-k}}(\icat)
\end{equation*}
of Corollary \ref{cor:Fact-bimod}, $u^w$ corresponds to $A$ with its canonical $(B_k,B_{n-k})$-bimodule structure. It follows that the underlying bimodule of $u^w$ is invertible if and only if $A$ is invertible as a $(B_k,B_{n-k})$-bimodule. It is a standard result (we provide a proof for completeness in \Cref{lem:invert-bimod-cond}) that, for $\E_1$-algebras $B$ and $C$, a $(B,C)$-bimodule $M$ is invertible if and only if the canonical maps
\begin{alignat*}{3}
    B &\longrightarrow \mathrm{End}_{C}(M) \qquad &\text{and} \qquad  C^{rev} &\longrightarrow  \mathrm{End}_{B}(M) 
\end{alignat*}
are equivalences. Applying this result to the case at hand, we see that $A$ is invertible as a $(B_k,B_{n-k})$-bimodule if and only if the canonical maps
\begin{equation*}
    B_k \longrightarrow \mathrm{End}_{B_{n-k}}(A) \qquad B_{n-k}^{rev} \longrightarrow \mathrm{End}_{B_{k}}(A)
\end{equation*}
are equivalences. Since this must be true for $k=0,...,n$, we recover the desired criteria.
\end{proof}

\begin{lem}\label{lem:invert-bimod-cond}
Let $B$ and $C$ be $\E_1$-algebras in $\icat$. Let $_{B}M_C \in {}_B\mathsf{BMod}_C(\icat)$ be a left dualizable $(B,C)$-bimodule. Then $_{B}M_C$ is invertible as an $(B,C)$-bimodule if and only if the maps
\begin{alignat*}{3}
    \theta_{B} : B &\longrightarrow \mathrm{End}_{C}(M) \qquad &\text{and} \qquad  \theta_{C}: C^{rev} &\longrightarrow \mathrm{End}_{B}(M)
\end{alignat*}
are invertible.
\end{lem}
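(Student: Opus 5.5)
The plan is to identify the left adjoint of $M$ in the Morita $(\infty,2)$-category $\umor_1(\icat)$ and then use the standard fact that a 1-morphism in an $(\infty,2)$-category is invertible if and only if it has a left adjoint whose unit and counit are invertible. Since $M$ is left dualizable as a $(B,C)$-bimodule, \cite[Proposition 4.6.2.13]{LurHA} provides a left adjoint. I would model it concretely by the $(C,B)$-bimodule $M^\vee := \mathrm{Hom}_B(M,B)$, the left $B$-linear dual, with $C$ acting through the right $C$-action on $M$. The evaluation $\epsilon\colon M^\vee\otimes_B M\to C$ and coevaluation $\eta\colon B\to M\otimes_C M^\vee$ then give the adjunction data in ${}_C\mathsf{BMod}_C(\icat)$ and ${}_B\mathsf{BMod}_B(\icat)$; their exact direction depends on the left/right convention, and I will fix it to match Lurie's. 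Using Proposition \ref{prop:relativetensorprod}, composition in $\umor_1(\icat)$ is the relative tensor product, so $M$ is invertible exactly when $\eta$ and $\epsilon$ are equivalences of bimodules. Since the forgetful functor to $\icat$ is conservative, it suffices to check them on underlying objects.

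The next step is to relate $\eta$ and $\epsilon$ to $\theta_B$ and $\theta_C$. Because $M$ is dualizable over $B$, tensoring with $M^\vee$ computes internal homs:
\[ M\otimes_C M^\vee \simeq \mathrm{End}_C(M)\qquad\text{and}\qquad M^\vee\otimes_B M\simeq \mathrm{End}_B(M)^{\text{appropriate op}}, \]
where I will use whichever of the two formulas is the standard dualizability identity and reduce the other to it. Under these identifications, $\eta$ becomes $\theta_B$ and $\epsilon$ becomes the map $\mathrm{End}_B(M)\to C^{rev}$ that is inverse-direction to $\theta_C$. The latter identification needs an argument: for an adjunction $M^\vee \dashv M$, invertibility of the counit is equivalent to full faithfulness of $M\circ -$. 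This in turn is equivalent to the map $C^{rev}\to\mathrm{End}_B(M)$, obtained by evaluating $-\otimes_C M$ on $C$ itself, being an equivalence. That argument uses that $\mathsf{RMod}_C$ is generated under colimits by $C$ and that $-\otimes_C M$ preserves colimits.

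With these identifications the two directions follow. If $M$ is invertible, both unit and counit are equivalences, so $\theta_B$ and $\theta_C$ are invertible. Conversely, if $\theta_B$ and $\theta_C$ are invertible, then $\eta$ is an equivalence, and $-\otimes_C M$ is fully faithful on the generator $C$, hence fully faithful, so $\epsilon$ is an equivalence.

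I expect the main obstacle to be the bookkeeping of sides. This covers which dual ($B$-linear or $C$-linear) serves as the left adjoint, and checking that the composite $C^{rev}\to\mathrm{End}_B(M)$ really agrees with the map $\theta_C$ in the statement, including the $rev$. I would settle this first by working it out in the 1-categorical case of ordinary rings and then transcribing it, relying on \cite[Section 4.6]{LurHA} for the $\infty$-categorical duality identities.
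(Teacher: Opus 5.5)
\textbf{There is a genuine gap.} It sits in the step that is supposed to give the converse for $\theta_C$: ``$\theta_C$ an equivalence $\Rightarrow$ $M\otimes_C(-)$ fully faithful, because $C$ generates and the functor preserves colimits.''

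Full faithfulness of $G=M\otimes_C(-)\colon\mathsf{LMod}_C(\icat)\to\mathsf{LMod}_B(\icat)$ on the generator does not propagate to all modules. Reducing the first variable to free modules is fine. But you then need $W\mapsto\mathrm{Hom}_B(M,M\otimes_C W)$ to commute with colimits in $W$. That holds when $M$ is dualizable as a left $B$-module, and fails in general.

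Left dualizability does not supply this. Once the sides are sorted out, the data $\eta\colon B\to M\otimes_C N$ and $\varepsilon\colon N\otimes_B M\to C$ make $M$ dualizable over $C$, with $N\simeq\mathrm{Hom}_C(M,C)$ (not $\mathrm{Hom}_B(M,B)$, as you wrote). That gives $M\otimes_C N\simeq\mathrm{End}_C(M)$, and hence $\eta\leftrightarrow\theta_B$, exactly as in the paper. It says nothing on the $B$-side.

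In fact the implication is false under the stated hypothesis. Take $\icat=\mathsf{Mod}_k$, $V$ an infinite-dimensional vector space, $E=\mathrm{End}_k(V)$, and $e\in E$ a rank-one idempotent. Set $B=k$, $C=E^{op}$ and $M=V\cong Ee$.
\begin{itemize}
\item $M$ is left dualizable, with $N=eE$.
\item $\theta_B\colon k\to\mathrm{End}_E(Ee)\simeq k$ is an isomorphism; there is no higher Ext since $Ee$ is projective.
\item $\theta_C\colon E\to\mathrm{End}_k(V)$ is an isomorphism.
\item However, $\varepsilon$ is, up to swapping factors, the multiplication $Ee\otimes_k eE\to E$. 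Its image is the proper ideal $EeE$ of finite-rank operators, so $M$ is not invertible.
\end{itemize}

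\textbf{How to repair it.} You need the extra hypothesis that $M$ is also dualizable over $B$. With it, your generator argument goes through, since $\mathrm{Hom}_B(M,-)$ then preserves colimits. Alternatively, the $B$-linear dual $N'$ gives $N'\otimes_B M\simeq\mathrm{End}_B(M)$, which identifies $\theta_C$ with the unit $C\to N'\otimes_B M$. Then $N$ is a left inverse and $N'$ a right inverse of $M$, so $M$ is invertible.

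The paper's own proof has the same hole, hidden in ``Similarly, one can show that $\varepsilon$ is an equivalence if and only if $\theta_C$ is.'' The analogue of $\Psi$ needs precisely this $B$-side dualizability. It is harmless for Corollary~\ref{cor:invertibility}: there $(n+1)$-dualizability gives each $u^w$ adjoints on both sides, so its underlying bimodule is dualizable over both algebras.

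Your forward direction (invertible $\Rightarrow$ $\theta_B,\theta_C$ equivalences) is fine as it stands.
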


\begin{proof}
Since $M$ is left dualizable there exists a $(C,B)$-bimodule $N$ and bimodule morphisms
\begin{align*}
    &\varepsilon  : \ {}_{C}N_B \otimes_{B} {}_BM_{C} \ \longrightarrow \  {}_{C}C_{C} \\
    &\eta : \  {}_{B}B_{B} \ \longrightarrow \  {}_{B} M_C \otimes_C {}_CN_{B} 
\end{align*}
satisfying the usual snake identities. The map
\begin{align*}
    \Psi : \  {}_{B}M_C \otimes_C {}_CN_B \ &\longrightarrow \ \mathrm{End}_{C}(M)
\end{align*}
adjoint to 
\[  {}_BM_C\otimes_C {}_CN_B \otimes_B {}_BM_C \xrightarrow{\mathrm{id}\otimes \epsilon} {}_BM_C  \]
is an equivalence of $(B,B)$-bimodules. The snake identity of the adjunction ensure that the diagram 
\begin{equation*}
    \begin{tikzcd}
	{{}_{B}B_{B}} && {{}_{B}M \otimes_{C} N_B} \\
	\\
	&& {\mathrm{End}_{C}(M)}
	\arrow["\eta", from=1-1, to=1-3]
	\arrow["{\theta_{B}}"', from=1-1, to=3-3]
	\arrow["\Psi", from=1-3, to=3-3]
\end{tikzcd}
\end{equation*}
commutes up to equivalence. Hence, $\eta$ is an equivalence if and only if $\theta_B$ is an equivalence. Similarly, one can show that $\varepsilon$ is an equivalence if and only if $\theta_{C}$ is an equivalence. The result follows since $M$ is invertible if and only if $\varepsilon$ and $\eta$ are equivalences.
\end{proof}

\section{Generalizing to relative dualizability}\label{sec:relative}

In this section we generalize our main theorem, \Cref{thm:mainthm}, to a relative version.

\subsection{Statement of the relative dualizbility theorem}
A 1-morphism $M:A\to B$ in $\mor_n(\icat)$ is a constructible (pointless for $n=1$) factorization algebra on the stratified disk $\square_{[1]} \times \square^{n-1}$, or, equivalently due to \Cref{cor:fact=alg}, a pointless disk algebra; that is, an algebra for $\infdisk^{\otimes}_{/(X,E)}$.

As a stratified space $\square_{[1]} \times \square^{n-1}$ is equivalent to $\R^n$ with codimension one stratum $\{0\} \times \R^{n-1}$ separating two codimension zero strata. We will label the codimension zero stratum $\R_{<0} \times \R^{n-1}$ by $A$ and the codimension zero stratum $\R_{>0} \times \R^{n-1}$ by $B$:
$$
\begin{tikzpicture}[scale=1, baseline=(current bounding box.base)]
\fill[color=blue,opacity=0.3] (-1,0) -- (0,0) -- (0,2) -- (-1,2) -- cycle;
\fill[color=red,opacity=0.3] (0,0) -- (1,0) -- (1,2) -- (0,2) -- cycle;
\draw[very thick, green!85!black] (0,0) -- (0,2);
\draw (-0.7, 0.4) node[color=blue] {$A$} ;
\draw (0.6, 1.4) node[color=red] {$B$} ;
\end{tikzpicture}
$$

We  want to assign values in $\mathcal{C}$ to certain $n$-framed manifolds $X$ equipped with a codimension 1-stratum $X_1 \subset X$ and a labeling of the connected components of $X \backslash X_1$ by elements of the set $\{A,B\}$, such that every point $x\in X_1$ admits a neighborhood diffeomorphic to $\square_{[1]} \times \square^{n-1}$ as stratified disks. For simplicity, we call such manifolds \textit{$n$-framed manifolds with interface}.

In dimension $n>1$ pointless disk algebras are the same as disk algebras, hence we can evaluate the factorization homology for smooth conical manifolds with tangential structures from \cite{AFT-fh-stratified} with coefficient $M:A\to B$ on any $n$-framed manifold with interface.

For $n=1$ the only appearing manifold with interface is the interval with a single marked point to which we assign $M$.

\begin{nota}
Given a 1-morphism $M:A\to B$ in $\mor_n(\icat)$ and an $n$-framed manifold $\widetilde{X}$ with interface, denote by
\begin{equation*}
    \int_{\widetilde{X}} M_{A\to B}
\end{equation*}
the factorization homology of $M$ on $\widetilde{X}$ for $n>1$ and $M$ for $n=1$ and $\widetilde{X}=\square_{[1]}$.
\end{nota}

We now define $n$-framed manifolds  with interface required to describe dualizability conditions for a 1-morphism in $\mor_n(\icat)$. 

\begin{defn}
    \label{cons:stratified-spheres}
For $1\leq k \leq n$, let $\widetilde{\R}^k$ denote $\R^k$ with the structure of a $k$-framed manifold with interface given by the codimension one submanifold $\{x_1=0\}$, and labeling region $\{x_1<0\}$ by $A$ and region $\{x_1>0\}$ by $B$. The submanifolds $D^k$ and $S^{k-1}$ inherit an interface from $\R^k$. Denote the corresponding $k$-framed manifolds with interface by $\widetilde{D}^{k}$ and $\widetilde{S}^{k-1}$. Fix $\delta \in (0,1/2)$. We define the following $k$-framed submanifolds of $\widetilde{D}^k$ and $\widetilde{S}^{k-1}$:
\begin{alignat*}{3}
    \widetilde{D}^k_+ &= \widetilde{D}^k \cap \{x_1 > -\delta\} &\qquad  \widetilde{D}^k_- &= \widetilde{D}^k \cap \{x_1 < \delta \} \\
    \widetilde{S}^{k-1}_+ &= \widetilde{S}^{k} \cap \{x_1 > -\delta \} &\qquad  \widetilde{S}^{k-1}_- &= \widetilde{S}^{k-1} \cap \{x_1 < \delta \}.
\end{alignat*} 
\end{defn}

For $k=1,2,3$, the stratified manifold $\tilde{D}^k_+$ is given by
\begin{equation*}
\begin{tikzpicture}
\draw[thick,red] (1,0) -- (0,0);
\draw[thick,blue] (0,0) -- (-0.5,0);
\node[above,red] at (0.5,0) { $B$};
\node[above,blue] at (-0.5,0) { $A$};
\filldraw[red]  (1,0) circle (1pt);
\filldraw[green!85!black]  (0,0) circle (1pt);

\begin{scope}[xshift=4cm]
\filldraw[red,opacity=0.3] (0,1) arc (90:-90:1);
\draw[thick,red] (0,1) arc (90:-90:1) node[midway,right,red] {$B$} ;

\filldraw[blue,opacity=0.3] (0,-1) -- (-0.5,-1) -- (-0.5,1) -- (0,1);
\draw[thick,blue] (0,1) -- (-0.5,1) node[above,blue] {$A$};;
\draw[thick,blue] (0,-1) -- (-0.5,-1);
\draw[thick,blue,dashed] (-0.5,-1) -- (-0.5,1);

\draw[very thick, green!85!black] (0,1) -- (0,-1);
\filldraw[green!85!black]  (0,1) circle (0.8pt);
\filldraw[green!85!black]  (0,-1) circle (0.8pt);
\end{scope}

\begin{scope}[xshift=10cm]

    \filldraw[blue,opacity=0.2] (-1.1,1) -- (-0.3,1) arc (90:270:0.3 and 1) -- (-1.1,-1) arc (270:90:0.3 and 1) ;
    \filldraw[red,opacity=0.2]  (-0.3,1) arc (90:-90:1.3 and 1) arc (-90:-270:0.3 and 1);
    \draw[thick,blue,dashed]  (-1.1,1) arc (90:270:0.3 and 1);
    \draw[very thick, green!85!black] (-0.3,1) arc (90:270:0.3 and 1);

    \filldraw[green,opacity=0.5] (0,0) arc (0:360:0.3 and 1); 
    \filldraw[blue,opacity=0.1] (-0.8,0) arc (0:360:0.3 and 1);

    \filldraw[blue,opacity=0.3] (-1.1,1) -- (-0.3,1) arc (90:-90:0.3 and 1) -- (-1.1,-1) arc (-90:90:0.3 and 1) ;
    \filldraw[red,opacity=0.3]  (-0.3,1) arc (90:-90:1.3 and 1) arc (-90:90:0.3 and 1);
    \draw[thick,blue,dashed] (-1.1,1) arc (90:-90:0.3 and 1);
    \draw[blue] (-1.1,1) node[above,blue] {$A$} -- (-0.3,1) ;
    \draw[blue] (-1.1,-1) -- (-0.3,-1);

    \draw[thick,red]  (-0.3,1) arc (90:-90:1.3 and 1) node[midway,right,red] {$B$};
    \draw[very thick, green!85!black] (-0.3,1) arc (90:-90:0.3 and 1);

\end{scope}

\end{tikzpicture}
\end{equation*}

The product $\widetilde{S}^{k-1}_{\pm} \times \R^{n-k+1}$ admits the canonical structure of an $n$-framed manifold with interface. Since $\widetilde{S}^{k-1}_{\pm}$ is the boundary of $\widetilde{D}^k_{\pm}$, for $M:A\to B$ a 1-morphism in $\mor_{n}(\icat)$, the factorization homology of $M$ over $\widetilde{S}^{k-1}_{\pm} \times \R^{n-k+1}$ acts canonically on the factorization homology
\begin{align*}
    \int_{\widetilde{D}^{k}_{\pm} \times \R^{n-k}} M_{A\to B} \simeq \int_{\widetilde{\R}^n} M_{A\to B} \simeq M \ .
\end{align*}

We now state the main result of this section; we defer its proof to \Cref{sec:relative_proof}.

\begin{thm}\label{thm:main-oplax-thm}
Let $\icat$ be a presentably symmetric monoidal \infcatt. Let $M:A\to B$ be a 1-morphism in $\mor_n(\icat)$.
 \begin{enumerate}
 \item {\em (``even'' = oplax case)}  The 1-morphism $M$ is $n$-times right adjointable in $\mor_{n}(\icat)$ (we also call this $R^n$-adjointable; see \cref{def:n-times-adj}) if and only if for $0\leq k \leq n-1$ the object $M$ is right dualizable as a right module over the factorization homologies 
\[  \int_{\widetilde{S}^{k}_{+} \times \R^{n-k}} M_{A\to B} \, .\] 

\item {\em (``odd'' case)}
On the other hand, $M$ is $R^{n-1}L$-adjointable in $\mor_{n}(\icat)$ (see \cref{def:n-times-adj}) if and only if for $0\leq k \leq n-1$ the object $M$ is left dualizable as a left module over the factorization homologies 
\[  \int_{\widetilde{S}^{k}_{-}\times \R^{n-k}} M_{A\to B} \, . \] 

 \end{enumerate}
\end{thm}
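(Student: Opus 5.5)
The proof follows the two-step structure of \Cref{thm:mainthm}: lifting of adjoints, then a Morse-theoretic identification of the underlying bimodules. The starting point is the relative analogue of the adjunction data of \Cref{prop:En-adjoint-data}. By \cite{GS}, every $k$-morphism of $\mor_n(\icat)$ with $1\leq k\leq n-1$ has left and right adjoints, and the units and counits are given by pushforwards along the bend maps $\varphi^0_k,\varphi^1_k$, followed by the collapse-rescale $\collresc_k$. For a binary word $w$ of length $m\leq n-1$, I will therefore define $(m+1)$-morphisms $v^w$ as in \Cref{cons:right-adj-data}. The difference is that I start from $M$ itself, not from $\ev,\coev$, so there is no initial fold. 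Set $v^{\emptyset}=M$. Then let $v^{w,0}$ and $v^{w,1}$ be the unit and counit of $v^w\dashv (v^w)^R$, or of $v^w\dashv(v^w)^L$ in the odd case at the last step. By the one-sided adjointability criterion of \cite{Araujo-thesis, SS25} used in \Cref{prop:first-n+1-dual}, $M$ is $R^n$-adjointable if and only if the $n$-morphisms $v^w$, for all words $w$ of length $n-1$, admit right adjoints. By \Cref{thm:main-lifting-lemma} (in its right-adjoint form, via \Cref{rmk:flippedadjointcrit}), this holds if and only if their underlying bimodules admit right adjoints in $\mor_1(\icat)$. The odd case is the same argument with left adjoints throughout.

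Next I establish the analogue of \Cref{prop:equiv-of-bimods}. The underlying bimodule of $v^w$ is $(h^w)_\sharp M$, where $h^w:=\pi_n\circ\varphi^{w_{n-1}}_{n-1}\circ\cdots\circ\varphi^{w_1}_1$. The inductive proof of \Cref{prop:equiv-of-bimods} carries over verbatim, using $\pi_{k,\dots,n}\circ\collresc_{k-1}=\pi_{k,\dots,n}$. The geometric step is a stratified version of \Cref{prop:Morse-1}. The function $h^w$, restricted to the interface $\{x_1=1/2\}\cong(0,1)^{n-1}$, is exactly the function $g^{w}$ of \Cref{prop:Morse-1} in the remaining coordinates. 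Hence it is Morse there, with a single critical point of index $|w|$. On the ambient cube, $h^w$ is a submersion away from the interface, since it is a composite of a diffeomorphism with a projection, and the $x_1$ direction is never bent. So $h^w$ is a stratified Morse function: its only critical point is $(1/2,\dots,1/2)$ on the interface, and the $x_1$ direction is transverse and carried along trivially.

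The main obstacle is the computation of the module structures, which is the analogue of \Cref{prop:mod-formula}. I would apply the Morse lemma to $h^w|_{\{x_1=1/2\}}$ and extend the chart as a product with the $x_1$ coordinate. This gives a stratified diffeomorphism of $(h^w)^{-1}(1/2-\epsilon,1/2+\epsilon)$ with a neighbourhood of $0$ in $\widetilde{\R}^n$, identifying $h^w$ with a standard index-$|w|$ Morse function in the coordinates $x_2,\dots,x_n$. A sublevel or superlevel set then looks like a half-space $\{x_1>-\delta\}$ or $\{x_1<\delta\}$ intersected with a thickened sphere. One has to check the sidedness carefully: which half survives depends on whether the relevant sphere direction is the one containing $x_1$. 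In the oplax case, the module $M\simeq\int_{\widetilde{\R}^n}M_{A\to B}$ is acted on on the right by $\int_{\widetilde{S}^{k}_{+}\times\R^{n-k}}M_{A\to B}$, with $k=n-1-|w|$ ranging over $0,\dots,n-1$; here $\widetilde S^k_+$ is the interface-sphere in the descending directions together with $x_1$, cut by the half-space. I expect this bookkeeping to be delicate, and I would check it first in $n=1,2$ against the pictures of $\widetilde S^k_\pm$. For instance, $n=1$ gives $\widetilde S^0_+=\{B\text{-point}\}$, recovering \cite[Proposition 4.6.2.13]{LurHA}. Finally, \cite[Proposition 4.6.2.13]{LurHA}, in the form ``a bimodule ${}_PM_Q$ has a right adjoint iff $M$ is right dualizable over $Q$'', converts adjointability of $(h^w)_\sharp M$ into right dualizability of $M$ over these factorization homologies. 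Since the answer depends only on $|w|$, ranging over all $w$ gives exactly the conditions for $0\leq k\leq n-1$. In the odd case the last adjoint is a left adjoint, so the left-module version with $\widetilde S^k_-$ appears, because the roles of $A$ and $B$ are swapped.
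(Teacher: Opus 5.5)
Your overall route is the paper's: the iterated right-adjunction data $v^w$, the one-sided criterion, \Cref{thm:main-lifting-lemma}, the identification of the underlying bimodule with $h^w_\sharp M$, stratified Morse theory, and Lurie's criterion. Your observation that $h^w$ restricted to the interface $\{x_1=\tfrac12\}$ is literally $g^w$ is a clean way to get the index $|w|$. However, the step you call the main obstacle is left open, and the model you propose for it fails.

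Since $h^w=\pi_n\circ\phi^w$ with $\phi^w$ a diffeomorphism, $h^w$ is a submersion at $p=(\tfrac12,\dots,\tfrac12)$, while its restriction to the interface is critical there. Hence $\partial_1h^w(p)\neq0$, and no chart that is a product with the $x_1$-coordinate can turn $h^w$ into a Morse function of $x_2,\dots,x_n$ alone. If it could, the level sets would be $\widetilde\R\times\{Q=c\}$ and the interface would run through all of them. You would then get factorization homology of the underlying $\E_{n-1}$-algebra of $M$ over ordinary spheres, not over $\widetilde S^k_\pm$.

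The correct normal form is \Cref{lem:strat-Morse-lemma}: $h^w=\tfrac12\pm x_1-|u|^2+|v|^2$, with $u\in\R^{|w|}$, $v\in\R^{n-1-|w|}$, interface $\{x_1=0\}$, and $A$ on the side $x_1<0$. The half-spheres come precisely from the linear term. Take the sign $+$:
\begin{itemize}
\item[] The level set $\{h^w=\tfrac12-\epsilon\}$ is the graph $x_1=-\epsilon+|u|^2-|v|^2$. Its $A$-part is $\{|u|^2<\epsilon+|v|^2\}$, so it is $\widetilde S^{|w|}_-\times\R^{n-1-|w|}$, with the sphere in the $(x_1,u)$-directions.
\item[] The level set $\{h^w=\tfrac12+\epsilon\}$ has $B$-part $\{|v|^2<\epsilon+|u|^2\}$, so it is $\widetilde S^{n-1-|w|}_+\times\R^{|w|}$. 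The sphere lies in the ascending $(x_1,v)$-directions, not the descending ones as you wrote.
\end{itemize}
For the sign $-$ the two are interchanged.

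So the ``sidedness'' you defer to low-dimensional checks is exactly the sign of $\partial_1h^w(p)$, and it has to be computed. Write $h^w(x)=\pi_2\varphi^{w_k}\bigl(h^{w'}(x_1,\dots,x_k),x_{k+1}\bigr)$, and use $\pi_2\varphi^0(X,y)=1-(1-X)\sin(\pi y)$ and $\pi_2\varphi^1(X,y)=X\sin(\pi y)$ near $X=\tfrac12$. This gives $\partial_1h^w(p)=\sin(\pi/2)\,\partial_1h^{w'}(p)$, hence inductively $\partial_1h^w(p)=1>0$. The critical point is therefore positive; this is the content of \Cref{prop:Morse-strat} beyond the index.

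Consequently the single bimodule $h^w_\sharp M$ has left algebra $\int_{\widetilde S^{|w|}_-\times\R^{n-|w|}}M_{A\to B}$ and right algebra $\int_{\widetilde S^{n-1-|w|}_+\times\R^{|w|+1}}M_{A\to B}$. Lurie's criterion then yields both parts of the theorem at once as $|w|$ ranges over $0,\dots,n-1$.

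This also corrects your account of the odd case. Nothing about $A$ and $B$ is swapped, and one should not use ``left adjoints throughout'': that tests $L^n$-adjointability, which by \Cref{lem:even-even} agrees with $R^{n-1}L$ only for $n$ odd. One keeps the same $v^w$, built from right adjunctions at every level, and asks only that the $n$-morphisms $v^w$ with $|w|=n-1$ admit left adjoints. The relevant algebra is then the sublevel-side one, $\widetilde S^{|w|}_-$.
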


\begin{rmk}
\cref{thm:main-oplax-thm} implies \cref{thm:mainthm}. Since $\mor_{n}(\icat) \simeq \Hom_{\mor_{n+1}(\icat)}(\unit,\unit)$, an $\E_{n}$-algebra $A$ is equivalently an endomorphism of the unit in $\mor_{n+1}(\icat)$. Moreover, $A$ is $n$-dualizable as an $\E_{n}$-algebra if and only if it is $n$-times right dualizable as a 1-morphism in $\mor_{n+1}(\icat)$ if and only if it is $R^{n-1}L$-dualizable as a 1-morphism in $\mor_{n+1}(\icat)$. The factorization homologies for a 1-morphism $A:\unit \to \unit$ in $\mor_{n+1}(\icat)$ over the stratified half spheres satisfy
\begin{equation*}
    \int_{\widetilde{S}^{k}_{-}\times \R^{n-k+1}} A_{\unit\to \unit} \ \simeq\ \int_{S^{k-1} \times \R^{n-k+1}} A  \ \simeq \ \int_{\widetilde{S}^{k}_{+}\times \R^{n-k+1}} A_{\unit \to \unit},
\end{equation*}
where the middle integral uses the factorization homology of $A$ as an $\E_{n}$-algebra, thus recovering \cref{thm:mainthm}.

\end{rmk}

\begin{ex}
Consider first the case that $n=1$, then $M:A\to B$ is a constructible factorization algebra on $\square_{[1]}$. The stratified manifold $\tilde{S}^{0}_+$ is the point $1\in \tilde{\R}$ which is labeled by $B$, and ${D}^{0}_+$ is the half-closed interval $(-\delta,1] \subset \widetilde{\R}$. The action of the factorization homology $\int_{\tilde{S}^{0}_+ \times \R} M_{A\to B} = \int_{\R} B \simeq B$ on $\int_{D^{0}_+} M_{A\to B} \simeq M$ is therefore the defining action of $B$ on $M$. The ``even'' case of \cref{thm:main-oplax-thm} corresponds to the statement that $M$ admits a right adjoint if and only if $M$ is dualizable over $A$.

Similarly the factorization homology $\int_{\tilde{S}^{0}_- \times \R} M_{A\to B} = \int_{\R} A \simeq A$ which acts canonically on $\int_{D^{0}_-} M_{A\to B} \simeq M$. The ``\text{odd}'' case of \cref{thm:main-oplax-thm} then recovers the statement that $M$ admits a left adjoint if and only if $M$ is dualizable over $A$.
\end{ex}

\begin{ex}
We give an illustration of Theorem \ref{thm:main-oplax-thm} for a 1-morphism $M:A\to B$ in $\mor_2(\icat)$. In this case the morphism $M$ admits a right adjoint and the unit and counits of the adjunction are given by pushing forward along the maps $\varphi^0$ and $\varphi^1$ of \cref{cons:units} followed by a collapse-rescale map. The 1-morphism $M$ is a constructible factorization algebra on
\[\begin{tikzpicture} 
\filldraw[blue,opacity=0.3] (1,0) -- (0,0) -- (0,2)--(1,2);
\filldraw[red,opacity=0.3] (1,0) -- (2,0) -- (2,2)--(1,2);
\draw[very thick, green!85!black] (1,0) -- (1,2);
\node[blue] at (0.3,0.4) {$A$};
\node[red] at (1.6,1.4) {$B$};
\end{tikzpicture}\]
The image of the regions $A$ and $B$ under $\varphi^0$ and $\varphi^1$ are given by
\[\begin{tikzpicture}[baseline = 1cm]
\filldraw[blue,opacity=0.3] (0.5,2) -- (0,2) -- (0,0) -- (2,0) -- (2,2) -- (1.5,2) arc (0:-180:0.5 and 1);
\filldraw[red,opacity=0.3] (0.5,2) -- (1.5,2) arc (0:-180:0.5 and 1);
\draw[very thick, green!85!black] (1.5,2) arc (0:-180:0.5 and 1);
\node[blue] at (1,0.5) {$A$};
\node[red] at (1,1.5) {$B$};
\end{tikzpicture} \qquad \text{and} \quad \begin{tikzpicture}[baseline = 1cm]
\filldraw[blue,opacity=0.3](0.5,0) -- (1.5,0)arc (0:180:0.5 and 1);
\filldraw[red,opacity=0.3]  (0.5,0) -- (0,0) -- (0,2) -- (2,2) -- (2,0) -- (1.5,0) arc (0:180:0.5 and 1);
\draw[very thick, green!85!black] (1.5,0) arc (0:180:0.5 and 1);
\node[blue] at (1,0.5) {$A$};
\node[red] at (1,1.5) {$B$};
\end{tikzpicture} \]
respectively. The source 1-morphisms of the unit and counit of the adjunction are given by the factorization homology of $M$ on
\begin{equation*}
    \begin{tikzpicture}[baseline=0.13cm]
\filldraw[blue,opacity=0.3] (0,0) rectangle (2,0.5);
\end{tikzpicture} \ \cong\  \widetilde{S}^0_{-} \times \R^2 \qquad \text{and} \qquad  
\begin{tikzpicture}[baseline=0.13cm]
\filldraw[red,opacity=0.3] (0,0) rectangle (0.5,0.5);
\filldraw[red,opacity=0.3] (1.5,0) rectangle (2,0.5);
\filldraw[blue,opacity=0.3] (0.5,0) rectangle (1.5,0.5);
\draw[very thick, green!85!black] (0.5,0) -- (0.5,0.5);
\draw[very thick, green!85!black](1.5,0) -- (1.5,0.5);
\end{tikzpicture}\  \cong \widetilde{S}^1_{-} \times \R
\end{equation*}
respectively. Whereas the target 1-morphisms are given by the factorization homology of $M$ on 
\begin{equation*}
    \  \begin{tikzpicture}[baseline=0.13cm]
\filldraw[blue,opacity=0.3] (0,0) rectangle (0.5,0.5);
\filldraw[blue,opacity=0.3] (1.5,0) rectangle (2,0.5);
\filldraw[red,opacity=0.3] (0.5,0) rectangle (1.5,0.5);
\draw[very thick, green!85!black] (0.5,0) -- (0.5,0.5);
\draw[very thick, green!85!black](1.5,0) -- (1.5,0.5);
\end{tikzpicture} \ \cong\  \widetilde{S}^1_{+} \times \R \qquad \ \text{and} \qquad  
\begin{tikzpicture}[baseline=0.13cm]
\filldraw[red,opacity=0.3] (0,0) rectangle (2,0.5);
\end{tikzpicture}\  \cong \widetilde{S}^0_{+} \times \R^2
\end{equation*}
respectively. Note that $M$ is 2-times right adjointable if and only if the unit and counit of the right adjunction admit right adjoints. Whereas, $M$ is $RL$-adjointable if and only if the unit and counit of the right adjunction admit left adjoints.
\end{ex}

\subsection{Relative field theories and partial dualizability}

In \cite{JFS}, building on the ideas of \cite{ST,FTrel}, a notion of relative (also called twisted) topological field theory was defined. We recall this notion and the corresponding classification via the Cobordism Hypothesis. In light of this classification, \cref{thm:main-oplax-thm} gives rise to relative field theories via the Cobordism Hypothesis.

Let $\mathcal{E}$ be a symmetric monoidal $(\infty,N)$-category. An $n$-dimensional framed \emph{oplax relative field theory} is a symmetric monoidal functor
\begin{align*}
    \mathcal{R} : \Bord_{n}^{\fr} \longrightarrow \mathcal{E}^\to \ ,
\end{align*}
where $\mathcal{E}^\to$ is the oplax arrow category of $\mathcal{E}$, defined in \cite{JFS}. In particular, the objects of $\mathcal{E}^\to$ are the 1-morphisms in $\mathcal{E}$. The oplax arrow category $\mathcal{E}^{\to}$ admits source and target functors $S,T\colon \mathcal{E}^\to \to \mathcal{E}$, and for any closed $k$-dimensional $n$-framed manifold $Y$  (i.e. an object in $\Omega^{k}\Bord_n^{\fr}$), an oplax relative field theory assigns a $(k+1)$-morphism
\begin{equation*}
    \mathcal{R}(Y) \colon (S\circ \mathcal{R})(Y) \longrightarrow (T\circ \mathcal{R})(Y)
\end{equation*}
in $\mathcal{E}$. In this sense, $\mathcal{R}$ is an $n$-dimensional field theory defined relative to (or twisted by) the absolute $n$-dimensional framed theories $S\circ \mathcal{R} :\Bord_{n}^{\fr} \to \mathcal{E}$ and $T\circ \mathcal{R} :\Bord_{n}^{\fr} \to \mathcal{E}$.

\begin{rmk}
Often the term \emph{relative} implies that composing with either $S$ or $T$ recovers the trivial theory. To reduce terminology, we also use relative in the two sided setting where neither composite is required to be trivial.
\end{rmk}

Recall that a 1-morphism $f$ is $n$-times right adjointable if $f$ admits a right adjoint, and inductively, for $1\leq k\leq n-1$, the units and counits witnessing the right adjunctions of the $k$-morphisms themselves admit right adjoints. In \cite{JFS}, it is shown that the $n$-dualizable objects in $\mathcal{E}^\to$ are the 1-morphisms $f:x\to y$ in $\mathcal{E}$ such that $x$ and $y$ are $n$-dualizable as objects in $\mathcal{E}$, and $f$ is $n$-times right adjointable as a 1-morphism in $\mathcal{E}$. 

Consider now the case that $\mathcal{E} = \umor_n(\icat)$. Let $M:A\to B$ be a 1-morphism in $\mor_n(\icat)$. Since $\umor_n(\icat)$ is $n$-dualizable by \cite{GS}, the requirement that $A$ and $B$ are $n$-dualizable is automatic. Applying the cobordism hypothesis therefore gives the following classification of oplax relative field theories.

\begin{prop}[\cite{JFS}]\label{prop:oplax-classification}
Assuming the Cobordism Hypothesis, an $n$-dimensional framed oplax relative field theory $\mathcal{R} : \Bord_{n}^{\fr} \to \umor_{n}(\icat)^\to$
is equivalent to an $n$-times right adjointable morphism $M$ in $\umor_{n}(\icat)$.
\end{prop}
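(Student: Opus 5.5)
The statement is Proposition \ref{prop:oplax-classification}, attributed to \cite{JFS}. My plan is to assemble it from three inputs: the Cobordism Hypothesis, the characterization in \cite{JFS} of $n$-dualizable objects of the oplax arrow category, and the $n$-dualizability of $\E_n$-algebras in $\umor_n(\icat)$ from \cite{GS}.

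The first step is to apply the framed Cobordism Hypothesis to the target $\umor_n(\icat)^{\to}$. Here I would use that the oplax arrow category of a symmetric monoidal $(\infty,n+1)$-category is again a symmetric monoidal $(\infty,N)$-category, as constructed in \cite{JFS}. Symmetric monoidal functors $\Bord_n^{\fr}\to\umor_n(\icat)^{\to}$ are then classified by evaluation at the positively framed point. This evaluation identifies them with the maximal subgroupoid of $n$-dualizable objects of $\umor_n(\icat)^{\to}$.

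The second step is to invoke the \cite{JFS} characterization of those objects. An object of $\umor_n(\icat)^\to$ is a $1$-morphism $M:A\to B$ of $\umor_n(\icat)$. It is $n$-dualizable precisely when $A$ and $B$ are $n$-dualizable in $\umor_n(\icat)$ and $M$ is $n$-times right adjointable.

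The third step is to note that by \cite{GS} (Propositions \ref{prop:En-alg-dual} and \ref{prop:En-adjoint-data}) every object of $\umor_n(\icat)$ is $n$-dualizable. This result was proved in the pointed model, but it transfers along the forgetful functor $\umor_n^{\mathrm{ptd}}(\icat)\to\umor_n(\icat)$, since symmetric monoidal functors preserve duality data. I would also use that every object of $\umor_n(\icat)$ is equivalent to one in the image of the completion map. The conditions on $A$ and $B$ are therefore automatic, and only the $n$-fold right adjointability of $M$ remains. This gives the equivalence of the proposition.

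The main obstacle is the second step: matching conventions. The oplax (rather than lax) arrow category of \cite{JFS} must correspond exactly to \emph{right} adjointability with our orientation conventions for composition in $\umor_n(\icat)$. Otherwise "right" would turn into "left". I would resolve this by checking the $n=1$ case directly. There a $1$-dualizable object of the oplax arrow category is a morphism whose mate $2$-cell requires a right adjoint. I would compare this with the bimodule description via \cite[Proposition 4.6.2.13]{LurHA}, and then argue that the convention propagates inductively to higher $n$.
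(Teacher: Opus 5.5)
Your proposal is correct and is essentially the paper's own argument: apply the Cobordism Hypothesis to $\umor_n(\icat)^{\to}$, use the \cite{JFS} characterization of $n$-dualizable objects of the oplax arrow category, and note via \cite{GS} that the $n$-dualizability of source and target is automatic. The convention check you plan at $n=1$ via \cite[Proposition 4.6.2.13]{LurHA} is unnecessary, because the \cite{JFS} result is formal for any target, and ``right adjointable'' is then intrinsic to $\umor_n(\icat)$, independent of the bimodule description.
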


In \cite{JFS}, a notion of \emph{lax} relative field theory was also provided. It is shown in \cite{SS25}, that this is related to the notion of oplax relative field theory by replacing the target $\mathcal{E}^{\to}$ with the target $((\mathcal{E}^{(\text{odd-op})})^{\to})^{(\text{odd-op})}$ where odd-op denotes the $(\infty,N)$-category with the direction of odd morphisms reversed. Moreover, \cite{SS25} show that the notion of oplax and lax relative field theories are equivalent for $n$ even, and there is another distinct version of relative field theory in these dimensions. We now recall the various notions of partial dualizability introduced in \cite{SS25}, which correspond to different notions of relative field theories.

\begin{defn}\label{def:dexterity-fn}
A \textit{dexterity function of length $n$} is a function
\begin{equation*}
    a^n:\{1,...,n\} \to \{L,R\}.
\end{equation*} 
\end{defn}

We often represent a dexterity function by a string; e.g. $RL$ is the dexterity function given by $1\mapsto R$ and $2\mapsto L$. For a dexterity function $a^n$ of length $n$ and $k< n$, we denote by $a^{n}_{-k}$ the dexterity function of length $(n-k)$ defined by
\begin{align*}
    a^{n}_{-k}(i) := a^n(i+k).
\end{align*}
This notation is convenient in expressing the following definition of higher adjointability.

\begin{defn}\label{def:n-times-adj}
Let $\mathcal{E}$ be an $(\infty,N)$-category. Let $a^n$ be a dexterity function of length $n \geq 1$. Let $f$ be a $k$-morphism in $\mathcal{E}$. Then we say that $f$ is \textit{$a^n$-adjointable} if
\begin{equation*}
    \begin{cases} \text{$f$ is right-adjointable},  &\text{if $a^n(1)$ =R}  \\
     \text{$f$ is left-adjointable},  &\text{if $a^n(1)$ =L} 
    \end{cases}
\end{equation*}
and if $n>1$, the unit and counit of the adjunction are themselves $a^{n}_{-1}$-adjointable.
\end{defn}

\begin{rmk}
Let $R^n$ be the constant ambidexterity function $R^n(i) = R$ of length $n$, then the notion of $R^n$-adjointable recovers the notion of $n$-times right adjointable. Similarly, letting $L^n$ be the constant ambidexterity function $L^n(i) = L$ of length $n$, then the notion of $L^n$-adjointable recovers the notion of $n$-times left adjointable.
\end{rmk}

The following theorem is one of the main results of \cite{SS25}.

\begin{thm}[\cite{SS25}]\label{lem:even-even}
Let $\mathcal{E}$ be an $(\infty,N)$-category and let $f$ be a $k$-morphism in $\mathcal{E}$. Let $a^n$ and $b^n$ be two dexterity functions of length $n$ such that
\begin{align*}
    |(a^n)^{-1}(R)| \equiv |(b^n)^{-1}(R)| \quad \text{mod 2},
\end{align*}then $f$ is $a^n$-adjointable if and only if it is $b^n$-adjointable.
\end{thm}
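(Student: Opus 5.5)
The plan is to reduce the statement to a single local move on dexterity functions and then show that such moves generate the parity relation. The move is: flip both letters in two consecutive positions $i,i+1$ (so $RR\leftrightarrow LL$ and $RL\leftrightarrow LR$ in those slots) and leave all other letters unchanged. Flipping two letters changes the number of $R$'s by $-2$, $0$ or $+2$, so it preserves the parity. Conversely, any two words of length $n\ge 2$ with the same parity are connected by such moves. To see this, first use flips to make positions $1,\dots,n-1$ of $a^n$ agree with $b^n$, fixing them from left to right by flipping the pair $(i,i+1)$ whenever slot $i$ disagrees. The last letter then agrees automatically by the parity assumption. When $n=1$ the parity hypothesis forces $a^1=b^1$, so there is nothing to prove. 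It therefore suffices to show that $a^n$-adjointability is invariant under a single double flip.

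\textbf{Step 1 (the base case $i=1$).} Let $f$ be $a^n$-adjointable with $a^n(1)=R$, say $f\dashv g$ with unit $\eta\colon 1\to gf$ and counit $\varepsilon\colon fg\to 1$.
\begin{itemize}
\item If $a^n(2)=R$, the right adjoints $\varepsilon^R\colon 1\to fg$ and $\eta^R\colon gf\to 1$ should serve as unit and counit exhibiting $g\dashv f$. The snake identities for $(\varepsilon^R,\eta^R)$ are the mates of the snake identities for $(\eta,\varepsilon)$, because passing to right adjoints is a contravariant equivalence on $2$-morphisms that have them, compatible with whiskering. Hence $f$ is left adjointable, with unit and counit $\varepsilon^R,\eta^R$, and these are left adjointable with left adjoints $\varepsilon,\eta$.
\item If $a^n(2)=L$, the same argument with left adjoints $\eta^L,\varepsilon^L$ produces $g\dashv f$ with unit $\varepsilon^L$ and counit $\eta^L$, and these are right adjointable.
\end{itemize}
The cases with $a^n(1)=L$ are the mirror images of these two.

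\textbf{Step 2 (preserving the tail).} The new second-level adjunction $\varepsilon\dashv\varepsilon^R$ is literally the old one, now read with $\varepsilon^R$ as the left-hand morphism. So its unit and counit, which are $(k+2)$-morphisms, are the same as before, and the requirement from position $3$ onwards is unchanged. This gives $a^n$-adjointable $\Rightarrow$ $b^n$-adjointable, where $b^n$ is obtained by flipping positions $1,2$. Applying the same argument in reverse gives the converse.

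\textbf{Step 3 (flips at position $i>1$).} Induct on $i$. By definition, $f$ is $a^n$-adjointable exactly when it admits the adjoint prescribed by $a^n(1)$ and its unit and counit are $a^n_{-1}$-adjointable. A flip at positions $i,i+1$ of $a^n$ is a flip at positions $i-1,i$ of $a^n_{-1}$. Applying the inductive hypothesis to the unit and the counit then gives the result.

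The main obstacle is Step 1. There one has to verify carefully, in an $(\infty,N)$-categorical setting, that mates of the snake identities really yield the snake identities for the new adjunction. One also has to check that the higher coherence data needed for an adjunction in an $(\infty,2)$-category is determined by the homotopy $2$-category. I would reduce to the homotopy $2$-category of $\operatorname{Hom}$-categories, where adjunctions are detected, and use the mate correspondence of Remark~\ref{rmk:mates}.
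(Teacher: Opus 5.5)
Your proof is correct. The paper gives no proof of this statement: it is quoted from \cite{SS25} and used only to reduce the relative dualizability question to the two representatives $R^n$ and $R^{n-1}L$. So there is no in-paper argument to compare your route against.

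Your argument is complete. The double flips at consecutive positions generate exactly the parity relation, and your left-to-right normalization shows this correctly. The base case is the ambidexterity swap: if $f\dashv g$ and $\eta,\varepsilon$ have right (resp.\ left) adjoints, then $g\dashv f$ with unit $\varepsilon^R$ and counit $\eta^R$ (resp.\ $\varepsilon^L$ and $\eta^L$). The decisive observation is that the second-level adjunction pairs $\eta\dashv\eta^R$ and $\varepsilon\dashv\varepsilon^R$ (resp.\ $\eta^L\dashv\eta$ and $\varepsilon^L\dashv\varepsilon$) are unchanged, so the tail of the dexterity function is untouched. The induction on the flip position then goes through, because $a^n(1)=b^n(1)$ whenever the flip is at $i\geq 2$.

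Two small points of presentation:
\begin{itemize}
\item In Step~2, $\varepsilon^R$ is still the \emph{right} adjoint in $\varepsilon\dashv\varepsilon^R$. What changes is only which member of the pair is the morphism under test. Since the definition always uses the same adjunction pair whichever side is prescribed, its unit and counit coincide.
\item Step~1 does not need the mate correspondence of Remark~\ref{rmk:mates}. It needs only three facts about passing to right (or left) adjoints inside the hom-categories: it reverses vertical composition, preserves identities, and commutes with whiskering. The last holds because whiskering is a functor and therefore preserves adjunctions. These facts give the snake identities for $(\varepsilon^R,\eta^R)$ up to invertible higher morphisms, that is, in the homotopy $2$-category. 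This suffices because adjunctions are detected there.
\end{itemize}
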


There are therefore only two inequivalent notions of partial $n$-dualizability for a $k$-morphism. We choose as representatives the dexterity functions $R^n$ and $R^{n-1}L$. By Proposition \ref{prop:oplax-classification} a 1-morphism $M$ in $\mor_n(\icat)$ that is $R^n$-adjointable defines an $n$-dimensional framed oplax relative field theory. Following \cite{SS25} we also refer to this case as ``even'' since the number of $L$'s in any equivalent dexterity function is always zero mod 2. In contrast, a 1-morphism $M$ in $\mor_n(\icat)$ that is $R^{n-1}L$-adjointable defines an $n$-dimensional framed \emph{odd-relative field theory} via \cite{SS25} and the Cobordism Hypothesis.

With this set-up, we can reformulate our main \Cref{thm:main-oplax-thm} in terms of relative field theories.
\begin{cor}\label{cor:main-oplax-thm}
Let $\icat$ be a presentably symmetric monoidal \infcatt. Let $M:A\to B$ be a 1-morphism in $\mor_n(\icat)$. Assuming the Cobordism Hypothesis,
 \begin{enumerate}
 \item {\em (``even'' = oplax case)}  the 1-morphism $M$ gives an ``even''=oplax $n$-dimensional framed TFT if and only if for $0\leq k \leq n-1$ the object $M$ is right dualizable as a right module over the factorization homologies 
\[  \int_{\widetilde{S}^{k}_{+} \times \R^{n-k}} M_{A\to B} \,.\] 
\item {\em (``odd'' case)}
On the other hand, the 1-morphism $M$ gives an odd $n$-dimensional framed TFT if and only if for $0\leq k \leq n-1$ the object $M$ is left dualizable as a left module over the factorization homologies 
\[  \int_{\widetilde{S}^{k}_{-}\times \R^{n-k}} M_{A\to B} \,. \] 
 \end{enumerate}
\end{cor}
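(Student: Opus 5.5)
The plan is to deduce Corollary \ref{cor:main-oplax-thm} formally from Theorem \ref{thm:main-oplax-thm}, combined with the classification results on relative field theories recalled above. No new geometric or algebraic input should be needed; the work consists of matching the adjointability notions with the kinds of field theories.

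For the ``even'' = oplax case, I would start from Proposition \ref{prop:oplax-classification}. It says that, assuming the Cobordism Hypothesis, framed $n$-dimensional oplax relative field theories valued in $\umor_n(\icat)^\to$ correspond to $n$-dualizable objects of $\umor_n(\icat)^\to$. By the result of \cite{JFS} recalled above, these are the $1$-morphisms $M\colon A\to B$ with $A$ and $B$ $n$-dualizable and $M$ $R^n$-adjointable. The source and target conditions hold automatically, since every object of $\umor_n(\icat)$ is $n$-dualizable by \cite{GS}. So $M$ determines an oplax theory exactly when it is $R^n$-adjointable.

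Two bookkeeping points need checking here.
\begin{enumerate}
\item Adjointability of $M$ in $\mor_n(\icat)$ is equivalent to adjointability of its image in $\umor_n(\icat)$. This follows from the remark after Definition \ref{defn:moritacategory}, since adjoints are detected in homotopy $2$-categories, which are insensitive to univalent completion.
\item $R^n$-adjointability is exactly the notion appearing in part (1) of Theorem \ref{thm:main-oplax-thm}.
\end{enumerate}
Invoking part (1) of that theorem then turns $R^n$-adjointability into right dualizability of $M$ over the algebras $\int_{\widetilde{S}^{k}_{+}\times\R^{n-k}} M_{A\to B}$ for $0\le k\le n-1$.

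For the ``odd'' case, I would use the classification from \cite{SS25} recalled just above. Assuming the Cobordism Hypothesis, odd relative $n$-dimensional framed theories correspond to $1$-morphisms that are $a^n$-adjointable for some dexterity function $a^n$ with an odd number of $L$'s. By Theorem \ref{lem:even-even}, all such $a^n$ give the same notion, and we may take the representative $R^{n-1}L$. The same two bookkeeping points (passing between $\mor_n$ and $\umor_n$, and the automatic $n$-dualizability of $A$ and $B$) then reduce the claim to part (2) of Theorem \ref{thm:main-oplax-thm}. That part gives the left-dualizability criterion over $\int_{\widetilde{S}^{k}_{-}\times\R^{n-k}} M_{A\to B}$.

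The only step needing care is to confirm that the \cite{SS25} classification of odd relative theories is stated in terms of partial dualizability in the appropriately modified arrow category. Concretely, I would check that an odd relative theory with target $\umor_n(\icat)$ amounts exactly to an $R^{n-1}L$-adjointable $1$-morphism, with no extra condition on the source and target objects beyond $n$-dualizability, which holds automatically here. Once this identification is fixed, the corollary is immediate.
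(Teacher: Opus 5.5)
Your proposal is correct and follows the paper's route. The corollary comes from combining \cref{thm:main-oplax-thm} with two classification inputs: for the oplax case, \cref{prop:oplax-classification}, where $n$-dualizability of $A$ and $B$ is automatic by \cite{GS}; and for the odd case, the \cite{SS25} identification of $R^{n-1}L$-adjointable $1$-morphisms with odd relative theories, with \cref{lem:even-even} justifying that choice of representative. Your extra bookkeeping about passing between $\mor_n(\icat)$ and $\umor_n(\icat)$ is harmless, and the paper leaves it implicit.
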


\subsection{Morse theory in codimension one}

The proof of \cref{thm:main-oplax-thm} will use similar arguments to the proof of \cref{thm:mainthm}. In place of a Morse function, we will use the notion of a Morse function on a manifold with interface. Morse theory for stratified manifolds is a well established subject, see for example the textbook \cite{GM}. A manifold with interface is a very simple example of a stratified manifold with a codimension one stratification coming from an embedded submanifold. The Morse theory of a manifold with interface is therefore very similar to Morse theory for manifolds with boundary developed in \cite{JR72,Bra74,Haj}. We recall the definition and main results of stratified Morse theory applied to the special case of a manifold with interface.

\begin{defn}\label{def:strat-morse}
Let $\widetilde{X}$ be an $n$-framed manifold with interface whose underlying $n$-framed manifold is $X$. A \emph{stratified Morse function} on $\widetilde{X}$ is a Morse function $f:X \to \R$ satisfying the following two additional conditions:
\begin{enumerate}
    \item The restriction $f_1:=f|_{X_1}:X_1 \to \R$ to the interface $X_1$ is a Morse function.
    \item There are no critical points of $f$ in a neighborhood of $X_1$.
\end{enumerate}
We write $\tilde{f}:\widetilde{X} \to \R$ to denote the stratified Morse function and $f:X\to \R$ to denote the underlying Morse function.
\end{defn}

The critical points of $\tilde{f}:\widetilde{X}\to \R$ come in two families. An \emph{interior critical point} of $\tilde{f}$ is a critical point of $f:X\to \R$ in the usual sense. By condition two, interior critical points belong to $X \backslash X_1$.  A \emph{codimension one critical point} of $\tilde{f}$ is a critical point of $f_1$. The codimension one critical points further separate into two distinct families. Let $x\in X_1$ be a codimension one critical point and let $\hat{n} \in T_xX$ be a outwards normal vector to region $A$ (inwards normal to region $B$). Since $f$ has no critical points in a neighborhood of $X_1$, the derivative $df_x(\hat{n})$ is non-zero. We call a codimension one critical point \emph{positive} if $df_x(\hat{n}) > 0$ and \emph{negative} if $df_x(\hat{n}) < 0$. Note that this does not depend on the specific choice of $\hat{n}$. The \emph{index} of a codimension one critical point $x\in X_1$ is the index of $f_1:X_1 \to \R$ at $x$.

The Morse lemma for codimension one critical points is a straightforward extension of the standard Morse lemma. A proof in the setting of manifolds with boundary appeared in \cite{JR72,Bra74}. 

\begin{lem}\label{lem:strat-Morse-lemma}
Let $\tilde{f}:\widetilde{X}\to \R$ be a stratified Morse function. Let $x\in \widetilde{X}$ be a codimension one critical point of index $k$. Then there exists local coordinates $(x_1,...,x_n)$ at $x$ such that $(0,x_{2},...,x_{n}) \subset X_1$, the coordinate $x_1$ is outwards normal to region $A$, and $\tilde{f}$ is given by
\begin{align*}
 \tilde{f}(x_1,...,x_n) = \tilde{f}(x) \pm x_1 - \sum_{i=2}^{k+1} x_i^2 + \sum_{i=k+2}^{n} x_i^2 ,
\end{align*}
where $\pm$ corresponds to whether $x$ is a positive ($+$) or negative ($-$) codimension one critical point.
\end{lem}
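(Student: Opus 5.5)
Since $x$ is a critical point of $f_1 = f|_{X_1}$ but not of $f$, the natural strategy is to split off the normal direction first and then apply the classical Morse lemma on the interface. Concretely, I first choose interface-adapted coordinates: near $x$ there are coordinates $(t,y_2,\dots,y_n)$ with $X_1=\{t=0\}$, with $t$ increasing from region $A$ towards region $B$ (so $\partial_t$ is outward normal to $A$), and with $y=(y_2,\ldots,y_n)$ restricting to coordinates on $X_1$. In these coordinates $df_x$ vanishes on $T_xX_1$, while by condition (2) of Definition \ref{def:strat-morse} it satisfies $\partial_t f(x)=df_x(\hat n)\neq 0$. The sign of this derivative is the sign $\pm$ in the statement.

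The second step is to straighten $f$ in the normal direction. Because $\partial_t f(x)\neq 0$, the map $(t,y)\mapsto (f(t,y)-f(0,y),\, y)$ is a local diffeomorphism by the inverse function theorem. Its Jacobian at $x$ is triangular with diagonal entries $\partial_t f(x)$ and the identity. I would set the new coordinate
\[
x_1 := \pm\bigl(f(t,y)-f(0,y)\bigr),
\]
with the sign chosen so that $x_1$ has the same sign as $t$. This coordinate vanishes exactly on $\{t=0\}=X_1$ and still points outward from region $A$. In the coordinates $(x_1,y)$ we then have $f = f_1(y) \pm x_1$.

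The third step is to normalize the interface part. Since $f_1$ is Morse at $x$ with index $k$, the classical Morse lemma gives coordinates $(x_2,\dots,x_n)$ on $X_1$ near $x$ in which
\[
f_1 = f(x) - \sum_{i=2}^{k+1}x_i^2+\sum_{i=k+2}^n x_i^2 .
\]
Extending these functions constantly in $x_1$ keeps the splitting of the previous step intact, and the resulting chart $(x_1,x_2,\dots,x_n)$ gives exactly the formula in the statement.

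The main point requiring care is the compatibility of the chart with the stratification and the labelling: $X_1$ must remain $\{x_1=0\}$, and the sign of $x_1$ must agree with the orientation of the outward normal from $A$. Both facts are guaranteed by the construction in the second step, since $x_1$ and $t$ have the same sign. The framing plays no role in this argument, since the statement only asks for local coordinates.
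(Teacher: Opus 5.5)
Your argument is correct. You straighten $f$ in the normal direction via $x_1=\pm\bigl(f(t,y)-f(0,y)\bigr)$ and then apply the classical Morse lemma to $f_1$ on $X_1$; this is exactly the ``straightforward extension of the standard Morse lemma'' the paper invokes. The paper gives no proof of its own, only pointing to the manifold-with-boundary versions in \cite{JR72,Bra74}. Your construction supplies the argument directly, and because $x_1$ is defined on a full two-sided neighborhood, it handles the interface case without reducing to a boundary.
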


\begin{rmk}
We choose $x_1$ for the coordinate normal to the interface since the standard $n$-framing on $\widetilde{\R}^n \cong \square_{[1]} \times \square^{n-1}$ has the 1-st component of the framing normal to the interface.
\end{rmk}

\begin{rmk}
There is a second approach to Morse theory for manifolds with boundary (and therefore also interfaces) in which one drops condition (2) of \cref{def:strat-morse}, and instead requires that $\nabla f$ is always tangent to $X_1$ (this requires a choice of metric on $X$). In this case, $\pm x_1$ is replaced by $\pm x_1^2$ in the local coordinate form, see \cite{BNR} for details. It will be important for us that we use \cref{def:strat-morse}, since the handle attachments we require are those of \cite{GM} which correspond to the local form of \cref{lem:strat-Morse-lemma}.
\end{rmk}

Following \cite{GM} (see also \cite{Haj}), the stratified handle attachment corresponding to a positive codimension one critical point of index $\gamma$ is built as follows. Start with the product
\begin{equation*}
    \tilde{I} \times D^{\gamma} \times D^{n-\gamma-1}
\end{equation*}
where $\tilde{I} = \square_{[1]}$ is the standard stratified interval. This product is a manifold with corners, the faces are given by $\pt_{A} \times D^{\gamma} \times D^{n-\gamma-1}$, $\pt_{B} \times D^{\gamma} \times D^{n-\gamma-1}$, $\tilde{I} \times S^{\gamma-1}\times D^{n-\gamma-1}$ and $\tilde{I} \times D^{\gamma}\times S^{n-\gamma-2}$. By smoothing the corners $\pt_A \times D^{\gamma} \times S^{n-\gamma-2}$ and $\pt_B \times S^{\gamma-1}\times D^{n-\gamma-1}$, the handle attachment  is given by interpreting $\tilde{I} \times D^{\gamma} \times D^{n-\gamma}$ as a handle from 
\begin{align*}
      (\pt_{A} \times D^{\gamma} \times D^{n-\gamma-1} )  \bigcup_{\pt_A \times D^{\gamma} \times S^{n-\gamma-2}} (\tilde{I} \times D^{\gamma}\times S^{n-\gamma-2} )\ \cong  \widetilde{S}^{n-\gamma-1}_- \times D^{\gamma}
\end{align*}
to 
\begin{align*}
    ( \tilde{I} \times S^{\gamma-1}\times D^{n-\gamma-1} ) \bigcup_{\pt_B \times S^{\gamma-1}\times D^{n-\gamma-1}} (\pt_{B} \times D^{\gamma} \times D^{n-\gamma-1} )  \cong \widetilde{S}^{\gamma}_+ \times D^{n-\gamma-1} \ .
\end{align*}
The identifications on the right are given by applying the canonical diffeomorphisms coming from gluing the product $\tilde{I} \times S^k$ with a disc $D^k$ for some $k$, either on the right (the disk is labeled by $B$), or on the left (the disc is labeled by $A$):
\begin{equation*}
     (\pt_A \times D^{n-\gamma -1}) \bigcup_{\pt_A \times S^{n-\gamma-2}} (\tilde{I} \times S^{n-\gamma-2})  \cong \widetilde{S}^{n-\gamma-1}_-
\end{equation*}
and 
\begin{align*}
    ( \tilde{I} \times S^{\gamma-1} ) \bigcup_{\pt_B \times S^{\gamma-1}} (\pt_{B} \times D^{\gamma}  )  \cong \widetilde{S}^{\gamma}_+ \ .
\end{align*}
For $\gamma=1,2$, the compositions $( \tilde{I} \times S^{\gamma-1} ) \bigcup_{\pt_B \times S^{\gamma-1}} (\pt_{B} \times D^{\gamma}  )$ are given by
\begin{equation*}
\begin{tikzpicture}[scale=0.7,baseline=-0.1cm]
\draw[thick,red] (1.5,1) arc (90:-90:1) node[midway,right,red] {$B$};

\node at (1,0) {$\bigcup$};

\draw[thick,blue] (0,1) -- (-0.5,1)  node[above,blue] {$A$};
\draw[thick,blue] (0,-1) -- (-0.5,-1);
\draw[thick,red] (0,1) -- (0.5,1);
\draw[thick,red] (0,-1) -- (0.5,-1);

\node[
    draw=green!90!black,
    fill=green!90!black,
    circle,
    inner sep=1pt
  ] at (0,1) {};
  \node[
    draw=green!90!black,
    fill=green!90!black,
    circle,
    inner sep=1pt
  ] at (0,-1) {};
\end{tikzpicture} \ \cong \ \widetilde{S}^{1}_+ \qquad \qquad \text{and} \qquad \qquad 
\begin{tikzpicture}[scale=0.7,baseline=-0.1cm]
    \draw[thick,blue,dashed]  (-1.1,1) arc (90:270:0.3 and 1);
    \draw[very thick, green!85!black] (-0.3,1) arc (90:270:0.3 and 1);
    \draw[red]  (0.5,1) arc (90:270:0.3 and 1);

    \filldraw[blue,opacity=0.1] (-1.1,1) -- (-0.3,1) arc (90:270:0.3 and 1) -- (-1.1,-1) arc (-90:-270:0.3 and 1) ;
    \filldraw[red,opacity=0.1] (0.5,1) -- (-0.3,1) arc (90:270:0.3 and 1) -- (0.5,-1) arc (-90:-270:0.3 and 1) ;
    
    \filldraw[blue,opacity=0.3] (-1.1,1) -- (-0.3,1) arc (90:-90:0.3 and 1) -- (-1.1,-1) arc (-90:90:0.3 and 1) ;
    \filldraw[red,opacity=0.3] (0.5,1) -- (-0.3,1) arc (90:-90:0.3 and 1) -- (0.5,-1) arc (-90:90:0.3 and 1) ;
    \draw[red]  (0.5,1) arc (90:-90:0.3 and 1);

    \draw[thick,blue,dashed] (-1.1,1) arc (90:-90:0.3 and 1);
    \draw[red] (0.5,1) -- (-0.3,1);
    \draw[red] (0.5,-1) -- (-0.3,-1);
    \draw[blue] (-1.1,1) -- (-0.3,1) node[midway,above,blue] {$A$};;
    \draw[blue] (-1.1,-1) -- (-0.3,-1);
    \draw[very thick, green!85!black] (-0.3,1) arc (90:-90:0.3 and 1);
    
    \node at (1.1,0) {$\bigcup$};

    \filldraw[red,opacity=0.3]  (1.7,1) arc (90:-90:1.3 and 1)  arc (-90:90:0.3 and 1);
    \filldraw[red,opacity=0.1] (2,0) arc (0:360:0.3 and 1);
    \draw[thick,red]  (1.7,1) arc (90:-90:1.3 and 1) node[midway,right,red] {$B$};
    \draw[red]  (1.7,1) arc (90:-90:0.3 and 1);
    \draw[red]  (1.7,1) arc (90:270:0.3 and 1);
    
\end{tikzpicture} \ \cong \ \widetilde{S}^{2}_+
\end{equation*}

Giving $\tilde{I} \times D^{\gamma} \times D^{n-\gamma}$ the standard $n$-framing coming from $\R^{n}$, the resulting $n$-framings on $\widetilde{S}^{n-\gamma-1}_- \times D^{\gamma}$ and $\widetilde{S}^{\gamma}_+ \times D^{n-\gamma-1}$ are those coming from considering them as a submanifold of the boundary of a stratified disk, as defined in \cref{cons:stratified-spheres}.

\begin{rmk}
For a negative codimension one critical point, the convention for reading $\tilde{I}\times D^{\gamma}\times D^{n-\gamma}$ as a handle is different. In particular the gluing of faces should be done across the other shared corners. We will only need to use the positive codimension one critical points.   
\end{rmk}

\subsection{Relative dualizability theorem}\label{sec:relative_proof}

We now prove \cref{thm:main-oplax-thm} classifying relative dualizability in the higher Morita category. The proof proceeds analogously to the proof of \cref{thm:mainthm} with the Morse theoretic arguments replaced by stratified Morse theoretic arguments.

\begin{cons}
Recall the maps from \Cref{cons:units}. For each binary word $w$ of length $k \in \{0,...,n-1\}$, define a map $\phi^w:(0,1)^{k+1} \to (0,1)^{k+1}$ to be the composite
\begin{align*}
    (0,1)^{k+1} \xrightarrow{\varphi^{w_1}_1} (0,1)^{k+1} \xrightarrow{\varphi^{w_2}_2} \cdots \xrightarrow{\varphi^{w_k}_k} (0,1)^{k+1}.
\end{align*}
and let $h^{w}:= \pi_{k+1}\circ \phi^w:(0,1)^k \to (0,1)$. Note that $\phi^{\emptyset} =  h^{\emptyset}= \Id:(0,1)\to (0,1)$.
\end{cons}

\begin{prop}\label{prop:Morse-strat}
As a manifold with interface, let $(0,1)^{k+1} = \square_{[1]} \times \square^{k}$. The function $h^{w}: (0,1)^{k+1} \to (0,1)$ is a stratified Morse function with a single positive codimension one critical point of index $|w|$ at $(1/2,....,1/2)$. 
\end{prop}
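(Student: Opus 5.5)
We argue by induction on the length $k$ of $w$, in parallel with the proof of Proposition \ref{prop:Morse-1}. The interface of $(0,1)^{k+1}=\square_{[1]}\times\square^{k}$ is the hyperplane $X_1=\{x_1=1/2\}$, with region $A=\{x_1<1/2\}$ and region $B=\{x_1>1/2\}$. For the base case $k=0$ we have $h^{\emptyset}=\Id$ on $(0,1)$. It has no interior critical points. Its restriction to the interface is the single point $1/2$, which counts as a critical point of index $0$. Since the outward normal to $A$ is $\partial_{x_1}$ and $dh^{\emptyset}(\partial_{x_1})=1>0$, this critical point is positive. This settles the base case.

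For the inductive step we write $w=w',w_k$. As in diagram \eqref{diag:comm-morse}, we have the factorization $h^{w}=\pi_2\circ\varphi^{w_k}\circ(h^{w'}\times\Id)$, where the last coordinate $y=x_{k+1}$ is the one that $\varphi^{w_k}_k$ bends against $x_k$. The explicit formula \eqref{eq:twist-formula}, together with its conjugate by $\inv$, gives the local form near the relevant line:
\[h^{w}(\mathbf{x},y)=1-(1-h^{w'}(\mathbf{x}))\sin(\pi y)\quad\text{if } w_k=0,\]
\[h^{w}(\mathbf{x},y)=h^{w'}(\mathbf{x})\sin(\pi y)\quad\text{if } w_k=1.\]
Away from this neighbourhood $\varphi^{w_k}$ is a diffeomorphism, so critical points can only arise where $h^{w'}\times\Id$ fails to be submersive.

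\textbf{Interior critical points.} By induction $h^{w'}$ has no critical points at all on $(0,1)^{k}$, since its only critical point is one of the restriction to the interface. Hence $dh^{w'}\neq0$ everywhere. The $\mathbf{x}$-component of $dh^{w}$ is $\pm dh^{w'}\sin(\pi y)$, which is nonzero. So $h^{w}$ has no interior critical points, and condition (2) of Definition \ref{def:strat-morse} holds automatically.

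\textbf{Restriction to the interface.} The bend maps act on the coordinates $x_k,x_{k+1}$ with $k\ge1$, so they commute with restriction to $\{x_1=1/2\}$. Therefore $h^{w}|_{X_1}=h^{w'}|_{X_1}$ composed with the same bend. The Hessian computation from Proposition \ref{prop:Morse-1} now applies verbatim to $f_1=h^{w}|_{X_1}$. The $y$-derivative vanishes only at $y=1/2$, and the Hessian is block diagonal. It has the block $H_{h^{w'}|_{X_1}}$ and the entry $\pm\pi^2/2$, where the sign is $+$ for $w_k=0$ and $-$ for $w_k=1$. So $f_1$ is Morse with a single critical point at $(1/2,\dots,1/2)$, and its index is $|w'|+w_k=|w|$.

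\textbf{Positivity.} At the critical point $\sin(\pi y)=1$, so $\partial_{x_1}h^{w}=\pm\partial_{x_1}h^{w'}\cdot 1$ with the sign coming out positive in both cases. This is because $1-(1-h)$ and $h$ both have derivative $+\partial h$ in $\mathbf{x}$. By induction $\partial_{x_1}h^{w'}>0$ at the critical point, so positivity propagates. This also confirms that the derivative is nonzero near $X_1$.

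The main obstacle is the claim that the local formula is valid on the whole region that matters. The explicit form of $\varphi$ is prescribed only near the line $x_k=1/2$. Away from that line we must rule out critical points by using that $\varphi$ is a diffeomorphism and that $h^{w'}$ is submersive there. We also need the nonvanishing of $\partial_{x_1}h^{w}$ on a whole neighbourhood of $X_1$, not just at the critical point. I would handle this by induction as well: in the local formula the sign of $\partial_{x_1}$ is preserved wherever $\sin(\pi y)>0$, which is everywhere on $(0,1)$.
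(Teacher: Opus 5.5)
Your proposal is correct and follows the paper's proof essentially step for step. Both argue by induction via the factorization $h^{w}=\pi_2\circ\varphi^{w_k}\circ(h^{w'}\times\Id)$, use submersivity of $h^{w'}$ to exclude interior critical points, rerun the Hessian argument of Proposition~\ref{prop:Morse-1} on the restriction $h^{w}|_{X_1}$, and get positivity from $\partial_{x_1}h^{w}=\partial_{x_1}h^{w'}\sin(\pi/2)$.

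One small correction: the first bend $\varphi_1$ does involve $x_1$, so your stated reason for $h^{w}|_{X_1}=\pi_2\circ\varphi^{w_k}\circ(h^{w'}|_{X_1}\times\Id)$ (that the bends avoid $x_1$) is wrong as phrased. The correct reason is simply that in this factorization the new variable $y=x_{k+1}$ is never $x_1$, so restricting to $\{x_1=1/2\}$ only restricts the input of $h^{w'}$. Separately, your worry about $\partial_{x_1}h^{w}\neq 0$ on a whole neighbourhood of $X_1$ is unnecessary: condition (2) only requires $dh^{w}\neq 0$ there, which you already have globally.
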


\begin{proof}
The proof is similar to that of \cref{prop:Morse-1}. We proceed by induction in the length of $w$. If $w=\emptyset$, the map $\Id:\square_{[1]} \to (0,1)$ has no interior critical points since it is submersive everywhere, it has a codimension one critical point at $1/2 \in (0,1)$ which is easily seen to be positive with respect to the standard stratification on $\square_{[1]}$. For the inductive step suppose that $w = w',w_k$ for $w'$ a word of length $k-1$ and suppose the proposition is true for $w'$. By definition we have a commutative diagram
\begin{equation*}
    \begin{tikzcd}[row sep=small,column sep=small]
	{(0,1)^{k+1}} && {(0,1)} \\
	\\
	{(0,1)^2} && {(0,1)^2}
	\arrow["{h^w}", from=1-1, to=1-3]
	\arrow["{h^{w'}\times\Id}"', from=1-1, to=3-1]
	\arrow["{\varphi^{w_k}}"', from=3-1, to=3-3]
	\arrow["{\pi_2}"', from=3-3, to=1-3]
\end{tikzcd}
\end{equation*}
By the inductive hypothesis, $h^{w'}$ is submersive everywhere, and since $\varphi^{w_k}$ is a diffeomorphism, so is $h^{w}$. Hence $h^w$ has no interior critical points.

To determine the codimension one critical points, consider $h^{w}_1:(0,1)^k \to (0,1)$, the restriction of $h^{w}$ to $\{1/2\}\times (0,1)^k \subset \square_{[1]} \times (0,1)^k$. By definition of $\varphi^{0}$ and $\varphi^1$, for words of length one we have that $h^{0}_1$ is the function $ y \mapsto 1-1/2\sin(\pi y)$ and $h_1^1$ is the function $y \mapsto 1/2\sin(\pi y)$. The same arguments as in the proof of \cref{prop:Morse-1} show that $h^{w}_1$ has a single critical point at $(1/2,...,1/2)$ of index $|w|$. 

It remains to show that the codimension one critical point of $h^{w}$ at $(1/2,....,1/2)$ is positive. Let $\hat{n} =\partial_1$ be the normal at $(1/2,...,1/2)$, by definition of $\varphi^{w_k}$
\begin{align*}
    dh^{w}(\hat{n}) =  dh^{w'}(\hat{n})\sin(\pi/2) = dh^{w'}(\hat{n})
\end{align*}
By the inductive hypothesis $dh^{w'}(\hat{n}) > 0$, hence the codimension one critical point remains positive.
\end{proof}

\begin{cons}
Let $M$ be a 1-morphism in $\mor_n(\icat)$. For each binary word $w$ of length $k \in \{0,...,n-1\}$, we define a $(k+1)$-morphism $v^{w}$ in $\mor_n(\icat)$ as follows. For the empty word, set $v^{\emptyset} = M$. For any binary word $w'$ with positive length less than $n-1$, iteratively define $v^{w'0}$ and $v^{w'1}$ to be the unit and counit of the adjunction $v^{w'} \dashv (v^{w'})^R$ respectively, as given in \cref{prop:En-adjoint-data}. In particular, $v^{0}$ and $v^{1}$ are the unit and counit of the adjunction $\F \dashv \F^R$.
\end{cons}

We now prove our main result on relative dualizability.

\begin{proof}[Proof of \cref{thm:main-oplax-thm}]
By \cite{JFS,SS25}, the 1-morphism $M$ is $R^{n-1}L$-adjointable in $\mor_n(\icat)$ if and only if $v^w$ admits a left adjoint for all binary words $w$ of length $n-1$. By \cref{thm:main-lifting-lemma} $v^w$ admits a left adjoint in $\mor_n(\icat)$ if and only if the underlying bimodule of $v^w$ admits a left adjoint in $\mor_1(\icat)$. By the same arguments as \cref{prop:equiv-of-bimods}, the underlying bimodule of $v^w$ is equivalent to $(\pi_{k,...,n} \circ (\phi^w))_\ast M = h^{w}_{\ast} M$. By \cref{prop:Morse-strat} $h^w$ is a stratified Morse function with a single positive codimension one critical point of index $|w|$. By the stratified Morse lemma, we have that $M$ is $R^{n-1}L$-adjointable if and only if $M$ is dualizable over
\begin{equation*}
    \int_{\widetilde{S}^{|w|}_{-}\times \R^{n-|w|}} M_{A\to B}
\end{equation*}
for all binary words $w$ of length $n-1$. Similarly, $M$ is $R^{n}$-adjointable if and only if $v^w$ admits a right adjoint for all binary words $w$ of length $n-1$ if and only if $M$ is dualizable over
\begin{equation*}
    \int_{\widetilde{S}^{n-|w|-1}_+ \times \R^{|w|+1}} M_{A\to B}
\end{equation*}
for all binary words $w$ of length $n-1$. Since $w$ is ranging over all binary words of length $n-1$, $|w|$ takes all values from $0$ to $n-1$.
\end{proof}

\subsection{Neumann and Dirichlet theories}

\begin{prop}
A 1-morphism $M : \unit \to B$ in $\mor_n(\icat)$ is $R^{n-1}L$-adjointable if and only if $M$ is $n$-dualizable as an object in $\mor_{n-1}(\icat)$. Similarly, a 1-morphism $N : A \to \unit$ in $\mor_n(\icat)$ is $R^{n}$-adjointable if and only if $N$ is $n$-dualizable as an object in $\mor_{n-1}(\icat)$.
\end{prop}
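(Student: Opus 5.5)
The plan is to apply Theorem~\ref{thm:main-oplax-thm} directly with $A=\unit$ (resp.\ $B=\unit$), compute the factorization homologies over the stratified half-spheres when one side is trivial, and compare the resulting conditions with those of Theorem~\ref{thm:mainthm} for $M$ viewed as an $\E_{n-1}$-algebra.

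First we identify $M$ itself. A 1-morphism $\unit\to B$ is, by pointless additivity (Proposition~\ref{prop:markedadditivity}) and Corollary~\ref{cor:Fact-bimod}, an $\E_{n-1}$-algebra in right $B$-modules. Forgetting the $B$-action, i.e.\ applying the comparison functor $\pi_*$ of \eqref{eq:comparisonfunctor}, we obtain an $\E_{n-1}$-algebra, which we still denote $M$, regarded as an object of $\mor_{n-1}(\icat)$. Since the region labelled $A=\unit$ carries the unit, it contributes trivially to factorization homology: any component of an $n$-framed manifold with interface lying entirely in the $A$-region can be discarded.

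Next we compute the algebras occurring in case (2) of Theorem~\ref{thm:main-oplax-thm}. The half-sphere $\widetilde S^{k}_-$ consists of the $A$-cap $\widetilde S^k\cap\{x_1<0\}$ together with a collar $\widetilde S^k\cap\{0\le x_1<\delta\}$. The collar is the interface sphere $S^{k-1}$ times a small interval reaching into the $B$-region. The $A$-cap contributes $\unit$, so
\[ \int_{\widetilde{S}^{k}_{-}\times \R^{n-k}} M_{\unit\to B} \simeq \int_{S^{k-1}\times\R\times\R^{n-k-1}} M, \]
where on the right we integrate $M$ over an $(n-1)$-manifold along the interface; the leftover $\R$ of the collar is a $B$-region direction that is contracted by constructibility. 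Likewise $\int_{\widetilde{D}^k_-\times\R^{n-k}}M_{\unit\to B}\simeq M$.

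The main point to check is that the action of this algebra on $M$ agrees with the canonical action in Theorem~\ref{thm:mainthm}. For this we use the explicit local model of the stratified handle from the previous subsection: its $\widetilde S^{n-\gamma-1}_-\times D^\gamma$ end with $A$ trivial restricts, along the interface, to the ordinary Morse handle of Proposition~\ref{prop:mod-formula}. Granting this identification, condition (2) for $0\le k\le n-1$ becomes exactly the condition of Theorem~\ref{thm:mainthm} with $n$ replaced by $n-1$, namely that $M$ be left dualizable over $\int_{S^{k-1}\times\R^{n-k}}M$ for all $k$. Theorem~\ref{thm:mainthm} then identifies this with $n$-dualizability of $M$ in $\mor_{n-1}(\icat)$.

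For $N\colon A\to\unit$ we run the mirror argument with case (1). Now the $B$-cap of $\widetilde S^k_+$ is trivial, and the collar lies on the $A$-side. This reduces the condition to right dualizability over the corresponding sphere homologies. The reversal $\rev$ of Construction~\ref{cons:duals} exchanges left and right modules, and it preserves the algebras $\int_{S^{k-1}\times\R^{n-k}}$ up to orientation reversal, as in the Remark after Corollary~\ref{cor:Fact-bimod}. It therefore converts this condition into the left-dualizability criterion for $N$, and we again apply Theorem~\ref{thm:mainthm}. The hardest step is the precise identification of the actions near the interface when one side is trivial; I expect to settle it by comparing the coordinate form in Lemma~\ref{lem:strat-Morse-lemma} with the unstratified Morse lemma applied to $f_1$ on the interface.
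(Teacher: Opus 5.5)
Your proposal is correct and follows essentially the paper's proof. You apply Theorem~\ref{thm:main-oplax-thm} with one side equal to $\unit$, excise the $\unit$-labelled cap so that the half-sphere homology becomes $\int_{S^{k-1}\times\R^{n-k}}$ of the underlying $\E_{n-1}$-algebra, and conclude with Theorem~\ref{thm:mainthm}.

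The paper handles the action-compatibility step you defer without any Morse-lemma comparison. After excision everything is a product $\tilde{I}\times(-)$, so the action is simply the canonical one of $\int_{\tilde{I}\times S^{k-1}\times\R^{n-k}}$ on the corresponding disk.

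Your left/right bookkeeping for $N\colon A\to\unit$ covers a point the paper passes over silently. The relevant tool there is the reflection of the module line from the Remark after Corollary~\ref{cor:Fact-bimod}, not $\rev$ from Construction~\ref{cons:duals}.
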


\begin{proof}
Let $N:A  \to \unit$ be a 1-morphism with trivial target. We will apply excision to the diffeomorphism
\begin{align*}
    ( \tilde{I} \times S^{k-1}\times \R^{n-k} ) \bigcup_{\pt_B \times S^{k-1}\times \R \times \R^{n-k}} (\pt_{B} \times D^{k} \times \R^{n-k} )  \cong \widetilde{S}^{k}_+  \times \R^{n-k} \ .
\end{align*}
As the target of $N$ is trivial the factorization homologies over $\pt_B \times S^{k-1}\times \R \times \R^{n-k}$ and $\pt_{B} \times D^{k} \times \R^{n-k} $ are trivial, since they are manifolds entirely labeled by $B$. Excision of factorization homology yields
\begin{equation*}
    \int_{\widetilde{S}^{k}_{+} \times \R^{n-k}} N_{A\to \unit} \simeq \int_{\tilde{I} \times S^{k-1}\times \R^{n-k} } N_{A\to \unit} \otimes_{ \unit}   \unit \simeq \int_{\tilde{I} \times S^{k-1}\times \R^{n-k} } N_{A\to \unit} 
\end{equation*}
By definition of stratified factorization homology on the product $\tilde{I} \times S^{k-1}\times \R^{n-k}$ we have
\begin{equation*}
    \int_{\tilde{I} \times S^{k-1}\times \R^{n-k} } N_{A\to \unit} \simeq \int_{S^{k-1}\times \R^{n-k} } N
\end{equation*}
where the second term is using factorization homology of $N$ as an $\E_{n-1}$-algebra. The action of $\int_{S^{k-1}\times \R^{n-k}} N$ on $N$ is seen to agree with the canonical action since it arises from the canonical action of $\int_{\tilde{I} \times S^{k-1}\times \R^{n-k}} N$ on  $\int_{\tilde{I} \times D^{k-1}\times \R^{n-k}} N$. By \cref{thm:main-oplax-thm}, $N:A\to \unit$ is $R^n$-adjointable if and only if $N$ is dualizable over $\int_{S^{k -1} \times \R^{n-k}} N$ for $0\leq k\leq n-1$. By Theorem \ref{thm:mainthm}, this is equivalent to $N$ being $n$-dualizable as an object in $\mor_{n-1}(\icat)$. The result for a 1-morphism $M:\unit \to B$ is analogous.
\end{proof}

Using \cref{prop:oplax-classification} we obtain the following.

\begin{cor}[Relative Neumann theory]
Let $N \colon A \to \unit$ be a 1-morphism in $\mor_n(\icat)$ such that $N$ is $n$-dualizable as an object in $\mor_{n-1}(\icat)$. Assuming the Cobordism Hypothesis, there exists an $n$-dimensional framed oplax relative field theory $$\mathcal{R}_N : \Bord_{n}^{\fr} \longrightarrow  \umor_{n}(\icat)^\to \ $$
with $\mathcal{R}_{N}(\pt^+) =  N\colon A \to \unit$.
\end{cor}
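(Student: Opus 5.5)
The corollary follows from the preceding proposition combined with \cref{prop:oplax-classification}, so the plan is to verify the hypotheses of the latter and read off the value at the point.

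First, by the proposition just proved, a 1-morphism $N\colon A\to\unit$ in $\mor_n(\icat)$ whose underlying object is $n$-dualizable in $\mor_{n-1}(\icat)$ is $R^n$-adjointable in $\mor_n(\icat)$. The proof there goes through \cref{thm:main-oplax-thm}: excision identifies $\int_{\widetilde{S}^{k}_{+}\times\R^{n-k}} N_{A\to\unit}$ with $\int_{S^{k-1}\times\R^{n-k}} N$, and \cref{thm:mainthm} turns $n$-dualizability of $N$ into the required dualizability conditions.

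Next I transfer this to the univalent completion. The completion map $\mor_n(\icat)\to\umor_n(\icat)$ preserves and detects adjoints at every level, by the remark following \Cref{defn:moritacategory}. Moreover, adjunction data (units and counits) are carried to adjunction data. Hence the image of $N$ is $n$-times right adjointable in $\umor_n(\icat)$.

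I then invoke the classification. By \cite{JFS}, an object of $\umor_n(\icat)^{\to}$ is $n$-dualizable exactly when its source and target are $n$-dualizable and it is $n$-times right adjointable. Source and target are automatically $n$-dualizable, since $A$ and $\unit$ are $n$-dualizable in $\umor_n(\icat)$ by \cite{GS}. \Cref{prop:oplax-classification}, which assumes the Cobordism Hypothesis, then yields a symmetric monoidal functor $\mathcal{R}_N\colon\Bord_n^{\fr}\to\umor_n(\icat)^{\to}$ with $\mathcal{R}_N(\pt^+)\simeq N$.

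The only point needing care is the compatibility of the completion with iterated adjointability. I expect this to be immediate, because adjunctions live in homotopy 2-categories, which are insensitive to completion.
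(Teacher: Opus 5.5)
Your proposal is correct and follows the same route as the paper. The preceding proposition gives $R^n$-adjointability of $N$, and \cref{prop:oplax-classification} then yields $\mathcal{R}_N$, with $n$-dualizability of $A$ and $\unit$ automatic by \cite{GS}. Your extra step of passing from $\mor_n(\icat)$ to $\umor_n(\icat)$ makes explicit something the paper leaves implicit; it is justified by the paper's remark that duals and adjoints are detected before univalent completion.
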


The fact that the regular 1-morphism $A\colon \unit \to A$ is $R^n$-adjointable was proven in Corollary \ref{cor:regular-bimod-dualizability}. Using \cref{prop:oplax-classification} we obtain the following.

\begin{cor}[Relative Dirichlet theory]
Assuming the Cobordism Hypothesis, there exists an $n$-dimensional framed oplax relative field theory $$\mathcal{R}_{A} : \Bord_{n}^{\fr} \longrightarrow \umor_{n}(\icat)^\to\ $$
with $\mathcal{R}(\pt^+) =  A\colon \unit \to A$.
\end{cor}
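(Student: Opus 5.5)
The relative Dirichlet theory will be obtained by feeding the regular 1-morphism $A\colon \unit \to A$ into the classification of Proposition~\ref{prop:oplax-classification}. That classification says that, assuming the Cobordism Hypothesis, $n$-dimensional framed oplax relative field theories $\Bord_n^{\fr}\to \umor_n(\icat)^{\to}$ correspond to $n$-times right adjointable 1-morphisms in $\umor_n(\icat)$. The value at the positive point is the chosen 1-morphism. So it suffices to show that $A\colon\unit\to A$ is $R^n$-adjointable in $\umor_n(\icat)$.

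First, I identify $A\colon \unit\to A$ as a regular factorization bimodule. In $\mor_n(\icat)$, the $k=0$ instance of the regular-bimodule construction applies to the unit map $\eta_A\colon \unit\to A$ in $\mathsf{Ob}(\mor_n(\icat))$, viewed as a map of $\bb{E}_n$-algebras, i.e.\ of locally constant factorization algebras on $\square^n$. The right regular bimodule ${}_{\unit}A_A$ is by definition the Cartesian transport of $\mathrm{id}_A$ along $(\eta_A,\mathrm{id}_A)$ for the fibration of Proposition~\ref{prop:sourcetargetCartesian}. Its underlying object is $A$, with the left action through $\eta_A$, so it is the 1-morphism $A\colon\unit\to A$ of the statement.

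Second, I invoke Corollary~\ref{cor:regular-bimod-dualizability} with $k=0$. It gives that ${}_{\unit}A_A$ is $(n+1)$-times right adjointable in $\mor_n(\icat)$, and in particular $R^n$-adjointable. The mechanism is parts $(4)$ and $(5)$ of Proposition~\ref{prop:regularbimodajoint}: the right adjoint is the left regular bimodule, and the units and counits are again right regular bimodules, so the induction continues up to $(n+1)$-morphisms.

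Third, I transfer this to $\umor_n(\icat)$. Existence of adjoints is detected in homotopy 2-categories, which are insensitive to univalent completion, as noted in the remark following Definition~\ref{defn:moritacategory}. Hence the image of $A\colon\unit\to A$ under $\mor_n(\icat)\to\umor_n(\icat)$ is still $R^n$-adjointable. The source $\unit$ and the target $A$ are $n$-dualizable by \cite{GS}, as required for dualizability in the oplax arrow category. Applying Proposition~\ref{prop:oplax-classification} then produces $\mathcal{R}_A$ with $\mathcal{R}_A(\pt^+)=A\colon\unit\to A$.

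The only point needing care is the first step: matching the paper's 1-morphism $A\colon\unit\to A$ (the regular module) with ${}_{\unit}A_A$ as defined via Cartesian transport. I would check this by evaluating on disk-like subcubes, using the fact that Cartesian morphisms are exactly those that are equivalences on the deepest stratum.
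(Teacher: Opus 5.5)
Your proposal is correct and is exactly the paper's argument: identify $A\colon\unit\to A$ with the right regular bimodule ${}_{\unit}A_A$ of $\eta_A$, get $R^n$-adjointability from Corollary~\ref{cor:regular-bimod-dualizability} (that is, by iterating $(4)$ and $(5)$ of Proposition~\ref{prop:regularbimodajoint}), and then apply Proposition~\ref{prop:oplax-classification}. One caution: iterating $(4)$ and $(5)$ produces only $n$ levels of right adjoints, and the last units and counits are $(n+1)$-morphisms that are not adjointable in general (for $n=1$ the unit is $\eta_A\colon\unit\to A$, which is not invertible unless $A\simeq\unit$), so read that corollary as giving $R^n$-adjointability rather than literally ``$(n+1)$-times'', which is all you use anyway.
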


\begin{rmk}
It is interesting also to consider the statement of Theorem \ref{thm:main-oplax-thm} in the case that $M$ is the regular 1-morphism $A\colon\unit \to A$. Applying  stratified factorization homology for the regular 1-morphism $A\colon \unit \to A$, we have
\begin{equation*}
    \int_{\widetilde{S}^{k}_+  \times \R^{n-k}} A_{\unit \to A} \simeq \int_{D^{k} \times \R^{n-k}} A  \simeq A
\end{equation*}
for all $0\leq \gamma \leq n-1$. Moreover, the right action of  $\int_{\widetilde{S}^{k}_+  \times \R^{n-k}} A_{\unit \to A} \simeq A$ on  $\int_{\widetilde{D}^{k+1}_+ \times \R^{n-k-1}} A_{\unit \to A} \simeq A$ gives $A$ the structure of the regular $A$-module for all  $0\leq \gamma \leq n-1$.  In other words, all the unit and counit $n$-morphisms witnessing the $n$-dualizability for $A:\unit \to A$ have underlying bimodule the regular $A$-module. This was proven directly in \Cref{sec:reg-bimod}.
\end{rmk}

\bibliographystyle{alpha}
\bibliography{mybib}

\end{document}